\newcommand{\esssup}{\mathop{\mathrm{ess}\:\mathrm{sup}}\limits} 
\def\wh{\widehat}
\def\wt{\widetilde}
\def\R{\mathbb R}
\def\AC{\mathbb C}
\def\N{\mathbb N}
\def\A{\mathcal A}
\def\O{\mathcal O}
\def\u{\mathbf u}
\def\e{\mathbf e}
\def\x{\mathbf x}
\def\v{\mathbf v}
\def\k{\mathbf k}
\def\q{\mathbf q}
\def\D{\mathbf D}
\def\FF{\mathbf F}
\def\1{\mathbf 1}
\def\H{\mathfrak H}
\def\O{\mathcal O}
\def\NN{\mathfrak N}
\def\bxi{\boldsymbol \xi}
\def\1{\bold 1}
\def\Ran{\mathrm{Ran}\,}
\def\rank{\mathrm{rank}\,}
\def\eps{\varepsilon}
\def\Dom{\mathrm{Dom}\,}
\def\Ker{\mathrm{Ker}\,}
\def\clos{\mathrm{clos}\,}
\def\le{\leqslant}
\def\ge{\geqslant}
\theoremstyle{theorem}
\newtheorem{theorem}{Theorem}[section]
\newtheorem{proposition}[theorem]{Proposition}
\newtheorem{lemma}[theorem]{Lemma}
\newtheorem{condition}[theorem]{Condition}
\newtheorem{corollary}[theorem]{Corollary}
\theoremstyle{definition}
\newtheorem{remark}[theorem]{Remark}
\numberwithin{equation}{section}
\begin{document}

\dedicatory{To the anniversary  of Nina Nikolaevna Ural'tseva}

\title[Homogenization of hyperbolic equations]
{Homogenization of hyperbolic equations\\ with periodic coefficients in $\R^d$: \\
sharpness of the results}

\author{M.~A.~Dorodnyi, T.~A.~Suslina}

\address{St.-Petersburg State University
\\
Universitetskaya nab. 7/9
\\
St.~Petersburg, 199034, Russia}

\email{mdorodni@yandex.ru}

\email{t.suslina@spbu.ru}

\keywords{Periodic differential operators, hyperbolic equations, homogenization, 
operator error estimates}

\thanks{Supported by Russian Science Foundation (project 17-11-01069).}

\begin{abstract}
In $L_2(\R^d;\AC^n)$,  a selfadjoint strongly elliptic  second order differential operator $\A_\eps$ is considered. 
It is assumed that the coefficients of the operator $\A_\eps$ are periodic and depend on  $\x/\eps$, 
where  \hbox{$\eps >0$}~is a small parameter. 
We find approximations for the operators $\cos ( \A_\eps^{1/2}\tau)$ and $\A_\eps^{-1/2}\sin ( \A_\eps^{1/2}\tau)$ in the norm of operators acting from the Sobolev space $H^s(\R^d)$ to  $L_2(\R^d)$ (with suitable  $s$).
We also find approximation with corrector for the operator $\A_\eps^{-1/2}\sin ( \A_\eps^{1/2}\tau)$  in the $(H^s \to H^1)$-norm. The question about the sharpness of the results with respect to the type of the operator norm and with respect to the dependence of estimates on $\tau$ is studied.  
The results are applied to study the behavior of the solutions of the Cauchy problem for the hyperbolic equation 
$\partial_\tau^2 \u_\eps = - \A_\eps \u_\eps + \FF$.
\end{abstract}

\maketitle
\tableofcontents

\section*{Introduction}

The paper concerns homogenization theory for periodic differential operators (DOs).
An extensive literature is devoted to homogenization problems; first of all, we mention the books 
\cite{BeLP, BaPa, ZhKO}.  For homogenization problems in $\R^d$, one of the methods   is the spectral approach based on the Floquet--Bloch theory; see, e.~g., \cite[Chapter 4]{BeLP}, \cite[Chapter 2]{ZhKO},
\cite{Se,Zh,COrVa}.

\subsection{The class of operators}
We consider selfadjoint second order DOs acting in $L_2(\R^d;\AC^n)$ and admitting a factorization of the form
\begin{equation}
\label{0.1}
\mathcal{A} = f(\x)^* b(\D)^* g(\x)b(\D) f(\x).
\end{equation}
Here  $b(\D)= \sum_{l=1}^d b_l D_l$
is the first order $(m \times n)$-matrix DO such that 
$m \ge n$ and the symbol $b(\boldsymbol{\xi})$ has maximal rank. 
The matrix-valued functions $g(\x)$ (of size $m\times m$) and  $f(\x)$ (of size $n\times n$) 
are periodic with respect to some lattice $\Gamma$; 
$g(\x)$ is positive definite and bounded; $f, f^{-1} \in L_\infty$. 
It is convenient to start with the study of the simpler class of operators given by  
\begin{equation}
\label{0.2}
\wh{\mathcal{A}} =  b(\D)^* g(\x)b(\D).
\end{equation}
Many operators of mathematical physics can be written in the form \eqref{0.1} or \eqref{0.2};
see \cite{BSu1} and \cite[Chapter 4]{BSu3}. The simplest example is the acoustics operator 
$\wh{\mathcal{A}} =  
- \operatorname{div} g(\x) \nabla = \D^* g(\x) \D.$

Now we introduce the small parameter $\eps>0$. For any $\Gamma$-periodic function $\varphi(\x)$,  denote $\varphi^\eps(\x) := \varphi(\eps^{-1} \x)$. Consider the operators
\begin{align}
\label{0.3}
&\mathcal{A}_\eps = f^\eps(\x)^* b(\D)^* g^\eps(\x)b(\D) f^\eps(\x),
\\
\label{0.4}
&\wh{\mathcal{A}}_\eps = b(\D)^* g^\eps(\x)b(\D).
\end{align}

\subsection{Operator error estimates for elliptic and parabolic problems in~$\R^d$}

In a series of papers \cite{BSu1,BSu2,BSu3,BSu4} by Birman and Suslina, an operator-theoretic  (spectral) approach to homogenization problems in $\R^d$ was suggested and developed. This approach was based on the scaling transformation, the Floquet--Bloch theory, and the analytic perturbation theory. 

Let us discuss the results for the simpler operator  \eqref{0.4}. In \cite{BSu1}, it was proved that
\begin{equation}
\label{0.5}
\| (\wh{\mathcal{A}}_\eps +I)^{-1} - (\wh{\mathcal{A}}^0 +I)^{-1} \|_{L_2(\R^d) \to L_2(\R^d)} \le C \eps.
\end{equation}
Here $\wh{\mathcal{A}}^0\! =\! b(\D)^* g^0 b(\D)$ is the effective operator with the constant~effective matrix~$g^0$. Approximations for the resolvent $(\wh{\mathcal{A}}_\eps\! +\!I)^{-\!1}$ in the~${(L_2 \!\!\to\!\! L_2)}$\nobreakdash-norm with the error term $O(\eps^2)$ and in the \hbox{$(L_2 \!\!\to\!\! H^1)$}-norm with the error term $O(\eps)$
(with correctors taken into account) were obtained in  \cite{BSu2, BSu3} and \cite{BSu4}, respectively.
 
The operator-theoretic approach was applied to parabolic problems in \cite{Su1, Su2, Su3, V, VSu1, VSu2}.
In \cite{Su1, Su2}, it was proved that
\begin{equation}
\label{0.6}
\| e^{- \tau \wh{\mathcal{A}}_\eps} - e^{-\tau \wh{\mathcal{A}}^0} \|_{L_2(\R^d) \to L_2(\R^d)} 
\le C \eps (\tau + \eps^2)^{-1/2}, \quad \tau >0.
\end{equation}
Approximations for the exponential $e^{-\tau \wh{\mathcal{A}}_\eps}$ in the $(L_2 \to L_2)$-norm with the error $O(\eps^2)$ and in the \hbox{$(L_2 \to H^1)$}-norm with the error  $O(\eps)$
(with  correctors taken into account) were obtained in \cite{V} and \cite{Su3}, respectively.
Even more accurate approximations for the resolvent and the semigroup of the operator $\wh{\mathcal{A}}_\eps$
were found in  \cite{VSu1, VSu2}. 

The operator-theoretic approach was applied also to the more general class of operators  
$\wh{\mathcal B}_\eps$ with the principal part $\wh{\mathcal A}_\eps$ and the lower order terms: 
the resolvent of this operator was studied in \cite{Su44, Su55} and the semigropup in  \cite{M100, M200}.

Estimates of the form \eqref{0.5}, \eqref{0.6} are called  \textit{operator error estimates} in homogenization theory. They are order-sharp. A different approach to operator error estimates (the so called shift method) was suggested by Zhikov and Pastukhova; see \cite{Zh2, ZhPas1, ZhPas2} and also the survey \cite{ZhPas3}.

\subsection{Operator error estimates for the nonstationary Schrödinger-type equations and hyperbolic equations\label{sec0.3}}
 The situation with homogenization of the nonstationary Schrödinger-type equations and hyperbolic equations  differs from the case of the elliptic and parabolic problems. 
 The operator-theoretic approach was applied to the nonstationary problems in  \cite{BSu5}.
Again, let us dwell on the results for the operator \eqref{0.4}.
In operator terms, we are talking about approximation of the operators
$e^{-i \tau \wh{\mathcal{A}}_\eps}$ and $\cos (\tau \wh{\mathcal{A}}_\eps^{1/2})$ (where $\tau \in \R$) 
for small $\eps$. It turned out that it is impossible to approximate these operators in the 
$(L_2 \! \to\! L_2)$-norm, and therefore we have to change the type of norm.  In \cite{BSu5}, it was proved that
\begin{align}
\label{0.7}
&\| e^{- i \tau \wh{\mathcal{A}}_\eps} - e^{-i \tau \wh{\mathcal{A}}^0} \|_{H^3(\R^d) \to L_2(\R^d)} 
\le C (1+|\tau|)\eps, 
\\
\label{0.8}
&\| \cos(\tau \wh{\mathcal{A}}_\eps^{1/2}) - \cos ( \tau (\wh{\mathcal{A}}^0)^{1/2} ) \|_{H^2(\R^d) \to L_2(\R^d)} 
\le C (1+|\tau|)\eps. 
\end{align}
 Recently Meshkova  \cite{M, M2} has obtained a similar result for the operator $\wh{\mathcal{A}}_\eps^{-1/2} \sin(\tau \wh{\mathcal{A}}_\eps^{1/2})$, together with approximation in the ``energy'' norm:
 \begin{equation}
\label{0.9}
\| \wh{\mathcal{A}}_\eps^{-1/2}\! \sin(\tau \wh{\mathcal{A}}_\eps^{1/2}) \!-\! 
(\wh{\mathcal{A}}^0)^{-1/2} \!\sin ( \tau (\wh{\mathcal{A}}^0)^{1/2} ) \|_{H^1(\R^d) \to L_2(\R^d)} 
\!\le\! C (1\!+\!|\tau|)\eps,
\end{equation}
\begin{equation}
\label{0.10}
\| \wh{\mathcal{A}}_\eps^{-1/2} \sin(\tau \wh{\mathcal{A}}_\eps^{1/2}) - 
(\wh{\mathcal{A}}^0)^{-1/2} \sin ( \tau (\wh{\mathcal{A}}^0)^{1/2} )  - \eps K(\eps)\|_{H^2(\R^d) \to H^1(\R^d)} 
\le C (1+|\tau|)\eps. 
\end{equation}
Here $K(\eps)$ is an appropriate corrector. (It is impossible to prove analogs of estimate  \eqref{0.10}
for the operators $e^{-i \tau \wh{\mathcal{A}}_\eps}$ and $\cos(\tau \wh{\mathcal{A}}_\eps^{1/2})$.)

To explain the method, let us discuss the proof of estimate \eqref{0.8}. 
Denote $\mathcal{H}_0 := - \Delta$.
Clearly, estimate \eqref{0.8} is equivalent to the inequality
\begin{equation}
\label{0.10a}
\bigl\| \bigl( \cos( \tau \wh{\mathcal{A}}_\eps^{1/2}) - 
 \cos ( \tau (\wh{\mathcal{A}}^0)^{1/2} ) \bigr) ({\mathcal H}_0 +I)^{-1} \bigr\|_{L_2(\R^d) \to L_2(\R^d)} 
\le C (1+|\tau|)\eps. 
\end{equation}
By the scaling transformation,  \eqref{0.10a} is equivalent to the estimate
\begin{equation}
\label{0.11}
\bigl\|  \bigl( \cos( \eps^{-1}\tau \wh{\mathcal{A}}^{1/2}) - 
 \cos (\eps^{-1} \tau (\wh{\mathcal{A}}^0)^{1/2} )  \bigr) \eps^2 ({\mathcal H}_0 + \eps^2 I)^{-1} \bigr \|_{L_2(\R^d) \to L_2(\R^d)} 
\le C (1+|\tau|)\eps. 
\end{equation}

Next, by the Floquet--Bloch theory, the operator $\wh{\mathcal{A}}$ expands  in the direct integral 
of the operators $\wh{\mathcal{A}}(\k)$ acting in $L_2(\Omega;\AC^n)$ (where $\Omega$ is the cell of the lattice  $\Gamma$) and given by the expression $b(\D+\k)^* g(\x) b(\D+\k)$ with periodic boundary conditions.
The operator $\wh{\mathcal{A}}(\k)$ has discrete spectrum. The operator family  
$\wh{\mathcal{A}}(\k)$ is studied by methods of the analytic perturbation theory  (with respect to the onedimensional parameter $t=|\k|$). It is possible to obtain the analog of inequality \eqref{0.11} for the operators $\wh{\mathcal{A}}(\k)$ with the constant that does not depend on $\k$. This yields estimate  \eqref{0.11}.

The operator exponential was further studied in \cite{Su4} and \cite{D}.
In \cite{Su4}, it was shown that estimate \eqref{0.7} is sharp with respect to the type of the operator norm: 
some conditions on the operator were found under which the estimate  
$\| e^{- i \tau \wh{\mathcal{A}}_\eps} - e^{-i \tau \wh{\mathcal{A}}^0} \|_{H^s \to L_2} \le C(\tau) \eps$ 
does not hold if  $s<3$. In \cite{D},  it was proved that estimate \eqref{0.7} is sharp with respect to the dependence on  $\tau$ (for large $|\tau|$): the factor  
$(1+ |\tau|)$ in the right-hand side cannot be replaced by $(1+|\tau|)^{\alpha}$ with $\alpha<1$.
On the other hand, in \cite{Su4}, it was shown that,  under some additional conditions, 
the result can be improved with respect to the type of the operator norm: $H^3$ can be replaced by  $H^2$.
Finally, in \cite{D}, it was proved that, under the same conditions, the result can be improved in another sense: the factor  $(1+|\tau|)$ can be replaced by $(1+|\tau|)^{1/2}$. As a result, under some additional conditions  (that are automatically satisfied for the acoustics operator) it was proved that
\begin{equation*}
\| e^{- i \tau \wh{\mathcal{A}}_\eps} - e^{-i \tau \wh{\mathcal{A}}^0} \|_{H^2(\R^d) \to L_2(\R^d)} 
\le C (1+|\tau|)^{1/2}\eps. 
\end{equation*}

The hyperbolic problems were studied in \cite{DSu} (see also \cite{DSu1}). It was shown that estimates  \eqref{0.8}, \eqref{0.9} are sharp with respect to the type of the operator norm, but under some additional assumptions the results can be improved: $H^2$ can be replaced by $H^{3/2}$ in \eqref{0.8}, and $H^1$~can be replaced 
by~$H^{1/2}$ in~\eqref{0.9}.

The nonstationary problems were also investigated for more general class of operators
$\wh{\mathcal B}_\eps$ (with the lower order terms):  the exponential $e^{- i \tau \wh{\mathcal{B}}_\eps}$ was studied in~\cite{D2}, and the hyperbolic problems were studied in~\cite{M3} where  a different approach based on modification of the Trotter--Kato theorem was suggested.

\vspace{-2mm}

\subsection{Main results}
In the present paper, we continue to study the behavior of the operators $\cos(\tau \wh{\mathcal{A}}_\eps^{1/2})$ and $\wh{\mathcal{A}}_\eps^{-1/2} \sin(\tau \wh{\mathcal{A}}_\eps^{1/2})$ for small $\eps$.
On one hand, we confirm the sharpness of estimates \eqref{0.8}--\eqref{0.10}:
we find a condition on the operator under which these estimates cannot be improved neither regarding the type of operator norm, nor regarding the dependence on  $\tau$.
This condition is formulated in the spectral terms. 

Consider the operator family $\wh{\mathcal A}(\k)$ and put
$$
\k = t \boldsymbol{\theta},\quad t=|\k|,\quad \boldsymbol{\theta} \in \mathbb{S}^{d-1}.
$$
  This family depends on the parameter $t$ analytically. 
 For $t=0$ the number $\lambda_0=0$ is the $n$-multiple eigenvalue of the ``unperturbed'' operator~$\wh{\mathcal A}(0)$.   Then for small $t$, there exist real-analytic branches of the eigenvalues $\lambda_l(t,\boldsymbol{\theta})$ ($l=1,\dots,n$) of the operator  $\wh{\mathcal A}(\k)$.  For small $t$, 
 we have the following convergent power series expansions
  \begin{equation*}
 \lambda_l(t,\boldsymbol{\theta}) = \gamma_l(\boldsymbol{\theta}) t^2 + 
 \mu_l(\boldsymbol{\theta}) t^3 + \nu_l(\boldsymbol{\theta}) t^4 + \dots, \quad l=1,\dots,n,
 \end{equation*}
 where $\gamma_l(\boldsymbol{\theta}) >0$ and $\mu_l(\boldsymbol{\theta}), \nu_l(\boldsymbol{\theta}) \in \R$.
 If $\mu_l(\boldsymbol{\theta}_0)\ne 0$ for some $l$ and some  
 $\boldsymbol{\theta}_0 \in \mathbb{S}^{d-1}$, then estimates  \eqref{0.8}--\eqref{0.10} cannot be improved.

 On the other hand, under some additional assumptions, we improve the results and obtain the following estimates:
 \begin{align}
\label{0.13}
& \| \cos(\tau \wh{\mathcal{A}}_\eps^{1/2}) - \cos ( \tau (\wh{\mathcal{A}}^0)^{1/2} ) \|_{H^{3/2}(\R^d) \to L_2(\R^d)} 
\le C (1+|\tau|)^{1/2} \eps,
\\
\label{0.14}
& \| \wh{\mathcal{A}}_\eps^{-1/2} \sin(\tau \wh{\mathcal{A}}_\eps^{1/2}) - 
(\wh{\mathcal{A}}^0)^{-1/2} \sin ( \tau (\wh{\mathcal{A}}^0)^{1/2} ) \|_{H^{1/2}(\R^d) \to L_2(\R^d)} 
\le C (1+|\tau|)^{1/2}\eps,
\\
\label{0.15}
& \| \wh{\mathcal{A}}_\eps^{-1/2} \sin(\tau \wh{\mathcal{A}}_\eps^{1/2}) - 
(\wh{\mathcal{A}}^0)^{-1/2} \sin ( \tau (\wh{\mathcal{A}}^0)^{1/2} ) - \eps K(\eps) \|_{H^{3/2}(\R^d) \to H^1(\R^d)} 
 \le C (1+|\tau|)^{1/2} \eps. 
\end{align}
For $n=1$, a sufficient condition that ensures estimates \eqref{0.13}--\eqref{0.15} is that
 $\mu(\boldsymbol{\theta})= \mu_1(\boldsymbol{\theta}) =0$ for any 
$\boldsymbol{\theta} \in \mathbb{S}^{d-1}$.
In particular, this condition is satisfied for the operator $\wh{\mathcal{A}}_\eps = \D^* g^\eps(\x)\D$ if
$g(\x)$ is a symmetric matrix with real entries.
For $n\ge 2$,  in addition to the condition that all the coefficients
 $\mu_l(\boldsymbol{\theta})$ are equal to zero, we impose one more condition in terms 
 of the coefficients $\gamma_l(\boldsymbol{\theta})$. The simplest version of this condition is that the
 different branches $\gamma_l(\boldsymbol{\theta})$ do not intersect each other. 

Next, we show that estimates  \eqref{0.13}--\eqref{0.15} are also sharp: if all the coefficients
$\mu_l(\boldsymbol{\theta})$ are equal to zero, but
$\nu_j(\boldsymbol{\theta}_0) \ne 0$ (for some $j$ and some $\boldsymbol{\theta}_0$), then estimates
 \eqref{0.13}--\eqref{0.15} cannot be improved neither regarding the norm type, 
 nor regarding the dependence on $\tau$.

Using interpolation, we also obtain estimates in the $(H^s\! \to\! L_2)$ or \hbox{$(H^s \!\to \!H^1)$}-norms.  
For instance, in the general case, the operator from \eqref{0.8} satisfies estimate of order $O((1\!+\!|\tau|)^{s/2}\eps^{s/2})$ in the $(H^s \!\to\! L_2)$-norm  with $0\!\le\! s \!\le\! 2$. 

We obtain qualified error estimates for small $\eps$ and large $\tau$: in the general case, it is possibe to consider  
$\tau =O(\eps^{-\alpha})$ with $0<\alpha<1$, while in the case of improvement it is possible to consider 
$\tau =O(\eps^{-\alpha})$ with ${0<\alpha<2}$.

 For more general operator  \eqref{0.3}, we obtain analogs of the results described above for the operators $\cos(\tau {\mathcal{A}}_\eps^{1/2})$ and ${\mathcal{A}}_\eps^{-1/2} \sin(\tau {\mathcal{A}}_\eps^{1/2})$ sandwiched between appropriate factors (for instance, for
 $f^\eps \cos(\tau {\mathcal{A}}_\eps^{1/2}) (f^\eps)^{-1}$). 

The results formulated in the operator terms are applied to homogenization of the solutions of the Cauchy problem for hyperbolic equations. In particular, we consider the acoustics equation and the elasticity system.

\vspace{-2mm}

\subsection{Method} The results are obtained by further development of the operator-theoretic approach. 
We follow the plan outlined above in Subsection \ref{sec0.3}. Our considerations are based on  the abstract operator-theoretic scheme. A  family of operators $A(t) = X(t)^*X(t)$, $t \in \R$, acting in some Hilbert space $\H$ is studied. Here $X(t)= X_0 + tX_1$. (The family $A(t)$ models the operator family $\A(\k) = \A(t \boldsymbol{\theta})$, but in the abstract statement the parameter $\boldsymbol{\theta}$ is absent.) It is assumed that the point $\lambda_0 =0$ is an isolated eigenvalue of multiplicity $n$ for the operator $A(0)$. Then for $|t| \le t_0$ the perturbed operator $A(t)$ has exactly $n$ eigenvalues on the interval  
$[0,\delta]$ ($\delta$ and $t_0$ are controlled explicitly). These eigenvalues and the corresponding eigenvectors are real-analytic functions of $t$. The coefficients of the corresponding power series expansions are called the  \textit{threshold characteristics} of the operator $A(t)$. We distinguish the finite rank operator $S$ (the so called \textit{spectral germ} of the family $A(t)$) acting in the subspace $\NN = \operatorname{Ker} A(0)$. 
The spectral germ  carries information about the threshold characteristics of principal order.

In terms of the spectral germ, we find appropriate approximations for the operators 
$\cos(\eps^{-1} \tau A(t)^{1/2})$ and $A(t)^{-1/2}\sin(\eps^{-1} \tau  A(t)^{1/2})$. 
Application of these abstract results leads to  the required estimates for DOs. 
  However,  at this step there is an additional difficulty. It concerns improvement of the results 
  under the assumption that all the coefficients
$\mu_l(\boldsymbol{\theta})$ are equal to zero. In the general case,  
it is impossible to make constructions uniform with respect to the parameter
 $\boldsymbol{\theta}$ and we are forced to impose additional conditions 
(assuming that the different branches $\gamma_l(\boldsymbol{\theta})$ do not intersect). 

\subsection{Plan of the paper} The paper consists of three chapters. Chapter~1 \hbox{(\S\S1--6)} contains necessary  abstract operator-theoretic material;  here main results in abstract terms are obtained. 
In Chapter 2 (\S\S7--14), periodic DOs of the form \eqref{0.1}, \eqref{0.2} are studied.
In \S 7, the class of operators is introduced and the direct integral expansion is described; 
the corresponding operator family ${\mathcal A}(\k)$ is included in the framework of the abstract scheme. In \S 8, the effective characteristics for the operator $\wh{\mathcal A}$ are described. In \S 9,  approximations for the operator-valued functions of  $\wh{\mathcal A}(\k)$ are deduced from the abstract theorems,
in \S 10, the sharpness of these results is confirmed. The effective characteristics of the operator \eqref{0.1} are described in \S 11.
Approximations  for the operator-valued functions of  ${\mathcal A}(\k)$ are found in  \S 12,  and the sharpness of these results is discussed in \S 13. Finally, in~\S 14, using the direct integral expansion, we deduce  approximations for the operator-valued functions of the operators \eqref{0.1} and \eqref{0.2}.
Chapter~3 (\S\S 15--18) is devoted to homogenization problems. In \S 15, with the help of the scaling transformation, we deduce  main results of the paper (approximations for the operator-valued functions of 
$\wh{\A}_\eps$ and $\A_\eps$) from the results of Chapter~2. In \S 16,  the results  are applied to study the solutions of the Cauchy problem for hyperbolic equations. 
\S\S 17, 18 are devoted to applications of the general results to the particular equations of mathematical physics.

\subsection{Notation} Let $\H$ and $\H_*$ be complex separable Hilbert spaces. The symbols
 $(\,\cdot\,,\,\cdot\,)_\H$ and $\|\,\cdot\,\|_\H$ stand for the inner product and the norm in $\H$, respectively; the symbol $\|\,\cdot\,\|_{\H \to \H_*}$ denotes the norm of a bounded operator from $\H$ to $\H_*$.
 Sometimes we omit the indices. By $I = I_\H$ we denote the identity operator in  $\H$. 
If $A: \H \to \H_*$ is a linear operator, then $\operatorname{Dom} A$ and $\operatorname{Ker} A$ denote its domain and its kernel, respectively. 
If $P$ is the orthogonal projection of the space  $\H$ onto $\NN$, then $P^\perp$ is the orthogonal projection onto $\NN^\perp:= \H \ominus \NN$.

The symbols $\langle \,\cdot\,, \,\cdot\, \rangle$ and $|\,\cdot\,|$ stand for the inner product and the norm in  $\AC^n$; ${\mathbf 1}_n$ is the unit $(n \times n)$-matrix. If $a$ is an $(m\times n)$-matrix, then the symbol~$|a|$ denotes the norm of the matrix $a$ viewed as a linear operator from $\AC^n$ to $\AC^m$.
Next, we denote $\x = (x_1,\dots, x_d) \in \R^d$, $i D_j = \partial_j = \partial / \partial x_j$,
$j=1,\dots,d$; $\D = - i \nabla = (D_1,\dots, D_d)$.
The classes $L_p$ (where $1 \le p \le \infty$) and the Sobolev classes  (of order $s \ge 0$) of $\AC^n$-valued functions in a domain  $\O \subset \R^d$ are denoted by $L_p(\O;\AC^n)$ and 
$H^s(\O;\AC^n)$, respectively. Sometimes we write simply $L_p(\O)$, $H^s(\O)$.

Different constants in estimates are denoted by $C$, $\mathcal C$, $\mathrm C$, $\mathfrak C$, and $c$ (probably, with indices and marks).

\subsection{Acknowledgements} M.~A.~Dorodnyi is  a Young Russian Mathematics award winner and would like to thank its sponsors and jury. T.~A.~Suslina is grateful to  Mittag-Leffler Institute (Stockholm, Sweden). The work was partially completed during the participation of T.~A.~Suslina in the Research Program ``Spectral Methods in Mathematical Physics'' in February and March 2019.

\section*{Chapter 1. Abstract operator-theoretic scheme}

\section{Quadratic operator pencils}
\label{abstr_section_1}

The material of this section is borrowed from  \cite{BSu1, BSu2, VSu1, Su4, D}.

\subsection{The operators $X(t)$ and $A(t)$}  
\label{abstr_X_A_section}

Let  $\mathfrak{H}$ and $\mathfrak{H}_{*}$~be  complex separable Hilbert spaces. Suppose that   $X_{0}: \mathfrak{H} \to \mathfrak{H}_{*}$~is a densely defined and closed operator, and $X_{1} : \mathfrak{H} \to \mathfrak{H}_{*}$~is a bounded operator.  
Then the operator $X(t) = X_0 + t X_1$,  $t \in \mathbb{R}$, is closed on  $\Dom X_0$. Consider the family of selfadjoint operators $A(t) = X(t)^*  X(t)$ in $\mathfrak{H}$. The operator $A(t)$ is generated by the closed quadratic form $\| X(t) u \|^{2}_{\mathfrak{H}_*}$, $u \in \Dom X_0$. Denote $A_0 := A(0)$;
$\mathfrak{N} := \Ker  A_0 = \Ker X_0$; $\mathfrak{N}_{*} := \Ker X^*_0$.

\textit{It is assumed that the point $\lambda_0 = 0$~is an isolated point of the spectrum of  $A_0$ and $0 < n := \dim \mathfrak{N} < \infty$, $n \le n_* := \dim  \mathfrak{N}_* \le \infty$}.

Let $d^0$ be the distance from the point $\lambda_0 = 0$ to the rest of the spectrum of $A_0$. 
By $P$ and $P_*$ we denote the orthogonal projections of  $\mathfrak{H}$ onto~$\mathfrak{N}$ and of   $\mathfrak{H}_*$ onto  $\mathfrak{N}_*$, respectively.
Let $F(t;[a, b])$ be the spectral projection of the operator $A(t)$ for the interval $[a,b]$. We put
$$
\mathfrak{F} (t;[a,b]) := F(t;[a, b]) \mathfrak{H}.
$$
 Fix a number $\delta > 0$ such that  $8 \delta < d^0$.  Next, we choose a number $t_0 > 0$ so that
 \begin{equation}
\label{abstr_t0_fixation}
t_0 \le \delta^{1/2} \|X_1\|^{-1}.
\end{equation}
As was shown in~\cite[Chapter~1,~(1.3)]{BSu1}, for  $|t| \le t_0$ we have $F(t; [0,\delta]) = F(t;[0, 3 \delta])$ and 
$\rank F(t; [0,\delta]) = n$. We shall write $F(t)$ instead of $F(t; [0,\delta])$.

\subsection{The operators $Z$, $R$, and $S$}
\label{abstr_Z_R_S_op_section}
According to~\cite[Chapter~1, \S1]{BSu1} and~\cite[\S1]{BSu2}, we introduce the operators appearing in the considerations of the perturbation theory.

Let $\omega \in \mathfrak{N}$ and let $\phi = \phi(\omega) \in \Dom X_0 \cap \mathfrak{N}^{\perp}$ be a (weak) solution of the equation $X^*_0 (X_0 \phi + X_1 \omega) = 0$.
Define the operator $Z : \mathfrak{H} \to \mathfrak{H}$ by the relation $Zu = \phi (P u)$, $u \in \mathfrak{H}$. 
Note that $PZ=0$, whence $Z^* P=0$. We have
\begin{equation}\label{1.2a}
\| X_0 Z \| \le \|X_1\|, \quad  \| Z \| \le (8 \delta)^{-1/2}\|X_1\|.
\end{equation} 
Next, we define the operator $R : \mathfrak{N}  \to \mathfrak{N}_*$ by the formula $R := X_0 Z + X_1$.
Then  $R= P_*X_1 |_{\mathfrak{N}}$.

The operator $S :=  R^* R : \mathfrak{N} \to \mathfrak{N}$ is called the 
\emph{spectral germ} of the family $A(t)$ at $t=0$. We have  $S = P X^*_1 P_* X_1 |_{\mathfrak{N}}$. 
The spectral germ is called \emph{nondegenerate} if $\Ker S = \{0\}$. Note that
\begin{equation}
\label{abstr_Z_R_S_est}
\| R \| \le \| X_1 \|, \quad   \| S \| \le \| X_1 \|^2.
\end{equation}

\subsection{The operators $Z_2$ and $R_2$}
\label{abstr_Z2_R2_section}
We introduce the operators $Z_2$ and $R_2$ (see \cite[\S 1]{VSu1}). 
Let $\omega \in \mathfrak{N}$, and let $\psi= \psi(\omega) \in \Dom X_0 \cap \mathfrak{N}^{\perp}$ be a  (weak) solution of the equation
$X^*_0 (X_0 \psi + X_1 Z \omega) = - P^\perp X_1^* R \omega$.
Obviously, the solvability condition is satisfied. We define the operator  
$Z_2: \mathfrak{H} \to \mathfrak{H}$ by the relation $Z_2 u = \psi(P u)$, $u \in \mathfrak{H}$.
Finally, we introduce the operator $R_2: \mathfrak{N} \to \mathfrak{H}_*$ by the formula 
$R_2:= X_0 Z_2 + X_1 Z$.

\subsection{The analytic branches of eigenvalues and eigenvectors of the operator $A(t)$} 
According to the general analytic perturbation theory (see~\cite{Ka}), for $|t| \le t_0$ there exist real-analytic functions $\lambda_l (t)$ (the branches of the eigenvalues) and real-analytic  $\mathfrak{H}$-valued functions $\varphi_l (t)$ (the branches of the eigenvectors) such that 
\begin{equation*}
A(t) \varphi_l(t) = \lambda_l (t) \varphi_l(t), \quad l = 1, \ldots, n,\quad |t| \le t_0,
\end{equation*}
and the set $\varphi_l (t), \; l = 1, \ldots, n,$ forms an \emph{orthonormal basis} in $\mathfrak{F}(t;[0,\delta])$. 
For \emph{sufficiently small} $t_*$ (where $0 < t_* \le t_0$) and $|t| \le t_*$ we have the following convergent power series expansions:
\begin{align}
\label{abstr_A(t)_eigenvalues_series}
\lambda_l(t) &= \gamma_l t^2 + \mu_l t^3 + \nu_l t^4 + \ldots, \quad \gamma_l \ge 0, \quad \; \mu_l, \nu_l \in \mathbb{R}, \quad   l = 1, \ldots, n, \\ 
\label{abstr_A(t)_eigenvectors_series}
\varphi_l (t) &= \omega_l + t \psi_l^{(1)} + \ldots, \quad	l = 1, \ldots, n.
\end{align}
The elements $\omega_l =  \varphi_l (0), \, l = 1, \ldots, n,$ form an orthonormal basis in the subspace $\mathfrak{N}$. In \cite[Chapter~1, \S1]{BSu1} and  \cite[\S1]{BSu2}, it was shown that 
$\widetilde{\omega}_l := \psi_l^{(1)} - Z \omega_l \in \mathfrak{N}$, $l = 1, \ldots, n$,
\begin{equation}
\label{abstr_S_eigenvectors}
S \omega_l = \gamma_l \omega_l , \quad l = 1, \ldots, n.
\end{equation}
Thus,  \emph{the numbers $\gamma_l$ and the elements $\omega_l$ defined 
by~\emph{(\ref{abstr_A(t)_eigenvalues_series})} and~\emph{(\ref{abstr_A(t)_eigenvectors_series})} are eigenvalues and eigenvectors of the germ $S$}. We have
\begin{equation}
\label{abstr_SP_repr_gamma_omega}
P = \sum_{l=1}^{n} (\,\cdot\,, \omega_l) \omega_l, \quad 
SP = \sum_{l=1}^{n} \gamma_l (\,\cdot\,, \omega_l) \omega_l.
\end{equation}

\subsection{Threshold approximations}
We need approximations for the spectral projection $F(t)$ and the operator $A(t) F(t)$ on the interval  $[0,t_0]$. 
The following statement was obtained in~\cite[Chapter~1, Theorems~4.1~and~4.3]{BSu1}.  
Below by $\beta_j$ we denote \textit{absolute constants} \textit{assuming that} $\beta_j \ge 1$.

\begin{proposition}[see~\cite{BSu1}] Under the assumptions of Subsection~\emph{\ref{abstr_X_A_section}}, we have
	\begin{align}
	\label{abstr_F(t)_threshold_1}
	\| F(t) - P \| &\le C_1 |t|, \quad |t| \le t_0, \\
	\label{abstr_A(t)_threshold_1}
	\| A(t)F(t) - t^2 SP \| &\le C_2 |t|^3, \quad |t| \le t_0.
	\end{align}
	The number $t_0$ is subject to~\eqref{abstr_t0_fixation} and the constants $C_1,$ $C_2$ are given by
	\begin{equation}
	\label{abstr_C1_C2}
	C_1 = \beta_1 \delta^{-1/2} \| X_1 \|, \quad   C_2 = \beta_2 \delta^{-1/2}\| X_1 \|^3.
	\end{equation}
\end{proposition}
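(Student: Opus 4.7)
The plan is to derive both estimates from the Riesz contour-integral formula for the spectral projection. Take the positively oriented circle $\Gamma=\{\zeta\in\AC:|\zeta|=2\delta\}$; since $8\delta<d^0$, it encloses only the eigenvalue $\lambda_0=0$ of $A_0$, while a form-perturbation estimate combined with the choice \eqref{abstr_t0_fixation} of $t_0$ ensures that for all $|t|\le t_0$ the contour $\Gamma$ still separates the $n$ perturbed eigenvalues of $A(t)$ lying in $[0,\delta]$ from the rest of $\sigma(A(t))$. Consequently
\[
F(t)-P=-\frac{1}{2\pi i}\oint_\Gamma\bigl[(A(t)-\zeta I)^{-1}-(A_0-\zeta I)^{-1}\bigr]\,d\zeta,
\]
and $A(t)F(t)$ is the analogous integral with an extra factor $\zeta$ in the integrand.

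The key uniform bounds I would establish are $\|(A_0-\zeta I)^{-1}\|\le(2\delta)^{-1}$ and $\|X_0(A_0-\zeta I)^{-1}\|=O(\delta^{-1/2})$ on $\Gamma$, by a direct spectral-theorem calculation using $\sigma(A_0)\subset\{0\}\cup[8\delta,\infty)$. Condition \eqref{abstr_t0_fixation} transfers these bounds, with absolute constants, to $(A(t)-\zeta I)^{-1}$ and $X(t)(A(t)-\zeta I)^{-1}$ uniformly in $|t|\le t_0$. I would then iterate the second resolvent identity, always pairing each factor $X_0$ with a resolvent of $A_0$, so that only bounded operators actually occur.

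For \eqref{abstr_F(t)_threshold_1}, one step of the resolvent expansion produces an integrand of norm $O(|t|\,\|X_1\|\,\delta^{-3/2})$; multiplying by the contour length $\sim\delta$ yields the bound with $C_1=\beta_1\delta^{-1/2}\|X_1\|$. For \eqref{abstr_A(t)_threshold_1}, I expand $\zeta(A(t)-\zeta I)^{-1}$ to third order in $t$, writing $A(t)-A_0=tV_1+t^2V_2$ with $V_1=X_0^*X_1+X_1^*X_0$ and $V_2=X_1^*X_1$, and compute the residues at $\zeta=0$. The $t^0$ residue is $A_0 P=0$; the $t^1$ residue is proportional to $PV_1 P$ and vanishes because $X_0 P=0$; the $t^2$ residue equals $PV_2 P - PV_1 R_0^{\perp}(0) V_1 P$, where $R_0^{\perp}(0)=(A_0|_{\mathfrak{N}^{\perp}})^{-1}P^\perp$ denotes the reduced resolvent at $0$. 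Using $X_0 P=0$ together with the identity $X_0 R_0^{\perp}(0)X_0^*=P_*^{\perp}$ (which is just a reformulation of the equation $X_0^*(X_0 Z+X_1)\omega=0$ defining $Z$ in Subsection~\ref{abstr_Z_R_S_op_section}), this $t^2$ residue collapses to $PX_1^*X_1 P-PX_1^*P_*^{\perp}X_1 P=PX_1^*P_*X_1 P=R^*RP=SP$. The $O(t^3)$ remainder is bounded by the same contour-length-times-resolvent-norm argument, yielding $C_2=\beta_2\delta^{-1/2}\|X_1\|^3$.

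The main obstacle is precisely the algebraic identification of the $t^2$ residue with $SP$: the resolvent expansion produces two a priori unrelated contributions --- the ``diagonal'' piece $PX_1^*X_1 P$ coming from $V_2$ and a contour integral involving $V_1$ twice --- and one must verify, via the relationship between the reduced resolvent at $0$ and the operator $Z$, that their combination is exactly the germ $SP$. Once this identification is in place, both norm estimates follow from routine contour bounds.
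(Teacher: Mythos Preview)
Your argument is correct and follows the standard Riesz contour-integral method; the paper does not prove this proposition but cites it from \cite{BSu1}, where essentially the same approach is used. The identity $X_0 R_0^{\perp}(0) X_0^* = P_*^{\perp}$ that you single out as the crux is exactly what underlies the relation $R = P_* X_1|_{\mathfrak{N}}$ recorded in Subsection~\ref{abstr_Z_R_S_op_section}, and once it is in hand your residue computation of the $t^2$ coefficient as $PX_1^*X_1P - PX_1^*P_*^{\perp}X_1P = SP$ and the $O(|t|^3)$ remainder bound go through as you describe.
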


We also need  more accurate approximations; see~\cite[\S 2~and~\S 4]{BSu2}.

\begin{proposition}[see~\cite{BSu2}] Under the assumptions of Subsection~\emph{\ref{abstr_X_A_section}}, we have 	\begin{align}
	\label{abstr_F(t)_threshold_2}
	F(t) &= P + tF_1 + F_2(t), & &\| F_2(t) \| \le  C_3 t^2,& |t| &\le t_0,
	\\ 
	\nonumber
	A(t) F(t) &= t^2 SP + t^3 K + \Psi (t), & &\| \Psi (t) \| \le C_4 t^4, & |t| &\le t_0, 
	\end{align}
	where $C_3 = \beta_3 \delta^{-1} \| X_1 \|^2$  and $C_4 = \beta_4 \delta^{-1}\| X_1 \|^4$.
	The operator $K$ can be represented as $K = K_0 + N = K_0 + N_0 + N_*,$
where $K_0$ takes $\mathfrak{N}$ to $\mathfrak{N}^{\perp}$ and $\mathfrak{N}^{\perp}$ to $\mathfrak{N},$ 
and  $N = N_0 + N_*$ takes $\mathfrak{N}$ into itself and takes  $\mathfrak{N}^{\perp}$ to $\{ 0 \}$. In terms of the coefficients of the power series expansions, we have
\begin{gather}
\notag
F_1 = \sum_{l=1}^{n} \left( (\,\cdot\,, Z \omega_l) \omega_l + (\,\cdot\,, \omega_l) Z \omega_l \right) , 
\quad 
K_0 = \sum_{l=1}^{n} \gamma_l \left( (\,\cdot\,, Z \omega_l) \omega_l + (\,\cdot\,, \omega_l) Z \omega_l \right) , \\
\label{abstr_N_0_N_*}
N_0 = \sum_{l=1}^{n} \mu_l (\,\cdot\,, \omega_l) \omega_l, \quad N_* = \sum_{l=1}^{n} \gamma_l \left( (\,\cdot\,, \widetilde{\omega}_l) \omega_l + (\,\cdot\,, \omega_l) \widetilde{\omega}_l\right) .
\end{gather}
In the invariant terms, 
\begin{align}
\label{abstr_F1_K0_N_invar}
F_1 = ZP + PZ^*, \quad & K_0 = Z S P + S P Z^*, 
\\
\label{abstr_N}
N = Z^*X_1^* R P &+ (RP)^* X_1 Z.
\end{align}
\end{proposition}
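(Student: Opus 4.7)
My plan is the standard Riesz-integral approach of analytic perturbation theory. Write
\begin{equation*}
F(t) = -\frac{1}{2\pi i}\oint_\gamma (A(t)-\zeta I)^{-1}\,d\zeta, \qquad A(t)F(t) = -\frac{1}{2\pi i}\oint_\gamma \zeta\,(A(t)-\zeta I)^{-1}\,d\zeta,
\end{equation*}
with $\gamma$ the positively oriented circle about $0$ of radius comparable to $\delta$, lying in the resolvent set of $A(t)$ uniformly for $|t|\le t_0$ (thanks to \eqref{abstr_t0_fixation} and $8\delta<d^0$). Set $V(t):=A(t)-A_0=tB_1+t^2 B_2$ with $B_1=X_0^*X_1+X_1^*X_0$ and $B_2=X_1^*X_1$, and iterate the second resolvent identity to obtain the Neumann expansion of $(A(t)-\zeta)^{-1}$ in $V(t)(A_0-\zeta)^{-1}$. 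The choice of $t_0$ forces $\|V(t)(A_0-\zeta)^{-1}\|\le 1/2$ uniformly on $\gamma$, so all tails decay geometrically; together with $\|R(\zeta)\|\le c\delta^{-1}$ on $\gamma$ for the reduced resolvent $R(\zeta):=P^\perp(A_0|_{\mathfrak{N}^\perp}-\zeta I)^{-1}P^\perp$ (holomorphic near~$0$), this is the engine producing all the $\delta$- and $\|X_1\|$-dependence in $C_3$ and $C_4$.

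Split $(A_0-\zeta)^{-1}=-\zeta^{-1}P+R(\zeta)$ in each slot of the Neumann series and evaluate the coefficients by residues at $\zeta=0$. At first order in $t$, the would-be $\zeta^{-2}$ pole vanishes because $X_0 P=0$ implies $PB_1P=0$, leaving $F_1=-R(0)B_1P-PB_1R(0)$. The identity $A_0(Z\omega)=-X_0^*X_1\omega$ for $\omega\in\mathfrak{N}$ with $Z\omega\in\mathfrak{N}^\perp$ identifies $R(0)B_1 P=-Z$, whence $F_1=Z+Z^*=ZP+PZ^*$, which is the invariant form \eqref{abstr_F1_K0_N_invar}. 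The quadratic remainder in $F(t)$ is bounded via the next Neumann tail together with the contribution of $t^2 B_2$, producing $\|F_2(t)\|\le C_3 t^2$ with $C_3=\beta_3\delta^{-1}\|X_1\|^2$.

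The same procedure applied to $A(t)F(t)$, now with the extra $\zeta$ in the integrand, annihilates the $t^0$ and $t^1$ coefficients and reproduces $t^2 SP$ with $S=PX_1^*P_*X_1|_{\mathfrak{N}}$ at second order. At order $t^3$ two terms contribute: the triple-$B_1$ Neumann term and the cross term with~$B_2$. Classifying residues by how many of the three slots retain the singular part $-\zeta^{-1}P$, the configurations with exactly one outer-slot $P$ assemble into $K_0=ZSP+SPZ^*$, which is manifestly off-diagonal with respect to $\mathfrak{N}\oplus\mathfrak{N}^\perp$; the configurations with $P$ in both outer slots, after using $R=X_0 Z+X_1$ and a cancellation between the $B_1$-triple and the $B_2$-cross term, collapse into $N=Z^*X_1^*RP+(RP)^*X_1 Z$, which is self-adjoint and supported on $\mathfrak{N}$. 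The fourth-order remainder bound $\|\Psi(t)\|\le C_4 t^4$ with $C_4=\beta_4\delta^{-1}\|X_1\|^4$ follows from the next Neumann tail. To extract the componentwise formulas \eqref{abstr_N_0_N_*} (and the corresponding explicit sums for $F_1$ and $K_0$), I would apply these invariant operators to the basis $\{\omega_l\}$ of $\mathfrak{N}$, using $SP\omega_l=\gamma_l\omega_l$ from \eqref{abstr_S_eigenvectors}, and match against the spectral expansion $A(t)F(t)=\sum_l\lambda_l(t)(\,\cdot\,,\varphi_l(t))\varphi_l(t)$ at order $t^3$ inside $\mathfrak{N}$: the coefficient of $(\,\cdot\,,\omega_l)\omega_l$ supplied by the $\mu_l$-piece of $\lambda_l(t)$ is exactly $N_0$, and the $\gamma_l$-weighted diagonal contribution from $\psi_l^{(1)}=Z\omega_l+\widetilde{\omega}_l$ is exactly $N_*$. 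The skew-Hermitian constraint on the matrix $(\widetilde{\omega}_l,\omega_j)$ forced by the order-$t$ orthonormality of $\{\varphi_l(t)\}$ then explains why no $\widetilde{\omega}_l$-terms survive in $F_1$ or $K_0$.

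The technical core, and the step where I expect the real bookkeeping effort, is the third-order residue calculation for $K$. Five residue patterns arise across the three resolvent slots, and one must verify both that the triple-pole configurations vanish (again via $PB_1P=0$) and that the cross term from $-t^2 B_2$ delivers precisely the ingredients needed to collapse the remaining double-pole residues into the clean invariant forms $K_0=ZSP+SPZ^*$ and $N=Z^*X_1^*RP+(RP)^*X_1 Z$. Once this combinatorial organization is in hand, the remainder bounds, the structural statement that $K_0$ is off-diagonal while $N$ lives on $\mathfrak{N}$, and the explicit componentwise expressions all follow with only routine effort.
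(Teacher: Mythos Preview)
The paper does not prove this proposition---it is quoted from \cite{BSu2}---and your Riesz-integral/Neumann-series plan is precisely the approach used there. One technical point needs adjustment: because $X_0$ is unbounded, the perturbation $V(t)=A(t)-A_0$ is \emph{not} a bounded operator, and in general $V(t)(A_0-\zeta)^{-1}$ is unbounded as well (the piece $X_0^*X_1(A_0-\zeta)^{-1}$ fails, since $X_1(A_0-\zeta)^{-1}$ need not map into $\operatorname{Dom}X_0^*$; equivalently, $\operatorname{Dom}A(t)\ne\operatorname{Dom}A_0$ in general). So your convergence claim $\|V(t)(A_0-\zeta)^{-1}\|\le 1/2$ is not literally correct. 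The standard fix---and what \cite{BSu1,BSu2} actually do---is to organize the iteration around the bounded building blocks $X_0(A_0-\zeta)^{-1}$, $(A_0-\zeta)^{-1}X_0^*$ (each of norm $O(\delta^{-1/2})$ on $\gamma$) and the bounded $X_1$, rather than around $B_1(A_0-\zeta)^{-1}$; equivalently, one controls the form-symmetric sandwich, which has the correct $|t|\,\delta^{-1/2}\|X_1\|$ scaling guaranteeing geometric convergence for $|t|\le t_0$. With this adjustment your residue computations for $F_1$, $K_0$, $N$ and the remainder bounds go through exactly as you outline.
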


\begin{remark}
	\label{abstr_N_remark}
	 In the basis $\{\omega_l\}_{l=1}^n$, the operators $N$, $N_0$, and $N_*$ (restricted to $\mathfrak{N}$)
	 are given by the matrices of size $n \times n$. The operator $N_0$ is diagonal:
		\begin{equation}
		\label{abstr_N_0_matrix_elem}
		(N_0 \omega_j, \omega_k ) = \mu_j \delta_{jk}, \quad  j,\, k = 1, \dots,n.
		\end{equation}
		The matrix entries of the operator $N_*$ are given by 
		\begin{equation*}
		(N_* \omega_j, \omega_k) = \gamma_k (\omega_j, \widetilde{\omega}_k) + \gamma_j (\widetilde{\omega}_j, \omega_k ) = ( \gamma_j - \gamma_k)(\widetilde{\omega}_j, \omega_k ), \quad j, k = 1,\ldots, n.
		\end{equation*}        
		Here we  have taken into account that  (see~\emph{\cite[(1.18)]{BSu2}})
		\begin{equation}
		\label{abstr_omega_tilde_omega_rel}
		(\widetilde{\omega}_j, \omega_k ) + (\omega_j, \widetilde{\omega}_k) = 0, \quad j, k = 1,\ldots, n.
		\end{equation}
		It is seen that the diagonal entries of $N_*$ are equal to zero: 
		\hbox{$(N_* \omega_j, \omega_j) = 0$}, $j = 1, \ldots , n$. Moreover,
		$(N_* \omega_j, \omega_k) = 0$ if $\gamma_j = \gamma_k$.
\end{remark}

\subsection{Nondegeneracy condition}
\label{abstr_nondegenerated_section}
Below we impose the following additional condition~(cf.~\cite[Chapter~1, Subsection~5.1]{BSu1}).

\begin{condition}
\label{cond_A}
For some $c_* > 0$ we have 
\begin{equation}
\label{abstr_A(t)_nondegenerated}
A(t) \ge c_* t^2 I, \quad  |t| \le t_0.
\end{equation}
\end{condition}

From~(\ref{abstr_A(t)_nondegenerated}) it follows that  $\lambda_l (t) \ge c_* t^2, \; l = 1, \ldots, n$, for $|t| \le t_0$. By~(\ref{abstr_A(t)_eigenvalues_series}), this implies that
$\gamma_l \ge c_* > 0$, $l= 1, \ldots, n$.
Thus, the germ is nondegenerate (see \eqref{abstr_S_eigenvectors}):
\begin{equation}
\label{abstr_S_nondegenerated}
S \ge c_* I_{\mathfrak{N}}.
\end{equation}

\subsection{Division of the eigenvalues of the operator $A(t)$ into clusters}
\label{abstr_cluster_section}

The material of this subsection is borrowed from~\cite[\S2]{Su4}. It is meaningful for  $n \ge 2$.

Suppose that Condition~\ref{cond_A} is satisfied. 
 Now it is convenient to change the notation, tracing the multiplicities of the eigenvalues of the germ~$S$. 
 Let $p$ be the number of different eigenvalues of the germ. We enumerate these eigenvalues in the increasing order and denote them by $\gamma_j^\circ$, $j=1,\dots,p$. Their multiplicities are denoted by
 $k_1, \ldots, k_p$ (obviously, $k_1 + \dots + k_p = n$). 
  The eigenspaces are denoted by $\mathfrak{N}_j = \Ker(S - \gamma^{\circ}_j I_\mathfrak{N})$, $j = 1 ,\ldots, p$. Then $\mathfrak{N} = \sum_{j=1}^{p} \oplus \mathfrak{N}_j$.
Let $P_j$~be the orthogonal projection of $\mathfrak{H}$ onto $\mathfrak{N}_j$. Then 
$P = \sum_{j=1}^{p} P_j$, and $P_j P_l = 0$ for $j \ne l$.
Correspondingly, we change the notation for the eigenvectors of the germ (those that are  \textquotedblleft embryos\textquotedblright  \ in~(\ref{abstr_A(t)_eigenvectors_series})) dividing them in $p$ parts, so that  $\omega^{(j)}_1, \ldots, \omega^{(j)}_{k_j}$ correspond to the eigenvalue $\gamma^{\circ}_j$ and form an orthonormal basis in $\mathfrak{N}_j$.

\begin{remark}
 According to Remark~\ref{abstr_N_remark},
 	$P_j N_* P_j = 0$ and  $P_l N_0 P_j  = 0$ for $l \ne j$.
	This implies the  invariant representations for the operators $N_0$ and $N_*$:
	\begin{equation}
	\label{abstr_N_invar_repers}
	N_0 = \sum_{j=1}^{p} P_j N P_j, \quad N_* = \sum_{\substack{1 \le j,\, l \le p: \, j \ne l}} P_j N P_l. 
	\end{equation}
\end{remark}

 For each pair of indices $(j, l), 1 \le j,l \le p, j \ne l$, we denote 
\begin{equation}
\label{abstr_c_circ_jl}
c^{\circ}_{jl} := \min \{c_*, n^{-1} |\gamma^{\circ}_l - \gamma^{\circ}_j|\}.
\end{equation}
Clearly, there exists a number  $i_0 = i_0 (j,l)$, where $j \le i_0 \le l-1$ for $j < l$ and $l \le i_0 \le j-1$ for $l < j$, such that $\gamma^{\circ}_{i_0 + 1} -  \gamma^{\circ}_{i_0} \ge c^{\circ}_{jl}$. 
We choose a number $t^{00}_{jl} \le t_0$ satisfying the inequality
\begin{equation}
\label{abstr_t00_jl}
t^{00}_{jl} \le (4C_2)^{-1} c^{\circ}_{jl} = (4 \beta_2)^{-1} \delta^{1/2} \|X_1\|^{-3 } c^{\circ}_{jl}.
\end{equation}
Denote  $\Delta_{jl}^{(1)}:=[\gamma^{\circ}_1 - c^{\circ}_{jl}/4, \gamma^{\circ}_{i_0} + c^{\circ}_{jl}/4]$ and \hbox{$\Delta_{jl}^{(2)}:=[\gamma^{\circ}_{i_0+1} - c^{\circ}_{jl}/4, \gamma^{\circ}_p + c^{\circ}_{jl}/4]$}. 
  The spectral projections of the operator $A(t)$ corresponding to the intervals $t^2 \Delta_{jl}^{(1)}$ and $t^2 \Delta_{jl}^{(2)}$ are denoted by  $F^{(1)}_{jl} (t)$ and $F^{(2)}_{jl} (t)$, respectively. 
In \cite[\S 2]{Su4}, it was shown that $F(t) = F^{(1)}_{jl} (t) + F^{(2)}_{jl} (t)$
for $|t| \le  t^{00}_{jl}$ and the following statement was proved.

\begin{proposition}[see~\cite{Su4}]
	For $|t| \le t^{00}_{jl}$ we have 
	\begin{equation*}
	\begin{split}
	\| F^{(1)}_{jl} (t) - (P_1 + \cdots + P_{i_0}) \| &\le C_{5,jl} |t|, \\
	\| F^{(2)}_{jl} (t) - (P_{i_0+1} + \cdots + P_p) \| &\le C_{5,jl} |t|.
	\end{split}
	\end{equation*}
	The constant $C_{5,jl}$ is given by 
	$C_{5,jl} = \beta_5 \delta^{-1/2} \|X_1\|^5 (c^{\circ}_{jl})^{-2}$.
\end{proposition}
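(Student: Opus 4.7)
The plan is to realize both $F^{(1)}_{jl}(t)$ and $P_1+\cdots+P_{i_0}$ as Riesz projections along one and the same contour in $\AC$, and then to estimate their difference via the second resolvent identity, using the threshold approximation $\|A(t)F(t)-t^2 SP\|\le C_2|t|^3$ from \eqref{abstr_A(t)_threshold_1} as the perturbation input. The restriction \eqref{abstr_t00_jl} on $t^{00}_{jl}$ is precisely what makes the spectra of $A(t)F(t)$ and $t^2 SP$ split in a consistent way across the contour.

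Concretely, I would take a rectangular contour $\Gamma^{(1)}_{jl}\subset\AC$ that encloses $t^2\Delta^{(1)}_{jl}$ at distance of order $t^2 c^\circ_{jl}$ from its endpoints. Since $C_2|t|^3\le t^2 c^\circ_{jl}/4$ for $|t|\le t^{00}_{jl}$ by \eqref{abstr_t00_jl}, the nonzero eigenvalues $\lambda_l(t)$ of $A(t)F(t)$ can be paired (via Weyl's inequality applied to the analytic branches) with the eigenvalues $t^2\gamma_l$ of $t^2 SP$ within $t^2 c^\circ_{jl}/4$. Hence they split into $k_1+\cdots+k_{i_0}$ eigenvalues lying inside $t^2\Delta^{(1)}_{jl}$ and the rest in $t^2\Delta^{(2)}_{jl}$. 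The remaining spectrum of $A(t)$ lies in $[3\delta,\infty)$ and is well separated from $\Gamma^{(1)}_{jl}$, while $0\notin t^2\Delta^{(1)}_{jl}$ because $\gamma^\circ_1\ge c_*\ge c^\circ_{jl}$. Both operators therefore satisfy the Riesz representation
\begin{equation*}
F^{(1)}_{jl}(t)=-\frac{1}{2\pi i}\oint_{\Gamma^{(1)}_{jl}}(A(t)F(t)-\zeta I)^{-1}\,d\zeta,\quad P_1+\cdots+P_{i_0}=-\frac{1}{2\pi i}\oint_{\Gamma^{(1)}_{jl}}(t^2 SP-\zeta I)^{-1}\,d\zeta.
\end{equation*}

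Subtracting and applying the second resolvent identity to the integrand gives
\begin{equation*}
F^{(1)}_{jl}(t)-(P_1+\cdots+P_{i_0})=-\frac{1}{2\pi i}\oint_{\Gamma^{(1)}_{jl}}(A(t)F(t)-\zeta I)^{-1}\bigl(t^2 SP-A(t)F(t)\bigr)(t^2 SP-\zeta I)^{-1}\,d\zeta.
\end{equation*}
On $\Gamma^{(1)}_{jl}$, each resolvent has norm at most $\mathrm{const}\cdot(t^2 c^\circ_{jl})^{-1}$ by the spectral separation above, the middle factor is bounded by $C_2|t|^3$, and the length of $\Gamma^{(1)}_{jl}$ is of order $t^2\|X_1\|^2$ since $\gamma^\circ_p\le\|S\|\le\|X_1\|^2$. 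Multiplying these ingredients and inserting $C_2=\beta_2\delta^{-1/2}\|X_1\|^3$ yields the claimed bound $\beta_5\delta^{-1/2}\|X_1\|^5(c^\circ_{jl})^{-2}|t|$. The estimate for $F^{(2)}_{jl}(t)-(P_{i_0+1}+\cdots+P_p)$ follows either from the analogous argument with a contour around $t^2\Delta^{(2)}_{jl}$, or from $F^{(2)}_{jl}(t)=F(t)-F^{(1)}_{jl}(t)$ combined with \eqref{abstr_F(t)_threshold_1}.

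The main obstacle is essentially bookkeeping: one must verify that a single contour $\Gamma^{(1)}_{jl}$ separates the spectra of $A(t)F(t)$ and of $t^2 SP$ in the same manner, which is precisely what the choice $t^{00}_{jl}\le(4C_2)^{-1}c^\circ_{jl}$ is designed to guarantee, since it forces the perturbation size $C_2|t|^3$ to stay below a quarter of the inter-cluster gap $t^2 c^\circ_{jl}$. Everything else is a standard contour-integral estimate; the factor $(c^\circ_{jl})^{-2}$ in $C_{5,jl}$, as opposed to $\delta^{-2}$ in earlier threshold bounds, reflects that the dangerous small scale is now the inter-cluster gap $c^\circ_{jl}$ rather than the absolute gap $\delta$.
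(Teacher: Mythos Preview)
Your approach is correct and is the standard one for this type of estimate. The paper itself does not prove this proposition---it is quoted from \cite{Su4}---and the contour-integral argument you outline (Riesz projections for $A(t)F(t)$ and $t^2SP$ around a common contour separating the clusters, combined with the second resolvent identity and the threshold bound \eqref{abstr_A(t)_threshold_1}) is precisely the expected method and yields the stated constant form. Your remark that the second inequality follows from $F^{(2)}_{jl}(t)=F(t)-F^{(1)}_{jl}(t)$ together with \eqref{abstr_F(t)_threshold_1} is also fine, since $c^\circ_{jl}\le \|S\|\le\|X_1\|^2$ implies $C_1\le \beta_1\delta^{-1/2}\|X_1\|^5(c^\circ_{jl})^{-2}$, so $C_1$ is absorbed into the form of $C_{5,jl}$.
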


\subsection{The coefficients $\nu_l$\label{sec_nu_l}}
For definiteness, suppose that  enumeration in 
\eqref{abstr_A(t)_eigenvalues_series}, \eqref{abstr_A(t)_eigenvectors_series} is such that
$\gamma_1 \le \dots \le \gamma_n$. 
The coefficients  $\nu_l$ and the vectors $\omega_l$, $l=1,\dots,n$, in the expansions
\eqref{abstr_A(t)_eigenvalues_series}, \eqref{abstr_A(t)_eigenvectors_series} are eigenvalues and eigenvectors of some problem; see \cite[Subsection~1.8]{D}.
We need to describe this problem in the case where $\mu_l=0$, $l=1,\dots,n,$ i.~e., $N_0=0$.

\begin{proposition}[see~\cite{D}]\label{Prop_nu_1}
Let $N_1^0 := Z_2^* X_1^* RP + (RP)^* X_1 Z_2 + R_2^* R_2 P$. 
Suppose that $N_0=0$.
Let $\gamma_1^\circ, \dots, \gamma_p^\circ$ be the different eigenvalues of the operator 
$S,$ and let $k_1, \dots, k_p$ be their multiplicities. Suppose that $P_q$ is the orthogonal projection onto the subspace 
$\mathfrak{N}_q  = \operatorname{Ker} (S - \gamma_q^\circ I_{\mathfrak{N}}),$ $q=1,\dots,p$.
We introduce the operators $\mathcal{N}^{(q)},$ $q=1,\dots,p,$ as follows\emph{:} 
the operator $\mathcal{N}^{(q)}$ acts in  $\mathfrak{N}_q$ and is given by   
$$
\mathcal{N}^{(q)} := P_q \Bigl( N_1^0 - \frac{1}{2} Z^* Z SP - \frac{1}{2} SP Z^* Z  \Bigr) \Bigl\vert_{\mathfrak{N}_q}
+ \sum_{j=1,\dots,p: j\ne q} (\gamma_q^\circ - \gamma_j^\circ)^{-1} P_q N P_j N \bigr\vert_{\mathfrak{N}_q}.
$$
Denote $i(q)= k_1 + \dots + k_{q-1} +1$.
Let $\nu_l$ be the coefficients of $t^4$ in the expansions  \eqref{abstr_A(t)_eigenvalues_series},
and let $\omega_l$ be the embryos from  \eqref{abstr_A(t)_eigenvectors_series}, $l=1,\dots,n$.
Then  
$$
\mathcal{N}^{(q)} \omega_l = \nu_l \omega_l, \quad l= i(q), i(q)+1,\dots, i(q) + k_q -1.
$$
\end{proposition}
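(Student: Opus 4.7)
The plan is to substitute the analytic expansions $\varphi_l(t)=\omega_l+t\psi_l^{(1)}+t^2\psi_l^{(2)}+t^3\psi_l^{(3)}+O(t^4)$ and $\lambda_l(t)=\gamma_l t^2+\mu_l t^3+\nu_l t^4+O(t^5)$ into the eigenvalue identity $A(t)\varphi_l(t)=\lambda_l(t)\varphi_l(t)$ and match coefficients through order $t^4$, with $A(t)=A_0+tA_1+t^2A_2$, $A_1=X_0^*X_1+X_1^*X_0$, $A_2=X_1^*X_1$. I would augment the matching with the normalization/orthogonality conditions $(\varphi_l(t),\varphi_k(t))=\delta_{lk}$, which give scalar constraints on $P\psi_l^{(1)}$ and $P\psi_l^{(2)}$. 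The output at order $t^4$, projected onto $\mathfrak{N}$ and paired with any $\omega_k$ lying in the same eigenspace $\mathfrak{N}_q$ as $\omega_l$, should produce the matrix of $\mathcal{N}^{(q)}$ acting on $\mathfrak{N}_q$.

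The first three orders recover known facts and set up auxiliary notation. Writing $\widetilde\omega_l:=P\psi_l^{(1)}$, the order-$t^1$ equation $X_0^*(X_0\psi_l^{(1)}+X_1\omega_l)=0$ gives $P^\perp\psi_l^{(1)}=Z\omega_l$, so $\psi_l^{(1)}=Z\omega_l+\widetilde\omega_l$; the orthogonality conditions at order $t^1$ recover \eqref{abstr_omega_tilde_omega_rel}. At order $t^2$, projection onto $\mathfrak{N}$ gives $S\omega_l=\gamma_l\omega_l$ (so $\omega_l\in\mathfrak{N}_q$ for some $q$, with $\gamma_l=\gamma_q^\circ$), while projection onto $\mathfrak{N}^\perp$, compared with the defining relation for $Z_2$ in Subsection~\ref{abstr_Z2_R2_section}, yields $P^\perp\psi_l^{(2)}=Z_2\omega_l+Z\widetilde\omega_l$. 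At order $t^3$, projecting onto $\mathfrak{N}$ and using the invariant formula \eqref{abstr_N}, the equation collapses to $N\omega_l+(S-\gamma_l I_{\mathfrak{N}})\widetilde\omega_l=\mu_l\omega_l$; under the hypothesis $N_0=0$ and in view of \eqref{abstr_N_invar_repers}, this forces
\[
P_j\widetilde\omega_l=(\gamma_q^\circ-\gamma_j^\circ)^{-1}P_jN\omega_l,\qquad j\neq q,
\]
while $P_q\widetilde\omega_l$ remains a free parameter to be fixed at the next order.

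The decisive step is the $t^4$ projection onto $\mathfrak{N}$. The relation $A_0\psi_l^{(4)}+A_1\psi_l^{(3)}+A_2\psi_l^{(2)}=\gamma_l\psi_l^{(2)}+\mu_l\psi_l^{(1)}+\nu_l\omega_l$ (with $\mu_l=0$), paired with $\omega_k\in\mathfrak{N}_q$, eliminates $\psi_l^{(4)}$ since $PA_0=0$. The contribution $PA_1 P^\perp\psi_l^{(3)}$ is handled by solving the order-$t^3$ $\mathfrak{N}^\perp$-equation for $P^\perp\psi_l^{(3)}$ in terms of $Z_2\widetilde\omega_l$ and a new auxiliary element arising from $A_1 Z_2\omega_l+A_2 Z\omega_l-\gamma_l Z\omega_l$; the $R_2^*R_2P$ piece of $N_1^0$ appears through the identity $R_2=X_0Z_2+X_1Z$. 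After collecting terms, using $K_0=ZSP+SPZ^*$ from \eqref{abstr_F1_K0_N_invar} and $X_0^*R=0$, and substituting the formula above for $P_j\widetilde\omega_l$, the off-diagonal perturbation sum $\sum_{j\neq q}(\gamma_q^\circ-\gamma_j^\circ)^{-1}P_qNP_jN|_{\mathfrak{N}_q}$ emerges, while the symmetric term $-\tfrac12 P_q(Z^*Z\,SP+SP\,Z^*Z)|_{\mathfrak{N}_q}$ arises from the second-order normalization constraint fixing $\mathrm{Re}(\omega_k,P\psi_l^{(2)})$ in terms of $(Z\omega_l,Z\omega_k)$. All remaining contributions assemble into $P_qN_1^0|_{\mathfrak{N}_q}$, yielding $\mathcal{N}^{(q)}\omega_l=\nu_l\omega_l$.

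The main obstacle will be the bookkeeping at order $t^4$: one must simultaneously track the undetermined diagonal parts $P_q\widetilde\omega_l$ and $P\psi_l^{(2)}$ (which can be reshuffled by the residual freedom in choosing analytic branches within each cluster), verify that the resulting matrix equation on $\mathfrak{N}_q$ is independent of those free choices, and check that the three ostensibly different pieces combine exactly into $\mathcal{N}^{(q)}$ as written. Once this is done, since $\{\omega_l\}_{l=i(q)}^{i(q)+k_q-1}$ is an orthonormal basis of $\mathfrak{N}_q$ diagonalizing $\mathcal{N}^{(q)}$ with eigenvalues $\nu_l$, the proposition follows.
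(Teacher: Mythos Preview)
The paper does not prove this proposition: it is stated with the citation ``see~[D]'', and the opening line of \S\ref{abstr_section_1} confirms that the material is borrowed from earlier works. There is thus no proof in the present paper to compare your proposal against.

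Your outline is the standard Rayleigh--Schr\"odinger computation for the fourth-order coefficient, adapted to the factorization $A(t)=X(t)^*X(t)$, and it is correct in substance; this is essentially how the result is obtained in~[D]. The identifications $P^\perp\psi_l^{(1)}=Z\omega_l$, $P^\perp\psi_l^{(2)}=Z_2\omega_l+Z\widetilde\omega_l$, and the order-$t^3$ determination of $P_j\widetilde\omega_l=(\gamma_q^\circ-\gamma_j^\circ)^{-1}P_jN\omega_l$ for $j\ne q$ all match the operators of Subsections~\ref{abstr_Z_R_S_op_section}--\ref{abstr_Z2_R2_section}, and the mechanism you describe for the $-\tfrac12(Z^*ZSP+SPZ^*Z)$ term via the $t^2$ normalization constraint is right. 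The bookkeeping you flag is genuine but routine: the undetermined pieces $P_q\widetilde\omega_l$ and the free part of $P\psi_l^{(2)}$ always enter the $t^4$ equation on $\mathfrak{N}_q$ multiplied by $(S-\gamma_q^\circ)|_{\mathfrak{N}_q}=0$ or in antisymmetric combinations that cancel, so the resulting matrix on $\mathfrak{N}_q$ is well defined and coincides with $\mathcal{N}^{(q)}$.
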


\section{Approximation for the operators $\cos ( \tau A(t)^{1/2}) P$ and $A(t)^{-1/2} \sin (\tau A(t)^{1/2}) P$}

\subsection{Approximation in the operator norm in $\H$} 
Denote
\begin{align}
\label{2.00}
	\mathcal{J}(t,\tau) := &  e^{-i\tau A(t)^{1/2}} P -  e^{-i\tau (t^2S)^{1/2}} P,
\\
\label{2.1}
	\mathcal{E}(t,\tau) := & A(t)^{-1/2} e^{-i\tau A(t)^{1/2}} P - (t^2S)^{-1/2} e^{-i\tau (t^2S)^{1/2}} P. 
	\end{align}
We need estimates of the operators \eqref{2.00} and \eqref{2.1} established (with the help of the threshold approximations) in~\cite[Subsection 2.3]{BSu5}, \cite[Subsection~2.1]{M} and~\cite[(2.34), (2.49), (2.53), (2.54)]{DSu}.

\begin{proposition}[see~\cite{BSu5,M}]
	\label{prop2.1}
	For $\tau \in \mathbb{R}$ we have  
	\begin{align*}
	\| \mathcal{J}(t,\tau) \|& \le 2 C_1 |t| + C_6 |\tau| t^2, \quad |t| \le t_0,
	\\
	\| \mathcal{E}(t,\tau) \|& \le C_7 + C_8 |\tau| |t|, \quad 0 < |t| \le t_0.	
	\end{align*}
	The number $t_0$ is subject to condition \eqref{abstr_t0_fixation}. The constant $C_1$ is defined by  \eqref{abstr_C1_C2}, and
	 $C_6= \beta_6 \delta^{-1/2}  \|X_1\|^2 \bigl( 1 + c_*^{-1/2} \|X_1\| \bigr)$.
	The constants $C_7$ and $C_8$ are given by
	\begin{equation*}
C_7 = \beta_7 \delta^{-1/2} c_*^{-1/2} \|X_1\| \left( 1 + c_*^{-1} \|X_1\|^2\right) , \quad C_8 =  c_*^{-1/2} C_6.
	\end{equation*}
\end{proposition}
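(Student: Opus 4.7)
The plan for $\mathcal{J}(t,\tau)$ is to combine the threshold approximations \eqref{abstr_F(t)_threshold_1}, \eqref{abstr_A(t)_threshold_1} with a Duhamel-type identity comparing the two unitary evolutions. I introduce the auxiliary nonnegative selfadjoint rank-at-most-$n$ operators $\mathcal{A}(t) := A(t)F(t)$ and $\mathcal{S}(t) := t^2 SP$ on~$\H$; by functional calculus one has $e^{-i\tau A(t)^{1/2}}F(t) = e^{-i\tau \mathcal{A}(t)^{1/2}}F(t)$ and $e^{-i\tau(t^2 S)^{1/2}}P = e^{-i\tau\mathcal{S}(t)^{1/2}}P$. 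Inserting $\pm F(t)$ and $\pm P$ I obtain the splitting
\begin{equation*}
	\mathcal{J}(t,\tau) = e^{-i\tau A(t)^{1/2}}\bigl(P - F(t)\bigr) + e^{-i\tau \mathcal{A}(t)^{1/2}}\bigl(F(t) - P\bigr) + \bigl[e^{-i\tau \mathcal{A}(t)^{1/2}} - e^{-i\tau\mathcal{S}(t)^{1/2}}\bigr]P.
\end{equation*}
The first two summands are each bounded by $C_1|t|$ via~\eqref{abstr_F(t)_threshold_1}, producing the leading $2C_1|t|$; for the third I invoke the Duhamel formula
\begin{equation*}
	e^{-i\tau\mathcal{A}(t)^{1/2}} - e^{-i\tau\mathcal{S}(t)^{1/2}} = -i\int_0^{\tau} e^{-i(\tau-s)\mathcal{A}(t)^{1/2}}\bigl(\mathcal{A}(t)^{1/2} - \mathcal{S}(t)^{1/2}\bigr)e^{-is\mathcal{S}(t)^{1/2}}\,ds,
\end{equation*}
which together with the unitary bounds $\|e^{\pm is\mathcal{A}(t)^{1/2}}\|,\|e^{\pm is\mathcal{S}(t)^{1/2}}\|\le 1$ reduces everything to the single key estimate $\|(\mathcal{A}(t)^{1/2} - \mathcal{S}(t)^{1/2})P\|\le C\,t^2$.

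To establish this quadratic bound I would use the integral representation $X^{1/2} = \tfrac{2}{\pi}\int_0^{\infty} X(X+s^2 I)^{-1}\,ds$ for $X\ge 0$, which yields
\begin{equation*}
	\mathcal{A}(t)^{1/2} - \mathcal{S}(t)^{1/2} = \tfrac{2}{\pi}\int_0^{\infty}s^2(\mathcal{A}(t)+s^2 I)^{-1}\bigl(\mathcal{A}(t) - \mathcal{S}(t)\bigr)(\mathcal{S}(t)+s^2 I)^{-1}\,ds.
\end{equation*}
The trivial bound $\|s^2(\mathcal{A}(t)+s^2 I)^{-1}\|\le 1$, the threshold estimate $\|\mathcal{A}(t) - \mathcal{S}(t)\|\le C_2|t|^3$ from~\eqref{abstr_A(t)_threshold_1}, the spectral-gap bound $\|(\mathcal{S}(t)+s^2 I)^{-1}P\|\le (c_*t^2 + s^2)^{-1}$ furnished by Condition~\ref{cond_A} and \eqref{abstr_S_nondegenerated}, together with $\int_0^\infty (c_*t^2+s^2)^{-1}\,ds = \tfrac{\pi}{2}c_*^{-1/2}|t|^{-1}$, deliver $\|(\mathcal{A}(t)^{1/2} - \mathcal{S}(t)^{1/2})P\|\le (C_2/\sqrt{c_*})\,t^{2}$, which after multiplication by $|\tau|$ produces a constant of exactly the structure of~$C_6$.

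For $\mathcal{E}(t,\tau)$ I would follow the same template but with the additional factor $A(t)^{-1/2}F(t)$, whose norm on $F(t)\H$ is bounded by $c_*^{-1/2}|t|^{-1}$ thanks to Condition~\ref{cond_A}. After a splitting of the form
\begin{equation*}
	\mathcal{E}(t,\tau) = \bigl[A(t)^{-1/2}F(t) - (t^2 S)^{-1/2}P\bigr]\,e^{-i\tau\mathcal{S}(t)^{1/2}}P + A(t)^{-1/2}F(t)\bigl[e^{-i\tau\mathcal{A}(t)^{1/2}} - e^{-i\tau\mathcal{S}(t)^{1/2}}\bigr]P,
\end{equation*}
modulo correction terms absorbed by $\|P - F(t)\|\le C_1|t|$, the second summand is controlled by the Duhamel step multiplied by $c_*^{-1/2}|t|^{-1}$, giving the $C_8|\tau||t|$ contribution; the first summand, estimated via the analogous representation $X^{-1/2} = \tfrac{1}{\pi}\int_0^\infty(X + sI)^{-1}s^{-1/2}\,ds$ applied on the ranges of $\mathcal{A}(t)$ and $\mathcal{S}(t)$, yields the $\tau$-independent~$C_7$ once the two $|t|^{-1}$ singularities coming from $A(t)^{-1/2}F(t)$ and $(t^2 S)^{-1/2}P$ cancel against the $|t|^3$ provided by~\eqref{abstr_A(t)_threshold_1}. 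The main technical obstacle in both cases is precisely this (inverse-)square-root comparison: balancing the resolvent singularities at $s=0$ against the spectral-gap factor $\sqrt{c_*}\,|t|$ and the threshold factor $|t|^3$ carefully enough to recover exactly the powers of $|t|$ encoded in $C_6$, $C_7$, $C_8$, all the while keeping track of the domains where the various square roots and inverse square roots make sense.
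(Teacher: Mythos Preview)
The paper does not prove Proposition~\ref{prop2.1}; it is quoted from \cite{BSu5,M} with only the pointer ``established (with the help of the threshold approximations) in~\cite[Subsection 2.3]{BSu5}, \cite[Subsection~2.1]{M}''. So there is no in-paper argument to compare against, only the indication that the proof combines the threshold estimates \eqref{abstr_F(t)_threshold_1}, \eqref{abstr_A(t)_threshold_1} with elementary manipulations of the functions $e^{-i\tau\lambda^{1/2}}$ and $\lambda^{-1/2}e^{-i\tau\lambda^{1/2}}$.

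Your sketch is a correct implementation of exactly that plan. The splitting of $\mathcal{J}(t,\tau)$ into the two $(P-F(t))$-terms and the difference of exponentials on $P$ is legitimate (I checked the telescoping, using $e^{-i\tau A(t)^{1/2}}F(t)=e^{-i\tau\mathcal{A}(t)^{1/2}}F(t)$), the Duhamel identity is standard, and your resolvent-integral estimate for $\|(\mathcal{A}(t)^{1/2}-\mathcal{S}(t)^{1/2})P\|$ gives precisely $C_2\,c_*^{-1/2}\,t^2$, which has the structure of the second term in $C_6$. For $\mathcal{E}(t,\tau)$ the outline is also sound: the correction $A(t)^{-1/2}e^{-i\tau A(t)^{1/2}}F(t)^\perp P$ that you mention is controlled by $\|A(t)^{-1/2}F(t)^\perp\|\le (3\delta)^{-1/2}$ and $\|F(t)^\perp P\|=\|(I-F(t))(P-F(t))\|\le C_1|t|$, hence feeds into the $\tau$-independent constant; and the inverse-square-root comparison can be handled either by your integral representation or, more directly, by the algebraic identity $a^{-1/2}-b^{-1/2}=-a^{-1/2}(a^{1/2}-b^{1/2})b^{-1/2}$ applied on the relevant finite-dimensional ranges, which immediately gives the factor $c_*^{-1/2}|t|^{-1}\cdot (C_2 c_*^{-1/2})t^2\cdot c_*^{-1/2}|t|^{-1}=C_2 c_*^{-3/2}$ matching the second summand in $C_7$. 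The only thing to watch is that $\mathcal{A}(t)$ and $\mathcal{S}(t)$ vanish on different complements, so the resolvent identities must be written on $F(t)\H$ and $\mathfrak N$ separately and glued via $\|F(t)-P\|\le C_1|t|$; you acknowledge this, and it costs only additional constant-order terms that fit in $C_7$.
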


\begin{proposition}[see~\cite{DSu}]
	\label{abstr_exp_enched_est_thrm1}
	Suppose that the operator $N$ defined by~\eqref{abstr_N} is equal to zero{\rm :} $N=0$. 
	Then for $\tau \in \mathbb{R}$ we have
	\begin{align*}
	\| \mathcal{J}(t,\tau)  \| & \le 2 C_1 |t| + C_9 |\tau| |t|^3, \quad |t| \le t_0,
	\\
	\| \mathcal{E}(t,\tau)  \| & \le C_7 + C_{10} |\tau| t^2, \quad 0 < |t| \le t_0.
	\end{align*}
The number $t_0$ is subject to condition \eqref{abstr_t0_fixation}. The constants $C_9$ and  $C_{10}$ are given by  
	\begin{align*}
	C_{9}& = \beta_{9} \delta^{-1}  \|X_1\|^3 \big( 1 + c_*^{-1/2} \|X_1\| + c_*^{-3/2} \|X_1\|^3 + 
	c_*^{-5/2} \|X_1\|^5\big), 
	\\
	C_{10}& = c_*^{-1/2} C_9.
	\end{align*}
\end{proposition}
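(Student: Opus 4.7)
The proof parallels that of Proposition~\ref{prop2.1}, extracting one additional power of $|t|$ in the $\tau$-dependent terms by exploiting the hypothesis $N=0$. Since $N=N_0+N_*$ and $N_0$ is the block of $N$ that maps $\mathfrak{N}$ into itself (see \eqref{abstr_N_invar_repers}), the vanishing $N=0$ forces $N_0=0$ in particular. By~\eqref{abstr_N_0_matrix_elem}, this is equivalent to $\mu_l=0$ for every $l=1,\dots,n$. Hence the eigenvalue expansion~\eqref{abstr_A(t)_eigenvalues_series} reads $\lambda_l(t)=\gamma_l t^2+\nu_l t^4+O(|t|^5)$, with the remainder controlled uniformly in $l$ by $\|X_1\|$, $c_*$, and $\delta$. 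Rationalization together with Condition~\ref{cond_A} (which gives $\sqrt{\lambda_l(t)}+|t|\sqrt{\gamma_l}\ge 2c_*^{1/2}|t|$) then yields $|\sqrt{\lambda_l(t)}-|t|\sqrt{\gamma_l}|=O(|t|^3)$ for $|t|$ smaller than the convergence radius $t_*$; for $t_*<|t|\le t_0$ the crude bound $|t|^3\le t_0^3$ is absorbed into the constant $C_9$ (and analogously into $C_{10}$).

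\textbf{The estimate for $\mathcal{J}$.} I decompose $\mathcal{J}(t,\tau)=e^{-i\tau A(t)^{1/2}}(P-F(t))+[e^{-i\tau A(t)^{1/2}}F(t)-e^{-i\tau(t^2S)^{1/2}}P]$. The first term is bounded by $\|F(t)-P\|\le C_1|t|$ from~\eqref{abstr_F(t)_threshold_1}. For the second, expand via the analytic branches as $\sum_l e^{-i\tau\sqrt{\lambda_l(t)}}(\cdot,\varphi_l(t))\varphi_l(t)-\sum_l e^{-i\tau|t|\sqrt{\gamma_l}}(\cdot,\omega_l)\omega_l$ and add-subtract $\sum_l e^{-i\tau|t|\sqrt{\gamma_l}}(\cdot,\varphi_l(t))\varphi_l(t)$. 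The resulting \emph{phase-difference} piece is a sum of mutually orthogonal rank-one operators whose scalar coefficients obey $|e^{-i\tau\sqrt{\lambda_l(t)}}-e^{-i\tau|t|\sqrt{\gamma_l}}|\le|\tau|\cdot|\sqrt{\lambda_l(t)}-|t|\sqrt{\gamma_l}|$, so its operator norm is controlled by $C_9|\tau||t|^3$ by the first paragraph. The \emph{eigenvector-difference} piece is bounded via the triangle inequality by a multiple of $\sum_l\|\varphi_l(t)-\omega_l\|$, which by~\eqref{abstr_A(t)_eigenvectors_series} is $O(|t|)$ uniformly in $\tau$ and is absorbed into the second $C_1|t|$ of the stated bound.

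\textbf{The estimate for $\mathcal{E}$ and the main obstacle.} The argument for $\mathcal{E}$ follows the same template, with an extra inverse-square-root factor in every term. For the replacement $P\to F(t)$, $\|A(t)^{-1/2}F(t)\|\le c_*^{-1/2}|t|^{-1}$ (from Condition~\ref{cond_A}) combines with $\|P-F(t)\|\le C_1|t|$ to give a $\tau$-independent contribution of order $c_*^{-1/2}C_1$, absorbed into $C_7$. For the spectral-decomposition part, the elementary bound $|a^{-1/2}e^{-i\tau\sqrt{a}}-b^{-1/2}e^{-i\tau\sqrt{b}}|\le|\tau||\sqrt{a}-\sqrt{b}|/\sqrt{a}+|\sqrt{a}-\sqrt{b}|/\sqrt{ab}$ with $a=\lambda_l(t)$, $b=t^2\gamma_l$ produces from the first summand a contribution $O(|\tau||t|^3)/O(|t|)=O(|\tau|t^2)$, which yields $C_{10}|\tau|t^2$, and from the second $O(|t|^3)/O(t^2)=O(|t|)$, bounded and absorbed into $C_7$. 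The eigenvector-difference piece, weighted by $|t|^{-1}$, also contributes $O(1)$ to $C_7$. The main obstacle, beyond careful bookkeeping of the constants $C_7$, $C_9$, $C_{10}$ in terms of $\|X_1\|$, $c_*$, $\delta$, is ensuring uniformity when the germ eigenvalues $\gamma_l$ have multiplicities, so that the eigenbasis $\omega_l$ and the analytic branches $\varphi_l(t)$ are not uniquely determined; this is resolved by the fact that the hypothesis $N=0$ is basis-invariant, so one may fix any convenient analytic choice and observe that all the bounds depend only on the invariant data $\|X_1\|,c_*,\delta$.
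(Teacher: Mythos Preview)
Your argument has a genuine gap: it relies on the power series expansions~\eqref{abstr_A(t)_eigenvalues_series}--\eqref{abstr_A(t)_eigenvectors_series}, which converge only for $|t|\le t_*$, and the paper is explicit that $t_*\in(0,t_0]$ is \emph{not} controlled in terms of $\|X_1\|$, $c_*$, $\delta$. Hence none of the bounds you derive --- on the remainder in $\lambda_l(t)=\gamma_l t^2+\nu_l t^4+O(|t|^5)$, on the phase difference $|\sqrt{\lambda_l(t)}-|t|\sqrt{\gamma_l}|$, or on $\|\varphi_l(t)-\omega_l\|$ --- come with constants of the required explicit form. Your patch for $t_*<|t|\le t_0$ (``the crude bound $|t|^3\le t_0^3$ is absorbed into $C_9$'') does not work: the only a~priori bound available there is $\|\mathcal{J}(t,\tau)\|\le 2$, and for small $|\tau|$ (take $\tau=0$) the target $2C_1|t|+C_9|\tau||t|^3$ reduces to $2C_1|t|\le 2C_1t_0$, which need not dominate $2$. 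Since $t_*$ is uncontrolled, no choice of $C_9$ depending only on $\|X_1\|,c_*,\delta$ can rescue this. The same defect contaminates the $\mathcal{E}$ argument.

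The proof in~\cite{DSu} (from which the proposition is quoted) avoids the analytic branches altogether and works instead with the threshold approximations~\eqref{abstr_F(t)_threshold_1}--\eqref{abstr_F(t)_threshold_2}, which hold with explicit constants on the \emph{full} interval $|t|\le t_0$. Under the hypothesis $N=0$ the refined expansion becomes $A(t)F(t)=t^2SP+t^3K_0+\Psi(t)$ with $K_0=ZSP+SPZ^*$ and $\|\Psi(t)\|\le C_4t^4$; this is fed into a Duhamel-type comparison of $e^{-i\tau A(t)^{1/2}}F(t)$ with $e^{-i\tau(t^2S)^{1/2}}P$, producing the extra factor $|t|$ relative to Proposition~\ref{prop2.1} directly, with all constants tracked in terms of $\|X_1\|,c_*,\delta$ to yield the stated $C_9$ and $C_{10}=c_*^{-1/2}C_9$.
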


\begin{proposition}[see~\cite{DSu}]
	\label{abstr_exp_enched_est_thrm2}
Denote
	$$\mathcal{Z} := \{ (j, l) \colon 1 \le j, l \le p, \; j \ne l, \; P_j N P_l \ne 0\},$$
	\begin{equation}
	\label{abstr_c^circ}
	c^\circ : = \min_{(j,l) \in \mathcal{Z}} c^\circ_{jl},
	\end{equation}
where the numbers $c^\circ_{jl}$ are defined by \eqref{abstr_c_circ_jl}. 
Suppose that the number $t^{00} \le t_0$ is such that
	\begin{equation}
	\label{abstr_t^00}
	t^{00} \le (4 \beta_2)^{-1} \delta^{1/2} \|X_1\|^{-3} c^\circ.
	\end{equation}
	Suppose that the operator $N_0$ defined by \eqref{abstr_N_invar_repers} is equal to zero{\rm :} \hbox{$N_0=0$}.
	 Then for $\tau \in \mathbb{R}$ we have 
	\begin{align*}
	\| \mathcal{J}(t,\tau)  \| & \le C_{11} |t| + C_{12} |\tau| |t|^3, \quad  |t| \le t^{00},
	\\
	\| \mathcal{E}(t,\tau)  \| & \le C_{13} + C_{14} |\tau| t^2, \quad 0 < |t| \le t^{00}.
	\end{align*}
	The constants $C_{11},$ $C_{12},$ $C_{13}$, and $C_{14}$ are given by 
	\begin{align*}
	C_{11} &= \beta_{11} \delta^{-1/2}  \|X_1\| \bigl( 1 +  n^2 c_*^{-1/2} \|X_1\|^3 (c^\circ)^{-1}\bigr),
	\\
	 C_{12} &= \beta_{12} \delta^{-1} \|X_1\|^3 \bigl( 1 + c_*^{-1/2} \|X_1\| + c_*^{-3/2} \|X_1\|^3 + c_*^{-5/2} \|X_1\|^5 \bigr) + \beta_{12} \delta^{-1} c_*^{-1/2}  \|X_1\|^8 n^2 (c^\circ)^{-2},
	 \\
	C_{13} &= \beta_{13} \delta^{-1/2} c_*^{-1/2} \|X_1\| \bigl( 1 + c_*^{-1} \|X_1\|^2 + n^2 c_*^{-1/2} \|X_1\|^3 (c^\circ)^{-1}\bigr), 
	\\
	C_{14}& = c_*^{-1/2} C_{12}.
	\end{align*}
\end{proposition}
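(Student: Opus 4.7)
The plan is to imitate the proof of Proposition \ref{abstr_exp_enched_est_thrm1} (the case $N=0$), using the cluster decomposition of Subsection \ref{abstr_cluster_section} to absorb the remaining off-diagonal block $N_*$. Since $N_0=0$ by hypothesis, the cubic operator $K$ in the threshold expansion $A(t)F(t) = t^2 SP + t^3 K + \Psi(t)$ reduces to $K = K_0 + N_*$. The operator $K_0$ is off-diagonal with respect to the splitting $\mathfrak{H} = \mathfrak{N} \oplus \mathfrak{N}^\perp$ and therefore contributes exactly as in the case $N=0$; only the presence of $N_*$ necessitates a new argument, and this is precisely what the cluster decomposition is designed to handle.

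First I would refine $F(t)$ into $p$ cluster projections $\mathcal{P}_q(t)$, $q=1,\dots,p$, of ranks $k_q$, with $F(t) = \sum_{q=1}^p \mathcal{P}_q(t)$ and $\|\mathcal{P}_q(t) - P_q\| = O(|t|)$. Such $\mathcal{P}_q(t)$ are obtained by iterating the two-cluster splitting $F(t) = F^{(1)}_{jl}(t) + F^{(2)}_{jl}(t)$ from the proposition at the end of Subsection \ref{abstr_cluster_section}, applied to the pairs $(j,l)\in\mathcal{Z}$; condition \eqref{abstr_t^00} guarantees that all such splittings are simultaneously valid. Each iteration contributes an $O((c^\circ)^{-1})$-factor to the projection bounds, and combining up to $n$ such steps yields the $n^2(c^\circ)^{-1}$-dependence visible in $C_{11}$ and $C_{13}$. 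The key observation is that on the range of each $\mathcal{P}_q(t)$ the effective cubic operator vanishes: $P_q K P_q = P_q K_0 P_q + P_q N_* P_q = 0$, since $K_0$ is off-diagonal with respect to $\mathfrak{N} \oplus \mathfrak{N}^\perp$, and $(N_* \omega_j, \omega_k) = 0$ whenever $\gamma_j = \gamma_k$ by Remark \ref{abstr_N_remark}. Hence, on each cluster the eigenvalues of $A(t)$ admit the expansion $\gamma^\circ_q t^2 + O(t^4)$, exactly as in the $N=0$ setting, and a cluster-wise repetition of the proof of Proposition \ref{abstr_exp_enched_est_thrm1} produces, after summation over $q$, the required bound for $\mathcal{J}(t,\tau)$. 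The bound for $\mathcal{E}(t,\tau)$ follows along the same lines, using Condition \ref{cond_A} (equivalently, $S \ge c_* I_{\mathfrak{N}}$) to invert $A(t)^{1/2}$ on $F(t)\mathfrak{H}$.

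The main obstacle is the cross-cluster coupling by the blocks $P_q N_* P_{q'}$ with $q\ne q'$: a naive treatment of these terms would destroy the $|t|^3$ improvement. The remedy is to use a resolvent-type identity that rewrites such couplings as $(\gamma^\circ_q - \gamma^\circ_{q'})^{-1}$ times higher-order corrections, which is legitimate precisely under \eqref{abstr_t^00}. This inserts one factor of $(c^\circ)^{-1}$, while the iterated refinement of $F(t)$ into the projections $\mathcal{P}_q(t)$ contributes another, jointly producing the $(c^\circ)^{-2}$ appearing in $C_{12}$ and $C_{14}$. The remaining technical task is the combinatorial bookkeeping when iterating the two-cluster splittings over all pairs in $\mathcal{Z}$, together with verifying that the perturbative term $t^3 N_*$ stays small compared to the cluster gap $t^2 c^\circ$; the latter is exactly what the bound $t^{00} \le (4\beta_2)^{-1}\delta^{1/2}\|X_1\|^{-3}c^\circ$ encodes.
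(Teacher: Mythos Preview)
The paper does not prove this proposition; it is quoted from \cite{DSu} (note the attribution ``see~\cite{DSu}'' in the header, and the earlier remark citing equations (2.34), (2.49), (2.53), (2.54) of that reference). There is therefore no in-paper argument to compare your plan against.

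That said, your outline is consistent with the machinery the paper sets up in Subsection~\ref{abstr_cluster_section}, and the key structural observation --- that $P_q K P_q = 0$ because $K_0$ maps $\mathfrak{N}$ to $\mathfrak{N}^\perp$ while $P_q N_* P_q = 0$ by Remark~\ref{abstr_N_remark} --- is correct and is exactly the reason the $|t|^3$ improvement survives clusterwise. Your proposed handling of cross-cluster blocks via $(\gamma^\circ_q - \gamma^\circ_{q'})^{-1}$ is also the standard mechanism here, and the appearance of $(c^\circ)^{-1}$ and $(c^\circ)^{-2}$ in the constants matches what such an argument produces. One small clarification: you do not need to separate \emph{all} $p$ clusters, only those pairs $(j,l)$ lying in $\mathcal{Z}$, i.e., those actually coupled by $N$; this is why $c^\circ$ in \eqref{abstr_c^circ} is defined as a minimum over $\mathcal{Z}$ rather than over all pairs, and it somewhat simplifies the iterated-splitting bookkeeping you describe.
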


Propositions  \ref{prop2.1}--\ref{abstr_exp_enched_est_thrm2} directly imply approximations for the operators
$\cos ( \tau A(t)^{1/2}) P$ and $A(t)^{-1/2} \sin (\tau A(t)^{1/2}) P$.  Denote 
\begin{align}
\label{3.01}
\mathcal{J}_1(t, \tau) &:=    \cos (\tau A(t)^{1/2})  {P}  
-   \cos ( \tau (t^2 {S})^{1/2} )  {P},
\\
\label{3.02}
\mathcal{J}_2(t, \tau) &:=    A(t)^{-1/2} \sin (\tau A(t)^{1/2})  {P}  
-   (t^2 {S})^{-1/2} \sin ( \tau (t^2 {S})^{1/2} ) {P}.
\end{align}

\begin{theorem}[see~\cite{BSu5,M}]
\label{th2.1}
	For $\tau \in \mathbb{R}$ and $|t| \le t_0$ we have 
	\begin{align}
	\label{2.5}
	& \bigl\|  \mathcal{J}_1(t, \tau)
	  \bigr\| \le 2 C_{1} |t| + C_{6}  |\tau| t^2, 
	\\
	\label{2.6}
	& \bigl\| \mathcal{J}_2(t, \tau)  \bigr\| \le C_{7}  + C_{8} 
	 |\tau| |t|.
	\end{align}
\end{theorem}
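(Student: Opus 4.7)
The plan is to reduce the theorem to Proposition 2.1 via Euler's formulas applied through the spectral calculus of the selfadjoint operators $A(t)^{1/2}$ and $(t^2 S)^{1/2}$. Writing
\[
\cos(\tau \lambda^{1/2}) = \tfrac{1}{2}\bigl( e^{-i\tau \lambda^{1/2}} + e^{i\tau \lambda^{1/2}} \bigr),
\qquad
\lambda^{-1/2}\sin(\tau \lambda^{1/2}) = \tfrac{1}{2i}\bigl( \lambda^{-1/2} e^{i\tau \lambda^{1/2}} - \lambda^{-1/2} e^{-i\tau \lambda^{1/2}} \bigr),
\]
and applying the functional calculus, I would identify the two differences in \eqref{3.01}, \eqref{3.02} with symmetric and antisymmetric combinations of the operators $\mathcal{J}(t,\cdot)$ and $\mathcal{E}(t,\cdot)$ from \eqref{2.00}, \eqref{2.1}. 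Concretely,
\[
\mathcal{J}_1(t,\tau) = \tfrac{1}{2}\bigl( \mathcal{J}(t,\tau) + \mathcal{J}(t,-\tau)\bigr),
\qquad
\mathcal{J}_2(t,\tau) = \tfrac{1}{2i}\bigl( \mathcal{E}(t,-\tau) - \mathcal{E}(t,\tau)\bigr).
\]
Here the factor $P$ and the commutation of $P$ with $A(t)^{1/2}$ on the range of $F(t)$ play no subtle role, since $P$ merely restricts attention to the appropriate spectral subspace where all functional-calculus identities are classical; the operator $A(t)^{-1/2}$ is understood on $P \mathfrak{H}$ with the convention that at $\lambda=0$ the function $\lambda^{-1/2}\sin(\tau \lambda^{1/2})$ equals $\tau$, which is harmless after multiplication by $P$.

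Once these algebraic identities are established, the theorem follows instantly by the triangle inequality, because the bounds of Proposition~\ref{prop2.1} are stated in terms of $|\tau|$ and are therefore invariant under $\tau \mapsto -\tau$. So
\[
\|\mathcal{J}_1(t,\tau)\| \le \tfrac{1}{2}\bigl(\|\mathcal{J}(t,\tau)\|+\|\mathcal{J}(t,-\tau)\|\bigr) \le 2C_1|t| + C_6|\tau| t^2,
\]
and analogously for $\mathcal{J}_2(t,\tau)$, yielding \eqref{2.5}, \eqref{2.6}. There is no genuine obstacle in this step: the theorem is a direct corollary of Proposition~\ref{prop2.1}. The only point that warrants a sentence of care is the well-definedness of $A(t)^{-1/2} P$ on the spectral subspace $\mathfrak{F}(t;[0,\delta])$; by Condition~\ref{cond_A} the operator $A(t)$ is strictly positive on this subspace for $0<|t| \le t_0$, and the entire functional calculus used above is therefore legitimate.
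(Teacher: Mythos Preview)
Your proof is correct and is precisely the approach the paper has in mind: the paper simply states that Propositions~\ref{prop2.1}--\ref{abstr_exp_enched_est_thrm2} ``directly imply'' Theorems~\ref{th2.1}--\ref{th2.7}, and your Euler-formula reduction is exactly that direct implication spelled out. One minor remark: your aside about ``commutation of $P$ with $A(t)^{1/2}$'' is unnecessary and slightly misleading (no such commutation holds or is needed); the identities $\mathcal{J}_1(t,\tau)=\tfrac12(\mathcal{J}(t,\tau)+\mathcal{J}(t,-\tau))$ and $\mathcal{J}_2(t,\tau)=\tfrac{1}{2i}(\mathcal{E}(t,-\tau)-\mathcal{E}(t,\tau))$ follow purely from the functional calculus applied to each of the two selfadjoint operators separately, then right-multiplied by $P$.
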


\begin{theorem}[see~\cite{DSu}]
Suppose that the operator $N$ defined by~\eqref{abstr_N} is equal to zero{\rm :} $N = 0$. 
Then for $\tau \in \mathbb{R}$ and $|t| \le t_0$ we have 
	\begin{align}
	\label{2.7}
	& \bigl\| \mathcal{J}_1(t, \tau) \bigr\| \le 2 C_{1} |t| + C_{9}  |\tau| |t|^3, 
	\\
	\label{2.8}
	& \bigl\| \mathcal{J}_2(t, \tau) \bigr\| \le C_{7}  + C_{10} |\tau| t^2.
	\end{align} 
\end{theorem}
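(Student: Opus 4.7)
My plan is to obtain the bounds \eqref{2.7}, \eqref{2.8} as immediate corollaries of Proposition~\ref{abstr_exp_enched_est_thrm1}, by writing the cosine and sine via the Euler formulas and re-assembling the operators $\mathcal J_1, \mathcal J_2$ out of the two \textquotedblleft exponential\textquotedblright\ differences $\mathcal J(t,\tau)$ and $\mathcal E(t,\tau)$ at parameters $\pm \tau$.

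First, I note the elementary identities (using that $A(t)^{-1/2}$ is a function of $A(t)$ and hence commutes with $e^{\pm i\tau A(t)^{1/2}}$)
\begin{align*}
\cos(\tau A(t)^{1/2})P &= \tfrac12 \bigl( e^{-i\tau A(t)^{1/2}} P + e^{i\tau A(t)^{1/2}} P \bigr),\\
A(t)^{-1/2}\sin(\tau A(t)^{1/2})P &= \tfrac{1}{2i}\bigl( A(t)^{-1/2} e^{i\tau A(t)^{1/2}}P - A(t)^{-1/2} e^{-i\tau A(t)^{1/2}}P \bigr),
\end{align*}
and the same identities with $A(t)$ replaced by $t^2 S$. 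Subtracting the two corresponding expressions gives
\begin{equation*}
\mathcal J_1(t,\tau) = \tfrac12\bigl(\mathcal J(t,\tau) + \mathcal J(t,-\tau)\bigr),\qquad
\mathcal J_2(t,\tau) = \tfrac{1}{2i}\bigl(\mathcal E(t,-\tau) - \mathcal E(t,\tau)\bigr),
\end{equation*}
where $\mathcal J(t,\tau)$ and $\mathcal E(t,\tau)$ are the quantities from \eqref{2.00}, \eqref{2.1}. One small point to check here is that $A(t)^{-1/2}P$ is well-defined and the above cancellation of the ``free'' terms is legitimate: this is ensured by Condition~\ref{cond_A}, which yields $A(t)\ge c_*t^2 I$ on $\operatorname{Ran}F(t)$ for $|t|\le t_0$, hence also $t^2S\ge c_* t^2 I_{\mathfrak N}$ by \eqref{abstr_S_nondegenerated}; both inverse square roots exist on the relevant ranges.

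Second, under the hypothesis $N=0$, Proposition~\ref{abstr_exp_enched_est_thrm1} yields
\begin{equation*}
\|\mathcal J(t,\pm\tau)\|\le 2C_1|t|+C_9|\tau|\,|t|^3,\qquad
\|\mathcal E(t,\pm\tau)\|\le C_7+C_{10}|\tau|\,t^2,\qquad |t|\le t_0.
\end{equation*}
(The bounds are symmetric in the sign of $\tau$ since only $|\tau|$ enters.) Plugging these into the identities above by the triangle inequality gives
\begin{equation*}
\|\mathcal J_1(t,\tau)\|\le \tfrac12\bigl(\|\mathcal J(t,\tau)\|+\|\mathcal J(t,-\tau)\|\bigr)\le 2C_1|t|+C_9|\tau|\,|t|^3,
\end{equation*}
and, analogously,
\begin{equation*}
\|\mathcal J_2(t,\tau)\|\le \tfrac12\bigl(\|\mathcal E(t,\tau)\|+\|\mathcal E(t,-\tau)\|\bigr)\le C_7+C_{10}|\tau|\,t^2,
\end{equation*}
which are precisely \eqref{2.7} and \eqref{2.8}.

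In short, the entire content of the theorem is transferred from the corresponding exponential estimates by the Euler formulas; there is no genuine obstacle, only the formal bookkeeping of the half-sums. The substantive work has already been done in Proposition~\ref{abstr_exp_enched_est_thrm1}, where the vanishing of $N$ is used to upgrade the threshold approximation of $A(t)F(t)$ and to control the phase difference between $A(t)^{1/2}$ and $(t^2S)^{1/2}$ in the exponent.
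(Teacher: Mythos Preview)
Your proposal is correct and is exactly the approach the paper takes: the text states that Propositions~\ref{prop2.1}--\ref{abstr_exp_enched_est_thrm2} \textquotedblleft directly imply\textquotedblright\ the cosine/sine estimates, and the passage from $\mathcal J,\mathcal E$ to $\mathcal J_1,\mathcal J_2$ via the Euler identities (with parameters $\pm\tau$) is the intended mechanism. One minor remark: your justification that $A(t)^{-1/2}P$ is well-defined via Condition~\ref{cond_A} does not cover $t=0$, but at $t=0$ both terms in $\mathcal J_2(0,\tau)$ equal $\tau P$ and cancel, so the bound holds trivially there; the decomposition via $\mathcal E(t,\pm\tau)$ is only needed for $0<|t|\le t_0$, which is precisely the range in Proposition~\ref{abstr_exp_enched_est_thrm1}.
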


\begin{theorem}[see~\cite{DSu}]
\label{th2.7}
	Suppose that the operator $N_0$ defined by~\eqref{abstr_N_invar_repers} is equal to zero{\rm :} $N_0 = 0$. Then for  $\varepsilon > 0,$ $\tau \in \mathbb{R}$, and $|t| \le t^{00}$ we have 
	\begin{align*}
	& \bigl\| \mathcal{J}_1(t, \tau) \bigr\| \le  C_{11} |t| + C_{12}  |\tau| |t|^3, 
	\\
	& \bigl\| \mathcal{J}_2(t, \tau) \bigr\| \le C_{13}  + 
	C_{14} |\tau| t^2.
	\end{align*} 
\end{theorem}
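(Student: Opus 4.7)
The plan is to deduce the theorem directly from Proposition \ref{abstr_exp_enched_est_thrm2} by passing from the complex exponentials to the cosine and sine via the Euler formulas. This is precisely the same mechanism by which Theorem \ref{th2.1} is deduced from Proposition \ref{prop2.1}, and by which the intermediate bounds \eqref{2.7}, \eqref{2.8} are deduced from Proposition \ref{abstr_exp_enched_est_thrm1}. The key observation is that for any selfadjoint operator $B$ one has $\cos(\tau B) = \tfrac12(e^{i\tau B} + e^{-i\tau B})$ and $B^{-1}\sin(\tau B) = \tfrac{1}{2i}(e^{i\tau B} - e^{-i\tau B})B^{-1}$, which relates the two families of approximants via pure algebra.

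Applying these identities simultaneously with $B = A(t)^{1/2}$ and $B = (t^2 S)^{1/2}$ and comparing with the definitions \eqref{2.00}, \eqref{2.1}, \eqref{3.01}, \eqref{3.02}, I would record the operator identities
\begin{equation*}
\mathcal{J}_1(t,\tau) = \tfrac12\bigl(\mathcal{J}(t,\tau) + \mathcal{J}(t,-\tau)\bigr), \qquad \mathcal{J}_2(t,\tau) = \tfrac{1}{2i}\bigl(\mathcal{E}(t,-\tau) - \mathcal{E}(t,\tau)\bigr).
\end{equation*}
Both hold as genuine operator equalities on $\mathfrak{H}$. Since the right-hand sides of the bounds in Proposition \ref{abstr_exp_enched_est_thrm2} depend on $\tau$ only through $|\tau|$, the bound at $-\tau$ is identical to that at $\tau$, and the triangle inequality applied to the two identities above yields precisely the stated estimates with the same constants $C_{11}$, $C_{12}$ and $C_{13}$, $C_{14}$.

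I foresee no serious obstacle: the nontrivial analytic input (threshold approximations from Subsection~1.5, the cluster decomposition of Subsection~\ref{abstr_cluster_section}, and the use of the hypothesis $N_0 = 0$) is entirely packaged inside Proposition \ref{abstr_exp_enched_est_thrm2}. The remaining step is mechanical and requires only that the spectral projection $P$ is preserved when passing $\tau \mapsto -\tau$, which is obvious since $P$ is $\tau$-independent. The only technical point worth noting is that the sine identity tacitly requires $t \ne 0$ in order that $A(t)^{-1/2}P$ and $(t^2 S)^{-1/2}P$ be defined on the relevant subspace (by Condition~\ref{cond_A} and~\eqref{abstr_S_nondegenerated}), which matches the $0 < |t| \le t^{00}$ restriction already present for the bound on $\mathcal{E}$ in Proposition \ref{abstr_exp_enched_est_thrm2}.
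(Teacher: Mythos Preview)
Your proposal is correct and matches the paper's approach exactly: the paper states immediately before Theorem~\ref{th2.1} that ``Propositions~\ref{prop2.1}--\ref{abstr_exp_enched_est_thrm2} directly imply approximations for the operators $\cos(\tau A(t)^{1/2})P$ and $A(t)^{-1/2}\sin(\tau A(t)^{1/2})P$,'' and Theorem~\ref{th2.7} is simply the instance corresponding to Proposition~\ref{abstr_exp_enched_est_thrm2}. Your Euler-formula identities and the triangle inequality are precisely the implicit mechanism behind that one-line reduction.
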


\subsection{Approximation of the operator $A(t)^{-1/2} \sin(\tau A(t)^{1/2})$ in the 
\hbox{\textquotedblleft energy\textquotedblright} norm}

We obtain approximation for the operator  $A(t)^{-1/2} \sin(\tau A(t)^{1/2})$ in the 
\hbox{\textquotedblleft energy\textquotedblright} norm. We need two estimates, the first one  follows from  
\eqref{abstr_t0_fixation}, \eqref{abstr_Z_R_S_est}, and  \eqref{abstr_A(t)_threshold_1}, and the second one was proved in~\cite[(2.23)]{BSu4}:
\begin{align}
\label{abstr_A(t)F(t)_sqrt_est}
\| A (t)^{1/2} F(t) \| &\le C_{15} |t|, \quad |t| \le  t_0; & C_{15} &= (1 + \beta_2)^{1/2} \|X_1\|, 
\\
\label{abstr_sqrtA(t)F2(t)_est}
\| A(t)^{1/2}F_2(t) \| &\le C_{16} t^2, \quad |t| \le  t_0; & C_{16} &= \beta_{16} \delta^{-1/2} \|X_1\|^2.
\end{align}

By~(\ref{abstr_F(t)_threshold_1}), for $\tau \in \R$ we have 
\begin{equation}\label{3.5}
\| A(t)^{1/2} ( A(t)^{-1/2}e^{-i\tau A(t)^{1/2}} P 
- A(t)^{-1/2}e^{-i\tau A(t)^{1/2}} F(t)  P) \| \le C_1 |t|,  \quad  |t| \le t_0.
\end{equation}
Next,
\begin{equation}\label{3.6}
\begin{split}
A(t)^{1/2} F(t)\bigl(A(t)^{-1/2} e^{-i\tau A(t)^{1/2}} P&
 -  (t^2S)^{-1/2} e^{-i\tau (t^2S)^{1/2}} P\bigr)
 = A(t)^{1/2} F(t) \mathcal{E}(t,\tau) P,
\end{split}
\end{equation}
where the operator $\mathcal{E}(t,\tau)$ is defined by  \eqref{2.1}.
The right-hand side is estimated with the help of~(\ref{abstr_A(t)F(t)_sqrt_est}) and Proposition~\ref{prop2.1}
(if the additional assumptions are satisfied, we apply Propositions~\ref{abstr_exp_enched_est_thrm1} and \ref{abstr_exp_enched_est_thrm2}). For $\tau \in \R$ we obtain:
{\allowdisplaybreaks
\begin{align}
\label{3.8}
\| A(t)^{1/2} \!F(t) \mathcal{E}(t,\tau) P \| &\!\le\! C_{15} |t|(C_7\! +\! C_8 |\tau| |t| ),\quad|t|\!\le\! t_0;
\\
\label{3.9}
\| A(t)^{1/2}\! F(t) \mathcal{E}(t,\tau) P \| &\!\le\! C_{15} |t|(C_7 \!+\! C_{10} |\tau| t^2 ),\quad|t| \!\le\! t_0,  \text{ if }  N\!=\!0;
\\
\label{3.10}
\| A(t)^{1/2}\! F(t) \mathcal{E}(t,\tau) P \| &\!\le\! C_{15} |t|(C_{13}\! +\! C_{14} |\tau| t^2),\quad|t|\! \le\! t^{00},   \text{ if }  N_0\!=\!0.
\end{align}
}

By~(\ref{abstr_F(t)_threshold_2}), (\ref{abstr_F1_K0_N_invar}), and the identity $Z^* P = 0$, we have  
\begin{equation}\label{3.11}
A(t)^{1/2} F(t)  (t^2S)^{-1/2} e^{-i\tau (t^2S)^{1/2}} P  
= A(t)^{1/2} (I + tZ + F_2(t)) (t^2S)^{-1/2} e^{-i\tau (t^2S)^{1/2}} P.
\end{equation}
Using~(\ref{abstr_S_nondegenerated}) and~(\ref{abstr_sqrtA(t)F2(t)_est}), we obtain 
\begin{equation}\label{3.12}
\| A(t)^{1/2} F_2(t)  (t^2S)^{-1/2} e^{-i\tau (t^2S)^{1/2} } P \| \le c_*^{-1/2} C_{16} |t|, \quad \tau \in \mathbb{R}, \  |t| \le t_0.
\end{equation}

As a result, relations \eqref{3.5}--\eqref{3.12} imply the following results.

\begin{theorem}[see~\cite{M}]
	\label{abstr_sin_thrm_wo_eps}
	Let  
	\begin{equation}
	\label{3.13}
	\Sigma(t,\tau)\!:= \! \big(A(t)^{-1/2} \sin(\tau A(t)^{1/2})\! -\! (I\!+\!tZ)(t^2 S)^{-1/2} \sin(\tau (t^2 S)^{1/2})\big)P.
	\end{equation}
	For $\tau \in \mathbb{R}$ and $|t| \le t_0$ we have 
	\begin{equation}
	\label{abstr_sin_est_wo_eps}
	\| A(t)^{1/2} \Sigma(t,\tau) \| \le C_{17} |t| + C_{18}  |\tau| t^2.
	\end{equation}
	The constants $C_{17}$ and $C_{18}$ are given by  
	$$
	C_{17} = C_1 + C_7 C_{15} +c_*^{-1/2}C_{16},\quad C_{18} = C_8 C_{15}.
		$$
\end{theorem}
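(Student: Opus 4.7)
The statement is the endpoint of the chain of reductions \eqref{3.5}--\eqref{3.12} that precede it in the text, so my plan is simply to assemble these pieces. First, I would pass from sine to the complex exponential: since $\sin(\tau A(t)^{1/2}) = \operatorname{Im} e^{i\tau A(t)^{1/2}}$ (and likewise for the germ $t^2 S$), it suffices to control $A(t)^{1/2}$ applied to the operator
\[
 A(t)^{-1/2} e^{-i\tau A(t)^{1/2}} P - (I+tZ)(t^2 S)^{-1/2} e^{-i\tau (t^2 S)^{1/2}} P,
\]
and then take imaginary parts at the end.

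Second, I would insert the spectral projection $F(t)$ and split this difference into three pieces:
\begin{align*}
\Xi_1(t,\tau) &:= A(t)^{-1/2} e^{-i\tau A(t)^{1/2}} P - A(t)^{-1/2} e^{-i\tau A(t)^{1/2}} F(t) P, \\
\Xi_2(t,\tau) &:= F(t)\bigl( A(t)^{-1/2} e^{-i\tau A(t)^{1/2}} P - (t^2S)^{-1/2} e^{-i\tau (t^2 S)^{1/2}} P\bigr) = F(t)\,\mathcal{E}(t,\tau) P, \\
\Xi_3(t,\tau) &:= \bigl(F(t) - (I + tZ)\bigr)(t^2 S)^{-1/2} e^{-i\tau (t^2 S)^{1/2}} P.
\end{align*}
Applying $A(t)^{1/2}$ on the left: estimate \eqref{3.5} handles $A(t)^{1/2}\Xi_1$ and yields the contribution $C_1 |t|$. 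For $A(t)^{1/2}\Xi_2$, I combine the bound $\|A(t)^{1/2} F(t)\| \le C_{15}|t|$ from \eqref{abstr_A(t)F(t)_sqrt_est} with the estimate $\|\mathcal{E}(t,\tau)\|\le C_7 + C_8 |\tau||t|$ from Proposition~\ref{prop2.1}, giving the bound $C_{15}|t|(C_7 + C_8|\tau||t|)$. For $\Xi_3$, the identity $Z^*P = 0$ together with the threshold expansion $F(t) = P + tF_1 + F_2(t)$ and $F_1 = ZP + PZ^*$ from \eqref{abstr_F(t)_threshold_2}, \eqref{abstr_F1_K0_N_invar} implies $F(t)P - (I+tZ)P = F_2(t) P$, so $\Xi_3(t,\tau) = F_2(t)(t^2S)^{-1/2} e^{-i\tau (t^2S)^{1/2}} P$, and by \eqref{abstr_sqrtA(t)F2(t)_est} together with the nondegeneracy bound \eqref{abstr_S_nondegenerated} one gets $\|A(t)^{1/2}\Xi_3\| \le c_*^{-1/2} C_{16}|t|$.

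Adding the three bounds, taking the imaginary part, and collecting the terms independent of $\tau$ versus those proportional to $|\tau| t^2$ produces exactly
\[
\|A(t)^{1/2}\Sigma(t,\tau)\| \le (C_1 + C_7 C_{15} + c_*^{-1/2} C_{16})|t| + C_8 C_{15} |\tau| t^2,
\]
which is \eqref{abstr_sin_est_wo_eps} with the stated constants $C_{17}$ and $C_{18}$.

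There is no serious obstacle; the proof is a matter of bookkeeping once one notices the algebraic simplification $F(t)P - (I+tZ)P = F_2(t)P$ (courtesy of $Z^*P = 0$), which is what makes $(I+tZ)$ the correct \emph{first-order corrector} to insert into the energy-norm approximation. The only mild care needed is to keep track that $|\tau|$-factors only arise from $\mathcal{E}(t,\tau)$ and not from $\Xi_1$ or $\Xi_3$, so that the coefficient in front of $|\tau| t^2$ ends up being $C_{18} = C_8 C_{15}$ as stated.
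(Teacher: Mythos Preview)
Your proof is correct and follows exactly the paper's approach: the three-piece decomposition $\Xi_1 + \Xi_2 + \Xi_3$ is precisely what relations \eqref{3.5}--\eqref{3.12} encode, and your identity $F(t)P - (I+tZ)P = F_2(t)P$ is the content of \eqref{3.11}. The passage from the exponential to the sine by taking imaginary parts (equivalently, averaging the exponential bound at $\tau$ and $-\tau$) is also how the paper goes from the $\mathcal{E}(t,\tau)$ estimates to the statement about $\Sigma(t,\tau)$.
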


\begin{theorem}
	\label{abstr_sin_enchcd_thrm_wo_eps_1}
	Suppose that the assumptions of Theorem {\rm \ref{abstr_sin_thrm_wo_eps}} are satisfied. 
	Suppose that the operator~$\!N$ defined by~\eqref{abstr_N} is equal to zero{\rm:} $N\! = \!0$. 
	Then for~$\tau \!\in \!\mathbb{R}$~and~$|t| \!\le\! t_0$ we have
	\begin{equation}
	\label{abstr_sin_enchcd_est_wo_eps_1}
	\bigl\| A(t)^{1/2} \Sigma(t,\tau)  \bigr\|  \le C_{17} |t| + C_{19} |\tau| |t|^3,  \quad  C_{19} = C_{10} C_{15}.
	\end{equation}
\end{theorem}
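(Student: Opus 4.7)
The plan is to mirror the proof of Theorem~\ref{abstr_sin_thrm_wo_eps} step-by-step, replacing only the single estimate that is sensitive to the vanishing of $N$. Since $\sin x = -\Im e^{-ix}$, it suffices to bound
$$
A(t)^{1/2}\bigl[ A(t)^{-1/2} e^{-i\tau A(t)^{1/2}} P - (I + tZ)(t^2 S)^{-1/2} e^{-i\tau (t^2 S)^{1/2}} P \bigr]
$$
and take the imaginary part at the end; the desired estimate on $A(t)^{1/2}\Sigma(t,\tau)$ then follows automatically with the same constants.

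To dissect the displayed expression, I would insert the spectral projection $F(t)$ and use relation \eqref{3.11} to rewrite $A(t)^{1/2}(I+tZ)(t^2 S)^{-1/2} e^{-i\tau(t^2 S)^{1/2}} P$ as $A(t)^{1/2} F(t)(t^2 S)^{-1/2} e^{-i\tau(t^2 S)^{1/2}} P$ minus $A(t)^{1/2} F_2(t)(t^2 S)^{-1/2} e^{-i\tau(t^2 S)^{1/2}} P$. This produces the three-term decomposition: (a) the ``projection defect'' $A(t)^{1/2}\bigl(A(t)^{-1/2} e^{-i\tau A(t)^{1/2}}\bigr)(P-F(t))P$; (b) the germ approximation $A(t)^{1/2} F(t) \mathcal{E}(t,\tau) P$; (c) the remainder $A(t)^{1/2} F_2(t)(t^2 S)^{-1/2} e^{-i\tau (t^2 S)^{1/2}} P$, where $\mathcal{E}(t,\tau)$ is defined by \eqref{2.1}.

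The terms (a) and (c) are estimated exactly as in the proof of Theorem~\ref{abstr_sin_thrm_wo_eps}: by \eqref{3.5} we have $\|\text{(a)}\| \le C_1 |t|$, and by \eqref{3.12} we have $\|\text{(c)}\| \le c_*^{-1/2} C_{16} |t|$. Neither bound depends on the hypothesis $N=0$, so they contribute the same $|t|$-terms as before. The key new input is estimate \eqref{3.9} (rather than \eqref{3.8}), which is available precisely because we now assume $N=0$ and can invoke Proposition~\ref{abstr_exp_enched_est_thrm1}: combining \eqref{abstr_A(t)F(t)_sqrt_est} with the enhanced bound on $\mathcal{E}(t,\tau)$ gives $\|\text{(b)}\| \le C_{15} |t|(C_7 + C_{10} |\tau| t^2)$. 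Summing the three contributions yields
$$
\| A(t)^{1/2}\Sigma(t,\tau)\| \le \bigl(C_1 + C_7 C_{15} + c_*^{-1/2} C_{16}\bigr)|t| + C_{10} C_{15}|\tau||t|^3 = C_{17}|t| + C_{19}|\tau||t|^3,
$$
which is exactly \eqref{abstr_sin_enchcd_est_wo_eps_1}.

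There is essentially no conceptual obstacle here: the result is a direct inheritance of the improvement gained in passing from Proposition~\ref{prop2.1} to Proposition~\ref{abstr_exp_enched_est_thrm1} in the energy-norm setting. The only thing to verify carefully is that the decomposition (a)+(b)+(c) is algebraically correct, i.e.\ that no spurious $|\tau| |t|^2$ term sneaks in from (a) or (c); both of these contain no factor of $\tau$ because they rely only on the threshold approximation of $F(t)-P$ and $F_2(t)$, so the $\tau$-dependence lives entirely in the middle term~(b) and is governed by the enhanced bound from Proposition~\ref{abstr_exp_enched_est_thrm1}.
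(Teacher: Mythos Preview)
Your proof is correct and follows the paper's approach exactly: the paper derives Theorems~\ref{abstr_sin_thrm_wo_eps}--\ref{abstr_sin_enchcd_thrm_wo_eps_2} uniformly from the chain of relations \eqref{3.5}--\eqref{3.12}, and the only change in passing to Theorem~\ref{abstr_sin_enchcd_thrm_wo_eps_1} is to swap estimate~\eqref{3.8} for the sharper~\eqref{3.9} in bounding $A(t)^{1/2}F(t)\mathcal{E}(t,\tau)P$. Your three-term decomposition (a)+(b)+(c) and the resulting constant $C_{17}|t|+C_{10}C_{15}|\tau||t|^3$ match the paper's argument and constants verbatim.
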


\begin{theorem}
	\label{abstr_sin_enchcd_thrm_wo_eps_2}
	\!Suppose that the assumptions of Theorem {\rm\ref{abstr_sin_thrm_wo_eps}} are satisfied.
	\!Suppose that the operator~$\!N_0$ defined by~\eqref{abstr_N_invar_repers} is equal to zero{\rm :} $N_0\! =\! 0$. Then for~$\tau\! \in \!\mathbb{R}$~and~$|t| \!\le\! t^{00}$ we have 
	\begin{equation*}
	\bigl\| A(t)^{1/2} \Sigma(t,\tau)  \bigr\| \le C_{20} |t| + C_{21} |\tau| |t|^3.
	\end{equation*}
The constants $C_{20}$ and $C_{21}$ are given by 
          $$
	C_{20} = C_1 + C_{13} C_{15} +c_*^{-1/2}C_{16},\quad C_{21} = C_{14} C_{15}.
	$$
\end{theorem}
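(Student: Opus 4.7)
The plan is to mimic the proofs of Theorems \ref{abstr_sin_thrm_wo_eps} and \ref{abstr_sin_enchcd_thrm_wo_eps_1}, replacing the estimate of $\mathcal{E}(t,\tau)P$ coming from Proposition \ref{prop2.1} (or Proposition \ref{abstr_exp_enched_est_thrm1}) by the sharper bound \eqref{3.10}, which is available precisely under the hypothesis $N_0=0$. Since every ingredient except the bound on $\mathcal{E}$ is unchanged, all one has to check is that the three displayed estimates \eqref{3.5}, \eqref{3.11}--\eqref{3.12}, and \eqref{3.10} may be summed and then the imaginary part taken to recover $\Sigma(t,\tau)$.

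More precisely, I would first write
\[
A(t)^{1/2}\bigl(A(t)^{-1/2}e^{-i\tau A(t)^{1/2}}P - (I+tZ)(t^2S)^{-1/2}e^{-i\tau(t^2S)^{1/2}}P\bigr)
\]
as a sum of three pieces: (a) $A(t)^{1/2}A(t)^{-1/2}e^{-i\tau A(t)^{1/2}}(I-F(t))P$, controlled by $C_1|t|$ via \eqref{3.5}; (b) $A(t)^{1/2}F(t)\mathcal{E}(t,\tau)P$, controlled by $C_{15}|t|(C_{13}+C_{14}|\tau|t^2)$ via \eqref{3.10}, which is the only step where $N_0=0$ is used and which forces the restriction $|t|\le t^{00}$; and (c) $A(t)^{1/2}\bigl(F(t)-I-tZ\bigr)(t^2S)^{-1/2}e^{-i\tau(t^2S)^{1/2}}P = A(t)^{1/2}F_2(t)(t^2S)^{-1/2}e^{-i\tau(t^2S)^{1/2}}P$, controlled by $c_*^{-1/2}C_{16}|t|$ via \eqref{3.11}--\eqref{3.12} (here the nondegeneracy \eqref{abstr_S_nondegenerated} lets one pass $(t^2S)^{-1/2}$ through the bounded exponential). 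Adding these three bounds yields
\[
\bigl\|A(t)^{1/2}\bigl(A(t)^{-1/2}e^{-i\tau A(t)^{1/2}}P - (I+tZ)(t^2S)^{-1/2}e^{-i\tau(t^2S)^{1/2}}P\bigr)\bigr\| \le C_{20}|t| + C_{21}|\tau||t|^3,
\]
with the constants as declared.

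To finish, I would observe that $\sin(\tau x)=\tfrac{1}{2i}(e^{i\tau x}-e^{-i\tau x})$ in the functional calculus of the self-adjoint operators $A(t)^{1/2}$ and $(t^2 S)^{1/2}$, so $A(t)^{-1/2}\sin(\tau A(t)^{1/2})P - (I+tZ)(t^2 S)^{-1/2}\sin(\tau(t^2S)^{1/2})P$ is the antisymmetric combination in $\tau$ of the expression estimated above; the bound is even in $\tau$, so it carries over to $\Sigma(t,\tau)$ verbatim. This gives \eqref{abstr_sin_enchcd_thrm_wo_eps_2}'s inequality on $|t|\le t^{00}$.

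The proof contains no real obstruction beyond bookkeeping; the only genuine novelty is exploiting \eqref{3.10} in place of its coarser analogues, which in turn forces the shrinking of the admissible range from $|t|\le t_0$ to $|t|\le t^{00}$. The constant $t^{00}$ is chosen in \eqref{abstr_t^00} precisely so that the spectral splitting underlying Proposition \ref{abstr_exp_enched_est_thrm2} (and hence \eqref{3.10}) is valid, and \eqref{abstr_t^00} also ensures $t^{00}\le t_0$, so estimates \eqref{3.5} and \eqref{3.12} remain applicable on the same interval.
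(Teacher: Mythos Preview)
Your proposal is correct and follows exactly the approach the paper uses: the paper states that ``relations \eqref{3.5}--\eqref{3.12} imply the following results'' and lists Theorems \ref{abstr_sin_thrm_wo_eps}--\ref{abstr_sin_enchcd_thrm_wo_eps_2} without further detail, so you have simply written out the bookkeeping that the paper leaves implicit. The three-piece decomposition you give is precisely (3.5), (3.6) together with the bound \eqref{3.10}, and (3.11)--(3.12), and the resulting constants match the stated $C_{20}$ and $C_{21}$ exactly.
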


Theorem~\ref{abstr_sin_thrm_wo_eps} was known earlier  (see~\cite[Proposition~2.2]{M}).

\section{Approximation for the operators $\cos(\varepsilon^{-1} \tau A(t)^{1/2}) P$ 
and $A(t)^{-1/2} \sin(\varepsilon^{-1} \tau A(t)^{1/2}) P$}
\label{abstr_aprox_thrm_section}

\subsection{Approximation in the operator norm in $\H$}

Now we introduce the parameter $\varepsilon > 0$. We study the behavior of the operators 
$\cos(\varepsilon^{-1} \tau A(t)^{1/2}) P$ and $A(t)^{-1/2} \sin(\varepsilon^{-1} \tau A(t)^{1/2}) P$ for small
 $\varepsilon$. It is convenient to multiply these operators by the 
 \textquotedblleft smoothing factor\textquotedblright \ $\varepsilon^s (t^2 + \varepsilon^2)^{-s/2}P$, where $s > 0$. (This term is explained by the fact that  in applications to DOs  such multiplication  turns into  smoothing.) Our goal is to obtain approximations for the smoothed operator $\cos(\varepsilon^{-1} \tau A(t)^{1/2}) P$ with error  of order 
$O (\varepsilon)$ and for the smoothed operator $A(t)^{-1/2} \sin(\varepsilon^{-1} \tau A(t)^{1/2}) P$ with error of order  $O (1)$ for minimal possible~$s$.

\begin{theorem}[see~\cite{BSu5,M}]
	\label{th3.1}
	Suppose that the operators $\mathcal{J}_1(t,\tau)$ and $\mathcal{J}_2(t,\tau)$ are defined by  \eqref{3.01}\textup, \eqref{3.02}.
	For $\varepsilon > 0, \tau \in \mathbb{R}$, and $|t| \le t_0$ we have 
	\begin{align}
	\label{2.5a}
	& \bigl\| \mathcal{J}_1(t, \eps^{-1}\tau)
	 \bigr\| \varepsilon^2 (t^2 + \varepsilon^2)^{-1} \le (C_{1} + C_{6}|\tau|) \varepsilon,
	\\
	\label{2.6a}
	& \bigl\| \mathcal{J}_2(t,\eps^{-1} \tau)
	 \bigr\| \varepsilon (t^2 + \varepsilon^2)^{-1/2} \le C_{7} + C_{8} |\tau|.
	\end{align}
\end{theorem}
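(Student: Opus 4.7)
The plan is to reduce the claim to Theorem \ref{th2.1} by direct substitution and then absorb the smoothing factor using two elementary inequalities. The only genuine content needed beyond Theorem \ref{th2.1} is the interplay between the ``inconvenient'' powers of $|t|$ and $t^2$ produced by the threshold bounds and the quantity $(t^2+\eps^2)^{-1}$ or $(t^2+\eps^2)^{-1/2}$ in the smoothing factor.

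First I would replace $\tau$ by $\eps^{-1}\tau$ in the estimates \eqref{2.5} and \eqref{2.6} of Theorem \ref{th2.1}, obtaining, for $|t|\le t_0$ and $\tau\in\R$,
\begin{align*}
\| \mathcal{J}_1(t, \eps^{-1}\tau) \| &\le 2C_1 |t| + C_6\,\eps^{-1}|\tau|\,t^2,
\\
\| \mathcal{J}_2(t, \eps^{-1}\tau) \| &\le C_7 + C_8\,\eps^{-1}|\tau|\,|t|.
\end{align*}
Multiplying the first line by $\eps^2(t^2+\eps^2)^{-1}$ and the second by $\eps(t^2+\eps^2)^{-1/2}$, the theorem is reduced to the pointwise bounds
\begin{equation*}
\frac{|t|\,\eps^2}{t^2+\eps^2}\le \tfrac12\eps,\qquad \frac{t^2}{t^2+\eps^2}\le 1,\qquad \frac{\eps}{(t^2+\eps^2)^{1/2}}\le 1,\qquad \frac{|t|}{(t^2+\eps^2)^{1/2}}\le 1,
\end{equation*}
the first of which follows from the AM--GM inequality $2|t|\eps\le t^2+\eps^2$ and the rest are obvious. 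Applying these inequalities termwise yields \eqref{2.5a} and \eqref{2.6a} with the stated constants.

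There is no genuine obstacle here: Theorem \ref{th2.1}, which is already quoted from \cite{BSu5,M}, does all the work, and the role of the smoothing factor $\eps^s(t^2+\eps^2)^{-s/2}$ is precisely to convert the threshold estimates (which degenerate at $t=0$ through the factor $\eps^{-1}$) into uniform bounds of the desired order. The only choice to be made is the exponent $s$: $s=2$ is exactly what is needed to neutralize the worst term $\eps^{-1}|\tau|t^2$ in the cosine estimate and produce $O(\eps)$, while $s=1$ is what is needed to neutralize $\eps^{-1}|\tau||t|$ in the sine estimate and produce $O(1)$. The sharpness of these choices is a separate matter treated later in the paper.
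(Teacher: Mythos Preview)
Your proof is correct and is precisely the argument the paper intends: it states that Theorem \ref{th3.1} ``directly follows from estimates \eqref{2.5} and \eqref{2.6} with $\tau$ replaced by $\varepsilon^{-1}\tau$,'' and your elementary inequalities are exactly what is needed to absorb the smoothing factor termwise.
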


Theorem \ref{th3.1} directly follows from estimates \eqref{2.5} and \eqref{2.6} with~$\tau$ replaced by $\varepsilon^{-1} \tau$. Earlier, estimate \eqref{2.5a} was obtained in  
\cite[Theorem 2.7]{BSu5}, and estimate \eqref{2.6a} was proved in  \cite[Theorem 2.3]{M}.

This result can be improved under some additional assumptions.

\begin{theorem}
	\label{th3.2}
	Suppose that the operator $N$ defined by~\eqref{abstr_N} is equal to zero{\rm :} ${N = 0}$. 
	Then for $\varepsilon > 0,$ $\tau \in \mathbb{R}$, and $|t| \le t_0$ we have 
	\begin{align}
	\label{2.7a}
	& \bigl\|  \mathcal{J}_1(t, \eps^{-1}\tau)
	 \bigr\| \varepsilon^{3/2} (t^2 + \varepsilon^2)^{-3/4} \le (2 C_{1} + C_{9}' |\tau|^{1/2}) \varepsilon,
	\\
	\label{2.8a}
	& \bigl\| \mathcal{J}_2(t, \eps^{-1}\tau)
	 \bigr\| \varepsilon^{1/2} (t^2 + \varepsilon^2)^{-1/4} \le C_{7} + C_{10}' |\tau|^{1/2}.
	\end{align}
Here $C_9' = \max \{C_9; 2\}$ and $C_{10}' = \max \{C_{10}; 2 c_*^{-1/2}\}$.
\end{theorem}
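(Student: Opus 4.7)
The plan is to combine the enhanced bounds \eqref{2.7}--\eqref{2.8} (with $\tau$ replaced by $\varepsilon^{-1}\tau$)
\begin{equation*}
\| \mathcal{J}_1(t,\varepsilon^{-1}\tau)\| \le 2C_1|t| + C_9 \varepsilon^{-1}|\tau||t|^3, \qquad
\| \mathcal{J}_2(t,\varepsilon^{-1}\tau)\| \le C_7 + C_{10}\varepsilon^{-1}|\tau|t^2,
\end{equation*}
with appropriate $\sigma$-uniform estimates, and then to interpolate by means of the elementary inequality $\min(a,b) \le \sqrt{ab}$. Unlike in the proof of Theorem~\ref{th3.1}, direct substitution is not enough: the term growing in $\varepsilon^{-1}|\tau|$ has to be traded against a bound that does not depend on~$\sigma$.

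For the cosine estimate the uniform bound is $\|\mathcal{J}_1(t,\sigma)\| \le 2$, immediate from $\|\cos\|\le 1$ on each summand. Combining this with the enhanced bound via the elementary facts $\min(2, A+X)\le A+\min(2,X)$ and $\min(a,b)\le \sqrt{ab}$, I would arrive at
\begin{equation*}
\| \mathcal{J}_1(t,\varepsilon^{-1}\tau)\| \le 2C_1|t| + \sqrt{2C_9}\,|\tau|^{1/2}\varepsilon^{-1/2}|t|^{3/2}.
\end{equation*}
Multiplying by $\varepsilon^{3/2}(t^2+\varepsilon^2)^{-3/4}$ and using $|t|(t^2+\varepsilon^2)^{-3/4}\le \varepsilon^{-1/2}$ and $|t|^{3/2}(t^2+\varepsilon^2)^{-3/4}\le 1$ (both immediate from $t^2+\varepsilon^2\ge t^2$) should reduce the right-hand side to $(2C_1 + \sqrt{2C_9}\,|\tau|^{1/2})\varepsilon$. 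A two-case check ($C_9\ge 2$ vs.\ $C_9<2$) gives $\sqrt{2C_9}\le \max\{C_9,2\}=C_9'$, which yields~\eqref{2.7a}.

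The sine estimate is the main obstacle: there is no obvious uniform bound on $\mathcal{J}_2$ analogous to $\|\mathcal{J}_1\|\le 2$, and the naive $|\sigma|$-bound coming from $|\sin x/x|\le 1$ is useless since it blows up after the substitution $\tau\mapsto \varepsilon^{-1}\tau$. The right replacement comes from reading Condition~\ref{cond_A} strictly: $A(t)\ge c_* t^2 I$ on the whole space $\mathfrak{H}$ forces $\|A(t)^{-1/2}\|\le c_*^{-1/2}|t|^{-1}$ for $0<|t|\le t_0$, and the parallel fact $S\ge c_* I_{\mathfrak{N}}$ gives the same control on the germ term; multiplying by $\|\sin\|\le 1$ produces the $\sigma$-uniform bound $\|\mathcal{J}_2(t,\sigma)\|\le 2c_*^{-1/2}|t|^{-1}$. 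Combining with the enhanced bound via $\min(C_7+A,B)\le C_7+\min(A,B)$ (for $A,B,C_7\ge 0$) and then $\min(a,b)\le \sqrt{ab}$ leads to
\begin{equation*}
\| \mathcal{J}_2(t,\varepsilon^{-1}\tau)\| \le C_7 + \sqrt{2C_{10}c_*^{-1/2}}\,|\tau|^{1/2}\varepsilon^{-1/2}|t|^{1/2}.
\end{equation*}
Multiplication by $\varepsilon^{1/2}(t^2+\varepsilon^2)^{-1/4}$ together with the elementary bounds $(t^2+\varepsilon^2)^{-1/4}\le \varepsilon^{-1/2}$ and $|t|^{1/2}(t^2+\varepsilon^2)^{-1/4}\le 1$ should collapse the right-hand side to $C_7 + \sqrt{2C_{10}c_*^{-1/2}}\,|\tau|^{1/2}$. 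Finally, using $C_{10}=c_*^{-1/2}C_9$, the identity $\sqrt{2C_{10}c_*^{-1/2}} = c_*^{-1/2}\sqrt{2C_9}\le c_*^{-1/2}\max\{C_9,2\} = \max\{C_{10},2c_*^{-1/2}\} = C_{10}'$ closes the constant bookkeeping and completes~\eqref{2.8a}.
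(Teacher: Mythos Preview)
Your proof is correct and uses the same two key ingredients as the paper --- the enhanced bound from the theorem above (Theorem with estimates \eqref{2.7}--\eqref{2.8}) and the trivial uniform bounds $\|\mathcal{J}_1\|\le 2$, $\|\mathcal{J}_2\|\le 2c_*^{-1/2}|t|^{-1}$ --- but combines them differently. The paper argues by a case split at the threshold $|t|=\varepsilon^{1/3}|\tau|^{-1/3}$: for $|t|$ above the threshold the smoothing factor $\varepsilon^{3/2}(t^2+\varepsilon^2)^{-3/4}$ (respectively $|t|^{-1}\varepsilon^{1/2}(t^2+\varepsilon^2)^{-1/4}$) is already $\le \varepsilon|\tau|^{1/2}$ (respectively $\le |\tau|^{1/2}$), so the trivial bound on $\mathcal{J}_l$ suffices; for $|t|$ below the threshold one plugs the enhanced bound directly and uses $|t|^{3/2}\le \varepsilon^{1/2}|\tau|^{-1/2}$. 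Your $\min(a,b)\le\sqrt{ab}$ trick is the ``soft'' version of this dichotomy: it interpolates between the two bounds without choosing a crossover point, and the inequalities $\min(2,A+X)\le A+\min(2,X)$ and $\min(C_7+X,B)\le C_7+\min(X,B)$ let you isolate the term to be interpolated. The bookkeeping with $\sqrt{2C_9}\le\max\{C_9,2\}$ and $c_*^{-1/2}\sqrt{2C_9}\le\max\{C_{10},2c_*^{-1/2}\}$ is clean and lands exactly on the stated constants. Both approaches yield the same result; yours is a bit more compact and avoids the explicit threshold, while the paper's case split makes the crossover scale $|t|\sim\varepsilon^{1/3}|\tau|^{-1/3}$ visible, which is useful later when proving sharpness.
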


\begin{proof}
For $\tau=0$ estimates \eqref{2.7a} and \eqref{2.8a} are obvious. Suppose that $\tau \ne 0$.
If $|t| \ge \eps^{1/3} |\tau|^{-1/3}$, then $\varepsilon^{3/2} (t^2 + \varepsilon^2)^{-3/4} \le \eps |\tau|^{1/2}$,  
whence the left-hand side of  \eqref{2.7a} does not exceed $2 \eps |\tau|^{1/2}$. 

 Now, assume  that  $|t| \le t_0$ and $|t| < \eps^{1/3} |\tau|^{-1/3}$. 
 We apply inequality~\eqref{2.7} with $\tau$ replaced by $\varepsilon^{-1} \tau$:
  \begin{align*}
  \bigl\| \mathcal{J}_1(t, \eps^{-1}\tau)
 	 \bigr\| \varepsilon^{3/2} (t^2 + \varepsilon^2)^{-3/4} 
	  	&\le (2 C_{1} |t| + C_{9} \eps^{-1}
	 |\tau| |t|^3) \varepsilon^{3/2} (t^2 + \varepsilon^2)^{-3/4}
	 \\
	 &\le 2 C_1 \eps + C_9 |\tau| \eps^{1/2} |t|^{3/2} \le
	 2 C_1 \eps + C_9 |\tau|^{1/2} \eps.
\end{align*}
As a result, we arrive at  \eqref{2.7a}.

Similarly, if $|t| \ge \eps^{1/3} |\tau|^{-1/3}$, then $|t|^{-1} \varepsilon^{1/2} (t^2 + \varepsilon^2)^{-1/4} \le 
 |\tau|^{1/2}$. Therefore, by \eqref{abstr_A(t)_nondegenerated} and \eqref{abstr_S_nondegenerated}, 
 the left-hand side of \eqref{2.8a} does not exceed  $2 c_*^{-1/2} |\tau|^{1/2}$.

For  $|t| \le t_0$ and $|t| < \eps^{1/3} |\tau|^{-1/3}$, by 
 \eqref{2.8} with  $\tau$ replaced by $\varepsilon^{-1} \tau$, we have 
  \begin{align*}
  \bigl\| \mathcal{J}_2(t,\tau) \bigr\| \varepsilon^{1/2} (t^2 + \varepsilon^2)^{-1/4} 
	  	&\le (C_{7}  + C_{10} \eps^{-1}|\tau| t^2) \varepsilon^{1/2} (t^2 + \varepsilon^2)^{-1/4}
	 \\
	 &\le C_7  + C_{10} \eps^{-1/2} |\tau|  |t|^{3/2} \le
	 C_7 + C_{10} |\tau|^{1/2}.
\end{align*}
As a result, we obtain estimate \eqref{2.8a}.
\end{proof}

Similarly, Theorem \ref{th2.7} implies the following result.

\begin{theorem}
	\label{th3.3}
	Suppose that the operator $N_0$ defined by~\eqref{abstr_N_invar_repers} is equal to zero{\rm :} $N_0 = 0$. Then for $\varepsilon > 0,$ $\tau \in \mathbb{R}$, and $|t| \le t^{00}$ we have
	\begin{align*}
	& \bigl\| \mathcal{J}_1(t, \eps^{-1}\tau)
	 \bigr \| \varepsilon^{3/2} (t^2 + \varepsilon^2)^{-3/4} \le (C_{11} + C_{12}' |\tau|^{1/2}) \varepsilon;
	\\
	& \bigl\| \mathcal{J}_2(t, \eps^{-1}\tau) 
	 \bigr \| \varepsilon^{1/2} (t^2 + \varepsilon^2)^{-1/4} \le C_{13} + C_{14}' |\tau|^{1/2},
	\end{align*}
	where $C_{12}' = \max \{ C_{12};2\}$ and $C_{14}' = \max \{C_{14}; 2 c_*^{-1/2}\}$.
\end{theorem}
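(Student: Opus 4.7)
The plan is to mirror the proof of Theorem~\ref{th3.2} verbatim, substituting Theorem~\ref{th2.7} for the $N=0$ input estimates used there, and working on the smaller interval $|t|\le t^{00}$ on which Theorem~\ref{th2.7}'s conclusions hold. The case $\tau=0$ is trivial, so I may assume $\tau\ne 0$.

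For $\tau\ne 0$, I would split $[-t^{00},t^{00}]$ at $|t|=\eps^{1/3}|\tau|^{-1/3}$. In the regime $|t|\ge\eps^{1/3}|\tau|^{-1/3}$, the smoothing factors behave favourably: $\eps^{3/2}(t^2+\eps^2)^{-3/4}\le\eps|\tau|^{1/2}$ and $|t|^{-1}\eps^{1/2}(t^2+\eps^2)^{-1/4}\le|\tau|^{1/2}$. Hence it suffices to control the unsmoothed operator norms by $O(1)$ for $\mathcal{J}_1$ and by $O(|t|^{-1})$ for $\mathcal{J}_2$. For $\mathcal{J}_1$, the trivial bound $\|\mathcal{J}_1(t,\eps^{-1}\tau)\|\le 2$ (each cosine is a contraction) yields a contribution of $2\eps|\tau|^{1/2}\le C_{12}'\eps|\tau|^{1/2}$. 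For $\mathcal{J}_2$, the spectral bound $\|\mathcal{J}_2(t,\eps^{-1}\tau)\|\le 2c_*^{-1/2}|t|^{-1}$ (furnished by Condition~\ref{cond_A} and \eqref{abstr_S_nondegenerated}) produces $2c_*^{-1/2}|\tau|^{1/2}\le C_{14}'|\tau|^{1/2}$.

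In the complementary regime $|t|<\eps^{1/3}|\tau|^{-1/3}$, with $|t|\le t^{00}$, I apply the estimates of Theorem~\ref{th2.7} with $\tau$ replaced by $\eps^{-1}\tau$ and multiply by the corresponding smoothing factors. Using the elementary inequalities $|t|\eps^{3/2}(t^2+\eps^2)^{-3/4}\le\eps$, $(t^2+\eps^2)^{-3/4}\le|t|^{-3/2}$, $\eps^{1/2}(t^2+\eps^2)^{-1/4}\le 1$, and $t^2(t^2+\eps^2)^{-1/4}\le|t|^{3/2}$, together with the regime assumption (which, upon cubing, reads $|t|^{3/2}<\eps^{1/2}|\tau|^{-1/2}$) to absorb the residual $\eps^{-1}|\tau|$ factors, one obtains $C_{11}\eps+C_{12}\eps|\tau|^{1/2}$ and $C_{13}+C_{14}|\tau|^{1/2}$ respectively.

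Combining the two regimes yields the claimed inequalities with $C_{12}'=\max\{C_{12},2\}$ and $C_{14}'=\max\{C_{14},2c_*^{-1/2}\}$. I do not expect any substantive new obstacle: Theorem~\ref{th2.7}'s enhanced estimates have exactly the right shape ($|t|$ versus $|t|^3$ in $\mathcal{J}_1$, constant versus $t^2$ in $\mathcal{J}_2$) to be absorbed by the smoothing factors, so the argument is a direct transcription of the proof of Theorem~\ref{th3.2} with $t_0$ replaced by $t^{00}$. The only point demanding attention is keeping the range of $t$ inside $[-t^{00},t^{00}]$ throughout, since Theorem~\ref{th2.7}'s hypothesis $N_0=0$ only yields its conclusions on that shorter interval.
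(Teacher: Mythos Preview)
Your proposal is correct and follows exactly the paper's approach: the paper simply states ``Similarly, Theorem~\ref{th2.7} implies the following result,'' meaning one repeats the proof of Theorem~\ref{th3.2} with the $N_0=0$ estimates from Theorem~\ref{th2.7} in place of the $N=0$ estimates, on the interval $|t|\le t^{00}$. Your splitting at $|t|=\eps^{1/3}|\tau|^{-1/3}$, the trivial bounds in the outer regime, and the absorption of the $|t|^3$ and $t^2$ terms via $|t|^{3/2}<\eps^{1/2}|\tau|^{-1/2}$ in the inner regime are precisely the intended steps.
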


\subsection{Approximation of the operator $A(t)^{-1/2} \sin(\varepsilon^{-1} \tau A(t)^{1/2}) P$ in the 
\hbox{\textquotedblleft energy\textquotedblright}  norm}

We apply Theorem~\ref{abstr_sin_thrm_wo_eps}. By~(\ref{abstr_sin_est_wo_eps}) (with $\tau$ replaced by  $\varepsilon^{-1} \tau$), for $|t| \le t_0$ we have 
\begin{equation*}
\begin{aligned}
&\| A(t)^{1/2}  \Sigma(t, \eps^{-1}\tau)   \| \varepsilon^2 (t^2 + \varepsilon^2)^{-1}  
\\
&\ \ \le (C_{17}|t| + C_{18} \varepsilon^{-1}  |\tau| t^2) \varepsilon^2 (t^2 + \varepsilon^2)^{-1} \le (C_{17} + C_{18}|\tau|) \varepsilon.
\end{aligned}
\end{equation*}
We arrive at the following result which was earlier proved in~\cite[Theorem~2.4]{M}.

\begin{theorem}[see~\cite{M}]
	\label{abstr_sin_general_thrm}
	Suppose that the operator $\Sigma(t,\tau)$ is defined by  \eqref{3.13}.
	For  $\varepsilon > 0, \tau \in \mathbb{R}$, and $|t| \le t_0$ we have 
	\begin{equation*}
	\bigl \| A(t)^{1/2} \Sigma(t,\eps^{-1}\tau) \bigr \| \varepsilon^2 (t^2 + \varepsilon^2)^{-1} \le (C_{17} + C_{18}|\tau|) \varepsilon.
	\end{equation*}
\end{theorem}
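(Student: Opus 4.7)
The plan is to reduce this statement to Theorem~\ref{abstr_sin_thrm_wo_eps}, which has already been proved, via a direct substitution $\tau \mapsto \varepsilon^{-1}\tau$ followed by elementary pointwise estimates on the smoothing factor $\varepsilon^2(t^2+\varepsilon^2)^{-1}$. No new perturbation-theoretic input is needed, since all the work — the invariant representation of $\Sigma(t,\tau)$ and the bound in the \textquotedblleft energy\textquotedblright \ norm — is already encoded in~\eqref{abstr_sin_est_wo_eps}.

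First, I would invoke Theorem~\ref{abstr_sin_thrm_wo_eps} with $\tau$ replaced by $\varepsilon^{-1}\tau$. Since the parameter $t_0$ in~\eqref{abstr_t0_fixation} does not depend on $\tau$, the hypothesis $|t|\le t_0$ is preserved, and we obtain
\begin{equation*}
\| A(t)^{1/2} \Sigma(t,\varepsilon^{-1}\tau) \| \le C_{17} |t| + C_{18}\varepsilon^{-1} |\tau| t^2, \qquad |t|\le t_0.
\end{equation*}
Multiplying both sides by the smoothing factor $\varepsilon^2 (t^2+\varepsilon^2)^{-1}$, it suffices to verify the two pointwise inequalities
\begin{equation*}
|t|\,\varepsilon^2 (t^2+\varepsilon^2)^{-1} \le \varepsilon, \qquad  t^2\,\varepsilon^2 (t^2+\varepsilon^2)^{-1} \le \varepsilon^2,
\end{equation*}
which hold for all $t\in\mathbb{R}$ and $\varepsilon>0$. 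The first follows from $2|t|\varepsilon \le t^2+\varepsilon^2$ (AM--GM), and the second from $t^2 \le t^2+\varepsilon^2$. Combining them gives
\begin{equation*}
(C_{17}|t| + C_{18}\varepsilon^{-1}|\tau| t^2)\,\varepsilon^2(t^2+\varepsilon^2)^{-1} \le C_{17}\varepsilon + C_{18}|\tau|\varepsilon = (C_{17}+C_{18}|\tau|)\varepsilon,
\end{equation*}
which is exactly the required estimate.

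The only conceivable subtlety is that the smoothing factor must absorb the extra $\varepsilon^{-1}$ produced by substituting $\tau \mapsto \varepsilon^{-1}\tau$ in the $|\tau|t^2$ term of~\eqref{abstr_sin_est_wo_eps}. This is precisely why the factor $\varepsilon^2(t^2+\varepsilon^2)^{-1}$ (of combined weight two in $\varepsilon$ and $t$) is the minimal one for which an $O(\varepsilon)$ bound linear in $|\tau|$ is achievable: the term $C_{18}\varepsilon^{-1}|\tau|t^2 \cdot \varepsilon^2(t^2+\varepsilon^2)^{-1}$ balances only because $t^2 \le t^2+\varepsilon^2$. Thus there is no genuine obstacle, and the proof amounts to a one-line reduction plus two trivial inequalities.
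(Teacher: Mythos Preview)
Your proposal is correct and follows exactly the same approach as the paper: substitute $\tau \mapsto \varepsilon^{-1}\tau$ in estimate~\eqref{abstr_sin_est_wo_eps} of Theorem~\ref{abstr_sin_thrm_wo_eps}, then multiply by the smoothing factor and use the elementary bounds $|t|\,\varepsilon^2(t^2+\varepsilon^2)^{-1}\le\varepsilon$ and $t^2\,\varepsilon^2(t^2+\varepsilon^2)^{-1}\le\varepsilon^2$. The paper presents this in a single displayed line immediately before the theorem statement.
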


Theorem~\ref{abstr_sin_enchcd_thrm_wo_eps_1} allows us to improve the result of Theorem~\ref{abstr_sin_general_thrm} in the case where $N = 0$.

\begin{theorem}
	\label{th3.5}
	\!Suppose that the assumptions of Theorem~\emph{\ref{abstr_sin_general_thrm}} are satisfied. 
	\!Suppose that the operator~$N$ defined by~\eqref{abstr_N} is equal to zero{\rm :} $N \!=\! 0$. Then for $\varepsilon \!> \!0, \tau\! \in\! \mathbb{R}$,~and~$|t|\! \le\! t_0$ we have 
	\begin{equation} 
	\label{abstr_sin_enchanced_est_1}    
	\bigl\| A(t)^{1/2}  \Sigma(t, \eps^{-1}\tau) \bigr\| \varepsilon^{3/2} (t^2 + \varepsilon^2)^{-3/4} \le  (C_{17} + C_{19}' |\tau|^{1/2} ) \varepsilon. 
	\end{equation}    
	Here $C'_{19} =  \max\{1+ (2+ 8^{-1/2}) \| X_1\| c_*^{-1/2}, C_{19}\}$.
\end{theorem}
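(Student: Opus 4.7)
The plan mirrors the dichotomy used in the proof of Theorem~\ref{th3.2}, now applied at the level of the energy-norm estimate \eqref{abstr_sin_enchcd_est_wo_eps_1}. The case $\tau = 0$ is immediate, so I assume $\tau \ne 0$ and split the range $|t|\le t_0$ at the threshold $t_* := \varepsilon^{1/3}|\tau|^{-1/3}$.

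In the regime $|t| < t_*$, I would invoke Theorem~\ref{abstr_sin_enchcd_thrm_wo_eps_1} with $\tau$ replaced by $\varepsilon^{-1}\tau$ to obtain
\[
\|A(t)^{1/2}\Sigma(t,\varepsilon^{-1}\tau)\| \le C_{17}|t| + C_{19}\varepsilon^{-1}|\tau|\,|t|^3,
\]
and then multiply by $\varepsilon^{3/2}(t^2+\varepsilon^2)^{-3/4}$. For the $C_{17}$-term I would use the elementary bound $|t|\,\varepsilon^{3/2}(t^2+\varepsilon^2)^{-3/4}\le\varepsilon$, verified by splitting according to $|t|\le\varepsilon$ versus $|t|>\varepsilon$. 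For the $C_{19}$-term I would estimate $|t|^3(t^2+\varepsilon^2)^{-3/4}\le |t|^{3/2}$ and then use $|t|^{3/2} < \varepsilon^{1/2}|\tau|^{-1/2}$ (which follows from $|t|<t_*$) to produce a contribution $C_{19}|\tau|^{1/2}\varepsilon$. Combining the two terms yields $(C_{17}+C_{19}|\tau|^{1/2})\varepsilon$, which is absorbed by the right-hand side of \eqref{abstr_sin_enchanced_est_1} because $C_{19}' \ge C_{19}$.

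In the regime $t_* \le |t| \le t_0$, the smoothing factor already supplies the decay: $(t^2+\varepsilon^2)^{-3/4}\le |t|^{-3/2}\le \varepsilon^{-1/2}|\tau|^{1/2}$, so $\varepsilon^{3/2}(t^2+\varepsilon^2)^{-3/4}\le \varepsilon|\tau|^{1/2}$. Thus it suffices to bound $\|A(t)^{1/2}\Sigma(t,\varepsilon^{-1}\tau)\|$ by a constant depending only on $\|X_1\|$ and $c_*$. I would write
\[
A(t)^{1/2}\Sigma(t,\varepsilon^{-1}\tau) = \sin(\varepsilon^{-1}\tau A(t)^{1/2})P - A(t)^{1/2}(I+tZ)(t^2S)^{-1/2}\sin(\varepsilon^{-1}\tau(t^2S)^{1/2})P,
\]
the first summand having norm $\le 1$ by functional calculus. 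For the second, the key algebraic cancellation $X_0 P = 0$ yields
\[
X(t)(I+tZ)P = tRP + t^2 X_1 Z P,
\]
and then \eqref{abstr_Z_R_S_est}, \eqref{1.2a} together with \eqref{abstr_t0_fixation} give $\|A(t)^{1/2}(I+tZ)P\|\le |t|\,\|X_1\|(1+8^{-1/2})$ for $|t|\le t_0$. Combined with $\|(t^2 S)^{-1/2}\sin(\varepsilon^{-1}\tau(t^2 S)^{1/2})P\|\le c_*^{-1/2}|t|^{-1}$, which follows from $S\ge c_* I_\mathfrak{N}$ and $|\sin|\le 1$, this produces the uniform bound
\[
\|A(t)^{1/2}\Sigma(t,\varepsilon^{-1}\tau)\| \le 1 + (1+8^{-1/2})\|X_1\|c_*^{-1/2}.
\]
Since $\|S\|\le\|X_1\|^2$ together with $S\ge c_* I_\mathfrak{N}$ forces $c_*^{1/2}\le\|X_1\|$, i.e.\ $1\le\|X_1\|c_*^{-1/2}$, the right-hand side is dominated by $1+(2+8^{-1/2})\|X_1\|c_*^{-1/2}\le C_{19}'$, and multiplying by $\varepsilon|\tau|^{1/2}$ yields the required estimate.

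The only nontrivial step is the uniform bound in the large-$|t|$ regime: without exploiting the cancellation $X_0 P = 0$, the operator $A(t)^{1/2}(I+tZ)P$ would only be $O(1)$ rather than $O(|t|)$, and the factor $|t|^{-1}$ arising from $(t^2 S)^{-1/2}$ could not be absorbed, ruining uniformity in $t$. Everything else is routine manipulation of the smoothing factor and direct appeal to the material of \S\ref{abstr_section_1} and Theorem~\ref{abstr_sin_enchcd_thrm_wo_eps_1}.
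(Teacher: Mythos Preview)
Your proof is correct and follows essentially the same approach as the paper's own proof: the dichotomy at $|t| = \varepsilon^{1/3}|\tau|^{-1/3}$, the use of Theorem~\ref{abstr_sin_enchcd_thrm_wo_eps_1} in the small-$|t|$ regime, and the uniform bound on $\|A(t)^{1/2}\Sigma(t,\varepsilon^{-1}\tau)\|$ via the cancellation $X_0 P = 0$ in the large-$|t|$ regime. The only cosmetic difference is that you group $X_0 Z + X_1 = R$ on $\mathfrak{N}$ to obtain $\|X(t)(I+tZ)P\|\le (1+8^{-1/2})\|X_1\|\,|t|$, whereas the paper bounds the three terms $tX_0 ZP$, $tX_1 P$, $t^2 X_1 ZP$ separately and gets $(2+8^{-1/2})\|X_1\|\,|t|$; your constant is sharper, so the comparison with $C_{19}'$ goes through trivially and the auxiliary remark $c_*^{1/2}\le\|X_1\|$ is not actually needed.
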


\begin{proof}
It suffices to assume that  $\tau \ne 0$. Note that for
 $|t| \ge {\varepsilon}^{1/3} |\tau|^{-1/3}$ we have  $\varepsilon^{3/2} (t^2 + \varepsilon^2)^{-3/4} \le \varepsilon |\tau|^{1/2}$.  By \eqref{abstr_t0_fixation} and \eqref{1.2a}, 
$$
\|   A(t)^{1/2} (P + tZ P)\| = \| (X_0 + t X_1) (P+ tZP) \| \le (2+ 8^{-1/2}) \| X_1\| |t|, \quad |t| \le t_0.
$$
Combining this with \eqref{abstr_S_nondegenerated} and \eqref{3.13}, we see that  
the norm $\| A(t)^{1/2}  \Sigma(t, \eps^{-1}\tau)\|$ does not exceed the constant 
$\widetilde{C}_{19}= 1+ (2+ 8^{-1/2}) \| X_1\| c_*^{-1/2}$ for $|t| \le t_0$,  whence the left-hand 
side of~(\ref{abstr_sin_enchanced_est_1}) does not exceed
 $\widetilde{C}_{19} \varepsilon |\tau|^{1/2}$ for $|t|\le t_0$ and $|t| \ge \eps^{1/3} |\tau|^{-1/3}$.

By~(\ref{abstr_sin_enchcd_est_wo_eps_1}) with  $\tau$ replaced by  $\varepsilon^{-1} \tau$, for $|t| \le t_0$ 
and $|t| < {\varepsilon}^{1/3} |\tau|^{-1/3}$ we obtain 
\begin{align*}
\bigl\| A(t)^{1/2} &\Sigma(t, \eps^{-1}\tau)  \bigr\| \varepsilon^{3/2} (t^2 + \varepsilon^2)^{-3/4} 
\\
& \le \left( C_{17}|t| + C_{19} \varepsilon^{-1} | \tau | |t|^3 \right) \varepsilon^{3/2} (t^2 + \varepsilon^2)^{-3/4} 
\\
&\le C_{17} \varepsilon + C_{19} |\tau| \varepsilon^{1/2} |t|^{3/2} \le  (C_{17}  + C_{19} |\tau|^{1/2}) \varepsilon.
\end{align*}
As a result, we arrive at  estimate  \eqref{abstr_sin_enchanced_est_1} with the constant  
$C_{19}' = \max \{ C_{19}; \wt{C}_{19}\}$.
\end{proof}

Similarly, Theorem~\ref{abstr_sin_enchcd_thrm_wo_eps_2} implies the following result.

\begin{theorem}
	\label{abstr_sin_enchanced_thrm_2}
	\!Suppose that the assumptions of Theorem~\emph{\ref{abstr_sin_general_thrm}} are satisfied. 
	\!Suppose that the operator~$\!N_0$ defined by~\eqref{abstr_N_invar_repers} is equal to zero{\rm:} $N_0\! =\! 0$. \!Then for  $\varepsilon \!>\! 0, \tau\! \in\! \mathbb{R}$, and~$|t| \!\le \!t^{00}$ we have
	\begin{equation*} 
	\bigl\| A(t)^{1/2}  \Sigma(t, \eps^{-1} \tau)  \bigr\| \varepsilon^{3/2} (t^2 + \varepsilon^2)^{-3/4} \le  (C_{20} + C_{21}' |\tau|^{1/2} )  \varepsilon. 
	\end{equation*}    
	Here $C'_{21} =  \max\{1+ (2+ 8^{-1/2}) \| X_1\| c_*^{-1/2}, C_{21}\}$.
\end{theorem}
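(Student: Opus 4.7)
The plan is to follow the template of the proof of Theorem~\ref{th3.5}, merely replacing the input estimate from Theorem~\ref{abstr_sin_enchcd_thrm_wo_eps_1} with the one from Theorem~\ref{abstr_sin_enchcd_thrm_wo_eps_2} (and shrinking the allowable range of $t$ to $|t|\le t^{00}$). The target inequality is homogeneous in the trivial case $\tau=0$ and the restriction $|t|\le t^{00}$ is inherited from the hypothesis $N_0=0$, so we may assume $\tau\ne 0$ and split the interval $[-t^{00},t^{00}]$ at the threshold $|t|=\varepsilon^{1/3}|\tau|^{-1/3}$, exactly as in the proof of Theorem~\ref{th3.5}.

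\textbf{Regime 1:} $\varepsilon^{1/3}|\tau|^{-1/3}\le |t|\le t^{00}$. In this range the smoothing factor is small:
\[
\varepsilon^{3/2}(t^2+\varepsilon^2)^{-3/4}\le \varepsilon^{3/2}|t|^{-3/2}\le \varepsilon|\tau|^{1/2}.
\]
Therefore it suffices to bound $\|A(t)^{1/2}\Sigma(t,\eps^{-1}\tau)\|$ by an absolute constant on $[-t_0,t_0]\supset[-t^{00},t^{00}]$. Using~\eqref{abstr_t0_fixation} and~\eqref{1.2a} just as in the proof of Theorem~\ref{th3.5},
\[
\|A(t)^{1/2}(P+tZP)\|=\|(X_0+tX_1)(P+tZP)\|\le (2+8^{-1/2})\|X_1\||t|,\quad |t|\le t_0,
\]
and combining this with~\eqref{abstr_S_nondegenerated} and the definition~\eqref{3.13} of $\Sigma(t,\cdot)$, the norm $\|A(t)^{1/2}\Sigma(t,\eps^{-1}\tau)\|$ is bounded by $\widetilde{C}_{21}:=1+(2+8^{-1/2})\|X_1\|c_*^{-1/2}$. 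Hence the left-hand side of the target estimate is at most $\widetilde{C}_{21}\varepsilon|\tau|^{1/2}$ in this regime.

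\textbf{Regime 2:} $|t|<\varepsilon^{1/3}|\tau|^{-1/3}$ and $|t|\le t^{00}$. Here we invoke Theorem~\ref{abstr_sin_enchcd_thrm_wo_eps_2} (which is applicable precisely because $N_0=0$ and $|t|\le t^{00}$) with $\tau$ replaced by $\varepsilon^{-1}\tau$:
\[
\|A(t)^{1/2}\Sigma(t,\eps^{-1}\tau)\|\le C_{20}|t|+C_{21}\varepsilon^{-1}|\tau||t|^3.
\]
Multiplying by the smoothing factor and using $(t^2+\varepsilon^2)^{-3/4}\le\varepsilon^{-3/2}$ for the first term and $|t|<\varepsilon^{1/3}|\tau|^{-1/3}$ for the second, we obtain
\[
\|A(t)^{1/2}\Sigma(t,\eps^{-1}\tau)\|\varepsilon^{3/2}(t^2+\varepsilon^2)^{-3/4}\le C_{20}\varepsilon+C_{21}|\tau|\varepsilon^{1/2}|t|^{3/2}\le (C_{20}+C_{21}|\tau|^{1/2})\varepsilon.
\]

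Combining the two regimes yields the claimed bound with $C_{21}'=\max\{C_{21},\widetilde{C}_{21}\}$. There is no serious obstacle here: the only nontrivial choice is the cutoff $|t|=\varepsilon^{1/3}|\tau|^{-1/3}$, which is precisely the value that balances the two terms $|t|\varepsilon^{-1/2}$ and $|\tau||t|^{3}\varepsilon^{-3/2}$ appearing after absorbing the smoothing factor, and this is the same balancing used in Theorem~\ref{th3.5}. The proof is essentially mechanical once Theorem~\ref{abstr_sin_enchcd_thrm_wo_eps_2} is in hand.
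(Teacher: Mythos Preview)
Your proof is correct and follows exactly the template the paper uses for Theorem~\ref{th3.5}; the paper itself simply says ``Similarly, Theorem~\ref{abstr_sin_enchcd_thrm_wo_eps_2} implies the following result,'' so you have filled in precisely the intended argument. One small slip: in Regime~2 the justification you give for the first term, ``$(t^2+\varepsilon^2)^{-3/4}\le\varepsilon^{-3/2}$,'' would only yield $C_{20}|t|$, not $C_{20}\varepsilon$; the correct inequality is $|t|\,\varepsilon^{3/2}(t^2+\varepsilon^2)^{-3/4}\le \varepsilon^{3/2}(t^2+\varepsilon^2)^{-1/4}\le\varepsilon$, using $|t|\le (t^2+\varepsilon^2)^{1/2}$.
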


\begin{remark}
	\label{abstr_constants_remark}
We  have tracked how the constants in  the estimates depend on the parameters of  the problem. The constants  
	$C_1$, $C_{6}$,  $C_{7}$, $C_8$ from Theorem~\ref{th3.1};
	$C_9'$, $C_{10}'$ from Theorem \ref{th3.2}; $C_{17}$, $C_{18}$ from 
	Theorem~\ref{abstr_sin_general_thrm}; $C_{19}'$ from Theorem  
	\ref{th3.5}  are estimated by polynomials with {\rm (}absolute{\rm )} positive coefficients of the parameters $\delta^{-1/2}$, $c_*^{-1/2}$, $\|X_1\|$. The constants $C_{11}$,  $C_{12}'$,
	$C_{13}$, $C_{14}'$ from Theorem \ref{th3.3};
	$C_{20}$,  $C'_{21}$ from Theorem~\ref{abstr_sin_enchanced_thrm_2} are controlled by polynomials with positive coefficients of the same parameters, and also of  $(c^{\circ})^{-1}$ and $n$.
\end{remark}

\section{Sharpness of the results of \S \ref{abstr_aprox_thrm_section}}

\subsection{Sharpness of the results regarding the smoothing factor}
The following statement obtained in \cite[Theorem~3.5]{DSu} confirms that Theorem \ref{th3.1}  is sharp in the general case.

\begin{theorem}[see~\cite{DSu}]
	\label{th3.8}
	Suppose that the operators $\mathcal{J}_1(t,\tau)$ and $\mathcal{J}_2(t,\tau)$ are defined by  \eqref{3.01} and  \eqref{3.02}. Suppose that $N_0 \ne 0$. 
	
	\noindent $1^\circ$. Let $\tau \ne 0$ and $0 \le s < 2$. Then there does not exist a constant 
	\hbox{$C(\tau) > 0$} such that  the inequality
	\begin{equation}
	\label{**.1}
	\bigl\| \mathcal{J}_1(t,\eps^{-1}\tau) 
	\bigr \|  \varepsilon^{s} (t^2 + \varepsilon^2)^{-s/2} \le  C(\tau) \varepsilon
	\end{equation}
holds for all sufficiently small $|t|$ and $\varepsilon > 0$.
	
	\noindent $2^\circ$. Let $\tau \ne 0$ and $0 \le r < 1$. Then there does not exist a constant 
	\hbox{$C(\tau) > 0$} such that the inequality 
	\begin{equation}
	\label{**.2}
	\bigl  \| \mathcal{J}_2(t,\eps^{-1}\tau) 
	 \bigr  \|  \varepsilon^{r} (t^2 + \varepsilon^2)^{-r/2} \le  C(\tau)
	\end{equation}
holds for all sufficiently small $|t|$ and $\varepsilon > 0$.
\end{theorem}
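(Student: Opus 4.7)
My plan is to produce, for each $\tau\ne 0$, a sequence $(t_k,\eps_k)\to(0,0)$ along which the left-hand sides of \eqref{**.1} and \eqref{**.2} are of strictly larger order than the right-hand sides, contradicting any putative $C(\tau)$. Since $N_0\ne 0$, \eqref{abstr_N_0_matrix_elem} furnishes an index $l$ with $\mu_l\ne 0$; fix the associated unit embryo $\omega_l\in\mathfrak{N}$ (so $S\omega_l=\gamma_l\omega_l$ with $\gamma_l\ge c_*>0$) and lower-bound the operator norms by the diagonal matrix entries $|(\mathcal{J}_j(t,\eps^{-1}\tau)\omega_l,\omega_l)|$, $j=1,2$.

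The first step is a \emph{scalar reduction}. Using \eqref{abstr_F(t)_threshold_1}, the analytic expansion $\varphi_l(t)=\omega_l+t\psi_l^{(1)}+O(t^2)$ (so that $|(\omega_l,\varphi_j(t))|^2=\delta_{jl}+O(t^2)$), and the fact that $A(t)\ge 7\delta$ on $(I-F(t))\mathfrak{H}$ (since the rest of the spectrum of $A_0$ lies beyond $d^0-\delta$), one obtains
\begin{align*}
(\mathcal{J}_1(t,\eps^{-1}\tau)\omega_l,\omega_l) &= \cos(\eps^{-1}\tau\lambda_l(t)^{1/2})-\cos(\eps^{-1}\tau\sqrt{\gamma_l}\,|t|)+O(|t|),\\
(\mathcal{J}_2(t,\eps^{-1}\tau)\omega_l,\omega_l) &= \frac{\sin(\eps^{-1}\tau\lambda_l(t)^{1/2})}{\lambda_l(t)^{1/2}}-\frac{\sin(\eps^{-1}\tau\sqrt{\gamma_l}\,|t|)}{\sqrt{\gamma_l}\,|t|}+O(|t|),
\end{align*}
uniformly in $\eps>0$. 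In the second identity the crucial point is that on $(I-F(t))\mathfrak{H}$ the functional-calculus operator $A(t)^{-1/2}\sin(\eps^{-1}\tau A(t)^{1/2})$ is bounded by $(7\delta)^{-1/2}$ independently of $\eps$, so the complementary component contributes $O(|t|)$, not $O(|t|/\eps)$.

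Next I exploit \eqref{abstr_A(t)_eigenvalues_series}: for $t>0$ small, $\lambda_l(t)^{1/2}=\sqrt{\gamma_l}\,t+\tfrac{\mu_l}{2\sqrt{\gamma_l}}t^2+O(t^3)$. Substitute the ansatz $t=\eta\sqrt{\eps}$ with $\eta>0$, and set $\alpha_\eps:=\tau\sqrt{\gamma_l}\,\eta\,\eps^{-1/2}$, $\beta:=\tau\mu_l\eta^2/(2\sqrt{\gamma_l})$. Then $\eps^{-1}\tau\lambda_l(t)^{1/2}=\alpha_\eps+\beta+O(\sqrt{\eps})$ and $\eps^{-1}\tau\sqrt{\gamma_l}\,t=\alpha_\eps$, whence trigonometric addition yields
\begin{align*}
(\mathcal{J}_1(t,\eps^{-1}\tau)\omega_l,\omega_l) &= \cos(\alpha_\eps)(\cos\beta-1)-\sin(\alpha_\eps)\sin\beta+O(\sqrt{\eps}),\\
(\mathcal{J}_2(t,\eps^{-1}\tau)\omega_l,\omega_l) &= \tfrac{1}{\sqrt{\gamma_l}\,\eta\sqrt{\eps}}\bigl[\cos(\alpha_\eps)\sin\beta+\sin(\alpha_\eps)(\cos\beta-1)\bigr]+O(1).
\end{align*}
Choosing $\eta^2=\pi\sqrt{\gamma_l}/|\tau\mu_l|$ gives $|\sin\beta|=1$, $\cos\beta=0$. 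Since $\eps\mapsto\alpha_\eps$ is continuous and tends to $+\infty$ as $\eps\to 0^+$, I can extract a subsequence $\eps_k\to 0$ with $\alpha_{\eps_k}\equiv\pi/2\pmod{2\pi}$. Along it, $|(\mathcal{J}_1\omega_l,\omega_l)|\ge\tfrac12$ and $|(\mathcal{J}_2\omega_l,\omega_l)|\ge c\,\eps_k^{-1/2}$ with $c>0$ for large $k$. Since $t_k^2+\eps_k^2\asymp\eps_k$, the putative inequalities \eqref{**.1}, \eqref{**.2} would force $\eps_k^{s/2}\lesssim C(\tau)\,\eps_k$ and $\eps_k^{(r-1)/2}\lesssim C(\tau)$; the former fails for small $\eps_k$ when $s<2$, the latter whenever $r<1$.

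The main obstacle is the scalar reduction. I must verify carefully that isolating the $j=l$ term in the eigen-expansion of $F(t)\omega_l$ introduces only an $O(|t|)$ error, and in particular that the off-diagonal contributions with overlaps $|(\omega_l,\varphi_j(t))|^2=O(t^2)$ for $j\ne l$, as well as the complementary piece $(I-F(t))\omega_l=O(|t|)$, do not get amplified by the $\eps^{-1}$ factor present in the full norm of $A(t)^{-1/2}\sin(\eps^{-1}\tau A(t)^{1/2})$. This control is secured precisely by the spectral gap between $F(t)\mathfrak{H}$ and its orthogonal complement, which bounds $A(t)^{-1/2}$ uniformly on the latter.
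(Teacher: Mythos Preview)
Your proof is correct and follows essentially the same approach as the paper (and the source \cite{DSu} from which this theorem is quoted): the paper's proofs of the parallel results (e.g., Theorems~\ref{th3.9}, \ref{abstr_s<2_general_thrm}, \ref{th3.13}) likewise reduce to the scalar eigenvalue branch with $\mu_j\ne 0$, substitute $t\sim c\,\eps^{1/2}$, and pick a discrete sequence $\eps_k\to 0$ along which the trigonometric expression stays bounded away from zero. The only cosmetic differences are that the paper chooses the constant so that the phase shift equals $\pi$ (making the leading trigonometric term flip sign) and bounds the operator norm by applying to $\omega_j$ rather than by the diagonal entry; your choice of phase shift $\pi/2$ together with $\alpha_{\eps_k}\equiv\pi/2\pmod{2\pi}$ works equally well.
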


Next, we confirm the sharpness of Theorems \ref{th3.2} and \ref{th3.3}. 

\begin{theorem}
	\label{th3.9}
	Let $N_0 = 0$ and $\mathcal{N}^{(q)} \ne 0$ for some $q\in \{1,\dots,p\}$.
	 
	\noindent $1^\circ$. Let $\tau \ne 0$ and $0 \le s < 3/2$. Then there does not exist a constant $C(\tau) > 0$
	such that \eqref{**.1}  holds for all sufficiently small $|t|$ and $\varepsilon > 0$.

	\noindent $2^\circ$. Let $\tau \ne 0$ and $0 \le r < 1/2$.  Then there does not exist a constant $C(\tau) > 0$ such that  \eqref{**.2}
	holds for all sufficiently small $|t|$ and $\varepsilon > 0$.
\end{theorem}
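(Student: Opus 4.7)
The plan is to follow the approach of Theorem~\ref{th3.8}: test the failure of \eqref{**.1} and \eqref{**.2} on a specific unit vector $\omega_l$ at a critical scale of $t$ relative to $\varepsilon$. Since $\mathcal{N}^{(q)}\ne 0$ and $N_0=0$, Proposition~\ref{Prop_nu_1} furnishes an index $l\in\{i(q),\ldots,i(q)+k_q-1\}$ with $\nu_l\ne 0$. With the cubic correction absent, the expansion $\lambda_l(t)=\gamma_l t^2+\nu_l t^4+O(t^5)$ gives
\begin{equation*}
\varepsilon^{-1}\tau \lambda_l(t)^{1/2} = \varepsilon^{-1}\tau\gamma_l^{1/2} t + \frac{\tau\nu_l}{2\gamma_l^{1/2}}\,\varepsilon^{-1} t^3 + O\bigl(\varepsilon^{-1}|t|^4\bigr),
\end{equation*}
so the phase discrepancy inside the two cosines in $\mathcal{J}_1$ (and the two sines in $\mathcal{J}_2$) becomes of order unity precisely at $t\sim\varepsilon^{1/3}$. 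This is the new critical scale, replacing the $\varepsilon^{1/2}$ scale of Theorem~\ref{th3.8}.

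Set $c_\tau := (2\pi\gamma_l^{1/2}/|\tau\nu_l|)^{1/3}$, $t_n := c_\tau\varepsilon_n^{1/3}$, and pick a sequence $\varepsilon_n\to 0$ so that $A_n := \varepsilon_n^{-1}\tau\gamma_l^{1/2} t_n = \tau\gamma_l^{1/2} c_\tau\varepsilon_n^{-2/3}$ is an even multiple of $\pi$ for part~$1^\circ$ and congruent to $\pi/2$ modulo $\pi$ for part~$2^\circ$ (such $\varepsilon_n$ exist and tend to zero as the integer parameter goes to infinity). Since $B_n := \varepsilon_n^{-1}\tau\lambda_l(t_n)^{1/2} = A_n \pm \pi + o(1)$, the first choice yields $|\cos B_n-\cos A_n|\to 2$ and the second yields $|\sin B_n-\sin A_n|\to 2$. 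Using the threshold approximation \eqref{abstr_F(t)_threshold_1} and the analytic perturbation expansions $\varphi_k(t)=\omega_k+O(|t|)$, I will derive the reduction formulas
\begin{align*}
\mathcal{J}_1(t_n,\varepsilon_n^{-1}\tau)\omega_l &= (\cos B_n - \cos A_n)\,\omega_l + O(t_n),\\
\mathcal{J}_2(t_n,\varepsilon_n^{-1}\tau)\omega_l &= \bigl(\gamma_l^{1/2} t_n\bigr)^{-1}(\sin B_n - \sin A_n)\,\omega_l + O(1),
\end{align*}
which give $\|\mathcal{J}_1(t_n,\varepsilon_n^{-1}\tau)\|\ge c_0(\tau)$ and $\|\mathcal{J}_2(t_n,\varepsilon_n^{-1}\tau)\|\ge c_1(\tau)\varepsilon_n^{-1/3}$ for $n$ large enough.

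Combined with the smoothing factor $\varepsilon_n^s(t_n^2+\varepsilon_n^2)^{-s/2}\asymp\varepsilon_n^{2s/3}$, the left-hand side of \eqref{**.1} at $(t_n,\varepsilon_n)$ is bounded below by $\gtrsim\varepsilon_n^{2s/3}$, while the right-hand side is $C(\tau)\varepsilon_n$. For $s<3/2$ the ratio $\varepsilon_n^{2s/3-1}\to\infty$, contradicting the supposed inequality. Analogously, the left-hand side of \eqref{**.2} is $\gtrsim\varepsilon_n^{(2r-1)/3}$, which diverges for $r<1/2$ and cannot be bounded by $C(\tau)$.

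The main technical step will be the proof of the reduction formulas. Writing $\omega_l = F(t_n)\omega_l + (I-F(t_n))\omega_l$, one has $\|(I-F(t_n))\omega_l\|=O(|t_n|)$ by \eqref{abstr_F(t)_threshold_1}, and $F(t_n)\omega_l = \sum_k(\omega_l,\varphi_k(t_n))\varphi_k(t_n)$ with $(\omega_l,\varphi_k(t))=\delta_{lk}+O(|t|)$. The off-diagonal ($k\ne l$) terms contribute $O(|t_n|)=O(\varepsilon_n^{1/3})$ in norm to $\mathcal{J}_1(t_n,\varepsilon_n^{-1}\tau)\omega_l$ (negligible against the unit-size leading term), and $O(1)$ to $\mathcal{J}_2(t_n,\varepsilon_n^{-1}\tau)\omega_l$ (negligible against the $\varepsilon_n^{-1/3}$ leading term, since the factor $\lambda_k(t_n)^{-1/2}=O(\varepsilon_n^{-1/3})$ is compensated by $(\omega_l,\varphi_k(t_n))=O(\varepsilon_n^{1/3})$). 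The $(I-F(t_n))\omega_l$ contribution is controlled similarly, using $A(t)\ge 3\delta I$ on $\mathfrak{F}(t;[0,\delta])^\perp$. Intra-cluster branches $k\ne l$ with $\gamma_k=\gamma_l$ require no special handling: the ``unperturbed'' side acts on $\omega_l$ purely through $\gamma_l$ and carries no $\nu$-dependence, so no phase resonance with the perturbed side can occur.
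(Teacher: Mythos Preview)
Your proposal is correct and follows essentially the same route as the paper: pick an index with $\nu_l\ne 0$ (via Proposition~\ref{Prop_nu_1}), choose the critical scale $t=c_\tau\varepsilon^{1/3}$ so that the phase correction $\frac{\tau\nu_l}{2\gamma_l^{1/2}}\varepsilon^{-1}t^3$ equals $\pm\pi$, then select a sequence $\varepsilon_n$ along which the leading phase hits the appropriate value to make $|\cos B_n-\cos A_n|$ (resp.\ $|\sin B_n-\sin A_n|$) bounded away from zero. The paper organizes the reduction slightly differently---it first passes to the diagonal form \eqref{**.6} (resp.\ \eqref{**.12}) via \eqref{**.3}--\eqref{**.5} and \eqref{**.10}--\eqref{**.11} and then evaluates on $\omega_j$---whereas you compute $\mathcal{J}_1\omega_l$ and $\mathcal{J}_2\omega_l$ directly; but the content, constants, and sequences are the same.
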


\begin{proof}
Let us check statement $1^\circ$.
It suffices to assume that  \hbox{$1\le s < 3/2$}. Since $F(t)^\perp P = (P - F(t))P$,
 then~(\ref{abstr_F(t)_threshold_1}) implies that  
\begin{equation}\label{**.3}
\begin{split}
\| \cos(\eps^{-1} \tau A(t)^{1/2}) F(t)^\perp P  \| \eps (t^2 + \eps^2)^{-1/2}
\le C_1 |t| \eps (t^2 + \eps^2)^{-1/2} \le C_1 \eps, \quad
   |t| \le t_0.
\end{split}
\end{equation}

	Next, for $ |t| \le t_0$ we have 
	\begin{equation}
	\label{**.4}
	\cos(\varepsilon^{-1} \tau A(t)^{1/2})F (t) = \sum_{l=1}^{n} \cos \bigl( \varepsilon^{-1} \tau \sqrt{\lambda_l(t)}\bigr) (\,\cdot\,,\varphi_l(t))\varphi_l(t).
	\end{equation}
	From the convergence of series~(\ref{abstr_A(t)_eigenvectors_series}) it follows that 
	\begin{equation}
	\label{**.5}
	\| \varphi_l(t) - \omega_l \| \le c_1 |t|, \quad |t| \le t_*, \quad l = 1,\ldots,n.
	\end{equation}

	We prove by contradiction. Suppose that, for some \hbox{$0 \ne \tau \in \mathbb{R}$} and 
	\hbox{$1 \le s < 3/2$},  inequality~\eqref{**.1} holds for all sufficiently small $|t|$ and~$\varepsilon$. 
	By \eqref{abstr_SP_repr_gamma_omega} and \eqref{**.3}--\eqref{**.5}, this is equivalent to existence of a constant $\widetilde{C}(\tau) > 0$ such that the  inequality
	\begin{equation}
	\label{**.6}
	\Bigl \|  \sum_{l=1}^n \left( \cos \bigl(\varepsilon^{-1} \tau \sqrt{\lambda_l(t)} \bigr)   - 
	\cos \bigl(\varepsilon^{-1} \tau |t| \sqrt{\gamma_l} \bigr) \right) (\,\cdot\,, \omega_l) \omega_l  \Bigr\|
	  \varepsilon^{s} (t^2 + \varepsilon^2)^{-s/2}
		\le \widetilde{C} (\tau) \varepsilon
	\end{equation}
holds for all sufficiently small $|t|$ and $\varepsilon$.
	
	According to \eqref{abstr_N_0_matrix_elem} and Proposition \ref{Prop_nu_1}, the conditions $N_0 = 0$ and $\mathcal{N}^{(q)} \ne 0$ for some $q\in \{1,\dots,p\}$ mean that in expansions \eqref{abstr_A(t)_eigenvalues_series} we have $\mu_l=0$ for any $l=1,\dots,n$ and $\nu_j \ne 0$ at least for one $j$. Then 
	$$
	\lambda_j(t) = \gamma_j t^2 + \nu_j t^4 + O(|t|^5), \quad |t|\le t_*.
	$$
	Hence, decreasing $t_*$ if necessary, we have 
	\begin{equation}
	\label{**.7}
	\sqrt{\lambda_j(t)} = \sqrt{\gamma_j} |t| \Bigl(1 + \frac{\nu_j}{2\gamma_j} t^2 + O(|t|^3) \Bigr), \quad |t|\le t_*.
	\end{equation}
	We apply the operator under the norm sign in \eqref{**.6} to the element $\omega_j$. Then
	\begin{equation}
	\label{**.8}
	\Bigl|   \cos \Bigl(\varepsilon^{-1} \tau \sqrt{\lambda_j(t)} \Bigr)   - 
	\cos \left( \varepsilon^{-1} \tau |t| \sqrt{\gamma_j} \right)   \Bigr|
	  \varepsilon^{s} (t^2 + \varepsilon^2)^{-s/2} \le \widetilde{C} (\tau) \varepsilon
	\end{equation}
	for all sufficiently small $|t|$ and $\varepsilon$.
	Next, we put  
	\begin{equation}
	\label{**.9}
	t = t(\varepsilon) = (2 \pi)^{1/3} \gamma_j^{1/6} | \nu_j \tau |^{-1/3}  \varepsilon^{1/3} = c \varepsilon^{1/3}.
	\end{equation}
	Then  
	$$
	\cos(\varepsilon^{-1} \tau t(\varepsilon) \sqrt{\gamma_j}) = \cos(\alpha_j \varepsilon^{-2/3}),
	$$
	where
	$\alpha_j := (\operatorname{sgn} \tau) (2\pi)^{1/3} \gamma_j^{2/3} |\tau|^{2/3} |\nu_j|^{-1/3}$.
	 Assuming that  $\varepsilon$ (and then also $t(\eps)$) is sufficiently small and taking~\eqref{**.7} into account, we have 
	 $\varepsilon^{-1} \tau \sqrt{\lambda_j(t(\eps))}  = \alpha_j \varepsilon^{-2/3} + 
\pi \operatorname{sgn} (\tau \nu_j) + O(\varepsilon^{1/3})$, whence 
	 $\cos \left(\varepsilon^{-1} \tau \sqrt{\lambda_j(t(\eps))} \right) =  - \cos\left(\alpha_j \varepsilon^{-2/3} + O(\varepsilon^{1/3})\right)$.
	Thus, from~\eqref{**.8}  it follows that the expression 
	$$
	\bigl| \cos \bigl(\alpha_j \varepsilon^{-2/3} + O(\varepsilon^{1/3})\bigr) + \cos\bigl(\alpha_j \varepsilon^{-2/3}
	\bigr) \bigr| \varepsilon^{2 s/3-1} (c^2 + \varepsilon^{4/3})^{-s/2}$$ 
	is uniformly bounded for small $\varepsilon > 0$. But this is not true if  $s < 3/2$.	(It suffices to consider the sequence $\varepsilon_k = \alpha_j^{3/2} (2\pi k)^{-3/2}$, $k \in \mathbb{N}$.) We arrive at a contradiction. This completes the proof of statement $1^\circ$.

	Statement $2^\circ$ is checked similarly. By~(\ref{abstr_F(t)_threshold_1}) and  
	\eqref{abstr_A(t)_nondegenerated},
	 \begin{equation}
	\label{**.10}
	\bigl\|  A(t)^{-1/2} \sin(\varepsilon^{-1} \tau A(t)^{1/2})  F(t)^\perp P \bigr \|
	  \le c_*^{-1/2}{C}_1, \quad |t|\le t_0.
	  \end{equation}
	
	Next, for $ |t| \le t_0$ we have 
	\begin{equation}
	\label{**.11}
	A(t)^{-1/2} \sin(\varepsilon^{-1} \tau A(t)^{1/2})F (t) = \sum_{l=1}^{n} \frac{\sin\bigl( \varepsilon^{-1} \tau \sqrt{\lambda_l(t)}\bigr) }{\sqrt{\lambda_l(t)}}(\,\cdot\,,\varphi_l(t))\varphi_l(t).
	\end{equation}
	
	Suppose that, for some  $\tau \ne 0$ and $0\le r < 1/2$, inequality \eqref{**.2} holds for all sufficiently small  $|t|$ and $\varepsilon$. Combining this with  \eqref{abstr_SP_repr_gamma_omega}, 
	\eqref{abstr_A(t)_nondegenerated}, \eqref{**.5}, \eqref{**.10}, and \eqref{**.11}, we see that there exists a constant $\check{C}(\tau)$ such that the inequality
	\begin{equation}
	\label{**.12}
	\Bigl\| \sum_{l=1}^{n} \Big( \frac{\sin(\varepsilon^{-1} \tau \sqrt{\mathstrut \lambda_l(t)})}{\sqrt{\mathstrut \lambda_l(t)}} - 
	\frac{\sin(\varepsilon^{-1} \tau |t| \sqrt{\mathstrut \gamma_l})}{|t| \sqrt{\mathstrut \gamma_l}} \Big)
	(\,\cdot\,,\omega_l)\omega_l  \Bigr \| 
	\varepsilon^{r} (t^2 + \varepsilon^2)^{-r/2}  \le \check{C} (\tau)
	\end{equation}
	holds for all sufficiently small $|t|$ and $\varepsilon$.
	
	Applying the operator under the norm sign in  \eqref{**.12} to the element $\omega_j$, we conclude that
	\begin{equation*}
	\Big| \frac{\sin(\varepsilon^{-1} \tau \sqrt{\mathstrut \lambda_j(t)})}{\sqrt{\mathstrut \lambda_j(t)}} - \frac{\sin(\varepsilon^{-1} \tau |t| \sqrt{\mathstrut \gamma_j})}{|t| \sqrt{\mathstrut \gamma_j}} \Big| \varepsilon^{r}  (t^2 + \varepsilon^2)^{-r/2} \le \check{C} (\tau)
	\end{equation*}
	 for all sufficiently small $|t|$ and $\varepsilon$.
	 Substituting $t=t(\eps) = c \eps^{1/3}$ as in \eqref{**.9} and using \eqref{**.7}, we see that the expression
	 $$
	 \left| \bigl( 1+ O(\eps^{2/3})\bigr) \sin \bigl(\alpha_j \varepsilon^{-2/3} + O(\varepsilon^{1/3}) \bigr)   
	 + \sin\bigl(\alpha_j \varepsilon^{-2/3} \bigr) \right| \varepsilon^{(2 r -1)/3} (c^2 + \varepsilon^{4/3})^{-r/2}
	 $$  
	is uniformly bounded for small $\varepsilon > 0$. But this is not true if $r < 1/2$.	(It suffices to consider the sequence $\varepsilon_k = \alpha_j^{3/2} (2\pi k + \pi/2)^{-3/2}$, $k \in \mathbb{N}$.) This contradiction completes the proof of statement~$2^\circ$.
\end{proof}

Now, we show that the result of Theorem~\ref{abstr_sin_general_thrm} cannot be improved in the general situation. 

\begin{theorem}
	\label{abstr_s<2_general_thrm}
		Suppose that the operator $\Sigma(t,\tau)$ is defined by \eqref{3.13}.
	Suppose that $N_0 \ne 0$. Let $\tau \!\ne\! 0$ and \hbox{$0\! \le\! s \!<\! 2$}. Then there does not exist a constant ${C(\tau)\! > \!0}$ such that  the inequality 
	\begin{equation}
	\label{abstr_s<2_est_imp}
	\bigl \| A(t)^{1/2} \Sigma(t, \eps^{-1} \tau)  \bigr \|  \varepsilon^{s} (t^2 + \varepsilon^2)^{-s/2} \le  C(\tau) \varepsilon
	\end{equation}
holds for all sufficiently small $|t|$ and $\varepsilon > 0$.
\end{theorem}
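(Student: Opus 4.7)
I will follow the argument pattern of Theorem \ref{th3.9}, now with an extra factor of $A(t)^{1/2}$ on the left. Since $N_0 \ne 0$, relation \eqref{abstr_N_0_matrix_elem} provides an index $j$ with $\mu_j \ne 0$. Fix $\tau \ne 0$ and $s \in [0, 2)$, and assume by contradiction that \eqref{abstr_s<2_est_imp} holds for all sufficiently small $|t|$ and $\eps > 0$ with some constant $C(\tau)$. The plan is to evaluate $A(t)^{1/2}\Sigma(t, \eps^{-1}\tau)$ on the embryo $\omega_j$, pair the result with $\omega_j$, and exhibit a sequence $(t(\eps_k), \eps_k)$ along which this sesquilinear form is bounded below in modulus by a positive constant, while the smoothing factor in \eqref{abstr_s<2_est_imp} becomes too small as $k \to \infty$ to restore the inequality.

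The first task is to compute $(A(t)^{1/2}\Sigma(t, \eps^{-1}\tau)\omega_j, \omega_j)$. For the first summand, which equals $\sin(\eps^{-1}\tau A(t)^{1/2})P$, expand $\omega_j$ in the basis $\{\varphi_l(t)\}$ of $F(t)\mathfrak{H}$ (together with the residue $F(t)^\perp \omega_j$) and use \eqref{abstr_F(t)_threshold_1} and \eqref{**.5} to obtain
\begin{equation*}
(\sin(\eps^{-1}\tau A(t)^{1/2})\omega_j, \omega_j) = \sin(\eps^{-1}\tau \sqrt{\lambda_j(t)}) + O(|t|).
\end{equation*}
For the second summand,
\begin{equation*}
\bigl(A(t)^{1/2}(I+tZ)(t^2S)^{-1/2}\sin(\eps^{-1}\tau(t^2S)^{1/2})\omega_j, \omega_j\bigr) = \frac{\sin(\eps^{-1}\tau t\sqrt{\gamma_j})}{t\sqrt{\gamma_j}}\bigl(A(t)^{1/2}(\omega_j + tZ\omega_j), \omega_j\bigr),
\end{equation*}
and the inner product on the right is evaluated in the same basis, exploiting \eqref{abstr_omega_tilde_omega_rel} together with $Z^*P = 0$ (which yields $(Z\omega_l, \omega_k) = 0$) to control the cross terms. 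The outcome is $(A(t)^{1/2}(\omega_j + tZ\omega_j), \omega_j) = \sqrt{\lambda_j(t)} + O(t^2)$; combined with $\sqrt{\lambda_j(t)}/(t\sqrt{\gamma_j}) = 1 + O(|t|)$, the second summand reduces to $\sin(\eps^{-1}\tau t\sqrt{\gamma_j}) + O(|t|)$. Subtracting,
\begin{equation*}
(A(t)^{1/2}\Sigma(t, \eps^{-1}\tau)\omega_j, \omega_j) = \sin(\eps^{-1}\tau\sqrt{\lambda_j(t)}) - \sin(\eps^{-1}\tau t\sqrt{\gamma_j}) + O(|t|).
\end{equation*}

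It remains to select $(t(\eps_k), \eps_k)$ so that this quantity stays of order $1$. Applying $\sin A - \sin B = 2\cos\frac{A+B}{2}\sin\frac{A-B}{2}$ and the expansion $\sqrt{\lambda_j(t)} - t\sqrt{\gamma_j} = \mu_j t^2/(2\sqrt{\gamma_j}) + O(t^3)$ (for $t > 0$), I set $t(\eps) := c\,\eps^{1/2}$ with $c := (\pi\sqrt{\gamma_j}/|\mu_j\tau|)^{1/2}$, so that $(A-B)/2 \to (\pi/4)\operatorname{sgn}(\mu_j\tau)$ as $\eps \to 0$. I then pick $\eps_k := (c\tau\sqrt{\gamma_j}/(2\pi k))^2$, $k \in \mathbb{N}$, ensuring that the rapidly oscillating phase $(A+B)/2$ lands near $2\pi k + (\pi/4)\operatorname{sgn}(\mu_j\tau)$; consequently the full product $|2\cos\tfrac{A+B}{2}\sin\tfrac{A-B}{2}|$ tends to $1$ as $k \to \infty$. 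This yields $|(A(t(\eps_k))^{1/2}\Sigma(t(\eps_k), \eps_k^{-1}\tau)\omega_j, \omega_j)| \ge 1/2$ for all sufficiently large $k$, hence $\|A(t(\eps_k))^{1/2}\Sigma(t(\eps_k), \eps_k^{-1}\tau)\| \ge 1/2$. On the other hand, $\eps_k^s(t(\eps_k)^2 + \eps_k^2)^{-s/2}$ is of order $c^{-s}\eps_k^{s/2}$, so \eqref{abstr_s<2_est_imp} would force $\tfrac{1}{2}c^{-s}\eps_k^{s/2} \le C(\tau)\eps_k$, i.e.\ $\eps_k^{s/2-1}$ bounded; since $s/2 - 1 < 0$ this fails as $k \to \infty$, yielding the desired contradiction. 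The main delicate point is the identity $(A(t)^{1/2}(\omega_j + tZ\omega_j), \omega_j) = \sqrt{\lambda_j(t)} + O(t^2)$, whose derivation requires exploiting the cancellations coming from both $(\widetilde{\omega}_j, \omega_j) + (\omega_j, \widetilde{\omega}_j) = 0$ (see \eqref{abstr_omega_tilde_omega_rel}) and $Z^*P = 0$.
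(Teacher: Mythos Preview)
Your argument is correct, but it follows a different path from the paper's. The paper does \emph{not} compute the bilinear form $(A(t)^{1/2}\Sigma(t,\eps^{-1}\tau)\omega_j,\omega_j)$ directly. Instead it first invokes the coercivity $A(t)\ge c_* t^2 I$ (Condition~\ref{cond_A}) to pass from the assumed bound on $\|A(t)^{1/2}\Sigma\|$ to one on $|t|\|\Sigma\|$; then, since $|t|\eps^s(t^2+\eps^2)^{-s/2}\le\eps$ for $s\ge 1$ and the terms $A(t)^{-1/2}(P-F(t))$ and $tZ(t^2S)^{-1/2}$ are uniformly bounded, it strips $\Sigma$ down to $\mathcal J_2$. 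This reduces the problem to a scalar inequality of the type already handled in Theorem~\ref{th3.8}, and the endgame (with $t=\tilde c\,\eps^{1/2}$ chosen so that the phase shift is $\pi$) is the same as there.

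Your route trades that reduction for an explicit spectral computation: you expand in the analytic eigenbasis $\{\varphi_l(t)\}$ and establish $(A(t)^{1/2}(\omega_j+tZ\omega_j),\omega_j)=\sqrt{\lambda_j(t)}+O(t^2)$. This is correct (and, incidentally, needs only $PZ=0$ and $Z^*P=0$ together with $\|F(t)^\perp\omega_j\|=O(|t|)$ and $\|X(t)(\omega_j+tZ\omega_j)\|=O(|t|)$; the relation~\eqref{abstr_omega_tilde_omega_rel} is not actually required at the $O(t^2)$ level). The paper's approach is shorter because it leverages the already-proved machinery; yours is more self-contained and shows concretely that $(A(t)^{1/2}\Sigma\omega_j,\omega_j)$ reduces to the difference of sines up to $O(|t|)$. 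Your phase choice $(A-B)/2\to\pi/4$ (rather than the paper's $\pi$) is an equally valid variant.
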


\begin{proof}
	We prove by contradiction. Suppose that, for some $0 \ne \tau \in \mathbb{R}$ and \hbox{$1 \le s < 2$}, inequality~(\ref{abstr_s<2_est_imp}) holds for all sufficiently small $|t|$ and $\varepsilon$. 
	Taking~(\ref{abstr_A(t)_nondegenerated}) into account, we see that there exists a constant $\widetilde{C}(\tau) > 0$ such that 
	\begin{equation*}
	\bigl \|   \Sigma(t, \eps^{-1} \tau)\bigr \|
	  |t|\varepsilon^{s} (t^2 + \varepsilon^2)^{-s/2} \le \widetilde{C} (\tau) \varepsilon
	\end{equation*}
	for all sufficiently small $|t|$ and $\varepsilon$.
	Since $|t|\varepsilon^{s} (t^2 + \varepsilon^2)^{-s/2} \le \eps$ and the operators $A(t)^{-1/2}(P-F(t))$ and $tZ (t^2S)^{-1/2}$ are uniformly bounded  (by \eqref{1.2a}, \eqref{abstr_F(t)_threshold_1}, \eqref{abstr_A(t)_nondegenerated}, \eqref{abstr_S_nondegenerated}), then  for some constant
	$\widehat{C}(\tau)>0$ we have 
	 \begin{equation}
	\label{abstr_s<2_est_c}
	\bigl \|  A(t)^{-1/2} \sin(\varepsilon^{-1} \tau A(t)^{1/2})  F(t)  - (t^2 S)^{-1/2} \sin(\varepsilon^{-1} \tau (t^2 S)^{1/2}P) P \bigr\|
	  |t|\varepsilon^{s} (t^2 + \varepsilon^2)^{-s/2} 
	  \le \widehat{C} (\tau) \varepsilon
	\end{equation}
	for all sufficiently small $|t|$ and $\varepsilon$. 
	Next, from \eqref{abstr_SP_repr_gamma_omega}, \eqref{abstr_A(t)_nondegenerated}, \eqref{**.5}, \eqref{**.11}, and  (\ref{abstr_s<2_est_c}) 
	it follows that there exists a constant $\check{C} (\tau)$ such that 
	\begin{equation}
	\label{abstr_s<2_est_b}
	\Bigl\| \sum_{l=1}^{n} \left( \frac{\sin\bigl(\varepsilon^{-1} \tau \sqrt{\mathstrut \lambda_l(t)}\bigr)}{\sqrt{\mathstrut \lambda_l(t)}} - 
	\frac{\sin\bigl(\varepsilon^{-1} \tau |t| \sqrt{\mathstrut \gamma_l}\bigr)}{|t| \sqrt{\mathstrut \gamma_l}} \right) (\,\cdot\,,\omega_l)\omega_l  \Bigr \|
	\varepsilon^{s} |t| (t^2 + \varepsilon^2)^{-s/2} \le \check{C} (\tau) \varepsilon
	\end{equation}
	for all sufficiently small $|t|$ and $\eps$.
	
	According to \eqref{abstr_N_0_matrix_elem}, the condition $N_0 \ne 0$ means that $\mu_j \ne 0$ at least for one $j$. Then $\lambda_j(t) = \gamma_j t^2 + \mu_j t^3 + O(t^4)$ for $|t|\le t_*$. Decreasing $t_*$ if necessary, we have 
	\begin{equation}
	\label{**.7a}
	\sqrt{\lambda_j(t)} = \sqrt{\gamma_j} |t|\Bigl( 1 + \frac{\mu_j}{2\gamma_j} t + O(t^2)\Bigr), \quad |t|\le t_*.
	\end{equation}

	Applying the operator under the norm sign in~(\ref{abstr_s<2_est_b}) to the element $\omega_j$, we obtain 
	\begin{equation}
	\label{abstr_s<2_est_cc}
	\Big| \frac{\sin\bigl(\varepsilon^{-1} \tau \sqrt{\mathstrut \lambda_j(t)}\bigr)}{\sqrt{\mathstrut \lambda_j(t)}} - \frac{\sin\bigl(\varepsilon^{-1} \tau |t| \sqrt{\mathstrut \gamma_j}\bigr)}{|t| \sqrt{\mathstrut \gamma_j}} \Big| \varepsilon^{s} |t| (t^2 + \varepsilon^2)^{-s/2} \le \check{C} (\tau) \varepsilon
	\end{equation}
	for all sufficiently small $|t|$ and $\varepsilon$. 
	We put 
	\begin{equation*}
	t = \widetilde{t}(\varepsilon) = (2 \pi)^{1/2} \gamma_j^{1/4}  |{\mu_j \tau}|^{-1/2} \varepsilon^{1/2} = \widetilde{c} \varepsilon^{1/2}.
	\end{equation*}
	Then  
	$$
	\sin(\varepsilon^{-1} \tau \widetilde{t}(\varepsilon) \sqrt{\gamma_j}) = \sin(\widetilde{\alpha}_j \varepsilon^{-1/2}),
	$$
	where
	$\widetilde{\alpha}_j := (\operatorname{sgn} \tau) (2\pi)^{1/2} \gamma_j^{3/4} |\tau|^{1/2} |\mu_j|^{-1/2}$. Assuming that  $\varepsilon$ is sufficiently small and using~(\ref{**.7a}), we have 
	$\varepsilon^{-1} \tau \sqrt{\lambda_j(\widetilde{t} (\eps))} = \widetilde{\alpha}_j \varepsilon^{-1/2} + \pi \operatorname{sgn} (\tau \mu_j) + O(\varepsilon^{1/2})$, whence
$\sin\Bigl(\varepsilon^{-1} \tau \sqrt{\lambda_j(\widetilde{t} (\eps))} \Bigr) 
= - \sin\left(\widetilde{\alpha}_j \varepsilon^{-1/2} + O(\varepsilon^{1/2})\right).$
	Thus, from~\eqref{abstr_s<2_est_cc} it follows that the expression
	$$
	\Big| \bigl( 1+ O(\eps^{1/2})\bigr) \sin\bigl(\widetilde{\alpha}_j \varepsilon^{-1/2} + O(\varepsilon^{1/2}) \bigr) 
	+ \sin\bigl( \widetilde{\alpha}_j \varepsilon^{-1/2} \bigr) \Big| \varepsilon^{s/2-1} (\widetilde{c}^2 + \varepsilon)^{-s/2}
	$$ 
	is uniformly bounded for small $\varepsilon > 0$. But this is not true if  $s < 2$.	(It suffices to consider the sequence $\varepsilon_k = \widetilde{\alpha}_j^2 (\pi/2 + 2\pi k)^{-2}$, $k \in \mathbb{N}$.) This contradiction completes the proof.
\end{proof}

Finally, we confirm that Theorems  \ref{th3.5} and \ref{abstr_sin_enchanced_thrm_2} are sharp.

\begin{theorem}
	\label{th3.12}
		Suppose that the operator $\Sigma(t,\tau)$ is defined by \eqref{3.13}.
	Suppose that $N_0 = 0$ and $\mathcal{N}^{(q)} \ne 0$ for some $q \in \{1,\dots,p\}$. Let $\tau \ne 0$ and \hbox{$0 \le s < 3/2$}.  Then there does not exist a constant $C(\tau) > 0$ such that estimate 
\eqref{abstr_s<2_est_imp}	 holds for all sufficiently small $|t|$ and $\varepsilon > 0$.
\end{theorem}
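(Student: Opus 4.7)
The plan is to argue by contradiction, merging the substitution $t = c\eps^{1/3}$ from Theorem~\ref{th3.9}$(1^\circ)$ with the energy-norm reduction used in Theorem~\ref{abstr_s<2_general_thrm}. The exponent $1/3$ appears because, by Remark~\ref{abstr_N_remark} and Proposition~\ref{Prop_nu_1}, the conditions $N_0 = 0$ and $\mathcal{N}^{(q)} \ne 0$ translate precisely into the spectral statement that $\mu_l = 0$ for every $l$, while $\nu_j \ne 0$ for at least one index $j$ (belonging to the $q$-th block); thus the first nontrivial term of $\sqrt{\lambda_j(t)} - \sqrt{\gamma_j}|t|$ is of order $|t|^3$ rather than $t^2$.

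The easy reduction comes first. By monotonicity of the prefactor $\eps^s(t^2+\eps^2)^{-s/2}$ in $s$, it suffices to rule out \eqref{abstr_s<2_est_imp} for $s \in [1, 3/2)$. Assuming it holds for such $s$, Condition~\ref{cond_A} in the form $A(t)^{1/2} \ge c_*^{1/2}|t|\,I$ lets me replace $\| A(t)^{1/2}\Sigma(t,\eps^{-1}\tau) \|$ by $|t|\,\|\Sigma(t,\eps^{-1}\tau)\|$ on the left. The elementary bound $|t|\eps^s(t^2+\eps^2)^{-s/2} \le \eps$ (valid for $s \ge 1$), combined with the uniform boundedness of $A(t)^{-1/2}\sin(\eps^{-1}\tau A(t)^{1/2})(P-F(t))$ and $tZ(t^2S)^{-1/2}\sin(\eps^{-1}\tau (t^2S)^{1/2})P$ on $[-t_0,t_0]$ (via \eqref{1.2a}, \eqref{abstr_F(t)_threshold_1}, \eqref{abstr_A(t)_nondegenerated}, \eqref{abstr_S_nondegenerated}, and \eqref{**.10}), allows me to absorb these pieces and arrive at the spectral-sum inequality \eqref{abstr_s<2_est_b}.

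Testing \eqref{abstr_s<2_est_b} on the eigenvector $\omega_j$ with $\nu_j \ne 0$ yields the scalar estimate
\begin{equation*}
\Bigl| \frac{\sin(\eps^{-1}\tau\sqrt{\lambda_j(t)})}{\sqrt{\lambda_j(t)}} - \frac{\sin(\eps^{-1}\tau|t|\sqrt{\gamma_j})}{|t|\sqrt{\gamma_j}} \Bigr|\, \eps^s|t|(t^2+\eps^2)^{-s/2} \le \check C(\tau)\eps.
\end{equation*}
Since $\mu_j = 0$, expansion~\eqref{**.7} is at my disposal. I then substitute $t = t(\eps) = c\eps^{1/3}$ with the constants $c$ and $\alpha_j$ exactly as in \eqref{**.9} and the subsequent lines, so that $\eps^{-1}\tau\sqrt{\lambda_j(t(\eps))} = \alpha_j\eps^{-2/3} + \pi\,\mathrm{sgn}(\tau\nu_j) + O(\eps^{1/3})$, while $\eps^{-1}\tau|t(\eps)|\sqrt{\gamma_j} = \alpha_j\eps^{-2/3}$. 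Writing $1/\sqrt{\lambda_j(t)} = (|t|\sqrt{\gamma_j})^{-1}(1 + O(\eps^{2/3}))$ and using the cancellation of the extracted $|t|$ against the $|t|^{-1}$ coming from the sinc-type factors, the inequality collapses to
\begin{equation*}
\tfrac{1}{\sqrt{\gamma_j}}\bigl|(1+O(\eps^{2/3}))\sin(\alpha_j\eps^{-2/3}+O(\eps^{1/3})) + \sin(\alpha_j\eps^{-2/3})\bigr|\,\eps^{2s/3}(c^2+\eps^{4/3})^{-s/2} \le \check C(\tau)\eps.
\end{equation*}

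Finally, along the sequence $\eps_k = \alpha_j^{3/2}(\pi/2 + 2\pi k)^{-3/2}$, $k \in \N$, the oscillating factor tends to $2/\sqrt{\gamma_j}$, so the bound forces $\eps_k^{2s/3 - 1}$ to remain bounded as $k \to \infty$, which fails for $s < 3/2$; this contradiction completes the argument. The step I expect to require the most care is the exact cancellation of the $|t|^{\pm 1}$ factors together with the bookkeeping of the $O(\eps^{1/3})$ phase error, so as to confirm that the oscillating factor really does stay bounded away from zero on the chosen sequence; the surrounding arithmetic is a direct transcription of the scheme already used in Theorems~\ref{th3.9}$(1^\circ)$ and~\ref{abstr_s<2_general_thrm}.
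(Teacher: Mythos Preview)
Your proposal is correct and follows essentially the same approach as the paper's own proof: reduce to the spectral-sum inequality \eqref{abstr_s<2_est_b} exactly as in Theorem~\ref{abstr_s<2_general_thrm}, apply it to $\omega_j$ with $\nu_j\ne 0$ to obtain \eqref{abstr_s<2_est_cc}, substitute $t=c\eps^{1/3}$ from \eqref{**.9}, and extract a contradiction along the sequence $\eps_k=\alpha_j^{3/2}(\pi/2+2\pi k)^{-3/2}$. The paper's write-up is terser (it simply cites the earlier proofs for the reduction and the substitution), but the logic is identical.
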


\begin{proof}
 As in the proof of Theorem \ref{abstr_s<2_general_thrm}, 
  supposing the opposite, we see that inequality  \eqref{abstr_s<2_est_b} holds for some $\tau\ne 0$ and $1\le s<3/2$. Under our assumptions,  $\mu_l=0$, $l=1,\dots,n$, and $\nu_j \ne 0$ for some $j$.
 Then $\sqrt{\lambda_j(t)}$ satisfies \eqref{**.7}. 
 Applying the operator under the norm sign in \eqref{abstr_s<2_est_b} to the element $\omega_j$, we obtain inequality  \eqref{abstr_s<2_est_cc}. Next, substituting $t=t(\eps) = c \eps^{1/3}$ as in  
 \eqref{**.9}, we conclude that the expression 
  $$
	\big| \big( 1+ O(\eps^{2/3})\big) \sin\bigl({\alpha}_j \varepsilon^{-2/3} + O(\varepsilon^{1/3})\bigr) 
	+ \sin\bigl({\alpha}_j \varepsilon^{-2/3} \bigr) \big| \varepsilon^{2s/3-1} (c^2 + \varepsilon^{4/3})^{-s/2}
	$$ 
	is uniformly bounded for small $\varepsilon > 0$. But this is not true if $s < 3/2$.	
	This contradiction completes the proof.
\end{proof}

\subsection{Sharpness of the results with respect to time}
Now we prove the following statement  confirming that Theorem \ref{th3.1} is sharp regarding the 
dependence on  $\tau$ (for large $|\tau|$).

\begin{theorem}\label{th3.13}
Let $N_0 \ne 0$. 

	\noindent $1^\circ$. Let $s \ge 2$.
There does not exist a positive function $C(\tau)$ such that 
$\lim_{\tau \to \infty} C(\tau)/ |\tau| =0$ and estimate
 \eqref{**.1} holds for all $\tau \in \R$ and sufficiently small $|t|$ and $\eps$.  

\noindent $2^\circ$. Let $r \ge 1$. There does not exist a positive function
 $C(\tau)$ such that 
$\lim_{\tau \to \infty} C(\tau)/ |\tau| =0$ end estimate \eqref{**.2} holds for all
 $\tau \in \R$ and sufficiently small $|t|$ and $\eps$.
\end{theorem}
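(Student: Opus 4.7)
The plan is to adapt the contradiction arguments of Theorems~\ref{th3.9}, \ref{abstr_s<2_general_thrm}, and~\ref{th3.12} and exploit the \emph{linear-in-$\tau$} growth that the $\mu_j$-term in the phase produces once $t$ and $\eps$ are linked. Since $N_0 \ne 0$, Remark~\ref{abstr_N_remark} furnishes an index $j$ with $\mu_j \ne 0$, so the expansion~\eqref{**.7a} is available. The key new ingredient, compared with Theorems~\ref{th3.8}--\ref{th3.12}, is the choice of a \emph{resonant sequence} $\tau_k \to \infty$ rather than a single scaling $t=t(\eps)$.

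For statement $1^\circ$, I first assume the opposite: a positive function $C(\tau)$ with $C(\tau)/|\tau|\to 0$ satisfies~\eqref{**.1} for all $\tau\in\R$ and all sufficiently small $|t|,\eps>0$. Exactly as in the proofs of Theorems~\ref{th3.9} and~\ref{abstr_s<2_general_thrm}, I use \eqref{abstr_F(t)_threshold_1} together with \eqref{**.3}--\eqref{**.5} to reduce this to the scalar comparison
$$
\bigl|\cos(\eps^{-1}\tau\sqrt{\lambda_j(t)})-\cos(\eps^{-1}\tau|t|\sqrt{\gamma_j})\bigr|\,\eps^s(t^2+\eps^2)^{-s/2}\le\widetilde C(\tau)\eps
$$
with $\widetilde C(\tau)/|\tau|\to 0$. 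Next I make the diagonal choice $t=\eps$, so that the smoothing factor becomes the fixed constant $2^{-s/2}$. By~\eqref{**.7a}, the two phases read $\tau\sqrt{\gamma_j}+\mu_j\tau\eps/(2\sqrt{\gamma_j})+O(\tau\eps^2)$ and $\tau\sqrt{\gamma_j}$; applying the identity $\cos A-\cos B=-2\sin(\tfrac{A+B}{2})\sin(\tfrac{A-B}{2})$ and choosing the resonant sequence $\tau_k\sqrt{\gamma_j}=\pi/2+2\pi k$ (so that $|\sin(\tau_k\sqrt{\gamma_j})|=1$), I obtain, for $\eps$ small enough depending on $\tau_k$,
$$
\bigl|\cos(\eps^{-1}\tau_k\sqrt{\lambda_j(\eps)})-\cos(\tau_k\sqrt{\gamma_j})\bigr|\ge\frac{|\mu_j|}{4\sqrt{\gamma_j}}\,|\tau_k|\,\eps.
$$
Substituting back forces $\widetilde C(\tau_k)\ge c|\tau_k|$ for some $c>0$ independent of $k$, contradicting $\widetilde C(\tau)/|\tau|\to 0$.

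For statement $2^\circ$ I follow the same pattern. I reduce, along the lines of~\eqref{abstr_s<2_est_b}--\eqref{abstr_s<2_est_cc}, to a scalar comparison of $\sin(\eps^{-1}\tau\sqrt{\lambda_j(t)})/\sqrt{\lambda_j(t)}$ with $\sin(\eps^{-1}\tau|t|\sqrt{\gamma_j})/(|t|\sqrt{\gamma_j})$, again set $t=\eps$, and choose the resonant sequence $\tau_k\sqrt{\gamma_j}=2\pi k$ so that $|\cos(\tau_k\sqrt{\gamma_j})|=1$. A first-order Taylor expansion of $\sin$ in the shift $\mu_j\tau_k\eps/(2\sqrt{\gamma_j})$, combined with the expansion $1/\sqrt{\lambda_j(\eps)}=(\eps\sqrt{\gamma_j})^{-1}\bigl(1-\mu_j\eps/(2\gamma_j)+O(\eps^2)\bigr)$, gives a difference of order $|\mu_j\tau_k|/(2\gamma_j)$ (with no $\eps$ factor, since the amplitude $1/\eps$ cancels the $\eps$ in the phase shift). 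The smoothing factor $\eps^r(t^2+\eps^2)^{-r/2}$ equals $2^{-r/2}$, so the assumed bound forces $\check C(\tau_k)\ge c'|\tau_k|$, again contradicting the sublinear hypothesis.

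The main obstacle is not conceptual but a bookkeeping one: for each fixed resonant $\tau_k$ one must ensure that the remainder $O(\tau_k\eps^2)$ in the phase and the higher-order terms in~\eqref{**.7a} are genuinely dominated by the principal linear contribution. This is achieved by taking $\eps=\eps_k$ sufficiently small depending on $\tau_k$ (and on $\mu_j,\gamma_j$), which is permitted because the hypothesized inequalities~\eqref{**.1} and~\eqref{**.2} are assumed to hold for all sufficiently small $|t|,\eps$, with smallness allowed to depend on $\tau$.
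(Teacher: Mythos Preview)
Your argument is correct and in fact more streamlined than the paper's. Both proofs proceed by contradiction and reduce, via \eqref{abstr_SP_repr_gamma_omega} and \eqref{**.3}--\eqref{**.5}, to the scalar inequality for the $j$-th branch (with $\mu_j\ne 0$), and both exploit the product formulae $\cos A-\cos B=-2\sin\tfrac{A+B}{2}\sin\tfrac{A-B}{2}$ and $\sin A-\sin B=2\cos\tfrac{A+B}{2}\sin\tfrac{A-B}{2}$. The difference lies in how $t$ and $\eps$ are coupled. The paper takes $t_\flat=c_\flat|\tau|^{-1/2}\eps^{1/2}$, which makes the half-difference $\tfrac{\tau}{2\eps}(\sqrt{\lambda_j(t_\flat)}-t_\flat\sqrt{\gamma_j})$ a fixed nonzero number; it then selects resonant $\eps_k$ (depending on $\tau$ and $k$) so that the half-sum lands near $\pi/2+2\pi k$ (resp.\ $2\pi k$), and finally puts $\tau=\tau_k$ to force $\widetilde C(\tau_k)/|\tau_k|$ to stay bounded away from zero. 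Your diagonal choice $t=\eps$ collapses the smoothing factor to the constant $2^{-s/2}$ (resp.\ $2^{-r/2}$), after which the half-difference is $\frac{\mu_j\tau_k\eps}{4\sqrt{\gamma_j}}+O(\tau_k\eps^2)$; choosing $\tau_k$ resonant and then $\eps=\eps_k$ small enough (so that the remainder is dominated and the half-difference lies in $(0,\pi/4)$) yields directly $\widetilde C(\tau_k)\ge c|\tau_k|$. Your route avoids the intermediate substitution $t_\flat$ and the careful tracking of the factor $(c_\flat^2+\eps|\tau|)^{-s/2}$, at the cost of requiring $\eps_k\to 0$ a bit faster (order $|\tau_k|^{-1}$ rather than the paper's $|\tau_k|^{-1}$ as well, so no real loss). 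The paper's scaling has the merit of paralleling the earlier proofs of Theorems~\ref{th3.8}--\ref{th3.12} (where the exponent $s$ was the issue and $t\sim\eps^{1/2}$ was essential), but for the present $\tau$-sharpness question your more elementary coupling is entirely adequate.
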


\begin{proof}
Let us check statement  $1^\circ$.
We prove by contradiction. Suppose that for some 
$s \ge 2$ there exists a positive function $C(\tau)$ such that 
$\lim_{\tau \to \infty} C(\tau)/ |\tau| =0$ and estimate \eqref{**.1} holds for  all
 $\tau \in \R$ and sufficiently small $|t|$ and $\eps$.  
	By \eqref{abstr_SP_repr_gamma_omega} and \eqref{**.3}--\eqref{**.5}, this is equivalent to the existence
	of a function $\widetilde{C}(\tau) > 0$ such that $\lim_{\tau \to \infty} \widetilde{C}(\tau)/ |\tau| =0$ and 
	the inequality 
\begin{equation}
	\label{3.31}
	\big\| \! \sum_{l=1}^n ( \cos (\varepsilon^{-\!1} \tau\! \sqrt{\lambda_l(t)} )  \! - \!\cos (\varepsilon^{-\!1} \tau |t|\! \sqrt{\gamma_l}) ) (\,\cdot\,, \omega_l) \omega_l \big\|
	  \varepsilon^{s} (t^2\!\! +\! \varepsilon^2)^{-\!s/2}\!\! \le\! \widetilde{C} (\tau) \varepsilon
	\end{equation}
	holds for all $\tau \in \R$ and sufficiently small $|t|$ and $\varepsilon$.
	
According to \eqref{abstr_N_0_matrix_elem}, the condition $N_0 \ne 0$ means that  $\mu_j \ne 0$ at least for one $j$. Then  \eqref{**.7a} is valid.
Applying the operator under the norm sign in \eqref{3.31} to the element $\omega_j$, we obtain 
\begin{equation}
	\label{3.32}
	\big|   \cos \big(\varepsilon^{-1} \tau \sqrt{\lambda_j(t)}\big)   - 
	\cos \big(\varepsilon^{-1} \tau |t| \sqrt{\gamma_j} \big) \big|
	  \varepsilon^{s} (t^2 + \varepsilon^2)^{-s/2} \le \widetilde{C} (\tau) \varepsilon
	\end{equation}
	for all $\tau \in \R$ and sufficiently small $|t|$ and $\varepsilon$.
	 Rewrite \eqref{3.32} in the form 
\begin{equation}
	\label{3.33}
	2\Big|   \sin \Big(  \frac{\tau}{2\eps} \Bigl(\sqrt{\lambda_j(t)}\! + \!|t| \sqrt{\gamma_j} \Bigr)    \Big)     
 \sin \Big(  \frac{\tau}{2\eps}  \Bigl( \sqrt{\lambda_j(t)}\! -\! |t| \sqrt{\gamma_j} \Bigr)  \Big)\Big|
	 \frac{ \varepsilon^{s}}{ (t^2\! +\! \varepsilon^2)^{s/2}} 
	\le \widetilde{C} (\tau) \varepsilon.
	\end{equation}
	
	Using \eqref{**.7a}, assume that $t_*$ is so small that 
	\begin{equation}
	\label{3.34} 
	\frac{1}{4} |\mu_j| \gamma_j^{-1/2} t^2 \leqslant \Bigl| \sqrt{\lambda_j(t)} - |t| \sqrt{\gamma_j} \Bigr| \le 
	\frac{3}{4} |\mu_j| \gamma_j^{-1/2} t^2, \quad |t| \le t_*.
	\end{equation}
Let $\tau \ne 0$, and suppose that $\eps \le \eps_* |\tau|$, $\eps_*= (4\pi)^{-1} \gamma_j^{-1/2} |\mu_j| t_*^2$.	
We put  
\begin{equation}
	\label{3.35} 
	t_\flat= t_{\flat}(\eps, \tau) = c_\flat  |\tau|^{-1/2} \eps^{1/2}, \quad c_\flat = \sqrt{\pi/2} \gamma_j^{1/4} |\mu_j|^{-1/2}.
	\end{equation}
Then $t_\flat \le t_*/2$ and, by  \eqref{3.34},
\begin{equation}
	\label{3.36} 
	\Big| \frac{\tau}{2\eps}  \Bigl( \sqrt{\lambda_j(t_\flat)} - t_\flat \sqrt{\gamma_j} \Bigr) \Big| \le \frac{3\pi}{16} < \frac{\pi}{4}.
	\end{equation}
We apply the estimate $|\sin y| \ge \frac{2}{\pi} |y|$ for $|y| \le \pi/2$.
Then, by \eqref{3.34}, 
\begin{equation}
	\label{3.37}
	\Big| \sin \Big(  \frac{\tau}{2\eps}  \Bigl( \sqrt{\lambda_j(t_\flat)} - t_\flat \sqrt{\gamma_j} \Bigr)  \Big)\Big|
	\ge \frac{|\tau|}{\pi \eps}  \Bigl| \sqrt{\lambda_j(t_\flat)} - t_\flat \sqrt{\gamma_j} \Bigr|
		\ge  \frac{|\tau|}{ 4 \pi \eps} |\mu_j| \gamma_j^{-1/2} t_\flat^2 = \frac{1}{8}. 
	\end{equation}
Now,  \eqref{3.33} and \eqref{3.37} imply that 
$$
\frac{1}{4}\Big|   \sin \Big(  \frac{\tau}{2\eps} \Bigl(\sqrt{\lambda_j(t_\flat)} + t_\flat \sqrt{\gamma_j} \Bigr)    \Big)     \Big|   
	  \varepsilon^{s} (t_\flat^2 + \varepsilon^2)^{-s/2} \le \widetilde{C} (\tau) \varepsilon,
$$
which is equivalent to the inequality 
\begin{equation}
	\label{3.38}
\frac{1}{4}\Big|   \sin \Big(  \frac{\tau}{2\eps} \Bigl(\sqrt{\lambda_j(t_\flat)} + t_\flat \sqrt{\gamma_j} \Bigr)  \Big)     \Big|   
	(\varepsilon |\tau|)^{s/2-1}   (c_\flat^2 + \varepsilon |\tau|)^{-s/2} \le \frac{\widetilde{C} (\tau)}{|\tau|}.
\end{equation}
By \eqref{3.36}, the argument of the  sine in  \eqref{3.38} differs from  $\eps^{-1} \tau t_\flat \sqrt{\gamma_j} =(\operatorname{sgn} \tau) \sqrt{\gamma_j} c_\flat |\tau|^{1/2} \eps^{-1/2}$ by no more than $\pi/4$. We put
$$
\eps_k = \gamma_j c_\flat^2 |\tau| (2\pi k + \pi/2)^{-2},
$$
assuming that $k\in \N$ is sufficiently large so that 
$\eps_k \le \eps_* |\tau|$.  Let $t_k = t_\flat(\eps_k,\tau)$.  Then $\eps_k^{-1} \tau t_k \sqrt{\gamma_j} = (\operatorname{sgn} \tau) (2\pi k +\pi/2)$, whence   
$$
\Big| \sin \Big(  \frac{\tau}{2\eps_k} \Bigl(\sqrt{\lambda_j(t_k)} + t_k \sqrt{\gamma_j} \Bigr)  \Big)   
 \Big|   \ge 1/\sqrt{2}.
$$
Now,  \eqref{3.38} with $\eps = \eps_k$ implies that
\begin{equation*}
\frac{1}{4\sqrt{2} c_\flat^2}
	\Big(\frac{\gamma_j  \tau^2}{(2\pi k + \pi/2)^{2}}\Big)^{s/2 -1}   \Big(1 + \frac{\gamma_j  \tau^2}{(2\pi k + \pi/2)^{2}} \Big)^{-s/2} \le \frac{\widetilde{C} (\tau)}{|\tau|}
\end{equation*}
for all sufficiently large $k$. According to our assumption, the right-hand side tends to zero as $\tau \to \infty$.
 Putting $\tau = \tau_k = 2\pi k + \pi/2$ and tending $k$ to infinity, we arrive at a contradiction. 

Statement $2^\circ$ is checked similarly. We prove by contradiction. Suppose that for some  $r \ge 1$ there exists a positive function  $C(\tau)$ such that 
$\lim_{\tau \to \infty} C(\tau)/ |\tau| =0$ and estimate \eqref{**.2} holds for all 
 $\tau \in \R$ and sufficiently small $|t|$ and $\eps$.
 Similarly to  the proof of inequality~\eqref{3.32}, this implies that  
\begin{equation}
	\label{3.40}
\Big|   \frac{\sin \bigl(\varepsilon^{-1} \tau \sqrt{\lambda_j(t)} \bigr)}{\sqrt{\lambda_j(t)}}   - 
	\frac{\sin \bigl(\varepsilon^{-1} \tau |t| \sqrt{\gamma_j} \bigr)}{|t| \sqrt{\gamma_j}}\Big|
	  \varepsilon^r (t^2 + \varepsilon^2)^{-r/2} \le \widetilde{C} (\tau),
	\end{equation}
and  $\lim_{\tau \to \infty} \widetilde{C} (\tau) / |\tau| =0$.
By \eqref{**.7a},  the quantity 
$$
|(\lambda_j(t))^{-1/2} - |t|^{-1} \gamma_j^{-1/2}|
$$
is uniformly bounded for $|t| \le t_*$. Therefore,  \eqref{3.40} implies that 
\begin{equation}
	\label{3.41}
\Big|   \sin \Big(\varepsilon^{-1} \tau \sqrt{\lambda_j(t)} \Big)    - 
	\sin \Bigl(\varepsilon^{-1} \tau |t| \sqrt{\gamma_j} \Bigr) \Big| |t|^{-1}
	  \varepsilon^r (t^2 + \varepsilon^2)^{-r/2} \le \widehat{C} (\tau),
	\end{equation}
and $\lim_{\tau \to \infty} \widehat{C} (\tau) / |\tau| =0$. Rewrite \eqref{3.41} in the form 
\begin{equation}
	\label{3.42}
2\Big|\cos \Big(\frac{\tau}{2\eps} \Bigl(\sqrt{\lambda_j(t)} + |t| \sqrt{\gamma_j} \Bigr)\Big)\sin \Big(  \frac{\tau}{2\eps}  \Bigl( \sqrt{\lambda_j(t)} - |t| \sqrt{\gamma_j} \Bigr)\Big)\Big|
	\frac{  \varepsilon^r} { |t|(t^2 + \varepsilon^2)^{r/2}} \le \widehat{C} (\tau).
	\end{equation}
As above, we assume that  \eqref{3.34} is satisfied and $\eps \le \eps_* |\tau|$. 
Let $t_\flat$ be given by  \eqref{3.35}. Then  \eqref{3.37} is valid.
 As a result,  \eqref{3.42} implies that
 $$
\frac{1}{4}\Big|\cos\Big(  \frac{\tau}{2\eps} \Bigl(\sqrt{\lambda_j(t_\flat)} + t_\flat \sqrt{\gamma_j} \Bigr)\Big)\Big|   
	  t_\flat^{-1}\varepsilon^r (t_\flat^2 + \varepsilon^2)^{-r/2} \le \widehat{C} (\tau),
$$
which is equivalent to the inequality
\begin{equation}
	\label{3.43}
\frac{1}{4}\Big|   \cos \Big(  \frac{\tau}{2\eps} \Bigl(\sqrt{\lambda_j(t_\flat)} + t_\flat \sqrt{\gamma_j} \Bigr)\Big)\Big|   
	  \frac{ (\varepsilon |\tau|)^{(r-1)/2}  }{c_\flat (c_\flat^2 + \varepsilon |\tau|)^{r/2}}  
	  \le \frac{\widehat{C} (\tau)}{|\tau|}.
\end{equation}
By \eqref{3.36}, the argument of cosine in \eqref{3.43} differs from $\eps^{-1} \tau t_\flat \sqrt{\gamma_j} =(\operatorname{sgn} \tau) \sqrt{\gamma_j} c_\flat |\tau|^{1/2} \eps^{-1/2}$ by no more than $\pi/4$.
We put $\widetilde{\eps}_k = \gamma_j c_\flat^2 |\tau| (2\pi k)^{-2}$, assuming that $k\in \N$ is
 sufficiently large so that 
$\widetilde{\eps}_k \le \eps_* |\tau|$.  Let $\widetilde{t}_k = t_\flat(\eps_k,\tau)$. Then  $\widetilde{\eps}_k^{-1} \tau \widetilde{t}_k \sqrt{\gamma_j} = (\operatorname{sgn} \tau) 2\pi k$. Therefore,  
$$
\Big| \cos\Big(  \frac{\tau}{2\widetilde{\eps}_k}\Big(\sqrt{\lambda_j(\widetilde{t}_k)} +\widetilde{t}_k \sqrt{\gamma_j}\Big)\Big)\Big|   \ge 1/\sqrt{2}.
$$
Now,  \eqref{3.43} with $\eps = \widetilde{\eps}_k$ yields the inequality 
\begin{equation*}
\frac{1}{4\sqrt{2} c_\flat^2}\Big(\frac{\gamma_j  \tau^2}{(2\pi k)^{2}} \Big)^{(r-1)/2}\Big(1 + \frac{\gamma_j  \tau^2}{(2\pi k)^{2}}\Big)^{-r/2} \le \frac{\widehat{C} (\tau)}{|\tau|}
\end{equation*}
for all sufficiently large $k$. According to our assumption, the right-hand side tends to zero as $\tau \to \infty$.
 Putting $\tau = \widetilde{\tau}_k = 2\pi k$ and tending $k$ to infinity, we arrive at a contradiction. 
\end{proof}

Now, we confirm the sharpness of  Theorem  \ref{abstr_sin_general_thrm}  regarding  the dependence on\,~$\!\tau\!$.

\begin{theorem}\label{th4.6}
Suppose that the operator $\Sigma(t,\tau)$ is defined by \eqref{3.13}.
Let $N_0 \ne 0$, and let $s \ge 2$. 
Then there does not exist a positive function $C(\tau)$ such that 
$\lim_{\tau \to \infty} C(\tau)/ |\tau| =0$ and estimate  \eqref{abstr_s<2_est_imp} holds for all
$\tau \in \R$ and sufficiently small $|t|$ and  $\eps$.
\end{theorem}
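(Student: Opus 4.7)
I would proceed by contradiction, combining the scalar reduction from the proof of Theorem~\ref{abstr_s<2_general_thrm} with the $\tau$-sensitive substitution used in the proof of statement~$2^\circ$ of Theorem~\ref{th3.13}. Assume, for some $s \ge 2$, the existence of a positive function $C(\tau)$ with $C(\tau)/|\tau| \to 0$ such that \eqref{abstr_s<2_est_imp} holds for all $\tau \in \R$ and all sufficiently small $|t|$ and $\varepsilon > 0$. Running verbatim the chain from the proof of Theorem~\ref{abstr_s<2_general_thrm} (using \eqref{abstr_A(t)_nondegenerated}, \eqref{abstr_SP_repr_gamma_omega}, \eqref{**.5}, \eqref{**.11}, and uniform boundedness of $A(t)^{-1/2}(P-F(t))$ and $tZ(t^2S)^{-1/2}$), I obtain an inequality of the form \eqref{abstr_s<2_est_b} with a function $\check{C}(\tau)$ still satisfying $\check{C}(\tau)/|\tau| \to 0$. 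Since $N_0 \ne 0$ implies, by \eqref{abstr_N_0_matrix_elem}, that $\mu_j \ne 0$ for at least one index $j$, I apply the operator under the norm sign in \eqref{abstr_s<2_est_b} to $\omega_j$ and split
$$
\frac{\sin A}{\sqrt{\lambda_j(t)}} - \frac{\sin B}{|t|\sqrt{\gamma_j}}
= \frac{\sin A - \sin B}{|t|\sqrt{\gamma_j}} + (\sin A)\Bigl(\frac{1}{\sqrt{\lambda_j(t)}} - \frac{1}{|t|\sqrt{\gamma_j}}\Bigr),
$$
where $A = \varepsilon^{-1}\tau\sqrt{\lambda_j(t)}$ and $B = \varepsilon^{-1}\tau|t|\sqrt{\gamma_j}$. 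By \eqref{**.7a}, the parenthesized factor is uniformly bounded for $|t| \le t_*$; cancelling the $|t|$ on both sides yields a scalar inequality
$$
|\sin A - \sin B|\,\varepsilon^s(t^2+\varepsilon^2)^{-s/2} \le C_1(\tau)\varepsilon + C_0\,\varepsilon^s|t|(t^2+\varepsilon^2)^{-s/2},
$$
with $C_1(\tau)/|\tau| \to 0$ and an absolute constant $C_0$.

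Next, I would invoke the identity $\sin A - \sin B = 2\cos((A+B)/2)\sin((A-B)/2)$ and transplant the parameter choice from the proof of statement~$2^\circ$ of Theorem~\ref{th3.13}. Setting $t = t_\flat(\varepsilon,\tau) := c_\flat|\tau|^{-1/2}\varepsilon^{1/2}$ with $c_\flat$ as in \eqref{3.35}, estimate \eqref{3.37} produces $|\sin((A-B)/2)| \ge 1/8$ whenever $\varepsilon \le \varepsilon_*|\tau|$, while \eqref{3.36} gives $(A+B)/2 = (\operatorname{sgn}\tau)\sqrt{\gamma_j}c_\flat|\tau|^{1/2}\varepsilon^{-1/2} + \eta$ with $|\eta| \le 3\pi/16 < \pi/4$. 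Choosing $\varepsilon_k := \gamma_j c_\flat^2|\tau|(2\pi k)^{-2}$ forces $\sqrt{\gamma_j}c_\flat|\tau|^{1/2}\varepsilon_k^{-1/2} = 2\pi k$, so that $|\cos((A+B)/2)| \ge \cos(3\pi/16) > 1/\sqrt{2}$ at $(t,\varepsilon) = (t_\flat, \varepsilon_k)$; combined with the previous bound this gives $|\sin A - \sin B| \ge (4\sqrt{2})^{-1}$ along this choice.

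To conclude, I specialize $\tau = \tau_k := 2\pi k$, so that $\tau_k\varepsilon_k = \gamma_j c_\flat^2$ and $t_\flat = c_\flat^2\sqrt{\gamma_j}/\tau_k$. A direct computation shows $\varepsilon_k^s(t_\flat^2+\varepsilon_k^2)^{-s/2} = (\gamma_j/(1+\gamma_j))^{s/2}$, independent of $k$. The left-hand side of the scalar inequality is therefore bounded below by the strictly positive, $k$-independent constant $(4\sqrt{2})^{-1}(\gamma_j/(1+\gamma_j))^{s/2}$, whereas the right-hand side reduces to $C_1(\tau_k)\varepsilon_k + C_0\,c_\flat^2\sqrt{\gamma_j}\tau_k^{-1}(\gamma_j/(1+\gamma_j))^{s/2}$. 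The first summand is $o(1)$ (since $C_1(\tau_k) = o(\tau_k)$ while $\varepsilon_k = O(\tau_k^{-1})$) and the second is $O(\tau_k^{-1})$, so the right-hand side tends to $0$ as $k \to \infty$. Letting $k \to \infty$ yields the desired contradiction. The main technical obstacle is the bookkeeping at the splitting step: unlike in the proof of statement $2^\circ$ of Theorem~\ref{th3.13}, the initial inequality carries an extra $\varepsilon$ on the right-hand side which must be balanced against the remainder $O(\varepsilon^s|t|(t^2+\varepsilon^2)^{-s/2})$ produced by the correction term; fortunately, along the sequence $(t_\flat, \varepsilon_k, \tau_k)$ both contributions decay at comparable rates, preserving the contradiction.
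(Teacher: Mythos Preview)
Your argument is correct and follows essentially the same route as the paper's proof: reduce via \eqref{abstr_A(t)_nondegenerated} and the chain of Theorem~\ref{abstr_s<2_general_thrm} to the scalar inequality for index $j$ with $\mu_j\ne 0$, pass to $|\sin A-\sin B|$ using the boundedness of $\lambda_j(t)^{-1/2}-|t|^{-1}\gamma_j^{-1/2}$, then substitute $t=t_\flat$, $\varepsilon=\widetilde\varepsilon_k$, $\tau=2\pi k$ to force the contradiction. The only cosmetic difference is that the paper absorbs your extra term $C_0\,\varepsilon^s|t|(t^2+\varepsilon^2)^{-s/2}$ directly into $\check C'(\tau)$ (via $|t|\varepsilon^s(t^2+\varepsilon^2)^{-s/2}\le\varepsilon$ for $s\ge 2$, noting that a bounded additive constant still satisfies $\check C'(\tau)/|\tau|\to 0$), whereas you carry it explicitly to the final limit.
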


\begin{proof}
We prove by contradiction. Suppose that for some $s \ge 2$ there exists a positive function  
$C(\tau)$ such that 
$\lim_{\tau \to \infty} C(\tau)/ |\tau| =0$ and estimate  \eqref{abstr_s<2_est_imp} holds for all $\tau \in \R$ and sufficiently small $|t|$ and $\eps$. Together with \eqref{abstr_A(t)_nondegenerated} this implies  that
\begin{equation}
\label{3.46}
\bigl \|  \Sigma(t, \eps^{-1} \tau)\bigr  \| |t|\eps^s (t^2 + \eps^2)^{-s/2} \le \widetilde{C}(\tau) \eps,
\end{equation}
and  $\lim_{\tau \to \infty} \widetilde{C}(\tau)/ |\tau| =0$.
Since $|t|\eps^s (t^2 + \eps^2)^{-s/2} \le \eps$ and the norm
$\| tZ (t^2 S)^{-1/2} \sin \bigl( \eps^{-1} \tau (t^2 S)^{1/2} \bigr) P\|$ is uniformly bounded, 
from \eqref{3.46}, \eqref{3.02}, and \eqref{3.13} it follows that
\begin{equation}
\label{3.47}
\bigl  \|  {\mathcal J}_2(t,  \eps^{-1} \tau)\bigr  \| |t|\eps^s (t^2 + \eps^2)^{-s/2} \le \widehat{C}(\tau) \eps,
\end{equation}
and $\lim_{\tau \to \infty} \widehat{C}(\tau)/ |\tau| =0$.
The condition $N_0 \ne 0$ means that $\mu_j \ne 0$ for some $j$.
Similarly to   \eqref{3.40}, from \eqref{3.47} we obtain  
\begin{equation}
	\label{3.48}
\Big|   \frac{\sin \bigl(\varepsilon^{-1} \tau \sqrt{\lambda_j(t)} \bigr)}{\sqrt{\lambda_j(t)}}   - 
	\frac{\sin \left(\varepsilon^{-1} \tau |t| \sqrt{\gamma_j} \right)}{|t| \sqrt{\gamma_j}} \Big|
	  |t|\varepsilon^s (t^2 + \varepsilon^2)^{-s/2} \le \check{C} (\tau) \eps,
	\end{equation}
and $\lim_{\tau \to \infty} \check{C} (\tau) / |\tau| =0$.
By \eqref{**.7a}, the quantity   
$$
|(\lambda_j(t))^{-1/2} - |t|^{-1} \gamma_j^{-1/2}|
$$
is uniformly bounded for $|t| \le t_*$. Therefore,  \eqref{3.48} implies that 
\begin{equation*}
\Big|   \sin \Big(\varepsilon^{-1} \tau \sqrt{\lambda_j(t)} \Big)    - 
	\sin \Bigl(\varepsilon^{-1} \tau |t| \sqrt{\gamma_j} \Bigr)\Big| 
	  \varepsilon^s (t^2 + \varepsilon^2)^{-s/2} \le \check{C}' (\tau) \eps,
\end{equation*}
and $\lim_{\tau \to \infty} \check{C}' (\tau) / |\tau| =0$. 
Similarly to  the proof of Theorem~\ref{th3.13}, substituting $t=t_\flat$ (see \eqref{3.35}), we deduce that  
\begin{equation}
	\label{3.51}
\frac{1}{4}\Big|   \cos\Big(  \frac{\tau}{2\eps} \Bigl(\sqrt{\lambda_j(t_\flat)} + t_\flat \sqrt{\gamma_j} \Bigr)\Big)\Big|    \frac{   (\varepsilon |\tau|)^{s/2-1} }{ (c_\flat^2 + \varepsilon |\tau|)^{s/2}} \le \frac{\check{C}' (\tau)}{|\tau|}.
	\end{equation}
Now,  \eqref{3.51} with $\eps = \widetilde{\eps}_k= \gamma_j c_\flat^2 |\tau| (2\pi k)^{-2}$ yields  the inequality
\begin{equation*}
	\frac{1}{4\sqrt{2} c_\flat^2}\Big(\frac{\gamma_j  \tau^2}{(2\pi k)^{2}}\Big)^{s/2-1}\Big(1 + \frac{\gamma_j  \tau^2}{(2\pi k)^{2}}\Big)^{-s/2} \le \frac{\check{C}' (\tau)}{|\tau|}
\end{equation*}
for all sufficiently large $k$. Here the right-hand side tends to zero as  $\tau \to \infty$.
 Putting $\tau = \widetilde{\tau}_k = 2\pi k$ and tending $k$ to infinity, we arrive at a contradiction. 
\end{proof}

Next, we confirm that Theorems \ref{th3.2} and \ref{th3.3} are sharp  regarding the dependence on~$\tau$.

\begin{theorem}
	\label{th4.*2}
	Suppose that  $N_0 = 0$ and $\mathcal{N}^{(q)} \ne 0$ for some $q\in \{1,\dots,p\}$.
	 
	\noindent $1^\circ$.  Let $s \ge 3/2$. There does not exist a positive function $C(\tau)$ such that 
	$\lim_{\tau\to \infty} C(\tau)/ |\tau|^{1/2} = 0$ and estimate  \eqref{**.1} holds for all
	 $\tau \in \R$ and sufficiently small $|t|$ and $\varepsilon$.
	
	\noindent $2^\circ$. Let $r\ge 1/2$.
	There does not exist a positive function $C(\tau)$ such that 
	$\lim_{\tau \to \infty} C(\tau)/ |\tau|^{1/2} = 0$ 
	and estimate \eqref{**.2} holds for all $\tau \in \R$ and sufficiently small $|t|$ and $\varepsilon$.
	\end{theorem}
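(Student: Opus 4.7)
The plan is to hybridize the proofs of Theorems \ref{th3.13}--\ref{th4.6} (sharpness in $\tau$ in the $N_0\ne 0$ case) with that of Theorem \ref{th3.9} (sharpness in the smoothing factor under $N_0 = 0$, $\mathcal{N}^{(q)}\ne 0$). I argue by contradiction: suppose a positive function $C(\tau)$ with $C(\tau)/|\tau|^{1/2}\to 0$ exists for which \eqref{**.1} (resp.\ \eqref{**.2}) holds for all $\tau\in\R$ and all sufficiently small $|t|,\eps$. As in the derivations of \eqref{3.32} and \eqref{3.41}, I use \eqref{abstr_SP_repr_gamma_omega}, \eqref{**.3}--\eqref{**.5}, and (in part $2^\circ$) the uniform bound on $|\lambda_j(t)^{-1/2}-|t|^{-1}\gamma_j^{-1/2}|$ to reduce to a scalar inequality obtained by applying the operator under the norm to the embryo $\omega_j$, where $j$ is a fixed index with $\nu_j\ne 0$ furnished by Proposition~\ref{Prop_nu_1}. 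The product-to-sum identities $\cos a-\cos b=-2\sin\tfrac{a+b}{2}\sin\tfrac{a-b}{2}$ and $\sin a-\sin b=2\cos\tfrac{a+b}{2}\sin\tfrac{a-b}{2}$ then factor the scalar inequality into a fast phase $\tfrac{\tau}{2\eps}(\sqrt{\lambda_j(t)}+|t|\sqrt{\gamma_j})$ and a slow phase $\tfrac{\tau}{2\eps}(\sqrt{\lambda_j(t)}-|t|\sqrt{\gamma_j})$.

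Since $N_0=0$ forces $\mu_l=0$ for every $l$, the expansion~\eqref{**.7} yields
\[
\sqrt{\lambda_j(t)}-|t|\sqrt{\gamma_j}=\frac{\nu_j}{2\sqrt{\gamma_j}}|t|^3\bigl(1+O(|t|)\bigr),
\]
so after shrinking $t_*$ one has $\tfrac14|\nu_j|\gamma_j^{-1/2}|t|^3\le|\sqrt{\lambda_j(t)}-|t|\sqrt{\gamma_j}|\le\tfrac34|\nu_j|\gamma_j^{-1/2}|t|^3$ for $|t|\le t_*$. Mirroring \eqref{3.35} but with a cubic rather than a quadratic scaling, I set
\[
t_\natural(\eps,\tau):=c_\natural|\tau|^{-1/3}\eps^{1/3},\qquad c_\natural:=\bigl(\pi\sqrt{\gamma_j}/(2|\nu_j|)\bigr)^{1/3},
\]
which makes the slow phase lie in $[-3\pi/16,3\pi/16]$ for $\eps\le\eps_\natural|\tau|$ with a suitable $\eps_\natural>0$; combined with the elementary bound $|\sin y|\ge 2|y|/\pi$ on $[-\pi/2,\pi/2]$ this forces $|\sin(\text{slow})|\ge 1/8$. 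Next, I choose $\eps_k:=(c_\natural\sqrt{\gamma_j})^{3/2}|\tau|/(2\pi k+\pi/2)^{3/2}$ in part~$1^\circ$ (respectively $(c_\natural\sqrt{\gamma_j})^{3/2}|\tau|/(2\pi k)^{3/2}$ in part~$2^\circ$) so that $\eps_k^{-1}\tau t_\natural(\eps_k,\tau)\sqrt{\gamma_j}=(\operatorname{sgn}\tau)(2\pi k+\pi/2)$ (resp.\ $(\operatorname{sgn}\tau)2\pi k$), making the fast $|\sin|$ (resp.\ $|\cos|$) factor at least $1/\sqrt 2-O(\eps_k^{1/3})$.

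Finally, the identity $t_\natural^2/\eps_k^2=1/y$ with $y:=\gamma_j\tau^2/(2\pi k+\pi/2)^2$ (resp.\ $y:=\gamma_j\tau^2/(2\pi k)^2$) gives $\eps_k^s(t_\natural^2+\eps_k^2)^{-s/2}=(y/(1+y))^{s/2}$ in part~$1^\circ$ and $|t_\natural|^{-1}\eps_k^r(t_\natural^2+\eps_k^2)^{-r/2}=y^{(r+1)/2}(1+y)^{-r/2}/\eps_k$ in part~$2^\circ$. Substituting these into the scalar inequality, dividing through by $\eps_k|\tau|^{1/2}=(c_\natural\sqrt{\gamma_j})^{3/2}|\tau|^{3/2}/(2\pi k+\pi/2)^{3/2}$ (resp.\ the same with $2\pi k$), and evaluating along $\tau_k=2\pi k+\pi/2$ (resp.\ $\tau_k=2\pi k$) makes $y=\gamma_j$ and $\eps_k|\tau_k|^{1/2}=(c_\natural\sqrt{\gamma_j})^{3/2}$ into positive constants, producing strictly positive lower bounds on $C(\tau_k)/|\tau_k|^{1/2}$ that contradict $C(\tau)/|\tau|^{1/2}\to 0$. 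The only real obstacle is identifying the correct scaling: the condition $\mu_l=0$ together with $\nu_j\ne 0$ promotes the leading correction in $\sqrt{\lambda_j(t)}-|t|\sqrt{\gamma_j}$ from quadratic to cubic, so $t$ must scale like $\eps^{1/3}|\tau|^{-1/3}$ rather than $\eps^{1/2}|\tau|^{-1/2}$; carrying through the bookkeeping then converts the sharp $\tau$-growth from $|\tau|$ (as in Theorems \ref{th3.13}, \ref{th4.6}) to $|\tau|^{1/2}$.
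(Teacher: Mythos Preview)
Your proposal is correct and follows essentially the same approach as the paper's proof: the paper likewise reduces to a scalar inequality via \eqref{**.3}--\eqref{**.5}, factors with the product-to-sum identity, uses the cubic bound \eqref{*4.5} on $\sqrt{\lambda_j(t)}-|t|\sqrt{\gamma_j}$, substitutes $t_\dag=c_\dag|\tau|^{-1/3}\eps^{1/3}$ with $c_\dag=(\pi/2)^{1/3}\gamma_j^{1/6}|\nu_j|^{-1/3}$ (your $c_\natural$), picks $\eps_k\propto|\tau|(2\pi k+\pi/2)^{-3/2}$ (resp.\ $(2\pi k)^{-3/2}$), and evaluates along $\tau_k=2\pi k+\pi/2$ (resp.\ $2\pi k$) to reach the contradiction. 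One small expository slip: in part~$2^\circ$ the scalar inequality has right-hand side $\widehat C(\tau)$ rather than $\widehat C(\tau)\eps$, so you divide only by $|\tau|^{1/2}$ (the $1/\eps_k$ is already present on the left and combines with it), not by $\eps_k|\tau|^{1/2}$; the conclusion is unaffected.
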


\begin{proof}
The conditions $N_0 = 0$ and $\mathcal{N}^{(q)} \ne 0$ for some $q\in \{1,\dots,p\}$ 
mean that  $\mu_l=0$ for $l=1,\dots,n$, and  $\nu_j \ne 0$ at least for one $j$. 
Then expansion  \eqref{**.7} is valid.

Let us check statement $1^\circ$. We prove by contradiction. 
Similarly to the proof of Theorem \ref{th3.9}, we suppose the opposite and obtain
\begin{equation}
	\label{*4.3}
	\Big|   \cos \Bigl(\varepsilon^{-1} \tau \sqrt{\lambda_j(t)} \Bigr)   - 
	\cos \Bigl(\varepsilon^{-1} \tau |t| \sqrt{\gamma_j} \Bigr) \Big|
	  \varepsilon^{s} (t^2 + \varepsilon^2)^{-s/2} \le \widetilde{C} (\tau) \varepsilon
	\end{equation}
	for some  $s \ge 3/2$, and $\lim_{\tau\to \infty} \widetilde{C}(\tau)/ |\tau|^{1/2} = 0$.
Rewrite  \eqref{*4.3} as follows:
\begin{equation}
	\label{*4.4}
	2\Big|   \sin\Big(\frac{\tau}{2\varepsilon} \Big(\sqrt{\lambda_j(t)} + |t| \sqrt{\gamma_j} \Big)\Big)
	   \sin\Big(\frac{\tau}{2\varepsilon} \Big(\sqrt{\lambda_j(t)} - |t| \sqrt{\gamma_j} \Big)\Big)\Big|
	 \frac{ \varepsilon^{s}} {(t^2 + \varepsilon^2)^{s/2}} \le \widetilde{C} (\tau) \varepsilon.
	\end{equation}
	
	Using \eqref{**.7}, we assume that  $t_*$ is so small that 
\begin{equation}
	\label{*4.5} 
	\frac{1}{4} |\nu_j| \gamma_j^{-1/2} |t|^3 \leqslant \Bigl| \sqrt{\lambda_j(t)} - |t| \sqrt{\gamma_j} \Bigr| \le \frac{3}{4} |\nu_j| \gamma_j^{-1/2} |t|^3, \quad |t| \le t_*.
	\end{equation}
Let $\tau \ne 0$, and let $\eps \le \eps_\dag |\tau|$, $\eps_\dag = (4\pi)^{-1} \gamma_j^{-1/2} |\nu_j| t_*^3$.	
We put  
\begin{equation}
	\label{*4.6} 
	t_\dag = t_{\dag}(\eps, \tau) = c_\dag  |\tau|^{-1/3} \eps^{1/3}, \quad c_\dag = (\pi/2)^{1/3} 
	\gamma_j^{1/6} |\nu_j|^{-1/3}.
	\end{equation}
Then $t_\dag \le t_*/2$, and, by \eqref{*4.5},
\begin{equation}
	\label{*4.7} 
	\Big| \frac{\tau}{2\eps}  \Big( \sqrt{\lambda_j(t_\dag)} - t_\dag \sqrt{\gamma_j} \Big)\Big| \le \frac{3\pi}{16} < \frac{\pi}{4}.
	\end{equation}
We apply the estimate $|\sin y| \ge \frac{2}{\pi} |y|$ for $|y| \le \pi/2$.
Then, by \eqref{*4.5}, 
\begin{equation}
	\label{*4.8}
	\Big| \sin \Big(  \frac{\tau}{2\eps}  \Bigl( \sqrt{\lambda_j(t_\dag)} - t_\dag \sqrt{\gamma_j} \Bigr)\Big)\Big|
	\ge \frac{|\tau|}{\pi \eps}  \Big| \sqrt{\lambda_j(t_\dag)} - t_\dag \sqrt{\gamma_j}\Big|
	 \ge  \frac{|\tau|}{ 4 \pi \eps} |\nu_j| \gamma_j^{-1/2} t_\dag^3 = \frac{1}{8}. 
	\end{equation}
Now, \eqref{*4.4} and \eqref{*4.8} imply that 
$$
\frac{1}{4}\Big|\sin\Big(  \frac{\tau}{2\eps} \Big(\sqrt{\lambda_j(t_\dag)} + t_\dag \sqrt{\gamma_j} \Big)\Big) \Big|   
	  \varepsilon^{s} (t_\dag^2 + \varepsilon^2)^{-s/2} \le \widetilde{C} (\tau) \varepsilon,
$$
which is equivalent to the inequality
\begin{equation}
	\label{*4.9}
\frac{1}{4}\Big|\sin\Big(\frac{\tau}{2\eps} \Big(\sqrt{\lambda_j(t_\dag)} + t_\dag \sqrt{\gamma_j} \Big)\Big)\Big|\frac{(\varepsilon |\tau|^{1/2})^{2s/3 -1}}{ (c_\dag^2 + \varepsilon^{4/3} |\tau|^{2/3})^{s/2}} \le \frac{\widetilde{C} (\tau)}{|\tau|^{1/2}}.
\end{equation}
By \eqref{*4.7}, the argument of sine in \eqref{*4.9} differs from  
$$
\eps^{-1} \tau t_\dag \sqrt{\gamma_j} =(\operatorname{sgn} \tau) \sqrt{\gamma_j} c_\dag |\tau|^{2/3} \eps^{-2/3}
$$
 by no more than $\pi/4$.
We put $\widehat{\eps}_k = {\gamma_j^{3/4} c_\dag^{3/2} |\tau|}{ (2\pi k + \pi/2)^{-3/2}}$, assuming that  $k\in \N$ is sufficiently large so that $\widehat{\eps}_k \le \eps_\dag |\tau|$.  
Let $\widehat{t}_k = t_\dag(\widehat{\eps}_k,\tau)$.
Then $\widehat{\eps}_k^{-1} \tau \widehat{t}_k \sqrt{\gamma_j} = (\operatorname{sgn} \tau) (2\pi k +\pi/2)$,
whence  
$$
\Big| \sin\Big(  \frac{\tau}{2 \widehat{\eps}_k} \Big(\sqrt{\lambda_j(\widehat{t}_k)} + \widehat{t}_k \sqrt{\gamma_j} 
\Big)\Big)\Big|   \ge 1/\sqrt{2}.
$$
Now,  from \eqref{*4.9} with  $\eps = \widehat{\eps}_k$ it follows that
\begin{equation*}
\frac{1}{4\sqrt{2} c_\dag^{3/2}}\Big( \frac{\gamma_j  \tau^2}{(2\pi k + \pi/2)^{2}}\Big)^{s/2 -3/4}\Big(1 + \frac{\gamma_j  \tau^2}{(2\pi k + \pi/2)^{2}}\Big)^{-s/2} \le \frac{\widetilde{C} (\tau)}{|\tau|^{1/2}}
\end{equation*}
for all sufficiently large $k$. According to our assumption, the right-hand side tends to zero as $\tau \to \infty$.
 Putting $\tau = {\tau}_k = 2\pi k + \pi/2$ and tending $k$ to infinity, we arrive at a contradiction.

Statement $2^\circ$ is checked similarly. 
Suppose the opposite. Then for some $r \ge 1/2$ we obtain the inequality 
\begin{equation}
	\label{*4.11}
\bigg|  \frac{ \sin \left(\varepsilon^{-1} \tau \sqrt{\lambda_j(t)} \right)}{\sqrt{\lambda_j(t)}}   - 
	\frac{\sin \left(\varepsilon^{-1} \tau |t| \sqrt{\gamma_j} \right)}{|t| \sqrt{\gamma_j}}\bigg|
	  \varepsilon^{r} (t^2 + \varepsilon^2)^{-r/2} \le \widetilde{C} (\tau),
	\end{equation}
	and $\lim_{\tau \to \infty} \widetilde{C}(\tau)/ |\tau|^{1/2} = 0$.
	By \eqref{**.7}, the quantity 
	$$
	\bigl| \lambda_j(t)^{-1/2} - |t|^{-1} \gamma_j^{-1/2}\bigr|
	$$
	is uniformly bounded for  $|t| \le t_*$. Therefore,  \eqref{*4.11} implies that 
\begin{equation*}
	\Big|  \sin \Bigl(\varepsilon^{-1} \tau \sqrt{\lambda_j(t)} \Bigr)     -\sin\Big(\varepsilon^{-1} \tau |t| \sqrt{\gamma_j}\Big)\Big| |t|^{-1}\varepsilon^{r} (t^2 + \varepsilon^2)^{-r/2} \le \widehat{C} (\tau),
	\end{equation*}
	and $\lim_{\tau \to \infty} \widehat{C}(\tau)/ |\tau|^{1/2} = 0$.
	Similarly to the proof of statement~$1^\circ$, assuming that 	
	 $\eps \le \eps_\dag |\tau|$ and substituting $t=t_\dag$ (see \eqref{*4.6}), we arrive at 
\begin{equation*}
	\frac{1}{4}\Big|\cos\Big(\frac{\tau}{2\varepsilon} \Big(\sqrt{\lambda_j(t_\dag)} + t_\dag \sqrt{\gamma_j} \Big)\Big)\Big| 
	t_\dag^{-1}  \varepsilon^{r} (t_\dag^2 + \varepsilon^2)^{-r/2} \le \widehat{C} (\tau),
	\end{equation*}
which is equivalent to 
\begin{equation}
	\label{*4.15}
\frac{1}{4c_\dag}
\Big|   \cos \Big(\frac{\tau}{2\varepsilon} \Bigl(\sqrt{\lambda_j(t_\dag)} + t_\dag \sqrt{\gamma_j} \Bigr) \Big) \Big| 
  (\varepsilon |\tau|^{1/2})^{(2r-1)/3}
	 (c_\dag^2 + \varepsilon^{4/3} |\tau|^{2/3})^{-r/2} \le 
	\frac{\widehat{C} (\tau)}{|\tau|^{1/2}}.
	\end{equation}
By \eqref{*4.7}, the argument of cosine in \eqref{*4.15} differs from  
$$
\eps^{-1} \tau t_\dag \sqrt{\gamma_j} =(\operatorname{sgn} \tau) \sqrt{\gamma_j} c_\dag |\tau|^{2/3} \eps^{-2/3}
$$
 by no more than $\pi/4$. We put  
$$
\check{\eps}_k = \gamma_j^{3/4} c_\dag^{3/2} |\tau| (2\pi k)^{-3/2},
$$
 assuming that $k\in \N$ is sufficiently large. Let $\check{t}_k = t_\dag (\check{\eps}_k, \tau)$.
Then $\check{\eps}_k^{-1} \tau \check{t}_k \sqrt{\gamma_j} = (\operatorname{sgn} \tau) 2\pi k$,
whence  
$$
\Bigl| \cos \Bigl(  \frac{\tau}{2 \check{\eps}_k} \Bigl(\sqrt{\lambda_j(\check{t}_k)} + \check{t}_k \sqrt{\gamma_j} \Bigr)  \Bigr)     \Bigr|   \ge 1/\sqrt{2}.$$
Now, from \eqref{*4.15} with $\eps = \check{\eps}_k$ it follows that 
\begin{equation*}
	\frac{1}{4\sqrt{2} c_\dag^{3/2}}  \Big(\frac{\gamma_j  \tau^2}{(2\pi k)^{2}} \Big)^{r/2 - 1/4}
	  \Big(1 + \frac{\gamma_j  \tau^2}{(2\pi k)^{2}} \Big)^{-r/2} \le \frac{\widehat{C} (\tau)}{|\tau|^{1/2}}
\end{equation*}
 for all sufficiently large $k$. According to our assumption, the right-hand side tends to zero as $\tau \to \infty$.
 Putting $\tau = \widetilde{\tau}_k = 2\pi k$ and tending  $k$ to infinity, we arrive at a contradiction. 
\end{proof}

Finally, we confirm the sharpness of Theorems \ref{th3.5} and \ref{abstr_sin_enchanced_thrm_2} regarding  the dependence on  $\tau$.

\begin{theorem}
	\label{th4.final}
		Suppose that the operator $\Sigma(t,\tau)$ is defined by \eqref{3.13}.
	Let \hbox{$N_0 = 0$} and  $\mathcal{N}^{(q)} \ne 0$ for some $q \in \{1,\dots,p\}$. 
	Let $s \ge 3/2$. There does not exist a positive function $C(\tau)$ such that 
	$\lim_{\tau \to \infty} C(\tau)/ |\tau|^{1/2} = 0$ and estimate
	\eqref{abstr_s<2_est_imp} holds for all  $\tau \in \R$ and sufficiently small $|t|$ and $\varepsilon > 0$.
	\end{theorem}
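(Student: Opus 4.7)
The plan is to argue by contradiction, following the scheme of Theorem \ref{th4.6} but with the $t_\dag$-scaling used in Theorem \ref{th4.*2}. Assume that for some $s \ge 3/2$ there exists a positive function $C(\tau)$ with $C(\tau)/|\tau|^{1/2} \to 0$ for which \eqref{abstr_s<2_est_imp} is valid at all $\tau \in \R$ and all sufficiently small $|t|, \eps > 0$. I first transfer the bound from the energy norm of $\Sigma$ to the operator norm of $\mathcal{J}_2$ exactly as in the derivation of \eqref{3.47}. Namely, using \eqref{abstr_A(t)_nondegenerated} to pull down a factor of $|t|$ in place of $A(t)^{1/2}$, combined with the inequality $|t|\eps^s(t^2+\eps^2)^{-s/2} \le \eps$ and the uniform boundedness of $\|tZ(t^2S)^{-1/2}\sin(\eps^{-1}\tau(t^2S)^{1/2})P\|$ that follows from \eqref{1.2a} and \eqref{abstr_S_nondegenerated}, the hypothesis is converted into
\[
\|\mathcal{J}_2(t,\eps^{-1}\tau)\|\, |t|\, \eps^s (t^2+\eps^2)^{-s/2} \le \widehat{C}(\tau)\eps, \qquad \widehat{C}(\tau)/|\tau|^{1/2} \to 0.
\]

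Under the assumptions $N_0 = 0$ and $\mathcal{N}^{(q)} \ne 0$, one has $\mu_l = 0$ for all $l$ and $\nu_j \ne 0$ for at least one $j$ (by \eqref{abstr_N_0_matrix_elem} and Proposition \ref{Prop_nu_1}), so the expansion \eqref{**.7} applies. Testing the operator on the embryo $\omega_j$ and invoking \eqref{abstr_SP_repr_gamma_omega}, \eqref{**.5}, \eqref{**.10}, \eqref{**.11}, then discarding the uniformly bounded quantity $|\lambda_j(t)^{-1/2} - |t|^{-1}\gamma_j^{-1/2}|$ as in the passage from \eqref{3.40} to \eqref{3.41}, I obtain the scalar inequality
\[
\bigl|\sin(\eps^{-1}\tau\sqrt{\lambda_j(t)}) - \sin(\eps^{-1}\tau|t|\sqrt{\gamma_j})\bigr|\,\eps^s(t^2+\eps^2)^{-s/2} \le \check{C}(\tau)\eps
\]
with $\check{C}(\tau)/|\tau|^{1/2} \to 0$. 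A product-to-sum identity rewrites the left side as twice the modulus of a product of a cosine and a sine of the half-sum and half-difference of the two phases, setting up the final substitution.

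The heart of the argument is the substitution $t = t_\dag(\eps,\tau) = c_\dag|\tau|^{-1/3}\eps^{1/3}$ from \eqref{*4.6}, which is admissible whenever $\eps \le \eps_\dag |\tau|$. By \eqref{*4.5} the half-difference phase is bounded by $3\pi/16$, and the standard lower bound $|\sin y| \ge 2|y|/\pi$ yields $|\sin(\tfrac{\tau}{2\eps}(\sqrt{\lambda_j(t_\dag)}-t_\dag\sqrt{\gamma_j}))| \ge 1/8$. Dividing through by $\eps$, the remaining inequality takes the form
\[
\bigl|\cos\bigl(\tfrac{\tau}{2\eps}(\sqrt{\lambda_j(t_\dag)}+t_\dag\sqrt{\gamma_j})\bigr)\bigr|\, (\eps|\tau|^{1/2})^{2s/3-1}(c_\dag^2+\eps^{4/3}|\tau|^{2/3})^{-s/2} \le \mathrm{const}\cdot \check{C}(\tau)/|\tau|^{1/2}.
\]
Choosing $\eps = \check{\eps}_k = \gamma_j^{3/4}c_\dag^{3/2}|\tau|(2\pi k)^{-3/2}$ makes the cosine's argument differ from $(\operatorname{sgn}\tau)\cdot 2\pi k$ by at most $\pi/4$, so $|\cos(\cdots)| \ge 1/\sqrt{2}$. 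Setting finally $\tau = \tau_k = 2\pi k$, both $\eps|\tau|^{1/2}$ and $\eps^{4/3}|\tau|^{2/3}$ collapse to positive constants independent of $k$; hence the left side is bounded below by a positive constant while the right side tends to $0$, the desired contradiction.

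The main obstacle lies in the first paragraph: carefully tracking that the assumed decay $C(\tau)/|\tau|^{1/2} \to 0$ is faithfully transmitted through the reduction from the energy-norm estimate of $\Sigma$ to the operator-norm estimate of $\mathcal{J}_2$, and then to the projected scalar inequality, without losing the $o(|\tau|^{1/2})$ behaviour. Once this is in place, the $t_\dag$-substitution and the sequence $(\check{\eps}_k, \tau_k)$ are dictated by the same mechanism already employed in Theorem \ref{th4.*2}.
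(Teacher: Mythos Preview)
Your proposal is correct and follows essentially the same route as the paper's proof: reduce the energy-norm estimate on $\Sigma$ to an operator-norm estimate on $\mathcal{J}_2$ (via \eqref{abstr_A(t)_nondegenerated} and the uniform boundedness of the corrector term), project onto $\omega_j$ to get the scalar sine inequality \eqref{*4.19}, then carry out the $t_\dag$-substitution and the choice $\eps=\check{\eps}_k$, $\tau=2\pi k$ exactly as in Theorem~\ref{th4.*2}. Your identification of the ``main obstacle''---preserving the $o(|\tau|^{1/2})$ decay through each reduction step---is apt, and your handling of it (the additive corrections are each $O(\eps)$ uniformly in $\tau$, hence harmless) matches the paper's reasoning.
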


\begin{proof}
 Under our assumptions, $\mu_l=0$ for all $l=1,\dots,n,$ and  $\nu_j \ne 0$ at least for one $j$. 
Then expansion  \eqref{**.7} is satisfied.

Suppose the opposite. Then, similarly to  \eqref{3.46}--\eqref{3.48}, we see that for some 
 $s \ge 3/2$  the inequality
\begin{equation}
	\label{*4.18}
\Big|   \frac{\sin \left(\varepsilon^{-1} \tau \sqrt{\lambda_j(t)} \right)}{\sqrt{\lambda_j(t)}}    - 
	\frac{\sin \left(\varepsilon^{-1} \tau |t| \sqrt{\gamma_j} \right)}{|t| \sqrt{\gamma_j}} \Big| |t|
	  \varepsilon^{s} (t^2 + \varepsilon^2)^{-s/2} \le \widetilde{C} (\tau) \eps
	\end{equation}
holds and $\lim_{\tau \to \infty} \widetilde{C}(\tau)/ |\tau|^{1/2} = 0$.
	By \eqref{**.7}, the quantity  	
	$$
	\bigl| \lambda_j(t)^{-1/2} - |t|^{-1} \gamma_j^{-1/2}\bigr|
	$$
	is uniformly bounded for  $|t| \le t_*$, whence  \eqref{*4.18} implies that 
\begin{equation}
	\label{*4.19}
	\Big|  \sin \Bigl(\varepsilon^{-1} \tau \sqrt{\lambda_j(t)} \Bigr)     - 
	\sin \left(\varepsilon^{-1} \tau |t| \sqrt{\gamma_j} \right)  \Big| 
	  \varepsilon^{s} (t^2 + \varepsilon^2)^{-s/2} \le \widehat{C} (\tau) \eps,
	\end{equation}
	and $\lim_{\tau \to \infty} \widehat{C}(\tau)/ |\tau|^{1/2} = 0$.
	Similarly to  \eqref{*4.4}--\eqref{*4.9}, from  \eqref{*4.19} we deduce that
\begin{equation*}
\frac{1}{4}\Big|\cos\Big(\frac{\tau}{2\eps} \Big(\sqrt{\lambda_j(t_\dag)} + t_\dag \sqrt{\gamma_j} \Big)\Big)\Big|   
	(\varepsilon |\tau|^{1/2})^{2s/3 -1}  (c_\dag^2 + \varepsilon^{4/3} |\tau|^{2/3})^{-s/2} \le \frac{\widehat{C} (\tau)}{|\tau|^{1/2}}.
\end{equation*}
For $\eps=  \check{\eps}_k = \gamma_j^{3/4} c_\dag^{3/2} |\tau| (2\pi k)^{-3/2}$
this yields the inequality 
\begin{equation*}
	\frac{1}{4\sqrt{2} c_\dag^{3/2}}\Big( \frac{\gamma_j  \tau^2}{(2\pi k)^{2}}\Big)^{s/2 - 3/4}\Big(1 + \frac{\gamma_j  \tau^2}{(2\pi k)^{2}} \Big)^{-s/2} \le \frac{\widehat{C} (\tau)}{|\tau|^{1/2}}
\end{equation*}
for all sufficiently large $k$. By our assumption, the right-hand side tends to zero as $\tau \to \infty$.
 Putting $\tau = \widetilde{\tau}_k = 2\pi k$ and tending $k$ to infinity, we arrive at a contradiction. 
\end{proof}


\section{Operator of the form $A(t) = M^* {\widehat{A}} (t) M$. Approximation of the sandwiched operators
 $\cos (\tau A(t)^{1/2})$ and  $A(t)^{-1/2} \sin(\tau A(t)^{1/2})$}
\label{abstr_sandwiched_section}

\subsection{The operator family of the form $A(t) = M^* \widehat{A} (t) M$}
\label{abstr_A_and_Ahat_section}
Along with the space $\mathfrak{H}$, we consider yet another separable Hilbert space  $\widehat{\mathfrak{H}}$. Let $\widehat{X} (t) = \widehat{X}_0 + t \widehat{X}_1 \colon \widehat{\mathfrak{H}} \to \mathfrak{H}_* $~be the family of operators of the same form as  $X(t)$. Suppose that $\widehat{X} (t)$ satisfies the assumptions of Subsection~\ref{abstr_X_A_section}. Let $M \colon \mathfrak{H} \to \widehat{\mathfrak{H}}$~be an isomorphism. Assume that $M \Dom X_0 = \Dom \widehat{X}_0$,  $X(t) = \widehat{X} (t) M$,
and then also $X_0 = \widehat{X}_0 M$, $X_1 = \widehat{X}_1 M$. In $\widehat{\mathfrak{H}}$,  we introduce the family of selfadjoint operators $\widehat{A} (t) = \widehat{X} (t)^* \widehat{X} (t)$. Then, obviously,
\begin{equation}
\label{abstr_A_and_Ahat}
A(t) = M^* \widehat{A} (t) M.
\end{equation} 
 In what follows, all the objects corresponding to the family  $\widehat{A}(t)$ are marked by \textquotedblleft$\, \widehat{\phantom{\_}} \,$\textquotedblright. Note that $\widehat{\mathfrak{N}} = M \mathfrak{N}$
and $\widehat{\mathfrak{N}}_* =  \mathfrak{N}_*$.
 In the space $\widehat{\mathfrak{H}}$, we consider the positive definite operator $Q := (M M^*)^{-1}$.
Let $Q_{\widehat{\mathfrak{N}}}$~be the block of the operator $Q$ in  
$\widehat{\mathfrak{N}}$, i.~e., $Q_{\widehat{\mathfrak{N}}} = \widehat{P} Q|_{\widehat{\mathfrak{N}}}$.
Obviously, $Q_{\widehat{\mathfrak{N}}}$~is an isomorphism in $\widehat{\mathfrak{N}}$.

As was shown in \cite[Proposition~1.2]{Su2}, the orthogonal projection $P$ of $\mathfrak{H}$ onto $\mathfrak{N}$ 
and the orthogonal projection $\widehat{P}$ of $\widehat{\mathfrak{H}}$ onto $\widehat{\mathfrak{N}}$ 
satisfy the following relation:
\begin{equation}
\label{abstr_P_and_P_hat_relation}
P = M^{-1} (Q_{\widehat{\mathfrak{N}}})^{-1} \widehat{P} (M^*)^{-1}.  
\end{equation}
Let $\widehat{S} \colon \widehat{\mathfrak{N}} \to \widehat{\mathfrak{N}}$~be the spectral germ of the family $\widehat{A} (t)$ at $t = 0$,
and let  $S$~be the germ of the family $A (t)$. In \cite[Chapter~1, Subsection~1.5] {BSu1}, it was proved that
\begin{equation}
\label{abstr_S_and_S_hat_relation}
S = P M^* \widehat{S} M |_\mathfrak{N}.
\end{equation}

Assume that $A(t)$ satisfies Condition \ref{cond_A}.
Then the germ $S$ (as well as $\wh{S}$) is nondegenerate.

\subsection{The operators $\widehat{Z}_Q$ and  $\widehat{N}_Q$}
\label{abstr_hatZ_Q_and_hatN_Q_section}
We introduce the operator $\widehat{Z}_Q$ acting in  $\widehat{\mathfrak{H}}$ and taking an element  $\widehat{u} \in \widehat{\mathfrak{H}}$ into the weak solution  $\widehat{\phi}_Q \in \operatorname{Dom} \wh{X}_0$ of 
the problem 
$\widehat{X}^*_0 (\widehat{X}_0 \widehat{\phi}_Q + \widehat{X}_1 \widehat{\omega}) = 0$, 
$Q \widehat{\phi}_Q \perp \widehat{\mathfrak{N}}$,
where $\widehat{\omega} = \widehat{P} \widehat{u}$. As was shown in~\cite[\S6]{BSu2}, the operator $Z$ for the family  $A(t)$ and the operator $\widehat{Z}_Q$ satisfy the following relation: 
\begin{equation}
\label{abstr_Z_and_hatZ_Q_relat}
\widehat{Z}_Q =M Z M^{-1} \widehat{P}.
\end{equation}
Next, we put  
\begin{equation}
\label{abstr_hatN_Q}
\widehat{N}_Q := \widehat{Z}_Q^* \widehat{X}_1^* \widehat{R} \widehat{P} + (\widehat{R} \widehat{P})^* \widehat{X}_1  \widehat{Z}_Q.
\end{equation}
According to~\cite[\S6]{BSu2}, the operator $N$ for the family $A(t)$ and the operator~(\ref{abstr_hatN_Q}) introduced above satisfy the following relation:
\begin{equation}
\label{abstr_N_and_hatN_Q_relat}
\widehat{N}_Q = \widehat{P} (M^*)^{-1} N M^{-1} \widehat{P}.
\end{equation}
Recall that $N = N_0 + N_*$, and define the operators
\begin{equation}
\label{abstr_N0*_and_hatN0*_Q_relat}
\widehat{N}_{0,Q} = \widehat{P} (M^*)^{-1} N_0 M^{-1} \widehat{P}, \quad \widehat{N}_{*,Q} = \widehat{P} (M^*)^{-1} N_* M^{-1} \widehat{P}.
\end{equation}
Then $\widehat{N}_Q = \widehat{N}_{0,Q} + \widehat{N}_{*,Q}$.
The following lemma was proved in~\cite[Lemma~5.1]{Su4}.

\begin{lemma}[see~\cite{Su4}]
	\label{abstr_N_and_Nhat_lemma}
	Suppose that the assumptions of Subsection~\emph{\ref{abstr_A_and_Ahat_section}} are satisfied. Suppose that the operators $N$ and $N_0$ are defined by~\eqref{abstr_N} and \eqref{abstr_N_invar_repers} and the operators $\widehat{N}_Q$ and $\widehat{N}_{0,Q}$ are defined by~\eqref{abstr_hatN_Q} and \eqref{abstr_N0*_and_hatN0*_Q_relat}. Then the condition  $N = 0$ is equivalent to 
	the relation $\widehat{N}_Q = 0$. The condition $N_0 = 0$ is equivalent to the relation $\widehat{N}_{0,Q} = 0$.
\end{lemma}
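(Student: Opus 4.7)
The plan is to reduce both equivalences to a single, purely linear-algebraic observation: if $T\colon\mathfrak{H}\to\mathfrak{H}$ is an operator satisfying $T = PTP$, then
\begin{equation*}
T = 0 \quad \Longleftrightarrow \quad \widehat{P}(M^*)^{-1} T\, M^{-1}\widehat{P} = 0.
\end{equation*}
The forward implication is trivial, so the whole content is the reverse. Once this is established, the lemma follows immediately by applying it to $T = N$ and to $T = N_0$, using \eqref{abstr_N_and_hatN_Q_relat} and \eqref{abstr_N0*_and_hatN0*_Q_relat}.

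First I would record the structural properties we need. From the description following \eqref{abstr_N_0_N_*}, both $N$ and $N_0$ send $\mathfrak{N}$ into itself and kill $\mathfrak{N}^\perp$; equivalently, $N = PNP$ and $N_0 = PN_0 P$ (this also reads off directly from \eqref{abstr_N} and \eqref{abstr_N_0_N_*}). Next I would use that $M$ is an isomorphism and $\widehat{\mathfrak{N}} = M\mathfrak{N}$, so $M^{-1}$ restricts to a bijection $\widehat{\mathfrak{N}} \to \mathfrak{N}$.

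For the key observation, assume $\widehat{P}(M^*)^{-1} T\, M^{-1}\widehat{P}=0$ with $T = PTP$. Pairing with arbitrary $\widehat{u},\widehat{v}\in\widehat{\mathfrak{N}}$ and using $((M^*)^{-1})^* = M^{-1}$, I would compute
\begin{equation*}
0 = \bigl(\widehat{P}(M^*)^{-1} T M^{-1}\widehat{P}\,\widehat{u},\widehat{v}\bigr)_{\widehat{\mathfrak{H}}}
 = \bigl(T M^{-1}\widehat{u},\, M^{-1}\widehat{v}\bigr)_{\mathfrak{H}}.
\end{equation*}
Setting $u := M^{-1}\widehat{u}$ and $v := M^{-1}\widehat{v}$, which range over all of $\mathfrak{N}$ as $\widehat{u},\widehat{v}$ range over $\widehat{\mathfrak{N}}$, this gives $(Tu,v)_{\mathfrak{H}}=0$ for all $u,v\in\mathfrak{N}$. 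Combined with $T = PTP$, this forces $T = 0$.

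Applying the observation with $T=N$ (using \eqref{abstr_N_and_hatN_Q_relat}) and with $T=N_0$ (using \eqref{abstr_N0*_and_hatN0*_Q_relat}) yields the two stated equivalences. The only conceptual step is recognizing that $N$ and $N_0$ are reduced operators on $\mathfrak{N}$ (i.e.\ $T=PTP$); everything else is bookkeeping with the isomorphism $M\colon \mathfrak{N}\to\widehat{\mathfrak{N}}$. There is no real obstacle — the identity \eqref{abstr_P_and_P_hat_relation} and the isomorphism $Q_{\widehat{\mathfrak{N}}}$ are not needed, because the injectivity comes directly from pairing against vectors in $\widehat{\mathfrak{N}}$.
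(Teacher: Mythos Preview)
Your proof is correct. The paper does not actually prove this lemma; it quotes it from \cite[Lemma~5.1]{Su4} without argument. Your approach---reducing both equivalences to the single observation that for any $T=PTP$ the map $T\mapsto\widehat{P}(M^*)^{-1}TM^{-1}\widehat{P}$ is injective, via pairing against vectors in $\widehat{\mathfrak{N}}$ and using that $M^{-1}$ restricts to a bijection $\widehat{\mathfrak{N}}\to\mathfrak{N}$---is clean and self-contained, and uses exactly the structural facts the paper records (that $N$ and $N_0$ take $\mathfrak{N}$ into itself and annihilate $\mathfrak{N}^\perp$, together with \eqref{abstr_N_and_hatN_Q_relat} and \eqref{abstr_N0*_and_hatN0*_Q_relat}).
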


\subsection{The operators $\widehat{Z}_{2,Q}$,  $\widehat{R}_{2,Q}$, and $\widehat{N}^0_{1,Q}$\label{sec5.3}}

Let $\widehat{\omega} \in \widehat{\mathfrak{N}}$ and let   
$\widehat{\psi}_Q = \widehat{\psi}_Q(\widehat{\omega}) \in \Dom \widehat{X}_0$ be a (weak) solution of the problem
\begin{equation*}
\widehat{X}^*_0 (\widehat{X}_0 \widehat{\psi}_Q + \widehat{X}_1 \widehat{Z}_Q \widehat{\omega}) = 
- \widehat{X}^*_1 \widehat{R} \widehat{\omega} + 
Q Q_{\widehat{\mathfrak{N}}}^{-1} \widehat{P} \widehat{X}^*_1 \widehat{R} \widehat{\omega}, \quad Q \widehat{\psi}_Q \perp \widehat{\mathfrak{N}}.
\end{equation*}
Clearly, the right-hand side of this equation belongs to  $\widehat{\mathfrak{N}}^\perp = \operatorname{Ran} 
\widehat{X}_0^*$, and so the solvability condition is satisfied. We define the operator  
$\widehat{Z}_{2,Q} : \widehat{\H} \to \widehat{\H}$ by the relation 
$\widehat{Z}_{2,Q} \widehat{u} = \widehat{\psi}_Q(\widehat{P} \widehat{u})$,
$\widehat{u} \in \widehat{\mathfrak H}$.
 Next,  define the operator $\widehat{R}_{2,Q}: \widehat{\mathfrak N} \to \H_*$ by the relation
$\widehat{R}_{2,Q} := \widehat{X}_0 \widehat{Z}_{2,Q} + \widehat{X}_1 \widehat{Z}_{Q}$.
We put
\begin{equation}
\label{abstr_hatN0_1Q}
\widehat{N}^0_{1,Q} = \widehat{Z}^*_{2,Q}  \widehat{X}_1^*    \widehat{R}  \widehat{P}
+ (\widehat{R}  \widehat{P})^*  \widehat{X}_1 \widehat{Z}_{2,Q} + \widehat{R}_{2,Q}^* \widehat{R}_{2,Q} \wh{P}.
\end{equation}

In~\cite[Subsection~6.3]{VSu1}, it was proved that
\begin{equation*}
\begin{aligned}
\widehat{Z}_{2,Q} = M Z_2 M^{-1} \widehat{P}, \quad \widehat{R}_{2,Q} = R_2 M^{-1}\vert_{\widehat{\mathfrak N}},
\quad
\widehat{N}^0_{1,Q} = \widehat{P} (M^*)^{-1} N_1^0 M^{-1} \widehat{P}.
\end{aligned}
\end{equation*}

\subsection{Relationship between the operators and the coefficients of the power series expansions}
Now, we describe relationship between the coefficients of the power series expansions~(\ref{abstr_A(t)_eigenvalues_series}), (\ref{abstr_A(t)_eigenvectors_series}) and the operators $\widehat{S}$ and $Q_{\widehat{\mathfrak{N}}}$. (See~\cite[Subsections~1.6,~1.7]{BSu3}.) We put $\zeta_l := M \omega_l \in \widehat{\mathfrak{N}}, \, l = 1, \ldots, n$. Then from~(\ref{abstr_S_eigenvectors}) and~(\ref{abstr_P_and_P_hat_relation}), (\ref{abstr_S_and_S_hat_relation}) it follows that 
\begin{equation}
\label{abstr_hatS_gener_spec_problem}
\widehat{S} \zeta_l  = \gamma_l Q_{\widehat{\mathfrak{N}}} \zeta_l, \quad l = 1, \ldots, n. 
\end{equation}
The set $\zeta_1, \ldots, \zeta_n$ forms a basis in $\widehat{\mathfrak{N}}$ orthonormal with the weight $Q_{\widehat{\mathfrak{N}}}$:
\begin{equation}
\label{abstr_sndwchd_zeta_basis}
(Q_{\widehat{\mathfrak{N}}} \zeta_l, \zeta_j) = \delta_{lj}, \qquad l,j = 1,\ldots,n.
\end{equation}

The operators $\widehat{N}_{0,Q}$ and  $\widehat{N}_{*,Q}$ can be described in terms of the coefficients of the power series expansions~(\ref{abstr_A(t)_eigenvalues_series}) and~(\ref{abstr_A(t)_eigenvectors_series}); cf.~(\ref{abstr_N_0_N_*}). We put $\widetilde{\zeta}_l :=  M \widetilde{\omega}_l \in \widehat{\mathfrak{N}}, \; l = 1, \ldots,n $. Then
\begin{equation}
\label{abstr_hatN_0Q_N_*Q}
\begin{split}
\widehat{N}_{0,Q}& = \sum_{k=1}^{n} \mu_k (\,\cdot\,, Q_{\widehat{\mathfrak{N}}} \zeta_k) Q_{\widehat{\mathfrak{N}}} \zeta_k, \\
\widehat{N}_{*,Q} &= \sum_{k=1}^{n} \gamma_k \left( (\,\cdot\,, Q_{\widehat{\mathfrak{N}}} \widetilde{\zeta}_k) Q_{\widehat{\mathfrak{N}}} \zeta_k + (\,\cdot\,, Q_{\widehat{\mathfrak{N}}} \zeta_k) Q_{\widehat{\mathfrak{N}}} \widetilde{\zeta}_k \right). 
\end{split}
\end{equation}

\begin{remark}
	By~\eqref{abstr_sndwchd_zeta_basis} and~\eqref{abstr_hatN_0Q_N_*Q}, we have
	\begin{align*}
	(\widehat{N}_{0,Q} \zeta_j, \zeta_l)& = \mu_l \delta_{jl}, && j,l=1,\ldots,n,\\
	(\widehat{N}_{*,Q} \zeta_j, \zeta_l) &= \gamma_l (\zeta_j, Q_{\widehat{\mathfrak{N}}} \widetilde{\zeta}_l) + \gamma_j (Q_{\widehat{\mathfrak{N}}} \widetilde{\zeta}_j, \zeta_l), && j,l=1,\ldots,n,    
	\end{align*}
Relations~\eqref{abstr_omega_tilde_omega_rel} imply that 
$(Q_{\widehat{\mathfrak{N}}} \widetilde{\zeta}_j, \zeta_l) + (\zeta_j, Q_{\widehat{\mathfrak{N}}} \widetilde{\zeta}_l) = 0$, $j, l = 1, \ldots,n$.
It follows that $(\widehat{N}_{*,Q} \zeta_j, \zeta_l) = 0$ if $\gamma_j = \gamma_l$.
\end{remark}

Now, we return to the notation of~Subsection~\ref{abstr_cluster_section}. Recall that the different eigenvalues 
of the germ $S$ are denoted by  $\gamma^{\circ}_j$, $j = 1,\ldots,p$, and the corresponding eigenspaces by $\mathfrak{N}_j$. The vectors $\omega^{(j)}_i$, $i = 1,\ldots, k_j,$ form an orthonormal basis in $\mathfrak{N}_j$. Then the same numbers $\gamma^{\circ}_j$, $j = 1,\ldots,p$,~are different eigenvalues of the 
problem~(\ref{abstr_hatS_gener_spec_problem}), and
$M \mathfrak{N}_j = \operatorname{Ker} (\wh{S} - \gamma_j^\circ Q_{\widehat{\mathfrak{N}}}) =: \wh{\mathfrak{N}}_{j,Q}$~are the corresponding eigenspaces. The vectors $\zeta^{(j)}_i = M\omega^{(j)}_i, \, i = 1,\ldots, k_j,$ form a basis in $\wh{\mathfrak{N}}_{j,Q}$ orthonormal with the weight $Q_{\widehat{\mathfrak{N}}}$. 
By $\mathcal{P}_j$ we denote the \textquotedblleft skew\textquotedblright \ projection onto $\wh{\mathfrak{N}}_{j,Q}$ which is orthogonal with respect to the inner product  $(Q_{\widehat{\mathfrak{N}}} \,\cdot\,, \,\cdot\,)$, i.~e.,
$\mathcal{P}_j = \sum_{i=1}^{k_j} (\,\cdot\,, Q_{\widehat{\mathfrak{N}}} \zeta^{(j)}_i) \zeta^{(j)}_i$, 
$j = 1, \ldots, p$.
Clearly, we have $\mathcal{P}_j =M P_j M^{-1} \widehat{P}$.
Using~(\ref{abstr_N_invar_repers}), (\ref{abstr_N_and_hatN_Q_relat}), and~(\ref{abstr_N0*_and_hatN0*_Q_relat}), it is easy to obtain the invariant representations
\begin{equation}
\label{abstr_hatN_0Q_N_*Q_invar_repr}
\widehat{N}_{0,Q} = \sum_{j=1}^{p} \mathcal{P}_j^* \widehat{N}_Q \mathcal{P}_j, \quad \widehat{N}_{*,Q} = \sum_{\substack{1 \le l,j \le p: \, l \ne j}} \mathcal{P}_l^* \widehat{N}_Q \mathcal{P}_j.
\end{equation}

\subsection{The coefficients $\nu_l$}

The coefficients $\nu_l$ from expansions \eqref{abstr_A(t)_eigenvalues_series} and the vectors  
$\zeta_l = M \omega_l$, $l=1,\dots,n$, are the eigenvalues and the eigenvectors of some problem; see 
\cite[Subsection~3.4]{D}.
We need to describe this problem in the case where $\mu_l=0$, $l=1,\dots,n,$ i.~e., $\wh{N}_{0,Q}=0$.

\begin{proposition}[see~\cite{D}]
\label{Prop_nu}
Let $\wh{N}_{0,Q}=0$. Suppose that the operator  
$\wh{N}_{1,Q}^0$ is defined by \eqref{abstr_hatN0_1Q}.
Let $\gamma_1^\circ, \dots, \gamma_p^\circ$ be the different eigenvalues of problem \eqref{abstr_hatS_gener_spec_problem}, and let $k_1, \dots, k_p$ be their multiplicities. Let  
$\wh{\mathfrak{N}}_{q,Q}  = \operatorname{Ker} (\wh{S} - \gamma_q^\circ Q_{\widehat{\mathfrak{N}}})$, and let 
$\wh{P}_{q,Q}$ be the orthogonal projection of the space $\wh{\H}$ onto  
$\wh{\mathfrak{N}}_{q,Q}$, $q=1,\dots,p$. 
We introduce the operators $\wh{\mathcal{N}}_Q^{(q)},$ $q=1,\dots,p$\emph{:} the operator $\wh{\mathcal{N}}_Q^{(q)}$ acts in $\wh{\mathfrak{N}}_{q,Q}$ and is given by the expression  
\begin{equation*}
\begin{aligned}
\wh{\mathcal{N}}_Q^{(q)} 
:= \wh{P}_{q,Q} \Big( \wh{N}_{1,Q}^0 
- \frac{1}{2} \wh{Z}_Q^* Q \wh{Z}_Q Q^{-1} \wh{S} \wh{P} - 
\frac{1}{2} \wh{S} \wh{P} Q^{-1} \wh{Z}_Q^* Q \wh{Z}_Q \Big) \Big\vert_{\wh{\mathfrak{N}}_{q,Q}}
\\
+ \sum_{j=1,\dots,p: j\ne q} (\gamma_q^\circ - \gamma_j^\circ)^{-1} 
\wh{P}_{q,Q}  \wh{N}_Q   \wh{P}_{j,Q} Q^{-1} \wh{P}_{ j,Q}   \wh{N}_Q 
\Big\vert_{\wh{\mathfrak{N}}_{q,Q}}.
\end{aligned}
\end{equation*}
Denote $i(q)= k_1 + \dots + k_{q-1} +1$.
Let $\nu_l$ be the coefficients of  $t^4$ in expansions  \eqref{abstr_A(t)_eigenvalues_series}, and let 
 $\omega_l$ be the embryos from expansions \eqref{abstr_A(t)_eigenvectors_series}. Let
$\zeta_l = M \omega_l$, $l=1,\dots,n$.
Denote ${Q}_{\wh{\mathfrak{N}}_{q,Q}} = \wh{P}_{q,Q} Q \vert_{\wh{\mathfrak{N}}_{q,Q}}$.
Then  
\begin{equation*}
\widehat{\mathcal{N}}^{(q)}_Q  \zeta_l = \nu_l {Q}_{\wh{\mathfrak{N}}_{q,Q} } \zeta_l, \quad l= i(q), i(q)+1,\dots, i(q) + k_q -1.
\end{equation*}
\end{proposition}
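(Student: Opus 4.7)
This proposition is the sandwiched counterpart of Proposition \ref{Prop_nu_1}. The plan is to reduce to that result through the operator dictionary of Subsections \ref{abstr_A_and_Ahat_section}--\ref{sec5.3}. The starting point is the observation that the eigenvalue problem $A(t)u = \lambda u$ is equivalent, via $v = Mu$, to the generalized eigenvalue problem $\wh{A}(t)v = \lambda Q v$; hence the coefficients $\nu_l$ in the expansions \eqref{abstr_A(t)_eigenvalues_series} are the same for both formulations, and our task is to show that they are computed by $\wh{\mathcal{N}}_Q^{(q)}$ in the sandwiched setting.

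The first technical ingredient is a pair of algebraic identities arising from $Q = (MM^*)^{-1} = (M^*)^{-1}M^{-1}$, namely $M^* Q M = I_{\mathfrak{H}}$ and $Q^{-1} = MM^*$. These identities allow quadratic forms such as $\wh{Z}_Q^* Q \wh{Z}_Q$ to collapse. Combining them with the translation relations
$\wh{Z}_Q = MZM^{-1}\wh{P}$,
$\wh{Z}_{2,Q}=MZ_2 M^{-1}\wh{P}$,
$\wh{R}_{2,Q}=R_2 M^{-1}|_{\wh{\mathfrak{N}}}$,
$\wh{N}_Q = \wh{P}(M^*)^{-1}NM^{-1}\wh{P}$,
$\wh{N}^0_{1,Q} = \wh{P}(M^*)^{-1}N_1^0 M^{-1}\wh{P}$,
one obtains, for instance,
$$
\wh{Z}_Q^* Q \wh{Z}_Q = \wh{P}(M^*)^{-1} Z^* Z M^{-1}\wh{P},
$$
and analogous rewrites for each remaining ingredient of $\wh{\mathcal{N}}_Q^{(q)}$.

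Apply $\wh{\mathcal{N}}_Q^{(q)}$ to $\zeta_l = M\omega_l$, where $\omega_l \in \mathfrak{N}_q$ for $l=i(q),\dots,i(q)+k_q-1$. Using $\wh{P}\zeta_l = \zeta_l$, $M^{-1}\zeta_l = \omega_l$, and the generalized eigenvalue identity $\wh{S}\zeta_l = \gamma_q^\circ \wh{P}Q\zeta_l$, every ingredient of $\wh{\mathcal{N}}_Q^{(q)}\zeta_l$ matches the corresponding ingredient of $\mathcal{N}^{(q)}\omega_l$ up to a pre-multiplication by $\wh{P}_{q,Q}(M^*)^{-1}$. The resulting identity
$$
\wh{\mathcal{N}}_Q^{(q)}\zeta_l = \wh{P}_{q,Q}(M^*)^{-1}\mathcal{N}^{(q)}\omega_l
$$
combined with Proposition \ref{Prop_nu_1} (which gives $\mathcal{N}^{(q)}\omega_l = \nu_l \omega_l$), the relation $(M^*)^{-1}\omega_l = (M^*)^{-1}M^{-1}\zeta_l = Q\zeta_l$, and the definition $Q_{\wh{\mathfrak{N}}_{q,Q}} = \wh{P}_{q,Q}Q|_{\wh{\mathfrak{N}}_{q,Q}}$ yields
$$
\wh{\mathcal{N}}_Q^{(q)}\zeta_l = \nu_l \wh{P}_{q,Q}Q\zeta_l = \nu_l Q_{\wh{\mathfrak{N}}_{q,Q}}\zeta_l,
$$
as claimed.

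The main obstacle is the treatment of the sum over $j\ne q$. For the family $A(t)$, the natural projections $P_j$ onto the eigenspaces of the germ $S$ are ordinary orthogonal projections, whereas for the sandwiched family $\wh{A}(t)$ the natural projections $\mathcal{P}_j = MP_j M^{-1}\wh{P}$ are only $Q_{\wh{\mathfrak{N}}}$-orthogonal. The formula for $\wh{\mathcal{N}}_Q^{(q)}$ instead features the ordinary orthogonal projections $\wh{P}_{j,Q}$ together with an inserted factor $Q^{-1}$. Verifying that $\wh{P}_{j,Q}Q^{-1}\wh{P}_{j,Q}$ sandwiched between two copies of $\wh{N}_Q$ reproduces $\mathcal{P}_j$ sandwiched between two copies of $\wh{P}(M^*)^{-1}NM^{-1}\wh{P}$ requires careful bookkeeping; the key is that the decomposition $\wh{\mathfrak{N}} = \bigoplus_j \wh{\mathfrak{N}}_{j,Q}$ is orthogonal with respect to the $Q_{\wh{\mathfrak{N}}}$ inner product, so that the extra $Q^{-1}$ decouples the contributions from different clusters and cancels the weights via $Q^{-1} = MM^*$ in precisely the right way.
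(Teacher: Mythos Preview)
The paper does not prove this proposition; it is quoted from \cite[Subsection~3.4]{D}. Your strategy of reducing it to Proposition~\ref{Prop_nu_1} via the sandwiching dictionary of Subsections~\ref{abstr_A_and_Ahat_section}--\ref{sec5.3} is natural and distinct from simply citing \cite{D}, but the argument as written has a genuine gap.

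The central claim---that each ingredient of $\wh{\mathcal{N}}_Q^{(q)}\zeta_l$ matches the corresponding ingredient of $\mathcal{N}^{(q)}\omega_l$ up to premultiplication by $\wh{P}_{q,Q}(M^*)^{-1}$---is not established, and the difficulty is not confined to the cross-cluster sum you flag at the end. Already in the term $\wh{Z}_Q^{*}Q\wh{Z}_Q\,Q^{-1}\wh{S}\wh{P}\,\zeta_l$ there is a problem: translating through $\wh{Z}_Q^{*}Q\wh{Z}_Q=\wh{P}(M^*)^{-1}Z^{*}ZM^{-1}\wh{P}$, one is led to $M^{-1}\wh{P}Q^{-1}\wh{S}\zeta_l$, and for the term-by-term match you need this to equal $SP\omega_l=\gamma_q^{\circ}\omega_l$. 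Using $\wh{S}\zeta_l=\gamma_q^{\circ}Q_{\wh{\mathfrak N}}\zeta_l$, this would require $\wh{P}Q^{-1}\bigl|_{\wh{\mathfrak N}}=Q_{\wh{\mathfrak N}}^{-1}$. But the block of $Q^{-1}$ in $\wh{\mathfrak N}$ is \emph{not} the inverse of $Q_{\wh{\mathfrak N}}$ unless $\wh{\mathfrak N}$ is $Q$-invariant; in general a residual term proportional to $M^{-1}\wh{P}Q^{-1}\wh{P}^{\perp}Q\zeta_l\in\mathfrak N$ survives. You neither show that $\wh{P}_{q,Q}(M^*)^{-1}Z^{*}Z$ annihilates this residual nor that it cancels against analogous residuals from the companion term $\wh{S}\wh{P}Q^{-1}\wh{Z}_Q^{*}Q\wh{Z}_Q$ and from the sum over $j\ne q$. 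The closing paragraph correctly names the bookkeeping issue for the cross terms but offers only a heuristic (``$Q^{-1}$ decouples the clusters and cancels via $Q^{-1}=MM^{*}$''); the same $\wh{P}Q^{-1}\wh{P}$ versus $Q_{\wh{\mathfrak N}}^{-1}$ discrepancy arises there as well.

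A complete proof along your lines would have to track all of these $Q$-off-diagonal contributions through every term and demonstrate the required cancellation; alternatively, one can follow \cite{D} and carry out the perturbation-theoretic derivation directly for the generalized pencil $\wh{A}(t)v=\lambda Qv$, where the $Q$-weighted objects appear from the outset and no such reconciliation is needed.
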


\subsection{Approximation of the sandwiched operators  $\cos(\eps^{-1}\tau A(t)^{1/2})$ and 
$A(t)^{-1/2} \sin( \eps^{-1}\tau A(t)^{1/2})$}
In this section, we find approximations of the operators $\cos(\eps^{-1}\tau A(t)^{1/2})$ and
$A(t)^{-1/2} \sin(\eps^{-1} \tau A(t)^{1/2})$ for the family~(\ref{abstr_A_and_Ahat}) in terms of the germ  $\widehat{S}$ of the operator $\widehat{A}(t)$ and the isomorphism $M$. It turns out that it is convenient to border the operators under consideration by appropriate factors.

Denote $M_0 := (Q_{\widehat{\mathfrak{N}}})^{-1/2}$.  We have 
\begin{align}
\label{abstr_sandwiched_cos_S_relation}
M \cos(\tau (t^2 S)^{1/2} ) P M^* &= M_0 \cos (\tau (t^2 M_0 \widehat{S} M_0)^{1/2}) M_0 \widehat{P},
\\
\label{abstr_sandwiched_sin_S_relation}
M (t^2 S)^{-1/2} 
\sin(\tau (t^2 S)^{1/2}) P M^* 
&= M_0 (t^2 M_0 \widehat{S} M_0)^{-1/2} \sin (\tau (t^2 M_0 \widehat{S} M_0)^{1/2}) M_0 \widehat{P},
\\
\label{abstr_sandwiched_sin_S_relation2}
M (t^2 S)^{-1/2} 
\sin(\tau (t^2 S)^{1/2} ) M^{-1} \wh{P} 
&= 
M_0 (t^2 M_0 \widehat{S} M_0)^{-1/2} \sin (\tau (t^2 M_0 \widehat{S} M_0)^{1/2}) M_0^{-1} \widehat{P}.
\end{align}
Relation \eqref{abstr_sandwiched_cos_S_relation} was checked in~\cite[Proposition~3.3]{BSu5}, and
 \eqref{abstr_sandwiched_sin_S_relation} follows from  \eqref{abstr_sandwiched_cos_S_relation}
 with the help of integration in $\tau$. Finally, relation \eqref{abstr_sandwiched_sin_S_relation2}
 is deduced from  \eqref{abstr_sandwiched_sin_S_relation} by multiplying by $M_0^{-2}\wh{P}= 
 Q_{\wh{\mathfrak{N}}} \wh{P}$ from the right and taking \eqref{abstr_P_and_P_hat_relation} into account.

 We introduce the notation 
 \allowdisplaybreaks{
 \begin{align}
\label{abstr_J1_def}
J_1(t, \tau) :=&  M  \cos (\tau A(t)^{1/2}) M^{-1} \widehat{P}  
-  M_0  \cos ( \tau (t^2 M_0 \widehat{S} M_0)^{1/2} ) M_0^{-1} \widehat{P},
\\
\label{abstr_J2_def}
\begin{split}
J_2(t, \tau) :=&  M  A(t)^{-1/2} \sin (\tau A(t)^{1/2}) M^{-1} \widehat{P}  
\\ 
&-   M_0 (t^2 M_0 \widehat{S} M_0)^{-1/2} \sin ( \tau (t^2 M_0 \widehat{S} M_0)^{1/2} ) M_0^{-1} \widehat{P},
\end{split}
\\
\label{abstr_tildeJ2_def}
\begin{split}
\wt{J}_3(t, \tau) :=&  M  A(t)^{-1/2} \sin (\tau A(t)^{1/2}) P M^{*}   
\\
&-   M_0 (t^2 M_0 \widehat{S} M_0)^{-1/2} \sin ( \tau (t^2 M_0 \widehat{S} M_0)^{1/2} ) M_0 \widehat{P},
\end{split}
\\
\label{abstr_J3_def}
\begin{split}
{J}_3(t, \tau) :=&  M  A(t)^{-1/2} \sin (\tau A(t)^{1/2}) M^* \wh{P}    
\\
&-   M_0 (t^2 M_0 \widehat{S} M_0)^{-1/2} \sin ( \tau (t^2 M_0 \widehat{S} M_0)^{1/2} ) M_0 \widehat{P}.
\end{split}
\end{align}
}

\begin{lemma}
	\label{abstr_cos_sin_sandwiched_est_lemma}
	Suppose that  ${\mathcal J}_1(t,\tau)$ and ${\mathcal J}_2(t,\tau)$ are defined by
 \eqref{3.01}, \eqref{3.02}. Under the assumptions of Subsection~\emph{\ref{abstr_A_and_Ahat_section}} we have 
	\begin{align}
	\label{abstr_J1_est1}
	& \| J_1(t, \tau)\|  \le \|M\| \|M^{-1}\| \| {\mathcal J}_1(t,\tau)  \|,
	\\
	\label{abstr_J2_est1}
	& \| J_2(t, \tau)\|  \le \|M\| \|M^{-1}\| \| {\mathcal J}_2(t,\tau) \|,
	\\
	\label{abstr_tildeJ2_est1}
	& \| \wt{J}_3(t, \tau)\|  \le \|M\|^2  \| {\mathcal J}_2(t,\tau) \|,
	\\
	\label{abstr_J1_est2}
	& \| {\mathcal J}_1(t,\tau)  \| \le \|M\|^2 \|M^{-1}\|^2 \| J_1(t, \tau)\|,
	\\
         \label{abstr_J2_est2}
	& \| {\mathcal J}_2(t,\tau) \|\le \|M\|^2 \|M^{-1}\|^2 \| J_2(t, \tau)\|,
	\\
          \label{abstr_tildeJ2_est2}
	& \| {\mathcal J}_2(t,\tau) \|\le \|M^{-1}\|^2 \| \wt{J}_3(t, \tau)\|.
	\end{align}
\end{lemma}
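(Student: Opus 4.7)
The plan is to derive three short algebraic identities of the form ``sandwiched operator equals $M$ times bare operator times something'' and then read off all six norm estimates by elementary manipulations. There is essentially no analysis to do; everything is linear algebra combined with the ready-made relations \eqref{abstr_sandwiched_cos_S_relation}--\eqref{abstr_sandwiched_sin_S_relation2} and \eqref{abstr_P_and_P_hat_relation}.

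\emph{Step 1 (key identities).} I will show that, under the hypotheses of Subsection~\ref{abstr_A_and_Ahat_section},
\[
J_1(t,\tau)=M\,\mathcal{J}_1(t,\tau)\,M^{-1}\widehat{P},\quad
J_2(t,\tau)=M\,\mathcal{J}_2(t,\tau)\,M^{-1}\widehat{P},\quad
\widetilde{J}_3(t,\tau)=M\,\mathcal{J}_2(t,\tau)\,M^{*}.
\]
The identity for $\widetilde{J}_3$ is immediate from \eqref{abstr_sandwiched_sin_S_relation}: the second summand of $\widetilde{J}_3$ equals $M(t^2S)^{-1/2}\sin(\tau(t^2S)^{1/2})P M^{*}$, so after subtracting the first summand we get $M\mathcal{J}_2 M^{*}$. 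For $J_1,J_2$ I exploit the observation that, since $M\mathfrak{N}=\widehat{\mathfrak{N}}$, the range of $M^{-1}\widehat{P}$ lies in $\mathfrak{N}$, whence $M^{-1}\widehat{P}=P\,M^{-1}\widehat{P}$; this lets me insert $P$ into the first summand of $J_k$ for free. For the second summand of $J_2$ I invoke \eqref{abstr_sandwiched_sin_S_relation2} directly. For the second summand of $J_1$ I need the cosine counterpart of \eqref{abstr_sandwiched_sin_S_relation2}, namely $M\cos(\tau(t^2S)^{1/2})PM^{-1}\widehat{P}=M_0\cos(\tau(t^2M_0\widehat{S}M_0)^{1/2})M_0^{-1}\widehat{P}$; this is obtained from \eqref{abstr_sandwiched_cos_S_relation} exactly as \eqref{abstr_sandwiched_sin_S_relation2} was obtained from \eqref{abstr_sandwiched_sin_S_relation}, by multiplying on the right by $Q_{\widehat{\mathfrak{N}}}\widehat{P}=M_0^{-2}\widehat{P}$ and simplifying with \eqref{abstr_P_and_P_hat_relation}.

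\emph{Step 2 (direct estimates \eqref{abstr_J1_est1}--\eqref{abstr_tildeJ2_est1}).} Taking operator norms in the three identities of Step~1, and using $\|\widehat{P}\|\le 1$ together with $\|M^{*}\|=\|M\|$, I immediately read off $\|J_1\|\le\|M\|\|M^{-1}\|\|\mathcal{J}_1\|$, $\|J_2\|\le\|M\|\|M^{-1}\|\|\mathcal{J}_2\|$, and $\|\widetilde{J}_3\|\le\|M\|^2\|\mathcal{J}_2\|$.

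\emph{Step 3 (reverse estimates \eqref{abstr_J1_est2}--\eqref{abstr_tildeJ2_est2}).} The bound \eqref{abstr_tildeJ2_est2} follows by inverting $\widetilde{J}_3=M\mathcal{J}_2 M^{*}$: left-multiplication by $M^{-1}$ and right-multiplication by $(M^{*})^{-1}$ give $\mathcal{J}_2=M^{-1}\widetilde{J}_3(M^{*})^{-1}$, whence $\|\mathcal{J}_2\|\le\|M^{-1}\|^2\|\widetilde{J}_3\|$. For \eqref{abstr_J1_est2} and \eqref{abstr_J2_est2}, I use the companion observation that $MP$ has range $M\mathfrak{N}=\widehat{\mathfrak{N}}$, hence $\widehat{P}MP=MP$. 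Starting from $J_k=M\mathcal{J}_k M^{-1}\widehat{P}$, left-multiplying by $M^{-1}$ and right-multiplying by $MP$ yields
\[
M^{-1}J_k MP \;=\; \mathcal{J}_k M^{-1}\widehat{P}MP \;=\; \mathcal{J}_k M^{-1}MP \;=\; \mathcal{J}_k P \;=\; \mathcal{J}_k,
\]
where the final equality uses that each term of $\mathcal{J}_k$ carries the factor $P$ on the right. Taking norms gives $\|\mathcal{J}_k\|\le\|M\|\|M^{-1}\|\|J_k\|$, which is stronger than the claimed $\|M\|^2\|M^{-1}\|^2\|J_k\|$, so the lemma's inequalities follow a fortiori.

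\emph{Main obstacle.} There is no genuine analytic difficulty; the argument is purely algebraic. The only point requiring care is that $J_1,J_2$ carry $\widehat{P}$ on a single side, which forces one to absorb or insert projections using the two symmetric identities $M^{-1}\widehat{P}=PM^{-1}\widehat{P}$ and $\widehat{P}MP=MP$. The small computation deriving the cosine analog of \eqref{abstr_sandwiched_sin_S_relation2} is the only new ingredient beyond what is already recorded in the text.
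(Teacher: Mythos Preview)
Your proof is correct and follows essentially the same route as the paper: establish the algebraic identities $J_k=M\mathcal{J}_k M^{-1}\widehat{P}$ and $\widetilde{J}_3=M\mathcal{J}_2 M^{*}$ and read off the norm bounds. The paper in fact only writes out the $J_2$ case (citing \cite{DSu} for the other four inequalities) and, for the reverse bound \eqref{abstr_J2_est2}, passes through $M\mathcal{J}_2 PM^{*}=J_2\,Q_{\widehat{\mathfrak{N}}}^{-1}\widehat{P}$ together with $\|Q_{\widehat{\mathfrak{N}}}^{-1}\widehat{P}\|\le\|M\|^{2}$; your right-multiplication by $MP$ (using $\widehat{P}MP=MP$) is a cleaner inversion that yields the sharper constant $\|M\|\,\|M^{-1}\|$ in place of $\|M\|^{2}\|M^{-1}\|^{2}$, from which the stated inequalities follow a fortiori.
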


\begin{proof}
Inequalities \eqref{abstr_J1_est1}, \eqref{abstr_tildeJ2_est1}, \eqref{abstr_J1_est2}, and \eqref{abstr_tildeJ2_est2}
were proved in \cite[Lemma 4.2]{DSu}.

By \eqref{abstr_sandwiched_sin_S_relation2} and \eqref{abstr_J2_def}, 
\begin{equation}
\label{J2_relation}
    J_2(t,\tau) = M   {\mathcal J}_2(t,\tau)  M^{-1} \wh{P}.
\end{equation}
This implies inequality  \eqref{abstr_J2_est1}. 
Conversely, it is obvious that 
$$
\begin{aligned}
\| {\mathcal J}_2(t,\tau)  \|
\le \|M^{-1}\|^2 \| M {\mathcal J}_2(t,\tau) PM^* \|. 
\end{aligned}
$$
Using the relation $PM^* = M^{-1} Q_{\wh{\mathfrak N}}^{-1}\wh{P}$
(see \eqref{abstr_P_and_P_hat_relation}) and \eqref{J2_relation}, we rewrite the right-hand side as  
 $\|M^{-1}\|^2 \| J_2(t,\tau) Q_{\wh{\mathfrak N}}^{-1}\wh{P}\|$. Together with the inequality 
$\|Q_{\wh{\mathfrak N}}^{-1}\wh{P}\| \le \|M\|^2$ (which follows from the relation $Q_{\wh{\mathfrak N}}^{-1}\wh{P} 
=M P M^*$) this implies~\eqref{abstr_J2_est2}.
\end{proof}

By  \eqref{abstr_P_and_P_hat_relation}, 
$PM^* = PM^* \wh{P}$. From \eqref{abstr_tildeJ2_def} and \eqref{abstr_J3_def} it follows that  
$$
J_3(t,\tau) - \wt{J}_3(t,\tau) = M A(t)^{-1/2} \sin ( \tau A(t)^{1/2}) (I-P) M^* \wh{P}.
$$
 Applying \eqref{abstr_F(t)_threshold_1} and \eqref{abstr_A(t)_nondegenerated}, we obtain  
\begin{equation}
\label{J3-J3tilde}
\| J_3(t,\tau) - \wt{J}_3(t,\tau) \| \le \|M\|^2 \bigl( \delta^{-1/2} + C_1 c_*^{-1/2}\bigr) =: \wt{C},
\quad \tau \in \R, \ |t|\le t_0.
\end{equation}

Using inequalities \eqref{abstr_J1_est1}--\eqref{abstr_tildeJ2_est1},  \eqref{J3-J3tilde} and applying  Lemma \ref{abstr_N_and_Nhat_lemma}, we deduce the following three theorems from Theorems \ref{th3.1},
\ref{th3.2}, and \ref{th3.3}.
In formulations, we use the notation \eqref{abstr_J1_def}, \eqref{abstr_J2_def}, and \eqref{abstr_J3_def}. 

\begin{theorem}[see~\cite{BSu5,M,DSu}]
\label{th5.5}
	Under the assumptions of Subsection~\emph{\ref{abstr_A_and_Ahat_section}}, for 
	$\tau \in \mathbb{R}$, $\varepsilon > 0$, and $|t| \le t_0$ we have
	\begin{align}
	\label{abstr_cos_sandwiched_general_est}
	&\|  J_1(t, \varepsilon^{-1} \tau) \|  \varepsilon^2 (t^2 + \varepsilon^2)^{-1} 
	\le \|M\| \|M^{-1}\| (C_{1}  + C_{6} |\tau| ) \varepsilon,
	\\
	\label{abstr_sin_sandwiched_general_est1}
	&\|  J_2(t, \varepsilon^{-1} \tau) \|  \varepsilon (t^2 + \varepsilon^2)^{-1/2} 
	\le \|M\| \|M^{-1}\| (C_{7}  + C_{8} |\tau| ),
	\\
	\label{abstr_sin_sandwiched_general_est2}
	&\|  {J}_3(t, \varepsilon^{-1} \tau) \|  \varepsilon (t^2 + \varepsilon^2)^{-1/2} 
	\le \|M\|^2 (C_{7}  + C_{8} |\tau| ) + \wt{C}.
	\end{align}
\end{theorem}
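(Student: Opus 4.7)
The proof is a direct consequence of Theorem \ref{th3.1} combined with Lemma \ref{abstr_cos_sin_sandwiched_est_lemma} and the auxiliary bound \eqref{J3-J3tilde}; all the analytical work has been done in the preceding sections, so what remains is essentially bookkeeping. The plan is to treat the three estimates \eqref{abstr_cos_sandwiched_general_est}, \eqref{abstr_sin_sandwiched_general_est1}, \eqref{abstr_sin_sandwiched_general_est2} in turn, each time reducing the sandwiched objects $J_1,J_2,J_3$ to their non-sandwiched counterparts $\mathcal{J}_1, \mathcal{J}_2$.

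For \eqref{abstr_cos_sandwiched_general_est}, I would invoke inequality \eqref{abstr_J1_est1} of Lemma \ref{abstr_cos_sin_sandwiched_est_lemma} with $\tau$ replaced by $\eps^{-1}\tau$, giving
$$\|J_1(t,\eps^{-1}\tau)\| \le \|M\|\|M^{-1}\| \|\mathcal{J}_1(t,\eps^{-1}\tau)\|.$$
Multiplying by the smoothing factor $\eps^{2}(t^2+\eps^2)^{-1}$ and applying \eqref{2.5a} of Theorem \ref{th3.1} yields the claim. The argument for \eqref{abstr_sin_sandwiched_general_est1} is identical, using \eqref{abstr_J2_est1} instead of \eqref{abstr_J1_est1} and \eqref{2.6a} in place of \eqref{2.5a}, with smoothing factor $\eps(t^2+\eps^2)^{-1/2}$.

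Estimate \eqref{abstr_sin_sandwiched_general_est2} is slightly more delicate because $J_3$ differs from the auxiliary operator $\wt{J}_3$ by a term which is merely uniformly bounded. I would write $J_3(t,\eps^{-1}\tau) = \wt{J}_3(t,\eps^{-1}\tau) + (J_3 - \wt{J}_3)(t,\eps^{-1}\tau)$, bound the second summand by $\wt{C}$ uniformly in $\tau, \eps, t$ thanks to \eqref{J3-J3tilde}, and bound the first summand using \eqref{abstr_tildeJ2_est1} together with Theorem \ref{th3.1}. Multiplying by $\eps(t^2+\eps^2)^{-1/2} \le 1$ and observing that the contribution from $\wt{C}$ absorbs into the additive constant gives
$$\|J_3(t,\eps^{-1}\tau)\|\eps(t^2+\eps^2)^{-1/2} \le \|M\|^2 (C_7+C_8|\tau|) + \wt{C},$$
as required.

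There is no serious obstacle: all hard spectral-theoretic content lies in Theorem \ref{th3.1} (and behind it, Proposition \ref{prop2.1}), and Lemma \ref{abstr_cos_sin_sandwiched_est_lemma} reduces the sandwiched problem to the non-sandwiched one. The only point that needs minor care is recognizing that the gap between $J_3$ and $\wt{J}_3$ can only be controlled uniformly (not by a factor of order $\eps$), which is precisely why the statement of \eqref{abstr_sin_sandwiched_general_est2} carries an additive constant $\wt{C}$ rather than being of the same multiplicative form as \eqref{abstr_sin_sandwiched_general_est1}.
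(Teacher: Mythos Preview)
Your proposal is correct and follows essentially the same route as the paper: the paper states that Theorem \ref{th5.5} is deduced from Theorem \ref{th3.1} by means of inequalities \eqref{abstr_J1_est1}--\eqref{abstr_tildeJ2_est1} of Lemma \ref{abstr_cos_sin_sandwiched_est_lemma} together with \eqref{J3-J3tilde}, exactly as you outline. Your handling of the additive $\wt{C}$ term in \eqref{abstr_sin_sandwiched_general_est2} via the splitting $J_3 = \wt{J}_3 + (J_3 - \wt{J}_3)$ and the observation $\eps(t^2+\eps^2)^{-1/2}\le 1$ matches the paper's derivation.
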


Earlier, estimate \eqref{abstr_cos_sandwiched_general_est} was obtained in  \cite[Theorem 3.4]{BSu5}, estimate \eqref{abstr_sin_sandwiched_general_est1} in \cite[Theorem 3.3]{M}, and estimate
\eqref{abstr_sin_sandwiched_general_est2} in \cite[Theorem 4.3]{DSu}.

\begin{theorem}
\label{th5.6}
	Suppose that the operator $\wh{N}_Q$ defined by \eqref{abstr_hatN_Q} is equal to zero\emph{:}  $\wh{N}_Q=0$. Then for $\tau \in \mathbb{R},$ $\varepsilon > 0$, and $|t| \le t_0$ we have 
	\begin{align}
	\label{abstr_cos_sandwiched_improved_est}
	&\|  J_1(t, \varepsilon^{-1} \tau) \|  \varepsilon^{3/2} (t^2 + \varepsilon^2)^{-3/4} 
	\le \|M\| \|M^{-1}\| ( 2 C_{1}  + C_{9}' |\tau|^{1/2} ) \varepsilon,
	\\
	\label{abstr_sin_sandwiched_improved_est1}
	&\|  J_2(t, \varepsilon^{-1} \tau) \|  \varepsilon^{1/2} (t^2 + \varepsilon^2)^{-1/4} 
	\le \|M\| \|M^{-1}\| (C_{7}  + C_{10}' |\tau|^{1/2} ),
	\\
	\label{abstr_sin_sandwiched_improved_est2}
	&\|  {J}_3(t, \varepsilon^{-1} \tau) \|  \varepsilon^{1/2} (t^2 + \varepsilon^2)^{-1/4} 
	\le \|M\|^2 (C_{7}  + C_{10}' |\tau|^{1/2} ) + \wt{C}.
	\end{align}
\end{theorem}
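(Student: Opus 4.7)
The plan is to reduce Theorem~\ref{th5.6} to the abstract \textquotedblleft un-sandwiched\textquotedblright\ estimates of Theorem~\ref{th3.2} via the comparison inequalities already established in Lemma~\ref{abstr_cos_sin_sandwiched_est_lemma} and the auxiliary bound~\eqref{J3-J3tilde}. This is essentially the same path that gave Theorem~\ref{th5.5} from Theorem~\ref{th3.1}, with $\wh{N}_Q=0$ playing the role that lets us upgrade from the exponent $2$ (resp.\ $1$) in the smoothing factor to the improved exponent $3/2$ (resp.\ $1/2$).

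First, invoke Lemma~\ref{abstr_N_and_Nhat_lemma}: the hypothesis $\wh{N}_Q=0$ is equivalent to $N=0$ for the family $A(t)=M^*\wh{A}(t)M$. Hence Theorem~\ref{th3.2} applies, giving, for $\tau\in\R$, $\eps>0$ and $|t|\le t_0$,
\begin{align*}
\|\mathcal{J}_1(t,\eps^{-1}\tau)\|\,\eps^{3/2}(t^2+\eps^2)^{-3/4} &\le (2C_1+C_9'|\tau|^{1/2})\eps,\\
\|\mathcal{J}_2(t,\eps^{-1}\tau)\|\,\eps^{1/2}(t^2+\eps^2)^{-1/4} &\le C_7+C_{10}'|\tau|^{1/2}.
\end{align*}

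Second, combine these with inequalities \eqref{abstr_J1_est1} and \eqref{abstr_J2_est1} of Lemma~\ref{abstr_cos_sin_sandwiched_est_lemma}. Inequality \eqref{abstr_J1_est1} with the above bound on $\mathcal{J}_1$ yields \eqref{abstr_cos_sandwiched_improved_est}, and \eqref{abstr_J2_est1} with the bound on $\mathcal{J}_2$ yields \eqref{abstr_sin_sandwiched_improved_est1}. No further work is needed for these two estimates.

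Third, to obtain \eqref{abstr_sin_sandwiched_improved_est2} for ${J}_3$, I would proceed in two stages. Using \eqref{abstr_tildeJ2_est1} together with the bound on $\mathcal{J}_2$ above gives
\begin{equation*}
\|\wt{J}_3(t,\eps^{-1}\tau)\|\,\eps^{1/2}(t^2+\eps^2)^{-1/4}\le \|M\|^2(C_7+C_{10}'|\tau|^{1/2}).
\end{equation*}
Then invoking \eqref{J3-J3tilde} and using the elementary inequality $\eps^{1/2}(t^2+\eps^2)^{-1/4}\le 1$ (which holds because $t^2+\eps^2\ge \eps^2$), we see that
\begin{equation*}
\|J_3(t,\eps^{-1}\tau)-\wt{J}_3(t,\eps^{-1}\tau)\|\,\eps^{1/2}(t^2+\eps^2)^{-1/4}\le\wt{C}.
\end{equation*}
Adding the two contributions yields \eqref{abstr_sin_sandwiched_improved_est2}.

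The proof is essentially mechanical given the machinery already in place, so I do not anticipate a real obstacle: the only point requiring a moment's care is the passage from $\wt{J}_3$ to $J_3$, where one must check that the bounded difference \eqref{J3-J3tilde} survives multiplication by the smoothing factor $\eps^{1/2}(t^2+\eps^2)^{-1/4}$, but this factor is bounded by $1$ so the constant $\wt{C}$ is preserved. All the genuinely nontrivial analytic content lives in Theorem~\ref{th3.2} (through the $|\tau|^{1/2}$ improvement under $N=0$) and in Lemma~\ref{abstr_N_and_Nhat_lemma} (through the equivalence of $N=0$ and $\wh{N}_Q=0$).
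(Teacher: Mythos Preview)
Your proposal is correct and follows essentially the same approach as the paper: the text preceding Theorem~\ref{th5.5} explicitly states that Theorems~\ref{th5.5}--\ref{th5.7} are deduced from Theorems~\ref{th3.1}--\ref{th3.3} using inequalities \eqref{abstr_J1_est1}--\eqref{abstr_tildeJ2_est1}, \eqref{J3-J3tilde}, and Lemma~\ref{abstr_N_and_Nhat_lemma}, which is exactly what you do. Your observation that $\eps^{1/2}(t^2+\eps^2)^{-1/4}\le 1$ is needed to absorb the $\wt{C}$ term from \eqref{J3-J3tilde} is the only small point left implicit in the paper, and you handle it correctly.
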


\begin{theorem}
\label{th5.7}
	Suppose that the operator $\wh{N}_{0,Q}$ defined by  \eqref{abstr_hatN_0Q_N_*Q_invar_repr} is equal 
	to zero\emph{:}  $\wh{N}_{0,Q}=0$. Then for
	 $\tau \in \mathbb{R},$ $\varepsilon > 0$, and $|t| \le t^{00}$ we have 
	\begin{align*}
	&\|  J_1(t, \varepsilon^{-1} \tau) \|  \varepsilon^{3/2} (t^2 + \varepsilon^2)^{-3/4} 
	\le \|M\| \|M^{-1}\| ( C_{11}  + C_{12}' |\tau| ^{1/2}) \varepsilon,
	\\
	&\|  J_2(t, \varepsilon^{-1} \tau) \|  \varepsilon^{1/2} (t^2 + \varepsilon^2)^{-1/4} 
	\le \|M\| \|M^{-1}\| (C_{13}  + C_{14}' |\tau|^{1/2} ),
	\\
	&\|  {J}_3(t, \varepsilon^{-1} \tau) \|  \varepsilon^{1/2} (t^2 + \varepsilon^2)^{-1/4} 
	\le \|M\|^2 (C_{13}  + C_{14}' |\tau|^{1/2} ) + \wt{C}.
	\end{align*}
\end{theorem}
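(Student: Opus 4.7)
The strategy is completely parallel to the proofs of Theorems~\ref{th5.5} and~\ref{th5.6}: reduce the sandwiched estimates to their counterparts for $\mathcal{J}_1(t,\tau)$ and $\mathcal{J}_2(t,\tau)$ from Theorem~\ref{th3.3} via the norm comparisons of Lemma~\ref{abstr_cos_sin_sandwiched_est_lemma}, after translating the hypothesis on $\wh{N}_{0,Q}$ into a hypothesis on $N_0$ by means of Lemma~\ref{abstr_N_and_Nhat_lemma}.

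First, by Lemma~\ref{abstr_N_and_Nhat_lemma}, the assumption $\wh{N}_{0,Q}=0$ is equivalent to $N_0=0$ for the family $A(t)=M^*\wh{A}(t)M$. Hence Theorem~\ref{th3.3} applies to $A(t)$: for $\eps>0$, $\tau\in\R$ and $|t|\le t^{00}$ one has
\begin{align*}
\bigl\|\mathcal{J}_1(t,\eps^{-1}\tau)\bigr\|\,\eps^{3/2}(t^2+\eps^2)^{-3/4}&\le (C_{11}+C_{12}'|\tau|^{1/2})\eps,\\
\bigl\|\mathcal{J}_2(t,\eps^{-1}\tau)\bigr\|\,\eps^{1/2}(t^2+\eps^2)^{-1/4}&\le C_{13}+C_{14}'|\tau|^{1/2}.
\end{align*}

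Next, the bounds for $J_1(t,\eps^{-1}\tau)$ and $J_2(t,\eps^{-1}\tau)$ follow at once by combining the previous inequalities with \eqref{abstr_J1_est1} and \eqref{abstr_J2_est1} of Lemma~\ref{abstr_cos_sin_sandwiched_est_lemma}, which give
\[
\|J_1(t,\eps^{-1}\tau)\|\le \|M\|\|M^{-1}\|\,\|\mathcal{J}_1(t,\eps^{-1}\tau)\|,\qquad
\|J_2(t,\eps^{-1}\tau)\|\le \|M\|\|M^{-1}\|\,\|\mathcal{J}_2(t,\eps^{-1}\tau)\|.
\]
This yields the first two estimates of the theorem.

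For the third estimate, the proof proceeds in two stages. Using \eqref{abstr_tildeJ2_est1} one has $\|\wt{J}_3(t,\eps^{-1}\tau)\|\le \|M\|^2\|\mathcal{J}_2(t,\eps^{-1}\tau)\|$, which, multiplied by the smoothing factor $\eps^{1/2}(t^2+\eps^2)^{-1/4}$, is bounded by $\|M\|^2(C_{13}+C_{14}'|\tau|^{1/2})$. Finally, the difference $J_3(t,\eps^{-1}\tau)-\wt{J}_3(t,\eps^{-1}\tau)$ is controlled uniformly by $\wt{C}$ on account of \eqref{J3-J3tilde} (valid since $t^{00}\le t_0$), and the factor $\eps^{1/2}(t^2+\eps^2)^{-1/4}\le 1$ keeps this contribution bounded by $\wt{C}$. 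Adding the two contributions gives the stated bound for $J_3$. No new obstacle arises; the only point requiring attention is that in Theorem~\ref{th3.3} the range of $t$ is restricted to $|t|\le t^{00}$ (rather than $|t|\le t_0$ as in Theorems~\ref{th5.5},~\ref{th5.6}), so the conclusion is stated under the same restriction.
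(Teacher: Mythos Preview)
Your proof is correct and follows essentially the same approach as the paper, which states that Theorems~\ref{th5.5}--\ref{th5.7} are deduced from Theorems~\ref{th3.1}--\ref{th3.3} via inequalities \eqref{abstr_J1_est1}--\eqref{abstr_tildeJ2_est1}, \eqref{J3-J3tilde} and Lemma~\ref{abstr_N_and_Nhat_lemma}. Your observation that $t^{00}\le t_0$ is needed for invoking \eqref{J3-J3tilde}, and that $\eps^{1/2}(t^2+\eps^2)^{-1/4}\le 1$ handles the $\wt{C}$ term, are exactly the details one should check.
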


\subsection{Approximation in the ``energy'' norm for the sandwiched operator
${A(t)^{-1/2} \sin(\eps^{-1}\tau A(t)^{1/2})}$}
Denote 
\begin{align}
\label{abstr_J_def}
\begin{split}
J(t, \tau) :=&  M  A(t)^{-1/2} \sin (\tau A(t)^{1/2}) M^{-1} \widehat{P}  
\\
&-  (I + t\widehat{Z}_Q) M_0 (t^2 M_0 \widehat{S} M_0)^{-1/2} \sin ( \tau (t^2 M_0 \widehat{S} M_0)^{1/2} ) M_0^{-1} \widehat{P}.
\end{split}
\end{align}

\begin{lemma}
	\label{abstr_sin_sandwiched_est_lemma}
	Let $\Sigma(t,\tau)$ be the operator \eqref{3.13} and let $J(t,\tau)$ be the operator \eqref{abstr_J_def}. 
	Under the assumptions of Subsection~\emph{\ref{abstr_A_and_Ahat_section}}, we have 
	\begin{align}
	\label{abstr_sin_sandwiched_est_1}
	\| \wh{A}(t)^{1/2} J(t, \tau)\| & \le \|M^{-1}\| \|A(t)^{1/2} \Sigma(t, \tau) \|,
	\\
	\label{abstr_sin_sandwiched_est_3}
	\|A(t)^{1/2} \Sigma(t, \tau) \| & \le \|M\|^2 \| M^{-1}\| \| \wh{A}(t)^{1/2} J(t, \tau)\|.
		\end{align}
\end{lemma}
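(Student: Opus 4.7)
The proof rests on a single intertwining identity,
\begin{equation*}
J(t,\tau) = M\,\Sigma(t,\tau)\,M^{-1}\wh{P},
\end{equation*}
after which both estimates follow from the basic isometry
$\|A(t)^{1/2}v\|=\|\wh{A}(t)^{1/2}Mv\|$ extracted from $A(t)=M^*\wh{A}(t)M$.

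First I would establish this identity. Setting $\mathcal{F}(t,\tau) := (t^2 S)^{-1/2}\sin(\tau(t^2 S)^{1/2})P$, relation \eqref{abstr_sandwiched_sin_S_relation2} reads
\begin{equation*}
M_0(t^2 M_0\wh{S} M_0)^{-1/2}\sin(\tau(t^2 M_0\wh{S} M_0)^{1/2})M_0^{-1}\wh{P} = M\mathcal{F}(t,\tau)M^{-1}\wh{P}.
\end{equation*}
Since $\mathcal{F}(t,\tau)$ has range in $\mathfrak{N}$, the operator $M\mathcal{F}(t,\tau)M^{-1}\wh{P}$ has range in $\wh{\mathfrak{N}}$ where $\wh{P}$ acts trivially; combined with \eqref{abstr_Z_and_hatZ_Q_relat} this gives
\begin{equation*}
\wh{Z}_Q\cdot M\mathcal{F}(t,\tau)M^{-1}\wh{P} = MZM^{-1}\wh{P}\cdot M\mathcal{F}(t,\tau)M^{-1}\wh{P} = MZ\mathcal{F}(t,\tau)M^{-1}\wh{P},
\end{equation*}
so $(I+t\wh{Z}_Q)\cdot M\mathcal{F}(t,\tau)M^{-1}\wh{P} = M(I+tZ)\mathcal{F}(t,\tau)M^{-1}\wh{P}$. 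Moreover $M^{-1}\wh{P} = P\,M^{-1}\wh{P}$ (as $M^{-1}$ carries $\wh{\mathfrak{N}}$ into $\mathfrak{N}$), so the first summand of $J(t,\tau)$ in \eqref{abstr_J_def} equals $M A(t)^{-1/2}\sin(\tau A(t)^{1/2}) P M^{-1}\wh{P}$. Subtracting and comparing with \eqref{3.13} yields $J(t,\tau)=M\,\Sigma(t,\tau)\,M^{-1}\wh{P}$.

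Second, from $A(t)=M^*\wh{A}(t)M$ one reads off $\|A(t)^{1/2}v\|^2=(A(t)v,v)=\|\wh{A}(t)^{1/2}Mv\|^2$, hence also $\|\wh{A}(t)^{1/2}u\|=\|A(t)^{1/2}M^{-1}u\|$. Applying the first identity with $v=\Sigma(t,\tau)M^{-1}\wh{P}\wh{u}$ gives
\begin{equation*}
\|\wh{A}(t)^{1/2}J(t,\tau)\wh{u}\| = \|A(t)^{1/2}\Sigma(t,\tau)M^{-1}\wh{P}\wh{u}\| \le \|A(t)^{1/2}\Sigma(t,\tau)\|\cdot\|M^{-1}\|\cdot\|\wh{u}\|,
\end{equation*}
which is \eqref{abstr_sin_sandwiched_est_1}. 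For \eqref{abstr_sin_sandwiched_est_3}, since $M^{-1}\wh{P}MP=P$, post-multiplying $J=M\Sigma M^{-1}\wh{P}$ by $MP$ and using $\Sigma P=\Sigma$ yields $\Sigma=M^{-1}JMP$; the second form of the isometry then gives
\begin{equation*}
\|A(t)^{1/2}\Sigma(t,\tau)v\| = \|\wh{A}(t)^{1/2}J(t,\tau)MPv\| \le \|M\|\cdot\|\wh{A}(t)^{1/2}J(t,\tau)\|\cdot\|v\|,
\end{equation*}
from which \eqref{abstr_sin_sandwiched_est_3} follows (the stated constant $\|M\|^2\|M^{-1}\|$ dominates the sharper $\|M\|$).

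The only genuine obstacle is the bookkeeping in the first step: one must track which operators have range in $\mathfrak{N}$ versus $\wh{\mathfrak{N}}$ to justify inserting or deleting the projections $P$ and $\wh{P}$ and to invoke identities \eqref{abstr_Z_and_hatZ_Q_relat} and \eqref{abstr_sandwiched_sin_S_relation2} correctly. Once the intertwining identity is in place the two norm bounds are immediate.
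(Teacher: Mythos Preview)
Your proof is correct and follows the same backbone as the paper's: both establish the intertwining identity $J(t,\tau)=M\,\Sigma(t,\tau)\,M^{-1}\wh{P}$ from \eqref{abstr_Z_and_hatZ_Q_relat} and \eqref{abstr_sandwiched_sin_S_relation2}, and both read off \eqref{abstr_sin_sandwiched_est_1} from the isometry $\|A(t)^{1/2}v\|=\|\wh{A}(t)^{1/2}Mv\|$.

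The one genuine difference is in \eqref{abstr_sin_sandwiched_est_3}. The paper post-multiplies by $M^*$, invoking $PM^*=M^{-1}Q_{\wh{\mathfrak N}}^{-1}\wh{P}$ and the bound $\|Q_{\wh{\mathfrak N}}^{-1}\wh{P}\|\le\|M\|^2$, which produces exactly the constant $\|M\|^2\|M^{-1}\|$. You instead post-multiply by $MP$ and use $M^{-1}\wh{P}MP=P$ to invert the identity as $\Sigma=M^{-1}JMP$, which yields the sharper constant $\|M\|$. Your route is cleaner and gives a better bound; the paper's route has the advantage of matching the stated constant exactly (and of paralleling the argument used for $J_2$ in Lemma~\ref{abstr_cos_sin_sandwiched_est_lemma}). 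Either way the lemma stands, and you correctly observe that $\|M\|\le\|M\|^2\|M^{-1}\|$.
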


\begin{proof}
	From~(\ref{abstr_Z_and_hatZ_Q_relat}) and (\ref{abstr_sandwiched_sin_S_relation2}) it follows that  
	\begin{equation}
	\label{abstr_sin_sandwiched_est_proof_2}
	J(t,\tau) = M \Sigma(t,\tau) M^{-1} \wh{P}.
	\end{equation}
	Relations~(\ref{abstr_A_and_Ahat}) and~(\ref{abstr_sin_sandwiched_est_proof_2}) imply~(\ref{abstr_sin_sandwiched_est_1}). Conversely, it is obvious that 
	\begin{equation*}
\|A(t)^{1/2} \Sigma(t,\tau)  \| \le  \| M^{-1} \| \| A(t)^{1/2}   \Sigma(t,\tau)  P  M^* \|.
	\end{equation*}
Combining the relation $PM^* = M^{-1} Q^{-1}_{\wh{\mathfrak N}}\wh{P}$  and \eqref{abstr_A_and_Ahat},
 \eqref{abstr_sin_sandwiched_est_proof_2}, we represent the right-hand side 
in the form $\| M^{-1} \| \| \wh{A}(t)^{1/2} J(t,\tau) Q^{-1}_{\wh{\mathfrak N}}\wh{P} \|$. 
Together with the inequality $\| Q^{-1}_{\wh{\mathfrak N}}\wh{P}\| \le \| M \|^2$, this 
implies~(\ref{abstr_sin_sandwiched_est_3}).
\end{proof}

Applying inequality~(\ref{abstr_sin_sandwiched_est_1}) and using Lemma~\ref{abstr_N_and_Nhat_lemma}, 
from Theorems~\ref{abstr_sin_general_thrm}, \ref{th3.5}, \ref{abstr_sin_enchanced_thrm_2} we deduce the following results. 

\begin{theorem}[see~\cite{M}]
	\label{abstr_sin_sandwiched_general_thrm}
	Suppose that $J(t,\tau)$ is defined by  \eqref{abstr_J_def}.
	Under the assumptions of Subsection~\emph{\ref{abstr_A_and_Ahat_section}},
	for $\tau \in \mathbb{R},$ $\varepsilon > 0$, and $|t| \le t_0$ we have 
	\begin{equation*}
	\begin{split}
	\| \wh{A}(t)^{1/2}  J(t, \varepsilon^{-1} \tau) \|  \varepsilon^2 (t^2 + \varepsilon^2)^{-1} &\le \|M^{-1}\| (C_{17}  + C_{18} |\tau| ) \varepsilon.
	\end{split}
	\end{equation*}
\end{theorem}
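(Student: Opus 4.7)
The plan is to reduce the sandwiched ``energy''-norm estimate to the non-sandwiched one already proved in Theorem \ref{abstr_sin_thrm_wo_eps}, and then absorb the smoothing factor $\varepsilon^2(t^2+\varepsilon^2)^{-1}$ to turn the $|t|$- and $t^2$-growth into constants. All the hard work has already been done in the two ingredients, so the argument is a short computation.

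First I would invoke inequality \eqref{abstr_sin_sandwiched_est_1} of Lemma \ref{abstr_sin_sandwiched_est_lemma} with $\tau$ replaced by $\varepsilon^{-1}\tau$. This gives the pointwise bound
$$\bigl\|\wh{A}(t)^{1/2}\, J(t,\varepsilon^{-1}\tau)\bigr\| \le \|M^{-1}\|\, \bigl\|A(t)^{1/2}\,\Sigma(t,\varepsilon^{-1}\tau)\bigr\|,$$
valid for $|t|\le t_0$, $\tau\in\mathbb R$, $\varepsilon>0$. The isomorphism $M$ is thus separated out, and the problem is reduced to the abstract family $A(t)$.

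Next I would apply Theorem \ref{abstr_sin_thrm_wo_eps}, which via \eqref{abstr_sin_est_wo_eps} (with the substitution $\tau \mapsto \varepsilon^{-1}\tau$) yields
$$\bigl\|A(t)^{1/2}\,\Sigma(t,\varepsilon^{-1}\tau)\bigr\| \le C_{17}|t| + C_{18}\,\varepsilon^{-1}|\tau|\,t^2, \quad |t|\le t_0.$$
Multiplying by the smoothing factor $\varepsilon^2(t^2+\varepsilon^2)^{-1}$ and using the elementary inequalities $|t|\varepsilon \le t^2+\varepsilon^2$ (hence $|t|\varepsilon^2(t^2+\varepsilon^2)^{-1}\le \varepsilon$) and $t^2(t^2+\varepsilon^2)^{-1}\le 1$ produces the desired bound $\|M^{-1}\|(C_{17}+C_{18}|\tau|)\varepsilon$.

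There is essentially no obstacle; the proof is a two-line concatenation of Lemma \ref{abstr_sin_sandwiched_est_lemma} and Theorem \ref{abstr_sin_thrm_wo_eps}, followed by the absorption estimates above. The only point worth stating explicitly is that the factor $\varepsilon^{-1}$ appearing in the $C_{18}$ term, after multiplication by the smoothing factor, cancels against one power of $\varepsilon$ from $\varepsilon^2$, leaving precisely one power of $\varepsilon$ and a bounded factor $t^2(t^2+\varepsilon^2)^{-1}$.
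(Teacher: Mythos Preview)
Your proposal is correct and follows essentially the same route as the paper. The paper cites inequality \eqref{abstr_sin_sandwiched_est_1} and Theorem~\ref{abstr_sin_general_thrm}; the latter is itself obtained from Theorem~\ref{abstr_sin_thrm_wo_eps} by exactly the smoothing-factor absorption you perform, so you have simply inlined that step rather than quoting it as a black box.
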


Theorem~\ref{abstr_sin_sandwiched_general_thrm} was known earlier (see~\cite[Theorem 3.3]{M}).

\begin{theorem}
	\label{abstr_sin_sandwiched_ench_thrm_1}
	Suppose that the operator $\widehat{N}_Q$ defined by~\eqref{abstr_N_and_hatN_Q_relat} is equal 
	to zero{\rm :}  \hbox{$\widehat{N}_Q = 0$}. Then for $\tau \in \mathbb{R},$ $\varepsilon > 0$, and 
	$|t| \le t_0$ we have 
	\begin{equation*}
	\begin{split}
	\| \wh{A}(t)^{1/2} J(t, \varepsilon^{-1} \tau) \| \varepsilon^{3/2} (t^2 + \varepsilon^2)^{-3/4} &\le \|M^{-1}\| (C_{17}  + C_{19}' |\tau|^{1/2} ) \varepsilon.
	\end{split}
	\end{equation*}
\end{theorem}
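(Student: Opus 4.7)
The plan is to directly combine the abstract estimate of Theorem \ref{th3.5} with the norm comparison provided by Lemma \ref{abstr_sin_sandwiched_est_lemma}, using the equivalence of the nondegeneracy conditions for $N$ and $\widehat{N}_Q$ established in Lemma \ref{abstr_N_and_Nhat_lemma}. All three ingredients are already at our disposal, so the argument reduces to a short chain of inequalities analogous to the deduction of Theorem \ref{abstr_sin_sandwiched_general_thrm} from Theorem \ref{abstr_sin_general_thrm}.

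First I would invoke Lemma \ref{abstr_N_and_Nhat_lemma} to translate the hypothesis $\widehat{N}_Q = 0$ into $N = 0$. This opens the door to Theorem \ref{th3.5}, which under the assumption $N=0$ yields, for $\tau \in \mathbb{R}$, $\varepsilon > 0$, and $|t| \le t_0$, the estimate
\begin{equation*}
\bigl\| A(t)^{1/2} \Sigma(t,\varepsilon^{-1}\tau) \bigr\| \, \varepsilon^{3/2} (t^2+\varepsilon^2)^{-3/4} \le (C_{17} + C_{19}'|\tau|^{1/2})\varepsilon,
\end{equation*}
with $\Sigma(t,\tau)$ given by \eqref{3.13}.

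Next I would apply inequality \eqref{abstr_sin_sandwiched_est_1} of Lemma \ref{abstr_sin_sandwiched_est_lemma}, namely
\begin{equation*}
\| \widehat{A}(t)^{1/2} J(t,\varepsilon^{-1}\tau)\| \le \|M^{-1}\|\, \|A(t)^{1/2} \Sigma(t,\varepsilon^{-1}\tau)\|,
\end{equation*}
where $J(t,\tau)$ is the operator defined in \eqref{abstr_J_def}. Multiplying both sides by the smoothing factor $\varepsilon^{3/2}(t^2+\varepsilon^2)^{-3/4}$ and substituting the bound from Theorem \ref{th3.5} yields precisely the claimed estimate, valid on the same range $\tau \in \mathbb{R}$, $\varepsilon>0$, $|t| \le t_0$.

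There is no real obstacle: the passage from the original space $\mathfrak{H}$ to the sandwiched setting in $\widehat{\mathfrak{H}}$ is handled entirely by Lemma \ref{abstr_sin_sandwiched_est_lemma}, and the equivalence of the nondegeneracy conditions is exactly the content of Lemma \ref{abstr_N_and_Nhat_lemma}. The only point that deserves mention is that the constants $C_{17}$ and $C_{19}'$ appearing in the final bound are literally the same constants produced by Theorem \ref{th3.5}; the isomorphism $M$ contributes only the factor $\|M^{-1}\|$ coming from~\eqref{abstr_sin_sandwiched_est_1}.
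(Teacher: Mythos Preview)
Your proposal is correct and follows exactly the same approach as the paper: the paper states that Theorems~\ref{abstr_sin_sandwiched_general_thrm}, \ref{abstr_sin_sandwiched_ench_thrm_1}, and \ref{abstr_sin_sandwiched_ench_thrm_2} are deduced from Theorems~\ref{abstr_sin_general_thrm}, \ref{th3.5}, and \ref{abstr_sin_enchanced_thrm_2} by applying inequality~\eqref{abstr_sin_sandwiched_est_1} together with Lemma~\ref{abstr_N_and_Nhat_lemma}, which is precisely your argument.
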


\begin{theorem}
	\label{abstr_sin_sandwiched_ench_thrm_2}
	Suppose that the operator $\widehat{N}_{0,Q}$ defined by~\eqref{abstr_hatN_0Q_N_*Q_invar_repr} is equal to zero{\rm :} $\widehat{N}_{0,Q}=0$. Then for $\tau \in \mathbb{R},$ $\eps >0$, and $|t| \le t^{00}$ we have 	\begin{equation*}
	\begin{split}
	\| \wh{A}(t)^{1/2} J(t, \varepsilon^{-1} \tau) \| \varepsilon^{3/2} (t^2 + \varepsilon^2)^{-3/4} &\le \|M^{-1}\| (C_{20}  + C_{21}' |\tau|^{1/2} ) \varepsilon.
	\end{split}
	\end{equation*}
\end{theorem}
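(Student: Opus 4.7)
The plan is to deduce Theorem~\ref{abstr_sin_sandwiched_ench_thrm_2} from the corresponding \textquotedblleft non-sandwiched\textquotedblright\ energy estimate, namely Theorem~\ref{abstr_sin_enchanced_thrm_2}, by exactly the same bridge that was used to derive Theorem~\ref{abstr_sin_sandwiched_ench_thrm_1} from Theorem~\ref{th3.5}. In other words, the mechanism is a direct application of Lemma~\ref{abstr_sin_sandwiched_est_lemma} together with the equivalence established in Lemma~\ref{abstr_N_and_Nhat_lemma}.

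First, I would invoke Lemma~\ref{abstr_N_and_Nhat_lemma}, which states that the hypothesis $\widehat{N}_{0,Q}=0$ imposed on the sandwiched family is equivalent to the hypothesis $N_0=0$ on the underlying family $A(t)=M^*\widehat{A}(t)M$. Consequently, the abstract estimate of Theorem~\ref{abstr_sin_enchanced_thrm_2} is applicable, and it yields, with $\tau$ replaced by $\varepsilon^{-1}\tau$, the bound
\begin{equation*}
\| A(t)^{1/2} \Sigma(t, \varepsilon^{-1}\tau)\|\, \varepsilon^{3/2}(t^2+\varepsilon^2)^{-3/4} \le (C_{20}+C_{21}'|\tau|^{1/2})\varepsilon,
\qquad |t|\le t^{00}.
\end{equation*}

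Next, I would apply inequality~\eqref{abstr_sin_sandwiched_est_1} from Lemma~\ref{abstr_sin_sandwiched_est_lemma}, which gives
\begin{equation*}
\| \widehat{A}(t)^{1/2} J(t,\varepsilon^{-1}\tau)\| \le \|M^{-1}\|\, \| A(t)^{1/2}\Sigma(t,\varepsilon^{-1}\tau)\|.
\end{equation*}
Multiplying both sides by the smoothing factor $\varepsilon^{3/2}(t^2+\varepsilon^2)^{-3/4}$ and inserting the bound from the previous step produces precisely the inequality claimed in the theorem. Since Lemma~\ref{abstr_sin_sandwiched_est_lemma} was proved by using the identity $J(t,\tau)=M\Sigma(t,\tau)M^{-1}\widehat{P}$ obtained from \eqref{abstr_Z_and_hatZ_Q_relat} and \eqref{abstr_sandwiched_sin_S_relation2}, no new computation is needed here.

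Because this argument is purely a concatenation of previously established results, there is no genuine obstacle in the proof; the only point requiring care is that the range $|t|\le t^{00}$ must be respected throughout (it is preserved, since both Theorem~\ref{abstr_sin_enchanced_thrm_2} and Lemma~\ref{abstr_sin_sandwiched_est_lemma} hold on this interval), and that the constants $C_{20}$, $C_{21}'$ transfer without modification, with the isomorphism factor $\|M^{-1}\|$ being the only new multiplicative constant introduced.
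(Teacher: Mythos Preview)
Your proposal is correct and follows exactly the approach of the paper: the paper states explicitly that Theorems~\ref{abstr_sin_sandwiched_general_thrm}, \ref{abstr_sin_sandwiched_ench_thrm_1}, and~\ref{abstr_sin_sandwiched_ench_thrm_2} are all deduced by applying inequality~\eqref{abstr_sin_sandwiched_est_1} and Lemma~\ref{abstr_N_and_Nhat_lemma} to Theorems~\ref{abstr_sin_general_thrm}, \ref{th3.5}, and~\ref{abstr_sin_enchanced_thrm_2}, respectively. One minor remark: Theorem~\ref{abstr_sin_enchanced_thrm_2} is already stated with argument $\varepsilon^{-1}\tau$, so no substitution is actually needed.
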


\section{Sharpness of the results of \S \ref{abstr_sandwiched_section}}

\subsection{Sharpness of the results regarding  the smoothing factor}
The following theorem confirms that  Theorems \ref{th5.5} and~\ref{abstr_sin_sandwiched_general_thrm} are sharp 
in the general case.

\begin{theorem}
	\label{th6.1}
	Suppose that the assumptions of Subsection~\emph{\ref{abstr_A_and_Ahat_section}} are satisfied.
	Let $\wh{N}_{0,Q} \ne 0$. 
	
	\noindent $1^\circ$. Let $\tau \ne 0$ and  $0 \le s < 2$. Then there does not exist a constant  
	\hbox{$C(\tau) > 0$} such that the estimate 
	\begin{equation}
	\label{6*.1}
	\left\| J_1(t, \eps^{-1} \tau) \right\|  \varepsilon^{s} (t^2 + \varepsilon^2)^{-s/2} \le  C(\tau) \varepsilon
	\end{equation}
	holds for all sufficiently small $|t|$ and $\varepsilon > 0$.

	\noindent $2^\circ$. Let $\tau \ne 0$ and $0 \le r < 1$.  Then there does not exist a constant 
	\hbox{$C(\tau) > 0$} such that the estimate
	\begin{equation}
	\label{6*.2}
	\left\| J_2(t, \eps^{-1} \tau)  \right\|  \varepsilon^{r} (t^2 + \varepsilon^2)^{-r/2} \le  C(\tau)
	\end{equation}
	holds for all sufficiently small $|t|$ and $\varepsilon > 0$.

	\noindent $3^\circ$. Let $\tau \ne 0$ and $0 \le r < 1$.  Then there does not exist a constant  
	\hbox{$C(\tau) > 0$} such that the estimate
	\begin{equation}
	\label{6*.3}
	\left\| {J}_3(t, \eps^{-1} \tau)  \right\|  \varepsilon^{r} (t^2 + \varepsilon^2)^{-r/2} \le  C(\tau)
	\end{equation}
	holds for all sufficiently small $|t|$ and $\varepsilon > 0$.

	\noindent $4^\circ$.
	 Let $\tau \ne 0$ and $0 \le s < 2$.  Then there does not exist a constant  
	\hbox{$C(\tau) > 0$} such that the estimate
		\begin{equation}
		\label{abstr_sndwchd_s<2_est_imp1}
		\bigl \| \wh{A}(t)^{1/2} J (t, \varepsilon^{-1} \tau) \bigr \| \varepsilon^{s} (t^2 + \varepsilon^2)^{-s/2} \le  C(\tau) \varepsilon
		\end{equation}
		holds for all sufficiently small $|t|$ and $\varepsilon > 0$.
		\end{theorem}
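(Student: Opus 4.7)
The plan is to reduce each of the four statements to the corresponding non-sandwiched sharpness result: statements $1^\circ$--$3^\circ$ will follow from Theorem~\ref{th3.8}, while statement $4^\circ$ will follow from Theorem~\ref{abstr_s<2_general_thrm}. The reduction uses the two-sided norm equivalences between the sandwiched quantities $J_1, J_2, \wt{J}_3, J$ and their non-sandwiched counterparts $\mathcal{J}_1, \mathcal{J}_2, \Sigma$ collected in Lemmas~\ref{abstr_cos_sin_sandwiched_est_lemma} and~\ref{abstr_sin_sandwiched_est_lemma}. The hypothesis $\wh{N}_{0,Q} \ne 0$ translates into the hypothesis $N_0 \ne 0$ needed to apply those theorems, by virtue of Lemma~\ref{abstr_N_and_Nhat_lemma}.

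For $1^\circ$, the argument is by contradiction: if \eqref{6*.1} held for some $0 \le s < 2$ and $\tau \ne 0$, then inequality \eqref{abstr_J1_est2} would give
\[
\| \mathcal{J}_1(t,\eps^{-1}\tau) \| \eps^s (t^2+\eps^2)^{-s/2} \le \|M\|^2\|M^{-1}\|^2 C(\tau)\,\eps
\]
for all sufficiently small $|t|$ and $\eps$, in direct contradiction to statement $1^\circ$ of Theorem~\ref{th3.8} (applicable because $N_0 \ne 0$ by Lemma~\ref{abstr_N_and_Nhat_lemma}). Statement $2^\circ$ is handled in exactly the same way, using the companion inequality \eqref{abstr_J2_est2} and statement $2^\circ$ of Theorem~\ref{th3.8}.

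For $3^\circ$, I first pass from $J_3$ to $\wt{J}_3$ using estimate \eqref{J3-J3tilde}: since $\|J_3 - \wt{J}_3\| \le \wt{C}$ uniformly for $|t|\le t_0$ and $\tau \in \R$, and since $\eps^r(t^2+\eps^2)^{-r/2} \le 1$ for $r\ge 0$, any bound of the form \eqref{6*.3} would imply
\[
\| \wt{J}_3(t,\eps^{-1}\tau) \| \eps^r (t^2+\eps^2)^{-r/2} \le C(\tau) + \wt{C}
\]
for all sufficiently small $|t|$ and $\eps$. Inequality \eqref{abstr_tildeJ2_est2} then promotes this to the same-order bound for $\mathcal{J}_2$, again contradicting statement~$2^\circ$ of Theorem~\ref{th3.8}.

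For $4^\circ$, the argument is analogous but uses Lemma~\ref{abstr_sin_sandwiched_est_lemma} instead. If \eqref{abstr_sndwchd_s<2_est_imp1} were true for some $0\le s<2$ and $\tau \ne 0$, then inequality \eqref{abstr_sin_sandwiched_est_3} would yield
\[
\| A(t)^{1/2} \Sigma(t,\eps^{-1}\tau) \| \eps^s (t^2+\eps^2)^{-s/2} \le \|M\|^2\|M^{-1}\| C(\tau)\,\eps
\]
for all sufficiently small $|t|$ and $\eps$, contradicting Theorem~\ref{abstr_s<2_general_thrm} (which applies because $N_0 \ne 0$). There is no real obstacle: once Lemma~\ref{abstr_N_and_Nhat_lemma} is invoked to convert the spectral assumption and the norm equivalences of Lemmas~\ref{abstr_cos_sin_sandwiched_est_lemma},~\ref{abstr_sin_sandwiched_est_lemma} are in hand, each item is a one-line application of a previously established counterexample. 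The only point requiring minor care is the uniform control of $\eps^r(t^2+\eps^2)^{-r/2}$ in item $3^\circ$, which is immediate.
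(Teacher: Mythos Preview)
Your proposal is correct and follows essentially the same approach as the paper: reduce to the non-sandwiched sharpness results (Theorem~\ref{th3.8} for $1^\circ$--$3^\circ$, Theorem~\ref{abstr_s<2_general_thrm} for $4^\circ$) via Lemma~\ref{abstr_N_and_Nhat_lemma} and the norm inequalities \eqref{abstr_J1_est2}, \eqref{abstr_J2_est2}, \eqref{abstr_tildeJ2_est2}, \eqref{J3-J3tilde}, \eqref{abstr_sin_sandwiched_est_3}. The paper simply cites an earlier work for $1^\circ$ and $3^\circ$ and writes out $2^\circ$ and $4^\circ$ exactly as you do; your treatment of $3^\circ$ (first pass from $J_3$ to $\wt{J}_3$ via \eqref{J3-J3tilde}, then apply \eqref{abstr_tildeJ2_est2}) matches how the paper handles the analogous step in the proof of Theorem~\ref{th6.2}.
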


\begin{proof}
Statements $1^\circ$ and $3^\circ$ were proved in  \cite[Theorem 4.6]{DSu}.

Let us prove statement $2^\circ$. By Lemma \ref{abstr_N_and_Nhat_lemma}, the condition $\wh{N}_{0,Q} \ne 0$ is equivalent to the condition $N_0\ne 0$. We suppose the opposite. Then,  using inequality \eqref{abstr_J2_est2}, 
we see that \eqref{**.2} is satisfied for some $0 \le r <1$. But this contradicts statement~$2^\circ$ of Theorem \ref{th3.8}.

Let us check statement $4^\circ$. 	Suppose the opposite. Then, using~\eqref{abstr_sin_sandwiched_est_3},
we arrive at inquality~\eqref{abstr_s<2_est_imp} with some $0 \le s < 2$. 
 But this contradicts the statement of Theorem~\ref{abstr_s<2_general_thrm}. 
\end{proof}

Next, we confirm that Theorems  \ref{th5.6}, \ref{th5.7}, \ref{abstr_sin_sandwiched_ench_thrm_1}, and  \ref{abstr_sin_sandwiched_ench_thrm_2} are sharp. (We omit the results for~$J_2$, because they will not be used in the study of DOs.)

\begin{theorem}
	\label{th6.2}
	Suppose that the assumptions of Subsection~\emph{\ref{abstr_A_and_Ahat_section}} are satisfied.
	Let $\wh{N}_{0,Q}\!=\! 0$ and  $\wh{\mathcal N}_Q^{(q)}\!\ne\! 0$ for some $q$ \emph{(}i.~e.,  
	$\nu_l \!\ne \!0$ for some~$l$\emph{)}. 
	
	\noindent $1^\circ$. Let $\tau \!\ne\! 0$ and $0\! \le \!s \!<\! 3/2$. Then there does not exist a constant ${C(\tau)\! >\! 0}$ such that estimate \eqref{6*.1} holds for all sufficiently small $|t|$ and $\varepsilon > 0$.

	\noindent $2^\circ$. Let $\tau \!\ne\! 0$ and $0 \!\le\! r \!< \!1/2$.  Then there does not exist a constant ${C(\tau)\! > \!0}$ such that estimate \eqref{6*.3} holds for all sufficiently small $|t|$ and $\varepsilon > 0$.
	
	\noindent $3^\circ$. Let $\tau \!\ne\! 0$ and $0 \!\le\! s\! <\! 3/2$. Then there does not exist a constant $C(\tau)\! >\! 0$ such that estimate \eqref{abstr_sndwchd_s<2_est_imp1} holds for all sufficiently small $|t|$ and  
	$\varepsilon >0$.	
\end{theorem}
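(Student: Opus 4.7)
The plan is to prove all three statements by contradiction, reducing each sandwiched estimate to its non-sandwiched analogue, which has already been shown to be sharp in Theorems~\ref{th3.9} and~\ref{th3.12}. The bridges between the two settings are the norm comparisons from Lemmas~\ref{abstr_cos_sin_sandwiched_est_lemma} and~\ref{abstr_sin_sandwiched_est_lemma}, together with the perturbation bound \eqref{J3-J3tilde}.

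First I would translate the hypotheses. By Lemma~\ref{abstr_N_and_Nhat_lemma}, the condition $\wh{N}_{0,Q}=0$ is equivalent to $N_0=0$. By comparing Propositions~\ref{Prop_nu_1} and~\ref{Prop_nu} (both characterize the same quartic coefficients $\nu_l$ in the expansions \eqref{abstr_A(t)_eigenvalues_series} through the eigenvalue problems for $\mathcal N^{(q)}$ and $\wh{\mathcal N}_Q^{(q)}$ on the respective eigenspaces), the hypothesis $\wh{\mathcal N}_Q^{(q)}\ne 0$ amounts to $\nu_l\ne 0$ for some $l\in\{i(q),\dots,i(q)+k_q-1\}$, hence is equivalent to $\mathcal N^{(q)}\ne 0$. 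Therefore the assumptions of Theorems~\ref{th3.9} and~\ref{th3.12} are automatically met in each of $1^\circ$--$3^\circ$.

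For statement $1^\circ$, suppose for contradiction that \eqref{6*.1} holds for some $0\le s<3/2$ and all sufficiently small $|t|,\eps>0$. Inequality~\eqref{abstr_J1_est2} yields
\[
\|\mathcal J_1(t,\eps^{-1}\tau)\|\,\eps^{s}(t^2+\eps^2)^{-s/2}\le \|M\|^2\|M^{-1}\|^2\,\|J_1(t,\eps^{-1}\tau)\|\,\eps^{s}(t^2+\eps^2)^{-s/2},
\]
so estimate \eqref{**.1} would be valid with the same $s$ and a modified constant, contradicting Theorem~\ref{th3.9}($1^\circ$). Statement $3^\circ$ is handled in the same spirit: assuming \eqref{abstr_sndwchd_s<2_est_imp1} for some $0\le s<3/2$, inequality \eqref{abstr_sin_sandwiched_est_3} transforms it into \eqref{abstr_s<2_est_imp} for $A(t)^{1/2}\Sigma(t,\eps^{-1}\tau)$ with the same $s$, contradicting Theorem~\ref{th3.12}.

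Statement $2^\circ$ is the one place where a small extra step is needed, and I expect it to be the mild technical point of the argument: the clean reduction~\eqref{abstr_J2_est2} is for $J_2$, not for $J_3$. Here I would first pass from $J_3$ to $\wt J_3$ via \eqref{J3-J3tilde}, writing $\|\wt J_3(t,\eps^{-1}\tau)\|\le \|J_3(t,\eps^{-1}\tau)\|+\wt C$, then apply \eqref{abstr_tildeJ2_est2} to obtain
\[
\|\mathcal J_2(t,\eps^{-1}\tau)\|\,\eps^{r}(t^2+\eps^2)^{-r/2}\le \|M^{-1}\|^2\bigl(\|J_3(t,\eps^{-1}\tau)\|+\wt C\bigr)\eps^{r}(t^2+\eps^2)^{-r/2}.
\]
The elementary bound $\eps^{r}(t^2+\eps^2)^{-r/2}\le 1$ for $r\ge 0$ absorbs the additive constant $\wt C$, so that \eqref{**.2} holds with some constant $C'(\tau)$ and the same $r<1/2$, contradicting Theorem~\ref{th3.9}($2^\circ$). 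This completes the proof plan; no genuinely new analytic input is required beyond the equivalences of hypotheses and the norm comparison lemmas already established.
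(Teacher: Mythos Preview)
Your proposal is correct and follows essentially the same route as the paper: translate the hypotheses via Lemma~\ref{abstr_N_and_Nhat_lemma} and Propositions~\ref{Prop_nu_1}, \ref{Prop_nu}, then reduce each statement to its non-sandwiched counterpart using \eqref{abstr_J1_est2}, \eqref{abstr_tildeJ2_est2} with \eqref{J3-J3tilde}, and \eqref{abstr_sin_sandwiched_est_3}, respectively, to contradict Theorems~\ref{th3.9} and~\ref{th3.12}. Your treatment of $2^\circ$ spells out the absorption of the additive constant $\wt C$ via $\eps^r(t^2+\eps^2)^{-r/2}\le 1$, which the paper leaves implicit.
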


\begin{proof}
By Lemma  \ref{abstr_N_and_Nhat_lemma}, the condition $\wh{N}_{0,Q}= 0$ is equivalent to the condition $N_0= 0$.
Next, according to Proposition \ref{Prop_nu}, the condition  $\wh{\mathcal N}_Q^{(q)}\ne 0$ for some $q$ means that  
$\nu_l \ne 0$ for some $l \in \{ i(q),\dots,i(q) + k_q -1\}$. By Proposition \ref{Prop_nu_1}, it follows that  
${\mathcal N}^{(q)}\ne 0$. Thus, the assumptions of Theorems \ref{th3.9} and \ref{th3.12} are satisfied.

Let us prove statement $1^\circ$. Assuming the opposite and using inequality  \eqref{abstr_J1_est2}, we see that \eqref{**.1} is satisfied for some  $0 \le s< 3/2$. But this contradicts statement~$1^\circ$ of Theorem \ref{th3.9}. 
	
	Statement $2^\circ$ is checked with the help of  \eqref{abstr_tildeJ2_est2}, \eqref{J3-J3tilde}, and statement $2^\circ$ of Theorem \ref{th3.9}. Statement $3^\circ$ follows from   
	\eqref{abstr_sin_sandwiched_est_3} and Theorem \ref{th3.12}.
	\end{proof}

\subsection{Sharpness of the results with respect to time}

 Using Lemma~\ref{abstr_N_and_Nhat_lemma} and relations \eqref{abstr_J1_est2}--\eqref{abstr_tildeJ2_est2}, \eqref{J3-J3tilde}, \eqref{abstr_sin_sandwiched_est_3}, we deduce the following result 
from Theorems \ref{th3.13} and~\ref{th4.6}. This result confirms that Theorems~\ref{th5.5} 
and~\ref{abstr_sin_sandwiched_general_thrm} are sharp.

\begin{theorem}\label{th6.5}
Suppose that $\wh{N}_{0,Q} \ne 0$. 

	\noindent $1^\circ$. Let $s \ge 2$. There does not exist a positive function $C(\tau)$ 
	such that $\lim_{\tau \to \infty} C(\tau)/ |\tau| =0$ and   estimate \eqref{6*.1} holds for all
 $\tau \in \R$ and sufficiently small  $|t|$ and $\eps$.  

\noindent $2^\circ$. Let $r \ge 1$. There does not exist a positive function $C(\tau)$ such that 
$\lim_{\tau \to \infty} C(\tau)/ |\tau| =0$ and estimate \eqref{6*.2} holds for all
  $\tau \in \R$ and sufficiently small  $|t|$ and $\eps$.

\noindent $3^\circ$. Let $r \ge 1$. There does not exist a positive function $C(\tau)$ such that 
$\lim_{\tau \to \infty} C(\tau)/ |\tau| =0$ and estimate \eqref{6*.3} holds for all $\tau \in \R$ and sufficiently small  $|t|$ and $\eps$.

\noindent $4^\circ$. Let $s \ge 2$. There does not exist a positive function $C(\tau)$ such that 
$\lim_{\tau \to \infty} C(\tau)/ |\tau| =0$ and estimate  \eqref{abstr_sndwchd_s<2_est_imp1} holds for all $\tau \in \R$ and sufficiently small $|t|$ and $\eps$.
\end{theorem}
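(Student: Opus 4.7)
The plan is to argue by contradiction, reducing each of the four assertions to its already-established non-sandwiched analogue via the inequalities of Lemmas \ref{abstr_cos_sin_sandwiched_est_lemma} and \ref{abstr_sin_sandwiched_est_lemma}. A preparatory observation is that, by Lemma~\ref{abstr_N_and_Nhat_lemma}, the hypothesis $\widehat{N}_{0,Q}\ne 0$ is equivalent to $N_0\ne 0$, so that Theorems~\ref{th3.13} and~\ref{th4.6} are directly applicable in the abstract (non-sandwiched) setting.

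For statement $1^\circ$, I would suppose a positive function $C(\tau)$ exists with $C(\tau)/|\tau|\to 0$ as $\tau\to\infty$ such that \eqref{6*.1} holds for all $\tau\in\R$ and all sufficiently small $|t|,\varepsilon$. Combining this with the estimate $\|\mathcal{J}_1(t,\tau)\|\le\|M\|^2\|M^{-1}\|^2\|J_1(t,\tau)\|$ from \eqref{abstr_J1_est2} yields the same bound on $\mathcal{J}_1$ with modified constant $\|M\|^2\|M^{-1}\|^2 C(\tau)$, whose ratio to $|\tau|$ also tends to zero. This contradicts statement $1^\circ$ of Theorem~\ref{th3.13}. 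Statement $2^\circ$ is treated identically, using \eqref{abstr_J2_est2} and statement $2^\circ$ of Theorem~\ref{th3.13}.

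Statement $3^\circ$ requires one extra step because the available inequality \eqref{abstr_tildeJ2_est2} bounds $\|\mathcal{J}_2\|$ by $\|\widetilde{J}_3\|$, not by $\|J_3\|$. Here I would use the uniform estimate \eqref{J3-J3tilde}, namely $\|J_3(t,\tau)-\widetilde{J}_3(t,\tau)\|\le\widetilde{C}$, to deduce that under the hypothetical bound \eqref{6*.3},
\[
\|\mathcal{J}_2(t,\varepsilon^{-1}\tau)\|\,\varepsilon^{r}(t^2+\varepsilon^2)^{-r/2}
\le \|M^{-1}\|^2 C(\tau) + \|M^{-1}\|^2 \widetilde{C}\,\varepsilon^{r}(t^2+\varepsilon^2)^{-r/2}
\le \|M^{-1}\|^2\bigl(C(\tau)+\widetilde{C}\bigr),
\]
since $\varepsilon^{r}(t^2+\varepsilon^2)^{-r/2}\le 1$. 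Dividing by $|\tau|$ and letting $\tau\to\infty$, the constant $\widetilde{C}$ is harmless, so the hypothesis of $2^\circ$ of Theorem~\ref{th3.13} is violated. Statement $4^\circ$ is obtained by the same transfer principle: apply the energy-norm inequality \eqref{abstr_sin_sandwiched_est_3} to convert a hypothetical estimate on $\widehat{A}(t)^{1/2}J(t,\varepsilon^{-1}\tau)$ into one on $A(t)^{1/2}\Sigma(t,\varepsilon^{-1}\tau)$, which Theorem~\ref{th4.6} forbids.

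The only place where any genuine care is needed is statement $3^\circ$, where the asymmetry of Lemma~\ref{abstr_cos_sin_sandwiched_est_lemma} forces the detour through $\widetilde{J}_3$; once this is cleanly handled by absorbing the bounded correction $\widetilde{C}$ into the $1/|\tau|$ factor, the rest of the proof is a routine mechanical transfer of the abstract results, and no new quantitative estimates have to be derived.
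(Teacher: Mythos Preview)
Your proof is correct and follows essentially the same route as the paper: the paper states that the theorem is deduced from Theorems~\ref{th3.13} and~\ref{th4.6} via Lemma~\ref{abstr_N_and_Nhat_lemma} and relations \eqref{abstr_J1_est2}--\eqref{abstr_tildeJ2_est2}, \eqref{J3-J3tilde}, \eqref{abstr_sin_sandwiched_est_3}, which is exactly the transfer you carry out. Your handling of statement~$3^\circ$, where the detour through $\widetilde{J}_3$ and the absorption of the bounded term $\widetilde{C}$ is needed, is the only nontrivial step, and you have done it correctly.
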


Similarly to the proof of Theorem \ref{th6.2}, from Theorems~\ref{th4.*2} and \ref{th4.final} we deduce the following result which demonstrates that Theorems \ref{th5.6},  \ref{th5.7}, \ref{abstr_sin_sandwiched_ench_thrm_1}, 
and~\ref{abstr_sin_sandwiched_ench_thrm_2} are sharp.

\begin{theorem}
	\label{th6.7}
	Suppose that  $\wh{N}_{0,Q} = 0$ and $\wh{\mathcal{N}}_Q^{(q)} \ne 0$ for some $q\in \{1,\dots,p\}$.
	 
	\noindent $1^\circ$.  Let $s \ge 3/2$. There does not exist a positive function $C(\tau)$ such that  
	$\lim_{\tau \to \infty} C(\tau)/ |\tau|^{1/2} = 0$ and estimate  \eqref{6*.1} holds for all $\tau \in \R$ and sufficiently small  $|t|$ and $\varepsilon > 0$.

	\noindent $2^\circ$. Let $r \ge 1/2$. There does not exist a positive function $C(\tau)$ such that 
	$\lim_{\tau \to \infty} C(\tau)/ |\tau|^{1/2} = 0$ and estimate  \eqref{6*.3} holds for all $\tau \in \R$ and sufficiently small $|t|$ and $\varepsilon > 0$.

\noindent $3^\circ$. Let $s \ge 3/2$. There does not exist a positive function $C(\tau)$ such that 
	$\lim_{\tau \to \infty} C(\tau)/ |\tau|^{1/2} = 0$ and estimate \eqref{abstr_sndwchd_s<2_est_imp1} holds for all $\tau \in \R$ and sufficiently small $|t|$ and $\varepsilon > 0$.
\end{theorem}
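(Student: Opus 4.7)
The plan is to reduce the three assertions of Theorem \ref{th6.7} to the already-established abstract sharpness results of Theorems \ref{th4.*2} and \ref{th4.final}, exactly in the same spirit as Theorem \ref{th6.2} was derived from Theorems \ref{th3.9} and \ref{th3.12}. The key translation tools are Lemma \ref{abstr_N_and_Nhat_lemma} and Proposition \ref{Prop_nu}, which relate the hypotheses on the ``sandwiched'' side to those on the ``plain'' side.

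First I would translate the hypotheses. By Lemma \ref{abstr_N_and_Nhat_lemma}, the condition $\widehat{N}_{0,Q} = 0$ is equivalent to $N_0 = 0$. By Proposition \ref{Prop_nu}, the condition $\widehat{\mathcal{N}}_Q^{(q)}\ne 0$ means that $\nu_l \ne 0$ for some index $l \in \{i(q), \dots, i(q)+k_q-1\}$, and then Proposition \ref{Prop_nu_1} gives $\mathcal{N}^{(q)} \ne 0$. Hence the abstract hypotheses of Theorems \ref{th4.*2} and \ref{th4.final} are in force.

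Next I would argue by contradiction in each of the three statements. For $1^\circ$, suppose that for some $s \ge 3/2$ there exists a positive $C(\tau)$ with $C(\tau)/|\tau|^{1/2}\to 0$ such that \eqref{6*.1} holds for all sufficiently small $|t|,\eps>0$ and all $\tau\in\R$. Applying the bound \eqref{abstr_J1_est2}, namely $\|\mathcal{J}_1(t,\tau)\| \le \|M\|^2 \|M^{-1}\|^2 \|J_1(t,\tau)\|$, we obtain the analogous estimate \eqref{**.1} with the function $\widetilde C(\tau) := \|M\|^2\|M^{-1}\|^2 C(\tau)$, which still satisfies $\widetilde C(\tau)/|\tau|^{1/2}\to 0$. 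This contradicts statement $1^\circ$ of Theorem \ref{th4.*2}. For $2^\circ$, assuming the opposite and combining \eqref{abstr_tildeJ2_est2} with \eqref{J3-J3tilde}, we obtain a bound of the form
\begin{equation*}
\|\mathcal{J}_2(t,\eps^{-1}\tau)\|\eps^r(t^2+\eps^2)^{-r/2} \le \|M^{-1}\|^2 C(\tau) + \|M^{-1}\|^2\widetilde C \eps^r(t^2+\eps^2)^{-r/2},
\end{equation*}
whose right-hand side is dominated by a function of $\tau$ that still divided by $|\tau|^{1/2}$ tends to zero (since the added constant term is $O(1)$). This produces inequality \eqref{**.2} in contradiction with statement $2^\circ$ of Theorem \ref{th4.*2}. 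For $3^\circ$, applying \eqref{abstr_sin_sandwiched_est_3} to the assumed bound \eqref{abstr_sndwchd_s<2_est_imp1} yields estimate \eqref{abstr_s<2_est_imp} with the function $\|M\|^2\|M^{-1}\|C(\tau)$, contradicting Theorem \ref{th4.final}.

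No essential obstacle is expected: the entire argument is a mechanical reduction via the two-sided norm comparisons between ``sandwiched'' operators and their ``plain'' counterparts. The only points that require a moment's care are the harmless $O(1)$ additive term in \eqref{J3-J3tilde} (which survives the limit because we divide by $|\tau|^{1/2}\to\infty$) and the verification that the hypotheses $N_0=0$ and $\mathcal{N}^{(q)}\ne 0$ really propagate through the chain Lemma \ref{abstr_N_and_Nhat_lemma} $\to$ Proposition \ref{Prop_nu} $\to$ Proposition \ref{Prop_nu_1}, which is already recorded earlier in the paper.
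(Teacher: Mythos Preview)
Your proposal is correct and follows essentially the same approach as the paper: the paper explicitly states that Theorem \ref{th6.7} is deduced from Theorems \ref{th4.*2} and \ref{th4.final} ``similarly to the proof of Theorem \ref{th6.2}'', which is exactly the reduction via Lemma \ref{abstr_N_and_Nhat_lemma}, Propositions \ref{Prop_nu_1} and \ref{Prop_nu}, and the norm comparisons \eqref{abstr_J1_est2}, \eqref{abstr_tildeJ2_est2}, \eqref{J3-J3tilde}, \eqref{abstr_sin_sandwiched_est_3} that you spell out. Your handling of the additive $\widetilde C$ term in part $2^\circ$ is also fine, since $\eps^r(t^2+\eps^2)^{-r/2}\le 1$ and a bounded constant divided by $|\tau|^{1/2}$ still tends to zero.
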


\section*{Chapter 2.  Periodic differential operators in $L_2(\mathbb{R}^d; \mathbb{C}^n)$}

\section{The class of differential operators in  $L_2(\mathbb{R}^d; \mathbb{C}^n)$}

\subsection{Lattices. Fourier series}

Let $\Gamma$~be a lattice in $\mathbb{R}^d$ generated by the basis $\mathbf{a}_1, \ldots , \mathbf{a}_d$, i.~e., 
$\Gamma = \bigl\{ \mathbf{a} \in \mathbb{R}^d \colon \mathbf{a} = \sum_{j=1}^{d} n_j \mathbf{a}_j, \; n_j \in \mathbb{Z} \bigr\}$, and let  $\Omega$~be the elementary cell of this lattice:
$$
\Omega := \bigl\{ \mathbf{x} \in \mathbb{R}^d \colon \mathbf{x} = \sum_{j=1}^{d} \xi_j \mathbf{a}_j, \; 0 < \xi_j < 1 \bigr\}.
$$ 
The basis $\mathbf{b}_1, \ldots , \mathbf{b}_d$ dual to the basis $\mathbf{a}_1, \ldots , \mathbf{a}_d$ 
is defined by the relations $\left< \mathbf{b}_l, \mathbf{a}_j \right> = 2 \pi \delta_{lj}$. This basis generates a 
lattice~$\widetilde \Gamma$ \emph{dual} to the lattice $\Gamma$.
By  $\widetilde \Omega$ we denote the central \emph{Brillouin zone} of the lattice $ \widetilde \Gamma$:
\begin{equation}
\label{Brillouin_zone}
\widetilde \Omega = \bigl\{ \mathbf{k} \in \mathbb{R}^d : | \mathbf{k} | < | \mathbf{k} - \mathbf{b} |, \; 0 \ne \mathbf{b} \in \widetilde \Gamma \bigr\}.
\end{equation}
Denote $| \Omega | = \operatorname{meas} \Omega$, $| \widetilde \Omega | = \operatorname{meas} \widetilde \Omega$, and note that  $| \Omega |  | \widetilde \Omega | = (2 \pi)^d$. Let $r_0$~be the radius of the ball  \emph{inscribed} in $\clos \widetilde \Omega$, and let $r_1 := \max_{\mathbf{k} \in \partial\widetilde{\Omega}} |\mathbf{k}|$. Note that 
\begin{equation}
\label{r_0}
2 r_0 = \min|\mathbf{b}|,  \quad 0 \ne \mathbf{b} \in \widetilde \Gamma.
\end{equation}
The following discrete Fourier transformation  is associated with the lattice $ \Gamma $:
\begin{equation}
\label{fourier}
\mathbf{v}(\mathbf{x}) = | \Omega |^{-1/2} \sum_{\mathbf{b} \in \widetilde \Gamma} \widehat{\mathbf{v}}_{\mathbf{b}} \exp (i \left<\mathbf{b}, \mathbf{x} \right>), \quad \mathbf{x} \in \Omega.
\end{equation}
This transform is a unitary mapping of $l_2 (\widetilde \Gamma; \mathbb{C}^n) $ onto 
$L_2 (\Omega; \mathbb{C}^n)$:
\begin{equation}
\label{fourier_unitary}
\int\limits_{\Omega} |\mathbf{v}(\mathbf{x})|^2 d \mathbf{x} = \sum_{\mathbf{b} \in \widetilde \Gamma} | \widehat{\mathbf{v}}_{\mathbf{b}} |^2.
\end{equation}

\textit{Let $\widetilde H^1(\Omega; \mathbb{C}^n)$ be the subspace of functions from $ H^1(\Omega; \mathbb{C}^n)$ whose $\Gamma$-periodic extension to $\mathbb{R}^d$ belongs to  
$H^1_{\mathrm{loc}}(\mathbb{R}^d; \mathbb{C}^n)$}. We have 
\begin{equation}
\label{D_and_fourier}
\int\limits_{\Omega} |(\mathbf{D} + \mathbf{k}) \mathbf{v}|^2\, d\mathbf{x} = \sum_{\mathbf{b} \in \widetilde{\Gamma}} |\mathbf{b} + \mathbf{k} |^2 |\widehat{\mathbf{v}}_{\mathbf{b}}|^2, \quad \mathbf{v} \in \widetilde{H}^1(\Omega; \mathbb{C}^n), \; \mathbf{k} \in \mathbb{R}^d,
\end{equation} 
and convergence of the series in the right-hand side of~(\ref{D_and_fourier}) is equivalent to the relation 
${\mathbf{v} \in \widetilde{H}^1(\Omega; \mathbb{C}^n)}$. From~(\ref{Brillouin_zone}),~(\ref{fourier_unitary}), and~(\ref{D_and_fourier}) it follows that 
\begin{equation}
\label{AO}
\int\limits_{\Omega}\!\! |(\mathbf{D}\! +\! \mathbf{k}) \mathbf{v}|^2 d\mathbf{x}  \ge \!\sum_{\mathbf{b} \in \widetilde{\Gamma}}\! | \mathbf{k} |^2 |\widehat{\mathbf{v}}_{\mathbf{b}}|^2\! = | \mathbf{k} |^2\!\! \int\limits_{\Omega}\! |\mathbf{v}|^2 d\mathbf{x}, \quad \mathbf{v} \!\in \!\widetilde{H}^1(\Omega; \mathbb{C}^n), \; \mathbf{k} \!\in\! \widetilde{\Omega}.
\end{equation}

\subsection{The Gelfand transformation}
First, we define the Gelfand transform $\mathcal{U}$ for functions of the Schwartz class 
 $\mathbf{v} \in \mathcal{S}(\mathbb{R}^d; \mathbb{C}^n)$ by the formula:
\begin{equation*}
\widetilde{\mathbf{v}} ( \mathbf{k}, \mathbf{x}) = (\mathcal{U} \- \mathbf{v}) (\mathbf{k}, \mathbf{x}) = | \widetilde \Omega |^{-1/2} \sum_{\mathbf{a} \in \Gamma} e^{- i \left< \mathbf{k}, \mathbf{x} + \mathbf{a} \right> } \mathbf{v} ( \mathbf{x} + \mathbf{a}),  \quad \mathbf{x} \in \Omega, \ \mathbf{k} \in \widetilde \Omega.
\end{equation*}
We have $\| \widetilde{\mathbf{v}} \|_{L_2(\wt{\Omega} \times \Omega)} = \| \v \|_{L_2(\R^d)}$,
and $\mathcal{U}$ extends by continuity up to unitary mapping 
\begin{equation*}
\mathcal{U} : L_2 (\mathbb{R}^d; \mathbb{C}^n) \to \int\limits_{\widetilde \Omega} \oplus  L_2 (\Omega; \mathbb{C}^n) d \mathbf{k} =:  \mathcal{H}.
\end{equation*}

\subsection{Factorized second order operators  $\mathcal{A}$}
\label{A_oper_subsect}

Let $b (\mathbf{D}) = \sum_{l=1}^d b_l D_l$, where $b_l$ are constant  ($ m \times n $)-matrices (in general, with complex entries).
\emph{Suppose that $m \ge n$}. Consider the symbol $b(\boldsymbol{\xi})= \sum_{l=1}^d b_l \xi_l$ and \emph{suppose that} $\rank b( \boldsymbol{\xi} ) = n$, $0 \ne  \boldsymbol{\xi} \in \mathbb{R}^d $. This condition is equivalent to the inequalities 
\begin{equation}
\label{rank_alpha_ineq}
\alpha_0 \mathbf{1}_n \le b( \boldsymbol{\theta} )^* b( \boldsymbol{\theta} ) \le \alpha_1 \mathbf{1}_n, \quad  \boldsymbol{\theta} \in \mathbb{S}^{d-1}, \quad 0 < \alpha_0 \le \alpha_1 < \infty,
\end{equation}
with some $ \alpha_0, \alpha_1 > 0 $.
Note that  \eqref{rank_alpha_ineq} implies the following estimates for the norms of the matrices $b_l$:
\begin{equation}
\label{5.7a} 
|b_l | \le \alpha_1^{1/2}, \quad l=1,\dots,d.
\end{equation}

Suppose that $f(\mathbf{x}), \; \mathbf{x} \in \mathbb{R}^d$,~is a $\Gamma$-periodic   ($n \times n$)-matrix-valued function  and $h(\mathbf{x}), \; \mathbf{x} \in \mathbb{R}^d$,~is a  $\Gamma$-periodic  ($m \times m$)-matrix-valued function. Assume that
\begin{equation}
\label{h_f_L_inf}
f, f^{-1} \in L_{\infty} (\mathbb{R}^d); \quad h, h^{-1} \in L_{\infty} (\mathbb{R}^d).
\end{equation}
Let 
$$
\mathcal{X}:  L_2 (\mathbb{R}^d ; \mathbb{C}^n) \to  L_2 (\mathbb{R}^d ; \mathbb{C}^m)
$$ 
be a closed operator given by the expression $\mathcal{X} = h b( \mathbf{D} ) f$ on the domain  
$$
\Dom \mathcal{X} = \{ \mathbf{u} \in L_2 (\mathbb{R}^d ; \mathbb{C}^n) \colon f \mathbf{u} \in 
H^1  (\mathbb{R}^d ; \mathbb{C}^n) \}.
$$
A selfadjoint operator $\mathcal{A} = \mathcal{X}^* \mathcal{X}$ in $L_2 (\mathbb{R}^d ; \mathbb{C}^n)$ is generated by the closed quadratic form $\mathfrak{a}[\mathbf{u}, \mathbf{u}] = \| \mathcal{X} \mathbf{u} \|^2_{L_2(\mathbb{R}^d)}, \; \mathbf{u} \in \Dom \mathcal{X}$. Formally,
\begin{equation}
\label{A}
\mathcal{A} = f (\mathbf{x})^* b( \mathbf{D} )^* g( \mathbf{x} )  b( \mathbf{D} ) f(\mathbf{x}),
\end{equation} 
where $ g( \mathbf{x} ) = h( \mathbf{x} )^*  h( \mathbf{x} ) $. Using the Fourier transform 
and~(\ref{rank_alpha_ineq}),~(\ref{h_f_L_inf}), it is easy to check that  
\begin{equation}
\label{a_form_ineq}
\alpha_0 \| g^{-1} \|_{L_{\infty}}^{-1} \| \mathbf{D} (f \mathbf{u}) \|_{L_2}^2 \! \le \mathfrak{a}[\mathbf{u}, \mathbf{u}]\le \alpha_1 \| g \|_{L_{\infty}} \| \mathbf{D} (f \mathbf{u}) \|_{L_2}^2, \quad \mathbf{u} \!\in\! \Dom \mathcal{X}.
\end{equation}

\subsection{The operators $\mathcal{A}(\mathbf{k})$}
Let $\mathbf{k} \in \mathbb{R}^d$. We put 
\begin{equation}
\label{Spaces_H}
\mathfrak{H} = L_2 (\Omega; \mathbb{C}^n), \quad \mathfrak{H}_* = L_2 (\Omega; \mathbb{C}^m),
\end{equation}
and consider the closed operator $\mathcal{X} (\mathbf{k}) \colon \mathfrak{H} \!\to \!\mathfrak{H}_*$
given by $\mathcal{X} (\mathbf{k})\! =\! hb(\mathbf{D}\! + \!\mathbf{k})f$ on the domain
${\Dom \mathcal{X} (\mathbf{k}) \!=\! \big\{ \mathbf{u} \in \mathfrak{H} \colon   f \mathbf{u} \in \widetilde{H}^1 (\Omega; \mathbb{C}^n)\big\} \!=:\!  \mathfrak{d}}$.
A selfadjoint operator 
$\mathcal{A} (\mathbf{k}) =\mathcal{X} (\mathbf{k})^* \mathcal{X} (\mathbf{k})$ in  $ \mathfrak{H}$
is generated by the quadratic form 
$\mathfrak{a}(\mathbf{k})[\mathbf{u}, \mathbf{u}] = \| \mathcal{X}(\mathbf{k}) \mathbf{u} \|_{\mathfrak{H}_*}^2$, $\mathbf{u} \in \mathfrak{d}$.
Using expansion of a function $\v=f\mathbf{u}$ in the Fourier series~(\ref{fourier}) and conditions~(\ref{rank_alpha_ineq}),~(\ref{h_f_L_inf}), it is easy to check that  
\begin{equation}
\label{a(k)_form_est}
\alpha_0 \|g^{-1} \|_{L_\infty}^{-1} \|(\mathbf{D} + \mathbf{k}) f \mathbf{u} \|_{L_2 (\Omega)}^2 
\le \mathfrak{a}(\mathbf{k})[\mathbf{u}, \mathbf{u}]
\le \alpha_1 \|g \|_{L_\infty} \|(\mathbf{D} + \mathbf{k}) f \mathbf{u} \|_{L_2 (\Omega)}^2, \  \mathbf{u} \in \mathfrak{d}.
\end{equation}

From~(\ref{AO}) and the lower estimate~(\ref{a(k)_form_est}) it follows that 
\begin{equation}
\label{A(k)_nondegenerated_and_c_*}	
\mathcal{A} (\mathbf{k}) \ge c_* |\mathbf{k}|^2 I, \qquad \mathbf{k} \in \widetilde{\Omega}, \; c_* = \alpha_0\|f^{-1} \|_{L_\infty}^{-2} \|g^{-1} \|_{L_\infty}^{-1} .
\end{equation}

We put $\mathfrak{N} :=  \Ker \mathcal{A} (0) = \Ker \mathcal{X} (0)$.
Relations~(\ref{a(k)_form_est}) with $\mathbf{k} = 0$ show that 
\begin{equation}
\label{Ker2}
\mathfrak{N} = \left\lbrace \mathbf{u} \in L_2 (\Omega; \mathbb{C}^n) \colon f \mathbf{u} = \mathbf{c} \in \mathbb{C}^n \right\rbrace, \quad \dim \mathfrak{N} = n. 
\end{equation}

As follows from~(\ref{r_0}) and~(\ref{D_and_fourier}) with $\mathbf{k} = 0$, 
a function  $\mathbf{v} \in \widetilde{H}^1 (\Omega; \mathbb{C}^n)$ such that 
$\int_{\Omega} \mathbf{v} \, d \mathbf{x} = 0$ (i.~e., $\widehat{\mathbf{v}}_0 = 0$) satisfies 
\begin{equation}
\label{AP}
\| \mathbf{D} \mathbf{v} \|_{L_2 (\Omega)}^2 \ge 4 r_0^2 \| \mathbf{v} \|_{L_2 (\Omega)}^2, \quad \mathbf{v} \in \widetilde{H}^1 (\Omega; \mathbb{C}^n), \; \int\limits_{\Omega} \mathbf{v} \, d \mathbf{x} = 0.
\end{equation} 
From~(\ref{AP}) and  the lower estimate~(\ref{a(k)_form_est}) with $\mathbf{k} = 0$ it follows that the distance  $d^0$ from the point $\lambda_0 = 0$ to the rest of the spectrum of the operator $\mathcal{A}(0)$ satisfies the estimate 
\begin{equation}
\label{d0_est}
d^0 \ge 4 c_* r_0^2.
\end{equation}

Denote by $E_j(\mathbf{k})$, $j \in \mathbb{N}$, the consecutive (counting multiplicities) eigenvalues of the operator $\mathcal{A}(\mathbf{k})$ (the band functions).
The band functions $E_j(\mathbf{k})$ are continuous and $\widetilde{\Gamma}$-periodic.
According to \eqref{A(k)_nondegenerated_and_c_*}, we have 
$E_j(\mathbf{k}) \ge c_* | \mathbf{k} |^2$, $j=1,\dots,n$.
As was shown in~\cite[Chapter~2, Subsection~2.2] {BSu1}, $E_{n+1}(\mathbf{k}) \ge c_* r_0^2$.

\subsection{The direct integral for the operator $\mathcal{A}$}
Under the Gelfand transform, the operator $\mathcal{A}$ expands in the direct integral:
\begin{equation}
\label{decompose}
\mathcal{U} \mathcal{A}  \mathcal{U}^{-1} = \int\limits_{\widetilde \Omega} \oplus \mathcal{A} (\mathbf{k}) \, d \mathbf{k}.
\end{equation}
This means the following. Let $\mathbf{v} \in \Dom \mathcal{X}$, then
$\widetilde{\mathbf{v}}(\mathbf{k}, \,\cdot\,) \in \mathfrak{d}$ for a.e. $\mathbf{k} \in \widetilde \Omega$ and  
\begin{equation}
\label{AB}
\mathfrak{a}[\mathbf{v}, \mathbf{v}] = \int\limits_{\widetilde{\Omega}} \mathfrak{a}(\mathbf{k}) [\widetilde{\mathbf{v}}(\mathbf{k}, \,\cdot\,), \widetilde{\mathbf{v}}(\mathbf{k}, \,\cdot\,)] \, d \mathbf{k} .
\end{equation}
Conversely, if $\widetilde{\mathbf{v}} \in \mathcal{H}$ satisfies 
$\widetilde{\mathbf{v}}(\mathbf{k}, \,\cdot\,) \in \mathfrak{d}$ for a.e. $\mathbf{k} \in \widetilde \Omega$
and the integral in~(\ref{AB}) is finite, then $\mathbf{v} \in \Dom \mathcal{X}$ and~(\ref{AB}) is valid.

From~(\ref{decompose}) it follows that the spectrum of the operator $\mathcal{A}$ coincides with the union of the intervals  (bands) $\Ran E_j,\; j \in\mathbb{N}$. 
Herewith,  the first $n$ spectral bands of the operator $\mathcal{A}$ overlap and have common bottom $\lambda_0 = 0$, while  the $(n + 1)$-th band is separated from zero.

\subsection{Incorporation of the operators $\mathcal{A} (\mathbf{k})$ in the abstract scheme}

If $ d > 1 $, then the operators  $\mathcal{A} (\mathbf{k})$ depend on the multidimensional parameter $\mathbf{k}$. According to \cite[Chapter~2]{BSu1}, we introduce the one-dimensional parameter $t = | \mathbf{k}|$. 
We rely on the scheme of Chapter~1. Now all constructions will depend on the parameter 
$\boldsymbol{\theta} = \mathbf{k} / | \mathbf{k}| \in \mathbb{S}^{d-1}$, and we have to make estimates 
uniform  in~$\boldsymbol{\theta}$. The spaces  $\mathfrak{H}$ and $\mathfrak{H}_*$ are defined 
by~(\ref{Spaces_H}). We put $X(t) = X(t, \boldsymbol{\theta}) =:  \mathcal{X}(t \boldsymbol{\theta})$. Then $X(t, \boldsymbol{\theta}) = X_0 + t  X_1 (\boldsymbol{\theta})$, where $X_0 = h(\mathbf{x}) b (\mathbf{D}) f(\mathbf{x}), \; \Dom X_0 = \mathfrak{d}$, and $ X_1 (\boldsymbol{\theta})$ is a bounded operator of multiplication by the matrix $h(\mathbf{x}) b(\boldsymbol{\theta}) f(\mathbf{x})$. Next, we put $A(t) = A(t, \boldsymbol{\theta}) =:  \mathcal{A}(t \boldsymbol{\theta})$.  The kernel $\mathfrak{N} = \Ker X_0 = \Ker \mathcal{A} (0)$ is described by \eqref{Ker2}, $\dim \mathfrak{N} = n$. The number $d^0$ satisfies estimate~(\ref{d0_est}). As was shown in  \cite[Chapter~2,~\S3]{BSu1}, the condition $n \le n_* = \dim \Ker X^*_0$ is also satisfied. Moreover, either $n_* = n$ (if $m = n$), or  $n_* = \infty$ (if $m > n$). Thus, all the assumptions of the abstract scheme are satisfied.

According to Subsection~\ref{abstr_X_A_section}, we should fix a number $\delta >0$ such that $\delta < d^0/8$. Using~(\ref{A(k)_nondegenerated_and_c_*}) and~(\ref{d0_est}), we put 
\begin{equation}
\label{delta_fixation}
\delta = \frac{1}{4} c_* r^2_0 = \frac{1}{4} \alpha_0\|f^{-1} \|_{L_\infty}^{-2} \|g^{-1} \|_{L_\infty}^{-1} r^2_0.
\end{equation}  
Note that, by~(\ref{rank_alpha_ineq}) and~(\ref{h_f_L_inf}), we have
\begin{equation}
\label{X_1_estimate}
\| X_1 (\boldsymbol{\theta}) \| \le  \alpha^{1/2}_1 \| h \|_{L_{\infty}} \| f \|_{L_{\infty}}, \quad \boldsymbol{\theta} \in \mathbb{S}^{d-1}.
\end{equation}

We choose  $t_0$ (see~(\ref{abstr_t0_fixation})) as follows: 
\begin{equation}
\label{t0_fixation}
t_0 = \delta^{1/2} \alpha_1^{-1/2} \| h \|_{L_{\infty}}^{-1} \|f\|_{L_{\infty}}^{-1}
= \frac{r_0}{2} \alpha_0^{1/2} \alpha_1^{-1/2} \left( \| h \|_{L_{\infty}} \| h^{-1} \|_{L_{\infty}} \|f \|_{L_\infty} \|f^{-1} \|_{L_\infty} \right)^{-1}.
\end{equation}
Note that $t_0 \le r_0/2$. Hence, the ball $|\mathbf{k}| \le t_0$ lies entirely in $\widetilde{\Omega}$. 
It is important that  $c_*$, $\delta$, $t_0$ (see~(\ref{A(k)_nondegenerated_and_c_*}), (\ref{delta_fixation}), (\ref{t0_fixation})) do not depend on  $\boldsymbol{\theta}$.

By~\eqref{A(k)_nondegenerated_and_c_*}, Condition~\ref{cond_A} is satisfied. The germ  $S(\boldsymbol{\theta})$ of the operator $A(t, \boldsymbol{\theta})$ is nondegenerate uniformly in  $\boldsymbol{\theta}$ (cf.~\eqref{abstr_S_nondegenerated}): 
\begin{equation}
\label{S_nondegenerated}
S(\boldsymbol{\theta}) \ge c_* I_{\mathfrak{N}}, \quad \boldsymbol{\theta} \in \mathbb{S}^{d-1}.
\end{equation}

\section{The effective characteristics of the operator $\widehat{\mathcal{A}} = b(\mathbf{D})^* g(\mathbf{x}) b(\mathbf{D})$}

\subsection{The operator $A(t, \boldsymbol{\theta})$ in the case where $f = \mathbf{1}_n$}
A special role is played by the operator  $\mathcal A$ with $f = \mathbf{1}_n$. In this case, we agree to mark all objects by hat  \textquotedblleft$\widehat{\phantom{\_}} $\textquotedblright. Then for the operator
\begin{equation}
\label{hatA}
\widehat{\mathcal{A}} = b(\mathbf{D})^* g(\mathbf{x}) b(\mathbf{D})
\end{equation}
the family  
\begin{equation}
\label{hatA(k)}
\widehat{\mathcal{A}} (\mathbf{k}) = b(\mathbf{D}+\mathbf{k})^* g(\mathbf{x}) b(\mathbf{D}+\mathbf{k})
\end{equation}
is denoted by $\widehat{A} (t, \boldsymbol{\theta})$. The kernel~(\ref{Ker2}) takes the form
\begin{equation}
\label{Ker3}
\widehat{\mathfrak{N}} = \left\lbrace \mathbf{u} \in L_2 (\Omega; \mathbb{C}^n) \colon \mathbf{u} = \mathbf{c} \in \mathbb{C}^n \right\rbrace, 
\end{equation}
i.~e., $\widehat{\mathfrak{N}}$ consists of constant vector-valued functions. The orthogonal projection  
$\widehat{P}$ of the space  $L_2 (\Omega; \mathbb{C}^n)$ onto the subspace (\ref{Ker3}) is the operator of averaging over the cell:
\begin{equation}
\label{Phat_projector}
\widehat{P} \mathbf{u} = |\Omega|^{-1} \int\limits_{\Omega} \mathbf{u} (\mathbf{x}) \, d\mathbf{x}.
\end{equation}

In the case where $f = \mathbf{1}_n$, the constants (\ref{A(k)_nondegenerated_and_c_*}), (\ref{delta_fixation}), and~(\ref{t0_fixation}) take the form 
\begin{align}
\label{hatc_*}
&\widehat{c}_*  = \alpha_0 \|g^{-1} \|_{L_\infty}^{-1}, \\
\label{hatdelta_fixation}
&\widehat{\delta} =  \frac{1}{4} \alpha_0 \|g^{-1} \|_{L_\infty}^{-1} r^2_0,\\
\label{hatt0_fixation}
&\widehat{t}_0  = \frac{r_0}{2} \alpha_0^{1/2} \alpha_1^{-1/2} \left( \| g \|_{L_{\infty}} \| g^{-1} \|_{L_{\infty}} \right)^{-1/2}.
\end{align}

The inequality (\ref{X_1_estimate}) turns into 
\begin{equation}
\label{hatX_1_estmate}
\| \widehat{X}_1 (\boldsymbol{\theta}) \| \le \alpha_1^{1/2} \| g \|_{L_{\infty}}^{1/2}.
\end{equation}

\subsection{The operators $\widehat{Z}(\boldsymbol{\theta})$, $\widehat{R}(\boldsymbol{\theta})$, and $\widehat{S}(\boldsymbol{\theta})$\label{sec8.2}}
\label{germ_and_eff_g0_section}

Now the operators $\widehat{Z}(\boldsymbol{\theta})$, $\widehat{R}(\boldsymbol{\theta})$, and $\widehat{S}(\boldsymbol{\theta})$ for the family $\widehat{A}(t, \boldsymbol{\theta})$ (in abstract terms, defined in Subsection~\ref{abstr_Z_R_S_op_section}) depend on $\boldsymbol{\theta}$.
They were found in \cite[Subsection 4.1]{BSu3} and~\cite[Chapter~3,~\S1]{BSu1}.

Let $\Lambda \in \widetilde{H}^1 (\Omega)$~be a periodic ($n \times m$)-matrix-valued function satisfying the equation 
\begin{equation}
\label{equation_for_Lambda}
b(\mathbf{D})^* g(\mathbf{x}) (b(\mathbf{D}) \Lambda (\mathbf{x}) + \mathbf{1}_m) = 0, \quad \int\limits_{\Omega} \Lambda (\mathbf{x}) \, d \mathbf{x} = 0.
\end{equation}
Then the operators $\widehat{Z}(\boldsymbol{\theta}): \mathfrak{H} \to \mathfrak{H}$ and $\widehat{R}(\boldsymbol{\theta}): \wh{\mathfrak{N}} \to \mathfrak{N}_*$ are represented as 
\begin{equation}
\label{hat_Z_theta}
\widehat{Z}(\boldsymbol{\theta}) = [ \Lambda] b(\boldsymbol{\theta}) \wh{P},
\quad 
\widehat{R}(\boldsymbol{\theta}) = [ h (b(\D) \Lambda + \1_m)] b(\boldsymbol{\theta}).
\end{equation}
Here and in what follows, square brackets denote the operator of multiplication by a function.
The spectral germ $\widehat{S} (\boldsymbol{\theta})= \widehat{R}(\boldsymbol{\theta})^* \widehat{R}(\boldsymbol{\theta})$ of the family $\widehat{A}(t, \boldsymbol{\theta})$ acting in $\widehat{\mathfrak{N}}$
is given by 
$\widehat{S} (\boldsymbol{\theta}) = b(\boldsymbol{\theta})^* g^0 b(\boldsymbol{\theta})$, 
where  $g^0$~is the so called effective matrix. 
The \textit{effective matrix} $g^0$ is defined in terms of the matrix $\Lambda (\mathbf{x})$:
\begin{gather}
\label{g_tilde}
\widetilde{g} (\mathbf{x}) :=  g(\mathbf{x})( b(\mathbf{D}) \Lambda (\mathbf{x}) + \mathbf{1}_m), \\
\label{g0}
g^0 = | \Omega |^{-1} \int\limits_{\Omega} \widetilde{g} (\mathbf{x}) \, d \mathbf{x}.
\end{gather}
It turns out that the matrix $g^0$ is positive definite.

Using~(\ref{equation_for_Lambda}), it is easy to check that
\begin{align}
\label{6.11a}
\| g^{1/2} b(\mathbf{D}) \Lambda \|_{L_2(\Omega)} &\le |\Omega|^{1/2} \|g\|_{L_\infty}^{1/2},
\\
\label{Lambda_est}
\| \Lambda \|_{L_2(\Omega)} &\le | \Omega |^{1/2} M_1, \quad M_1 := (2 r_0)^{-1} \alpha_0^{-1/2} \|g\|_{L_\infty}^{1/2} \|g^{-1}\|_{L_\infty}^{1/2},
\\
\label{D_Lambda_est}
\| \mathbf{D} \Lambda \|_{L_2(\Omega)} &\le | \Omega |^{1/2} M_2, \quad M_2 := \alpha_0^{-1/2} \|g\|_{L_\infty}^{1/2} \|g^{-1}\|_{L_\infty}^{1/2}.
\end{align}

\subsection{The effective operator\label{sec8.3}}

Consider the symbol 
\begin{equation}
\label{effective_oper_symb}
\widehat{S} (\mathbf{k}) :=  t^2 \widehat{S} (\boldsymbol{\theta}) = b(\mathbf{k})^* g^0 b(\mathbf{k}), \quad \mathbf{k} \in \mathbb{R}^{d}.
\end{equation}
Note that  
\begin{equation*}
\widehat{S} (\mathbf{k})  \ge \wh{c}_* |\k|^2 \1_n,  \quad \mathbf{k} \in \mathbb{R}^{d},
\end{equation*}
which folllows from \eqref{S_nondegenerated} (with $f=\1_n$). 
Expression~(\ref{effective_oper_symb}) is the symbol of the DO 
\begin{equation}
\label{hatA0}
\widehat{\mathcal{A}}^0 = b(\mathbf{D})^* g^0 b(\mathbf{D}),
\end{equation}
acting in  $L_2(\mathbb{R}^d; \mathbb{C}^n)$ and called the \emph{effective operator} for the operator 
$\widehat{\mathcal{A}}$.

Let $\widehat{\mathcal{A}}^0 (\mathbf{k})$~be the operator family in $L_2(\Omega; \mathbb{C}^n)$ corresponding to the effective operator~(\ref{hatA0}). Then $\widehat{\mathcal{A}}^0 (\mathbf{k}) = b(\mathbf{D} + \mathbf{k})^* g^0 b(\mathbf{D} + \mathbf{k})$ with periodic boundary conditions. Together with~(\ref{Phat_projector}) and~(\ref{effective_oper_symb}) this implies that
\begin{equation}
\label{hatS_P=hatA^0_P}
\widehat{S} (\mathbf{k}) \widehat{P} = \widehat{\mathcal{A}}^0 (\mathbf{k}) \widehat{P}.
\end{equation}

\subsection{The properties of the effective matrix}

The following properties of the matrix $g^0$ were checked in~\cite[Chapter~3, Theorem~1.5]{BSu1}.

\begin{proposition}[see~\cite{BSu1}]
	The effective matrix satisfies the following estimates 
	\begin{equation}
	\label{Voigt_Reuss}
	\underline{g} \le g^0 \le \overline{g},
	\end{equation}
	where  $\overline{g} := | \Omega |^{-1} \int_{\Omega} g (\mathbf{x}) \, d \mathbf{x}$
	and $\underline{g} := ( | \Omega |^{-1} \int_{\Omega} g (\mathbf{x})^{-1} \, d \mathbf{x})^{-1}$.
	If $m = n$, then $g^0 = \underline{g}$.
\end{proposition}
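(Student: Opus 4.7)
My plan is to prove both inequalities via complementary (primal and dual) variational principles for $g^0$, and then deduce the case $m=n$ from a uniqueness argument based on the symbol of $b(\mathbf{D})^*$.

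First, testing the equation \eqref{equation_for_Lambda} against $\Lambda\boldsymbol{\xi}$ for $\boldsymbol{\xi}\in\AC^m$ gives $|\Omega|^{-1}\int_\Omega \langle g(b(\D)\Lambda\boldsymbol{\xi}+\boldsymbol{\xi}), b(\D)\Lambda\boldsymbol{\xi}\rangle\,d\x = 0$, which combined with \eqref{g_tilde}, \eqref{g0} yields the symmetric representation
\begin{equation*}
\langle g^0\boldsymbol{\xi},\boldsymbol{\xi}\rangle = |\Omega|^{-1}\!\int_\Omega \langle g(\x)(b(\D)\Lambda(\x)\boldsymbol{\xi}+\boldsymbol{\xi}),\, b(\D)\Lambda(\x)\boldsymbol{\xi}+\boldsymbol{\xi}\rangle\,d\x.
\end{equation*}
Standard convexity then shows that $\Lambda\boldsymbol{\xi}$ is the minimizer of $\Phi\mapsto |\Omega|^{-1}\!\int_\Omega\langle g(b(\D)\Phi+\boldsymbol{\xi}),b(\D)\Phi+\boldsymbol{\xi}\rangle\,d\x$ over $\Phi\in\widetilde H^1(\Omega;\AC^n)$ with zero mean. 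Choosing the trial $\Phi\equiv 0$ gives the Voigt bound $\langle g^0\boldsymbol{\xi},\boldsymbol{\xi}\rangle\le |\Omega|^{-1}\!\int_\Omega\langle g(\x)\boldsymbol{\xi},\boldsymbol{\xi}\rangle\,d\x=\langle \overline g\,\boldsymbol{\xi},\boldsymbol{\xi}\rangle$.

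For the Reuss bound I would use the dual principle. For $\boldsymbol{\eta}\in\AC^m$, put $\mathbf{D}_0(\x):=\widetilde g(\x)(g^0)^{-1}\boldsymbol{\eta}$. Then $|\Omega|^{-1}\!\int_\Omega \mathbf{D}_0=\boldsymbol{\eta}$ and $b(\D)^*\mathbf{D}_0=0$ by \eqref{equation_for_Lambda}. For any competitor $\mathbf{D}'=\mathbf{D}_0+\mathbf{W}$ with $b(\D)^*\mathbf{D}'=0$ and $|\Omega|^{-1}\!\int\mathbf{D}'=\boldsymbol{\eta}$, the function $\mathbf{W}$ has zero mean and satisfies $b(\D)^*\mathbf{W}=0$. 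The cross term $|\Omega|^{-1}\!\int_\Omega\langle g^{-1}\mathbf{D}_0,\mathbf{W}\rangle$ equals $|\Omega|^{-1}\!\int_\Omega\langle b(\D)\Lambda(g^0)^{-1}\boldsymbol{\eta}+(g^0)^{-1}\boldsymbol{\eta},\mathbf{W}\rangle$ and vanishes, the first piece by $b(\D)^*\mathbf{W}=0$ (integration by parts with periodic $\Lambda$), the second by $\int\mathbf{W}=0$. Hence
\begin{equation*}
|\Omega|^{-1}\!\int_\Omega\langle g^{-1}\mathbf{D}',\mathbf{D}'\rangle\,d\x \ge |\Omega|^{-1}\!\int_\Omega\langle g^{-1}\mathbf{D}_0,\mathbf{D}_0\rangle\,d\x=\langle (g^0)^{-1}\boldsymbol{\eta},\boldsymbol{\eta}\rangle,
\end{equation*}
where the final equality uses $g^{-1}\mathbf{D}_0=b(\D)\Lambda(g^0)^{-1}\boldsymbol{\eta}+(g^0)^{-1}\boldsymbol{\eta}$ and \eqref{g0}. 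Inserting the admissible constant trial $\mathbf{D}'\equiv\boldsymbol{\eta}$ yields $\langle (g^0)^{-1}\boldsymbol{\eta},\boldsymbol{\eta}\rangle\le \langle \underline g^{-1}\boldsymbol{\eta},\boldsymbol{\eta}\rangle$, which is equivalent to $g^0\ge\underline g$.

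For the final assertion $m=n\Rightarrow g^0=\underline g$, I would show the competitor $\mathbf{W}$ above is forced to vanish. Expand $\mathbf{W}\in L_2(\Omega;\AC^m)$ in the Fourier series \eqref{fourier} with $\widehat{\mathbf{W}}_0=0$; the equation $b(\D)^*\mathbf{W}=0$ becomes $b(\mathbf{b})^*\widehat{\mathbf{W}}_\mathbf{b}=0$ for every $0\ne\mathbf{b}\in\widetilde\Gamma$, and when $m=n$ the maximal rank condition \eqref{rank_alpha_ineq} makes $b(\mathbf{b})^*$ invertible, so $\widehat{\mathbf{W}}_\mathbf{b}=0$ and $\mathbf{W}=0$. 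Thus $\mathbf{D}_0\equiv\boldsymbol{\eta}$, which for every $\boldsymbol{\eta}$ forces $\widetilde g(\x)\equiv g^0$; then averaging the identity $g^{-1}(\x)g^0=b(\D)\Lambda(\x)+\mathbf{1}_m$ and using $\int_\Omega b(\D)\Lambda\,d\x=0$ gives $\underline g^{-1}g^0=\mathbf{1}_m$, i.e.\ $g^0=\underline g$.

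The main (minor) obstacle is the orthogonality identification of $\mathbf{D}_0$ as the dual minimizer: the cross-term cancellation hinges on cleanly integrating by parts $\int\langle b(\D)\Lambda\boldsymbol{\zeta},\mathbf{W}\rangle\,d\x=\int\langle\Lambda\boldsymbol{\zeta},b(\D)^*\mathbf{W}\rangle\,d\x$ for periodic $\Lambda\in\widetilde H^1$, which is routine but must be done inside the variational argument rather than quoted.
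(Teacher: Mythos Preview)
Your argument is correct. The paper does not prove this proposition at all; it simply quotes it from \cite[Chapter~3, Theorem~1.5]{BSu1}, so there is nothing in the present paper to compare against. Your primal/dual variational derivation of the Voigt--Reuss bounds is the standard one and goes through as written; the only small redundancy is in the $m=n$ step, where once you have shown that any zero-mean $\mathbf W$ with $b(\mathbf D)^*\mathbf W=0$ must vanish, the dual admissible set collapses to $\{\boldsymbol\eta\}$ and you immediately get $\langle (g^0)^{-1}\boldsymbol\eta,\boldsymbol\eta\rangle=\langle\underline g^{-1}\boldsymbol\eta,\boldsymbol\eta\rangle$ for all $\boldsymbol\eta$, i.e.\ $g^0=\underline g$ --- the detour through $\widetilde g\equiv g^0$ and averaging $g^{-1}g^0$ is not needed.
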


Estimates~(\ref{Voigt_Reuss}) are known in homogenization theory for particular DOs as the Voigt--Reuss bracketing. Note that estimates~(\ref{Voigt_Reuss}) imply that 
\begin{equation}
\label{g^0_est}
| g^0 | \le \|g\|_{L_\infty}, \quad | (g^0)^{-1} | \le \|g^{-1}\|_{L_\infty}.
\end{equation}

Now, we distinguish conditions under which one of the inequalities in~(\ref{Voigt_Reuss}) becomes an identity; see  \cite[Chapter~3, Propositions~1.6, 1.7]{BSu1}.

\begin{proposition}[see~\cite{BSu1}]
	The identity  $g^0 = \overline{g}$ is equivalent to the relations 
	\begin{equation}
	\label{g0=overline_g_relat}
	b(\mathbf{D})^* \mathbf{g}_k (\mathbf{x}) = 0, \quad k = 1, \ldots, m,
	\end{equation}
	where $\mathbf{g}_k (\mathbf{x}), \; k = 1, \ldots,m,$~are the columns of the matrix $g (\mathbf{x})$.
\end{proposition}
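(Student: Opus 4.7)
My strategy is to derive an exact identity relating $g^0$ and $\overline{g}$ through the corrector $\Lambda$, so that both implications become transparent.

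First I would establish the quadratic form identity
\begin{equation*}
\langle g^0 \boldsymbol{\zeta}, \boldsymbol{\zeta} \rangle_{\AC^m}
= \langle \overline{g}\,\boldsymbol{\zeta}, \boldsymbol{\zeta} \rangle_{\AC^m}
- |\Omega|^{-1} \int_\Omega \langle g(\x) b(\D)\Lambda(\x)\boldsymbol{\zeta},\, b(\D)\Lambda(\x)\boldsymbol{\zeta}\rangle_{\AC^m}\, d\x,
\qquad \boldsymbol{\zeta}\in\AC^m.
\end{equation*}
To see this, I would start from $g^0 \boldsymbol{\zeta} = |\Omega|^{-1}\int_\Omega g(b(\D)\Lambda + \mathbf{1}_m)\boldsymbol{\zeta}\, d\x$ and test the defining equation \eqref{equation_for_Lambda} (applied to $\boldsymbol{\zeta}$) against $\Lambda\boldsymbol{\zeta}\in\widetilde{H}^1(\Omega;\AC^n)$, integrating by parts with the help of $\Gamma$-periodicity. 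This yields
\begin{equation*}
\int_\Omega \langle g(b(\D)\Lambda\boldsymbol{\zeta} + \boldsymbol{\zeta}),\, b(\D)\Lambda\boldsymbol{\zeta}\rangle\, d\x = 0,
\end{equation*}
and substituting it into the expansion of $\langle g^0\boldsymbol{\zeta},\boldsymbol{\zeta}\rangle$ produces the identity above.

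Next, both implications follow from this identity. For the direction $(\Leftarrow)$: if $b(\D)^*\mathbf{g}_k = 0$ for $k=1,\dots,m$, then the right-hand side of \eqref{equation_for_Lambda} vanishes and $\Lambda \equiv 0$ solves the equation; by the standard uniqueness argument (testing the homogeneous equation against the solution, using positivity of $g$, the rank condition \eqref{rank_alpha_ineq}, Fourier expansion on $\widetilde{H}^1$ with zero average), this solution is unique, so $b(\D)\Lambda = 0$. The identity then gives $g^0 = \overline{g}$. For the direction $(\Rightarrow)$: assume $g^0 = \overline{g}$. The identity and positivity of $g$ force $b(\D)\Lambda\,\boldsymbol{\zeta} = 0$ in $L_2(\Omega;\AC^m)$ for every $\boldsymbol{\zeta}$, i.e.\ $b(\D)\Lambda \equiv 0$. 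Plugging this back into \eqref{equation_for_Lambda} gives $b(\D)^* g(\x)\mathbf{1}_m = 0$, which is precisely the system $b(\D)^* \mathbf{g}_k = 0$, $k=1,\dots,m$.

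The potential obstacle is being careful with the uniqueness of $\Lambda$ in the class $\widetilde{H}^1$ modulo constants, but this is routine: any two solutions differ by a $\Lambda'\in \widetilde{H}^1$ with zero mean satisfying $b(\D)^*g\,b(\D)\Lambda' = 0$, and the associated energy identity together with \eqref{rank_alpha_ineq} and \eqref{AP} forces $\Lambda' = 0$. Everything else reduces to the one-line energy identity derived in the first step.
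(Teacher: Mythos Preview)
Your proof is correct. The paper itself does not prove this proposition; it is quoted from \cite{BSu1} without argument. Your approach---deriving the energy identity
\[
\langle g^0\boldsymbol{\zeta},\boldsymbol{\zeta}\rangle
=\langle\overline{g}\,\boldsymbol{\zeta},\boldsymbol{\zeta}\rangle
-|\Omega|^{-1}\int_\Omega\langle g\,b(\D)\Lambda\boldsymbol{\zeta},\,b(\D)\Lambda\boldsymbol{\zeta}\rangle\,d\x
\]
by testing \eqref{equation_for_Lambda} against $\Lambda\boldsymbol{\zeta}$, then reading off both implications from positivity of $g$ and uniqueness of $\Lambda$---is exactly the standard route and is essentially how the result is established in the cited reference. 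The uniqueness step you flag is indeed routine via \eqref{rank_alpha_ineq} and \eqref{AP}.
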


\begin{proposition}[see~\cite{BSu1}]
	The identity $g^0 = \underline{g}$ is equivalent to the representations
	\begin{equation}
	\label{g0=underline_g_relat}
	\mathbf{l}_k (\mathbf{x}) \!= \!\mathbf{l}^0_k\! +\! b(\mathbf{D}) \mathbf{w}_k(\mathbf{x}), \quad \mathbf{l}^0_k\! \in \!\mathbb{C}^m, \quad \mathbf{w}_k \in \widetilde{H}^1 (\Omega; \mathbb{C}^n), \quad k \!=\! 1, \ldots,m,
	\end{equation}
	where $\mathbf{l}_k (\mathbf{x}), \; k = 1, \ldots,m,$~are the columns of the matrix $g (\mathbf{x})^{-1}$.
\end{proposition}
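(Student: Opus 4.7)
My approach is to combine the variational characterization of $g^0$ with a single application of the Cauchy--Schwarz inequality; this yields both the Voigt--Reuss bound $g^0\ge\underline{g}$ and a sharp equality criterion.

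\emph{Step 1 (key inequality).} Recall from~\eqref{equation_for_Lambda}--\eqref{g0} that the minimizer of
$$
\xi^*g^0\xi \;=\; |\Omega|^{-1}\inf_{\mathbf{v}\in\widetilde{H}^1(\Omega;\mathbb{C}^n)}\int_\Omega (b(\mathbf{D})\mathbf{v}+\xi)^*g(b(\mathbf{D})\mathbf{v}+\xi)\,d\mathbf{x}
$$
is $\mathbf{u}_\xi:=b(\mathbf{D})\Lambda\xi+\xi$, and satisfies $g\mathbf{u}_\xi=\widetilde{g}\xi$ together with $\int_\Omega\mathbf{u}_\xi\,d\mathbf{x}=|\Omega|\xi$. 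For any constant vector $\eta\in\mathbb{C}^m$, Cauchy--Schwarz applied to $g^{1/2}\mathbf{u}_\xi$ and $g^{-1/2}\eta$ in $L_2(\Omega;\mathbb{C}^m)$ gives
$$
|\Omega|^2|\eta^*\xi|^2=\Bigl|\int_\Omega\eta^*\mathbf{u}_\xi\,d\mathbf{x}\Bigr|^2 \le \int_\Omega\eta^*g^{-1}\eta\,d\mathbf{x}\cdot\int_\Omega\mathbf{u}_\xi^*g\mathbf{u}_\xi\,d\mathbf{x}=|\Omega|^2(\eta^*\underline{g}^{-1}\eta)(\xi^*g^0\xi),
$$
with equality iff $g\mathbf{u}_\xi(\mathbf{x})=c\,\eta$ pointwise for some scalar $c$, i.e., $\widetilde{g}(\mathbf{x})\xi=c\,\eta$. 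Taking $\eta=\underline{g}\xi$ recovers the Voigt--Reuss bound $\xi^*\underline{g}\xi\le\xi^*g^0\xi$ with equality iff $\widetilde{g}(\mathbf{x})\xi=c(\xi)\,\underline{g}\xi$ pointwise.

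\emph{Step 2 (necessity).} Assume $g^0=\underline{g}$. Then the inequality of Step~1 is saturated for every $\xi$ with $\eta=\underline{g}\xi$, so $\widetilde{g}(\mathbf{x})\xi=c(\xi)\,\underline{g}\xi$ for a.e.~$\mathbf{x}\in\Omega$. Averaging and using $|\Omega|^{-1}\int_\Omega\widetilde{g}\xi\,d\mathbf{x}=g^0\xi=\underline{g}\xi$ forces $c(\xi)\equiv 1$, whence $\widetilde{g}(\mathbf{x})\equiv\underline{g}$. From $\widetilde{g}=g(b(\mathbf{D})\Lambda+\mathbf{1}_m)$ we left-multiply by $g^{-1}$ and right-multiply by $\underline{g}^{-1}$ to get
$$
g^{-1}=\underline{g}^{-1}+b(\mathbf{D})\bigl(\Lambda\underline{g}^{-1}\bigr).
$$
Reading column by column yields~\eqref{g0=underline_g_relat} with $\mathbf{l}_k^0:=\underline{g}^{-1}\mathbf{e}_k$ and $\mathbf{w}_k:=\Lambda\underline{g}^{-1}\mathbf{e}_k\in\widetilde{H}^1(\Omega;\mathbb{C}^n)$.

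\emph{Step 3 (sufficiency).} Conversely, assume~\eqref{g0=underline_g_relat}: $g^{-1}=L^0+b(\mathbf{D})W$, where $L^0$ is the constant matrix with columns $\mathbf{l}_k^0$ and $W$ has columns $\mathbf{w}_k$. Averaging identifies $L^0=\overline{g^{-1}}=\underline{g}^{-1}$. Define
$$
\Lambda_0:=W\underline{g}-|\Omega|^{-1}\int_\Omega W\underline{g}\,d\mathbf{x},
$$
which is $\Gamma$-periodic with zero mean. Using $b(\mathbf{D})W=g^{-1}-\underline{g}^{-1}$, a direct computation gives $b(\mathbf{D})\Lambda_0+\mathbf{1}_m=g^{-1}\underline{g}$, and hence
$$
g\bigl(b(\mathbf{D})\Lambda_0+\mathbf{1}_m\bigr)=g\cdot g^{-1}\underline{g}=\underline{g},
$$
which is constant. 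In particular $b(\mathbf{D})^*g(b(\mathbf{D})\Lambda_0+\mathbf{1}_m)=0$, so by uniqueness of the solution to~\eqref{equation_for_Lambda} we have $\Lambda_0=\Lambda$. Therefore $\widetilde{g}\equiv\underline{g}$ and $g^0=|\Omega|^{-1}\int_\Omega\widetilde{g}\,d\mathbf{x}=\underline{g}$.

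\emph{Main obstacle.} The delicate point is the equality analysis in Step~1: the scalar $c$ from Cauchy--Schwarz depends on $\xi$, and one must exploit the averaging identity $|\Omega|^{-1}\int_\Omega\widetilde{g}\,d\mathbf{x}=g^0=\underline{g}$ to pin down $c(\xi)\equiv 1$ and upgrade the pointwise scalar identity (for every $\xi$) to the matrix identity $\widetilde{g}\equiv\underline{g}$. Everything else reduces to direct linear algebra and the well-posedness of the cell problem~\eqref{equation_for_Lambda}.
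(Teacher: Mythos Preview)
The paper does not supply its own proof of this proposition; it is quoted from \cite{BSu1} (specifically \cite[Chapter~3, Proposition~1.7]{BSu1}) without argument. There is therefore no ``paper's proof'' to compare against here.

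Your argument is correct and self-contained. The use of Cauchy--Schwarz on $g^{1/2}\mathbf{u}_\xi$ and $g^{-1/2}\eta$ with $\eta=\underline{g}\,\xi$ simultaneously recovers the Voigt--Reuss lower bound and yields the sharp equality criterion $\widetilde{g}(\mathbf{x})\xi=c(\xi)\,\underline{g}\,\xi$; averaging and using $g^0=\underline{g}$ to force $c(\xi)\equiv 1$ is exactly the right move to upgrade this to $\widetilde{g}\equiv\underline{g}$. The converse (Step~3) is a clean direct verification: once you rewrite the hypothesis as $g^{-1}=\underline{g}^{-1}+b(\mathbf{D})W$, the candidate $\Lambda_0=W\underline{g}-\overline{W\underline{g}}$ solves the cell problem with $\widetilde{g}\equiv\underline{g}$, and uniqueness of \eqref{equation_for_Lambda} gives $\Lambda_0=\Lambda$. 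One minor cosmetic point: in Step~1 you could note that $\int_\Omega \mathbf{u}_\xi^*g\,\mathbf{u}_\xi\,d\mathbf{x}=|\Omega|\,\xi^*g^0\xi$ follows from the Euler--Lagrange orthogonality $\int_\Omega (b(\mathbf{D})\Lambda\xi)^*g\,\mathbf{u}_\xi\,d\mathbf{x}=0$, rather than from the infimum formulation alone; but this is implicit in your reference to \eqref{equation_for_Lambda}--\eqref{g0}.
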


\subsection{Analytic branches of the eigenvalues and eigenvectors}

The analytic (in $t$) branches of the eigenvalues $\widehat{\lambda}_l (t, \boldsymbol{\theta})$ and the analytic branches of the eigenvectors $\widehat{\varphi}_l (t, \boldsymbol{\theta})$ of the operator $\widehat{A} (t, \boldsymbol{\theta})$ admit the power series expansions of the form~(\ref{abstr_A(t)_eigenvalues_series}),~(\ref{abstr_A(t)_eigenvectors_series}) with coefficients depending on $\boldsymbol{\theta}$ (we do not control the interval of convergence $t = |\mathbf{k}| \le t_* (\boldsymbol{\theta})$):
\begin{align}
\label{hatA_eigenvalues_series}
\widehat{\lambda}_l (t, \boldsymbol{\theta}) &= \widehat{\gamma}_l (\boldsymbol{\theta}) t^2 + 
\widehat{\mu}_l (\boldsymbol{\theta}) t^3 + \widehat{\nu}_l (\boldsymbol{\theta}) t^4+ \ldots, & &l = 1, \ldots, n, \\
\label{hatA_eigenvectors_series}
\widehat{\varphi}_l (t, \boldsymbol{\theta}) &= \widehat{\omega}_l (\boldsymbol{\theta}) + t \widehat{\psi}^{(1)}_l (\boldsymbol{\theta}) + \ldots, & &l = 1, \ldots, n.
\end{align}
According to~(\ref{abstr_S_eigenvectors}), the numbers $\widehat{\gamma}_l (\boldsymbol{\theta})$ and the elements $\widehat{\omega}_l (\boldsymbol{\theta})$ are eigenvalues and eigenvectors of the germ:
\begin{equation*}
b(\boldsymbol{\theta})^* g^0 b(\boldsymbol{\theta}) \widehat{\omega}_l (\boldsymbol{\theta}) = \widehat{\gamma}_l (\boldsymbol{\theta}) \widehat{\omega}_l (\boldsymbol{\theta}), \quad l = 1, \ldots, n.
\end{equation*}

\subsection{The operator $\widehat{N} (\boldsymbol{\theta})$}

 As was shown in~\cite[\S4]{BSu3}, the operator $N$ (see~\eqref{abstr_N}) for the family $\widehat{A} (t, \boldsymbol{\theta})$  takes the form 
\begin{equation}
\label{N(theta)}
\widehat{N} (\boldsymbol{\theta}) = b(\boldsymbol{\theta})^* L(\boldsymbol{\theta}) b(\boldsymbol{\theta}) \widehat{P},
\end{equation}
where $L (\boldsymbol{\theta})$~is the ($m \times m$)-matrix-valued function given by 
\begin{equation}
\label{L(theta)}
L (\boldsymbol{\theta}) = | \Omega |^{-1} \int\limits_{\Omega} (\Lambda (\mathbf{x})^* b(\boldsymbol{\theta})^* \widetilde{g}(\mathbf{x}) + \widetilde{g}(\mathbf{x})^* b(\boldsymbol{\theta}) \Lambda (\mathbf{x}) ) \, d \mathbf{x}.
\end{equation}
Here $\Lambda (\mathbf{x})$~is the $\Gamma$-periodic solution of problem~(\ref{equation_for_Lambda}) and $\widetilde{g}(\mathbf{x})$~is the matrix-valued function~(\ref{g_tilde}).

In~\cite[\S4]{BSu3}, some conditions ensuring that~\hbox{$\widehat{N} (\boldsymbol{\theta}) \equiv 0$} are given.

\begin{proposition}[see~\cite{BSu3}]
	\label{N=0_proposit}
	Suppose that at least one of the following assumptions is satisfied\emph{:}
	
\noindent		$1^\circ$. The operator $\widehat{\mathcal{A}}$ is given by  $\widehat{\mathcal{A}} = \mathbf{D}^* g(\mathbf{x}) \mathbf{D}$, where $g(\mathbf{x})$~is a symmetric matrix with real entries.

		\noindent $2^\circ$. Relations~\emph{(\ref{g0=overline_g_relat})} are satisfied, i.~e., $g^0 = \overline{g}$.
		
\noindent		$3^\circ$.  Relations~\emph{(\ref{g0=underline_g_relat})} are satisfied, i.~e., 
 $g^0 = \underline{g}$. 

\noindent	Then $\widehat{N} (\boldsymbol{\theta}) = 0$ for all $\boldsymbol{\theta} \in \mathbb{S}^{d-1}$.
\end{proposition}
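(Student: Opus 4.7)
\smallskip

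\noindent\textbf{Proof plan.} In each of the three cases the goal is to show $\widehat{N}(\boldsymbol{\theta}) = b(\boldsymbol{\theta})^{*} L(\boldsymbol{\theta}) b(\boldsymbol{\theta})\widehat{P} \equiv 0$, using the explicit expression \eqref{L(theta)} for $L(\boldsymbol{\theta})$ together with the defining equation \eqref{equation_for_Lambda} for $\Lambda$ and the formula \eqref{g_tilde} for $\widetilde g$. The plan is to treat the three conditions separately: in cases $2^{\circ}$ and $3^{\circ}$ the vanishing will follow from the fact that $\Lambda$ itself, respectively $\widetilde g$, acquires a degenerate form; case $1^{\circ}$ is more delicate and requires a computation exploiting the special structure of the acoustic operator.

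\smallskip

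\noindent\textbf{Cases $2^{\circ}$ and $3^{\circ}$.} For case $2^{\circ}$, I would first note that under \eqref{g0=overline_g_relat} the trivial matrix $\Lambda \equiv 0$ is a periodic solution of \eqref{equation_for_Lambda} (the right-hand side of the equation $b(\mathbf{D})^{*}g\, b(\mathbf{D})\Lambda = -b(\mathbf{D})^{*}g$ vanishes). Uniqueness of the solution with zero mean in $\widetilde H^{1}(\Omega)$ forces $\Lambda \equiv 0$, whence $L(\boldsymbol{\theta}) \equiv 0$ directly from \eqref{L(theta)}. For case $3^{\circ}$, the representations \eqref{g0=underline_g_relat} let me write $g^{-1} = (g^{0})^{-1} + b(\mathbf{D})W$ for a suitable periodic matrix $W$ with columns $\mathbf{w}_{k}$ (averaging yields $(g^{0})^{-1} = \underline{g}^{-1}$). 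Then I would verify that $\Lambda := Wg^{0} - \langle Wg^{0}\rangle_{\Omega}$ solves \eqref{equation_for_Lambda} and produces $\widetilde g = g\bigl(b(\mathbf{D})\Lambda + \mathbf{1}_{m}\bigr) = g \cdot g^{-1}g^{0} = g^{0}$, which is constant. Therefore $\int_{\Omega}\Lambda^{*}b(\boldsymbol{\theta})^{*}\widetilde g\,d\mathbf{x} = \bigl(\int_{\Omega}\Lambda\,d\mathbf{x}\bigr)^{*}b(\boldsymbol{\theta})^{*}g^{0} = 0$, and similarly for the conjugate term, so $L(\boldsymbol{\theta}) \equiv 0$.

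\smallskip

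\noindent\textbf{Case $1^{\circ}$.} Here $n = 1$, $m = d$, $b(\mathbf{D}) = \mathbf{D} = -i\nabla$, and $g$ is real symmetric. The equation \eqref{equation_for_Lambda} for the row-vector $\Lambda$ becomes $-\nabla\cdot(g\nabla\Lambda) = i\,\nabla\cdot g$. I would factor out the imaginary unit by setting $\Lambda = i\Lambda'$, reducing to $-\nabla\cdot(g\nabla\Lambda') = \nabla\cdot g$, a purely real elliptic system with a unique real-valued solution $\Lambda' \in \widetilde H^{1}(\Omega;\mathbb{R}^{d})$ of zero mean. Consequently $\widetilde g = g(\nabla\Lambda' + \mathbf{1}_{d})$ is also real-valued. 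Plugging $\Lambda^{*} = -i(\Lambda')^{\top}$ into \eqref{L(theta)} and sandwiching by $\boldsymbol{\theta}$, the two contributions become
\[
-i\,(\Lambda'\boldsymbol{\theta})\bigl(\boldsymbol{\theta}^{\top}\widetilde g\,\boldsymbol{\theta}\bigr) \quad\text{and}\quad +i\,\bigl(\boldsymbol{\theta}^{\top}\widetilde g^{\top}\boldsymbol{\theta}\bigr)(\Lambda'\boldsymbol{\theta}),
\]
which coincide in absolute value (the scalars $\boldsymbol{\theta}^{\top}\widetilde g\,\boldsymbol{\theta}$ and $\boldsymbol{\theta}^{\top}\widetilde g^{\top}\boldsymbol{\theta}$ are equal) and cancel. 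Hence $b(\boldsymbol{\theta})^{*}L(\boldsymbol{\theta})b(\boldsymbol{\theta}) = 0$, i.e., $\widehat N(\boldsymbol{\theta}) \equiv 0$.

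\smallskip

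\noindent\textbf{Main obstacle.} The only genuinely nontrivial step is case $1^{\circ}$: the cancellation rests on the precise phase structure of $\Lambda$ forced by the combination of $b(\mathbf{D}) = -i\nabla$ with real symmetric $g$. A more conceptual reading is time-reversal symmetry: complex conjugation $K$ satisfies $\widehat{\mathcal{A}}(-\mathbf{k}) = K\widehat{\mathcal{A}}(\mathbf{k})K^{-1}$, so $\widehat\lambda_{l}(-\mathbf{k}) = \widehat\lambda_{l}(\mathbf{k})$ and the odd-power coefficients in \eqref{hatA_eigenvalues_series} vanish; this accounts for $\widehat N_{0}(\boldsymbol{\theta}) = 0$ (via \eqref{abstr_N_0_matrix_elem}), but the stronger statement $\widehat N(\boldsymbol{\theta}) = 0$ (which also rules out the off-diagonal block $\widehat N_{*}$) seems to require the direct calculation above, and it is here that the argument is most sensitive to the specific form of the operator.
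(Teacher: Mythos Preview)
Your argument is correct in all three cases. The paper itself does not give a proof of this proposition; it simply cites \cite[\S4]{BSu3}, so there is no in-paper proof to compare against. Your treatment of cases $2^\circ$ and $3^\circ$ via the explicit form of $\Lambda$ and $\widetilde g$ is the standard one, and your computation in case $1^\circ$ is clean and correct.

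One small remark on your closing paragraph: in case $1^\circ$ we have $n=1$, so the germ $\widehat S(\boldsymbol{\theta})$ has a single eigenvalue and the decomposition $\widehat N = \widehat N_0 + \widehat N_*$ from Remark~\ref{abstr_N_remark} is trivial (there is only one block, hence $\widehat N_* \equiv 0$ automatically). Thus the time-reversal symmetry you describe, which forces $\widehat\mu(\boldsymbol{\theta}) = \widehat\mu(-\boldsymbol{\theta}) = -\widehat\mu(\boldsymbol{\theta})$ and hence $\widehat N_0(\boldsymbol{\theta}) = 0$, would already suffice for $\widehat N(\boldsymbol{\theta}) = 0$ here. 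Your direct calculation is of course also valid and perhaps more transparent.
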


On the other hand, in~\cite[Subsections~10.4,~13.2,~14.6]{BSu3} there are examples of the operators  
$\widehat{\mathcal{A}}$ for which the operator $\widehat{N} (\boldsymbol{\theta})$ is not equal to zero. 
See also~\cite[Example~8.7]{Su4}, \cite[Subsection~14.3]{DSu}. 
Recall (see~Remark~\ref{abstr_N_remark}) that $\widehat{N} (\boldsymbol{\theta}) = \widehat{N}_0 (\boldsymbol{\theta}) + \widehat{N}_* (\boldsymbol{\theta})$, where the operator $\widehat{N}_0 (\boldsymbol{\theta})$ is diagonal in the basis  $\{ \widehat{\omega}_l (\boldsymbol{\theta})\}_{l=1}^n$ and the operator $\widehat{N}_* (\boldsymbol{\theta})$ has zero diagonal entries. We have 
\begin{equation*}
(\widehat{N} (\boldsymbol{\theta}) \widehat{\omega}_l (\boldsymbol{\theta}), \widehat{\omega}_l (\boldsymbol{\theta}))_{L_2 (\Omega)} = (\widehat{N}_0 (\boldsymbol{\theta}) \widehat{\omega}_l (\boldsymbol{\theta}), \widehat{\omega}_l (\boldsymbol{\theta}))_{L_2 (\Omega)} = \widehat{\mu}_l (\boldsymbol{\theta}), \quad l=1, \ldots, n.
\end{equation*}

The following statement was proved  in~\cite[Subsection~4.3]{BSu3}. 

\begin{proposition}
	\label{N_0=0_proposit}
	Suppose that the matrices $b(\boldsymbol{\theta})$ and $g (\mathbf{x})$~have real entries. Suppose that 
	the vectors $\widehat{\omega}_l (\boldsymbol{\theta}),$ $l = 1, \ldots, n,$  in expansions~\emph{(\ref{hatA_eigenvectors_series})} can be chosen real. Then 
	$\widehat{\mu}_l (\boldsymbol{\theta}) = 0$,  $l=1, \ldots, n,$ i.~e., $\widehat{N}_0 (\boldsymbol{\theta}) = 0$.
\end{proposition}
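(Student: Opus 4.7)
The plan is to exploit the fact that $\widehat{\mu}_l(\boldsymbol{\theta}) = (\widehat{N}(\boldsymbol{\theta})\widehat{\omega}_l(\boldsymbol{\theta}), \widehat{\omega}_l(\boldsymbol{\theta}))_{L_2(\Omega)}$ together with the explicit formulas \eqref{N(theta)} and \eqref{L(theta)}. Since $\widehat{P}\widehat{\omega}_l = \widehat{\omega}_l$ and the $\widehat{\omega}_l$ are identified with constant vectors $v_l \in \mathbb{R}^n$ (by assumption), matters reduce to showing that $\langle L(\boldsymbol{\theta}) w_l, w_l\rangle_{\mathbb{C}^m} = 0$ for the real vectors $w_l := b(\boldsymbol{\theta}) v_l \in \mathbb{R}^m$.

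The key observation is that, under the reality hypothesis on $b(\boldsymbol{\theta})$ and $g(\mathbf{x})$, the periodic solution $\Lambda(\mathbf{x})$ of \eqref{equation_for_Lambda} can be chosen purely imaginary. Indeed, $b(\mathbf{D}) = -i\sum_l b_l \partial_l$ has purely imaginary coefficients when the matrices $b_l$ are real. Substituting the ansatz $\Lambda = i\Lambda'$ with $\Lambda'$ real into \eqref{equation_for_Lambda} gives
\begin{equation*}
\Bigl(\sum_l b_l^T \partial_l\Bigr) g(\mathbf{x})\Bigl(\sum_l b_l \partial_l \Lambda'(\mathbf{x}) + \mathbf{1}_m\Bigr) = 0,
\end{equation*}
which is a real elliptic problem for $\Lambda'$ with zero mean; its solvability and reality of the solution are standard, so this choice of $\Lambda$ is legitimate. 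Then $\widetilde{g}(\mathbf{x}) = g(\mathbf{x})(b(\mathbf{D})\Lambda + \mathbf{1}_m) = g(\mathbf{x})(\sum_l b_l \partial_l \Lambda' + \mathbf{1}_m)$ is real.

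Now plug $\Lambda = i\Lambda'$ and the reality of $\widetilde{g}$, $b(\boldsymbol{\theta})$ into \eqref{L(theta)}. The integrand becomes
\begin{equation*}
\Lambda^* b(\boldsymbol{\theta})^* \widetilde{g} + \widetilde{g}^* b(\boldsymbol{\theta}) \Lambda
= -i(\Lambda')^T b(\boldsymbol{\theta})^T \widetilde{g} + i\widetilde{g}^T b(\boldsymbol{\theta}) \Lambda' = i(A - A^T),
\end{equation*}
where $A(\mathbf{x}) := \widetilde{g}(\mathbf{x})^T b(\boldsymbol{\theta}) \Lambda'(\mathbf{x})$ is a real $m\times m$ matrix. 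Hence $L(\boldsymbol{\theta}) = iM$ with $M = B - B^T$ real antisymmetric, where $B := |\Omega|^{-1}\int_\Omega A(\mathbf{x})\,d\mathbf{x}$.

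Finally, for any real vector $w \in \mathbb{R}^m$,
\begin{equation*}
\langle L(\boldsymbol{\theta}) w, w\rangle_{\mathbb{C}^m} = i\,w^T M w = 0
\end{equation*}
by antisymmetry of $M$. Applying this to $w = w_l = b(\boldsymbol{\theta}) v_l$ yields $\widehat{\mu}_l(\boldsymbol{\theta}) = 0$ for all $l$, hence $\widehat{N}_0(\boldsymbol{\theta}) = 0$ by \eqref{abstr_N_0_matrix_elem}. The only nontrivial step is justifying that $\Lambda$ admits a purely imaginary choice; everything else is algebraic. I expect no real obstacle here beyond noting that the zero-mean normalization in \eqref{equation_for_Lambda} picks out a unique solution, so the purely imaginary $\Lambda$ is \emph{the} solution, not merely \emph{a} solution.
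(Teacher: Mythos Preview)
Your argument is correct. The paper itself does not prove this proposition here but merely cites \cite[Subsection~4.3]{BSu3}; the proof given there is exactly the one you reproduce: write $\Lambda = i\Lambda'$ with $\Lambda'$ real (which is forced by uniqueness, since complex conjugation of the cell problem sends $\Lambda \mapsto -\overline{\Lambda}$), observe that $\widetilde g$ is then real, and conclude that $L(\boldsymbol{\theta})$ is $i$ times a real antisymmetric matrix, so its quadratic form vanishes on real vectors.
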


In the \textquotedblleft real \textquotedblright \ case under onsideration,  the germ $\widehat{S} (\boldsymbol{\theta})$ is a symmetric matrix with real entries. Clearly, in the case of the simple eigenvalue $\widehat{\gamma}_j (\boldsymbol{\theta})$ of the germ, the embryo  $\widehat{\omega}_j (\boldsymbol{\theta})$ is determined uniquely up to a phase factor, and it  can always be chosen real. We arrive at the following corollary.

\begin{corollary}
	\label{S_spec_simple_coroll}
	Suppose that the matrices $b(\boldsymbol{\theta})$ and $g (\mathbf{x})$~have real entries. Suppose that the spectrum of the germ $\widehat{S} (\boldsymbol{\theta})$ is simple. Then 
	$\widehat{N}_0 (\boldsymbol{\theta}) = 0$.
\end{corollary}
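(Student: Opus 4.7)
The plan is to reduce the corollary directly to Proposition~\ref{N_0=0_proposit}: it suffices to exhibit an orthonormal basis of embryos $\widehat{\omega}_l(\boldsymbol{\theta})$, $l=1,\dots,n$, consisting of real vectors, and the conclusion $\widehat{N}_0(\boldsymbol{\theta})=0$ will then follow from that proposition together with Remark~\ref{abstr_N_remark}.

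The key intermediate observation is that under the present hypotheses the germ $\widehat{S}(\boldsymbol{\theta}) = b(\boldsymbol{\theta})^* g^0 b(\boldsymbol{\theta})$ is a \emph{real} symmetric $(n\times n)$-matrix. By assumption $b(\boldsymbol{\theta})$ has real entries, so it remains to check that the effective matrix $g^0$ defined by~\eqref{g_tilde}--\eqref{g0} has real entries as well. I would verify this by a complex-conjugation argument applied to the defining problem~\eqref{equation_for_Lambda}: since $D_l = -i\partial_l$ and the matrices $b_l$, $g(\mathbf{x})$ are real, one has $\overline{b(\mathbf{D})} = -b(\mathbf{D})$, and hence $-\overline{\Lambda}$ satisfies the same periodic problem as $\Lambda$ with the same normalization $\int_\Omega (-\overline{\Lambda})\,d\mathbf{x}=0$. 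Uniqueness of the zero-mean solution of~\eqref{equation_for_Lambda} forces $\Lambda = -\overline{\Lambda}$, so $\Lambda$ is purely imaginary; consequently $b(\mathbf{D})\Lambda$ is a real-valued matrix, and then~\eqref{g_tilde}--\eqref{g0} produce a real $g^0$.

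Once realness of $\widehat{S}(\boldsymbol{\theta})$ is in hand, the remainder of the proof is a one-line linear-algebra remark already made in the paragraph preceding the corollary: if the spectrum of a real symmetric $(n \times n)$-matrix is simple, then each eigenspace is one-dimensional, and—being the kernel of a real symmetric matrix $\widehat{S}(\boldsymbol{\theta}) - \widehat{\gamma}_l(\boldsymbol{\theta})\mathbf{1}_n$—contains a real vector, uniquely determined up to sign after $L_2(\Omega)$-normalization. Choosing all the embryos $\widehat{\omega}_l(\boldsymbol{\theta})$, $l=1,\dots,n$, in this real form and invoking Proposition~\ref{N_0=0_proposit} yields $\widehat{\mu}_l(\boldsymbol{\theta}) = 0$ for every $l$, which is exactly $\widehat{N}_0(\boldsymbol{\theta}) = 0$.

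The only step that is not essentially automatic is the verification that $g^0$ is real-valued; this is the main (and really the only) substantive ingredient, and I would dispatch it by the conjugation argument sketched above, or alternatively cite it as a standard fact of periodic homogenization theory for operators with real coefficients.
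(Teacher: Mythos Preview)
Your proposal is correct and follows essentially the same route as the paper: the paragraph immediately preceding the corollary asserts that in the ``real'' case the germ $\widehat{S}(\boldsymbol{\theta})$ is a real symmetric matrix, so that simple eigenvalues admit real eigenvectors, and then Proposition~\ref{N_0=0_proposit} finishes the job. You supply in addition the explicit conjugation argument showing $g^0$ is real (via $\Lambda = -\overline{\Lambda}$), which the paper leaves implicit; this is a welcome elaboration but not a different approach.
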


\subsection{The operators $\widehat{Z}_2 (\boldsymbol{\theta})$, $\widehat{R}_2 (\boldsymbol{\theta})$, and
$\widehat{N}_1^0 (\boldsymbol{\theta})$}

We describe the operators $Z_2$, $R_2$, and $N_1^0$ (in abstract terms they were defined in Subsections \ref{abstr_Z2_R2_section} and \ref{sec_nu_l}) for the family $\wh{A}(t,\boldsymbol{\theta})$. Let $\Lambda_l^{(2)}(\x)$
be the $\Gamma$-periodic solution of the problem 
$$
b(\D)^* g(\x) \bigl(b(\D) \Lambda_l^{(2)}(\x) + b_l \Lambda(\x)\bigr)
= b_l^* \bigl( g^0 - \widetilde{g}(\x) \bigr),
\quad \int\limits_\Omega \Lambda_l^{(2)}(\x) \,d\x =0. 
$$
We put  
$
\Lambda^{(2)}(\x; \boldsymbol{\theta}) := \sum_{l=1}^d \Lambda_l^{(2)}(\x) \theta_l.
$
As was checked  in \cite[Subsection 6.3]{VSu2}, 
$$
\widehat{Z}_2 (\boldsymbol{\theta}) = \Lambda^{(2)}(\x; \boldsymbol{\theta}) b(\boldsymbol{\theta}) \widehat{P},
\quad 
\widehat{R}_2 (\boldsymbol{\theta}) = h(\x) \bigl( b(\D)\Lambda^{(2)}(\x; \boldsymbol{\theta}) 
+ b(\boldsymbol{\theta}) \Lambda(\x) \bigr) b(\boldsymbol{\theta}).
$$
Finally, in \cite[Subsection 6.4]{VSu2} it was shown that
\begin{align}
\label{N_10_theta}
\widehat{N}_1^0 (\boldsymbol{\theta}) = b(\boldsymbol{\theta})^* L_2(\boldsymbol{\theta}) b(\boldsymbol{\theta})
\widehat{P},
\end{align}
\begin{equation}
\label{L2_theta}
\begin{split}
&L_2(\boldsymbol{\theta})\! = \!|\Omega|^{-1}\!\! \int\limits_\Omega\!\! ( \Lambda^{(2)}(\x; \boldsymbol{\theta})^* b(\boldsymbol{\theta})^*
\widetilde{g}(\x)
\!+ \!\widetilde{g}(\x)^* b(\boldsymbol{\theta}) \Lambda^{(2)}(\x; \boldsymbol{\theta})) d\x
\\
&\!\!+\!  |\Omega|^{-\!1}\!\!\! \int\limits_\Omega \!\!( b(\D) \Lambda^{(2)}(\x; \boldsymbol{\theta})\! +\! b(\boldsymbol{\theta}) \Lambda(\x))^*\!\!
{g}(\x) \!( b(\D) \Lambda^{(2)}(\x; \boldsymbol{\theta})\! + \!b(\boldsymbol{\theta}) \Lambda(\x))
 d\x.
\end{split}
\end{equation}

\subsection{Multiplicities of the eigenvalues of the germ}
\label{eigenval_multipl_section}
In this subsection, we assume that  \hbox{$n \ge 2$}. We pass to the notation adopted in~Subsection~\ref{abstr_cluster_section}.
In general, the number $p(\boldsymbol{\theta})$ of the different eigenvalues  $\widehat{\gamma}^{\circ}_1 (\boldsymbol{\theta}), \ldots, \widehat{\gamma}^{\circ}_{p(\boldsymbol{\theta})} (\boldsymbol{\theta})$ of the spectral germ $\widehat{S}(\boldsymbol{\theta})$ and their multiplicities $k_1 (\boldsymbol{\theta}), \ldots, k_{p(\boldsymbol{\theta})} (\boldsymbol{\theta})$ depend on the parameter $\boldsymbol{\theta} \in \mathbb{S}^{d-1}$. For each fixed $\boldsymbol{\theta}$, let $\widehat{P}_j (\boldsymbol{\theta})$ be the orthogonal projection of $L_2 (\Omega; \mathbb{C}^n)$ onto the eigenspace  
$\wh{\mathfrak N}_j(\boldsymbol{\theta})$ of the germ $\widehat{S}(\boldsymbol{\theta})$ corresponding to the eigenvalue $\widehat{\gamma}_j^{\circ} (\boldsymbol{\theta})$. We have the following invariant representations for the operators  $\widehat{N}_0 (\boldsymbol{\theta})$  and $\widehat{N}_* (\boldsymbol{\theta})$:
\begin{gather}
\label{N0_invar_repr}
\widehat{N}_0 (\boldsymbol{\theta}) = \sum_{j=1}^{p(\boldsymbol{\theta})} \widehat{P}_j (\boldsymbol{\theta}) \widehat{N} (\boldsymbol{\theta}) \widehat{P}_j (\boldsymbol{\theta}), \\
\label{N*_invar_repr}
\widehat{N}_* (\boldsymbol{\theta}) = \sum_{\substack{1 \le j, l \le p(\boldsymbol{\theta}):\, j \ne l}} \widehat{P}_j (\boldsymbol{\theta}) \widehat{N} (\boldsymbol{\theta}) \widehat{P}_l (\boldsymbol{\theta}).
\end{gather}

\subsection{The coefficients $\wh{\nu}_l(\boldsymbol{\theta})$}
Applying Proposition~\ref{Prop_nu_1}, we arrive at the following statement.

\begin{proposition}\label{Prop_nu1_theta}
Let $\wh{N}_0(\boldsymbol{\theta})=0$. 
Suppose that $\wh{\gamma}_1^\circ(\boldsymbol{\theta}), \dots, \wh{\gamma}_{p(\boldsymbol{\theta})}^\circ(\boldsymbol{\theta})$ are the different eigenvalues of the operator 
$\wh{S}(\boldsymbol{\theta})$ and  $k_1(\boldsymbol{\theta}), \dots, k_{p(\boldsymbol{\theta})}(\boldsymbol{\theta})$ are their multiplicities. Let $\wh{P}_q(\boldsymbol{\theta})$ be the orthogonal projection of the space $L_2(\Omega;\AC^n)$ onto the subspace  
$\wh{\mathfrak{N}}_q (\boldsymbol{\theta})  = \operatorname{Ker} (\wh{S}(\boldsymbol{\theta}) 
- \wh{\gamma}_q^\circ(\boldsymbol{\theta}) I_{\wh{\mathfrak{N}}}),$ $q=1,\dots,p(\boldsymbol{\theta})$.
Let  $\wh{Z}(\boldsymbol{\theta})$ and
$\wh{N}_1^0(\boldsymbol{\theta})$ be the operators defined by \eqref{hat_Z_theta} and \eqref{N_10_theta}\textup,  \eqref{L2_theta}\textup, respectively.
We introduce the operators $\wh{\mathcal{N}}^{(q)}(\boldsymbol{\theta}),$ $q=1,\dots,p(\boldsymbol{\theta})$\emph{:}  the operator $\wh{\mathcal{N}}^{(q)}(\boldsymbol{\theta})$  acts in  
$\wh{\mathfrak{N}}_q(\boldsymbol{\theta})$ and is given by the expression   
\begin{equation}
\label{8.32aa}
\begin{split}
&\wh{\mathcal{N}}^{(q)}( \boldsymbol{\theta})
 \\
&:= \wh{P}_q(\boldsymbol{\theta}) 
\Big( \wh{N}_1^0(\boldsymbol{\theta}) - \frac{1}{2} \wh{Z}(\boldsymbol{\theta})^* \wh{Z}(\boldsymbol{\theta}) 
\wh{S}(\boldsymbol{\theta}) \wh{P} - \frac{1}{2} \wh{S}(\boldsymbol{\theta}) \wh{P} 
\wh{Z}(\boldsymbol{\theta})^* \wh{Z}(\boldsymbol{\theta})  \Big) \Big\vert_{\wh{\mathfrak{N}}_q(\boldsymbol{\theta})}
\\
&\qquad+ \sum_{j=1,\dots,p(\boldsymbol{\theta}): j\ne q} (\wh{\gamma}_q^\circ(\boldsymbol{\theta}) -
 \wh{\gamma}_j^\circ(\boldsymbol{\theta}))^{-1} 
\wh{P}_q(\boldsymbol{\theta}) \wh{N}(\boldsymbol{\theta}) \wh{P}_j(\boldsymbol{\theta}) 
\wh{N}(\boldsymbol{\theta}) \Big \vert_{\wh{\mathfrak{N}}_q(\boldsymbol{\theta})}.
\end{split}
\end{equation}
Denote $i(q,\boldsymbol{\theta})= k_1(\boldsymbol{\theta}) + \dots + k_{q-1}(\boldsymbol{\theta}) +1$.
Let $\wh{\nu}_l(\boldsymbol{\theta})$ be the coefficients of  $t^4$ in expansions 
\eqref{hatA_eigenvalues_series}\textup, and let  $\wh{\omega}_l(\boldsymbol{\theta})$ be the embryos from
\eqref{hatA_eigenvectors_series}\textup, $l=1,\dots,n$.
Then  
$$
\wh{\mathcal{N}}^{(q)}(\boldsymbol{\theta}) \wh{\omega}_l(\boldsymbol{\theta}) 
= \wh{\nu}_l(\boldsymbol{\theta}) \wh{\omega}_l(\boldsymbol{\theta}), \quad 
l=i(q, \boldsymbol{\theta}), i(q,\boldsymbol{\theta})+1,\dots, i(q,\boldsymbol{\theta}) + k_q(\boldsymbol{\theta}) -1.
$$
\end{proposition}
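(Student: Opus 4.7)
The statement is a direct translation of the abstract Proposition~\ref{Prop_nu_1} into the language of DOs, applied to the operator family $\wh{A}(t,\boldsymbol{\theta})$ with $\boldsymbol{\theta}\in\mathbb{S}^{d-1}$ treated as a fixed parameter. The plan is simply to check that every abstract object entering Proposition~\ref{Prop_nu_1} admits an explicit DO realization already recorded in \S7--\S8, and that under the substitution the abstract operator $\mathcal N^{(q)}$ becomes verbatim the operator $\wh{\mathcal N}^{(q)}(\boldsymbol{\theta})$ defined in~\eqref{8.32aa}.

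First I would recall from Subsection~7.6 that, with $\boldsymbol{\theta}$ fixed, the family $\wh{A}(t,\boldsymbol{\theta})=\wh{\A}(t\boldsymbol{\theta})$ fulfills all the hypotheses of the abstract scheme of Chapter~1: $\lambda_0=0$ is an isolated eigenvalue of $\wh{A}(0)$ of multiplicity $n$, the kernel $\wh{\mathfrak N}$ consists of constants (see~\eqref{Ker3}), and Condition~\ref{cond_A} holds uniformly in $\boldsymbol{\theta}$ thanks to~\eqref{A(k)_nondegenerated_and_c_*} with $f=\1_n$. Consequently the coefficients $\wh{\gamma}_l(\boldsymbol{\theta})$, $\wh{\mu}_l(\boldsymbol{\theta})$, $\wh{\nu}_l(\boldsymbol{\theta})$ in the expansions~\eqref{hatA_eigenvalues_series} are exactly the quantities $\gamma_l$, $\mu_l$, $\nu_l$ of the abstract theory for $A(t)=\wh{A}(t,\boldsymbol{\theta})$, and $\wh{\omega}_l(\boldsymbol{\theta})$ in~\eqref{hatA_eigenvectors_series} are the abstract embryos $\omega_l$.

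Next I would match the abstract operators with their concrete realizations, collected in Subsections~\ref{sec8.2} and~\ref{sec8.3}: the germ $S$ becomes $\wh{S}(\boldsymbol{\theta})$, the orthogonal projections $P_j$ onto the eigenspaces of $S$ become $\wh{P}_q(\boldsymbol{\theta})$, and the operators $Z$, $Z_2$, $R$, $R_2$, $N$, $N_1^0$ become $\wh{Z}(\boldsymbol{\theta})$, $\wh{Z}_2(\boldsymbol{\theta})$, $\wh{R}(\boldsymbol{\theta})$, $\wh{R}_2(\boldsymbol{\theta})$, $\wh{N}(\boldsymbol{\theta})$, $\wh{N}_1^0(\boldsymbol{\theta})$ respectively. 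The hypothesis $\wh{N}_0(\boldsymbol{\theta})=0$ of the proposition is, by the invariant representation~\eqref{N0_invar_repr}, identical to the abstract assumption $N_0=0$ in Proposition~\ref{Prop_nu_1}. Substituting these identifications into the abstract expression
\[
\mathcal N^{(q)} = P_q\Bigl(N_1^0 - \tfrac12 Z^*Z\,SP - \tfrac12 SP\,Z^*Z\Bigr)\Bigl\vert_{\mathfrak N_q} + \!\!\sum_{j\ne q}\! (\gamma_q^\circ-\gamma_j^\circ)^{-1}\, P_q N P_j N\Bigl\vert_{\mathfrak N_q}
\]
yields term-by-term the operator $\wh{\mathcal N}^{(q)}(\boldsymbol{\theta})$ of~\eqref{8.32aa} acting in $\wh{\mathfrak N}_q(\boldsymbol{\theta})$.

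The conclusion $\mathcal N^{(q)}\omega_l=\nu_l\omega_l$ of the abstract proposition then becomes exactly $\wh{\mathcal N}^{(q)}(\boldsymbol{\theta})\wh{\omega}_l(\boldsymbol{\theta})=\wh{\nu}_l(\boldsymbol{\theta})\wh{\omega}_l(\boldsymbol{\theta})$ for $l=i(q,\boldsymbol{\theta}),\dots,i(q,\boldsymbol{\theta})+k_q(\boldsymbol{\theta})-1$, which is the desired statement. Since the entire argument is a bookkeeping exercise extracting a concrete consequence from an already-proved abstract theorem, there is no genuine obstacle; the only point that requires care is verifying that the invariant formulas for $N_0$ and $\mathcal N^{(q)}$ in terms of $S$, $Z$, $N$, $N_1^0$ and the spectral projections of $S$ carry over \emph{unchanged} under the substitution---but this is immediate from the definitions, given that the spectral decomposition of $\wh{S}(\boldsymbol{\theta})$ into eigenspaces $\wh{\mathfrak N}_q(\boldsymbol{\theta})$ is the concrete realization of the abstract decomposition $\mathfrak N=\bigoplus_q\mathfrak N_q$.
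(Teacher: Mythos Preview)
Your proposal is correct and matches the paper's approach exactly: the paper simply states that the proposition is obtained by ``Applying Proposition~\ref{Prop_nu_1}'' to the family $\wh{A}(t,\boldsymbol{\theta})$, and your write-up carries out precisely this identification of abstract and concrete objects. If anything, you have been more thorough than the paper, which gives no further details beyond the one-line citation.
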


\section{Approximation for the operators  
$\cos(\varepsilon^{-1} \tau \widehat{\mathcal{A}}(\mathbf{k})^{1/2})$ and 
$\widehat{\mathcal{A}}(\mathbf{k})^{-1/2} \sin(\varepsilon^{-1} \tau \widehat{\mathcal{A}}(\mathbf{k})^{1/2})$
\label{sec9}}

\subsection{Approximation in the operator norm in   $L_2(\Omega;\AC^n)$. The general case}

Consider the operator $\mathcal{H}_0 = -\Delta$ in $L_2 (\mathbb{R}^d; \AC^n)$. In the direct integral expansion, the operator $\mathcal{H}_0$ is associated with the family of operators $\mathcal{H}_0 (\mathbf{k})$ acting in  $L_2 (\Omega; \AC^n)$. The operator $\mathcal{H}_0 (\mathbf{k})$ is given by the differential expression $| \mathbf{D} + \mathbf{k} |^2$ with periodic boundary conditions.  Denote 
\begin{equation}
\label{R(k, epsilon)}
\mathcal{R}(\mathbf{k}, \varepsilon) := \varepsilon^2 (\mathcal{H}_0 (\mathbf{k}) + \varepsilon^2 I)^{-1}.
\end{equation}
Obviously, 
\begin{equation}
\label{R_P}
\mathcal{R}(\mathbf{k}, \varepsilon)^{s/2}\widehat{P} = \varepsilon^s (t^2 + \varepsilon^2)^{-s/2} \widehat{P}, \qquad s > 0.
\end{equation}
Note that for  $|\k| > \wh{t}_0$ we have 
\begin{equation}
\label{R_P_2}
\| \mathcal{R}(\mathbf{k}, \varepsilon)^{s/2}\widehat{P} \|_{L_2(\Omega) \to L_2(\Omega)}
\le  (\wh{t}_0)^{-s} \varepsilon^s, \quad \eps > 0,\ \k \in \wt{\Omega}, \ |\k| > \wh{t}_0.
\end{equation}
Next, using the discrete Fourier transform, we obtain  
\begin{equation}
\label{R(k,eps)(I-P)_est}
\begin{split}
\| \mathcal{R}(\mathbf{k}, \varepsilon)^{s/2} (I - \widehat{P}) \|_{L_2(\Omega) \to L_2 (\Omega) }  \le \sup_{0 \ne \mathbf{b} \in \widetilde{\Gamma}} \varepsilon^s (|\mathbf{b} + \mathbf{k}|^2 + \varepsilon^2)^{-s/2} \le r_0^{-s} \varepsilon^s,
\\
\varepsilon > 0, \quad \mathbf{k} \in \widetilde{\Omega}.
\end{split}
\end{equation}

Denote 
\begin{align}
\label{J1_hat}
\widehat{J}_1(\mathbf{k}, \tau ) :=& \cos \bigl( \tau \wh{\mathcal A}(\k)^{1/2}\bigr) - 
\cos \bigl(\tau \wh{\mathcal A}^0(\k)^{1/2}\bigr),
\\
\label{J2_hat}
\wh{J}_2(\mathbf{k}, \tau ) :=& \wh{\mathcal A}(\k)^{-1/2} \sin \bigl(\tau \wh{\mathcal A}(\k)^{1/2}\bigr) - 
\wh{\mathcal A}^0(\k)^{-1/2} \sin \bigl(\tau \wh{\mathcal A}^0(\k)^{1/2}\bigr).
\end{align}

We apply theorems from~\S\ref{abstr_aprox_thrm_section} to the operator $\widehat{A}(t, \boldsymbol{\theta}) = \widehat{\mathcal{A}}(\mathbf{k})$. According to Remark~\ref{abstr_constants_remark}, we  can track the dependence  of the constants in estimates on the problem data. Note that $\widehat{c}_*$, $\widehat{\delta}$, and $\widehat{t}_0$ do not depend on $\boldsymbol{\theta}$ (see (\ref{hatc_*})--(\ref{hatt0_fixation})). According to~(\ref{hatX_1_estmate}), the norm $\| \widehat{X}_1 (\boldsymbol{\theta}) \|$ can be replaced by $\alpha_1^{1/2} \| g \|_{L_{\infty}}^{1/2}$. Therefore, the constants from Theorem~\ref{th3.1}
(applied to the operator $\widehat{\mathcal{A}} (\mathbf{k})$) will not depend on $\boldsymbol{\theta}$. 
They will depend only on $\alpha_0$, $\alpha_1$, $\|g\|_{L_\infty}$, $\|g^{-1}\|_{L_\infty}$, and $r_0$.

\begin{theorem}[see~\cite{BSu5,M}] 
\label{th9.1}
Suppose that  $\wh{J}_1(\mathbf{k}, \tau )$ and $\wh{J}_2(\mathbf{k}, \tau )$ are the operators 
defined by 
\eqref{J1_hat}\textup, \eqref{J2_hat}. Then for $\tau \in \R,$ $\eps >0,$ and $\k \in \wt{\Omega}$ we have
\begin{align}
\label{9.7}
 \|  \wh{J}_1(\mathbf{k}, \eps^{-1} \tau )  \mathcal{R}(\k,\eps) \|_{L_2(\Omega) \to L_2(\Omega)}
&\le \wh{\mathcal C}_1 (1+ |\tau|) \eps,
\\
\label{9.8}
 \|  \wh{J}_2(\mathbf{k}, \eps^{-1} \tau ) \mathcal{R}(\k,\eps)^{1/2} \|_{L_2(\Omega) \to L_2(\Omega)}
&\le \wh{\mathcal C}_2 (1+ |\tau|).
\end{align}
The constants $\wh{\mathcal C}_1$ and $\wh{\mathcal C}_2$ depend only on $\alpha_0,$ $\alpha_1,$ $\|g\|_{L_\infty},$  $\|g^{-1}\|_{L_\infty}$, and $r_0$.
\end{theorem}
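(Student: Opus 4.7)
The plan is to deduce Theorem \ref{th9.1} from the abstract Theorem \ref{th3.1} applied to the family $A(t,\boldsymbol{\theta}) = \wh{\mathcal A}(\k)$ with $\k = t\boldsymbol{\theta}$, combining it with the easy off-diagonal estimate on $\wh{P}^\perp = I - \wh{P}$. The key algebraic identity is \eqref{hatS_P=hatA^0_P}, which asserts $t^2 \wh{S}(\boldsymbol{\theta})\wh{P} = \wh{\mathcal A}^0(\k)\wh{P}$; this lets us reinterpret the abstract germ-side operators as the operators built from the effective $\wh{\mathcal A}^0$.

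First I would split $\wh{J}_i(\k,\eps^{-1}\tau) = \wh{J}_i(\k,\eps^{-1}\tau)\wh{P} + \wh{J}_i(\k,\eps^{-1}\tau)(I-\wh{P})$ for $i=1,2$, and treat the two summands separately. For $\k \in \wt\Omega$ with $|\k| \le \wh t_0$, the $\wh{P}$-part is handled by Theorem \ref{th3.1}: using the definitions \eqref{3.01}, \eqref{3.02} and identity \eqref{hatS_P=hatA^0_P}, the operators $\mathcal J_1(t,\eps^{-1}\tau)$ and $\mathcal J_2(t,\eps^{-1}\tau)$ for the family $\wh A(t,\boldsymbol{\theta})$ coincide with $\wh{J}_1(\k,\eps^{-1}\tau)\wh{P}$ and $\wh{J}_2(\k,\eps^{-1}\tau)\wh{P}$, respectively. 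Estimates \eqref{2.5a}, \eqref{2.6a} together with \eqref{R_P} yield the required bounds on this part. By Remark \ref{abstr_constants_remark} and the $\boldsymbol{\theta}$-independent choices \eqref{hatc_*}--\eqref{hatt0_fixation}, \eqref{hatX_1_estmate}, all constants are controlled by $\alpha_0, \alpha_1, \|g\|_{L_\infty}, \|g^{-1}\|_{L_\infty}, r_0$ only.

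Second, the $(I-\wh{P})$-part is handled by direct uniform boundedness of $\cos(\eps^{-1}\tau\wh{\mathcal A}(\k)^{1/2})$ and $\cos(\eps^{-1}\tau\wh{\mathcal A}^0(\k)^{1/2})$ (each of norm $\le 1$), combined with \eqref{R(k,eps)(I-P)_est} for $s=2$; this gives $\| \wh{J}_1(\k,\eps^{-1}\tau)(I-\wh{P})\mathcal R(\k,\eps)\| \le 2r_0^{-2}\eps^2$, which is even sharper than needed. For $\wh{J}_2$ one uses that $\wh{\mathcal A}(\k)^{1/2}$ and $\wh{\mathcal A}^0(\k)^{1/2}$ are bounded below by $(2r_0)\wh c_*^{1/2}$ on the range of $I-\wh P$ (because on $\wh{\mathfrak N}^\perp$ the quadratic form satisfies \eqref{AP}), so $\|\wh{\mathcal A}(\k)^{-1/2}\sin(\cdot)(I-\wh P)\|$ and its effective counterpart are both uniformly bounded, and \eqref{R(k,eps)(I-P)_est} with $s=1$ finishes the job.

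Third, for $\k \in \wt\Omega$ with $|\k| > \wh t_0$, I would dispense with the $\wh{P}/\wh{P}^\perp$ splitting altogether. Indeed $\cos(\cdot)$ is of norm $\le 1$, while \eqref{A(k)_nondegenerated_and_c_*} applied to both $\wh{\mathcal A}(\k)$ and $\wh{\mathcal A}^0(\k)$ gives $\|\wh{\mathcal A}(\k)^{-1/2}\sin(\cdot)\| \le \wh c_*^{-1/2}\wh t_0^{-1}$ and similarly for the effective version; combined with \eqref{R_P_2} (with $s=2$ for $\wh J_1$ and $s=1$ for $\wh J_2$), these immediately produce the required $O(\eps)$ and $O(1)$ bounds, with no $\tau$-dependence.

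There is no genuine obstacle here: the one point that would require a second's care is the uniform boundedness of $\wh{\mathcal A}^0(\k)^{-1/2}$ restricted to $\wh P^\perp$ for $|\k| \le \wh t_0$, but this follows from \eqref{AP} applied with $g^0$ in place of $g$ and the lower bound $|g^0|^{-1}\ge \|g^{-1}\|_{L_\infty}^{-1}$ from \eqref{g^0_est}. All constants produced in this argument depend only on $\alpha_0, \alpha_1, \|g\|_{L_\infty}, \|g^{-1}\|_{L_\infty}, r_0$, as claimed.
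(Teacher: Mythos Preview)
Your proof plan is correct and follows essentially the same approach as the paper, which simply remarks that the theorem is deduced from Theorem~\ref{th3.1}, relations \eqref{R_P}--\eqref{R(k,eps)(I-P)_est}, and the obvious estimates \eqref{9.9}. The only minor difference is that for the $(I-\wh P)$-part of $\wh J_2$ you invoke a form-lower-bound argument (the analogue of \eqref{AP} for general $\k\in\wt\Omega$, namely $\|(\D+\k)\w\|^2\ge r_0^2\|(I-\wh P)\w\|^2$), whereas the paper uses the cruder bound $\|\wh J_2\|\le 2\eps^{-1}|\tau|$ from \eqref{9.9} combined with \eqref{R(k,eps)(I-P)_est}; both routes work.
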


Theorem \ref{th9.1} is deduced from Theorem~\ref{th3.1} and relations  \eqref{R_P}--\eqref{R(k,eps)(I-P)_est}.
We should also take into account the following obvious estimates:
\begin{equation} 
\label{9.9}
\|  \wh{J}_1(\mathbf{k}, \eps^{-1} \tau ) \|_{L_2(\Omega) \to L_2(\Omega)} \le 2,
\quad 
\|  \wh{J}_2(\mathbf{k}, \eps^{-1} \tau ) \|_{L_2(\Omega) \to L_2(\Omega)}
\le 2 \eps^{-1}|\tau|.
\end{equation}
Earlier, estimate \eqref{9.7} was obtained in \cite[Theorem 7.2]{BSu5}, and inequality \eqref{9.8} was proved in 
 \cite[Subsection 7.4]{M}.

Below (for interpolation purposes in Chapter  3) we shall also need the following statement.

\begin{proposition}
\label{prop9.1a}
Under the assumptions of Theorem \emph{\ref{th9.1}}, for $\tau \in \R,$ $\eps >0$, and $\k \in \wt{\Omega}$ 
the operator \eqref{J2_hat} satisfies the following estimate:
\begin{align}
\label{9.9a}
\|  \wh{J}_2(\mathbf{k}, \eps^{-1} \tau ) \|_{L_2(\Omega) \to L_2(\Omega)}
\le \wh{\mathcal C}_2' (1+ \eps^{-1/2}|\tau|^{1/2}).
\end{align}
The constant $\wh{\mathcal C}_2'$ depends only on $\alpha_0,$ $\alpha_1,$ $\|g\|_{L_\infty},$  
$\|g^{-1}\|_{L_\infty},$ and $r_0$.
\end{proposition}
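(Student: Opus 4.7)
The plan is to prove \eqref{9.9a} by decomposing $\widehat{J}_2(\mathbf{k},\varepsilon^{-1}\tau) = \widehat{J}_2(\mathbf{k},\varepsilon^{-1}\tau)\widehat{P} + \widehat{J}_2(\mathbf{k},\varepsilon^{-1}\tau)(I - \widehat{P})$ and, within the small-$\mathbf{k}$ regime, combining the abstract bound \eqref{2.6} (linear in $|\mathbf{k}|$) with a direct spectral estimate (proportional to $|\mathbf{k}|^{-1}$) via the elementary inequality $\min(a,b) \le \sqrt{ab}$.

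First, for $|\mathbf{k}| \ge \widehat{t}_0$, the uniform positivity $\widehat{\mathcal{A}}(\mathbf{k}),\,\widehat{\mathcal{A}}^0(\mathbf{k}) \ge \widehat{c}_*\widehat{t}_0^2 I$ given by \eqref{A(k)_nondegenerated_and_c_*} (with $f=\mathbf{1}_n$) and its immediate analogue for $\widehat{\mathcal{A}}^0(\mathbf{k})$, together with the pointwise estimate $\|A^{-1/2}\sin(sA^{1/2})\| \le (\inf\operatorname{spec} A)^{-1/2}$, yields the uniform bound $\|\widehat{J}_2(\mathbf{k},\varepsilon^{-1}\tau)\| \le 2\widehat{c}_*^{-1/2}\widehat{t}_0^{-1}$. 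In what follows I assume $|\mathbf{k}| \le \widehat{t}_0$. To bound $\widehat{J}_2(\mathbf{k},\varepsilon^{-1}\tau)(I - \widehat{P})$: since $\widehat{\mathcal{A}}^0(\mathbf{k})$ is a constant-coefficient DO in $\mathbf{x}$, it commutes with $\widehat{P}$, and, as in the derivation of \eqref{R(k,eps)(I-P)_est}, the inequality $|\mathbf{b}+\mathbf{k}| \ge r_0$ combined with \eqref{g^0_est} gives $\widehat{\mathcal{A}}^0(\mathbf{k}) \ge 4\widehat{\delta}I$ on $(I-\widehat{P})L_2(\Omega;\mathbb{C}^n)$, so the $\widehat{\mathcal{A}}^0$-summand is bounded by $(4\widehat{\delta})^{-1/2}$. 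For the $\widehat{\mathcal{A}}(\mathbf{k})$-summand I split $(I-\widehat{P}) = (I-\widehat{F}(\mathbf{k}))(I-\widehat{P}) + \widehat{F}(\mathbf{k})(I-\widehat{P})$: on $(I-\widehat{F}(\mathbf{k}))L_2$ we have $\widehat{\mathcal{A}}(\mathbf{k}) \ge 3\widehat{\delta}$; on $\widehat{F}(\mathbf{k})L_2$ we have $\widehat{\mathcal{A}}(\mathbf{k}) \ge \widehat{c}_*|\mathbf{k}|^2$, while $\|\widehat{F}(\mathbf{k})(I-\widehat{P})\| = \|(\widehat{F}(\mathbf{k})-\widehat{P})(I-\widehat{P})\| \le C|\mathbf{k}|$ by \eqref{abstr_F(t)_threshold_1}, whose product gives a $|\mathbf{k}|$-free bound. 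Thus $\|\widehat{J}_2(\mathbf{k},\varepsilon^{-1}\tau)(I-\widehat{P})\|$ is bounded by a constant depending only on the admissible data.

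For $\widehat{J}_2(\mathbf{k},\varepsilon^{-1}\tau)\widehat{P}$ I use two estimates. First, Theorem~\ref{th2.1} applied to the family $\widehat{A}(t,\boldsymbol{\theta}) = \widehat{\mathcal{A}}(\mathbf{k})$ with $\tau$ replaced by $\varepsilon^{-1}\tau$, together with \eqref{hatS_P=hatA^0_P}, gives
\[
\|\widehat{J}_2(\mathbf{k},\varepsilon^{-1}\tau)\widehat{P}\| \le C_7 + C_8\varepsilon^{-1}|\tau||\mathbf{k}|.
\]
Second, an argument identical to that for $(I-\widehat{P})$ in the previous paragraph --- now with $\widehat{P}$ in place of $(I-\widehat{P})$, using $\widehat{\mathcal{A}}^0(\mathbf{k})\widehat{P} = \widehat{S}(\mathbf{k})\widehat{P} \ge \widehat{c}_*|\mathbf{k}|^2\widehat{P}$ and $\|(I-\widehat{F}(\mathbf{k}))\widehat{P}\| \le C|\mathbf{k}|$ --- yields
\[
\|\widehat{J}_2(\mathbf{k},\varepsilon^{-1}\tau)\widehat{P}\| \le 2\widehat{c}_*^{-1/2}|\mathbf{k}|^{-1} + C'.
\]
Applying $\min(x,y) \le \sqrt{xy}$ to $x = C_8\varepsilon^{-1}|\tau||\mathbf{k}|$ and $y = 2\widehat{c}_*^{-1/2}|\mathbf{k}|^{-1}$ bounds the minimum of these two estimates by $C''(1 + \varepsilon^{-1/2}|\tau|^{1/2})$; adding the bound on the $(I-\widehat{P})$-part produces \eqref{9.9a}.

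The main point is conceptual rather than technical: neither Theorem~\ref{th2.1} alone (which is sharp only as $|\mathbf{k}| \to 0$) nor the direct spectral bound $|\mathbf{k}|^{-1}$ alone (which blows up as $|\mathbf{k}| \to 0$) produces the $\varepsilon^{-1/2}|\tau|^{1/2}$ rate; it is only their minimum --- optimized near $|\mathbf{k}| \sim (\varepsilon/|\tau|)^{1/2}$ --- that does.
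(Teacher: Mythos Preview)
Your proof is correct and follows essentially the same approach as the paper: both balance the linear-in-$|\mathbf{k}|$ bound from \eqref{2.6} against the $|\mathbf{k}|^{-1}$ bound coming from $\widehat{\mathcal{A}}(\mathbf{k}),\widehat{\mathcal{A}}^0(\mathbf{k}) \ge \widehat{c}_*|\mathbf{k}|^2 I$. The only difference is cosmetic --- the paper does the explicit case split at $|\mathbf{k}| = \varepsilon^{1/2}|\tau|^{-1/2}$, whereas you package the same optimization via $\min(x,y)\le\sqrt{xy}$; also, the paper obtains the $|\mathbf{k}|^{-1}$ bound on all of $L_2(\Omega;\mathbb{C}^n)$ directly from \eqref{A(k)_nondegenerated_and_c_*} without the $\widehat{F}(\mathbf{k})$ decomposition you use, which slightly shortens that step.
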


\begin{proof}
From  \eqref{2.6} (with $\tau$ replaced  by $\eps^{-1} \tau$) it follows that 
\begin{equation}
\label{9.9b}
\|  \wh{J}_2(\mathbf{k}, \eps^{-\!1} \tau ) \wh{P} \|_{L_2(\Omega) \to L_2(\Omega)}
\!\!\le\! \wh{\mathcal C}_2^{(1)} (1\!+ \!\eps^{-1}|\tau| |\k|), \quad \tau\! \in \!\R,\ \eps\!>\!0, \ |\k|\! \le \!\wh{t}_0.
\end{equation}

Next, for $|\k| \le \wh{t}_0$ the norms of the operators  $\wh{\mathcal A}(\k)^{-1/2} (I - \wh{P})$ and
\hbox{$\wh{\mathcal A}^0(\k)^{-1/2} (I - \wh{P})$} are uniformly bounded (see \eqref{abstr_F(t)_threshold_1},  \eqref{A(k)_nondegenerated_and_c_*}), whence 
\begin{equation}
\label{9.9d}
\|  \wh{J}_2(\mathbf{k}, \eps^{-1} \tau ) (I-\wh{P}) \|_{L_2(\Omega) \to L_2(\Omega)}
\le \wh{\mathcal C}_2^{(2)}, \quad \tau \in \R,\ \eps>0, \ |\k| \le \wh{t}_0.
\end{equation}

If $\eps |\tau|^{-1} > \wh{t}_0^{\,2}$, then  estimate  \eqref{9.9a} follows directly from the second inequality in \eqref{9.9}. Assume that  $\eps |\tau|^{-1} \le \wh{t}_0^{\,2}$. Then, by \eqref{9.9b}, 
\begin{equation*}
\|  \wh{J}_2(\mathbf{k}, \eps^{-1} \tau ) \wh{P} \|_{L_2(\Omega) \to L_2(\Omega)}
\le \wh{\mathcal C}_2^{(1)} (1+ \eps^{-1/2}|\tau|^{1/2}), \quad  |\k| \le \eps^{1/2} |\tau|^{-1/2}.
\end{equation*}
Together with  \eqref{9.9d} this implies estimate   \eqref{9.9a}  for $|\k| \le \eps^{1/2} |\tau|^{-1/2}$.
 
The required estimate for $|\k| > \eps^{1/2} |\tau|^{-1/2}$ follows from \eqref{A(k)_nondegenerated_and_c_*} 
(for $\wh{\mathcal A}(\k)$ and $\wh{\mathcal A}^0(\k)$):
\begin{equation*}
\|  \wh{J}_2(\mathbf{k}, \eps^{-1} \tau )  \|_{L_2(\Omega) \to L_2(\Omega)}
\le 2 \wh{c}_*^{-1/2} |\k|^{-1} 
\le 2 \wh{c}_*^{-1/2} \eps^{-1/2} |\tau|^{1/2}, \quad 
|\k| > \eps^{1/2} |\tau|^{-1/2}.\qedhere
\end{equation*}
\end{proof}

\subsection{Approximation in the operator norm in  $L_2(\Omega;\AC^n)$. The case where
 $\wh{N}(\boldsymbol{\theta}) =0$}

Now we improve the result of Theorem \ref{th9.1} under the additional assumptions. We impose the following condition.

\begin{condition}
\label{cond_B}
	 Let $\widehat{N}(\boldsymbol{\theta})$ be the operator  defined by~\emph{(\ref{N(theta)})}. Suppose that 
	 \hbox{$\widehat{N}(\boldsymbol{\theta}) = 0$} for all $\boldsymbol{\theta} \in \mathbb{S}^{d-1}$. 
\end{condition}

\begin{theorem} 
\label{th9.2}
Let $\wh{J}_1(\mathbf{k}, \tau )$ and $\wh{J}_2(\mathbf{k}, \tau )$ be the operators defined by  \eqref{J1_hat}\textup, \eqref{J2_hat}. Suppose that Condition \emph{\ref{cond_B}} is satisfied.
Then for $\tau \in \mathbb{R},$ $\varepsilon > 0,$ and $\mathbf{k} \in \widetilde{\Omega}$  we have 
\begin{align}
\label{9.7a}
& \|  \wh{J}_1(\mathbf{k}, \eps^{-1} \tau ) \mathcal{R}(\k,\eps)^{3/4} \|_{L_2(\Omega) \to L_2(\Omega)}
\le \wh{\mathcal C}_3 (1+ |\tau|)^{1/2} \eps,
\\
\label{9.8a}
& \|  \wh{J}_2(\mathbf{k}, \eps^{-1} \tau ) \mathcal{R}(\k,\eps)^{1/4}\|_{L_2(\Omega) \to L_2(\Omega)}
\le \wh{\mathcal C}_4 (1+ |\tau|)^{1/2}.
\end{align}
The constants  $\wh{\mathcal C}_3$ and $\wh{\mathcal C}_4$ depend only on  $\alpha_0,$ $\alpha_1,$ $\|g\|_{L_\infty},$  $\|g^{-1}\|_{L_\infty}$, and $r_0$.
\end{theorem}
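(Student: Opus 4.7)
\textbf{Proof proposal for Theorem~\ref{th9.2}.}
The strategy mirrors the derivation of Theorem~\ref{th9.1}, but with Theorem~\ref{th3.2} replacing Theorem~\ref{th3.1} in the abstract step, which is legitimate because Condition~\ref{cond_B} asserts precisely that the abstract operator $N$ associated (via the family $\wh A(t,\boldsymbol\theta)=\wh{\mathcal A}(t\boldsymbol\theta)$) to every point $\boldsymbol\theta\in\mathbb S^{d-1}$ is zero. By Remark~\ref{abstr_constants_remark} and the $\boldsymbol\theta$-independent choices of $\wh c_*,\wh\delta,\wh t_0$ and of the bound~\eqref{hatX_1_estmate} for $\|\wh X_1(\boldsymbol\theta)\|$, the constants $C_1,C_9',C_7,C_{10}'$ produced by Theorem~\ref{th3.2} can be controlled uniformly in $\boldsymbol\theta$ by $\alpha_0,\alpha_1,\|g\|_{L_\infty},\|g^{-1}\|_{L_\infty},r_0$.

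The plan is to decompose $I=\wh P+(I-\wh P)$ and estimate $\wh J_i(\k,\eps^{-1}\tau)\,\mathcal R(\k,\eps)^{s/2}$ on each summand separately (with $s=3/2$ for $i=1$ and $s=1/2$ for $i=2$). For the $\wh P$-part with $|\k|\le\wh t_0$, I would use the key identity $\wh{\mathcal A}^0(\k)\wh P=\wh S(\k)\wh P=t^2\wh S(\boldsymbol\theta)\wh P$ from~\eqref{hatS_P=hatA^0_P} together with the fact that $\wh J_i(\k,\eps^{-1}\tau)\wh P$ equals exactly the abstract $\mathcal J_i(|\k|,\eps^{-1}\tau)$ of Subsection~3.1 evaluated at $\boldsymbol\theta=\k/|\k|$. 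Applying Theorem~\ref{th3.2} and converting by $\mathcal R(\k,\eps)^{s/2}\wh P=\eps^s(t^2+\eps^2)^{-s/2}\wh P$ from~\eqref{R_P} yields the required right-hand sides $(2C_1+C_9'|\tau|^{1/2})\eps$ and $(C_7+C_{10}'|\tau|^{1/2})$, which are clearly bounded by $\text{const}\,(1+|\tau|)^{1/2}\eps$ and $\text{const}\,(1+|\tau|)^{1/2}$ respectively.

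For the remaining pieces the abstract theorem does not apply, so I would rely on the smoothing factor alone. When $|\k|>\wh t_0$, estimate~\eqref{R_P_2} gives $\|\mathcal R(\k,\eps)^{s/2}\wh P\|\le\wh t_0^{-s}\eps^s$; and on $\wh{\mathfrak N}^\perp$, \eqref{R(k,eps)(I-P)_est} gives $\|\mathcal R(\k,\eps)^{s/2}(I-\wh P)\|\le r_0^{-s}\eps^s$. Combined with the trivial bound $\|\wh J_1\|\le 2$ from~\eqref{9.9} these handle $\wh J_1$ after a routine case-split on $\eps\le1$ versus $\eps>1$: indeed $\eps^{3/2}\le\min(\eps,\eps^{3/2})\le\max(1,\wh t_0^{-1/2},r_0^{-1/2})\,\eps\,(1+|\tau|)^{1/2}$ in all regimes.

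The main obstacle is the $\wh J_2$ piece: the crude bound $\|\wh J_2\|\le 2\eps^{-1}|\tau|$ from~\eqref{9.9} would produce $|\tau|$-dependence rather than the required $|\tau|^{1/2}$. This is exactly what Proposition~\ref{prop9.1a} is designed to handle: it supplies $\|\wh J_2(\k,\eps^{-1}\tau)\|\le\wh{\mathcal C}_2'(1+\eps^{-1/2}|\tau|^{1/2})$, and combining this with the smoothing factor $\eps^{1/2}$ provided by $\mathcal R^{1/4}$ on either $\wh{\mathfrak N}^\perp$ or the region $|\k|>\wh t_0$ gives
\[
\eps^{1/2}\bigl(1+\eps^{-1/2}|\tau|^{1/2}\bigr)=\eps^{1/2}+|\tau|^{1/2}\le 2(1+|\tau|)^{1/2}
\]
uniformly for $\eps\le1$, while for $\eps>1$ one argues directly from $\|\mathcal R^{1/4}\|\le 1$ and the elementary inequality $1+\wh t_0^{-1/2}|\tau|^{1/2}\le\text{const}\,(1+|\tau|)^{1/2}$. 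Summing the contributions produces estimates~\eqref{9.7a}, \eqref{9.8a} with constants $\wh{\mathcal C}_3,\wh{\mathcal C}_4$ depending only on the listed data.
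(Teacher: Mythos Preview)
Your approach is correct and follows the same overall strategy as the paper: decompose $I=\wh P+(I-\wh P)$, apply Theorem~\ref{th3.2} to the $\wh P$-part for $|\k|\le\wh t_0$, and handle the remainder via the smoothing factor. The constants are indeed uniform in $\boldsymbol\theta$ for the reasons you state.

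The paper's treatment of the remainder terms is, however, cleaner and avoids your case-split on $\eps$. For $\wh J_1$, rather than using $\|\mathcal R(\k,\eps)^{3/4}\wh P\|\le\wh t_0^{-3/2}\eps^{3/2}$ and then arguing $\eps^{3/2}\lesssim\eps$ (your inequality ``$\eps^{3/2}\le\min(\eps,\eps^{3/2})$'' is miswritten, though the intent is clear), the paper simply observes that $\mathcal R(\k,\eps)\le I$ forces $\|\mathcal R(\k,\eps)^{3/4}\wh P\|\le\|\mathcal R(\k,\eps)^{1/2}\wh P\|\le\wh t_0^{-1}\eps$, i.e.\ it invokes~\eqref{R_P_2} and~\eqref{R(k,eps)(I-P)_est} with $s=1$ rather than $s=3/2$. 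This gives $\eps$ directly, uniformly in $\eps>0$. For $\wh J_2$, instead of routing through Proposition~\ref{prop9.1a}, the paper uses the sharper ingredients from its proof: on $(I-\wh P)$ for $|\k|\le\wh t_0$ it cites~\eqref{9.9d}, which already gives a $\tau$-independent constant bound, and for $|\k|>\wh t_0$ it uses the nondegeneracy~\eqref{A(k)_nondegenerated_and_c_*} to bound $\|\wh J_2\|\le 2\wh c_*^{-1/2}\wh t_0^{-1}$. Your use of Proposition~\ref{prop9.1a} also works, but introduces an unnecessary $\eps$-dependence that you then have to argue away.
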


\begin{proof}
We start with the proof of inequality \eqref{9.7a}.
Applying \eqref{2.7a} and taking   \eqref{hatS_P=hatA^0_P}  and \eqref{R_P} into account, we have
\begin{equation}
\label{9.7b}
\|  \wh{J}_1(\mathbf{k}, \eps^{-1} \tau ) \mathcal{R}(\k,\eps)^{3/4} \wh{P}\|_{L_2(\Omega) \to L_2(\Omega)}
\le \wh{\mathcal C}^\circ_3 (1+ |\tau|)^{1/2} \eps,
\quad  \tau \in \R,\ \eps>0, \quad |\k| \le \wh{t}_0.
\end{equation}
 From \eqref{R_P_2} with $s=1$ and the first estimate in \eqref{9.9} we see that the left-hand side in \eqref{9.7b}
does not exceed $2 (\wh{t}_0)^{-1} \eps$ for  $|\k| > \wh{t}_0$.
Finally, by \eqref{R(k,eps)(I-P)_est} with $s=1$ and the first estimate in \eqref{9.9}, the quantity 
$\|  \wh{J}_1(\mathbf{k}, \eps^{-1} \tau ) \mathcal{R}(\k,\eps)^{3/4} (I-\wh{P})\|_{L_2(\Omega) \to L_2(\Omega)}$ 
does not exceed $2 r_0^{-1}\eps$ for all $\k \in \wt{\Omega}$.
 As a result, we arrive at  \eqref{9.7a}.

We proceed to the proof of estimate \eqref{9.8a}.
By \eqref{2.8a}, \eqref{hatS_P=hatA^0_P}, and  \eqref{R_P},
\begin{equation*}
\|  \wh{J}_2(\mathbf{k}, \eps^{-1} \tau ) \mathcal{R}(\k,\eps)^{1/4} \wh{P}\|_{L_2(\Omega) \to L_2(\Omega)}
\le \wh{\mathcal C}^\circ_4 (1+ |\tau|)^{1/2},\quad \tau \in \R,\ \eps>0, \ |\k| \le \wh{t}_0.
\end{equation*}
From  \eqref{9.9d} it follows that the quantity  
$\|  \wh{J}_2(\mathbf{k}, \eps^{-1} \tau ) \mathcal{R}(\k,\eps)^{1/4} (I-\wh{P}) 
\|$ is bounded by the constant $\wh{\mathcal C}_2^{(2)}$ for $\tau \in \R$, $\eps >0$, and $|\k| \le \wh{t}_0$.
Finally, for $\k \in \wt{\Omega}$ and $|\k| > \wh{t}_0$ the left-hand side of  \eqref{9.8a} does not exceed  
$2 \wh{c}_*^{-1/2} (\wh{t}_0)^{-1}$ due to estimate \eqref{A(k)_nondegenerated_and_c_*} 
(for the operators $\wh{\mathcal A}(\k)$ and  $\wh{\mathcal A}^0(\k)$).
As a result, we obtain  \eqref{9.8a}.
\end{proof}

We shall also need the following statement.

\begin{proposition}
\label{prop9.2a}
Under the assumptions of Theorem \emph{\ref{th9.2}}, for $\tau \in \R,$ $\eps >0,$ and $\k \in \wt{\Omega}$ 
we have
\begin{align}
\label{9.9xx}
\|  \wh{J}_2(\mathbf{k}, \eps^{-1} \tau ) \|_{L_2(\Omega) \to L_2(\Omega)}
\le \wh{\mathcal C}_4' (1+ \eps^{-1/3}|\tau|^{1/3}).
\end{align}
  The constant $\wh{\mathcal C}_4'$ depends only on  $\alpha_0,$ $\alpha_1,$ $\|g\|_{L_\infty},$  $\|g^{-1}\|_{L_\infty},$ and $r_0$.
\end{proposition}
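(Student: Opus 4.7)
\textbf{Proof proposal for Proposition \ref{prop9.2a}.}
The plan is to mimic the argument for Proposition~\ref{prop9.1a}, but exploit Condition~\ref{cond_B} to upgrade the quadratic-in-$|\k|$ bound on $\wh{P}$ furnished by Theorem~\ref{th2.7} (via estimate~\eqref{2.8}). The trade-off against the trivial bound $2\wh{c}_*^{-1/2}|\k|^{-1}$ will then balance at $|\k|\sim \eps^{1/3}|\tau|^{1/3}$ rather than $\eps^{1/2}|\tau|^{-1/2}$, producing the advertised exponent $1/3$.

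The three ingredients I need are as follows. First, identifying $A(t,\boldsymbol{\theta})=\wh{\mathcal{A}}(\k)$ and using that Condition~\ref{cond_B} means $N(\boldsymbol{\theta})\equiv 0$ for the family $\wh{A}(t,\boldsymbol{\theta})$, together with \eqref{hatS_P=hatA^0_P} and \eqref{R_P}, estimate~\eqref{2.8} (with $\tau$ replaced by $\eps^{-1}\tau$) yields
\begin{equation}
\label{propplan:Phat}
\|\wh{J}_2(\k,\eps^{-1}\tau)\wh{P}\|_{L_2(\Omega)\to L_2(\Omega)}\le \wh{C}^{(1)}\bigl(1+\eps^{-1}|\tau|\,|\k|^2\bigr),\quad |\k|\le \wh{t}_0,
\end{equation}
with $\wh{C}^{(1)}$ depending only on $\alpha_0,\alpha_1,\|g\|_{L_\infty},\|g^{-1}\|_{L_\infty},r_0$. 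Second, exactly as in the proof of Proposition~\ref{prop9.1a}, the norms of $\wh{\mathcal{A}}(\k)^{-1/2}(I-\wh{P})$ and $\wh{\mathcal{A}}^0(\k)^{-1/2}(I-\wh{P})$ are uniformly bounded for $|\k|\le\wh{t}_0$, so that $\|\wh{J}_2(\k,\eps^{-1}\tau)(I-\wh{P})\|\le \wh{C}^{(2)}$. Third, \eqref{A(k)_nondegenerated_and_c_*} applied to both $\wh{\mathcal{A}}(\k)$ and $\wh{\mathcal{A}}^0(\k)$ gives the a priori bound $\|\wh{J}_2(\k,\eps^{-1}\tau)\|\le 2\wh{c}_*^{-1/2}|\k|^{-1}$, valid for all $\k\in\wt\Omega\setminus\{0\}$ and all $\tau\in\R$, $\eps>0$.

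We may assume $\tau\ne0$. Set the threshold $|\k|_\ast:=\eps^{1/3}|\tau|^{-1/3}$ at which the two competing estimates $\eps^{-1}|\tau|\,|\k|^2$ and $|\k|^{-1}$ coincide. I split into three regimes:
\begin{itemize}
\item If $|\k|\le\min\{|\k|_\ast,\wh{t}_0\}$, then $\eps^{-1}|\tau|\,|\k|^2\le \eps^{-1/3}|\tau|^{1/3}$, so combining \eqref{propplan:Phat} with the uniform bound on the $(I-\wh{P})$ part gives $\|\wh{J}_2(\k,\eps^{-1}\tau)\|\le (\wh{C}^{(1)}+\wh{C}^{(2)})\bigl(1+\eps^{-1/3}|\tau|^{1/3}\bigr)$.
\item If $|\k|_\ast<|\k|\le\wh{t}_0$, the a priori bound $2\wh{c}_*^{-1/2}|\k|^{-1}<2\wh{c}_*^{-1/2}\eps^{-1/3}|\tau|^{1/3}$ suffices.
\item If $|\k|>\wh{t}_0$, the a priori bound gives $\|\wh{J}_2(\k,\eps^{-1}\tau)\|\le 2\wh{c}_*^{-1/2}\wh{t}_0^{-1}$, a pure constant.
\end{itemize}
The case $|\k|_\ast>\wh{t}_0$ (i.e.\ $\eps>\wh{t}_0^{\,3}|\tau|$) is absorbed by the first and third regimes; the second is then vacuous.

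There is no genuine difficulty here: the argument is bookkeeping of the same flavor as Proposition~\ref{prop9.1a}, and the only point worth double-checking is that the constant $\wh{C}^{(1)}$ coming from~\eqref{2.8} really depends only on the parameters listed in Remark~\ref{abstr_constants_remark} and not on $\boldsymbol{\theta}$, which is guaranteed because $\wh{c}_*,\wh{\delta},\wh{t}_0$ and the bound \eqref{hatX_1_estmate} on $\|\wh{X}_1(\boldsymbol{\theta})\|$ are all $\boldsymbol{\theta}$-independent.
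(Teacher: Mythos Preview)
Your proposal is correct and follows essentially the same route as the paper: split according to $|\k|\lessgtr \eps^{1/3}|\tau|^{-1/3}$, use \eqref{2.8} plus the uniform bound on $\wh{J}_2(\k,\eps^{-1}\tau)(I-\wh{P})$ for small $|\k|$, and the a priori bound $2\wh{c}_*^{-1/2}|\k|^{-1}$ for large $|\k|$. The only cosmetic difference is that the paper disposes of the case $\eps|\tau|^{-1}>\wh{t}_0^{\,3}$ separately via the trivial bound $\|\wh{J}_2(\k,\eps^{-1}\tau)\|\le 2\eps^{-1}|\tau|$ from \eqref{9.9}, whereas you simply observe that your first and third regimes already cover $\wt\Omega$ in that case; both work. (Note the small typo in your opening paragraph: the balancing scale is $\eps^{1/3}|\tau|^{-1/3}$, not $\eps^{1/3}|\tau|^{1/3}$, as you correctly write later.)
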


\begin{proof}
 From \eqref{2.8} (with $\tau$ replaced by $\eps^{-1} \tau$) it follows that 
\begin{equation}
\label{9.9bb}
\|  \wh{J}_2(\mathbf{k}, \eps^{-1}\! \tau ) \wh{P} \|_{L_2(\Omega)\! \to\! L_2(\Omega)}
\le \wh{\mathcal C}_4^{(1)} (1\!+\! \eps^{-\!1}|\tau| |\k|^2), \quad \tau \!\in \!\R,\ \eps\!>\!0, \ |\k| \!\le\! \wh{t}_0.
\end{equation}

If $\eps |\tau|^{-1} > \wh{t}_0^{\,3}$, then   \eqref{9.9xx} directly follows from the second inequality in~\eqref{9.9}. Suppose that  $\eps |\tau|^{-1} \le \wh{t}_0^{\,3}$. Then \eqref{9.9bb} yields
\begin{equation*}
\|  \wh{J}_2(\mathbf{k}, \eps^{-1} \tau ) \wh{P} \|_{L_2(\Omega) \to L_2(\Omega)}
\le \wh{\mathcal C}_4^{(1)} (1+ \eps^{-1/3}|\tau|^{1/3}), \quad  |\k| \le \eps^{1/3} |\tau|^{-1/3}.
\end{equation*}
Together with  \eqref{9.9d}, this implies estimate   \eqref{9.9xx}  for $|\k| \le \eps^{1/3} |\tau|^{-1/3}$.

Finally, the required estimate for $|\k| > \eps^{1/3} |\tau|^{-1/3}$ follows from  \eqref{A(k)_nondegenerated_and_c_*}:
\begin{equation*}
\|  \wh{J}_2(\mathbf{k}, \eps^{-1} \tau )  \|_{L_2(\Omega) \to L_2(\Omega)}
\le 2 \wh{c}_*^{-1/2} |\k|^{-1} \le 2 \wh{c}_*^{-1/2} \eps^{-1/3} |\tau|^{1/3}, 
\quad |\k| > \eps^{1/3} |\tau|^{-1/3}.\qedhere
\end{equation*}
\end{proof}

\subsection{Approximation in the operator norm in  $L_2(\Omega;\AC^n)$. The case where  $\wh{N}_0(\boldsymbol{\theta}) =0$\label{sec9.3}}

Now we abandon  the assumption that  $\widehat{N}(\boldsymbol{\theta}) \equiv 0$, but instead we assume that $\widehat{N}_0(\boldsymbol{\theta}) = 0$ for all $\boldsymbol{\theta}$. We would like to apply 
 Theorem~\ref{th3.3}. However, a complication arises  because at some points $\boldsymbol{\theta}$ the multiplicity of the spectrum of the germ $\widehat{S} (\boldsymbol{\theta})$ may change. When approaching such points, the distance between  a pair of different eigenvalues of the germ tends to zero, and we cannot choose  the values  $\widehat{c}^{\circ}_{jl}$, $\widehat{t}^{00}_{jl}$ independent of  $\boldsymbol{\theta}$. Therefore, we are forced to impose an additional condition. It is necessary to take care only about those eigenvalues  for which the corresponding term in representation~(\ref{N*_invar_repr}) is nonzero. Now it is more convenient to use the initial numbering of the eigenvalues of the germ $\widehat{S} (\boldsymbol{\theta})$, agreeing to number them in the nondecreasing order:
$\widehat{\gamma}_1 (\boldsymbol{\theta}) \le  \ldots \le \widehat{\gamma}_n (\boldsymbol{\theta})$. For each $\boldsymbol{\theta}$, by $\widehat{P}^{(k)} (\boldsymbol{\theta})$ we denote the orthogonal projection 
of the space $L_2 (\Omega; \mathbb{C}^n)$ onto the eigenspace of the operator $\widehat{S} (\boldsymbol{\theta})$ corresponding to the eigenvalue $\widehat{\gamma}_k (\boldsymbol{\theta})$. It is clear that for every 
$\boldsymbol{\theta}$ the operator $\widehat{P}^{(k)} (\boldsymbol{\theta})$ coincides with one of the projections  $\widehat{P}_j (\boldsymbol{\theta})$ introduced in Subsection~\ref{eigenval_multipl_section} (but the number $j$ may depend on $\boldsymbol{\theta}$ and changes at points of change in the multiplicity  of the germ spectrum).

\begin{condition}
	\label{cond1}
		$1^\circ$. $\widehat{N}_0(\boldsymbol{\theta})=0$ for all $\boldsymbol{\theta} \in \mathbb{S}^{d-1}$.
		
		\noindent$2^\circ$.  For each pair of indices $(k,r), 1 \le k,r \le n, k \ne r,$ such that 
		 $\widehat{\gamma}_k (\boldsymbol{\theta}_0)\! =\! \widehat{\gamma}_r (\boldsymbol{\theta}_0) $ for some  $\boldsymbol{\theta}_0 \!\in\! \mathbb{S}^{d-1},$ we have ${ {\widehat{P}}^{(k)} (\boldsymbol{\theta}) \widehat{N} (\boldsymbol{\theta}) {\widehat{P}}^{(r)} (\boldsymbol{\theta})\! =\! 0}$ for all $\boldsymbol{\theta} \in \mathbb{S}^{d-1}$.    
	\end{condition}

Assumption $2^\circ$ can be reformulated as follows: we require that, for nonzero (identically)
``blocks''  ${\widehat{P}}^{(k)} (\boldsymbol{\theta}) \widehat{N} (\boldsymbol{\theta}) {\widehat{P}}^{(r)} (\boldsymbol{\theta})$ of the operator $\widehat{N} (\boldsymbol{\theta})$, the branches of eigenvalues   
 $\widehat{\gamma}_k (\boldsymbol{\theta})$ and $\widehat{\gamma}_r (\boldsymbol{\theta})$ do not intersect. 
 Of course, Condition \ref{cond1} is ensured by the following more restrictive condition.

\begin{condition}
	\label{cond2}
	
		$1^\circ$.  $\widehat{N}_0(\boldsymbol{\theta})=0$ for all $\boldsymbol{\theta} \in \mathbb{S}^{d-1}$.
		
		\noindent$2^\circ$. The number $p$ of different eigenvalues of the spectral 
	germ~$\widehat{S}(\boldsymbol{\theta})$ does not depend on $\boldsymbol{\theta} \in \mathbb{S}^{d-1}$.        
\end{condition}

\begin{remark}
	The assumption~$2^\circ$ of Condition~{\ref{cond2}} is a fortiori satisfied  if the spectrum of the germ  $\widehat{S}(\boldsymbol{\theta})$ is simple for all $\boldsymbol{\theta} \in \mathbb{S}^{d-1}$.
\end{remark}

So, we assume that Condition~\ref{cond1} is satisfied. Denote
\begin{align*}
 \widehat{\mathcal{K}} &:= \{ (k,r) \colon 1 \le k,r \le n, \; k \ne r, \;  \widehat{P}^{(k)} (\boldsymbol{\theta}) \widehat{N} (\boldsymbol{\theta}) \widehat{P}^{(r)} (\boldsymbol{\theta}) \not\equiv 0 \},
\\
\widehat{c}^{\circ}_{kr} (\boldsymbol{\theta})&  := \min \{\widehat{c}_*, n^{-1} |\widehat{\gamma}_k (\boldsymbol{\theta}) - \widehat{\gamma}_r (\boldsymbol{\theta})| \}, \quad (k,r) \in \widehat{\mathcal{K}}.
\end{align*}
Since the operator $\widehat{S} (\boldsymbol{\theta})$ depends on $\boldsymbol{\theta} \in \mathbb{S}^{d-1}$ continuously (it is a polynomial of second order), then the perturbation theory of discrete spectrum  shows  that the functions $\widehat{\gamma}_j (\boldsymbol{\theta})$~are continuous on the sphere $\mathbb{S}^{d-1}$. By Condition~\ref{cond1}($2^\circ$), for $(k,r) \in \widehat{\mathcal{K}}$ we have~$|\widehat{\gamma}_k (\boldsymbol{\theta}) - \widehat{\gamma}_r (\boldsymbol{\theta})| > 0$ for all 
$\boldsymbol{\theta} \in \mathbb{S}^{d-1}$, whence 
$\widehat{c}^{\circ}_{kr} := \min_{\boldsymbol{\theta} \in \mathbb{S}^{d-1}} \widehat{c}^{\circ}_{kr} (\boldsymbol{\theta}) > 0$, $(k,r) \in \widehat{\mathcal{K}}$. We put 
\begin{equation}
\label{hatc^circ}
\widehat{c}^{\circ} := \min_{(k,r) \in \widehat{\mathcal{K}}} \widehat{c}^{\circ}_{kr}.
\end{equation}

Clearly, the number~(\ref{hatc^circ})~is a realization of~(\ref{abstr_c^circ}) chosen independent of 
$\boldsymbol{\theta}$.
Under Condition~\ref{cond1}, the number subject to~(\ref{abstr_t^00}) also can be chosen independent of $\boldsymbol{\theta} \in \mathbb{S}^{d-1}$. Taking~(\ref{hatdelta_fixation}) and~(\ref{hatX_1_estmate}) 
into account, we put
\begin{equation*}
\widehat{t}^{00} = (8 \beta_2)^{-1} r_0 \alpha_1^{-3/2} \alpha_0^{1/2} \| g\|_{L_{\infty}}^{-3/2} \| g^{-1}\|_{L_{\infty}}^{-1/2} \widehat{c}^{\circ}.
\end{equation*}
The condition $\widehat{t}^{00} \le \widehat{t}_{0}$ is valid automatically, since $\widehat{c}^{\circ} \le 
\| \widehat{S} (\boldsymbol{\theta}) \| \le \alpha_1 \|g\|_{L_{\infty}}$.

Under Condition~\ref{cond1}, we deduce the following result from Theorem~\ref{th3.3}, by analogy with the proof of Theorem~\ref{th9.2}. 
Now the constants in estimates will depend not only on $\alpha_0$, $\alpha_1$, $\|g\|_{L_\infty}$, $\| g^{-1}\|_{L_\infty}$, and  $r_0$, but also on $\wh{c}^\circ$ and $n$; see Remark~\ref{abstr_constants_remark}.

\begin{theorem} 
\label{th9.4}
Let $\wh{J}_1(\mathbf{k}, \tau )$ and $\wh{J}_2(\mathbf{k}, \tau )$ be the operators defined by \eqref{J1_hat}, \eqref{J2_hat}. 
Suppose that Condition~\emph{\ref{cond1}} \emph{(}or more restrictive Condition~\emph{\ref{cond2}}\emph{)} is satisfied.
 Then for  $\tau \in \mathbb{R},$ $\varepsilon > 0,$ and $\mathbf{k} \in \widetilde{\Omega}$  we have
{\allowdisplaybreaks
\begin{align*}
 \|  \wh{J}_1(\mathbf{k}, \eps^{-1} \tau ) \mathcal{R}(\k,\eps)^{3/4} \|_{L_2(\Omega) \to L_2(\Omega)}
&\le \wh{\mathcal C}_5 (1+ |\tau|)^{1/2} \eps,
\\
 \|  \wh{J}_2(\mathbf{k}, \eps^{-1} \tau ) \mathcal{R}(\k,\eps)^{1/4}\|_{L_2(\Omega) \to L_2(\Omega)}
&\le \wh{\mathcal C}_6 (1+ |\tau|)^{1/2}.
\end{align*}
}
\hspace{-3mm}
The constants $\wh{\mathcal C}_5$ and $\wh{\mathcal C}_6$ depend on $\alpha_0,$ $\alpha_1,$ $\|g\|_{L_\infty},$  $\|g^{-1}\|_{L_\infty},$  $r_0,$  $n,$  and $\widehat{c}^\circ$.
\end{theorem}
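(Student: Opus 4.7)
The plan is to mimic the proof of Theorem~\ref{th9.2}, replacing every application of the abstract Theorem~\ref{th3.2} by an application of Theorem~\ref{th3.3}. The crucial preparatory point is that under Condition~\ref{cond1} the threshold $\widehat{c}^\circ$ defined in~\eqref{hatc^circ} is strictly positive, and the quantity $\widehat{t}^{00}$ fixed just after~\eqref{hatc^circ} is independent of $\boldsymbol{\theta}\in\mathbb{S}^{d-1}$. By Lemma~\ref{abstr_N_and_Nhat_lemma} and by the identification $\widehat{A}(t,\boldsymbol{\theta})=\widehat{\mathcal A}(t\boldsymbol{\theta})$, the abstract hypothesis $N_0=0$ of Theorem~\ref{th3.3} is satisfied uniformly in $\boldsymbol{\theta}$, so the constants $C_{11},\dots,C_{14}'$ appearing there can be chosen to depend only on $\alpha_0,\alpha_1,\|g\|_{L_\infty},\|g^{-1}\|_{L_\infty},r_0,n,\widehat{c}^\circ$ (cf.\ Remark~\ref{abstr_constants_remark}).

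First I will split $L_2(\Omega;\mathbb{C}^n)=\widehat{\mathfrak N}\oplus\widehat{\mathfrak N}^\perp$ and the parameter region $\widetilde{\Omega}=\{|\mathbf{k}|\le\widehat{t}^{00}\}\cup\{|\mathbf{k}|>\widehat{t}^{00}\}$. On the ``germ part'' $\widehat{P}$ with $|\mathbf{k}|\le\widehat{t}^{00}$, I apply Theorem~\ref{th3.3} with $\tau$ replaced by $\varepsilon^{-1}\tau$, use~\eqref{hatS_P=hatA^0_P} to rewrite $(t^2\widehat{S}(\boldsymbol{\theta}))^{1/2}\widehat{P}$ as $\widehat{\mathcal A}^0(\mathbf{k})^{1/2}\widehat{P}$, and invoke the identity~\eqref{R_P} to convert $\varepsilon^{3/2}(t^2+\varepsilon^2)^{-3/4}\widehat{P}$ into $\mathcal R(\mathbf{k},\varepsilon)^{3/4}\widehat{P}$ and similarly for the exponent $1/4$. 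This gives the bounds
\[
\|\widehat{J}_1(\mathbf{k},\varepsilon^{-1}\tau)\mathcal R(\mathbf{k},\varepsilon)^{3/4}\widehat{P}\|\le \widehat{\mathcal C}_5^{(1)}(1+|\tau|)^{1/2}\varepsilon,\quad
\|\widehat{J}_2(\mathbf{k},\varepsilon^{-1}\tau)\mathcal R(\mathbf{k},\varepsilon)^{1/4}\widehat{P}\|\le\widehat{\mathcal C}_6^{(1)}(1+|\tau|)^{1/2}
\]
for $|\mathbf{k}|\le\widehat{t}^{00}$.

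Next I will handle the remaining three regions by crude but uniform estimates. For $|\mathbf{k}|>\widehat{t}^{00}$ and the $\widehat{P}$-part, inequality~\eqref{R_P_2} (with $\widehat{t}_0$ replaced by $\widehat{t}^{00}$) combined with the trivial bounds $\|\widehat{J}_1(\mathbf{k},\varepsilon^{-1}\tau)\|\le 2$ and (via~\eqref{A(k)_nondegenerated_and_c_*} for both $\widehat{\mathcal A}(\mathbf{k})$ and $\widehat{\mathcal A}^0(\mathbf{k})$) $\|\widehat{J}_2(\mathbf{k},\varepsilon^{-1}\tau)\|\le 2\widehat{c}_*^{-1/2}|\mathbf{k}|^{-1}\le 2\widehat{c}_*^{-1/2}(\widehat{t}^{00})^{-1}$ yields the required bounds. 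For the $(I-\widehat{P})$-part of $\widehat{J}_1$, I combine $\|\widehat{J}_1\|\le 2$ with~\eqref{R(k,eps)(I-P)_est} at $s=1$; for the $(I-\widehat{P})$-part of $\widehat{J}_2$, the resolvents $\widehat{\mathcal A}(\mathbf{k})^{-1/2}(I-\widehat{P})$ and $\widehat{\mathcal A}^0(\mathbf{k})^{-1/2}(I-\widehat{P})$ are uniformly bounded on $|\mathbf{k}|\le\widehat{t}^{00}$ by~\eqref{abstr_F(t)_threshold_1} and~\eqref{A(k)_nondegenerated_and_c_*} (this is exactly the analogue of~\eqref{9.9d}), giving the desired control.

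Adding the four contributions yields the two claimed inequalities with constants depending only on the listed parameters. The only nontrivial point in the argument is the need to make the threshold $\widehat{t}^{00}$ $\boldsymbol{\theta}$-independent; this is precisely what Condition~\ref{cond1}($2^\circ$) guarantees, and it is also the reason the constants $\widehat{\mathcal C}_5,\widehat{\mathcal C}_6$ now depend on $n$ and $\widehat{c}^\circ$ in addition to the parameters appearing in Theorem~\ref{th9.2}. Everything else reduces to bookkeeping parallel to the proof of Theorem~\ref{th9.2}.
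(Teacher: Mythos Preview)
Your proposal is correct and follows essentially the same route as the paper, which simply states that the result is deduced from Theorem~\ref{th3.3} ``by analogy with the proof of Theorem~\ref{th9.2}''; your write-up makes that analogy explicit. One small inaccuracy: the reference to Lemma~\ref{abstr_N_and_Nhat_lemma} is misplaced here, since that lemma concerns the sandwiched situation $A(t)=M^*\widehat{A}(t)M$; for the operator $\widehat{\mathcal A}(\mathbf{k})$ the abstract $N_0$ is precisely $\widehat{N}_0(\boldsymbol{\theta})$, so Condition~\ref{cond1}($1^\circ$) gives the hypothesis of Theorem~\ref{th3.3} directly.
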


We also need the following statement; the proof is similar to the proof of  
Proposition~\ref{prop9.2a}.

\begin{proposition}
\label{prop9.3a}
Under the assumptions of Theorem \emph{\ref{th9.4}}, for $\tau \in \R,$ $\eps >0,$ and $\k \in \wt{\Omega}$ 
we have
\begin{align*}
\|  \wh{J}_2(\mathbf{k}, \eps^{-1} \tau ) \|_{L_2(\Omega) \to L_2(\Omega)}
\le \wh{\mathcal C}_6' (1+ \eps^{-1/3}|\tau|^{1/3}).
\end{align*}
The constant  $\wh{\mathcal C}_6'$ depends on $\alpha_0,$ $\alpha_1,$ $\|g\|_{L_\infty},$  
$\|g^{-1}\|_{L_\infty},$ $r_0,$ $n,$ and $\widehat{c}^\circ$.
\end{proposition}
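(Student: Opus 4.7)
The argument will be a direct analog of the proof of Proposition~\ref{prop9.2a}, with the hypothesis $\wh{N}=0$ replaced by Condition~\ref{cond1} (giving $\wh{N}_0(\bt)=0$) and the threshold radius $\wh{t}_0$ replaced by the smaller radius $\wh{t}^{00}$ constructed in Subsection~\ref{sec9.3}. The plan is as follows.

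First, I would apply Theorem~\ref{th2.7} to the family $\wh{A}(t,\bt)=\wh{\mathcal A}(t\bt)$, using that under Condition~\ref{cond1} the hypothesis $N_0=0$ is fulfilled uniformly in $\bt\in\mathbb S^{d-1}$, and that the construction of Subsection~\ref{sec9.3} produces a threshold $\wh{t}^{00}$ that is independent of $\bt$. Replacing $\tau$ by $\eps^{-1}\tau$ in the second estimate of Theorem~\ref{th2.7}, I obtain, uniformly in $\bt$,
\begin{equation*}
\|\wh{J}_2(\k,\eps^{-1}\tau)\wh{P}\|_{L_2(\Omega)\to L_2(\Omega)}
\le \wh{\mathcal C}^{(1)}\bigl(1+\eps^{-1}|\tau||\k|^2\bigr),
\qquad |\k|\le \wh{t}^{00}.
\end{equation*}
By Remark~\ref{abstr_constants_remark}, $\wh{\mathcal C}^{(1)}$ depends on the listed parameters, including $\wh{c}^\circ$ and $n$.

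Next, I would control the orthogonal complement. For $|\k|\le \wh{t}^{00}$ the threshold approximation \eqref{abstr_F(t)_threshold_1} together with the nondegeneracy \eqref{A(k)_nondegenerated_and_c_*} (applied to both $\wh{\mathcal A}(\k)$ and $\wh{\mathcal A}^0(\k)$, the latter via \eqref{effective_oper_symb}) shows that the norms of $\wh{\mathcal A}(\k)^{-1/2}(I-\wh{P})$ and $\wh{\mathcal A}^0(\k)^{-1/2}(I-\wh{P})$ are uniformly bounded, so
\begin{equation*}
\|\wh{J}_2(\k,\eps^{-1}\tau)(I-\wh{P})\|_{L_2(\Omega)\to L_2(\Omega)}\le \wh{\mathcal C}^{(2)},\qquad |\k|\le \wh{t}^{00}.
\end{equation*}

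I would then split into cases. If $\eps|\tau|^{-1}>(\wh{t}^{00})^3$, the trivial bound $\|\wh{J}_2(\k,\eps^{-1}\tau)\|\le 2\eps^{-1}|\tau|$ from \eqref{9.9} yields $\|\wh{J}_2\|\le 2(\wh{t}^{00})^{-3}$, a constant absorbed into $\wh{\mathcal C}_6'$. Otherwise, $\eps|\tau|^{-1}\le(\wh{t}^{00})^3$, and I split the range of $\k$ at $|\k|=\eps^{1/3}|\tau|^{-1/3}\le \wh{t}^{00}$. For $|\k|\le \eps^{1/3}|\tau|^{-1/3}$, the inequality $\eps^{-1}|\tau||\k|^2\le \eps^{-1/3}|\tau|^{1/3}$ together with the two displayed bounds on $\wh{P}$ and $I-\wh{P}$ parts gives the desired estimate. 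For $|\k|>\eps^{1/3}|\tau|^{-1/3}$ (in $\wt\Omega$), I use $\|\wh{\mathcal A}(\k)^{-1/2}\sin(\sigma\wh{\mathcal A}(\k)^{1/2})\|\le \wh{c}_*^{-1/2}|\k|^{-1}$ and the analogous bound for $\wh{\mathcal A}^0(\k)$, which gives $\|\wh{J}_2(\k,\eps^{-1}\tau)\|\le 2\wh{c}_*^{-1/2}|\k|^{-1}\le 2\wh{c}_*^{-1/2}\eps^{-1/3}|\tau|^{1/3}$.

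There is no essential obstacle: the argument is parallel to that of Proposition~\ref{prop9.2a}. The only point requiring attention is that the constants acquire dependence on $\wh{c}^\circ$ and $n$, coming from the use of Theorem~\ref{th2.7} (and thus $C_{13}$, $C_{14}$) in place of Theorem~\ref{abstr_exp_enched_est_thrm1}; this is consistent with the constants allowed in the statement.
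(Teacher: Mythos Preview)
Your proposal is correct and follows essentially the same approach as the paper, which simply states that the proof is similar to that of Proposition~\ref{prop9.2a}. You have correctly identified that the only changes needed are to invoke Theorem~\ref{th2.7} (the case $N_0=0$) in place of the estimate~\eqref{2.8}, to replace the threshold radius $\wh{t}_0$ by $\wh{t}^{00}$, and to note that the resulting constants now depend on $n$ and $\wh{c}^\circ$ via Remark~\ref{abstr_constants_remark}.
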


\subsection{Approximation of the operator $\widehat{\mathcal{A}}(\mathbf{k})^{-1/2} \sin(\varepsilon^{-1} \tau \widehat{\mathcal{A}} (\mathbf{k})^{1/2})$ in the \hbox{``energy''} norm}
Now we apply Theorem~\ref{abstr_sin_general_thrm} to the operator
$\widehat{A}(t, \boldsymbol{\theta}) = \widehat{\mathcal{A}}(\mathbf{k})$ and take 
Remark~\ref{abstr_constants_remark} into account. By \eqref{hat_Z_theta},  
\begin{equation}
\label{tZP_realization}
t \widehat{Z}(\boldsymbol{\theta}) \widehat{P} = \Lambda b(\mathbf{k}) \widehat{P} = \Lambda b(\mathbf{D}+\mathbf{k}) \widehat{P}.
\end{equation}
Denote 
\begin{equation}
\label{hatJ}
\widehat{J}(\mathbf{k}, \tau)
 := \widehat{\mathcal{A}}(\mathbf{k})^{-1/2} \sin ( \tau \widehat{\mathcal{A}}(\mathbf{k})^{1/2}) 
- (I + \Lambda b(\mathbf{D} + \mathbf{k}) \wh{P}) \widehat{\mathcal{A}}^0(\mathbf{k})^{-1/2} \sin ( \tau \widehat{\mathcal{A}}^0(\mathbf{k})^{1/2}).
\end{equation}
Applying Theorem~\ref{abstr_sin_general_thrm}, we have 
\begin{equation}
\label{sin_est_|k|<t^0}
\bigl\| \widehat{\mathcal{A}}(\mathbf{k})^{1/2} \widehat{J}(\mathbf{k}, \varepsilon^{-1} \tau) \mathcal{R}(\mathbf{k}, \varepsilon)  \widehat{P} \bigr\|_{L_2(\Omega) \to L_2 (\Omega) }  \le \widehat{\mathcal{C}}'_7 (1 +  |\tau|) \varepsilon,  \quad \varepsilon > 0, \; \tau \in \mathbb{R}, \; |\mathbf{k} | \le \widehat{t}_0.
\end{equation}
The constant $\widehat{\mathcal{C}}'_7$ depends only on  $\alpha_0$, $\alpha_1$, $\|g\|_{L_\infty}$, $\|g^{-1}\|_{L_\infty}$, and $r_0$.

Estimates for $| \mathbf{k} | > \widehat{t}_0$ are trivial. Obviously, for  
$\varepsilon > 0$, $\tau \in \mathbb{R}$,  and $\mathbf{k}  \in \wt{\Omega}$ we have
\begin{multline}
\label{9.21a}
\bigl\| \widehat{\mathcal{A}}(\mathbf{k})^{1/2} \widehat{J}(\mathbf{k}, \varepsilon^{-1} \tau) \mathcal{R}(\mathbf{k}, \varepsilon)^{1/2}  \widehat{P} \bigr\|_{L_2(\Omega) \to L_2 (\Omega) } 
 \le \bigl\|  \mathcal{R}(\mathbf{k}, \varepsilon)^{1/2}  \widehat{P} \bigr\|
\\
 \times
 \left( 1 + \bigl\| \widehat{\mathcal{A}}(\mathbf{k})^{1/2} \widehat{\mathcal{A}}^0(\mathbf{k})^{-1/2}
 \bigr\|
 + \bigl\| \widehat{\mathcal{A}}(\mathbf{k})^{1/2} \Lambda b(\D+\k) \wh{P} \widehat{\mathcal{A}}^0(\mathbf{k})^{-1/2}\bigr\|\right).
\end{multline}
By~(\ref{hatA(k)}) and~(\ref{g^0_est}), 
\begin{equation}
\label{sin_est_|k|>t^0_1}
\| \widehat{\mathcal{A}}(\mathbf{k})^{1/2}  \widehat{\mathcal{A}}^0(\mathbf{k})^{-1/2} \|
 = \| g^{1/2} b(\mathbf{D} + \mathbf{k}) 
 \widehat{\mathcal{A}}^0(\mathbf{k})^{-1/2} \| \le 
 \|g\|_{L_\infty}^{1/2} \|g^{-1}\|_{L_\infty}^{1/2}, \quad	 \mathbf{k} \in \widetilde{\Omega}.
\end{equation}
Next, we use the estimate 
\begin{equation}
\label{sqrt_hatA_Lambda_P_est}
\| \widehat{\mathcal{A}} (\mathbf{k})^{1/2} \Lambda \widehat{P}_m \|_{L_2 (\Omega) \to L_2 (\Omega)} \le C_\Lambda, \quad \mathbf{k} \in \widetilde{\Omega},
\end{equation}
where $\widehat{P}_m$~is the orthogonal projection of the space $\mathfrak{H}_* = L_2(\Omega; \mathbb{C}^m)$ onto the subspace of constants, and 
$C_\Lambda = \|g\|_{L_\infty}^{1/2} \bigl( 1 + \alpha_1^{1/2} r_1 M_1  \bigr)$.
It is easy to check this estimate using \eqref{rank_alpha_ineq}, \eqref{6.11a}, and \eqref{Lambda_est}. Then 
\begin{equation}
\label{sin_est_|k|>t^0_2}
\begin{split}
&\| \widehat{\mathcal{A}}(\mathbf{k})^{1/2} \Lambda b(\mathbf{D} + \mathbf{k}) \wh{P} \widehat{\mathcal{A}}^0(\mathbf{k})^{-1/2}  \|_{L_2 (\Omega) \to L_2 (\Omega)} 
\\
 &\quad\le C_\Lambda
\|b(\mathbf{D} + \mathbf{k}) \widehat{\mathcal{A}}^0(\mathbf{k})^{-1/2} \|_{L_2 (\Omega) \to L_2 (\Omega)} 
\le C_{\Lambda} \|g^{-1}\|_{L_\infty}^{1/2}, \quad \mathbf{k} \in \widetilde{\Omega}.
\end{split}
\end{equation}
As a result, from~\eqref{R_P_2} with $s=1$, \eqref{9.21a}, \eqref{sin_est_|k|>t^0_1}, and \eqref{sin_est_|k|>t^0_2} 
it follows that
\begin{equation}
\label{sin_est_|k|>t^0}
\bigl\| \widehat{\mathcal{A}}(\mathbf{k})^{1/2} \widehat{J}(\mathbf{k}, \varepsilon^{-1} \tau) 
\mathcal{R}(\mathbf{k}, \varepsilon)^{1/2}  \widehat{P} \bigr\|_{L_2(\Omega) \to L_2 (\Omega) }  \le  
\wh{\mathcal C}''_7 \eps,
 \quad \varepsilon > 0, \ \tau \in \mathbb{R}, \ \mathbf{k} \in \widetilde{\Omega}, \ |\mathbf{k}| > \widehat{t}_0,
\end{equation}
where $\wh{\mathcal C}''_7 =  (\widehat{t}_0)^{-1} (1 +  \|g\|_{L_\infty}^{1/2} \|g^{-1}\|_{L_\infty}^{1/2} + C_{\Lambda} \|g^{-1}\|_{L_\infty}^{1/2})$.

Now we estimate the operator 
\begin{align*}
&\widehat{\mathcal{A}}(\mathbf{k})^{1/2} \widehat{J}(\mathbf{k}, \varepsilon^{-1} \tau) \mathcal{R}(\mathbf{k}, \varepsilon)^{1/2}  (I\!\!-\!\!\widehat{P}) 
\\
&\!=\!
( \sin(\eps^{-1} \tau \widehat{\mathcal{A}}(\mathbf{k})^{1/2})\! -\! \widehat{\mathcal{A}}(\mathbf{k})^{1/2}
\widehat{\mathcal{A}}^0(\mathbf{k})^{-1/2}
\!\sin(\eps^{-1} \tau \widehat{\mathcal{A}}^0(\mathbf{k})^{-1/2})) \mathcal{R}(\mathbf{k}, \varepsilon)^{1/2}  (I\!\!- \!\!\widehat{P}).
\end{align*}
Applying \eqref{R(k,eps)(I-P)_est} with $s=1$ and \eqref{sin_est_|k|>t^0_1}, for 
$\varepsilon > 0$, $\tau \in \mathbb{R}$, and $\mathbf{k} \in \widetilde{\Omega}$ we have 
\begin{equation}
\label{sin_est_wo_hatP}
\| \widehat{\mathcal{A}}(\mathbf{k})^{1/2} \widehat{J}(\mathbf{k}, \varepsilon^{-1} \tau) \mathcal{R}(\mathbf{k}, \varepsilon)^{1/2}  (I-\widehat{P}) \|_{L_2 (\Omega) \to L_2 (\Omega)}  \le 
\wh{\mathcal C}'''_7 \eps,
\end{equation}
where  $\wh{\mathcal C}'''_7 =  r_0^{-1} (1 +  \|g\|_{L_\infty}^{1/2} \|g^{-1}\|_{L_\infty}^{1/2})$.

\smallskip

Relations~(\ref{sin_est_|k|<t^0}), (\ref{sin_est_|k|>t^0}), and~(\ref{sin_est_wo_hatP}) (see~\hbox{\cite[(7.36)]{M}}) 
imply the following result.

\begin{theorem}[see~\cite{M}]
	\label{A(k)_sin_general_thrm}
Suppose that  $\widehat{J}(\mathbf{k}, \tau)$ is the operator defined by~\eqref{hatJ}. For $\tau \in \mathbb{R},$ $\varepsilon > 0,$ and $\mathbf{k} \in \widetilde{\Omega}$ we have
	\begin{equation*}
	\bigl\| \widehat{\mathcal{A}}(\mathbf{k})^{1/2} \widehat{J}(\mathbf{k}, \varepsilon^{-1} \tau) \mathcal{R}(\mathbf{k}, \varepsilon) \bigr\|_{L_2(\Omega) \to L_2 (\Omega) }  \le \widehat{\mathcal{C}}_7 (1 + |\tau|) \varepsilon.
	\end{equation*}
The constant $\widehat{\mathcal{C}}_7$ depends only on $\alpha_0,$ $\alpha_1,$ $\|g\|_{L_\infty},$ $\|g^{-1}\|_{L_\infty},$ $r_0$, and $r_1$.
\end{theorem}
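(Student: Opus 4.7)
The plan is to combine the three bounds established in the discussion immediately preceding the theorem, namely \eqref{sin_est_|k|<t^0}, \eqref{sin_est_|k|>t^0}, and \eqref{sin_est_wo_hatP}, into a single uniform estimate. First I would decompose the identity on the right as $I = \widehat{P} + (I - \widehat{P})$ and observe that $\mathcal{R}(\mathbf{k},\varepsilon)$ commutes with $\widehat{P}$ (since $\mathcal{H}_0(\mathbf{k})\widehat{P} = |\mathbf{k}|^2 \widehat{P}$ acts as a scalar on the constants). This splits the operator under consideration as
\begin{equation*}
\widehat{\mathcal{A}}(\mathbf{k})^{1/2}\widehat{J}(\mathbf{k},\varepsilon^{-1}\tau)\mathcal{R}(\mathbf{k},\varepsilon)\widehat{P} + \widehat{\mathcal{A}}(\mathbf{k})^{1/2}\widehat{J}(\mathbf{k},\varepsilon^{-1}\tau)\mathcal{R}(\mathbf{k},\varepsilon)(I-\widehat{P}),
\end{equation*}
and it suffices to bound each summand by $C(1+|\tau|)\varepsilon$.

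For the first summand, I further split by the size of $\mathbf{k}$. When $|\mathbf{k}| \le \widehat{t}_0$ the estimate \eqref{sin_est_|k|<t^0} applies directly and yields the desired bound $\widehat{\mathcal{C}}'_7(1+|\tau|)\varepsilon$. When $|\mathbf{k}| > \widehat{t}_0$, I would factor $\mathcal{R}(\mathbf{k},\varepsilon)\widehat{P} = \mathcal{R}(\mathbf{k},\varepsilon)^{1/2}\widehat{P} \cdot \mathcal{R}(\mathbf{k},\varepsilon)^{1/2}\widehat{P}$ and use $\|\mathcal{R}(\mathbf{k},\varepsilon)^{1/2}\widehat{P}\|_{L_2(\Omega)\to L_2(\Omega)} \le 1$ (immediate from \eqref{R_P}) to reduce to \eqref{sin_est_|k|>t^0}, giving a bound of $\widehat{\mathcal{C}}''_7 \varepsilon$.

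For the second summand I would similarly factor $\mathcal{R}(\mathbf{k},\varepsilon)(I-\widehat{P}) = \mathcal{R}(\mathbf{k},\varepsilon)^{1/2}(I-\widehat{P}) \cdot \mathcal{R}(\mathbf{k},\varepsilon)^{1/2}(I-\widehat{P})$, where the second factor has norm at most $1$ (a consequence of the computation in \eqref{R(k,eps)(I-P)_est} with $s=0$, or directly from the discrete Fourier expansion), and then invoke \eqref{sin_est_wo_hatP} to obtain a bound of $\widehat{\mathcal{C}}'''_7 \varepsilon$.

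Summing the three contributions and collecting the constants produces the claim with $\widehat{\mathcal{C}}_7 := \widehat{\mathcal{C}}'_7 + \widehat{\mathcal{C}}''_7 + \widehat{\mathcal{C}}'''_7$, whose dependence reduces to $\alpha_0, \alpha_1, \|g\|_{L_\infty}, \|g^{-1}\|_{L_\infty}, r_0$, and (through $C_\Lambda$ in \eqref{sqrt_hatA_Lambda_P_est}) $r_1$. There is no essential obstacle here, since all substantive work has already been done: the nontrivial ingredient is the estimate \eqref{sin_est_|k|<t^0} derived from Theorem~\ref{abstr_sin_sandwiched_general_thrm} applied to the family $\widehat{A}(t,\boldsymbol{\theta})$ with constants independent of $\boldsymbol{\theta}$. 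The only care required in assembling the pieces is the bookkeeping remark that the factor $\mathcal{R}(\mathbf{k},\varepsilon)$ in the statement is a stronger-looking smoothing than the $\mathcal{R}(\mathbf{k},\varepsilon)^{1/2}$ appearing in two of the auxiliary bounds; this strengthening is obtained for free because $\|\mathcal{R}(\mathbf{k},\varepsilon)^{1/2}\|_{L_2(\Omega)\to L_2(\Omega)} \le 1$.
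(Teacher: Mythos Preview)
Your proposal is correct and follows precisely the paper's approach: the paper states that the theorem is implied by relations \eqref{sin_est_|k|<t^0}, \eqref{sin_est_|k|>t^0}, and \eqref{sin_est_wo_hatP}, which is exactly the decomposition and combination you describe. One small inaccuracy: the estimate \eqref{sin_est_|k|<t^0} comes from Theorem~\ref{abstr_sin_general_thrm} (the unsandwiched version, since here $f=\mathbf{1}_n$), not Theorem~\ref{abstr_sin_sandwiched_general_thrm}; this does not affect the argument.
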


\subsection{Approximation of the operator $\widehat{\mathcal{A}}(\mathbf{k})^{-1/2} \sin(\varepsilon^{-1} \tau \widehat{\mathcal{A}} (\mathbf{k})^{1/2})$ in the \hbox{energy} norm. Improvement of the results}
Under Condition \ref{cond_B}, we apply Theorem~\ref{th3.5}.
Taking~(\ref{hatS_P=hatA^0_P}) and~(\ref{R_P}) into account, for $\tau \in \mathbb{R}$,  $\varepsilon > 0$, and
 $|\mathbf{k}| \le \widehat{t}_0$ we have 
\begin{equation*} 
\bigl\| \widehat{\mathcal{A}}(\mathbf{k})^{1/2} \widehat{J}(\mathbf{k}, \varepsilon^{-1} \tau) \mathcal{R}(\mathbf{k}, \varepsilon)^{3/4}  \widehat{P} \bigr\|_{L_2(\Omega) \to L_2 (\Omega) }   \le  \widehat{\mathcal{C}}'_8(1 + | \tau |)^{1/2} \varepsilon.
\end{equation*} 
Here $\widehat{\mathcal{C}}'_8$ depends on $\alpha_0$, $\alpha_1$, $\|g\|_{L_\infty}$, $\|g^{-1}\|_{L_\infty}$, and $r_0$. Together with~(\ref{sin_est_|k|>t^0}) and~(\ref{sin_est_wo_hatP}) this implies the following result.

\begin{theorem}
	\label{A(k)_sin_enchanced_thrm_1}
 Let $\widehat{J}(\mathbf{k}, \tau)$ be the operator defined by  \eqref{hatJ}.
Suppose that Condition \emph{\ref{cond_B}} is satisfied.
Then for $\tau \in \mathbb{R},$ $\varepsilon > 0,$ and $\mathbf{k} \in \widetilde{\Omega}$ we have 
	\begin{equation*} 
	\bigl\| \widehat{\mathcal{A}}(\mathbf{k})^{1/2} \widehat{J}(\mathbf{k}, \varepsilon^{-1} \tau) \mathcal{R}(\mathbf{k}, \varepsilon)^{3/4} \bigr\|_{L_2(\Omega) \to L_2 (\Omega) } \le  \widehat{\mathcal{C}}_8 (1 + | \tau |)^{1/2} \varepsilon.
	\end{equation*} 
	The constant $\widehat{\mathcal{C}}_8$ depends only on $\alpha_0,$ $\alpha_1,$ $\|g\|_{L_\infty},$ $\|g^{-1}\|_{L_\infty},$ $r_0$, and $r_1$.	
\end{theorem}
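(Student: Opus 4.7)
The plan is to decompose the operator under the norm sign into three pieces, according to whether we act on the range of $\wh P$ or its orthogonal complement, and (on the $\wh P$-piece) whether $|\k|\le \wh t_0$ or $|\k|>\wh t_0$. The only piece for which the present improvement is genuinely needed is the $\wh P$-piece on small quasimomenta; the remaining two pieces can be read off from the bounds already established in the proof of Theorem~\ref{A(k)_sin_general_thrm} up to a cosmetic adjustment of exponents.

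For $|\k|\le \wh t_0$ acting on $\wh P$, I would apply the abstract Theorem~\ref{th3.5} to the one-parameter family $\wh A(t,\boldsymbol\theta)=\wh{\mathcal A}(t\boldsymbol\theta)$ introduced in \S7. Since $\wh{\mathcal A}$ corresponds to $f=\1_n$, the abstract operator $N$ coincides with $\wh N(\boldsymbol\theta)$, so Condition~\ref{cond_B} is precisely the abstract hypothesis $N=0$. Translating the conclusion via the identifications $\wh S(\boldsymbol\theta)\wh P = \wh{\mathcal A}^0(\k)\wh P$ from \eqref{hatS_P=hatA^0_P}, $t\wh Z(\boldsymbol\theta)\wh P = \Lambda b(\mathbf D+\k)\wh P$ from \eqref{tZP_realization}, and $\varepsilon^{3/2}(t^2+\varepsilon^2)^{-3/4}\wh P = \mathcal R(\k,\varepsilon)^{3/4}\wh P$ from \eqref{R_P}, Theorem~\ref{th3.5} will yield
\begin{equation*}
\|\wh{\mathcal A}(\k)^{1/2}\wh J(\k,\varepsilon^{-1}\tau)\mathcal R(\k,\varepsilon)^{3/4}\wh P\|_{L_2(\Omega)\to L_2(\Omega)}\le \wh{\mathcal C}^{(1)}(1+|\tau|)^{1/2}\varepsilon,\qquad |\k|\le\wh t_0.
\end{equation*}
The $\boldsymbol\theta$-independence of the constant follows from Remark~\ref{abstr_constants_remark}, combined with the observation (made in \S7) that $\wh c_*,\wh\delta,\wh t_0,\|\wh X_1(\boldsymbol\theta)\|$ all admit uniform bounds in terms of $\alpha_0,\alpha_1,\|g\|_{L_\infty},\|g^{-1}\|_{L_\infty},r_0$.

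For the two remaining pieces, I would reuse estimates \eqref{sin_est_|k|>t^0} and \eqref{sin_est_wo_hatP}, which already bound the relevant quantities with $\mathcal R(\k,\varepsilon)^{1/2}$ in place of $\mathcal R(\k,\varepsilon)^{3/4}$ by a $\tau$-independent multiple of~$\varepsilon$. To transfer these to $\mathcal R(\k,\varepsilon)^{3/4}$, I note that $\mathcal R(\k,\varepsilon)$ is positive with $\|\mathcal R(\k,\varepsilon)\|\le 1$ and commutes with $\wh P$ (because the constants form an eigenspace of $\mathcal H_0(\k)$). Hence $\mathcal R^{3/4}\wh P = \mathcal R^{1/2}\wh P\cdot\mathcal R^{1/4}$ and $\mathcal R^{3/4}(I-\wh P)=\mathcal R^{1/2}(I-\wh P)\cdot\mathcal R^{1/4}$, and since $\|\mathcal R^{1/4}\|\le 1$ the $O(\varepsilon)$ bounds persist for both pieces. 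Summing the three estimates produces the claimed inequality with $\wh{\mathcal C}_8$ expressible through $\wh{\mathcal C}^{(1)},\wh{\mathcal C}''_7,\wh{\mathcal C}'''_7$, i.e.\ depending only on $\alpha_0,\alpha_1,\|g\|_{L_\infty},\|g^{-1}\|_{L_\infty},r_0,r_1$.

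There is no real obstacle: everything reduces to invoking the abstract improvement (Theorem~\ref{th3.5}) on the threshold piece and routinely importing the large-$\k$ and $(I-\wh P)$ estimates from the proof of Theorem~\ref{A(k)_sin_general_thrm}. The only points that require attention are (i) verifying that the abstract condition $N=0$ is automatic here because $f=\1_n$ makes the isomorphism $M$ trivial, so Lemma~\ref{abstr_N_and_Nhat_lemma} identifies $N$ with $\wh N(\boldsymbol\theta)$, and (ii) the commutation remark allowing the exponent $1/2$ on $\mathcal R$ to be upgraded to $3/4$ for free in the two auxiliary pieces.
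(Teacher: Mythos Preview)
Your proposal is correct and follows essentially the same route as the paper: apply the abstract Theorem~\ref{th3.5} on the $\wh P$-piece for $|\k|\le\wh t_0$, then reuse the already-established bounds \eqref{sin_est_|k|>t^0} and \eqref{sin_est_wo_hatP} for the remaining pieces, noting that $\|\mathcal R(\k,\varepsilon)^{1/4}\|\le 1$ lets you pass from $\mathcal R^{1/2}$ to $\mathcal R^{3/4}$ for free. One trivial simplification: since you are working directly with the family $\wh A(t,\boldsymbol\theta)$ (the case $f=\1_n$), the abstract $N$ \emph{is} $\wh N(\boldsymbol\theta)$ by definition, so there is no need to invoke Lemma~\ref{abstr_N_and_Nhat_lemma}.
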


Similarly, combining Theorem~\ref{abstr_sin_enchanced_thrm_2},  
\eqref{sin_est_wo_hatP}, and the analog of \eqref{sin_est_|k|>t^0}  (with $\widehat{t}_0$ replaced by  $\widehat{t}^{00}$), we arrive at the following result.

\begin{theorem}
	\label{A(k)_sin_enchanced_thrm_2}
		Let $\widehat{J}(\mathbf{k}, \tau)$ be the operator defined by  \eqref{hatJ}.
Suppose that Condition~\emph{\ref{cond1}} \emph{(}or more restrictive Condition~\emph{\ref{cond2}}\emph{)} is satisfied. Then for $\tau \in \mathbb{R},$ $\varepsilon > 0,$ and $\mathbf{k} \in \widetilde{\Omega}$ we have 
	\begin{equation*} 
	\bigl\| \widehat{\mathcal{A}}(\mathbf{k})^{1/2} \widehat{J}(\mathbf{k}, \varepsilon^{-1} \tau) \mathcal{R}(\mathbf{k}, \varepsilon)^{3/4}  \bigr\|_{L_2(\Omega) \to L_2 (\Omega) } \le \widehat{\mathcal{C}}_9 
	(1 + | \tau|)^{1/2} \varepsilon.
	\end{equation*}
The constant $\widehat{\mathcal{C}}_9$ depends on $\alpha_0,$ $\alpha_1,$ $\|g\|_{L_\infty},$ $\|g^{-1}\|_{L_\infty},$ $r_0,$ $r_1,$  $n,$ and $\widehat{c}^\circ$.
\end{theorem}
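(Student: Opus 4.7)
The plan is to mirror the three-region decomposition used in the proof sketch for Theorem~\ref{A(k)_sin_enchanced_thrm_1}, replacing the application of the abstract Theorem~\ref{th3.5} by that of Theorem~\ref{abstr_sin_enchanced_thrm_2}. I split the operator as
\[
\wh{\mathcal A}(\k)^{1/2} \wh J(\k,\eps^{-1}\tau) \mathcal R(\k,\eps)^{3/4}
= \wh{\mathcal A}(\k)^{1/2} \wh J(\k,\eps^{-1}\tau) \mathcal R(\k,\eps)^{3/4} \wh P
 + \wh{\mathcal A}(\k)^{1/2} \wh J(\k,\eps^{-1}\tau) \mathcal R(\k,\eps)^{3/4} (I-\wh P),
\]
and bound each summand separately, further splitting the first one according to whether $|\k|\le \wh t^{00}$ or $|\k|>\wh t^{00}$.

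First, I verify that the abstract machinery can be applied with a threshold $t^{00}$ that is independent of $\boldsymbol\theta$. Under Condition~\ref{cond1}, by Lemma~\ref{abstr_N_and_Nhat_lemma} (with $f=\1_n$, $M=I$, $\wh P_Q = \wh P$) the operator $N_0$ of the abstract scheme vanishes for the family $\wh A(t,\boldsymbol\theta)$ uniformly in $\boldsymbol\theta$. Because $\wh c^\circ$ from \eqref{hatc^circ} is positive (this is exactly where part~$2^\circ$ of Condition~\ref{cond1} is used) and independent of $\boldsymbol\theta$, the choice $\wh t^{00}$ made in Subsection~\ref{sec9.3} is uniform in $\boldsymbol\theta$ and realizes the abstract bound~\eqref{abstr_t^00}. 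Thus Theorem~\ref{abstr_sin_enchanced_thrm_2} applies and, using \eqref{hatS_P=hatA^0_P}, \eqref{R_P} (with $s=3/2$), and the identification \eqref{tZP_realization} of $t\wh Z(\boldsymbol\theta)\wh P$ with $\Lambda b(\D+\k)\wh P$, yields
\[
\bigl\| \wh{\mathcal A}(\k)^{1/2} \wh J(\k,\eps^{-1}\tau) \mathcal R(\k,\eps)^{3/4} \wh P \bigr\|_{L_2(\Omega)\to L_2(\Omega)}
\le \wh{\mathcal C}'_9 (1+|\tau|)^{1/2}\eps, \quad |\k|\le \wh t^{00},
\]
with $\wh{\mathcal C}'_9$ depending on $\alpha_0,\alpha_1,\|g\|_{L_\infty},\|g^{-1}\|_{L_\infty},r_0,n,\wh c^\circ$ (cf.\ Remark~\ref{abstr_constants_remark}).

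For $|\k|>\wh t^{00}$, I repeat verbatim the derivation of \eqref{sin_est_|k|>t^0}, replacing $\wh t_0$ by $\wh t^{00}$ throughout: I bound the product $\wh{\mathcal A}(\k)^{1/2}\wh J(\k,\eps^{-1}\tau) \mathcal R(\k,\eps)^{3/4}\wh P$ by $\|\mathcal R(\k,\eps)^{3/4}\wh P\|$ times the sum $1+\|\wh{\mathcal A}(\k)^{1/2}\wh{\mathcal A}^0(\k)^{-1/2}\|+\|\wh{\mathcal A}(\k)^{1/2}\Lambda b(\D+\k)\wh P \wh{\mathcal A}^0(\k)^{-1/2}\|$; the first norm is controlled by \eqref{R_P_2} (with $s=3/2$, gaining a factor $\eps$), the second and third factors are bounded uniformly in $\k$ via~\eqref{sin_est_|k|>t^0_1} and~\eqref{sin_est_|k|>t^0_2}. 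This produces an estimate of order $O(\eps)$, and in particular of order $O((1+|\tau|)^{1/2}\eps)$, on the region $|\k|>\wh t^{00}$. The complementary projection piece is already handled by~\eqref{sin_est_wo_hatP}, whose right-hand side $\wh{\mathcal C}_7''' \eps$ is likewise $O((1+|\tau|)^{1/2}\eps)$.

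Assembling the three estimates and taking the maximum of the resulting constants yields the asserted bound with a constant $\wh{\mathcal C}_9$ depending only on $\alpha_0,\alpha_1,\|g\|_{L_\infty},\|g^{-1}\|_{L_\infty},r_0,r_1,n,\wh c^\circ$. The only nontrivial point in the argument is the uniform-in-$\boldsymbol\theta$ choice of $\wh t^{00}$: this rests on Condition~\ref{cond1}($2^\circ$) and the continuity of the branches $\wh\gamma_j(\boldsymbol\theta)$ on the compact sphere $\mathbb S^{d-1}$, which together force $\wh c^\circ>0$. Everything else is a direct transcription of the proof of Theorem~\ref{A(k)_sin_enchanced_thrm_1}, with the improved abstract input supplied by Theorem~\ref{abstr_sin_enchanced_thrm_2} in place of Theorem~\ref{th3.5}.
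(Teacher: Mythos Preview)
Your proposal is correct and follows exactly the route indicated in the paper: the paper states that the result is obtained by combining Theorem~\ref{abstr_sin_enchanced_thrm_2}, estimate~\eqref{sin_est_wo_hatP}, and the analog of~\eqref{sin_est_|k|>t^0} with $\widehat t_0$ replaced by $\widehat t^{00}$, which is precisely your three-region decomposition. The only cosmetic remark is that invoking Lemma~\ref{abstr_N_and_Nhat_lemma} with $M=I$ is unnecessary---for $f=\mathbf 1_n$ the abstract $N_0$ of the family $\widehat A(t,\boldsymbol\theta)$ is $\widehat N_0(\boldsymbol\theta)$ directly---but this does not affect the argument.
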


\section{Sharpness of the results of \S\ref{sec9} \label{sec10}}

\subsection{Sharpness of the results regarding  the smoothing factor}

In the statements of the present section, we impose one of the following two conditions.

\begin{condition}
\label{cond10.1}
	Let $\widehat{N}_0 (\boldsymbol{\theta})$ be the operator defined by~\emph{(\ref{N0_invar_repr})}. Suppose that  $\widehat{N}_0 (\boldsymbol{\theta}_0) \ne 0$ at least for one point $\boldsymbol{\theta}_0 \in \mathbb{S}^{d-1}$. 
\end{condition}

\begin{condition}
\label{cond10.2}
	Let $\widehat{N}_0 (\boldsymbol{\theta})$ and $\widehat{\mathcal N}^{(q)} (\boldsymbol{\theta})$ be the operators defined by~\eqref{N0_invar_repr} and \eqref{8.32aa}, respectively. Suppose that  
	$\widehat{N}_0 (\boldsymbol{\theta})=0$ for all $\boldsymbol{\theta} \in \mathbb{S}^{d-1}$.
	Suppose that $\widehat{\mathcal N}^{(q)} (\boldsymbol{\theta}_0) \ne 0$ for some $\boldsymbol{\theta}_0 \in \mathbb{S}^{d-1}$ and some $q \in \{ 1,\dots, p(\boldsymbol{\theta}_0)\}$.
\end{condition}

We need the following lemma (see~\cite[Lemma~7.9]{DSu}).

\begin{lemma}[see~\cite{DSu}]
	\label{hatA(k)_Lipschitz_lemma}
 Let $\widehat{\delta}$ and $\widehat{t}_0$ be defined by~\emph{(\ref{hatdelta_fixation})}
	and~\emph{(\ref{hatt0_fixation})}, respectively. Let $\widehat{F} (\mathbf{k})$~be the spectral projection of the operator  $\widehat{\mathcal{A}}(\mathbf{k})$ for the interval $[0, \widehat{\delta}]$. 
	Then for $|\mathbf{k}| \le \widehat{t}_0$ and $|\mathbf{k}_0| \le \widehat{t}_0$ we have
	\begin{align*}
	&\| \widehat{\mathcal{A}}(\mathbf{k})^{1/2} \widehat{F}(\mathbf{k})  - \widehat{\mathcal{A}}(\mathbf{k}_0)^{1/2} \widehat{F}(\mathbf{k}_0)\|_{L_2(\Omega) \to L_2 (\Omega) } \le \widehat{C}' | \mathbf{k} - \mathbf{k}_0|,
	\\
	&\|  \cos ( \tau \widehat{\mathcal{A}}(\mathbf{k})^{1/2}) \widehat{F}(\mathbf{k})  - 
	\cos ( \tau \widehat{\mathcal{A}}(\mathbf{k}_0)^{1/2}) \widehat{F}(\mathbf{k}_0)\|_{L_2(\Omega) \to L_2 (\Omega) } \le \widehat{C}''(\tau) | \mathbf{k} - \mathbf{k}_0|,
	\\
	&\| \widehat{\mathcal{A}}(\mathbf{k})^{-1/2} \sin ( \tau \widehat{\mathcal{A}}(\mathbf{k})^{1/2}) \widehat{F}(\mathbf{k})  - \widehat{\mathcal{A}}(\mathbf{k}_0)^{-1/2} \sin ( \tau \widehat{\mathcal{A}}(\mathbf{k}_0)^{1/2}) \widehat{F}(\mathbf{k}_0)\|_{L_2(\Omega) \to L_2 (\Omega) }
 \le \widehat{C}'''(\tau) | \mathbf{k} - \mathbf{k}_0|.
	\end{align*}
\end{lemma}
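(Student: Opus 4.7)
All three estimates have the form
\[
\|\psi(\widehat{\mathcal{A}}(\mathbf{k}))\widehat{F}(\mathbf{k})-\psi(\widehat{\mathcal{A}}(\mathbf{k}_0))\widehat{F}(\mathbf{k}_0)\|_{L_2(\Omega)\to L_2(\Omega)}\le C_\psi(\tau)|\mathbf{k}-\mathbf{k}_0|
\]
for $\psi\in\{z^{1/2},\cos(\tau z^{1/2}), z^{-1/2}\sin(\tau z^{1/2})\}$; I will prove them simultaneously via the resolvent (Riesz) calculus. The strategy rests on the fact that, by our choice of $\widehat\delta$, the spectrum of $\widehat{\mathcal A}(\mathbf{k})$ in $[0,\widehat\delta]$ is separated from the rest of $\sigma(\widehat{\mathcal A}(\mathbf{k}))\subset\{0\}\cup[7\widehat\delta,+\infty)$ uniformly for $|\mathbf{k}|\le\widehat t_0$, so one can fix a rectifiable closed contour $\gamma\subset\mathbb C$ that encloses $[0,\widehat\delta]$ with winding number one, lies in the resolvent set of $\widehat{\mathcal A}(\mathbf{k})$ uniformly in $|\mathbf{k}|\le\widehat t_0$, and stays in the domain of analyticity of each $\psi$. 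For $\psi=z^{1/2}$ the branch-cut obstruction is handled by restricting the identity to the finite-dimensional range of $\widehat F(\mathbf{k})$, where $\widehat{\mathcal A}(\mathbf{k})\widehat F(\mathbf{k})$ is a bounded selfadjoint operator with spectrum in $[0,\widehat\delta]$ and the bounded functional calculus applies directly.

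Given $\gamma$, the Riesz formula yields
\[
\psi(\widehat{\mathcal A}(\mathbf{k}))\widehat F(\mathbf{k})=\frac{1}{2\pi i}\oint_\gamma \psi(z)(zI-\widehat{\mathcal A}(\mathbf{k}))^{-1}\,dz,
\]
and the second resolvent identity gives
\[
\psi(\widehat{\mathcal A}(\mathbf{k}))\widehat F(\mathbf{k})-\psi(\widehat{\mathcal A}(\mathbf{k}_0))\widehat F(\mathbf{k}_0) =\frac{1}{2\pi i}\oint_\gamma \psi(z)\,(zI-\widehat{\mathcal A}(\mathbf{k}))^{-1}\bigl(\widehat{\mathcal A}(\mathbf{k})-\widehat{\mathcal A}(\mathbf{k}_0)\bigr)(zI-\widehat{\mathcal A}(\mathbf{k}_0))^{-1}\,dz.
\]
Writing $Y:=h\,b(\mathbf{k}-\mathbf{k}_0)$ (a \emph{bounded} operator of multiplication with $\|Y\|\le\alpha_1^{1/2}\|h\|_{L_\infty}|\mathbf{k}-\mathbf{k}_0|$, by \eqref{rank_alpha_ineq}, \eqref{5.7a}, \eqref{h_f_L_inf}), the identity $\widehat X(\mathbf{k})=\widehat X(\mathbf{k}_0)+Y$ gives
\[
\widehat{\mathcal A}(\mathbf{k})-\widehat{\mathcal A}(\mathbf{k}_0)=\widehat X(\mathbf{k})^*Y+Y^*\widehat X(\mathbf{k}_0),
\]
so it suffices to bound each of $\widehat X(\mathbf{k})(zI-\widehat{\mathcal A}(\mathbf{k}))^{-1}$ and $\widehat X(\mathbf{k}_0)(zI-\widehat{\mathcal A}(\mathbf{k}_0))^{-1}$ uniformly in $z\in\gamma$ and $|\mathbf{k}|\le\widehat t_0$.

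The latter bound follows from $\|\widehat X(\mathbf{k})u\|^2=(\widehat{\mathcal A}(\mathbf{k})u,u)$ applied to $u=(zI-\widehat{\mathcal A}(\mathbf{k}))^{-1}w$, which together with the spectral identity $\widehat{\mathcal A}(\mathbf{k})(zI-\widehat{\mathcal A}(\mathbf{k}))^{-1}=zI(zI-\widehat{\mathcal A}(\mathbf{k}))^{-1}-I$ and the uniform estimate $\|(zI-\widehat{\mathcal A}(\mathbf{k}))^{-1}\|\le d_\gamma^{-1}$ (with $d_\gamma:=\inf_{z\in\gamma,|\mathbf{k}|\le\widehat t_0}\operatorname{dist}(z,\sigma(\widehat{\mathcal A}(\mathbf{k})))>0$) gives $\|\widehat X(\mathbf{k})(zI-\widehat{\mathcal A}(\mathbf{k}))^{-1}\|\le C_\gamma$ for some $C_\gamma$ depending only on $\gamma$. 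Assembling the pieces,
\[
\bigl\|\psi(\widehat{\mathcal A}(\mathbf{k}))\widehat F(\mathbf{k})-\psi(\widehat{\mathcal A}(\mathbf{k}_0))\widehat F(\mathbf{k}_0)\bigr\|\le \frac{|\gamma|}{2\pi}\cdot C_\gamma d_\gamma^{-1}\cdot 2\alpha_1^{1/2}\|h\|_{L_\infty}\cdot\max_{z\in\gamma}|\psi(z)|\cdot|\mathbf{k}-\mathbf{k}_0|,
\]
yielding the three estimates with $\widehat C'$ independent of $\tau$, and $\widehat C''(\tau),\widehat C'''(\tau)$ controlled by $\max_{z\in\gamma}|\cos(\tau z^{1/2})|$, $\max_{z\in\gamma}|z^{-1/2}\sin(\tau z^{1/2})|$ respectively (growing at most exponentially in $|\tau|$, which is admissible since the lemma only posits finiteness).

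\textbf{Main obstacle.} The principal technical nuisance is the first estimate (with $\psi_1(z)=z^{1/2}$): since $0$ belongs to $\sigma(\widehat{\mathcal A}(0))$ and $\gamma$ must wind around $[0,\widehat\delta]$, the contour is forced to cross the branch cut of the principal square root. The cleanest circumvention is to observe that $\widehat F(\mathbf{k})\widehat{\mathcal A}(\mathbf{k})\widehat F(\mathbf{k})$ has spectrum in $[0,\widehat\delta]$ and is a bounded selfadjoint operator on the finite-dimensional $\operatorname{Ran}\widehat F(\mathbf{k})$, so $\widehat{\mathcal A}(\mathbf{k})^{1/2}\widehat F(\mathbf{k})$ is defined unambiguously by bounded functional calculus and coincides with a contour integral over any Jordan curve in $\{\operatorname{Re}z>-\eta\}$ encircling $[0,\widehat\delta]$ (for suitably small $\eta>0$), on which the principal branch of $z^{1/2}$ is analytic. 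After this reduction, the scheme above applies verbatim, completing the proof.
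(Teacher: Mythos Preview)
The paper states this lemma without proof, citing \cite{DSu}, so there is no in-paper argument to compare against; I will assess your proposal on its own merits.

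Your Riesz-integral scheme with the second resolvent identity is sound, and it handles the \emph{second and third} estimates cleanly once one observes a point you do not make explicit: the functions $\cos(\tau z^{1/2})=\sum_{m\ge 0}(-1)^m\tau^{2m}z^m/(2m)!$ and $z^{-1/2}\sin(\tau z^{1/2})=\sum_{m\ge 0}(-1)^m\tau^{2m+1}z^m/(2m+1)!$ are \emph{entire} in $z$. Hence there is no branch-cut issue for these two, and your bound
\[
\frac{|\gamma|}{2\pi}\,C_\gamma\,d_\gamma^{-1}\cdot 2\alpha_1^{1/2}\|h\|_{L_\infty}\cdot\max_{z\in\gamma}|\psi(z)|\cdot|\mathbf{k}-\mathbf{k}_0|
\]
goes through verbatim with a fixed contour $\gamma$ independent of $\mathbf{k},\mathbf{k}_0$.

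The gap is in the \emph{first} estimate. Your ``cleanest circumvention'' asserts that the principal branch of $z^{1/2}$ is analytic on a Jordan curve in $\{\operatorname{Re}z>-\eta\}$ that encircles $[0,\widehat\delta]$. This is false: since $0\in[0,\widehat\delta]$, any such curve winds around the branch point $0$, and therefore must cross \emph{every} branch cut of $z^{1/2}$ emanating from $0$ (in particular the segment $(-\eta,0]$ for the principal branch). Consequently $z^{1/2}$ is not single-valued along $\gamma$, the Cauchy integral $\frac{1}{2\pi i}\oint_\gamma z^{1/2}(zI-\widehat{\mathcal A}(\mathbf{k}))^{-1}\,dz$ is not well-defined, and the difference formula you write down does not apply. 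Restricting attention to the finite-dimensional range of $\widehat F(\mathbf{k})$ does not help: the obstruction is purely about the analyticity of $\psi$ on the contour, not about the operator.

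This is a genuine obstacle, not a cosmetic one: for bounded nonnegative operators the map $B\mapsto B^{1/2}$ is in general only H\"older-$\tfrac12$, so Lipschitz continuity of $\widehat{\mathcal A}(\mathbf{k})\widehat F(\mathbf{k})$ (which your argument does yield, via $\psi(z)=z$) does not by itself give Lipschitz continuity of its square root. A correct proof of the first inequality needs extra structure. One workable route is to combine the crude bound $\|\widehat{\mathcal A}(\mathbf{k})^{1/2}\widehat F(\mathbf{k})\|\le \widehat C_{15}|\mathbf{k}|$ (the concrete form of \eqref{abstr_A(t)F(t)_sqrt_est}) for the regime $|\mathbf{k}-\mathbf{k}_0|\gtrsim\min(|\mathbf{k}|,|\mathbf{k}_0|)$ with, in the complementary regime, the observation that $\widehat S(\mathbf{k})^{1/2}=|\mathbf{k}|\,\widehat S(\boldsymbol\theta)^{1/2}$ is Lipschitz in $\mathbf{k}$ (since $\widehat S(\boldsymbol\theta)\ge\widehat c_*I$ makes $\boldsymbol\theta\mapsto\widehat S(\boldsymbol\theta)^{1/2}$ smooth) together with a quantitative comparison of $\widehat{\mathcal A}(\mathbf{k})^{1/2}\widehat F(\mathbf{k})$ to $\widehat S(\mathbf{k})^{1/2}\widehat P$; alternatively one can follow the argument actually given in \cite[Lemma~7.9]{DSu}. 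Either way, the contour argument alone is insufficient for $\psi(z)=z^{1/2}$.
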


  The following theorem proved in  \cite[Theorem 7.8]{DSu} shows that  Theorem~\ref{th9.1} is sharp.
(This result is deduced from Theorem \ref{th3.8} and Lemma~\ref{hatA(k)_Lipschitz_lemma}.)

\begin{theorem}[see \cite{DSu}]
	\label{th10.1}
	Suppose that Condition \emph{\ref{cond10.1}} is satisfied.
		
\noindent $1^\circ.$
Let $0 \ne \tau \in \mathbb{R}$ and $0 \le s < 2$. Then there does not exist a constant $\mathcal{C} (\tau) > 0$
such that the estimate 
\begin{equation}
	\label{10.1a}
	\bigl\| \wh{J}_1 (\mathbf{k},  \varepsilon^{-1} \tau)  \mathcal{R}(\mathbf{k}, \varepsilon)^{s/2} \bigr\|_{L_2(\Omega) \to L_2 (\Omega) }  \le \mathcal{C} (\tau) \varepsilon
	\end{equation}
holds for almost all  $\mathbf{k}  \in \widetilde{\Omega}$ and sufficiently small $\varepsilon > 0$.

\noindent $2^\circ.$
Let $0 \ne \tau \in \mathbb{R}$ and $0 \le r < 1$. Then there does not exist a constant 
 $\mathcal{C} (\tau) > 0$ such that the estimate
\begin{equation}
	\label{10.2a}
	\bigl\| \wh{J}_2 (\mathbf{k},  \varepsilon^{-1} \tau)  \mathcal{R}(\mathbf{k}, \varepsilon)^{r/2} \bigr\|_{L_2(\Omega) \to L_2 (\Omega) }  \le \mathcal{C} (\tau)
	\end{equation}
holds for almost all $\mathbf{k}  \in \widetilde{\Omega}$ and sufficiently small $\varepsilon > 0$.
	\end{theorem}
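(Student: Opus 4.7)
I focus on statement $1^\circ$; statement $2^\circ$ is entirely analogous, using the third estimate of Lemma~\ref{hatA(k)_Lipschitz_lemma} and statement $2^\circ$ of Theorem~\ref{th3.8}. The plan is to argue by contradiction, reducing the DO bound to the abstract sharpness Theorem~\ref{th3.8} applied to the family $\wh{A}(t,\boldsymbol{\theta}_0) = \wh{\mathcal{A}}(t\boldsymbol{\theta}_0)$, where $\boldsymbol{\theta}_0 \in \mathbb{S}^{d-1}$ is the direction supplied by Condition~\ref{cond10.1}.

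Suppose, toward a contradiction, that for some $0 \ne \tau \in \R$ and $0 \le s < 2$ estimate \eqref{10.1a} holds for almost every $\k \in \wt{\Omega}$ and all sufficiently small $\eps > 0$. The first step is to promote this almost-everywhere bound to an \emph{everywhere} bound on $|\k| \le \wh{t}_0$. Decompose
$$
\wh{J}_1(\k,\eps^{-1}\tau) \mathcal{R}(\k,\eps)^{s/2} = \wh{J}_1(\k,\eps^{-1}\tau) \wh{F}(\k) \mathcal{R}(\k,\eps)^{s/2} \wh{P} + \mathcal{E}(\k,\eps),
$$
where $\wh{F}(\k)$ is the spectral projection of $\wh{\mathcal{A}}(\k)$ onto $[0,\wh{\delta}]$. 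The error $\mathcal{E}(\k,\eps)$, which collects the complementary contributions involving $I - \wh{F}(\k)$ and $I - \wh{P}$, is uniformly $O(\eps)$ in $\k \in \wt{\Omega}$ thanks to \eqref{R(k,eps)(I-P)_est}, \eqref{R_P_2}, \eqref{abstr_F(t)_threshold_1} (applied to both $\wh{\mathcal{A}}(\k)$ and $\wh{\mathcal{A}}^0(\k)$), and the trivial bounds \eqref{9.9}. The leading term is Lipschitz continuous in $\k$ \emph{uniformly in $\eps$}, by Lemma~\ref{hatA(k)_Lipschitz_lemma} for the cosine of $\wh{\mathcal{A}}(\k)$, a direct computation for $\cos(\eps^{-1}\tau \wh{\mathcal{A}}^0(\k)^{1/2})\wh{P}$ (polynomial symbol $b(\k)^* g^0 b(\k)$), and smoothness of $\k \mapsto \mathcal{R}(\k,\eps)^{s/2}\wh{P}$. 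Hence the map $\k \mapsto \|\wh{J}_1(\k,\eps^{-1}\tau)\mathcal{R}(\k,\eps)^{s/2}\|$ is continuous and the assumed a.e.~bound extends to all $\k$ with $|\k| \le \wh{t}_0$ (with a slightly larger constant absorbing the $\mathcal{E}$-term).

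The second step specializes to $\k = t\boldsymbol{\theta}_0$ with $0 < t \le \wh{t}_0$. By \eqref{R_P} and \eqref{R(k,eps)(I-P)_est}, the $\wh{P}$-part of the smoothing factor equals $\eps^s(t^2+\eps^2)^{-s/2}\wh{P}$ while the $\wh{P}^\perp$-part contributes only $O(\eps^s)$. Thus the hypothesized bound transforms into
$$
\|\mathcal{J}_1(t,\eps^{-1}\tau)\|\,\eps^s(t^2+\eps^2)^{-s/2} \le \widetilde{C}(\tau)\eps
$$
for all sufficiently small $t,\eps > 0$, where $\mathcal{J}_1$ refers to the realization \eqref{3.01} for the abstract family $\wh{A}(\cdot,\boldsymbol{\theta}_0)$. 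Condition~\ref{cond10.1} together with the invariant representation \eqref{N0_invar_repr} says precisely that the abstract operator $N_0$ for this family is nonzero; since $s < 2$, this contradicts statement $1^\circ$ of Theorem~\ref{th3.8} and completes the proof.

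\textbf{Main obstacle.} The delicate point is the transfer from an a.e.\ bound to a pointwise bound at the special ray $\R_+\boldsymbol{\theta}_0$: the Lipschitz constant in the continuity estimate must be \emph{independent of $\eps$}, otherwise the argument collapses when $\eps \to 0$. This is exactly the purpose of the threshold-uniform estimates in Lemma~\ref{hatA(k)_Lipschitz_lemma}, while the nonthreshold piece is tamed by the smoothing factor $\mathcal{R}(\k,\eps)^{s/2}$, which is $O(\eps^s)$ on $\wh{P}^\perp$ uniformly in $\k \in \wt{\Omega}$.
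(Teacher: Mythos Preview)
Your approach matches the paper's (the paper cites [DSu] for this theorem but gives the identical argument in its proof of Theorem~\ref{th10.2a}). However, your ``main obstacle'' paragraph contains a misconception worth correcting.

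You claim the Lipschitz constant for the leading term must be \emph{independent of $\eps$}. This is both unnecessary and false: Lemma~\ref{hatA(k)_Lipschitz_lemma} gives the Lipschitz constant $\wh{C}''(\eps^{-1}\tau)$ for $\cos(\eps^{-1}\tau\,\wh{\mathcal A}(\k)^{1/2})\wh{F}(\k)$, which blows up as $\eps\to 0$. What is actually needed is only continuity in $\k$ for each \emph{fixed} $\eps$. For each sufficiently small $\eps>0$, the a.e.\ bound together with continuity yields the bound for all $\k$ with $|\k|\le\wh{t}_0$, in particular for $\k=t\boldsymbol{\theta}_0$. Since this works for each $\eps$ separately, the abstract inequality \eqref{**.1} holds for all sufficiently small $t$ and $\eps$, contradicting Theorem~\ref{th3.8}.

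The paper's setup is also cleaner than your decomposition. Multiply by $\wh{P}$ from the right, use \eqref{R_P} to rewrite the smoothing factor as $\eps^s(|\k|^2+\eps^2)^{-s/2}\wh{P}$, then use \eqref{abstr_F(t)_threshold_1} to replace $\wh{P}$ by $\wh{F}(\k)$ after the first cosine (the error $\|\wh{F}(\k)-\wh{P}\|\le \wh{C}_1|\k|$ times the smoothing factor is $O(\eps)$ for $s\ge 1$, and one may assume $s\ge 1$ without loss). This gives
\[
\bigl\|\cos(\eps^{-1}\tau\,\wh{\mathcal A}(\k)^{1/2})\wh{F}(\k) - \cos(\eps^{-1}\tau\,\wh{\mathcal A}^0(\k)^{1/2})\wh{P}\bigr\|\,\eps^s(|\k|^2+\eps^2)^{-s/2}\le\wt{\mathcal C}(\tau)\eps,
\]
and now Lemma~\ref{hatA(k)_Lipschitz_lemma} applies directly to both terms (note $\wh{P}$ is the spectral projection of $\wh{\mathcal A}^0(\k)$ for $[0,\wh{\delta}]$), giving continuity in $\k$ for fixed $\eps$. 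After specializing to $\k=t\boldsymbol{\theta}_0$ one undoes the $\wh{F}\leftrightarrow\wh{P}$ step and lands on \eqref{**.1}. This avoids the vague error term $\mathcal{E}(\k,\eps)$ in your decomposition and the separate argument for continuity of the smoothing factor.
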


Now we confirm the sharpness of Theorems \ref{th9.2} and \ref{th9.4}, relying on Theorem~\ref{th3.9}.

\begin{theorem}
	\label{th10.2a}
	Suppose that Condition \emph{\ref{cond10.2}} is satisfied.
	
\noindent $1^\circ.$
Let $0 \ne \tau \in \mathbb{R}$ and $0 \le s < 3/2$. Then there does not exist a constant $\mathcal{C} (\tau) > 0$ such that estimate \eqref{10.1a} holds for almost all $\mathbf{k} \in \widetilde{\Omega}$ and sufficiently small $\varepsilon > 0$.

\noindent $2^\circ.$
 Let $0 \ne \tau \in \mathbb{R}$ and $0 \le r < 1/2$. Then there does not exist a constant  $\mathcal{C} (\tau) > 0$ such that estimate \eqref{10.2a} holds for almost all $\mathbf{k} \in \widetilde{\Omega}$ and sufficiently small 
 $\varepsilon > 0$.
\end{theorem}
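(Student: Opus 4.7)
The plan is to adapt the proof of Theorem \ref{th10.1} from \cite{DSu} by invoking the sharper abstract obstruction Theorem \ref{th3.9} in place of Theorem \ref{th3.8}. Under Condition \ref{cond10.2}, pick a point $\boldsymbol{\theta}_0 \in \mathbb{S}^{d-1}$ and an index $q \in \{1,\dots, p(\boldsymbol{\theta}_0)\}$ such that $\widehat{\mathcal{N}}^{(q)}(\boldsymbol{\theta}_0) \ne 0$, and consider the one-parameter family $\widehat{A}(t, \boldsymbol{\theta}_0) = \widehat{\mathcal{A}}(t\boldsymbol{\theta}_0)$. By Propositions \ref{Prop_nu_1} and \ref{Prop_nu1_theta}, the operator $\widehat{\mathcal{N}}^{(q)}(\boldsymbol{\theta}_0)$ is precisely the realization of the abstract operator $\mathcal{N}^{(q)}$ for this family, so $\mathcal{N}^{(q)} \ne 0$. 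Together with $\widehat{N}_0(\boldsymbol{\theta}_0) = 0$ (which is the abstract condition $N_0 = 0$ at $\boldsymbol{\theta}_0$), this verifies the hypotheses of Theorem \ref{th3.9}.

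For statement $1^\circ$, I would proceed by contradiction: assume estimate \eqref{10.1a} holds for some $0 \ne \tau \in \mathbb{R}$, some $s \in [0, 3/2)$, almost every $\mathbf{k} \in \widetilde{\Omega}$, and all sufficiently small $\varepsilon > 0$. The key step is to upgrade this a.e.\ bound to a pointwise bound along the ray $\mathbf{k} = t\boldsymbol{\theta}_0$ with $0 < t \le \widehat{t}_0$. Splitting $I = \widehat{F}(\mathbf{k}) + \widehat{F}(\mathbf{k})^\perp$, the $\widehat{F}(\mathbf{k})^\perp$-part is handled by the uniform bound on $\cos(\varepsilon^{-1}\tau\widehat{\mathcal{A}}(\mathbf{k})^{1/2})\widehat{F}(\mathbf{k})^\perp$ (trivially of norm $\le 2$) combined with continuity of the smoothing factor; the $\widehat{F}(\mathbf{k})$-part is Lipschitz continuous in $\mathbf{k}$ with constant $\widehat{C}''(\varepsilon^{-1}\tau)$ by Lemma \ref{hatA(k)_Lipschitz_lemma}, and the effective piece $\cos(\varepsilon^{-1}\tau \widehat{\mathcal{A}}^0(\mathbf{k})^{1/2})\widehat{P}$ is real-analytic in $\mathbf{k}$. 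The smoothing factor $\mathcal{R}(\mathbf{k},\varepsilon)^{s/2}$ is also continuous in $\mathbf{k}$, so the entire left-hand side of \eqref{10.1a} is continuous in $\mathbf{k}$ on the ball $|\mathbf{k}| \le \widehat{t}_0$; thus the a.e.\ bound extends to every $\mathbf{k}$ in this ball, in particular to $\mathbf{k} = t\boldsymbol{\theta}_0$ for all $0 < t \le \widehat{t}_0$.

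At $\mathbf{k} = t\boldsymbol{\theta}_0$, the identifications $\widehat{\mathcal{A}}(t\boldsymbol{\theta}_0) = \widehat{A}(t, \boldsymbol{\theta}_0)$, \eqref{hatS_P=hatA^0_P}, \eqref{R_P}, and \eqref{R(k,eps)(I-P)_est} convert the bound on the $\widehat{P}$-part into inequality \eqref{**.1} of Theorem \ref{th3.9} applied to the family $\widehat{A}(\cdot,\boldsymbol{\theta}_0)$, while the bound on the $(I-\widehat{P})$-part is automatic by \eqref{R(k,eps)(I-P)_est} and the uniform bound on the cosine. Since $s < 3/2$, this contradicts part $1^\circ$ of Theorem \ref{th3.9}. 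Statement $2^\circ$ follows by exactly the same argument, using part $2^\circ$ of Theorem \ref{th3.9} together with the corresponding Lipschitz estimate of Lemma \ref{hatA(k)_Lipschitz_lemma} for $\widehat{\mathcal{A}}(\mathbf{k})^{-1/2}\sin(\varepsilon^{-1}\tau\widehat{\mathcal{A}}(\mathbf{k})^{1/2})\widehat{F}(\mathbf{k})$, plus the uniform $L_2(\Omega)$-bound $\|\widehat{\mathcal{A}}(\mathbf{k})^{-1/2}(I-\widehat{F}(\mathbf{k}))\| \le \widehat{\delta}^{-1/2}$ coming from the spectral gap.

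The main technical obstacle is the extension from a.e.\ $\mathbf{k}$ to the specific direction $\boldsymbol{\theta}_0$: one needs the Lipschitz constants from Lemma \ref{hatA(k)_Lipschitz_lemma} (which blow up with $\varepsilon^{-1}\tau$) to be harmless in the contradiction step. They are, because the extension is performed for \emph{fixed} $\varepsilon$ and $\tau$, whereas the contradiction is obtained by letting $\varepsilon \to 0$ along a carefully chosen subsequence as in the proof of Theorem \ref{th3.9}; the Lipschitz continuity is only needed to select a specific $\mathbf{k}$ at each $\varepsilon$, not uniformly in $\varepsilon$. Everything else is a direct translation through the abstract scheme.
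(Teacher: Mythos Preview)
Your overall strategy is correct and matches the paper's: reduce to the abstract Theorem~\ref{th3.9} applied to the one-parameter family $\widehat{A}(t,\boldsymbol{\theta}_0)$, using Lemma~\ref{hatA(k)_Lipschitz_lemma} to pass from an a.e.\ bound in $\mathbf{k}$ to the specific ray $\mathbf{k}=t\boldsymbol{\theta}_0$. Your identification of the hypotheses of Theorem~\ref{th3.9} via Proposition~\ref{Prop_nu1_theta} is exactly right, and your closing remark about the $\varepsilon$-dependence of the Lipschitz constants being harmless is a good observation.

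However, the continuity argument in your second paragraph is not valid as written. You claim that the full left-hand side of \eqref{10.1a} is norm-continuous in $\mathbf{k}$, but the justification for the $\widehat{F}(\mathbf{k})^\perp$-part (``uniform bound $\le 2$ combined with continuity of the smoothing factor'') does not establish continuity: a uniformly bounded family times a continuous family is not continuous in general. Likewise, you only mention the $\widehat{P}$-part of $\cos(\varepsilon^{-1}\tau\widehat{\mathcal{A}}^0(\mathbf{k})^{1/2})$, leaving the $(I-\widehat{P})$-part unaddressed. In fact, $\cos(\varepsilon^{-1}\tau\widehat{\mathcal{A}}(\mathbf{k})^{1/2})\widehat{F}(\mathbf{k})^\perp$ is \emph{not} norm-continuous in $\mathbf{k}$ without further argument.

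The paper's fix is simple and you should adopt it: multiply by $\widehat{P}$ on the right \emph{first}. Then by \eqref{R_P} the smoothing factor becomes the scalar $\varepsilon^s(|\mathbf{k}|^2+\varepsilon^2)^{-s/2}$, and you are left with $\bigl(\cos(\varepsilon^{-1}\tau\widehat{\mathcal{A}}(\mathbf{k})^{1/2})-\cos(\varepsilon^{-1}\tau\widehat{\mathcal{A}}^0(\mathbf{k})^{1/2})\bigr)\widehat{P}$. Now use \eqref{10.4a}, i.e.\ $\|\widehat{F}(\mathbf{k})-\widehat{P}\|\le \widehat{C}_1|\mathbf{k}|$, to replace $\widehat{P}$ by $\widehat{F}(\mathbf{k})$ in the first cosine at the cost of an $O(\varepsilon)$ error (this is where $s\ge 1$ suffices). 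The resulting operator $\cos(\varepsilon^{-1}\tau\widehat{\mathcal{A}}(\mathbf{k})^{1/2})\widehat{F}(\mathbf{k})-\cos(\varepsilon^{-1}\tau\widehat{\mathcal{A}}^0(\mathbf{k})^{1/2})\widehat{P}$ \emph{is} continuous by Lemma~\ref{hatA(k)_Lipschitz_lemma} (applied to both $\widehat{\mathcal{A}}$ and $\widehat{\mathcal{A}}^0$), so the a.e.\ bound extends to $\mathbf{k}=t\boldsymbol{\theta}_0$. Then swap back via \eqref{10.4a} to obtain \eqref{**.1}. The sine case is parallel, using the bound $\|\widehat{\mathcal{A}}(\mathbf{k})^{-1/2}\sin(\cdot)\widehat{F}(\mathbf{k})^\perp\|\le\widehat{\delta}^{-1/2}$ in place of \eqref{10.4a} for the swap.
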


\begin{proof}
Let us check statement $1^\circ$. It suffices to assume that \hbox{$1\le s < 3/2$}.
We prove by contradiction. Suppose that for some $\tau \ne 0$ and $1 \le s < 3/2$ there exists a constant  $\mathcal{C}(\tau) > 0$ such that estimate~\eqref{10.1a} holds for almost all $\mathbf{k} \in \widetilde{\Omega}$ 
and sufficiently small $\varepsilon$. 
Multiplying the operator under the norm sign in  \eqref{10.1a} by $\wh{P}$ and using~(\ref{R_P}),
we see that the inequality
\begin{equation}
	\label{10.3a}
	\bigl\|  \bigl( \cos (\varepsilon^{-1} \tau \widehat{\mathcal{A}}(\mathbf{k})^{1/2})  - 
	 \cos (\varepsilon^{-1} \tau \widehat{\mathcal{A}}^0(\mathbf{k})^{1/2}) \bigr) \widehat{P} 
	  \bigr\| \varepsilon^s (|\mathbf{k}|^2 + \varepsilon^2)^{-s/2}  \le {\mathcal{C}}(\tau) \varepsilon
	\end{equation}	
holds for almost all $\mathbf{k} \in \widetilde{\Omega}$ and sufficiently small $\varepsilon$.
	(In the proof, we omit the index of the operator norm in $L_2(\Omega;\AC^n)$.)

Let $|\mathbf{k}| \le \widehat{t}_0$.	By~(\ref{abstr_F(t)_threshold_1}), 
	\begin{equation}
	\label{10.4a}
	\| \widehat{F} (\mathbf{k}) - \widehat{P} \|_{L_2 (\Omega) \to L_2 (\Omega)} \le \widehat{C}_1 |\mathbf{k}|, \quad |\mathbf{k}| \le \widehat{t}_0.
	\end{equation}
From \eqref{10.3a} and \eqref{10.4a} it follows that for some constant $\wt{\mathcal{C}}(\tau)>0$ 
the estimate  
\begin{equation}
	\label{10.5a}
	\|  \cos (\varepsilon^{-1} \tau \widehat{\mathcal{A}}(\mathbf{k})^{1/2})  \widehat{F} (\mathbf{k})\! - \!
	 \cos (\varepsilon^{-1} \tau \widehat{\mathcal{A}}^0(\mathbf{k})^{1/2})  \widehat{P}  
	 \| \varepsilon^s (|\mathbf{k}|^2\!\! +\! \varepsilon^2)^{-s/2} \!\! \le\! \wt{\mathcal{C}}(\tau) \varepsilon
	\end{equation}
holds for almost all $\mathbf{k}$ in the ball $|\mathbf{k}| \le \widehat{t}_0$ and sufficiently small 
$\varepsilon$.

 Note that the projection  $\widehat{P}$ is the spectral projection of the operator $\widehat{\mathcal{A}}^0 (\mathbf{k})$ for the interval $[0, \widehat{\delta}]$. Therefore,  from Lemma~\ref{hatA(k)_Lipschitz_lemma} (applied to $\widehat{\mathcal{A}} (\mathbf{k})$ and $\widehat{\mathcal{A}}^0 (\mathbf{k})$) it follows that, for fixed  $\tau$ and $\varepsilon$, 
the operator under the norm sign in~\eqref{10.5a} is continuous with respect to $\mathbf{k}$ in the ball 
$|\mathbf{k}| \le \widehat{t}_0$. Hence, estimate~\eqref{10.5a} is valid for all values of  
 $\mathbf{k}$ in this ball. In particular, it holds for $\mathbf{k} = t\boldsymbol{\theta}_0$ if $t \le \widehat{t}_0$. Applying~\eqref{10.4a} once again, we see that for some constant $\wh{\mathcal{C}}(\tau)>0$ the estimate
	\begin{equation}
	\label{10.6a}
	\|  \bigl(  \cos (\varepsilon^{-1} \tau \widehat{\mathcal{A}}(t\boldsymbol{\theta}_0)^{1/2}) \! - \! \cos (\varepsilon^{-1} \tau \widehat{\mathcal{A}}^0(t\boldsymbol{\theta}_0)^{1/2}) \bigr) \widehat{P}  
	\| \varepsilon^s (t^2\!\! + \!\varepsilon^2)^{-s/2} \le \widehat{\mathcal{C}}(\tau) \varepsilon
	\end{equation}
holds for all $t \le \widehat{t}_0$ and sufficiently small  $\varepsilon$.
	
 Estimate \eqref{10.6a} corresponds to the abstract estimate \eqref{**.1}. Since, by Condition~\ref{cond10.2}, 
 $\widehat{N}_0 (\boldsymbol{\theta}_0)=0$  and $\widehat{\mathcal N}^{(q)} (\boldsymbol{\theta}_0) \ne 0$, 
 the assumptions of Theorem \ref{th3.9} are satisfied.
	 Applying statement $1^\circ$ of this theorem, we arrive at a contradiction.

We proceed to the proof of statement $2^\circ$. 
 Suppose the opposite. Then for some $\tau \ne 0$ and \hbox{$0 \le r < 1/2$} we have 
\begin{equation}
	\label{10.7n}
	\bigl\|  \bigl(  \widehat{\mathcal{A}}(\mathbf{k})^{-1/2} \sin (\varepsilon^{-1} \tau \widehat{\mathcal{A}}(\mathbf{k})^{1/2})  - \widehat{\mathcal{A}}^0(\mathbf{k})^{-1/2}
	 \sin (\varepsilon^{-1} \tau \widehat{\mathcal{A}}^0(\mathbf{k})^{1/2}) \bigr) \widehat{P}  
	 \bigr\| 
	\varepsilon^r  (|\mathbf{k}|^2 + \varepsilon^2)^{-r/2}  \le {\mathcal{C}}(\tau) 
	\end{equation}
	for almost all $\mathbf{k} \in \widetilde{\Omega}$ and sufficiently small $\varepsilon$. Obviously,
\begin{equation}
	\label{10.7m}
	\bigl\|  \widehat{\mathcal{A}}(\mathbf{k})^{-1/2} \sin (\varepsilon^{-1} \tau \widehat{\mathcal{A}}(\mathbf{k})^{1/2})  \wh{F}(\k)^\perp   \bigr\| \le \wh{\delta}^{-1/2}.  
	\end{equation}
Combining this with  \eqref{10.7n},  we obtain  
(with some constant $\wt{\mathcal{C}}(\tau)>0$)
\begin{equation}
	\label{10.8a}
	\bigl\|  \bigl(  \widehat{\mathcal{A}}(\mathbf{k})^{-1/2} \sin (\varepsilon^{-1} \tau \widehat{\mathcal{A}}(\mathbf{k})^{1/2})  \wh{F}(\k) - \widehat{\mathcal{A}}^0(\mathbf{k})^{-1/2}
	 \sin (\varepsilon^{-1} \tau \widehat{\mathcal{A}}^0(\mathbf{k})^{1/2})\bigr) \widehat{P}  \bigr\|
 \varepsilon^r  (|\mathbf{k}|^2 + \varepsilon^2)^{-r/2}  \le \wt{\mathcal{C}}(\tau) 
	\end{equation}
for almost all $\mathbf{k}\in \wt{\Omega}$ and sufficiently small $\varepsilon$.

Let $|\mathbf{k}| \le \widehat{t}_0$.	
 From Lemma~\ref{hatA(k)_Lipschitz_lemma} (applied to $\widehat{\mathcal{A}} (\mathbf{k})$ and 
 $\widehat{\mathcal{A}}^0 (\mathbf{k})$) it follows that the operator under the norm sign in~\eqref{10.8a} is continuous with respect to $\mathbf{k}$ in the ball $|\mathbf{k}| \le \widehat{t}_0$. Hence, estimate~\eqref{10.8a} holds for all values of $\mathbf{k}$ in this ball. In particular, it is valid for $\mathbf{k} = t\boldsymbol{\theta}_0$ if $t \le \widehat{t}_0$. Applying~\eqref{10.7m} once again, we see that for some constant $\wh{\mathcal{C}}(\tau)>0$ 
 the inequality
\begin{equation}
	\label{10.9a}
	\bigl\|    \bigl(\widehat{\mathcal{A}}(t\boldsymbol{\theta}_0)^{-1/2} \sin (\varepsilon^{-1} \tau \widehat{\mathcal{A}}(t\boldsymbol{\theta}_0 )^{1/2})  -
	 \widehat{\mathcal{A}}^0(t\boldsymbol{\theta}_0 )^{-1/2}
	 \sin (\varepsilon^{-1} \tau \widehat{\mathcal{A}}^0(t\boldsymbol{\theta}_0 )^{1/2}) \bigr) \widehat{P}  \bigr\| 
	 \varepsilon^r  (t^2 + \varepsilon^2)^{-r/2}  \le \wh{\mathcal{C}}(\tau)
\end{equation}
	holds for all $t \le \widehat{t}_0$ and sufficiently small $\varepsilon$.

	 Estimate \eqref{10.9a} corresponds to the abstract estimate \eqref{**.2}. 
 	 Applying statement~$2^\circ$ of Theorem~\ref{th3.9}, we arrive at a contradiction.
\end{proof}

Application of Theorem~\ref{abstr_s<2_general_thrm} allows us to confirm that 
Theorem~\ref{A(k)_sin_general_thrm} is sharp.

\begin{theorem}
	\label{hat_s<2_sinA(k)_thrm}
	Suppose that Condition \emph{\ref{cond10.1}} is satisfied.
	Let $0 \ne \tau \in \mathbb{R}$ and \hbox{$0 \le s < 2$}. Then there does not exist a constant $\mathcal{C} (\tau) > 0$ such that the estimate 
	\begin{equation}
	\label{hat_s<2_sinA(k)_est}
	\bigl\| \widehat{\mathcal{A}}(\mathbf{k})^{1/2} \widehat{J}(\mathbf{k}, \varepsilon^{-1} \tau)  \mathcal{R}(\mathbf{k}, \varepsilon)^{s/2} \bigr\|_{L_2(\Omega) \to L_2 (\Omega) }  \le \mathcal{C} (\tau) \varepsilon
	\end{equation}
	holds for almost all $\mathbf{k} \in \widetilde{\Omega}$ and sufficiently small $\varepsilon > 0$.
\end{theorem}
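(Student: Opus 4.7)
The plan is to argue by contradiction, closely parallel to the proof of Theorem~\ref{th10.2a}, but invoking Theorem~\ref{abstr_s<2_general_thrm} at the abstract level in place of Theorem~\ref{th3.9}. Suppose that for some $\tau\ne 0$ and some $s$ with $1\le s<2$ (this range suffices) inequality~\eqref{hat_s<2_sinA(k)_est} holds for almost all $\mathbf{k}\in\widetilde{\Omega}$ and sufficiently small $\varepsilon$. Multiplying the operator under the norm sign by $\widehat{P}$ from the right and using~\eqref{R_P}, I obtain
\begin{equation*}
\bigl\|\widehat{\mathcal{A}}(\mathbf{k})^{1/2}\widehat{J}(\mathbf{k},\varepsilon^{-1}\tau)\widehat{P}\bigr\|\,\varepsilon^{s}(|\mathbf{k}|^{2}+\varepsilon^{2})^{-s/2}\le\mathcal{C}(\tau)\,\varepsilon
\end{equation*}
for almost all $\mathbf{k}\in\widetilde{\Omega}$ and sufficiently small $\varepsilon$.

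Next, I would show that the operator under the norm sign depends continuously on $\mathbf{k}$ in the ball $|\mathbf{k}|\le\widehat{t}_0$. Split $\widehat{\mathcal{A}}(\mathbf{k})^{1/2}\widehat{J}(\mathbf{k},\varepsilon^{-1}\tau)\widehat{P}$ into the sine term $\sin(\varepsilon^{-1}\tau\widehat{\mathcal{A}}(\mathbf{k})^{1/2})\widehat{P}$ and the corrector term $\widehat{\mathcal{A}}(\mathbf{k})^{1/2}(I+\Lambda b(\mathbf{D}+\mathbf{k})\widehat{P})\widehat{\mathcal{A}}^0(\mathbf{k})^{-1/2}\sin(\varepsilon^{-1}\tau\widehat{\mathcal{A}}^0(\mathbf{k})^{1/2})\widehat{P}$. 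For the sine term, writing $\sin(\varepsilon^{-1}\tau\widehat{\mathcal{A}}(\mathbf{k})^{1/2})\widehat{P}=\bigl(\widehat{\mathcal{A}}(\mathbf{k})^{1/2}\widehat{F}(\mathbf{k})\bigr)\bigl(\widehat{\mathcal{A}}(\mathbf{k})^{-1/2}\sin(\varepsilon^{-1}\tau\widehat{\mathcal{A}}(\mathbf{k})^{1/2})\widehat{F}(\mathbf{k})\bigr)\widehat{P}+\sin(\varepsilon^{-1}\tau\widehat{\mathcal{A}}(\mathbf{k})^{1/2})\widehat{F}(\mathbf{k})^{\perp}\widehat{P}$, the first summand is Lipschitz in $\mathbf{k}$ by Lemma~\ref{hatA(k)_Lipschitz_lemma}, while the second is controlled by~\eqref{10.4a} and the continuity of $\widehat{F}(\mathbf{k})$. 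The corrector term is manifestly continuous since it is built from functions of the constant-coefficient operator $\widehat{\mathcal{A}}^{0}(\mathbf{k})$ and from the multiplication operator $\Lambda b(\mathbf{D}+\mathbf{k})\widehat{P}$. Consequently, the displayed estimate extends from almost all $\mathbf{k}$ to all $\mathbf{k}$ in the ball $|\mathbf{k}|\le\widehat{t}_0$; in particular, it holds along the ray $\mathbf{k}=t\boldsymbol{\theta}_{0}$ for $t\le\widehat{t}_0$.

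The final step is to identify $\widehat{J}(\mathbf{k},\tau)\widehat{P}$ at $\mathbf{k}=t\boldsymbol{\theta}_{0}$ with the abstract operator $\Sigma(t,\tau)$ from~\eqref{3.13} for the family $\widehat{A}(t,\boldsymbol{\theta}_{0})=\widehat{\mathcal{A}}(t\boldsymbol{\theta}_{0})$; this uses the identifications $\widehat{S}(\mathbf{k})\widehat{P}=\widehat{\mathcal{A}}^{0}(\mathbf{k})\widehat{P}$ from~\eqref{hatS_P=hatA^0_P} and $t\widehat{Z}(\boldsymbol{\theta})\widehat{P}=\Lambda b(\mathbf{k})\widehat{P}$ from~\eqref{tZP_realization}. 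The estimate then becomes exactly inequality~\eqref{abstr_s<2_est_imp} for $A(t)=\widehat{A}(t,\boldsymbol{\theta}_{0})$. Since Condition~\ref{cond10.1} gives $\widehat{N}_{0}(\boldsymbol{\theta}_{0})\ne 0$ (which is the abstract hypothesis $N_{0}\ne 0$), Theorem~\ref{abstr_s<2_general_thrm} is violated, yielding the required contradiction. The main obstacle is the continuity step: unlike in Theorem~\ref{th10.2a}, the factor $\widehat{\mathcal{A}}(\mathbf{k})^{1/2}$ is not absorbed by a spectral projection, so one cannot apply Lemma~\ref{hatA(k)_Lipschitz_lemma} verbatim and must carefully split into the $\widehat{F}(\mathbf{k})$- and $\widehat{F}(\mathbf{k})^{\perp}$-pieces as described.
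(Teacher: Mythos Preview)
Your overall strategy is exactly the paper's: argue by contradiction, multiply by $\widehat{P}$, pass from a.e.\ $\mathbf{k}$ to $\mathbf{k}=t\boldsymbol{\theta}_0$ by continuity, identify the resulting inequality with~\eqref{abstr_s<2_est_imp} for $A(t)=\widehat{A}(t,\boldsymbol{\theta}_0)$, and invoke Theorem~\ref{abstr_s<2_general_thrm}. However, there is a genuine gap in your continuity step for the corrector term.

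You write that the corrector term $\widehat{\mathcal{A}}(\mathbf{k})^{1/2}(I+\Lambda b(\mathbf{D}+\mathbf{k})\widehat{P})\widehat{\mathcal{A}}^0(\mathbf{k})^{-1/2}\sin(\varepsilon^{-1}\tau\widehat{\mathcal{A}}^0(\mathbf{k})^{1/2})\widehat{P}$ is ``manifestly continuous since it is built from functions of the constant-coefficient operator $\widehat{\mathcal{A}}^{0}(\mathbf{k})$ and from the multiplication operator $\Lambda b(\mathbf{D}+\mathbf{k})\widehat{P}$''. This overlooks the leading factor $\widehat{\mathcal{A}}(\mathbf{k})^{1/2}$, which is \emph{not} constant-coefficient and whose continuity in $\mathbf{k}$ (as an operator on the fixed subspace $\operatorname{Ran}(I+\Lambda b(\mathbf{k}))\widehat{P}$) is precisely the delicate point you yourself flag at the end. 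Lemma~\ref{hatA(k)_Lipschitz_lemma} gives Lipschitz continuity only for $\widehat{\mathcal{A}}(\mathbf{k})^{1/2}\widehat{F}(\mathbf{k})$, not for $\widehat{\mathcal{A}}(\mathbf{k})^{1/2}$ on other subspaces.

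The paper closes this gap by the identity $\widehat{P}+\Lambda b(\mathbf{D}+\mathbf{k})\widehat{P}=(\widehat{F}(\mathbf{k})-\widehat{F}_2(\mathbf{k}))\widehat{P}$ (from~\eqref{abstr_F(t)_threshold_2}, \eqref{abstr_F1_K0_N_invar}, and~\eqref{tZP_realization}). This replaces the corrector by $\widehat{\mathcal{A}}(\mathbf{k})^{1/2}\widehat{F}(\mathbf{k})$ times a factor, plus an $\widehat{F}_2$-error which is absorbed via $\|\widehat{\mathcal{A}}(\mathbf{k})^{1/2}\widehat{F}_2(\mathbf{k})\|\le\widehat{C}_{16}|\mathbf{k}|^2$ (see~\eqref{hat_s<2_proof_f3}). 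After also replacing $\widehat{P}$ by $\widehat{F}(\mathbf{k})$ in the sine term via~\eqref{10.4a}, the entire expression involves only $\widehat{F}(\mathbf{k})$-projected operators, so Lemma~\ref{hatA(k)_Lipschitz_lemma} applies verbatim (this is the passage from~\eqref{hat_s<2_proof_f1} to~\eqref{hat_s<2_proof_f4}). Once continuity gives the estimate at $\mathbf{k}=t\boldsymbol{\theta}_0$, the paper applies the same identities in reverse to recover~\eqref{hat_s<2_proof_f5}, which is~\eqref{abstr_s<2_est_imp}.
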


\begin{proof}
	We prove by contradiction. Suppose that, for some $\tau \ne 0$ and $1 \le s < 2$ there exists a constant  $\mathcal{C}(\tau) > 0$ such that estimate~(\ref{hat_s<2_sinA(k)_est}) holds for almost all $\mathbf{k} \in \widetilde{\Omega}$ and sufficiently small $\varepsilon > 0$. Multiplying the operator in  
	\eqref{hat_s<2_sinA(k)_est} by $\wh{P}$ and using~\eqref{R_P}, we obtain 
	\begin{multline}
	\label{hat_s<2_proof_f1}
	\bigl\| \widehat{\mathcal{A}}(\mathbf{k})^{1/2} \bigl( \widehat{\mathcal{A}}(\mathbf{k})^{-1/2} \sin (\varepsilon^{-1} \tau \widehat{\mathcal{A}}(\mathbf{k})^{1/2})  
	- 	 (I + \Lambda b(\mathbf{D} + \mathbf{k}) \wh{P}) \widehat{\mathcal{A}}^0(\mathbf{k})^{-1/2} \sin (\varepsilon^{-1} \tau \widehat{\mathcal{A}}^0(\mathbf{k})^{1/2}) \bigr) \widehat{P}  \bigr\| 
	\\
	\times\varepsilon^s (|\mathbf{k}|^2 + \varepsilon^2)^{-s/2}  \le {\mathcal{C}}(\tau) \varepsilon
	\end{multline}
	for almost all $\mathbf{k} \in \widetilde{\Omega}$ and sufficiently small $\varepsilon > 0$.
	
	Let $|\mathbf{k}| \le \widehat{t}_0$. By~(\ref{abstr_sqrtA(t)F2(t)_est}),
	\begin{equation}
	\label{hat_s<2_proof_f3}
	\| \widehat{\mathcal{A}}(\mathbf{k})^{1/2} \widehat{F}_2 (\mathbf{k}) \|_{L_2 (\Omega) \to L_2 (\Omega)} \le \widehat{C}_{16} |\mathbf{k}|^2, \quad |\mathbf{k}| \le \widehat{t}_0.  
	\end{equation}
	Combining the formula $\widehat{P} + \Lambda b(\mathbf{D} + \mathbf{k}) \widehat{P} = (\widehat{F}(\mathbf{k}) - \widehat{F}_2(\mathbf{k})) \widehat{P}$ (see~(\ref{abstr_F(t)_threshold_2}), (\ref{abstr_F1_K0_N_invar}),  (\ref{tZP_realization})) and relations~(\ref{S_nondegenerated}), \eqref{10.4a},  \eqref{hat_s<2_proof_f1}, (\ref{hat_s<2_proof_f3}), we see that 
for some $\check{\mathcal{C}}(\tau)>0$ the inequality
	\begin{multline}
	\label{hat_s<2_proof_f4}
	\bigl\| \widehat{\mathcal{A}}(\mathbf{k})^{1/2} \widehat{F}(\mathbf{k}) \bigl( \widehat{\mathcal{A}}(\mathbf{k})^{-1/2} \sin (\varepsilon^{-1} \tau \widehat{\mathcal{A}}(\mathbf{k})^{1/2}) \widehat{F}(\mathbf{k})
	\\
	 -  \widehat{\mathcal{A}}^0(\mathbf{k})^{-1/2} \sin (\varepsilon^{-1} \tau \widehat{\mathcal{A}}^0(\mathbf{k})^{1/2})  \widehat{P}  \bigr) \bigr\|_{L_2(\Omega) \to L_2 (\Omega) }
	\\
	\times\varepsilon^s (|\mathbf{k}|^2 + \varepsilon^2)^{-s/2}  \le \check{\mathcal{C}}(\tau) \varepsilon
	\end{multline}
holds for almost all  $\mathbf{k}$  in the ball $|\mathbf{k}| \le \widehat{t}_0$ and sufficiently small $\varepsilon$.
	 
	From Lemma~\ref{hatA(k)_Lipschitz_lemma} (applied to $\widehat{\mathcal{A}} (\mathbf{k})$ and 
	$\widehat{\mathcal{A}}^0 (\mathbf{k})$) it follows that for fixed $\tau$ and $\varepsilon$ the operator under the norm sign in~(\ref{hat_s<2_proof_f4}) is continuous with respect to $\mathbf{k}$ in the ball $|\mathbf{k}| \le \widehat{t}_0$. Hence, estimate~(\ref{hat_s<2_proof_f4}) holds for all values of $\mathbf{k}$ in this ball. In particular, it is valid for $\mathbf{k} = t\boldsymbol{\theta}_0$ if $t \le \widehat{t}_0$. Applying the
 formula $(\widehat{F}(\mathbf{k}) - \widehat{F}_2(\mathbf{k})) \widehat{P} = \widehat{P} + \Lambda b(\mathbf{D} + \mathbf{k}) \widehat{P}$ and inequalities~(\ref{S_nondegenerated}), \eqref{10.4a}, 	\eqref{hat_s<2_proof_f3} once again, we obtain that  
	\begin{multline}
	\label{hat_s<2_proof_f5}
	\bigl\| \widehat{\mathcal{A}}(t\boldsymbol{\theta}_0)^{1/2} \bigl( \widehat{\mathcal{A}}(t\boldsymbol{\theta}_0)^{-1/2} \sin (\varepsilon^{-1} \tau \widehat{\mathcal{A}}(t\boldsymbol{\theta}_0)^{1/2})
	\\ - (I + \Lambda b(t\boldsymbol{\theta}_0)) \widehat{\mathcal{A}}^0(t\boldsymbol{\theta}_0)^{-1/2} \sin (\varepsilon^{-1} \tau \widehat{\mathcal{A}}^0(t\boldsymbol{\theta}_0)^{1/2}) \bigr) \widehat{P}  \bigr\|
	\\
	\times\varepsilon^s (t^2 + \varepsilon^2)^{-s/2} \le \check{\mathcal{C}}'(\tau) \varepsilon
	\end{multline}
	for all $t \le \widehat{t}_0$ and sufficiently small $\varepsilon$ (with some constant 
	$\check{\mathcal{C}}'(\tau)>0$).
	
In the abstract terms, estimate~(\ref{hat_s<2_proof_f5}) corresponds to estimate~(\ref{abstr_s<2_est_imp}).  Since, by Condition~\ref{cond10.1}, we have $\widehat{N}_0 (\boldsymbol{\theta}_0) \ne 0$, then  application of 
Theorem~\ref{abstr_s<2_general_thrm} leads to a contradiction.
\end{proof}

Similarly to the proof of Theorem~\ref{hat_s<2_sinA(k)_thrm}, from Theorem~\ref{th3.12} we deduce the following statement which confirms the sharpness of Theorems~\ref{A(k)_sin_enchanced_thrm_1} 
and~\ref{A(k)_sin_enchanced_thrm_2}.

\begin{theorem}
	\label{hat_s<3/2_sinA(k)_thrm}
	Suppose that Condition~\emph{\ref{cond10.2}} is satisfied.
	Let $0 \ne \tau \in \mathbb{R}$ and \hbox{$0 \le s < 3/2$}. Then there does not exist a constant 
	$\mathcal{C} (\tau) > 0$ such that estimate  \eqref{hat_s<2_sinA(k)_est} holds for almost all 
	$\mathbf{k}  \in \widetilde{\Omega}$ and sufficiently small $\varepsilon > 0$.
\end{theorem}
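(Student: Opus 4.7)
The plan is to argue by contradiction, reducing to the abstract sharpness result of Theorem~\ref{th3.12} at the distinguished direction $\boldsymbol{\theta}_0$, exactly in the spirit of the proof of Theorem~\ref{hat_s<2_sinA(k)_thrm} but using the improved abstract input available under the hypothesis $\widehat{N}_0\equiv 0$. Since the estimate is trivially true for $s\le 1$ (by Theorem~\ref{A(k)_sin_general_thrm}) only when compared with weaker factors, we may restrict attention to $1\le s<3/2$. Suppose that, on the contrary, for some $\tau\ne 0$ and such $s$ there is $\mathcal{C}(\tau)>0$ with
\eqref{hat_s<2_sinA(k)_est} valid for a.e.\ $\mathbf{k}\in\widetilde{\Omega}$ and all sufficiently small $\varepsilon$.

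First, I multiply the operator under the norm in \eqref{hat_s<2_sinA(k)_est} by $\widehat{P}$ on the right; using \eqref{R_P} this produces
\begin{equation*}
\bigl\|\widehat{\mathcal{A}}(\mathbf{k})^{1/2}\bigl(\widehat{\mathcal{A}}(\mathbf{k})^{-1/2}\sin(\varepsilon^{-1}\tau\widehat{\mathcal{A}}(\mathbf{k})^{1/2})-(I+\Lambda b(\mathbf{D}+\mathbf{k}))\widehat{\mathcal{A}}^0(\mathbf{k})^{-1/2}\sin(\varepsilon^{-1}\tau\widehat{\mathcal{A}}^0(\mathbf{k})^{1/2})\bigr)\widehat{P}\bigr\|
\end{equation*}
multiplied by $\varepsilon^s(|\mathbf{k}|^2+\varepsilon^2)^{-s/2}$, bounded by $\mathcal{C}(\tau)\varepsilon$. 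Next, for $|\mathbf{k}|\le\widehat{t}_0$, I use the identity $\widehat{P}+\Lambda b(\mathbf{D}+\mathbf{k})\widehat{P}=(\widehat{F}(\mathbf{k})-\widehat{F}_2(\mathbf{k}))\widehat{P}$ (coming from \eqref{abstr_F(t)_threshold_2}, \eqref{abstr_F1_K0_N_invar}, \eqref{tZP_realization}), together with \eqref{abstr_sqrtA(t)F2(t)_est}, \eqref{10.4a}, and the nondegeneracy \eqref{S_nondegenerated}, to convert the inequality into the $\widehat{F}(\mathbf{k})$-version
\begin{equation*}
\bigl\|\widehat{\mathcal{A}}(\mathbf{k})^{1/2}\widehat{F}(\mathbf{k})\bigl(\widehat{\mathcal{A}}(\mathbf{k})^{-1/2}\sin(\varepsilon^{-1}\tau\widehat{\mathcal{A}}(\mathbf{k})^{1/2})\widehat{F}(\mathbf{k})-\widehat{\mathcal{A}}^0(\mathbf{k})^{-1/2}\sin(\varepsilon^{-1}\tau\widehat{\mathcal{A}}^0(\mathbf{k})^{1/2})\widehat{P}\bigr)\bigr\|\varepsilon^s(|\mathbf{k}|^2+\varepsilon^2)^{-s/2}\le\check{\mathcal{C}}(\tau)\varepsilon,
\end{equation*}
valid for a.e.\ $\mathbf{k}$ with $|\mathbf{k}|\le\widehat{t}_0$.

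The key passage is then from a.e.\ $\mathbf{k}$ to \emph{every} $\mathbf{k}$ in the ball $|\mathbf{k}|\le\widehat{t}_0$: this follows from Lemma~\ref{hatA(k)_Lipschitz_lemma}, applied both to $\widehat{\mathcal{A}}(\mathbf{k})$ and to $\widehat{\mathcal{A}}^0(\mathbf{k})$ (for the latter, $\widehat{P}$ plays the role of the spectral projection on $[0,\widehat{\delta}]$), which guarantees continuity of the operator under the norm sign in $\mathbf{k}$ for fixed $\tau,\varepsilon$. Specializing to $\mathbf{k}=t\boldsymbol{\theta}_0$ with $t\le\widehat{t}_0$ and reversing the passage to $\widehat{F}$ (again via \eqref{abstr_sqrtA(t)F2(t)_est}, \eqref{10.4a}), I recover
\begin{equation*}
\bigl\|\widehat{\mathcal{A}}(t\boldsymbol{\theta}_0)^{1/2}\bigl(\widehat{\mathcal{A}}(t\boldsymbol{\theta}_0)^{-1/2}\sin(\varepsilon^{-1}\tau\widehat{\mathcal{A}}(t\boldsymbol{\theta}_0)^{1/2})-(I+\Lambda b(t\boldsymbol{\theta}_0))\widehat{\mathcal{A}}^0(t\boldsymbol{\theta}_0)^{-1/2}\sin(\varepsilon^{-1}\tau\widehat{\mathcal{A}}^0(t\boldsymbol{\theta}_0)^{1/2})\bigr)\widehat{P}\bigr\|\varepsilon^s(t^2+\varepsilon^2)^{-s/2}\le\check{\mathcal{C}}'(\tau)\varepsilon
\end{equation*}
for every $t\le\widehat{t}_0$ and sufficiently small $\varepsilon$.

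Finally, translating to the abstract language of Chapter~1 applied to the one-parameter family $\widehat{A}(t,\boldsymbol{\theta}_0)$, the last inequality is exactly the estimate \eqref{abstr_s<2_est_imp} (with $\Sigma(t,\varepsilon^{-1}\tau)$ in place and the abstract $0\le s<3/2$). By Condition~\ref{cond10.2} the germ family at $\boldsymbol{\theta}_0$ satisfies $N_0=0$ together with $\mathcal{N}^{(q)}\ne 0$ for some $q$, so the hypotheses of Theorem~\ref{th3.12} hold. Applying that theorem produces the required contradiction. I expect the Lipschitz continuity step to be the main obstacle: keeping track of the correctly projected quantities so that the $\widehat{F}(\mathbf{k})\leftrightarrow\widehat{P}$ replacements lose only terms of admissible size ($O(|\mathbf{k}|)$ or $O(|\mathbf{k}|^2)$, absorbed by the weight $\varepsilon^s(|\mathbf{k}|^2+\varepsilon^2)^{-s/2}\le\varepsilon$ whenever $s\ge 1$), and that the operator whose norm we bound is genuinely continuous in $\mathbf{k}$ on the ball $|\mathbf{k}|\le\widehat{t}_0$, require care, but follow as in the proof of Theorem~\ref{hat_s<2_sinA(k)_thrm} using Lemma~\ref{hatA(k)_Lipschitz_lemma}.
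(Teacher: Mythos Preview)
Your proposal is correct and follows essentially the same route as the paper, which explicitly states that Theorem~\ref{hat_s<3/2_sinA(k)_thrm} is deduced from Theorem~\ref{th3.12} ``similarly to the proof of Theorem~\ref{hat_s<2_sinA(k)_thrm}''. Your reduction via multiplication by $\widehat{P}$, the replacement $\widehat{P}+\Lambda b(\mathbf{D}+\mathbf{k})\widehat{P}=(\widehat{F}(\mathbf{k})-\widehat{F}_2(\mathbf{k}))\widehat{P}$, the continuity argument using Lemma~\ref{hatA(k)_Lipschitz_lemma}, and the final appeal to Theorem~\ref{th3.12} at $\boldsymbol{\theta}_0$ under Condition~\ref{cond10.2} reproduce the paper's argument exactly.
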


\subsection{Sharpness of the results with respect to time}

In the present subsection, we confirm that the results of \S\ref{sec9} are sharp regarding  the dependence of  estimates on $\tau$ (for large $|\tau|$). The following statement shows that Theorem~\ref{th9.1} is sharp.
It easily follows from Theorem \ref{th3.13} by using the same arguments as  in the proof of Theorem  \ref{th10.2a}.

\begin{theorem}
	\label{th10.6}
 Suppose that Condition \emph{\ref{cond10.1}} is satisfied.
	
\noindent $1^\circ.$ Let $s \ge 2$.
  Then there does not exist a positive function $\mathcal{C}(\tau)$ such that  $\lim_{\tau \to \infty} \mathcal{C}(\tau)/|\tau|=0$ and estimate \eqref{10.1a} holds for all  $\tau \in \R,$ almost all $\mathbf{k} \in \wt{\Omega},$ and sufficiently small $\varepsilon > 0$.

\noindent $2^\circ.$ Let $r \ge 1$. Then there does not exist a positive function $\mathcal{C}(\tau)$ such that  $\lim_{\tau \to \infty} \mathcal{C}(\tau)/|\tau|=0$ and estimate \eqref{10.2a} holds for all $\tau \in \R,$ almost all $\mathbf{k} \in \wt{\Omega},$ and sufficiently small  $\varepsilon > 0$.
\end{theorem}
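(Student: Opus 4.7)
The plan is to argue by contradiction for both statements, reducing the assertion to Theorem \ref{th3.13} exactly as Theorem \ref{th10.1} was reduced to Theorem \ref{th3.8} and Theorem \ref{th10.2a} was reduced to Theorem \ref{th3.9}. Thus I suppose the existence of a positive function $\mathcal{C}(\tau)$ with $\mathcal{C}(\tau)/|\tau|\to 0$ as $\tau\to\infty$ for which the relevant estimate (\eqref{10.1a} in $1^\circ$, \eqref{10.2a} in $2^\circ$) holds for all $\tau\in\R$, a.e.\ $\k\in\wt{\Omega}$, and small $\varepsilon$; the goal is to derive from this a uniform-in-$\tau$ bound at the single point $\k=t\boldsymbol{\theta}_0$ contradicting the corresponding statement of Theorem \ref{th3.13}.

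For $1^\circ$, I first multiply the operator under the norm sign in \eqref{10.1a} by $\wh{P}$ from the right and use \eqref{R_P} to move the smoothing factor onto $\wh{P}$, which turns it into the scalar weight $\varepsilon^s(|\k|^2+\varepsilon^2)^{-s/2}$. By \eqref{abstr_F(t)_threshold_1} I may then replace the bare projection $\wh{P}$ acting next to $\cos(\varepsilon^{-1}\tau\wh{\mathcal{A}}(\k)^{1/2})$ by the true spectral projection $\wh{F}(\k)$: the resulting error is controlled by $\wh{C}_1|\k|$, and together with the trivial bound $\|\cos\|\le 1$ it gives a term of order $\varepsilon$, absorbed into the right-hand side. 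The key step is now Lemma \ref{hatA(k)_Lipschitz_lemma}, applied jointly to $\wh{\mathcal{A}}(\k)$ and $\wh{\mathcal{A}}^0(\k)$ (with $\wh{P}$ playing the role of the spectral projection for the latter): it guarantees that the resulting operator-valued expression is continuous in $\k$ on the ball $|\k|\le\wh{t}_0$. Consequently the a.e.\ estimate extends by continuity to every $\k$ in this ball, in particular to $\k=t\boldsymbol{\theta}_0$ for $0\le t\le\wh{t}_0$. Reverting the replacement $\wh{F}(t\boldsymbol{\theta}_0)\rightsquigarrow \wh{P}$ via \eqref{abstr_F(t)_threshold_1} once more, I obtain an inequality that coincides, in the abstract framework, with \eqref{**.1} along the ray $\boldsymbol{\theta}=\boldsymbol{\theta}_0$, with a constant $\wh{\mathcal{C}}(\tau)$ still satisfying $\wh{\mathcal{C}}(\tau)/|\tau|\to 0$. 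Since Condition \ref{cond10.1} ensures $\wh{N}_0(\boldsymbol{\theta}_0)\neq 0$, statement $1^\circ$ of Theorem \ref{th3.13} applies along this ray and produces a contradiction.

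Statement $2^\circ$ follows the same scheme, with $\cos(\tau A^{1/2})$ replaced by $A^{-1/2}\sin(\tau A^{1/2})$ and the exponent $s$ replaced by $r$. The only modifications are, first, that instead of the trivial $L^\infty$ bound on $\cos$ I use the inequality $\|\wh{\mathcal{A}}(\k)^{-1/2}\sin(\varepsilon^{-1}\tau\wh{\mathcal{A}}(\k)^{1/2})\wh{F}(\k)^\perp\|\le\wh{\delta}^{-1/2}$ to absorb the complementary spectral part (cf.~\eqref{10.7m}), and, second, that the third inequality of Lemma \ref{hatA(k)_Lipschitz_lemma} provides the required continuity in $\k$. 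The final contradiction then comes from statement $2^\circ$ of Theorem \ref{th3.13}.

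The main obstacle is the step from an estimate that holds only for a.e.\ $\k\in\wt{\Omega}$ to one valid pointwise at $\k=t\boldsymbol{\theta}_0$, since the chosen direction $\boldsymbol{\theta}_0$ need not lie in the full-measure set supplied by the hypothesis; this is precisely the reason why the Lipschitz estimates of Lemma \ref{hatA(k)_Lipschitz_lemma} are indispensable. Once continuity in $\k$ is secured, the rest is a routine translation between the abstract one-parameter family $A(t,\boldsymbol{\theta}_0)$ and the differential operator family $\wh{\mathcal{A}}(\k)$, and the entire argument reduces cleanly to the already-established abstract sharpness Theorem \ref{th3.13}.
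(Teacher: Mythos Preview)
Your proposal is correct and follows precisely the approach the paper intends: the paper itself states that Theorem~\ref{th10.6} ``easily follows from Theorem~\ref{th3.13} by using the same arguments as in the proof of Theorem~\ref{th10.2a}'', and your reduction---multiplying by $\wh{P}$, exchanging $\wh{P}$ for $\wh{F}(\k)$ via \eqref{abstr_F(t)_threshold_1} (resp.\ \eqref{10.7m}), invoking Lemma~\ref{hatA(k)_Lipschitz_lemma} to pass from a.e.\ $\k$ to $\k=t\boldsymbol{\theta}_0$, and then applying Theorem~\ref{th3.13}---is exactly that argument. The only small point worth making explicit is that the $\tau$-independent error terms you pick up (of size $\wh{C}_1\varepsilon$ in $1^\circ$ and $\wh{\delta}^{-1/2}$ in $2^\circ$) add harmless constants to $\mathcal{C}(\tau)$ and hence preserve the condition $\mathcal{C}(\tau)/|\tau|\to 0$.
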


Similarly, Theorem \ref{th4.*2} implies the following statement confirming the sharpness of 
Theorems \ref{th9.2} and \ref{th9.4}.

\begin{theorem}
	\label{th10.7}
Suppose that Condition \emph{\ref{cond10.2}} is satisfied.
	
\noindent $1^\circ.$ Let $s \ge 3/2$.
 There does not exist a positive function $\mathcal{C}(\tau)$ such that 
 $\lim_{\tau \to \infty} \mathcal{C}(\tau)/|\tau|^{1/2}=0$ and
  estimate \eqref{10.1a} holds for all $\tau \in \R,$ almost all $\mathbf{k} \in \wt{\Omega},$ and sufficiently small 
  $\varepsilon > 0$.

\noindent $2^\circ.$ Let $r \ge 1/2$.
 There does not exist a positive function $\mathcal{C}(\tau)$ such that 
 $\lim_{\tau \to \infty} \mathcal{C}(\tau)/|\tau|^{1/2}=0$ and estimate  \eqref{10.2a} holds for all
 $\tau \in \R,$ almost all $\mathbf{k} \in \wt{\Omega},$ and sufficiently small $\varepsilon > 0$.
\end{theorem}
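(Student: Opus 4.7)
The plan is to mimic the argument used for Theorems~\ref{th10.1} and~\ref{th10.2a}, with the abstract input being Theorem~\ref{th4.*2} in place of Theorems~\ref{th3.8} and~\ref{th3.9}. The only additional observation compared with Theorem~\ref{th10.2a} is that the hypothesis Condition~\ref{cond10.2} translates into the abstract hypotheses $\widehat{N}_0(\boldsymbol{\theta}_0)=0$ and $\widehat{\mathcal{N}}^{(q)}(\boldsymbol{\theta}_0)\neq 0$ at the fixed point $\boldsymbol{\theta}_0$, which by Propositions~\ref{Prop_nu_1} and~\ref{Prop_nu1_theta} (or by inspection of expansions \eqref{hatA_eigenvalues_series}) means precisely that $\mu_l(\boldsymbol{\theta}_0)=0$ for all $l$ while $\nu_j(\boldsymbol{\theta}_0)\neq 0$ for some $j$. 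This places us in the scope of Theorem~\ref{th4.*2} applied to the family $\widehat{A}(t,\boldsymbol{\theta}_0)=\widehat{\mathcal{A}}(t\boldsymbol{\theta}_0)$.

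To prove statement $1^\circ$, I argue by contradiction: assume there exists a positive function $\mathcal{C}(\tau)$ with $\lim_{\tau\to\infty}\mathcal{C}(\tau)/|\tau|^{1/2}=0$ such that \eqref{10.1a} holds for some $s\ge 3/2$, for all $\tau\in\R$, a.~e.\ $\mathbf{k}\in\widetilde{\Omega}$, and sufficiently small $\varepsilon>0$. Multiplying the operator in \eqref{10.1a} by $\widehat{P}$ on the right and using \eqref{R_P}, one obtains
\begin{equation*}
\bigl\| \bigl( \cos(\varepsilon^{-1}\tau\widehat{\mathcal{A}}(\mathbf{k})^{1/2})
-\cos(\varepsilon^{-1}\tau\widehat{\mathcal{A}}^0(\mathbf{k})^{1/2})\bigr)\widehat{P}\bigr\|
\,\varepsilon^{s}(|\mathbf{k}|^2+\varepsilon^2)^{-s/2}\le \mathcal{C}(\tau)\varepsilon
\end{equation*}
for all $\tau\in\R$, a.~e.\ $\mathbf{k}\in\widetilde{\Omega}$, and small $\varepsilon$. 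Using \eqref{abstr_F(t)_threshold_1} (i.~e.\ estimate \eqref{10.4a}) for $\widehat{F}(\mathbf{k})$ and $\widehat{P}$, one passes to a bound on
$\cos(\varepsilon^{-1}\tau\widehat{\mathcal{A}}(\mathbf{k})^{1/2})\widehat{F}(\mathbf{k})-\cos(\varepsilon^{-1}\tau\widehat{\mathcal{A}}^0(\mathbf{k})^{1/2})\widehat{P}$, with $\mathcal{C}(\tau)$ replaced by some $\widetilde{\mathcal{C}}(\tau)$ still satisfying $\widetilde{\mathcal{C}}(\tau)/|\tau|^{1/2}\to 0$. By Lemma~\ref{hatA(k)_Lipschitz_lemma}, the operator inside the norm depends continuously on $\mathbf{k}$ in the ball $|\mathbf{k}|\le\widehat{t}_0$ for fixed $\tau,\varepsilon$, so the a.~e.\ bound is in fact valid for every $\mathbf{k}$ in this ball. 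Specialising to $\mathbf{k}=t\boldsymbol{\theta}_0$ for $t\le\widehat{t}_0$, and applying \eqref{abstr_F(t)_threshold_1} once more to return from $\widehat{F}(t\boldsymbol{\theta}_0)$ to $\widehat{P}$, one reaches
\begin{equation*}
\bigl\| \bigl( \cos(\varepsilon^{-1}\tau\widehat{\mathcal{A}}(t\boldsymbol{\theta}_0)^{1/2})
-\cos(\varepsilon^{-1}\tau\widehat{\mathcal{A}}^0(t\boldsymbol{\theta}_0)^{1/2})\bigr)\widehat{P}\bigr\|
\,\varepsilon^{s}(t^2+\varepsilon^2)^{-s/2}\le \widehat{\mathcal{C}}(\tau)\varepsilon,
\end{equation*}
with $\widehat{\mathcal{C}}(\tau)/|\tau|^{1/2}\to 0$, valid for all $\tau\in\R$, sufficiently small $t$ and $\varepsilon$. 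This is precisely the abstract estimate \eqref{**.1} for the family $\widehat{A}(t,\boldsymbol{\theta}_0)$, which under Condition~\ref{cond10.2} falls under the hypotheses of statement $1^\circ$ of Theorem~\ref{th4.*2}. That theorem yields the desired contradiction.

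Statement $2^\circ$ is handled by the same scheme, but on the sine side. Starting from the assumed bound \eqref{10.2a} for $\widehat{J}_2$, multiplication by $\widehat{P}$ and use of \eqref{R_P} gives the analogue of \eqref{10.7n}. Combining with the trivial uniform bound
$\|\widehat{\mathcal{A}}(\mathbf{k})^{-1/2}\sin(\varepsilon^{-1}\tau\widehat{\mathcal{A}}(\mathbf{k})^{1/2})\widehat{F}(\mathbf{k})^{\perp}\|\le\widehat{\delta}^{-1/2}$ (as in \eqref{10.7m}) to insert $\widehat{F}(\mathbf{k})$, then invoking the third estimate of Lemma~\ref{hatA(k)_Lipschitz_lemma} to upgrade the a.~e.\ bound to a pointwise one in the ball $|\mathbf{k}|\le\widehat{t}_0$, and specialising to $\mathbf{k}=t\boldsymbol{\theta}_0$, I obtain the abstract estimate \eqref{**.2} for the family $\widehat{A}(t,\boldsymbol{\theta}_0)$. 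Statement $2^\circ$ of Theorem~\ref{th4.*2} then produces a contradiction. The only place requiring genuine care is the continuity step: one must check that Lemma~\ref{hatA(k)_Lipschitz_lemma} yields continuity of the whole expression inside the norm (not just of each summand), which is immediate once $\widehat{F}(\mathbf{k})$ has been inserted, since $\widehat{F}(\mathbf{k})$ and $\widehat{P}$ are the spectral projections of $\widehat{\mathcal{A}}(\mathbf{k})$ and $\widehat{\mathcal{A}}^0(\mathbf{k})$ for $[0,\widehat{\delta}]$, respectively; the rest is bookkeeping of the $\tau$-dependence of the constants so that the decay $\mathcal{C}(\tau)/|\tau|^{1/2}\to 0$ is preserved through the manipulations above.
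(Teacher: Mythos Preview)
Your proposal is correct and follows essentially the same approach as the paper: the paper states only that Theorem~\ref{th10.7} follows from Theorem~\ref{th4.*2} by the same arguments as in the proof of Theorem~\ref{th10.2a}, and your write-up carries out precisely that scheme, including the continuity upgrade via Lemma~\ref{hatA(k)_Lipschitz_lemma} and the specialisation to $\mathbf{k}=t\boldsymbol{\theta}_0$. Your remark that the added $\tau$-independent constants do not spoil the condition $\mathcal{C}(\tau)/|\tau|^{1/2}\to 0$ is the one extra bookkeeping point needed beyond Theorem~\ref{th10.2a}, and you have identified it correctly.
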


The following result confirms that Theorem~\ref{A(k)_sin_general_thrm} is sharp. It can be deduced from 
Theorem~\ref{th4.6} by the same arguments as in the proof of Theorem~\ref{hat_s<2_sinA(k)_thrm}.

\begin{theorem}
	\label{th10.8}
Suppose that Condition \emph{\ref{cond10.1}} is satisfied. Let $s \ge 2$.
There does not exist a positive function $\mathcal{C}(\tau)$ such that 
$\lim_{\tau \to \infty} \mathcal{C}(\tau)/|\tau|=0$ and estimate  \eqref{hat_s<2_sinA(k)_est} holds for all $\tau \in \R,$ almost all $\mathbf{k} \in \wt{\Omega},$ and sufficiently small $\varepsilon > 0$.
\end{theorem}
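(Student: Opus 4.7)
The plan is to argue by contradiction, following the template of the proof of Theorem~\ref{hat_s<2_sinA(k)_thrm}, but at the very end invoking the sharpness-in-$\tau$ statement of Theorem~\ref{th4.6} in place of the sharpness-in-$s$ statement of Theorem~\ref{abstr_s<2_general_thrm}. Thus I assume, toward a contradiction, that for some $s \ge 2$ there is a positive function $\mathcal{C}(\tau)$ with $\lim_{\tau\to\infty} \mathcal{C}(\tau)/|\tau|=0$ such that
\eqref{hat_s<2_sinA(k)_est} holds for all $\tau \in \R$, almost every $\k \in \wt{\Omega}$, and all sufficiently small $\eps > 0$.

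First I would multiply the operator under the norm sign in \eqref{hat_s<2_sinA(k)_est} on the right by $\wh{P}$ and use \eqref{R_P} with $s$ in place of $s$ to obtain the exact analog of \eqref{hat_s<2_proof_f1} with the factor $\eps^s(|\k|^2+\eps^2)^{-s/2}$ and the right-hand side $\mathcal{C}(\tau)\eps$. Then, proceeding exactly as in the derivation of \eqref{hat_s<2_proof_f4}, I would invoke the identity $\wh{P}+\Lambda b(\D+\k)\wh{P}=(\wh{F}(\k)-\wh{F}_2(\k))\wh{P}$ together with the threshold estimates \eqref{abstr_F(t)_threshold_1} and \eqref{abstr_sqrtA(t)F2(t)_est} to replace the $(I+\Lambda b(\D+\k)\wh{P})\wh{P}$ factor by $\wh{F}(\k)\wh{P}$, at the cost of an absolute ($\tau$-independent) additive error of order $\eps$ that can be absorbed into $\mathcal{C}(\tau)\eps$. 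This yields an inequality of the form \eqref{hat_s<2_proof_f4} holding for almost every $\k$ with $|\k|\le \wh{t}_0$.

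Next I would use Lemma~\ref{hatA(k)_Lipschitz_lemma} (applied to both $\wh{\mathcal A}(\k)$ and $\wh{\mathcal A}^0(\k)$) to upgrade this estimate from almost every $\k$ to every $\k$ in the ball $|\k|\le \wh{t}_0$: the operator under the norm sign is continuous in $\k$ on this ball, and the estimate is preserved under continuous extension. Specializing to $\k=t\boldsymbol{\theta}_0$ with $t\le \wh{t}_0$, where $\boldsymbol{\theta}_0\in\mathbb S^{d-1}$ is the point guaranteed by Condition~\ref{cond10.1} at which $\wh{N}_0(\boldsymbol{\theta}_0)\ne 0$, and converting back $\wh{F}(t\boldsymbol{\theta}_0)\wh{P}=\wh{P}+\Lambda b(t\boldsymbol{\theta}_0)\wh{P}-\wh{F}_2(t\boldsymbol{\theta}_0)\wh{P}$ via the same threshold bounds, I arrive at the analog of \eqref{hat_s<2_proof_f5}: an estimate which, in the abstract notation of Chapter~1 applied to the one-parameter family $A(t)=\wh{A}(t,\boldsymbol{\theta}_0)$, reads precisely as \eqref{abstr_s<2_est_imp} with $C(\tau)=\check{\mathcal C}'(\tau)$, and for which $\lim_{\tau\to\infty}\check{\mathcal C}'(\tau)/|\tau|=0$.

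At this point Theorem~\ref{th4.6} (applicable because $\wh{N}_0(\boldsymbol{\theta}_0)\ne 0$ corresponds to $N_0\ne 0$ for the abstract family $A(t)=\wh{A}(t,\boldsymbol{\theta}_0)$) yields the desired contradiction. The main obstacle I foresee is purely bookkeeping: one must check, in the step where Lemma~\ref{hatA(k)_Lipschitz_lemma} is used to pass from almost every $\k$ to every $\k$, that the $\tau$-dependent Lipschitz constants $\wh{C}''(\tau)$ and $\wh{C}'''(\tau)$ in that lemma are used only to ensure continuity in $\k$ (for each fixed $\tau$ and $\eps$), and are not absorbed into the right-hand side in a way that would spoil the hypothesized smallness $\lim\mathcal C(\tau)/|\tau|=0$. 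Since continuity in $\k$ alone is required, this step goes through without any loss in the $\tau$-regime, and the contradiction with Theorem~\ref{th4.6} is genuine.
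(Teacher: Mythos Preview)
Your proposal is correct and follows precisely the route the paper indicates: it says Theorem~\ref{th10.8} ``can be deduced from Theorem~\ref{th4.6} by the same arguments as in the proof of Theorem~\ref{hat_s<2_sinA(k)_thrm},'' which is exactly what you do. Your observation that the $\tau$-independent error terms coming from \eqref{10.4a} and \eqref{hat_s<2_proof_f3} can be absorbed into $\mathcal{C}(\tau)$ without spoiling the condition $\lim_{\tau\to\infty}\mathcal{C}(\tau)/|\tau|=0$, and that the $\tau$-dependent Lipschitz constants of Lemma~\ref{hatA(k)_Lipschitz_lemma} are used only for continuity in $\k$ at fixed $(\tau,\eps)$, are the two points one has to check beyond the proof of Theorem~\ref{hat_s<2_sinA(k)_thrm}, and you handle both correctly.
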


Similarly, Theorem \ref{th4.final} implies the following statement demonstrating that Theorems \ref{A(k)_sin_enchanced_thrm_1} and \ref{A(k)_sin_enchanced_thrm_2} are sharp.

\begin{theorem}
	\label{th10.9}
	Suppose that Condition \emph{\ref{cond10.2}} is satisfied. Let $s \ge 3/2$. There does not exist a positive function  $\mathcal{C}(\tau)$ such that $\lim_{\tau \to \infty} \mathcal{C}(\tau)/|\tau|^{1/2}=0$ and estimate 
   \eqref{hat_s<2_sinA(k)_est} holds for all
   $\tau \in \R,$ almost all $\mathbf{k} \in \wt{\Omega},$ and sufficiently small $\varepsilon > 0$.
\end{theorem}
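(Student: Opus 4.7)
The plan is to mimic the reduction scheme used in the proof of Theorem \ref{hat_s<3/2_sinA(k)_thrm}, but replace the application of Theorem \ref{th3.12} by the time-sharpness result Theorem \ref{th4.final}. We argue by contradiction: assume that for some $s\ge 3/2$ there is a positive function $\mathcal{C}(\tau)$ with $\lim_{\tau\to\infty}\mathcal{C}(\tau)/|\tau|^{1/2}=0$ such that estimate \eqref{hat_s<2_sinA(k)_est} holds for all $\tau\in\R$, a.e.\ $\k\in\wt{\Omega}$, and sufficiently small $\eps>0$.

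First, I would multiply the operator under the norm sign by $\wh{P}$ from the right and invoke \eqref{R_P} to rewrite the hypothetical inequality as a bound with the scalar smoothing factor $\eps^s(|\k|^2+\eps^2)^{-s/2}$, exactly as in \eqref{hat_s<2_proof_f1}. For $|\k|\le \wh{t}_0$, the threshold approximations \eqref{10.4a}, \eqref{hat_s<2_proof_f3} together with the identity $\wh{P}+\Lambda b(\D+\k)\wh{P}=(\wh{F}(\k)-\wh{F}_2(\k))\wh{P}$ (which follows from \eqref{abstr_F(t)_threshold_2}, \eqref{abstr_F1_K0_N_invar}, and \eqref{tZP_realization}) and the nondegeneracy \eqref{S_nondegenerated} will allow me to replace $\wh{P}+\Lambda b(\D+\k)\wh{P}$ by $\wh{F}(\k)$ (multiplied on the left by $\wh{\mathcal A}(\k)^{1/2}\wh{F}(\k)$) up to an error absorbed by $\check{\mathcal{C}}(\tau)\eps$, without disturbing the $\tau$-growth rate, since each correction term produces a bounded constant times $\eps$.

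Next, Lemma \ref{hatA(k)_Lipschitz_lemma}, applied to both $\wh{\mathcal A}(\k)$ and $\wh{\mathcal A}^0(\k)$ (here $\wh{P}$ is the spectral projection of $\wh{\mathcal A}^0(\k)$ for $[0,\wh\delta]$), shows that for fixed $\tau$ and $\eps$ the resulting operator is continuous in $\k$ on the ball $|\k|\le\wh{t}_0$. Therefore the estimate, originally for a.e.\ $\k$, extends to every $\k$ in this ball, and in particular to $\k=t\boldsymbol{\theta}_0$ with $0<t\le\wh{t}_0$. Running the threshold identities in reverse at $\k=t\boldsymbol{\theta}_0$, I arrive at an inequality of the form
\begin{equation*}
\bigl\| \wh{\mathcal A}(t\boldsymbol{\theta}_0)^{1/2}\wh{J}(t\boldsymbol{\theta}_0,\eps^{-1}\tau)\wh{P}\bigr\|\,\eps^{s}(t^2+\eps^2)^{-s/2}\le \check{\mathcal{C}}'(\tau)\,\eps,
\end{equation*}
valid for all $t\le\wh{t}_0$, sufficiently small $\eps>0$, and all $\tau\in\R$, with $\lim_{\tau\to\infty}\check{\mathcal{C}}'(\tau)/|\tau|^{1/2}=0$.

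This is precisely the realization, along the ray $\boldsymbol{\theta}=\boldsymbol{\theta}_0$, of the abstract estimate \eqref{abstr_s<2_est_imp} for the family $\wh{A}(t,\boldsymbol{\theta}_0)$, with the $\tau$-growth bound required in Theorem \ref{th4.final}. By Condition \ref{cond10.2}, at $\boldsymbol{\theta}=\boldsymbol{\theta}_0$ we have $N_0=0$ and $\mathcal{N}^{(q)}\ne 0$ (in the abstract terminology), so Theorem \ref{th4.final} applies and yields a contradiction. The main obstacle is bookkeeping in the second step: converting the sandwiched operator $\wh{\mathcal A}(\k)^{1/2}\wh{J}(\k,\eps^{-1}\tau)$ into the abstract form $A(t,\boldsymbol{\theta})^{1/2}\Sigma(t,\eps^{-1}\tau)$ requires the threshold identities to be used in both directions while keeping all residual terms bounded by $O(\eps)$ (never by $O(|\tau|^{1/2}\eps)$), so that the asymptotic hypothesis $\mathcal{C}(\tau)=o(|\tau|^{1/2})$ is faithfully transferred to the abstract setting. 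Once this is verified—and the Lipschitz-in-$\k$ argument is uniform in $\tau$ and $\eps$ because the spectral projection $\wh{F}(\k)$ is independent of them—the contradiction with Theorem \ref{th4.final} is immediate.
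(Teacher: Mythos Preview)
Your proposal is correct and follows essentially the same route as the paper: the paper states that Theorem~\ref{th10.9} is deduced from the abstract Theorem~\ref{th4.final} ``by the same arguments as in the proof of Theorem~\ref{hat_s<2_sinA(k)_thrm}'', which is exactly the reduction you carry out. Your observation that the threshold correction terms contribute only $O(\eps)$ (independent of $\tau$), so that the hypothesis $\mathcal{C}(\tau)=o(|\tau|^{1/2})$ survives the reduction, is the correct point to check.
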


\section{The operator $\mathcal{A} (\mathbf{k})$. Application of the scheme of~\S\ref{abstr_sandwiched_section}}

\subsection{Application of the scheme of~\S\ref{abstr_sandwiched_section} to the operator $\mathcal{A} (\mathbf{k})$}

The operator $\mathcal{A} (\mathbf{k}) = f^* \widehat{\mathcal{A}} (\mathbf{k}) f$ is studied by the method of~\S\ref{abstr_sandwiched_section}. Now we have $\mathfrak{H} = \widehat{\mathfrak{H}} = L_2 (\Omega; \mathbb{C}^n)$ and $\mathfrak{H}_* = L_2 (\Omega; \mathbb{C}^m)$. The role of the operator $A(t)$ is played by  $A(t, \boldsymbol{\theta}) = \mathcal{A}(\mathbf{k})$, the role of $\widehat{A}(t)$ is played by the operator 
$\widehat{A}(t, \boldsymbol{\theta}) = \widehat{\mathcal{A}}(\mathbf{k})$. The isomorphism $M$ is the operator of multiplication by the matrix-valued function $f(\mathbf{x})$. The operator $Q$ is the operator of multiplication by the matrix-valued function $Q(\mathbf{x}) = (f (\mathbf{x}) f (\mathbf{x})^*)^{-1}$.
The block of the operator $Q$ in the subspace $\widehat{\mathfrak{N}}$ (see~(\ref{Ker3})) is the operator of multiplication by the constant matrix $\overline{Q} = (\underline{f f^*})^{-1} = |\Omega|^{-1} \int_{\Omega} (f (\mathbf{x}) f (\mathbf{x})^*)^{-1} d \mathbf{x}$. Next, $M_0$ is the operator of multiplication by the constant matrix 
\begin{equation}
\label{f_0}
f_0 = (\overline{Q})^{-1/2} = (\underline{f f^*})^{1/2}.
\end{equation}
Note that
\begin{equation}
\label{f_0_estimates}
| f_0 | \le \| f \|_{L_{\infty}}, \qquad | f_0^{-1} | \le \| f^{-1} \|_{L_{\infty}}.
\end{equation}

In $L_2 (\mathbb{R}^d; \mathbb{C}^n)$, define the operator
\begin{equation}
\label{A0}
\mathcal{A}^0 :=  f_0 \widehat{\mathcal{A}}^0 f_0 = f_0 b(\mathbf{D})^* g^0 b(\mathbf{D}) f_0.
\end{equation}
Let $\mathcal{A}^0 (\mathbf{k})$~be the corresponding operator family in $L_2 (\Omega; \mathbb{C}^n)$. Then  
\begin{equation}
\label{8.4a}
\mathcal{A}^0 (\mathbf{k}) = f_0 \widehat{\mathcal{A}}^0 (\mathbf{k}) f_0= f_0 b(\mathbf{D} + \mathbf{k})^* g^0 b(\mathbf{D} + \mathbf{k}) f_0
\end{equation}
with periodic boundary conditions.
By~(\ref{hatS_P=hatA^0_P}), 
\begin{equation}
\label{f_0 hatS f_0 P = A^0}
f_0 \widehat{S} (\mathbf{k}) f_0 \widehat{P} = \mathcal{A}^0 (\mathbf{k}) \widehat{P}.
\end{equation}

\subsection{The analytic branches of  eigenvalues and eigenvectors}
\label{sndw_eigenvalues_and_eigenvectors_section}
According to~(\ref{abstr_S_and_S_hat_relation}), the spectral germ $S(\boldsymbol{\theta})$ of the operator  
$A (t, \boldsymbol{\theta})$ acting in the subspace $\mathfrak{N}$ (see (\ref{Ker2})) can be represented as 
$$
S(\boldsymbol{\theta}) = P f^* b(\boldsymbol{\theta})^* g^0 b(\boldsymbol{\theta}) f|_{\mathfrak{N}},
$$
where $P$~is the orthogonal projection of the space $L_2 (\Omega; \mathbb{C}^n)$ onto $\mathfrak{N}$. We put 
$$
S(\k) := t^2 S(\boldsymbol{\theta}) = P f^* b(\k)^* g^0 b(\k) f|_{\mathfrak{N}}.
$$

The analytic (in $t$) branches of the eigenvalues $\lambda_l (t, \boldsymbol{\theta})$ and the analytic branches of the eigenvectors $\varphi_l (t, \boldsymbol{\theta})$ of the operator $A (t, \boldsymbol{\theta})$ 
admit the power series expansions of the form~(\ref{abstr_A(t)_eigenvalues_series}), (\ref{abstr_A(t)_eigenvectors_series}) with the coefficients depending on $\boldsymbol{\theta}$:
\begin{gather}
\label{A_eigenvalues_series}
\lambda_l (t, \boldsymbol{\theta}) = \gamma_l (\boldsymbol{\theta}) t^2 + \mu_l (\boldsymbol{\theta}) t^3 + 
\nu_l (\boldsymbol{\theta}) t^4 + \ldots, \qquad l = 1, \ldots, n,
\\
\label{A_eigenvectors_series}
\varphi_l (t, \boldsymbol{\theta}) = \omega_l (\boldsymbol{\theta}) + t \psi^{(1)}_l (\boldsymbol{\theta}) + \ldots, \qquad l = 1, \ldots, n.
\end{gather}
The vectors $\omega_1 (\boldsymbol{\theta}), \ldots, \omega_n (\boldsymbol{\theta})$ form an orthonormal basis in the subspace $\mathfrak{N}$, and the vectors 
$\zeta_l (\boldsymbol{\theta}) = f \omega_l (\boldsymbol{\theta})$, $l = 1, \ldots, n$, form a basis in 
 $\widehat{\mathfrak{N}}$~(see~(\ref{Ker3}))  orthonormal with the weight:
$(\overline{Q} \zeta_l (\boldsymbol{\theta}), \zeta_j (\boldsymbol{\theta})) = \delta_{jl}, \; j, l = 1, \ldots,n$.

The numbers $\gamma_l (\boldsymbol{\theta})$ and the elements $\omega_l (\boldsymbol{\theta})$ are the eigenvalues and the eigenvectors of the spectral germ $S(\boldsymbol{\theta})$. 
According to~(\ref{abstr_hatS_gener_spec_problem}), the numbers $\gamma_l (\boldsymbol{\theta})$ and the elements $\zeta_l (\boldsymbol{\theta})$ are the eigenvalues and the eigenvectors of the following generalized spectral problem:
\begin{equation}
\label{hatS_gener_spec_problem}
b(\boldsymbol{\theta})^* g^0 b(\boldsymbol{\theta}) \zeta_l (\boldsymbol{\theta}) = \gamma_l (\boldsymbol{\theta}) \overline{Q} \zeta_l (\boldsymbol{\theta}), \qquad l = 1, \ldots, n.
\end{equation}

\subsection{The operators $\widehat{Z}_Q (\boldsymbol{\theta})$ and $\widehat{N}_Q (\boldsymbol{\theta})$}
Now we describe the operators  $\widehat{Z}_Q$ and $\widehat{N}_Q$ (in abstract terms defined in Subsection~\ref{abstr_hatZ_Q_and_hatN_Q_section}). For this, we introduce the $\Gamma$-periodic solution 
$\Lambda_Q(\mathbf{x})$ of the problem 
\begin{equation*}
b(\mathbf{D})^* g(\mathbf{x}) (b(\mathbf{D}) \Lambda_Q(\mathbf{x}) + \mathbf{1}_m) = 0, \quad \int\limits_{\Omega} Q(\mathbf{x}) \Lambda_Q(\mathbf{x}) \, d \mathbf{x} = 0.
\end{equation*}
Clearly, $\Lambda_Q(\mathbf{x})$ differs from the periodic solution $\Lambda(\mathbf{x})$ of the 
problem~(\ref{equation_for_Lambda}) by the constant summand:
\begin{equation}
\label{Lambda_Q=Lambda+Lambda_Q^0}
\Lambda_Q(\mathbf{x}) = \Lambda(\mathbf{x}) + \Lambda_Q^0, \quad \Lambda_Q^0 = -(\overline{Q})^{-1} (\overline{Q \Lambda}).
\end{equation}
As was checked in~\cite[\S5]{BSu3}, now the operators $\widehat{Z}_Q (\boldsymbol{\theta})$ and $\widehat{N}_Q (\boldsymbol{\theta})$ take the form 
\begin{align}
\label{Z_Q(theta)}
\widehat{Z}_Q (\boldsymbol{\theta}) &= \Lambda_Q b(\boldsymbol{\theta}) \widehat{P},
\\
\label{N_Q(theta)}
\widehat{N}_Q (\boldsymbol{\theta}) &= b(\boldsymbol{\theta})^* L_Q (\boldsymbol{\theta}) b(\boldsymbol{\theta}) \widehat{P},
\end{align}
where $L_Q (\boldsymbol{\theta})$~is the ($m \times m$)-matrix given by 
\begin{equation}
\label{L_Q(theta)}
L_Q (\boldsymbol{\theta}) = | \Omega |^{-1} \int\limits_{\Omega} (\Lambda_Q(\mathbf{x})^*b(\boldsymbol{\theta})^* \widetilde{g} (\mathbf{x}) + \widetilde{g} (\mathbf{x})^* b(\boldsymbol{\theta}) \Lambda_Q(\mathbf{x}))\, d \mathbf{x}.
\end{equation}
Obviously,
\begin{equation}
\label{11.14a}
t \wh{Z}_Q(\boldsymbol{\theta}) \wh{P} = t \Lambda_Q b(\boldsymbol{\theta}) \widehat{P} = \Lambda_Q b(\mathbf{D} + \mathbf{k}) \widehat{P}.
\end{equation}

In~\cite[\S5]{BSu3}, some conditions ensuring that $\widehat{N}_Q (\boldsymbol{\theta}) \equiv 0$ were given.

\begin{proposition}[see~\cite{BSu3}]
	\label{N_Q=0_proposit}
	Suppose that at least one of the following assumptions is satisfied{\rm :}
	
\noindent $1^\circ$.	 The operator $\mathcal{A}$ is of the form $\mathcal{A} = f(\mathbf{x})^*\mathbf{D}^* g(\mathbf{x}) \mathbf{D}f(\mathbf{x}),$ where $g(\mathbf{x})$~is a symmetric matrix with real entries.
	
\noindent $2^\circ$.  Relations~\emph{(\ref{g0=overline_g_relat})} are satisfied, i.~e.,  $g^0 = \overline{g}$.
	
\noindent	Then $\widehat{N}_Q (\boldsymbol{\theta}) = 0$ for any $\boldsymbol{\theta} \in \mathbb{S}^{d-1}$.
\end{proposition}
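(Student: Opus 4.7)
Our strategy is to analyze the matrix $L_Q(\boldsymbol{\theta})$ given by \eqref{L_Q(theta)} via the relation $\Lambda_Q = \Lambda + \Lambda_Q^0$ from \eqref{Lambda_Q=Lambda+Lambda_Q^0}, and to treat the two hypotheses separately. Substituting this decomposition into \eqref{L_Q(theta)} and using the identity $|\Omega|^{-1}\int_\Omega \widetilde{g}(\x)\,d\x = g^0$ together with the self-adjointness of $g^0$, we obtain
\begin{equation*}
L_Q(\boldsymbol{\theta}) = L(\boldsymbol{\theta}) + (\Lambda_Q^0)^* b(\boldsymbol{\theta})^* g^0 + g^0 b(\boldsymbol{\theta}) \Lambda_Q^0,
\end{equation*}
so that $\widehat{N}_Q(\boldsymbol{\theta})$ equals $\widehat{N}(\boldsymbol{\theta})$ plus a constant correction sandwiched between $b(\boldsymbol{\theta})^*$ and $b(\boldsymbol{\theta})$.

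Under hypothesis $2^\circ$, by \eqref{g0=overline_g_relat} the condition $g^0 = \overline{g}$ is equivalent to the columnwise relations $b(\mathbf{D})^*\mathbf{g}_k(\x) = 0$. These are precisely the assertion that the trivial function $\Lambda \equiv 0$ satisfies equation \eqref{equation_for_Lambda}; the normalization $\int_\Omega \Lambda\,d\x = 0$ then pins $\Lambda \equiv 0$. It follows that $\Lambda_Q^0 = 0$, $\Lambda_Q \equiv 0$, and direct inspection of \eqref{L_Q(theta)} yields $L_Q(\boldsymbol{\theta}) \equiv 0$, hence $\widehat{N}_Q(\boldsymbol{\theta}) \equiv 0$.

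Under hypothesis $1^\circ$ we have $b(\mathbf{D}) = \mathbf{D}$ and $g$ real symmetric. Proposition \ref{N=0_proposit}$(1^\circ)$ applied to $\widehat{\mathcal A} = \mathbf{D}^* g \mathbf{D}$ already delivers $\widehat{N}(\boldsymbol{\theta}) \equiv 0$, so the task is to show that the sandwich correction in the above decomposition also vanishes. The plan is to choose $\Lambda$ real (possible since the defining system \eqref{equation_for_Lambda} is a real elliptic system when $g$ is real) so that $\Lambda_Q^0$ is real, and then to identify the diagonal matrix elements of $\widehat{N}_{0,Q}$ in the basis $\{\zeta_l(\boldsymbol{\theta}) = f\omega_l(\boldsymbol{\theta})\}$ of the generalized spectral problem \eqref{hatS_gener_spec_problem} with the third-order coefficients $\mu_l(\boldsymbol{\theta})$ in the eigenvalue expansion \eqref{A_eigenvalues_series}, via the remark following \eqref{abstr_hatN_0Q_N_*Q}. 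A parity argument on the family $\mathcal{A}(\mathbf{k})$---the real symmetric structure of $g$ together with $b(\mathbf{D}) = \mathbf{D}$ furnishes a complex-conjugation symmetry yielding $\lambda_l(\mathbf{k}) = \lambda_l(-\mathbf{k})$---then forces $\mu_l \equiv 0$, so $\widehat{N}_{0,Q} \equiv 0$. The off-diagonal blocks of $\widehat{N}_Q$ (i.e.\ $\widehat{N}_{*,Q}$) are handled using the identity $(Q_{\widehat{\mathfrak{N}}}\widetilde{\zeta}_j,\zeta_l) + (\zeta_j, Q_{\widehat{\mathfrak{N}}}\widetilde{\zeta}_l) = 0$ recorded after \eqref{abstr_hatN_0Q_N_*Q}, combined with the reality of $\zeta_l, \widetilde{\zeta}_l, Q_{\widehat{\mathfrak{N}}}$ to force $(\widehat{N}_{*,Q}\zeta_j,\zeta_l) = 0$ whenever $\gamma_j \ne \gamma_l$ as well.

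The main obstacle is the reality/parity argument under $1^\circ$. The un-sandwiched identity $\widehat{N} \equiv 0$ is proved via a direct reality argument on $\widehat{\mathcal A}$; transferring it to $\widehat{N}_Q \equiv 0$ requires carefully tracking how the weight $Q = (ff^*)^{-1}$ interacts with the conjugation symmetry in the presence of a possibly complex-valued sandwich factor $f$, and verifying that both the diagonal part $\widehat{N}_{0,Q}$ and the off-diagonal part $\widehat{N}_{*,Q}$, computed in the weighted basis $\{\zeta_l\}$ from \eqref{abstr_hatN_0Q_N_*Q}, vanish. By contrast, case $2^\circ$ is immediate once one observes that $\Lambda$ itself is identically zero, so the sandwiching by $f$ becomes irrelevant.
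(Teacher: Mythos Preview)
This proposition is quoted from \cite{BSu3} and is not proved in the present paper, so there is no in-paper argument to compare against.

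Your argument for case $2^\circ$ is correct: relations \eqref{g0=overline_g_relat} force $\Lambda \equiv 0$ in \eqref{equation_for_Lambda}, hence $\Lambda_Q \equiv 0$, and then $L_Q(\boldsymbol{\theta}) \equiv 0$ directly from \eqref{L_Q(theta)}.

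For case $1^\circ$ there is both an overcomplication and a gap. The hypothesis $b(\mathbf{D}) = \mathbf{D}$ means $n = 1$, $m = d$ (this is precisely how the paper uses the result, cf.\ Subsection~\ref{sec17.1}); hence $\widehat{\mathfrak{N}}$ is one-dimensional, $\widehat{N}_Q = \widehat{N}_{0,Q}$, and your entire off-diagonal discussion is vacuous. (It would also not work as written: $\zeta_l = f\omega_l$ need not be real for complex $f$, and the antisymmetry you quote does not by itself force individual matrix entries to vanish.) More importantly, you assert the parity $\lambda_1(\mathbf{k}) = \lambda_1(-\mathbf{k})$ while simultaneously flagging complex $f$ as the ``main obstacle'' without resolving it. Complex conjugation alone takes $\mathcal{A}(\mathbf{k}) = \overline{f}(\mathbf{D}+\mathbf{k})^* g(\mathbf{D}+\mathbf{k})f$ to $f(\mathbf{D}-\mathbf{k})^* g(\mathbf{D}-\mathbf{k})\overline{f}$, which is not $\mathcal{A}(-\mathbf{k})$; you still owe the further unitary conjugation by the unimodular scalar $f/\overline{f}$ to complete the isospectrality. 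A cleaner route avoids the spectrum entirely. With $b(\mathbf{D}) = \mathbf{D}$ and $g$ real symmetric, one may take $\Lambda = i\Phi$ with $\Phi$ real, so that $\widetilde{g} = g(b(\mathbf{D})\Lambda + \mathbf{1}_d)$ is real; the weight $Q = |f|^{-2}$ is real, so $\Lambda_Q^0$ and hence $\Lambda_Q$ are purely imaginary. Since $n=1$, the relation $\widehat{N}_Q(\boldsymbol{\theta}) = 0$ amounts to the vanishing of the scalar $b(\boldsymbol{\theta})^* L_Q(\boldsymbol{\theta}) b(\boldsymbol{\theta}) = 2|\Omega|^{-1}\int_\Omega \operatorname{Re}\bigl(\overline{\Lambda_Q(\x)\boldsymbol{\theta}}\cdot \langle \widetilde{g}(\x)\boldsymbol{\theta},\boldsymbol{\theta}\rangle\bigr)\,d\x$, which is the real part of a purely imaginary integrand.
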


Recall that (see~Subsection~\ref{abstr_hatZ_Q_and_hatN_Q_section})
$$
\widehat{N}_Q (\boldsymbol{\theta}) = \widehat{N}_{0, Q} (\boldsymbol{\theta}) + \widehat{N}_{*,Q} (\boldsymbol{\theta}).
$$
 According to~(\ref{abstr_hatN_0Q_N_*Q}),
$\widehat{N}_{0, Q} (\boldsymbol{\theta}) = \sum_{l=1}^{n} \mu_l (\boldsymbol{\theta}) (\,\cdot\,, \overline{Q} \zeta_l(\boldsymbol{\theta}))_{L_2(\Omega)} \overline{Q} \zeta_l(\boldsymbol{\theta})$.
We have 
\begin{equation*}
(\widehat{N}_Q (\boldsymbol{\theta}) \zeta_l (\boldsymbol{\theta}), \zeta_l (\boldsymbol{\theta}))_{L_2 (\Omega)} = (\widehat{N}_{0,Q} (\boldsymbol{\theta}) \zeta_l (\boldsymbol{\theta}), \zeta_l (\boldsymbol{\theta}))_{L_2 (\Omega)} = \mu_l (\boldsymbol{\theta}), \quad l=1, \ldots, n.
\end{equation*}

The following statement was proved in \cite[\S 5]{BSu3}.

\begin{proposition}
Suppose that  $b(\boldsymbol{\theta}),$ $g (\mathbf{x}),$ and $Q(\mathbf{x})$~are matrices with real entries. Suppose that in the expansions~\emph{(\ref{A_eigenvectors_series})} for the analytic branches of the eigenvectors of the operator $A (t, \boldsymbol{\theta})$ the \textup{``}embryos\/\textup{''} $\omega_l (\boldsymbol{\theta}),$ ${l = 1, \ldots, n,}$ can be chosen so that the vectors  $\zeta_l (\boldsymbol{\theta}) = f \omega_l (\boldsymbol{\theta})$ are real. Then  
	$\mu_l (\boldsymbol{\theta}) = 0, \, l=1, \ldots, n,$ i.~e.\textup, $\widehat{N}_{0,Q} (\boldsymbol{\theta}) = 0$.
\end{proposition}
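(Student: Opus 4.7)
The plan is to show that, under the stated reality hypotheses, the matrix $L_Q(\boldsymbol\theta)$ from \eqref{L_Q(theta)} is purely imaginary (and Hermitian), and to deduce $\mu_l=0$ from the quadratic-form representation of $\wh{N}_Q(\boldsymbol\theta)$.

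First, by the Remark immediately preceding the statement, $\mu_l(\boldsymbol\theta) = (\wh{N}_Q(\boldsymbol\theta)\zeta_l,\zeta_l)_{L_2(\Omega)}$; indeed $(\wh{N}_{*,Q}\zeta_l,\zeta_l)=0$ by the skew relation analogous to \eqref{abstr_omega_tilde_omega_rel} transferred to the $\zeta$'s. Since $\zeta_l\in\wh{\mathfrak N}$ is a constant vector in $\AC^n$, representation \eqref{N_Q(theta)} yields
$$
\mu_l(\boldsymbol\theta)=|\Omega|\,\langle L_Q(\boldsymbol\theta)\,\xi_l,\,\xi_l\rangle_{\AC^m},\qquad \xi_l:=b(\boldsymbol\theta)\zeta_l.
$$
Under the hypotheses, both $b(\boldsymbol\theta)$ and $\zeta_l$ are real, so $\xi_l\in\R^m$. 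It therefore suffices to prove $\langle L_Q(\boldsymbol\theta)\xi,\xi\rangle_{\AC^m}=0$ for every $\xi\in\R^m$.

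The key step is to show that, under the stated assumptions, $\Lambda(\x)$ may be chosen purely imaginary. Since $\D=-i\nabla$ and the matrices $b_k,\,g(\x)$ are real, equation \eqref{equation_for_Lambda} takes the form
$$
\sum_{k,j}\partial_k\bigl[b_k^T g(\x)\, b_j\,\partial_j\Lambda(\x)\bigr] = -i\sum_k \partial_k\bigl[b_k^T g(\x)\bigr].
$$
After the substitution $\Lambda=-iM$ this becomes a purely real PDE for a $\Gamma$-periodic real matrix-valued function $M(\x)$, and the normalization $\int_\Omega\Lambda\,d\x=0$ is preserved by choosing $M$ with zero mean. Hence $\Lambda$ is purely imaginary, and $b(\D)\Lambda=-\sum_k b_k\partial_k M$ is real, so $\wt g=g(b(\D)\Lambda+\1_m)$ from \eqref{g_tilde} is real. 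By \eqref{Lambda_Q=Lambda+Lambda_Q^0}, $\Lambda_Q^0 = -(\overline Q)^{-1}\overline{Q\Lambda}$ is purely imaginary ($Q$ being real and $\Lambda$ imaginary), so $\Lambda_Q$ is purely imaginary as well.

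Plugging these into \eqref{L_Q(theta)}, each summand of the integrand is a product of one purely imaginary factor ($\Lambda_Q$ or $\Lambda_Q^*$) with real factors ($b(\boldsymbol\theta)$, $\wt g$, $\wt g^*$), so $L_Q(\boldsymbol\theta)$ is a purely imaginary matrix. Being Hermitian, it takes the form $L_Q(\boldsymbol\theta)=iB(\boldsymbol\theta)$ with $B(\boldsymbol\theta)\in\R^{m\times m}$ antisymmetric. Thus for every $\xi\in\R^m$,
$$
\langle L_Q(\boldsymbol\theta)\xi,\xi\rangle_{\AC^m} = i\,\xi^T B(\boldsymbol\theta)\xi = 0,
$$
and specializing $\xi=\xi_l$ gives $\mu_l(\boldsymbol\theta)=0$ for each $l$, which is equivalent to $\wh{N}_{0,Q}(\boldsymbol\theta)=0$. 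The main obstacle is the pure-imaginarity of $\Lambda$: one must carefully exploit the real coefficient structure of \eqref{equation_for_Lambda} and the uniqueness, up to an additive constant fixed by the zero-mean normalization, of the $\Gamma$-periodic solution, to argue that the real part of $\Lambda$ must vanish. The argument is parallel to that of Proposition \ref{N_0=0_proposit} for the case $f=\1_n$, with $\Lambda_Q$, $L_Q$, and $\zeta_l$ playing the roles of $\Lambda$, $L$, and $\wh\omega_l$ respectively; once the imaginarity of $\Lambda$ is secured, the rest is routine.
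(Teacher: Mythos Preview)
Your proof is correct. The paper does not supply its own argument for this proposition but cites \cite[\S5]{BSu3}; your approach---showing that $\Lambda$ (and hence $\Lambda_Q$) is purely imaginary under the reality assumptions, so that $L_Q(\boldsymbol\theta)$ is Hermitian with purely imaginary entries and therefore annihilates the quadratic form on real vectors $\xi_l=b(\boldsymbol\theta)\zeta_l$---is precisely the natural transfer of the $f=\mathbf{1}_n$ argument (Proposition~\ref{N_0=0_proposit}) to the sandwiched setting, and is the standard one.
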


In the  \textquotedblleft real\textquotedblright \ case under consideration, the germ  
$\widehat{S} (\boldsymbol{\theta})$ is a symmetric matrix with real entries; the matrix $\overline{Q}$ is also symmetric and real. Clearly, in the case of simple eigenvalue~$\gamma_j (\boldsymbol{\theta})$ of  problem~(\ref{hatS_gener_spec_problem}) the eigenvector $\zeta_j (\boldsymbol{\theta}) = f \omega_j (\boldsymbol{\theta})$ is defined uniquely up to a phase factor, and we can always choose it to be real. We obtain the following corollary.

\begin{corollary}
	\label{sndw_simple_spec_N0Q=0_cor}
	Suppose that the matrices  $b(\boldsymbol{\theta}),$ $g (\mathbf{x}),$ and $Q(\mathbf{x})$~have real entries. Suppose that problem~\emph{(\ref{hatS_gener_spec_problem})} has simple spectrum. Then
$\widehat{N}_{0,Q} (\boldsymbol{\theta}) = 0$.
\end{corollary}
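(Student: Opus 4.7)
The strategy is to reduce the corollary directly to the preceding Proposition by verifying that, under the assumption of simple spectrum of problem~\eqref{hatS_gener_spec_problem}, the embryos $\omega_l(\boldsymbol{\theta})$ can be chosen so that the vectors $\zeta_l(\boldsymbol{\theta}) = f\omega_l(\boldsymbol{\theta})$ are real. No new analytic perturbation arguments are needed; the content is entirely linear-algebraic.

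First, I would check that all the matrices entering problem~\eqref{hatS_gener_spec_problem} are real. The matrix $\overline{Q} = |\Omega|^{-1}\int_\Omega (ff^*)^{-1}\, d\mathbf{x}$ is real symmetric whenever $Q(\mathbf{x})$ is real. For the germ $\widehat{S}(\boldsymbol{\theta}) = b(\boldsymbol{\theta})^* g^0 b(\boldsymbol{\theta})$, it suffices to verify that $g^0$ is real. Writing the defining relation~\eqref{equation_for_Lambda} for $\Lambda$, taking complex conjugates, and using that $b(\mathbf{D})^* g(\mathbf{x}) b(\mathbf{D})$ has real coefficients, we see that $\overline{\Lambda(\mathbf{x})}$ also solves the same periodic problem with the same zero-mean normalization; by uniqueness $\Lambda$ is real. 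Hence $\widetilde{g}(\mathbf{x}) = g(\mathbf{x})(b(\mathbf{D})\Lambda(\mathbf{x}) + \mathbf{1}_m)$ is real by~\eqref{g_tilde}, and $g^0 = |\Omega|^{-1}\int_\Omega \widetilde{g}\, d\mathbf{x}$ is real by~\eqref{g0}.

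Next, since the spectrum of the generalized eigenvalue problem~\eqref{hatS_gener_spec_problem} is simple and the pair $(\widehat{S}(\boldsymbol{\theta}), \overline{Q})$ consists of real symmetric matrices (with $\overline{Q}$ positive definite), each eigenvalue $\gamma_j(\boldsymbol{\theta})$ is real and its eigenspace is one-dimensional. If $\zeta_j(\boldsymbol{\theta})$ is any (possibly complex) eigenvector, then $\overline{\zeta_j(\boldsymbol{\theta})}$ satisfies the same equation with the same eigenvalue, so it is a scalar multiple of $\zeta_j(\boldsymbol{\theta})$; hence $\zeta_j(\boldsymbol{\theta}) = e^{i\alpha}\eta_j(\boldsymbol{\theta})$ for some real vector $\eta_j(\boldsymbol{\theta})$ and some $\alpha\in\mathbb{R}$. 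Replacing $\zeta_j(\boldsymbol{\theta})$ by $e^{-i\alpha}\zeta_j(\boldsymbol{\theta})$ (equivalently, replacing $\omega_j(\boldsymbol{\theta})$ by $e^{-i\alpha}\omega_j(\boldsymbol{\theta})$, which is still an admissible choice of embryo for the analytic branch at the fixed value $\boldsymbol{\theta}$), we may assume $\zeta_j(\boldsymbol{\theta})$ is real. The normalization $(\overline{Q}\zeta_j(\boldsymbol{\theta}),\zeta_j(\boldsymbol{\theta}))=1$ is preserved under this rotation.

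With all $\zeta_j(\boldsymbol{\theta})$ chosen real, the hypothesis of the preceding Proposition is satisfied, and we conclude $\mu_l(\boldsymbol{\theta}) = 0$ for $l = 1,\dots, n$, i.e., $\widehat{N}_{0,Q}(\boldsymbol{\theta}) = 0$. The only step that requires a moment of care is the reduction of complex eigenvectors to real ones using the simplicity of the spectrum; the rest is bookkeeping based on properties already recorded in Subsections~\ref{sec8.2} and \ref{sndw_eigenvalues_and_eigenvectors_section}.
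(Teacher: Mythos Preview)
Your approach is correct and essentially identical to the paper's: the paper simply remarks that in the ``real'' case the germ $\widehat{S}(\boldsymbol{\theta})$ and the matrix $\overline{Q}$ are real symmetric, so under the simplicity assumption each eigenvector $\zeta_j(\boldsymbol{\theta})$ is determined up to a phase and can be chosen real, whence the preceding Proposition applies.

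One small slip in your added verification: $\Lambda$ is not real but \emph{purely imaginary}. Since the $b_l$ are real, $\overline{b(\mathbf{D})u}=-b(\mathbf{D})\overline{u}$, so conjugating the equation for $\Lambda$ gives $\overline{\Lambda}=-\Lambda$ (consistent with the explicit formula $\Lambda=i(\Phi_1,\dots,\Phi_d)$ in Subsection~\ref{sec17.1}). This does not affect your conclusion: $b(\mathbf{D})\Lambda$ is then real, hence $\widetilde{g}$ and $g^0$ are real, and the rest of your argument goes through unchanged.
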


\subsection{The operators $\widehat{Z}_{2,Q} (\boldsymbol{\theta})$, $\widehat{R}_{2,Q} (\boldsymbol{\theta}),$ and $\widehat{N}_{1,Q}^0 (\boldsymbol{\theta})$}

Now we describe the operators $\wh{Z}_{2,Q}$, $\wh{R}_{2,Q}$, and $\wh{N}_{1,Q}^0$ (see Subsection \ref{sec5.3}) for the family ${A}(t,\boldsymbol{\theta})$. 
Let $\Lambda_{l,Q}^{(2)}(\x)$ be the $\Gamma$-periodic solution of the problem 
\begin{align*}
b(\D)^* g(\x) \bigl(b(\D) \Lambda_{l,Q}^{(2)}(\x) + b_l \Lambda_Q(\x)\bigr)
&= - b_l^*  \widetilde{g}(\x) + Q(\x) (\overline{Q})^{-1} b_l^* g^0,
\\
 \int\limits_\Omega Q(\x)\Lambda_{l,Q}^{(2)}(\x) \,d\x &=0. 
\end{align*}
We put  
$
\Lambda^{(2)}_Q (\x; \boldsymbol{\theta}) := \sum_{l=1}^d \Lambda_{l,Q}^{(2)}(\x) \theta_l.
$
As was checked in \cite[Subsection 8.4]{VSu2}, we have
$$
\widehat{Z}_{2,Q} (\boldsymbol{\theta}) \!= \!\Lambda^{(2)}_Q(\x; \boldsymbol{\theta}) b(\boldsymbol{\theta}) \widehat{P},
\quad 
\widehat{R}_{2,Q} (\boldsymbol{\theta})\! =\! h(\x) \bigl( b(\D)\Lambda_Q^{(2)}(\x; \boldsymbol{\theta}) 
+ b(\boldsymbol{\theta}) \Lambda_Q(\x) \bigr) b(\boldsymbol{\theta}).
$$
Finally, in \cite[Subsection 8.5]{VSu2}, it was proved that
\begin{align}
\label{N_1Q0_theta}
&\widehat{N}_{1,Q}^0 (\boldsymbol{\theta})\! =\! b(\boldsymbol{\theta})^* L_{2,Q}(\boldsymbol{\theta}) b(\boldsymbol{\theta})
\widehat{P},
\\
\nonumber
\begin{split}
&L_{2,Q}(\boldsymbol{\theta})\! =\! |\Omega|^{-1}\!\! \int\limits_\Omega \!\big( \Lambda_Q^{(2)}(\x; \boldsymbol{\theta})^* b(\boldsymbol{\theta})^*
\widetilde{g}(\x) + \widetilde{g}(\x)^* b(\boldsymbol{\theta}) \Lambda_Q^{(2)}(\x; \boldsymbol{\theta}) \big)\, d\x
\\
&\!+ \! |\Omega|^{-1} \!\!\int\limits_\Omega \!\big( b(\D) \Lambda_Q^{(2)}(\x; \boldsymbol{\theta}) + b(\boldsymbol{\theta}) \Lambda_Q(\x)\big)^*
{g}(\x) \big( b(\D) \Lambda_Q^{(2)}(\x; \boldsymbol{\theta}) + b(\boldsymbol{\theta}) \Lambda_Q(\x)\big)
\, d\x.
\end{split}
\end{align}

\subsection{The multiplicities of the eigenvalues of the germ}
\label{sndw_eigenval_multipl_section}
In the present subsection, it is assumed that \hbox{$n \ge 2$}. We turn to the notation adopted in 
Subsection~\ref{abstr_cluster_section}.
 In general, the number $p(\boldsymbol{\theta})$ of different eigenvalues $\gamma^{\circ}_1 (\boldsymbol{\theta}), \ldots, \gamma^{\circ}_{p(\boldsymbol{\theta})} (\boldsymbol{\theta})$ of the spectral germ $S(\boldsymbol{\theta})$ (or of problem \eqref{hatS_gener_spec_problem}) and their multiplicities  $k_1 (\boldsymbol{\theta}), \ldots, k_{p(\boldsymbol{\theta})} (\boldsymbol{\theta})$ depend on the parameter 
 $\boldsymbol{\theta} \in \mathbb{S}^{d-1}$. 
 By $\mathfrak{N}_j (\boldsymbol{\theta})$ we denote the eigenspace of the germ $S (\boldsymbol{\theta})$ corresponding to the eigenvalue $\gamma^{\circ}_j (\boldsymbol{\theta})$. Then 
 $f \mathfrak{N}_j (\boldsymbol{\theta})=
\operatorname{Ker} \bigl( \wh{S}(\boldsymbol{\theta}) - \gamma_j^\circ(\boldsymbol{\theta}) \overline{Q}\bigr) 
=: \wh{\mathfrak N}_{j,Q}(\boldsymbol{\theta})$~is the eigenspace of the problem~(\ref{hatS_gener_spec_problem}) corresponding to the same eigenvalue $\gamma^{\circ}_j (\boldsymbol{\theta})$. We denote by  $\mathcal{P}_j (\boldsymbol{\theta})$ the \textquotedblleft skew\textquotedblright \ projection of the space $L_2(\Omega; \mathbb{C}^n)$ onto the subspace  
$\wh{\mathfrak N}_{j,Q}(\boldsymbol{\theta})$; $\mathcal{P}_j (\boldsymbol{\theta})$ is orthogonal with respect to the inner product with the weight $\overline{Q}$. According to~(\ref{abstr_hatN_0Q_N_*Q_invar_repr}), we have the following invariant representations for the operators $\widehat{N}_{0,Q} (\boldsymbol{\theta})$  and 
$\widehat{N}_{*,Q} (\boldsymbol{\theta})$:
\begin{equation}
\label{N0Q_invar_repr}
\begin{split}
\widehat{N}_{0,Q} (\boldsymbol{\theta})& = \sum_{j=1}^{p(\boldsymbol{\theta})} \mathcal{P}_j (\boldsymbol{\theta})^* \widehat{N}_Q (\boldsymbol{\theta}) \mathcal{P}_j (\boldsymbol{\theta}), 
\\ 
\widehat{N}_{*,Q} (\boldsymbol{\theta})& = \sum_{\substack{1 \le j, l \le p(\boldsymbol{\theta}):\, j \ne l}} \mathcal{P}_j (\boldsymbol{\theta})^* \widehat{N}_Q (\boldsymbol{\theta}) \mathcal{P}_l (\boldsymbol{\theta}).
\end{split}
\end{equation}

\subsection{The coefficients ${\nu}_l(\boldsymbol{\theta})$}

Applying Proposition~\ref{Prop_nu}, we arrive at the following statement.

\begin{proposition}\label{Prop_nu1_theta_Q}
Suppose that $\wh{N}_{0,Q}(\boldsymbol{\theta})=0$. 
Let ${\gamma}_1^\circ(\boldsymbol{\theta}), \dots, {\gamma}_{p(\boldsymbol{\theta})}^\circ(\boldsymbol{\theta})$ be the different eigenvalues of the problem~\eqref{hatS_gener_spec_problem}\textup,
and let  $k_1(\boldsymbol{\theta}), \dots, k_{p(\boldsymbol{\theta})}(\boldsymbol{\theta})$ be their multiplicities. Let $\wh{P}_{q,Q}(\boldsymbol{\theta})$ be the orthogonal projection of the space $L_2(\Omega;\AC^n)$ onto the subspace  $\wh{\mathfrak{N}}_{q,Q} (\boldsymbol{\theta})  = \operatorname{Ker} (\wh{S}(\boldsymbol{\theta}) 
- {\gamma}_q^\circ(\boldsymbol{\theta}) \overline{Q}),$ $q=1,\dots,p(\boldsymbol{\theta})$.
Suppose that the operators $\wh{Z}_Q(\boldsymbol{\theta}),$ $\wh{N}_Q(\boldsymbol{\theta}),$ and 
$\wh{N}_{1,Q}^0(\boldsymbol{\theta})$ are defined by  \eqref{Z_Q(theta)}, \eqref{N_Q(theta)}, and 
 \eqref{N_1Q0_theta}\textup, respectively.
We introduce the operators $\wh{\mathcal{N}}^{(q)}_Q(\boldsymbol{\theta}),$ $q=1,\dots,p(\boldsymbol{\theta})$\emph{:} the operator $\wh{\mathcal{N}}^{(q)}_Q(\boldsymbol{\theta})$  acts in  
$\wh{\mathfrak{N}}_{q,Q}(\boldsymbol{\theta})$ and is given by   
{\allowdisplaybreaks\begin{align}
\label{8.32a}
&\wh{\mathcal{N}}^{(q)}_Q( \boldsymbol{\theta}) 
:= \wh{P}_{q,Q}(\boldsymbol{\theta})  \wh{N}_{1,Q}^0(\boldsymbol{\theta})\bigl\vert_{\wh{\mathfrak{N}}_{q,Q}(\boldsymbol{\theta})} 
\\
&\!-\! \frac{1}{2} \wh{P}_{q,Q}(\boldsymbol{\theta}) \big(\wh{Z}_Q(\boldsymbol{\theta})^* Q 
\wh{Z}_Q(\boldsymbol{\theta})Q^{-1} \wh{S}(\boldsymbol{\theta}) \wh{P} 
\!+ \!\wh{S}(\boldsymbol{\theta}) \wh{P} Q^{-1}
\wh{Z}_Q(\boldsymbol{\theta})^* Q \wh{Z}_Q(\boldsymbol{\theta})  \big) \Bigl\vert_{\wh{\mathfrak{N}}_{q,Q}(\boldsymbol{\theta})} \notag
\\
&\!+ \!\!\!\sum_{j=1,\dots,p(\boldsymbol{\theta}): j\ne q}\!\!\! ({\gamma}_q^\circ(\boldsymbol{\theta})\! -\!
 {\gamma}_j^\circ(\boldsymbol{\theta}))^{-1} 
\wh{P}_{q,Q}(\boldsymbol{\theta}) \wh{N}_Q(\boldsymbol{\theta}) \wh{P}_{j,Q}(\boldsymbol{\theta}) 
Q^{-1} \wh{P}_{j,Q}(\boldsymbol{\theta})
\wh{N}_Q(\boldsymbol{\theta}) \Bigr \vert_{\wh{\mathfrak{N}}_{q,Q}(\boldsymbol{\theta})}.\notag
\end{align}
}
\hspace{-3mm}
Denote $i(q,\boldsymbol{\theta})= k_1(\boldsymbol{\theta}) + \dots + k_{q-1}(\boldsymbol{\theta}) +1$.
Let ${\nu}_l(\boldsymbol{\theta})$ be the coefficients of $t^4$ from the expansions  
\eqref{A_eigenvalues_series}\textup,  and let ${\omega}_l(\boldsymbol{\theta})$ be the embryos from 
\eqref{A_eigenvectors_series}. Let  
${\zeta}_l(\boldsymbol{\theta})= f {\omega}_l(\boldsymbol{\theta}),$ $l=1,\dots,n$. 
Denote $Q_{\wh{\mathfrak{N}}_{q,Q}(\boldsymbol{\theta}) } = \wh{P}_{q,Q}(\boldsymbol{\theta}) Q\vert_{\wh{\mathfrak{N}}_{q,Q}(\boldsymbol{\theta}) }$. Then 
$$
\wh{\mathcal{N}}^{(q)}_Q(\boldsymbol{\theta}) {\zeta}_l(\boldsymbol{\theta}) 
\!=\! {\nu}_l(\boldsymbol{\theta}) Q_{\wh{\mathfrak{N}}_{q,Q}(\boldsymbol{\theta}) }{\zeta}_l(\boldsymbol{\theta}), \quad 
l\!=\!i(q, \boldsymbol{\theta}), i(q,\boldsymbol{\theta})+1,\dots, i(q,\boldsymbol{\theta}) + k_q(\boldsymbol{\theta}) -1.
$$
\end{proposition}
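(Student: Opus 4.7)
The plan is to obtain Proposition \ref{Prop_nu1_theta_Q} as a pointwise-in-$\boldsymbol{\theta}$ specialization of the abstract Proposition \ref{Prop_nu} to the family $A(t,\boldsymbol{\theta}) = \mathcal{A}(\mathbf{k})$ with $\mathbf{k}= t\boldsymbol{\theta}$. Subsection 11.1 already embeds this family into the setting of Subsection \ref{abstr_A_and_Ahat_section}: the isomorphism is $M=f$ (multiplication), the weight is $Q(\x)=(f(\x)f(\x)^*)^{-1}$, the kernel $\wh{\mathfrak N}$ consists of constants so that $Q_{\wh{\mathfrak N}}=\overline{Q}$, and $\widehat{A}(t,\boldsymbol{\theta})=\wh{\mathcal A}(\mathbf{k})$. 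Thus, for each fixed $\boldsymbol{\theta}\in\mathbb{S}^{d-1}$, the assumptions of Subsection \ref{abstr_A_and_Ahat_section} are met uniformly as explained in Subsection~11.1.

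First I would verify that the hypothesis of Proposition \ref{Prop_nu} is satisfied: by Lemma \ref{abstr_N_and_Nhat_lemma}, the condition $\wh{N}_{0,Q}(\boldsymbol{\theta})=0$ (for the family $\wh{A}(t,\boldsymbol{\theta})$) is equivalent to $N_0(\boldsymbol{\theta})=0$ for the abstract family $A(t,\boldsymbol{\theta})$, so Proposition \ref{Prop_nu} applies for this $\boldsymbol{\theta}$. Next, I would match the spectral data: in view of \eqref{abstr_hatS_gener_spec_problem} together with Subsection~\ref{sndw_eigenvalues_and_eigenvectors_section}, the different eigenvalues of the germ $S(\boldsymbol{\theta})$ and of the generalized problem \eqref{hatS_gener_spec_problem} coincide (including multiplicities), and the abstract eigenspace $\wh{\mathfrak N}_{q,Q}=\operatorname{Ker}(\wh S - \gamma_q^\circ Q_{\wh{\mathfrak N}})$ becomes precisely $\wh{\mathfrak N}_{q,Q}(\boldsymbol{\theta})=\operatorname{Ker}(\wh S(\boldsymbol{\theta})-\gamma_q^\circ(\boldsymbol{\theta})\overline{Q})$, with $\wh P_{q,Q}(\boldsymbol{\theta})$ the corresponding orthogonal projection in $L_2(\Omega;\mathbb{C}^n)$ and $Q_{\wh{\mathfrak N}_{q,Q}(\boldsymbol{\theta})}=\wh P_{q,Q}(\boldsymbol{\theta})Q|_{\wh{\mathfrak N}_{q,Q}(\boldsymbol{\theta})}$ its restriction of the weight.

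Then I would plug in the explicit DO realizations of the three abstract operators appearing in Proposition \ref{Prop_nu}: the operator $\wh{Z}_Q$ is the multiplication-by-$\Lambda_Q$ operator \eqref{Z_Q(theta)}; the operator $\wh{N}_Q$ is given by \eqref{N_Q(theta)}--\eqref{L_Q(theta)}; and $\wh{N}_{1,Q}^0$ is the operator \eqref{N_1Q0_theta} constructed in Subsection~11.4 from $\wh{Z}_{2,Q}$ and $\wh{R}_{2,Q}$. Moreover $\wh S(\boldsymbol{\theta})\wh P = b(\boldsymbol{\theta})^* g^0 b(\boldsymbol{\theta})\wh P$. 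Substituting these into the abstract formula for $\wh{\mathcal N}^{(q)}_Q$ reproduces expression \eqref{8.32a} line by line. The conclusion of Proposition \ref{Prop_nu} then reads $\wh{\mathcal N}^{(q)}_Q(\boldsymbol{\theta})\zeta_l(\boldsymbol{\theta})=\nu_l(\boldsymbol{\theta})\,Q_{\wh{\mathfrak N}_{q,Q}(\boldsymbol{\theta})}\zeta_l(\boldsymbol{\theta})$ for $l=i(q,\boldsymbol{\theta}),\dots,i(q,\boldsymbol{\theta})+k_q(\boldsymbol{\theta})-1$, since the abstract ``embryos'' $M\omega_l$ are precisely the vectors $\zeta_l(\boldsymbol{\theta})=f\omega_l(\boldsymbol{\theta})$ of Subsection~\ref{sndw_eigenvalues_and_eigenvectors_section}.

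There is no genuine obstacle in this argument: the identifications are verbatim applications of results already established in Sections~5 and~11. The only point deserving attention is that Proposition \ref{Prop_nu} is purely pointwise in $\boldsymbol{\theta}$ — no uniformity in $\boldsymbol{\theta}$ is asserted or needed here — so one applies it at each fixed $\boldsymbol{\theta}\in\mathbb{S}^{d-1}$ separately, with the internal cluster decomposition $p(\boldsymbol{\theta}),k_1(\boldsymbol{\theta}),\dots,k_{p(\boldsymbol{\theta})}(\boldsymbol{\theta})$ from Subsection~\ref{sndw_eigenval_multipl_section} playing the role of the abstract $p,k_1,\dots,k_p$.
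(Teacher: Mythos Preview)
Your proposal is correct and follows exactly the paper's approach: the paper simply states ``Applying Proposition~\ref{Prop_nu}, we arrive at the following statement,'' and you have spelled out precisely what that application entails---the identifications $M=[f]$, $Q_{\wh{\mathfrak N}}=\overline{Q}$, the concrete realizations \eqref{Z_Q(theta)}, \eqref{N_Q(theta)}, \eqref{N_1Q0_theta} of the abstract operators, and the pointwise-in-$\boldsymbol{\theta}$ nature of the argument.
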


\section{Approximation for the sandwiched operators  $\cos(\varepsilon^{-1} \tau \mathcal{A}(\mathbf{k})^{1/2})$ 
and $\mathcal{A}(\mathbf{k})^{-1/2}\sin(\varepsilon^{-1} \tau \mathcal{A}(\mathbf{k})^{1/2})$\label{sec12}}

\subsection{Approximation in the operator norm in $L_2(\Omega;\AC^n)$. The general case}

Denote
\begin{align}
\label{J1}
{J}_1(\mathbf{k}, \!\tau )&\!\! :=\! f \cos ( \tau {\mathcal A}(\k)^{1/2}\big) f^{-1}\!\! - \!
f_0 \cos \big(\tau {\mathcal A}^0(\k)^{1/2}) f_0^{-1},
\\
\label{J2}
{J}_2(\mathbf{k}, \!\tau ) &\!\!:=\! \!f {\mathcal A}(\k)^{-\!1/2} \!\sin (\tau {\mathcal A}(\k)^{1/2}) f^{-1}\!\! - \!
f_0 {\mathcal A}^0(\k)^{-1/2} \!\sin (\tau {\mathcal A}^0(\k)^{1/2}) f_0^{-\!1}\!\!\!,
\\
\label{J2tilde}
{J}_3(\mathbf{k}, \!\tau ) &\!\!:= \!f {\mathcal A}(\k)^{-1/2} \sin (\tau {\mathcal A}(\k)^{1/2}) f^{*}\!\! - \!
f_0 {\mathcal A}^0(\k)^{-1/2} \sin (\tau {\mathcal A}^0(\k)^{1/2}) f_0.
\end{align}

We apply theorems of~\S\ref{abstr_sandwiched_section}  to the operator 
${A}(t, \boldsymbol{\theta}) = {\mathcal{A}}(\mathbf{k})$.
 By Remark~\ref{abstr_constants_remark}, we  can track the dependence of the constants in estimates  on the problem data. Note that ${c}_*$, ${\delta}$, and ${t}_0$ do not depend on $\boldsymbol{\theta}$ (see  \eqref{A(k)_nondegenerated_and_c_*},   \eqref{delta_fixation},  \eqref{t0_fixation}). According 
 to~\eqref{X_1_estimate}, the norm $\| {X}_1 (\boldsymbol{\theta}) \|$ can be replaced by 
$\alpha_1^{1/2} \| g \|_{L_{\infty}}^{1/2} \| f \|_{L_{\infty}}$. Therefore, the constants from Theorem~\ref{th5.5}
(applied to the operator ${\mathcal{A}} (\mathbf{k})$) will not depend on~$\boldsymbol{\theta}$. 
They will depend only on  $\alpha_0$, $\alpha_1$, $\|g\|_{L_\infty}$, $\|g^{-1}\|_{L_\infty}$,
$\|f\|_{L_\infty}$, $\|f^{-1}\|_{L_\infty}$, and $r_0$.

\begin{theorem}[see~\cite{BSu5,M,DSu}]
\label{th12.1}
Let ${J}_1(\mathbf{k}, \tau ),$ ${J}_2(\mathbf{k}, \tau ),$ and ${J}_3(\mathbf{k}, \tau )$ 
be defined by \eqref{J1}\textup, \eqref{J2}, and \eqref{J2tilde}, respectively. 
For $\tau \in \R,$ $\eps >0,$ and $\k \in \wt{\Omega}$ we have 
\begin{align}
\label{12.7}
& \|  {J}_1(\mathbf{k}, \eps^{-1} \tau )  \mathcal{R}(\k,\eps) \|_{L_2(\Omega) \to L_2(\Omega)}
\le {\mathcal C}_1 (1+ |\tau|) \eps,
\\
\label{12.8}
& \|  {J}_2(\mathbf{k}, \eps^{-1} \tau ) \mathcal{R}(\k,\eps)^{1/2} \|_{L_2(\Omega) \to L_2(\Omega)}
\le {\mathcal C}_2 (1+ |\tau|),
\\
\label{12.8a}
& \|  {J}_3(\mathbf{k}, \eps^{-1} \tau ) \mathcal{R}(\k,\eps)^{1/2} \|_{L_2(\Omega) \to L_2(\Omega)}
\le \wt{\mathcal C}_2 (1+ |\tau|).
\end{align}
The constants ${\mathcal C}_1,$ ${\mathcal C}_2,$ and $\wt{\mathcal C}_2$ depend only on 
$\alpha_0,$ $\alpha_1,$ $\|g\|_{L_\infty},$  $\|g^{-1}\|_{L_\infty},$ $\|f\|_{L_\infty},$  $\|f^{-1}\|_{L_\infty},$ and $r_0$.
\end{theorem}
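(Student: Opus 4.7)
The plan is to deduce Theorem \ref{th12.1} from the abstract Theorem \ref{th5.5} of \S\ref{abstr_sandwiched_section} by exactly the same mechanism that produced Theorem \ref{th9.1} from Theorem \ref{th3.1}: one splits the ambient space as $\hat{P} \oplus (I-\hat{P})$, applies the abstract estimate on the $\hat{P}$-component for $|\mathbf{k}| \le t_0$, and handles the remaining pieces by trivial bounds and the decay of the smoothing factor $\mathcal{R}(\mathbf{k},\varepsilon)$.

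First I would identify the concrete operators with the abstract ones. With $\mathfrak{H} = \widehat{\mathfrak{H}} = L_2(\Omega;\AC^n)$, $M=f$, $M_0 = f_0$, $A(t,\boldsymbol{\theta}) = \mathcal{A}(\mathbf{k})$, $\widehat{A}(t,\boldsymbol{\theta}) = \widehat{\mathcal{A}}(\mathbf{k})$, the invariance of $\widehat{\mathfrak{N}}$ under $\mathcal{A}^0(\mathbf{k})$ (by \eqref{f_0 hatS f_0 P = A^0}) implies that the abstract operators $J_i^{\mathrm{abstr}}(t,\tau)$ from \eqref{abstr_J1_def}--\eqref{abstr_J3_def} satisfy $J_i^{\mathrm{abstr}}(t,\tau) = J_i(\mathbf{k},\tau)\widehat{P}$ for $i=1,2,3$ (with the appropriate identification of $J_3$ as the sandwich by $f^*, f_0$ rather than $f^{-1}, f_0^{-1}$).

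Next I would treat the $\widehat{P}$-component. For $|\mathbf{k}| \le t_0$, Theorem \ref{th5.5}, combined with \eqref{R_P}, yields directly
\[
\|J_1(\mathbf{k},\varepsilon^{-1}\tau)\widehat{P}\,\mathcal{R}(\mathbf{k},\varepsilon)\| \le C(1+|\tau|)\varepsilon,
\quad
\|J_i(\mathbf{k},\varepsilon^{-1}\tau)\widehat{P}\,\mathcal{R}(\mathbf{k},\varepsilon)^{1/2}\| \le C(1+|\tau|),
\]
$i=2,3$, since $\widehat{P}$ commutes with $\mathcal{R}(\mathbf{k},\varepsilon)$. For $|\mathbf{k}| > t_0$, the abstract theorem no longer applies, but \eqref{R_P_2} gives $\|\mathcal{R}(\mathbf{k},\varepsilon)^{s/2}\widehat{P}\| \le t_0^{-s}\varepsilon^s$, and this is paired with the trivial $L_2\to L_2$-bounds $\|J_1(\mathbf{k},\tau)\| \le 2\|f\|_{L_\infty}\|f^{-1}\|_{L_\infty}$ and $\|J_j(\mathbf{k},\varepsilon^{-1}\tau)\|\le 2\|f\|_{L_\infty}\|f^{-1}\|_{L_\infty}\varepsilon^{-1}|\tau|$ (respectively $2\|f\|_{L_\infty}^2 \varepsilon^{-1}|\tau|$ for $J_3$), which come from $\|\cos(\cdot)\|\le 1$ and $\|\mathcal{A}^{-1/2}\sin(\tau \mathcal{A}^{1/2})\|\le|\tau|$.

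Finally I would treat the $(I-\widehat{P})$-component uniformly in $\mathbf{k}\in\widetilde{\Omega}$. By \eqref{R(k,eps)(I-P)_est}, $\|\mathcal{R}(\mathbf{k},\varepsilon)^{s/2}(I-\widehat{P})\| \le r_0^{-s}\varepsilon^s$, and combining this with the same trivial operator bounds on $J_i$ yields
\[
\|J_1\,\mathcal{R}(I-\widehat{P})\| \le C\varepsilon^2,
\qquad
\|J_i\,\mathcal{R}^{1/2}(I-\widehat{P})\| \le C|\tau|,\quad i=2,3,
\]
up to universal constants; using $\min(1,r_0^{-2}\varepsilon^2)\le r_0^{-1}\varepsilon$ if necessary to rewrite $\varepsilon^2$ as $\varepsilon$ times a bounded factor, these fit inside $C(1+|\tau|)\varepsilon$ and $C(1+|\tau|)$, respectively. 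Summing the three contributions (with the extra additive constant $\widetilde{C}$ for $J_3$ arising from $\|M\|^2$ in \eqref{abstr_sin_sandwiched_general_est2}) gives the claimed inequalities \eqref{12.7}--\eqref{12.8a}. I do not expect any genuine obstacle: the entire argument is mechanical bookkeeping, and the only mild care required is to notice that the multiplicative factors $\|f\|_{L_\infty}, \|f^{-1}\|_{L_\infty}$ (which appear in $\|M\|, \|M^{-1}\|$) propagate cleanly through every step, so that the final constants $\mathcal{C}_1, \mathcal{C}_2, \widetilde{\mathcal{C}}_2$ depend only on $\alpha_0, \alpha_1, \|g\|_{L_\infty}, \|g^{-1}\|_{L_\infty}, \|f\|_{L_\infty}, \|f^{-1}\|_{L_\infty}, r_0$, as asserted.
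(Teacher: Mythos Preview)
Your proposal is correct and follows essentially the same approach as the paper: the paper states that Theorem~\ref{th12.1} is deduced from the abstract Theorem~\ref{th5.5} together with the smoothing-factor relations \eqref{R_P}--\eqref{R(k,eps)(I-P)_est} and the trivial operator bounds \eqref{12.9_1}--\eqref{12.9_3}, which is exactly the $\widehat{P}\oplus(I-\widehat{P})$ splitting with the $|\mathbf{k}|\le t_0$ versus $|\mathbf{k}|>t_0$ dichotomy you describe. The only cosmetic difference is that the paper's version of \eqref{R_P_2} is written with $\widehat{t}_0$ rather than $t_0$, but the inequality $\|\mathcal{R}(\mathbf{k},\varepsilon)^{s/2}\widehat{P}\|\le t_0^{-s}\varepsilon^s$ for $|\mathbf{k}|>t_0$ follows by the identical argument, so this is not a discrepancy.
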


Theorem \ref{th12.1} is deduced from Theorem~\ref{th5.5} and relations  \eqref{R_P}--\eqref{R(k,eps)(I-P)_est}.
We should also take into account the obvious estimates
\begin{align} 
\label{12.9_1}
&\|  {J}_1(\mathbf{k}, \eps^{-1} \tau ) \|_{L_2(\Omega) \to L_2(\Omega)} \le 2 \|f\|_{L_\infty} \|f^{-1}\|_{L_\infty},
\\
\nonumber
&\|  {J}_2(\mathbf{k}, \eps^{-1} \tau ) \|_{L_2(\Omega) \to L_2(\Omega)}
\le 2 \|f\|_{L_\infty} \|f^{-1}\|_{L_\infty} \eps^{-1}|\tau|,
\\
\label{12.9_3}
&\|  {J}_3(\mathbf{k}, \eps^{-1} \tau ) \|_{L_2(\Omega) \to L_2(\Omega)}
\le 2 \|f\|^2_{L_\infty}  \eps^{-1}|\tau|.
\end{align}
Earlier, estimate \eqref{12.7} was obtained in \cite[Theorem 9.2]{BSu5}, inequality \eqref{12.8} was proved in  \cite[(7.32)]{M},  and \eqref{12.8a} was found in  \cite[Theorem 9.1]{DSu}.

In what follows, we shall need the following statement.

\begin{proposition}
\label{prop12.1a}
For $\tau \in \R,$ $\eps >0,$ and $\k \in \wt{\Omega}$ we have 
\begin{align}
\label{12.9a}
\|  {J}_3(\mathbf{k}, \eps^{-1} \tau ) \|_{L_2(\Omega) \to L_2(\Omega)}
\le {\mathcal C}_2' (1+ \eps^{-1/2}|\tau|^{1/2}),
\end{align}
where ${\mathcal C}_2'$ depends only on $\alpha_0,$ $\alpha_1,$ $\|g\|_{L_\infty},$  $\|g^{-1}\|_{L_\infty},$
$\|f\|_{L_\infty},$  $\|f^{-1}\|_{L_\infty},$ and $r_0$.
\end{proposition}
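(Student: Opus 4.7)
My plan is to mirror the proof of Proposition \ref{prop9.1a}, splitting $J_3(\mathbf{k}, \eps^{-1}\tau) = J_3(\mathbf{k}, \eps^{-1}\tau)\wh P + J_3(\mathbf{k}, \eps^{-1}\tau)(I - \wh P)$ and estimating the two pieces separately. For the $\wh P$-part, I use that $J_3(\mathbf{k}, \tau)\wh P$ coincides with the DO realization of the abstract operator $J_3$ from \eqref{abstr_J3_def}, because on constants (the range of $\wh P$) the symbols $f^* \wh P$ and $f^*$ act identically. Combining inequality \eqref{abstr_tildeJ2_est1} of Lemma \ref{abstr_cos_sin_sandwiched_est_lemma}, the abstract bound \eqref{2.6} applied with $\tau$ replaced by $\eps^{-1}\tau$, and the comparison \eqref{J3-J3tilde} between $J_3$ and $\wt J_3$, I will obtain
\begin{equation*}
\|J_3(\mathbf{k}, \eps^{-1}\tau) \wh P\|_{L_2(\Omega) \to L_2(\Omega)} \le \mathcal{C}_2^{(1)}\bigl(1 + \eps^{-1}|\tau| |\mathbf{k}|\bigr), \qquad |\mathbf{k}| \le t_0.
\end{equation*}

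The crux is the uniform bound $\|J_3(\mathbf{k}, \eps^{-1}\tau)(I - \wh P)\| \le \mathcal{C}_2^{(2)}$ for $|\mathbf{k}| \le t_0$. For the effective summand of $J_3(I - \wh P)$, since $f_0$ is constant it commutes with $\wh P$, and $\wh P$ also commutes with $\mathcal A^0(\mathbf{k})$ because the latter preserves constant vector-valued functions; the summand therefore equals $f_0 \mathcal A^0(\mathbf{k})^{-1/2} \sin(\eps^{-1}\tau \mathcal A^0(\mathbf{k})^{1/2}) (I - \wh P) f_0$, and its norm is bounded by $\|f\|_{L_\infty}^2 \|\mathcal A^0(\mathbf{k})^{-1/2}(I - \wh P)\|$, which is uniformly bounded in $\mathbf{k} \in \wt\Omega$ because the spectrum of $\mathcal A^0(\mathbf{k})$ on $\mathrm{Ran}(I - \wh P)$ is separated from zero. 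For the sandwiched summand $f \mathcal A(\mathbf{k})^{-1/2} \sin(\eps^{-1}\tau \mathcal A(\mathbf{k})^{1/2}) f^* (I - \wh P)$, the crucial algebraic input is the identity $P f^* (I - \wh P) = 0$, which follows directly from \eqref{abstr_P_and_P_hat_relation}, i.e. $P = f^{-1} \overline Q^{-1} \wh P (f^*)^{-1}$. This lets me insert $I - P$ and then decompose $I - P = (I - F(\mathbf{k})) + (F(\mathbf{k}) - P)$: the $(I - F(\mathbf{k}))$-contribution is bounded by $(3\delta)^{-1/2} \|f\|_{L_\infty}^2$ (using $\mathcal A(\mathbf{k}) \ge 3\delta$ on $\mathrm{Ran}(I - F(\mathbf{k}))$), and the $(F(\mathbf{k}) - P)$-contribution is controlled by combining the threshold estimate $\|F(\mathbf{k}) - P\| \le C_1 |\mathbf{k}|$ from \eqref{abstr_F(t)_threshold_1} with the dispersive bound $\|\mathcal A(\mathbf{k})^{-1/2} \sin(\cdot) F(\mathbf{k})\| \le c_*^{-1/2} |\mathbf{k}|^{-1}$, the powers of $|\mathbf{k}|$ cancelling.

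To conclude, I do the case analysis on the size of $\eps|\tau|^{-1}$. If $\eps |\tau|^{-1} > t_0^2$, the trivial estimate \eqref{12.9_3} already gives $\|J_3(\mathbf{k}, \eps^{-1}\tau)\| \le 2 \|f\|_{L_\infty}^2 t_0^{-2}$. Otherwise, for $|\mathbf{k}| \le \eps^{1/2} |\tau|^{-1/2} \le t_0$ the two bounds above combine to give $\|J_3\| \le \mathcal{C}_2^{(1)}(1 + \eps^{-1/2} |\tau|^{1/2}) + \mathcal{C}_2^{(2)}$, while for $|\mathbf{k}| > \eps^{1/2} |\tau|^{-1/2}$ the lower bound \eqref{A(k)_nondegenerated_and_c_*} applied to both $\mathcal A(\mathbf{k})$ and $\mathcal A^0(\mathbf{k})$ yields $\|J_3\| \le 2 \|f\|_{L_\infty}^2 c_*^{-1/2} \eps^{-1/2} |\tau|^{1/2}$. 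The main obstacle throughout is the mismatch between the projection $\wh P$ appearing in the statement and the natural spectral projection $F(\mathbf{k}) \to P$ of $\mathcal A(\mathbf{k})$; unlike in the unsandwiched setting of Proposition \ref{prop9.1a}, here $P \ne \wh P$, and the gap is bridged precisely by the identity $P f^* (I - \wh P) = 0$ coming from \eqref{abstr_P_and_P_hat_relation}.
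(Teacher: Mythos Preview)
Your proof is correct and follows essentially the same route as the paper's: the same splitting $J_3 = J_3\wh P + J_3(I-\wh P)$, the same derivation of the $\wh P$-bound from \eqref{2.6}, \eqref{abstr_tildeJ2_est1}, and \eqref{J3-J3tilde}, the same key identity $Pf^*(I-\wh P)=0$ from \eqref{abstr_P_and_P_hat_relation} to control the $(I-\wh P)$-piece, and the same final case analysis. One small wording point: in the $(F(\k)-P)$-contribution you should simply invoke $\|\mathcal A(\k)^{-1/2}\|\le c_*^{-1/2}|\k|^{-1}$ on the whole space (from \eqref{A(k)_nondegenerated_and_c_*}) rather than $\|\mathcal A(\k)^{-1/2}\sin(\cdot)F(\k)\|$, since $(F(\k)-P)$ does not map into $\operatorname{Ran}F(\k)$; the bound and conclusion are unaffected.
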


\begin{proof}
From  \eqref{2.6} (with $\tau$ replaced by $\eps^{-1} \tau$), \eqref{abstr_tildeJ2_est1}, and \eqref{J3-J3tilde} it follows that
\begin{equation}
\label{12.9b}
\|  {J}_3(\mathbf{k}, \eps^{-\!1} \tau ) \wh{P} \|_{L_2(\Omega) \to L_2(\Omega)}
\le {\mathcal C}_2^{(1)} (1\!+ \!\eps^{-\!1}|\tau| |\k|), \quad \!\tau\! \in \!\R,\ \eps\!>\!0, \ |\k| \!\le \!{t}_0.
\end{equation}

Now, we estimate the operator ${J}_3(\mathbf{k}, \eps^{-1} \tau )  (I-\wh{P})$ for $|\k| \le {t}_0$. 
Obviously,
\begin{equation*}
\begin{aligned}
\|  {J}_3(\mathbf{k}, \eps^{-1} \tau ) (I-\wh{P}) \|_{L_2(\Omega) \to L_2(\Omega)}
&\le \| f \|_{L_\infty}  \| {\mathcal A}(\mathbf k)^{-1/2} f^* (I-\wh{P}) \|_{L_2(\Omega) \to L_2(\Omega)}
\\
&+\| f \|_{L_\infty}^2  \| {\mathcal A}^0(\mathbf k)^{-1/2}  (I-\wh{P}) \|_{L_2(\Omega) \to L_2(\Omega)}.
\end{aligned}
\end{equation*}
The second term is uniformly bounded, which can be easily checked with the help of the discrete Fourier transformation. To estimate the first term, we note that $P f^*  (I-\wh{P}) =0$, by the identity
$Pf^* = f^{-1} (\overline{Q})^{-1} \wh{P}$ (see \eqref{abstr_P_and_P_hat_relation}). Therefore,
$f^* (I - \wh{P}) = (I-P) f^* (I - \wh{P})$, whence  
$$
\| {\mathcal A}(\k)^{-1/2} f^* (I-\wh{P}) \|_{L_2(\Omega) \to L_2(\Omega)}
\le \| f \|_{L_\infty}  \|{\mathcal A}(\k)^{-1/2} (I-P)  \|_{L_2(\Omega) \to L_2(\Omega)}.
$$
This quantity is uniformly bounded due to   \eqref{abstr_F(t)_threshold_1} and \eqref{A(k)_nondegenerated_and_c_*}. As a result, we obtain  
\begin{equation}
\label{12.9e}
\|  {J}_3(\mathbf{k}, \eps^{-1} \tau ) (I-\wh{P}) \|_{L_2(\Omega) \to L_2(\Omega)}
\le {\mathcal C}_2^{(2)}, \quad \tau \in \R,\ \eps >0,\ |\k| \le t_0.
\end{equation}

If $\eps |\tau|^{-1} > {t}_0^{2}$, then  the required estimate  \eqref{12.9a} follows directly from  \eqref{12.9_3}. 
So, we suppose that $\eps |\tau|^{-1} \le {t}_0^{2}$. Then \eqref{12.9b} implies that 
\begin{equation*}
\|  {J}_3(\mathbf{k}, \eps^{-1} \tau ) \wh{P} \|_{L_2(\Omega) \to L_2(\Omega)}
\le {\mathcal C}_2^{(1)} (1+ \eps^{-1/2}|\tau|^{1/2}), \quad  |\k| \le \eps^{1/2} |\tau|^{-1/2}.
\end{equation*}
Combining this with  \eqref{12.9e}, we obtain estimate  \eqref{12.9a} for $|\k| \le \eps^{1/2} |\tau|^{-1/2}$.

Finally,  from \eqref{A(k)_nondegenerated_and_c_*} 
(for the operators ${\mathcal A}(\k)$ and ${\mathcal A}^0(\k)$) it follows that 
$$
\|  {J}_3(\mathbf{k}, \eps^{-1} \tau )  \|_{L_2(\Omega) \to L_2(\Omega)}
\le 2 \|f\|^2_{L_\infty} {c}_*^{-1/2} |\k|^{-1} \le 2 \|f\|^2_{L_\infty} {c}_*^{-1/2} \eps^{-1/2} |\tau|^{1/2}
$$
for $|\k| > \eps^{1/2} |\tau|^{-1/2}$.
\end{proof}

\subsection{Approximation in the operator norm in 
$L_2(\Omega;\AC^n)$. The case where $\wh{N}_Q(\boldsymbol{\theta}) =0$}

Now, we improve the result of Theorem \ref{th12.1} (estimates \eqref{12.7} and \eqref{12.8a}) under some additional assumptions. We impose the following condition.

\begin{condition}
\label{cond_BB}
Let $\widehat{N}_Q(\boldsymbol{\theta})$ be the operator defined by~\eqref{N_Q(theta)}. 
Suppose that $\widehat{N}_Q(\boldsymbol{\theta}) = 0$ for any $\boldsymbol{\theta} \in \mathbb{S}^{d-1}$. 
\end{condition}

\begin{theorem} 
\label{th12.2}
Suppose that Condition \emph{\ref{cond_BB}} is satisfied. Then for 
$\tau \in \mathbb{R},$ $\varepsilon > 0,$ and $\mathbf{k} \in \widetilde{\Omega}$  we have
\begin{align}
\label{12.7a}
& \|  {J}_1(\mathbf{k}, \eps^{-1} \tau ) \mathcal{R}(\k,\eps)^{3/4} \|_{L_2(\Omega) \to L_2(\Omega)}
\le {\mathcal C}_3 (1+ |\tau|)^{1/2} \eps,
\\
\label{12.8b}
& \|  {J}_3(\mathbf{k}, \eps^{-1} \tau ) \mathcal{R}(\k,\eps)^{1/4}\|_{L_2(\Omega) \to L_2(\Omega)}
\le {\mathcal C}_4 (1+ |\tau|)^{1/2}.
\end{align}
The constants ${\mathcal C}_3$ and ${\mathcal C}_4$ depend only on  $\alpha_0,$ $\alpha_1,$ 
$\|g\|_{L_\infty},$  $\|g^{-1}\|_{L_\infty},$ $\|f\|_{L_\infty},$  $\|f^{-1}\|_{L_\infty}$, and $r_0$.
\end{theorem}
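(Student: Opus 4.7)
The plan is to follow the strategy already established in the proofs of Theorems \ref{th9.2} and \ref{th12.1}, but invoking the abstract Theorem \ref{th5.6} (the improvement under the hypothesis $\widehat{N}_Q = 0$) in place of Theorem \ref{th5.5}. By Lemma \ref{abstr_N_and_Nhat_lemma}, Condition \ref{cond_BB} is equivalent to vanishing of the abstract object $\widehat{N}_Q$ for the family $A(t,\boldsymbol{\theta}) = \mathcal{A}(\mathbf{k})$, so Theorem \ref{th5.6} is applicable uniformly in $\boldsymbol{\theta} \in \mathbb{S}^{d-1}$. As explained in Subsection 7.6 and in the beginning of \S\ref{sec12}, $c_*$, $\delta$, $t_0$, and $\|X_1(\boldsymbol{\theta})\|$ can be chosen independent of $\boldsymbol{\theta}$ in terms of the admissible parameters, so the constants $C'_9, C'_{10}$ appearing in Theorem \ref{th5.6} yield $\boldsymbol{\theta}$-uniform bounds.

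First I would treat the small-momenta regime $|\mathbf{k}| \le t_0$. Applying Theorem \ref{th5.6} with $\tau$ replaced by $\varepsilon^{-1}\tau$, and using the identities \eqref{R_P} and \eqref{f_0 hatS f_0 P = A^0} to translate the abstract bound involving $t^2 M_0 \widehat{S}(\boldsymbol{\theta}) M_0$ into one for $\mathcal{A}^0(\mathbf{k})$, I obtain
\begin{equation*}
\| J_1(\mathbf{k}, \varepsilon^{-1}\tau) \mathcal{R}(\mathbf{k},\varepsilon)^{3/4} \widehat{P} \|_{L_2(\Omega) \to L_2(\Omega)} \le C (1+|\tau|)^{1/2} \varepsilon,
\end{equation*}
and a similar bound for $J_3$ with smoothing factor $\mathcal{R}(\mathbf{k},\varepsilon)^{1/4}$, both valid for $|\mathbf{k}| \le t_0$. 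The ``outer'' regime $|\mathbf{k}| > t_0$, $\mathbf{k} \in \widetilde{\Omega}$, is trivial: \eqref{R_P_2} gives $\|\mathcal{R}(\mathbf{k},\varepsilon)^{s/2} \widehat{P}\| \le t_0^{-s}\varepsilon^s$, which combined with the trivial operator bounds \eqref{12.9_1}, \eqref{12.9_3} and with \eqref{A(k)_nondegenerated_and_c_*} (providing $\|\mathcal{A}(\mathbf{k})^{-1/2}\|, \|\mathcal{A}^0(\mathbf{k})^{-1/2}\| \le c_*^{-1/2} t_0^{-1}$) yields the required estimate for $J_1$ and $J_3$ applied to $\widehat{P}$.

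The contribution of the complementary projection $(I-\widehat{P})$ is handled separately. For $J_1$, inequality \eqref{R(k,eps)(I-P)_est} with $s=1$ combined with the uniform bound \eqref{12.9_1} gives $\|J_1(\mathbf{k},\varepsilon^{-1}\tau)\mathcal{R}(\mathbf{k},\varepsilon)^{3/4}(I-\widehat{P})\| \le C \varepsilon$ for all $\mathbf{k}\in\widetilde{\Omega}$. Assembling the three regimes produces \eqref{12.7a}. For $J_3$, the corresponding step is the main delicate point of the proof: one must show that $J_3(\mathbf{k},\varepsilon^{-1}\tau)(I-\widehat{P})$ is bounded uniformly in $\mathbf{k}, \tau, \varepsilon$, which is exactly the estimate \eqref{12.9e} established in the proof of Proposition \ref{prop12.1a}. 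The key observation there is that $Pf^* = f^{-1}(\overline{Q})^{-1}\widehat{P}$ by \eqref{abstr_P_and_P_hat_relation}, so that $f^*(I-\widehat{P}) = (I-P)f^*(I-\widehat{P})$; then $\|\mathcal{A}(\mathbf{k})^{-1/2} f^* (I-\widehat{P})\|$ is uniformly bounded on $|\mathbf{k}| \le t_0$ by virtue of \eqref{abstr_F(t)_threshold_1} and \eqref{A(k)_nondegenerated_and_c_*}, while the $\mathcal{A}^0(\mathbf{k})^{-1/2} f_0 (I-\widehat{P})$ term is controlled by the discrete Fourier transform. Multiplying by $\mathcal{R}(\mathbf{k},\varepsilon)^{1/4}$ only improves the estimate, and outside $|\mathbf{k}| \le t_0$ the bound follows again from \eqref{A(k)_nondegenerated_and_c_*}. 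Combining with the $\widehat{P}$-part proven above yields \eqref{12.8b}.

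The main obstacle, as just indicated, is not the abstract input (which is simply Theorem \ref{th5.6}) but the $(I-\widehat{P})$ analysis for $J_3$: the right factor $f^*$ (rather than $f^{-1}$ as in $J_2$) forces the identity $Pf^* = f^{-1}(\overline{Q})^{-1}\widehat{P}$ to be exploited to restore a $P^\perp$ on the left, and only then does the bound $\|\mathcal{A}(\mathbf{k})^{-1/2}(I-P)\| \le \delta^{-1/2}$ become usable. Everything else reduces to the already standard book-keeping of the smoothing factor on the three regions $|\mathbf{k}| \le t_0$ on $\widehat{\mathfrak{N}}$, $|\mathbf{k}| \le t_0$ on $\widehat{\mathfrak{N}}^\perp$, and $|\mathbf{k}| > t_0$.
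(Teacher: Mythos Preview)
Your proposal is correct and follows essentially the same approach as the paper's proof: invoke the abstract Theorem \ref{th5.6} on the $\widehat{P}$-part for $|\mathbf{k}|\le t_0$, handle $|\mathbf{k}|>t_0$ via the trivial bounds \eqref{12.9_1}, \eqref{A(k)_nondegenerated_and_c_*} and (the $t_0$-analog of) \eqref{R_P_2}, and treat the $(I-\widehat{P})$-part of $J_3$ by the identity $Pf^*=f^{-1}(\overline{Q})^{-1}\widehat{P}$ exactly as in the derivation of \eqref{12.9e}. One small imprecision: in the outer regime for $J_1$ you should invoke \eqref{R_P_2} with exponent $s=1$ (together with $\|\mathcal{R}^{1/4}\|\le 1$) rather than $s=3/2$, since $t_0^{-3/2}\varepsilon^{3/2}$ is not $O(\varepsilon)$ for large $\varepsilon$; the paper does exactly this.
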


\begin{proof}
First, we check  inequality \eqref{12.7a}.
Applying \eqref{abstr_cos_sandwiched_improved_est}  and using  \eqref{R_P} and 
\eqref{f_0 hatS f_0 P = A^0}, we have 
\begin{equation}
\label{12.7b}
\|  {J}_1(\mathbf{k}, \eps^{-1} \tau ) \mathcal{R}(\k,\eps)^{3/4} \wh{P}\|_{L_2(\Omega) \to L_2(\Omega)}
\le {\mathcal C}^\circ_3 (1+ |\tau|)^{1/2} \eps,
 \quad \tau \in \R,\ \eps>0, \ |\k| \le {t}_0.
\end{equation}
 From the analog of \eqref{R_P_2} (with $\wh{t}_0$ replaced by  $t_0$) for $s=1$ 
 and from \eqref{12.9_1} it is seen that the left-hand side of \eqref{12.7b}
does not exceed $2 \|f\|_{L_\infty} \| f^{-1}\|_{L_\infty}{t}_0^{-1} \eps$ for \hbox{$|\k| > {t}_0$}.
Finally, by \eqref{R(k,eps)(I-P)_est} with $s=1$ and \eqref{12.9_1}, the quantity 
$\|  {J}_1(\mathbf{k}, \eps^{-1} \tau ) \mathcal{R}(\k,\eps)^{3/4} (I-\wh{P})\|$ 
does not exceed $2 \|f\|_{L_\infty} \| f^{-1}\|_{L_\infty} r_0^{-1}\eps$ for any $\k \in \wt{\Omega}$.
 As a result, we arrive at inequality \eqref{12.7a}.

We proceed to the proof of estimate \eqref{12.8b}.
By \eqref{abstr_sin_sandwiched_improved_est2}, \eqref{R_P}, and \eqref{f_0 hatS f_0 P = A^0}, 
\begin{equation*}
\|  {J}_3(\mathbf{k}, \eps^{-1} \tau ) \mathcal{R}(\k,\eps)^{1/4} \wh{P}\|_{L_2(\Omega) \to L_2(\Omega)}
\le {\mathcal C}^\circ_4 (1+ |\tau|)^{1/2},\quad \tau \in \R,\ \eps>0, \ |\k| \le {t}_0.
\end{equation*}

Next, by \eqref{12.9e}, the norm of the operator 
${J}_3(\mathbf{k}, \eps^{-1} \tau ) \mathcal{R}(\k,\eps)^{1/4} (I-\wh{P})$ 
does not exceed the constant ${\mathcal C}_2^{(2)}$ for $|\k| \le {t}_0$. 

For $|\k| >t_0$ inequality \eqref{12.8b} follows from  \eqref{A(k)_nondegenerated_and_c_*}  
and the similar  inequality for ${\mathcal A}^0(\k)$.
\end{proof}

We also need the following statement.

\begin{proposition}
\label{prop12.2a}
Under the assumptions of Theorem \emph{\ref{th12.2}}, for $\tau \in \R,$ $\eps >0,$ and $\k \in \wt{\Omega}$
we have  
\begin{align}
\label{12.9x}
\|  {J}_3(\mathbf{k}, \eps^{-1} \tau ) \|_{L_2(\Omega) \to L_2(\Omega)}
\le {\mathcal C}_4' (1+ \eps^{-1/3}|\tau|^{1/3}).
\end{align}
The constant  ${\mathcal C}_4'$ depends on  $\alpha_0,$ $\alpha_1,$ $\|g\|_{L_\infty},$  $\|g^{-1}\|_{L_\infty},$
$\|f\|_{L_\infty},$  $\|f^{-1}\|_{L_\infty},$ and $r_0$.
\end{proposition}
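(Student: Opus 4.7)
The plan is to follow the strategy of Proposition~\ref{prop9.2a}, adapted to the sandwiched setting. The decisive improvement over Proposition~\ref{prop12.1a} (whose exponent in $\tau$ was $1/2$) is that under Condition~\ref{cond_BB} the abstract estimate~\eqref{2.8} becomes available: indeed, $\wh{N}_Q=0$ is equivalent to $N=0$ by Lemma~\ref{abstr_N_and_Nhat_lemma}, so \eqref{2.8} controls $\|\mathcal{J}_2(t,\tau)\|$ by a quantity of order $|\tau|t^2$ rather than $|\tau||t|$. This quadratic dependence on $t$ is precisely what produces the exponent $1/3$ in the $\tau$-dependence of~\eqref{12.9x}.

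Starting from \eqref{2.8} with $\tau$ replaced by $\eps^{-1}\tau$, I would combine it with the sandwiched inequality~\eqref{abstr_tildeJ2_est1} (which bounds $\|\wt{J}_3(\k,\eps^{-1}\tau)\|$ by $\|f\|_{L_\infty}^2\|\mathcal{J}_2(t,\eps^{-1}\tau)\|$) and with the uniform correction~\eqref{J3-J3tilde} (which controls $\|J_3-\wt{J}_3\|$). The outcome is the key small-$\k$ estimate
\begin{equation*}
\|J_3(\k,\eps^{-1}\tau)\|_{L_2(\Omega)\to L_2(\Omega)} \le \mathcal{C}^{(1)}\bigl(1+\eps^{-1}|\tau|\,|\k|^2\bigr), \quad |\k|\le t_0,
\end{equation*}
with a constant depending only on the admissible parameters $\alpha_0,\alpha_1,\|g\|_{L_\infty},\|g^{-1}\|_{L_\infty},\|f\|_{L_\infty},\|f^{-1}\|_{L_\infty},r_0$.

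The remainder of the argument is a case split on the size of $\eps|\tau|^{-1}$ mirroring the proof of Proposition~\ref{prop9.2a}. If $\eps|\tau|^{-1}>t_0^{\,3}$ (and $\tau\ne 0$), the trivial bound~\eqref{12.9_3} yields $\|J_3(\k,\eps^{-1}\tau)\|\le 2\|f\|_{L_\infty}^2 t_0^{-3}$, which is absorbed into the right-hand side of~\eqref{12.9x}; the case $\tau=0$ is immediate. When $\eps|\tau|^{-1}\le t_0^{\,3}$, the small-$\k$ estimate above applies in the region $|\k|\le \eps^{1/3}|\tau|^{-1/3}$ (automatically inside $|\k|\le t_0$) and delivers a bound of order $1+\eps^{-1/3}|\tau|^{1/3}$; in the complementary region $|\k|>\eps^{1/3}|\tau|^{-1/3}$, the uniform nondegeneracy~\eqref{A(k)_nondegenerated_and_c_*} applied to both $\mathcal{A}(\k)$ and $\mathcal{A}^0(\k)$ together with trivial $\|f\|_{L_\infty}$ bounds gives $\|J_3(\k,\eps^{-1}\tau)\|\le 2\|f\|_{L_\infty}^2 c_*^{-1/2}|\k|^{-1}\le 2\|f\|_{L_\infty}^2 c_*^{-1/2}\eps^{-1/3}|\tau|^{1/3}$.

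The one delicate point is the transfer from the abstract bound on $\|\mathcal{J}_2\|$ to the DO-level bound on $\|J_3\|$, since \eqref{abstr_tildeJ2_est1} directly controls the auxiliary operator $\wt{J}_3$ rather than $J_3$ itself; the bridge~\eqref{J3-J3tilde} must be invoked to correct for the difference. All subsequent manipulations are routine threshold comparisons between $\eps^{1/3}|\tau|^{-1/3}$ and $t_0$ and introduce no new ideas beyond those already used in Propositions~\ref{prop9.2a} and~\ref{prop12.1a}.
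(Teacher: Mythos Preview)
Your approach matches the paper's and is correct in outline, but there is one oversight in the derivation of your ``key small-$\k$ estimate.'' The three ingredients you cite --- \eqref{2.8}, \eqref{abstr_tildeJ2_est1}, \eqref{J3-J3tilde} --- are all statements about the \emph{abstract} operators $\mathcal{J}_2$, $\wt{J}_3$, $J_3$, and the abstract $J_3$ (definition~\eqref{abstr_J3_def}) carries a built-in factor $\widehat{P}$ on the right. Translated to the DO level via \eqref{f_0 hatS f_0 P = A^0}, they therefore only yield
\[
\|J_3(\k,\eps^{-1}\tau)\,\widehat{P}\|_{L_2(\Omega)\to L_2(\Omega)} \le \mathcal{C}^{(1)}\bigl(1+\eps^{-1}|\tau|\,|\k|^2\bigr), \qquad |\k|\le t_0,
\]
which is the paper's \eqref{12.9y}, not the full norm you display. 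To close the gap you must also invoke the uniform bound \eqref{12.9e} on $J_3(\k,\eps^{-1}\tau)(I-\widehat{P})$, established in the proof of Proposition~\ref{prop12.1a}; this adds only a constant and your case split then proceeds unchanged. The paper makes precisely this $\widehat{P}/(I-\widehat{P})$ decomposition explicit, and so does the model argument in Proposition~\ref{prop9.2a} that you cite (via \eqref{9.9bb} and \eqref{9.9d}).
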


\begin{proof}
From  \eqref{2.8} (with  $\tau$ replaced by $\eps^{-1} \tau$),   \eqref{abstr_tildeJ2_est1}, and \eqref{J3-J3tilde}
it follows that  
\begin{equation}
\label{12.9y}
\|  {J}_3(\mathbf{k}, \eps^{-\!1} \tau ) \wh{P} \|_{L_2(\Omega) \to L_2(\Omega)}
\le {\mathcal C}_4^{(1)}  (1+ \eps^{-\!1}|\tau| |\k|^2), \ \; \tau \!\in\! \R,\ \eps\!>\!0, \ |\k| \!\le \!{t}_0.
\end{equation}

If $\eps |\tau|^{-1} > {t}_0^{3}$, then the required estimate  \eqref{12.9x} follows directly from  \eqref{12.9_3}. 
So, we assume that $\eps |\tau|^{-1} \le {t}_0^{3}$. Then, by \eqref{12.9y},
\begin{equation*}
\|  {J}_3(\mathbf{k}, \eps^{-1} \tau ) \wh{P} \|_{L_2(\Omega) \to L_2(\Omega)}
\le {\mathcal C}_4^{(1)} (1+ \eps^{-1/3}|\tau|^{1/3}), \quad  |\k| \le \eps^{1/3} |\tau|^{-1/3}.
\end{equation*}
Together with \eqref{12.9e}, this leads to estimate \eqref{12.9x}  for $|\k| \le \eps^{1/3} |\tau|^{-1/3}$.

Finally, \eqref{A(k)_nondegenerated_and_c_*} implies that 
$$
\|  {J}_3(\mathbf{k}, \eps^{-1} \tau )  \|_{L_2(\Omega) \to L_2(\Omega)}
\le 2 \|f\|^2_{L_\infty} {c}_*^{-1/2} |\k|^{-1} \le 2 \|f\|^2_{L_\infty} {c}_*^{-1/2} \eps^{-1/3} |\tau|^{1/3}
$$
for $|\k| > \eps^{1/3} |\tau|^{-1/3}$.
\end{proof}

\begin{remark}\label{rem12.1}
$1^\circ$. Under the assumptions of Theorem \ref{th12.2}, we cannot deduce 
the analog of estimate \eqref{12.8b} with $J_3(\k,\eps^{-1} \tau)$ replaced by $J_2(\k,\eps^{-1} \tau)$
from the abstract inequality \eqref{abstr_sin_sandwiched_improved_est1}. The reason is that the operator
\hbox{${J}_2(\mathbf{k}, \eps^{-1} \tau ) \mathcal{R}(\k,\eps)^{1/4} (I-\wh{P})$} does not satisfy the required estimate. 
 For the same reason, under the assumptions of Theorem \ref{th12.3} (see below) there is no analog of estimate \eqref{12.18} for ${J}_2(\mathbf{k}, \eps^{-1} \tau )$.
$2^\circ$. Also, there are no analogs of Propositions \ref{prop12.1a}, \ref{prop12.2a}, and \ref{prop12.3a} 
(see below) for the operator ${J}_2(\mathbf{k}, \eps^{-1} \tau )$, because it is impossible
  to obtain the required estimate for the operator ${J}_2(\mathbf{k}, \eps^{-1} \tau )(I-\wh{P})$.
\end{remark}

\subsection{Approximation in the operator norm in  $L_2(\Omega;\AC^n)$. The case where \hbox{$\wh{N}_{0,Q}(\boldsymbol{\theta}) =0$}}

Now we refuse from Condition~\ref{cond_BB}, but instead assume that  
\hbox{$\widehat{N}_{0,Q}(\boldsymbol{\theta}) = 0$} for all $\boldsymbol{\theta}$. 
As in Subsection~\ref{sec9.3}, in order to apply Theorem~\ref{th5.7}, we need to impose some additional conditions. We use the original numbering of the eigenvalues $\gamma_1 (\boldsymbol{\theta}), \ldots , \gamma_n (\boldsymbol{\theta})$ of the germ $S (\boldsymbol{\theta})$, agreeing to number them
 in the nondecreasing order:
\begin{equation}
\label{gamma(theta)}
\gamma_1 (\boldsymbol{\theta}) \le \gamma_2 (\boldsymbol{\theta}) \le \dots \le \gamma_n (\boldsymbol{\theta}).
\end{equation}
As has been already mentioned, the numbers~(\ref{gamma(theta)}) are simultaneously the eigenvalues of the generalized spectral problem~(\ref{hatS_gener_spec_problem}). For each $\boldsymbol{\theta}$, we denote by 
$\mathcal{P}^{(k)} (\boldsymbol{\theta})$ the \textquotedblleft skew\textquotedblright \ projection (orthogonal with the weight $\overline{Q}$) of the space $L_2 (\Omega; \mathbb{C}^n)$ onto the eigenspace of 
problem~(\ref{hatS_gener_spec_problem}) corresponding to the eigenvalue $\gamma_k (\boldsymbol{\theta})$. Clearly, for each $\boldsymbol{\theta}$ the operator $\mathcal{P}^{(k)} (\boldsymbol{\theta})$ coincides with one of the projections $\mathcal{P}_j (\boldsymbol{\theta})$ introduced in Subsection~\ref{sndw_eigenval_multipl_section} (but the number $j$ may depend on $\boldsymbol{\theta}$ and changes at the points where the multiplicity of  the germ spectrum changes).

\begin{condition}
	\label{sndw_cond1}
	
	$1^\circ$.	 
	$\widehat{N}_{0,Q}(\boldsymbol{\theta})=0$ for any $\boldsymbol{\theta} \in \mathbb{S}^{d-1}$.
	
	\noindent$2^\circ$. For each pair of indices $(k,r), 1 \le k,r \le n, k \ne r,$ such that 
	$\gamma_k (\boldsymbol{\theta}_0) = \gamma_r (\boldsymbol{\theta}_0) $ for some 
	$\boldsymbol{\theta}_0 \in \mathbb{S}^{d-1},$ we have  
	$$
	(\mathcal{P}^{(k)} (\boldsymbol{\theta}))^* \widehat{N}_Q (\boldsymbol{\theta}) \mathcal{P}^{(r)} (\boldsymbol{\theta}) = 0
	$$
	for all  $\boldsymbol{\theta} \in \mathbb{S}^{d-1}$.    
	\end{condition}

Condition~$2^\circ$ can be reformulated as follows: it is assumed that for the nonzero (identically) \textquotedblleft blocks\textquotedblright \ $(\mathcal{P}^{(k)} (\boldsymbol{\theta}))^* \widehat{N}_Q (\boldsymbol{\theta}) \mathcal{P}^{(r)} (\boldsymbol{\theta})$ of the operator $\widehat{N}_Q (\boldsymbol{\theta})$ the branches of the eigenvalues $\gamma_k (\boldsymbol{\theta})$ and  $\gamma_r (\boldsymbol{\theta})$ do not intersect.
Obviously,  Condition~\ref{sndw_cond1} is ensured by the following more restrictive condition.

\begin{condition}
	\label{sndw_cond2}
	
	$1^\circ$.	 
	$\widehat{N}_{0,Q}(\boldsymbol{\theta})=0$ for any $\boldsymbol{\theta} \in \mathbb{S}^{d-1}$.
		
		\noindent$2^\circ$.	Suppose that the number  $p$ of different eigenvalues of the generalized spectral problem~\emph{(\ref{hatS_gener_spec_problem})} does not depend on $\boldsymbol{\theta} \in \mathbb{S}^{d-1}$.       
	\end{condition}

\begin{remark}
	\label{sndw_simple_spec_remark}
	The assumption~$2^\circ$ of Condition~\ref{sndw_cond2} is a fortiori satisfied if the spectrum of the problem~\eqref{hatS_gener_spec_problem} is simple for any  $\boldsymbol{\theta} \in \mathbb{S}^{d-1}$.
\end{remark}

So, we assume that Condition~\ref{sndw_cond1} is satisfied. 
We are interested in the pairs of indices from the set 
\begin{equation*}
\mathcal{K} := \{ (k,r) \colon 1 \le k,r \le n, \; k \ne r, \;  (\mathcal{P}^{(k)} (\boldsymbol{\theta}))^* \widehat{N}_Q (\boldsymbol{\theta}) \mathcal{P}^{(r)} (\boldsymbol{\theta}) \not\equiv 0 \}.
\end{equation*}

Denote 
\begin{equation*}
c^{\circ}_{kr} (\boldsymbol{\theta}) :=  \min \{c_*, n^{-1} |\gamma_k (\boldsymbol{\theta}) - \gamma_r (\boldsymbol{\theta})| \}, \quad (k,r) \in \mathcal{K}.
\end{equation*}
Since the operator $S (\boldsymbol{\theta})$ depends on   $\boldsymbol{\theta}$ continuously, then 
$\gamma_j (\boldsymbol{\theta})$~are continuous functions on the sphere $\mathbb{S}^{d-1}$. 
By Condition~\ref{sndw_cond1}($2^\circ$), we have~$|\gamma_k (\boldsymbol{\theta}) - \gamma_r (\boldsymbol{\theta})| > 0$ for $(k,r) \in \mathcal{K}$ and all $\boldsymbol{\theta} \in \mathbb{S}^{d-1}$, whence 
$c^{\circ}_{kr} :=  \min_{\boldsymbol{\theta} \in \mathbb{S}^{d-1}} c^{\circ}_{kr} (\boldsymbol{\theta}) > 0$, 
$(k,r) \in \mathcal{K}$. We put 
\begin{equation}
\label{c^circ}
c^{\circ} := \min_{(k,r) \in \mathcal{K}} c^{\circ}_{kr}.
\end{equation}

Clearly, the number~(\ref{c^circ})~is a realization of the value~(\ref{abstr_c^circ}) chosen independent of  
$\boldsymbol{\theta}$. Under Condition~\ref{sndw_cond1}, the number $t^{00}$ subject to~(\ref{abstr_t^00}) 
also can be chosen independent of $\boldsymbol{\theta} \in \mathbb{S}^{d-1}$. Taking~(\ref{delta_fixation}) and~(\ref{X_1_estimate}) into account, we put 
\begin{equation*}
t^{00} = (8 \beta_2)^{-1} r_0 \alpha_1^{-3/2} \alpha_0^{1/2} \| g\|_{L_{\infty}}^{-3/2} \| g^{-1}\|_{L_{\infty}}^{-1/2} \|f\|_{L_\infty}^{-3} \|f^{-1}\|_{L_\infty}^{-1} c^{\circ}.
\end{equation*}
(Condition $t^{00} \le t_{0}$ is satisfied because 
$c^{\circ} \le \| S (\boldsymbol{\theta}) \| \le \alpha_1 \|g\|_{L_{\infty}}\|f\|_{L_\infty}^2$.)

Similarly to the proof of Theorem \ref{th12.2}, we deduce the following result from Theorem~\ref{th5.7}.

\begin{theorem}
	\label{th12.3}
Suppose that Condition~\emph{\ref{sndw_cond1}} \emph{(}or more restrictive 
Condition~\emph{\ref{sndw_cond2}}\emph{)} is satisfied. Then for $\tau \in \mathbb{R},$ $\varepsilon > 0,$ and $\mathbf{k} \in \widetilde{\Omega}$ we have 
	\begin{align}
	\nonumber
	&\|  J_1 (\mathbf{k}, \eps^{-1} \tau) \mathcal{R}(\mathbf{k}, \varepsilon)^{3/4} \|_{L_2(\Omega) \to L_2(\Omega)} \le \mathcal{C}_5 (1 + |\tau|)^{1/2} \varepsilon,
	\\
	\label{12.18}
	&\|  J_3 (\mathbf{k}, \eps^{-1} \tau) \mathcal{R}(\mathbf{k}, \varepsilon)^{1/4} \|_{L_2(\Omega) \to L_2(\Omega)} \le \mathcal{C}_6 (1 + |\tau|)^{1/2}.
		\end{align}
The constants $\mathcal{C}_5$ and $\mathcal{C}_6$  depend on $\alpha_0,$ $\alpha_1,$ $\|g\|_{L_\infty},$ $\|g^{-1}\|_{L_\infty},$ $\|f\|_{L_\infty},$ $\|f^{-1}\|_{L_\infty},$ $r_0,$ and also on $n$ and~$c^\circ$.
\end{theorem}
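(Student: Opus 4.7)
The plan is to follow the template of the proof of Theorem~\ref{th12.2} verbatim, replacing Theorem~\ref{th5.6} by Theorem~\ref{th5.7} as the abstract engine. By Lemma~\ref{abstr_N_and_Nhat_lemma}, Condition~\ref{sndw_cond1}($1^\circ$) is equivalent to the abstract assumption $N_0 = 0$ for the family $A(t,\boldsymbol{\theta}) = \mathcal{A}(\mathbf{k})$, so Theorem~\ref{th5.7} is applicable. Condition~\ref{sndw_cond1}($2^\circ$) ensures, via continuity of the branches $\gamma_j(\boldsymbol{\theta})$ on the compact sphere $\mathbb{S}^{d-1}$, that the minimum $c^\circ$ defined in \eqref{c^circ} is strictly positive, so that the threshold $t^{00}$ introduced before the statement can be chosen independently of $\boldsymbol{\theta}$. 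By Remark~\ref{abstr_constants_remark}, the abstract constants of Theorem~\ref{th5.7} are controlled by polynomials in $\delta^{-1/2}$, $c_*^{-1/2}$, $\|X_1\|$, $(c^\circ)^{-1}$, and $n$; combined with \eqref{A(k)_nondegenerated_and_c_*}, \eqref{delta_fixation}, \eqref{t0_fixation}, \eqref{X_1_estimate}, this yields $\mathcal{C}_5, \mathcal{C}_6$ with exactly the dependence asserted.

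Inside the ball $|\mathbf{k}| \le t^{00}$, I would apply Theorem~\ref{th5.7} (with $\tau$ replaced by $\varepsilon^{-1}\tau$) to the sandwiched operators $J_1$ and $J_3$, and invoke \eqref{R_P} together with the identity \eqref{f_0 hatS f_0 P = A^0}, exactly as in the proof of Theorem~\ref{th12.2}, to rewrite the effective-symbol factor in terms of $\mathcal{A}^0(\mathbf{k})\widehat{P}$ and the smoothing factor $\mathcal{R}(\mathbf{k},\varepsilon)^{s/2}$. This delivers the desired estimates post-composed with $\widehat{P}$. For $J_3$, one additionally invokes \eqref{J3-J3tilde} to transfer from $\widetilde{J}_3$ to $J_3$, as in the derivation of \eqref{abstr_sin_sandwiched_improved_est2}.

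To extend the estimates to the full $\widetilde{\Omega}$, I split according to the location of $\mathbf{k}$ and according to the projection $\widehat{P}$. For $|\mathbf{k}| > t^{00}$, the operator $\mathcal{R}(\mathbf{k},\varepsilon)^{s/2}\widehat{P}$ has norm at most $(t^{00})^{-s}\varepsilon^s$ (cf.\ the analog of \eqref{R_P_2}); composed with the trivial bound \eqref{12.9_1} this handles $J_1$, while for $J_3$ the uniform coercivity \eqref{A(k)_nondegenerated_and_c_*} of $\mathcal{A}(\mathbf{k})$ and $\mathcal{A}^0(\mathbf{k})$ produces $\|J_3(\mathbf{k},\varepsilon^{-1}\tau)\| \le C|\mathbf{k}|^{-1} \le C(t^{00})^{-1}$, which suffices. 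For the $I-\widehat{P}$ component, \eqref{R(k,eps)(I-P)_est} with $s = 3/2$ combined with \eqref{12.9_1} controls $J_1$, and the uniform bound \eqref{12.9e} controls $J_3$.

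The only (mild) obstacle is organizational: one must ensure that the threshold $t^{00}$ is uniform in $\boldsymbol{\theta} \in \mathbb{S}^{d-1}$. This is precisely the role of Condition~\ref{sndw_cond1}($2^\circ$), which prevents the distances $|\gamma_k(\boldsymbol{\theta}) - \gamma_r(\boldsymbol{\theta})|$ from vanishing on those pairs $(k,r) \in \mathcal{K}$ for which the corresponding off-diagonal block of $\widehat{N}_Q(\boldsymbol{\theta})$ is not identically zero; compactness of $\mathbb{S}^{d-1}$ and continuity of the band functions then force $c^\circ > 0$. No genuinely new ingredient beyond what appears in the proof of Theorem~\ref{th12.2} enters.
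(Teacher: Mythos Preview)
Your proposal is correct and follows exactly the route the paper takes: the paper simply states ``Similarly to the proof of Theorem~\ref{th12.2}, we deduce the following result from Theorem~\ref{th5.7},'' and you have filled in precisely those details, including the replacement of $t_0$ by the $\boldsymbol{\theta}$-uniform threshold $t^{00}$ and the correct handling of the $\widehat{P}$ and $I-\widehat{P}$ components in the inner and outer regions.
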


The following statement can be checked by analogy with the proof of Proposition \ref{prop12.2a}.

\begin{proposition}
\label{prop12.3a}
Under the assumptions of Theorem \emph{\ref{th12.3}}, for $\tau \in \R,$ $\eps >0,$ and $\k \in \wt{\Omega}$ 
we have
\begin{align*}
\|  {J}_3(\mathbf{k}, \eps^{-1} \tau ) \|_{L_2(\Omega) \to L_2(\Omega)}
\le {\mathcal C}_6' (1+ \eps^{-1/3}|\tau|^{1/3}).
\end{align*}
The constant  ${\mathcal C}_6'$ depends on $\alpha_0,$ $\alpha_1,$ $\|g\|_{L_\infty},$  $\|g^{-1}\|_{L_\infty},$
$\|f\|_{L_\infty},$  $\|f^{-1}\|_{L_\infty},$  $r_0,$ and also on $n$ and~$c^\circ$.
\end{proposition}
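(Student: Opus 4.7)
The plan is to follow the same three-step strategy used in Propositions \ref{prop12.1a} and \ref{prop12.2a}: derive a bound on the $\wh{P}$-component of $J_3(\k,\eps^{-1}\tau)$ from the abstract theory, bound the $(I-\wh{P})$-component by a constant, and then dichotomize in $|\k|$ versus $\eps^{1/3}|\tau|^{-1/3}$, using the uniform ellipticity estimate $\mathcal{A}(\k) \ge c_* |\k|^2 I$ (and the analogous bound for $\mathcal{A}^0(\k)$) for the tail region.

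First I would apply the abstract Theorem~\ref{th2.7} (which requires $N_0=0$) to the family $A(t,\boldsymbol{\theta}) = \mathcal{A}(t\boldsymbol{\theta})$. By Lemma~\ref{abstr_N_and_Nhat_lemma}, the hypothesis $\wh{N}_{0,Q}(\boldsymbol{\theta})=0$ of Condition~\ref{sndw_cond1} is equivalent to $N_0(\boldsymbol{\theta})=0$, and by the discussion preceding Theorem~\ref{th12.3} the parameter $t^{00}$ may be chosen independent of $\boldsymbol{\theta}\in\mathbb{S}^{d-1}$. Combining \eqref{2.8} (with $\tau$ replaced by $\eps^{-1}\tau$) with the sandwich estimate \eqref{abstr_tildeJ2_est1} and the bound \eqref{J3-J3tilde}, I obtain an analog of \eqref{12.9y}, namely
\begin{equation*}
\|J_3(\k,\eps^{-1}\tau)\wh{P}\|_{L_2(\Omega)\to L_2(\Omega)} \le \mathcal{C}^{(1)}(1 + \eps^{-1}|\tau|\,|\k|^2), \qquad |\k|\le t^{00}.
\end{equation*}
The $(I-\wh{P})$-part is handled exactly as in the derivation of \eqref{12.9e}: since $t^{00}\le t_0$, the argument given there shows that $\|J_3(\k,\eps^{-1}\tau)(I-\wh{P})\|_{L_2\to L_2}$ is bounded by a constant $\mathcal{C}^{(2)}$ depending only on the same data.

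For the dichotomy, if $\eps|\tau|^{-1}>(t^{00})^3$, then the desired estimate follows directly from the trivial bound $\|J_3(\k,\eps^{-1}\tau)\|\le 2\|f\|^2_{L_\infty}\eps^{-1}|\tau|$ (see \eqref{12.9_3}), because $\eps^{-1}|\tau|\le (t^{00})^{-3}\cdot\eps^{-1/3}|\tau|^{1/3}\cdot(\eps^{-1}|\tau|)^{1/3}\cdot(t^{00})^{\text{...}}$; more simply, one notes that in this regime $\eps^{-1}|\tau|$ is controlled by a constant multiple of $\eps^{-1/3}|\tau|^{1/3}$. If, on the other hand, $\eps|\tau|^{-1}\le (t^{00})^3$, then for $|\k|\le \eps^{1/3}|\tau|^{-1/3}$ (automatically $\le t^{00}$) the two inequalities above give $\|J_3(\k,\eps^{-1}\tau)\|\le (\mathcal{C}^{(1)}+\mathcal{C}^{(2)})(1+\eps^{-1/3}|\tau|^{1/3})$, while for $|\k|>\eps^{1/3}|\tau|^{-1/3}$ the bound $\mathcal{A}(\k)\ge c_*|\k|^2 I$ (and its analog for $\mathcal{A}^0(\k)$) yields
\begin{equation*}
\|J_3(\k,\eps^{-1}\tau)\|_{L_2\to L_2} \le 2\|f\|^2_{L_\infty} c_*^{-1/2}|\k|^{-1} \le 2\|f\|^2_{L_\infty} c_*^{-1/2}\,\eps^{-1/3}|\tau|^{1/3}.
\end{equation*}

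There is no real obstacle here; the only point requiring care is the uniformity of $t^{00}$ in $\boldsymbol{\theta}$ under Condition~\ref{sndw_cond1}, and this has already been established in the text preceding Theorem~\ref{th12.3} using the continuity of the branches $\gamma_k(\boldsymbol{\theta})$ on $\mathbb{S}^{d-1}$ together with the non-crossing hypothesis. Once $t^{00}$ is fixed uniformly, the argument is entirely parallel to that of Proposition~\ref{prop12.2a} with $t_0$ replaced by $t^{00}$, and the constants in the final bound acquire their dependence on $n$ and $c^\circ$ precisely through the constants $C_{13}$, $C_{14}'$ of Theorem~\ref{th2.7}, as recorded in Remark~\ref{abstr_constants_remark}.
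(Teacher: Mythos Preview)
Your proposal is correct and follows exactly the approach the paper intends: it states that Proposition~\ref{prop12.3a} ``can be checked by analogy with the proof of Proposition~\ref{prop12.2a}'', and your argument is precisely that analogy, with $t_0$ replaced by the uniform $t^{00}$ and the $N=0$ estimate replaced by the $N_0=0$ estimate from Theorem~\ref{th2.7}. One small slip: when you write ``Combining \eqref{2.8}\dots'', you mean the second inequality of Theorem~\ref{th2.7} (the $N_0=0$ case, valid for $|t|\le t^{00}$), not \eqref{2.8} itself, which is the $N=0$ case; the structure of the bound is identical, so this does not affect the argument.
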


\subsection{Approximation  of the sandwiched operator {$\mathcal{A}(\mathbf{k})^{-1/2} \sin(\varepsilon^{-1} \tau \mathcal{A} (\mathbf{k})^{1/2})$} in the \hbox{``energy''} norm}
Denote 
\begin{align}
\nonumber
\check{J} (\mathbf{k}, \tau) 
&:= f \mathcal{A}(\mathbf{k})^{-1/2} \sin(\tau \mathcal{A}(\mathbf{k})^{1/2}) f^{-1} 
- (I + \Lambda_Q b(\mathbf{D} + \mathbf{k})\wh{P}) f_0 \mathcal{A}^0 (\mathbf{k})^{-1/2} \sin(\tau \mathcal{A}^0(\mathbf{k})^{1/2}) f_0^{-1}, 
\\
\label{9.0}
J (\mathbf{k}, \tau) 
&:= f \mathcal{A}(\mathbf{k})^{-1/2} \sin(\tau \mathcal{A}(\mathbf{k})^{1/2}) f^{-1} 
 - (I + \Lambda b(\mathbf{D} + \mathbf{k}) \wh{P}) f_0 \mathcal{A}^0 (\mathbf{k})^{-1/2} \sin( \tau \mathcal{A}^0(\mathbf{k})^{1/2}) f_0^{-1}.
\end{align}
Applying Theorem~\ref{abstr_sin_sandwiched_general_thrm} and taking~(\ref{R_P}),  (\ref{f_0 hatS f_0 P = A^0}), and
 \eqref{11.14a} into account, we obtain  
\begin{equation}
\label{sndw1_LambdaQ_sin_main_est_wP}
\| \widehat{\mathcal{A}}(\mathbf{k})^{1/2} \check{J} (\mathbf{k}, \varepsilon^{-1} \tau) \mathcal{R}(\mathbf{k}, \varepsilon) \widehat{P}\| \le \mathcal{C}'_7
 (1\! +\! |\tau|) \varepsilon,  \quad \varepsilon\! >\! 0, \; \tau\! \in\! \mathbb{R}, \; |\mathbf{k}| \!\le\! t_0.
\end{equation}
The constant $\mathcal{C}'_7$ depends only on $\alpha_0$, $\alpha_1$, $\|g\|_{L_\infty}$, $\|g^{-1}\|_{L_\infty}$, $\|f\|_{L_\infty}$, $\|f^{-1}\|_{L_\infty}$, and $r_0$.
(For brevity, we omit the index of the operator norm in  $L_2(\Omega;\AC^n)$.) 

We show that, within  the margin of error, $\Lambda_Q$ can be replaced by $\Lambda$ in~(\ref{sndw1_LambdaQ_sin_main_est_wP}). Recall that $\Lambda_Q = \Lambda+ \Lambda_Q^0$. 
Combining \eqref{Lambda_est}, \eqref{Lambda_Q=Lambda+Lambda_Q^0}, and 
 \eqref{f_0_estimates}, we obtain 
\begin{equation}\label{9.2}
| \Lambda_Q^0 | \le  (2 r_0)^{-1} \alpha_0^{-1/2} \|g\|_{L_\infty}^{1/2} \|g^{-1}\|_{L_\infty}^{1/2} \|f\|_{L_\infty}^{2} \|f^{-1}\|_{L_\infty}^{2}.
\end{equation}
By \eqref{rank_alpha_ineq}, 
\begin{align}
\label{hatA(k)_hatP_est}
\| \widehat{\mathcal{A}}(\mathbf{k})^{1/2} \widehat{P} \| 
= \| g^{1/2} b(\mathbf{k}) \widehat{P} \| \le \alpha_1^{1/2} 
\|g\|^{1/2}_{L_\infty} |\mathbf{k}|, \quad \mathbf{k}\in \widetilde{\Omega}.
\end{align}
From~\eqref{R_P}, \eqref{f_0_estimates}, \eqref{8.4a},  \eqref{9.2}, and \eqref{hatA(k)_hatP_est} it follows that 
\begin{multline}
\label{Lambda0_Q_est_1}
\|\widehat{\mathcal{A}}(\mathbf{k})^{1/2} \Lambda_Q^0 b(\mathbf{D} + \mathbf{k}) f_0 \mathcal{A}^0 (\mathbf{k})^{-1/2} \sin(\varepsilon^{-1} \tau \mathcal{A}^0(\mathbf{k})^{1/2}) f_0^{-1} \mathcal{R}(\mathbf{k}, \varepsilon)^{1/2} \widehat{P}\| 
\\  
\le  \alpha^{1/2}_1  \|g \|^{1/2}_{L_\infty} \|g^{-1} \|^{1/2}_{L_\infty}  \|f^{-1}\|_{L_\infty} |\Lambda_Q^0| |\mathbf{k}| \varepsilon (|\mathbf{k}|^2 + \varepsilon^2)^{-1/2} \le {\mathcal C}''_7 \varepsilon,
\end{multline}
  where the constant ${\mathcal C}''_7$ depends on $\alpha_0,\alpha_1,\|g\|_{L_\infty},\|g^{-1}\|_{L_\infty},\|f\|_{L_\infty},\|f^{-1}\|_{L_\infty}$, and~$r_0$.

Relations \eqref{sndw1_LambdaQ_sin_main_est_wP} and \eqref{Lambda0_Q_est_1} imply that  
\begin{equation}
\label{sndw1_sin_main_est_wP}
\| \widehat{\mathcal{A}}(\mathbf{k})^{1/2} J (\mathbf{k}, \varepsilon^{-1} \tau) \mathcal{R}(\mathbf{k}, \varepsilon) \widehat{P}\| \le \mathcal{C}'_7 
 (1 + |\tau|) \varepsilon + {\mathcal C}''_7 \varepsilon,
 \quad \varepsilon > 0, \; \tau \in \mathbb{R}, \; |\mathbf{k}| \le t_0.
\end{equation}

 Estimates for $ | \mathbf{k} | > {t}_0$ are trivial. By~(\ref{R_P}), we have
\begin{equation}
\label{sndw_R_hatP_|k|>t^0_est}
\| \mathcal{R}(\mathbf{k}, \varepsilon)^{1/2}\widehat{P} \|_{L_2 (\Omega) \to L_2 (\Omega)} \le t_0^{-1} \varepsilon, \quad \varepsilon > 0, \; \mathbf{k} \in \widetilde{\Omega}, \; | \mathbf{k} | > t_0.
\end{equation}
Since  ${\mathcal{A}}(\mathbf{k}) = f^*\widehat{\mathcal{A}}(\mathbf{k}) f$, then  
\begin{equation}
\label{sndw_sin_est_|k|>t^0_1}
\| \widehat{\mathcal{A}}(\mathbf{k})^{1/2} f \mathcal{A}(\mathbf{k})^{-1/2} \sin(\varepsilon^{-1} \tau \mathcal{A}(\mathbf{k})^{1/2}) f^{-1} \| 
\!\le \! \|f^{-1}\|_{L_\infty},  \quad \!\varepsilon \!>\! 0, \; \mathbf{k} \!\in \!\widetilde{\Omega}.
\end{equation}
Next, by~(\ref{hatA(k)}), (\ref{g^0_est}), \eqref{f_0_estimates}, and \eqref{8.4a},   
\begin{multline}
\label{sndw_sin_est_|k|>t^0_3}
\| \widehat{\mathcal{A}}(\mathbf{k})^{1/2} f_0 \mathcal{A}^0 (\mathbf{k})^{-1/2} \sin(\varepsilon^{-1} \tau \mathcal{A}^0(\mathbf{k})^{1/2}) f_0^{-1} \| 
 \\
  =  \| g^{1/2} b(\mathbf{D} + \mathbf{k}) f_0 \mathcal{A}^0 (\mathbf{k})^{-1/2} \sin(\varepsilon^{-1} \tau \mathcal{A}^0(\mathbf{k})^{1/2}) f_0^{-1} \| 
  \\
 \le \|g\|_{L_\infty}^{1/2} \|g^{-1}\|_{L_\infty}^{1/2} \|f^{-1}\|_{L_\infty}, \quad \varepsilon > 0, \; \mathbf{k} \in \widetilde{\Omega}.
\end{multline}
Taking (\ref{g^0_est}), \eqref{sqrt_hatA_Lambda_P_est}, \eqref{f_0_estimates}, and \eqref{8.4a} into account, we obtain 
\begin{multline}
\label{sndw_sin_est_|k|>t^0_5}
\|\widehat{\mathcal{A}}(\mathbf{k})^{1/2} \Lambda b(\mathbf{D} + \mathbf{k}) \wh{P} f_0 \mathcal{A}^0 (\mathbf{k})^{-1/2} \sin(\varepsilon^{-1} \tau \mathcal{A}^0(\mathbf{k})^{1/2}) f_0^{-1} \| 
\\ 
\le C_\Lambda
\|b(\mathbf{D} + \mathbf{k}) f_0 \mathcal{A}^0(\mathbf{k})^{-1/2} \sin(\varepsilon^{-1} \tau \mathcal{A}^0(\mathbf{k})^{1/2}) f_0^{-1} \|  
 \\
  \le C_{\Lambda} \|g^{-1}\|_{L_\infty}^{1/2} \|f^{-1}\|_{L_\infty},  \qquad \varepsilon > 0, \; \mathbf{k} \in \widetilde{\Omega}.
\end{multline}
So, from~(\ref{sndw_R_hatP_|k|>t^0_est})--(\ref{sndw_sin_est_|k|>t^0_5}) it follows that 
\begin{align}
\label{sndw_sin_est_|k|>t^0_-1}
\bigl\| \widehat{\mathcal{A}}(\mathbf{k})^{1/2}\! J (\mathbf{k}, \varepsilon^{-1}\! \tau)  \mathcal{R}(\mathbf{k}, \varepsilon)^{1/2}  \widehat{P} \bigr\| \le {\mathcal C}_7''' \varepsilon,\quad \!\varepsilon \!>\! 0, \; \tau\! \in\! \mathbb{R}, \; \mathbf{k} \!\in \!\widetilde{\Omega}, \; |\mathbf{k}|\! >\! t_0,
\end{align}
where ${\mathcal C}_7''' = (1 +  \|g\|_{L_\infty}^{1/2} \|g^{-1}\|_{L_\infty}^{1/2} + C_{\Lambda} \|g^{-1}\|_{L_\infty}^{1/2}) \|f^{-1}\|_{L_\infty}  t_0^{-1}$.
\smallskip

By~\eqref{R(k,eps)(I-P)_est} with $s=1$, \eqref{sndw_sin_est_|k|>t^0_1}, and \eqref{sndw_sin_est_|k|>t^0_3}, 
 \begin{align}
\label{sndw_sin_est_wo_hatP_-1}
\bigl\| \widehat{\mathcal{A}}(\mathbf{k})^{1/2} \!J(\mathbf{k}, \varepsilon^{-1} \tau) \mathcal{R}(\mathbf{k}, \varepsilon)^{1/2}  (I\!-\!\widehat{P}) \bigr\|  \le  \check{\mathcal C}_7 \varepsilon, \!\quad \varepsilon \!> \!0, \; \tau \!\in\! \mathbb{R}, \; \mathbf{k}\! \in \!\widetilde{\Omega},
\end{align}
where $\check{\mathcal C}_7= r_0^{-1}  (1 +  \|g\|_{L_\infty}^{1/2}  \|g^{-1}\|_{L_\infty}^{1/2}) \|f^{-1}\|_{L_\infty}$.

\smallskip
As a result, using~(\ref{sndw1_sin_main_est_wP}), (\ref{sndw_sin_est_|k|>t^0_-1}), and  (\ref{sndw_sin_est_wo_hatP_-1}),  we obtain the following result (proved earlier in~\cite[(7.36)]{M}).

\begin{theorem}[see~\cite{M}]
	\label{sndw_A(k)_sin_general_thrm}
	Suppose that  $J(\mathbf{k},  \tau)$ is the operator defined by \eqref{9.0}. Then 
	\begin{align*}
	\bigl\| \widehat{\mathcal{A}}(\mathbf{k})^{1/2} J(\mathbf{k}, \varepsilon^{-1} \tau)  \mathcal{R}(\mathbf{k}, \varepsilon) 
	\bigr\|_{L_2(\Omega) \to L_2 (\Omega) }  \le \mathcal{C}_7 (1 + |\tau|) \varepsilon
	\end{align*}
	for $\tau \in \mathbb{R},$ $\varepsilon > 0$, and $\mathbf{k} \in \widetilde{\Omega}$.
	The constant $\mathcal{C}_7$ depends only on $\alpha_0,$ $\alpha_1,$ $\|g\|_{L_\infty},$ $\|g^{-1}\|_{L_\infty},$ $\|f\|_{L_\infty},$ $\|f^{-1}\|_{L_\infty},$ $r_0,$ and $r_1$.
\end{theorem}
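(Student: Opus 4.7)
The plan is to decompose $J(\mathbf{k},\varepsilon^{-1}\tau)\mathcal{R}(\mathbf{k},\varepsilon)=J\mathcal R\,\widehat P+J\mathcal R\,(I-\widehat P)$ and, on the $\widehat P$-component, further split into the inner region $|\mathbf{k}|\le t_0$ (where the abstract machinery of \S\ref{abstr_sandwiched_section} is effective) and the outer region $|\mathbf{k}|>t_0$ (where both $\mathcal{A}(\mathbf{k})$ and $\mathcal{A}^0(\mathbf{k})$ are uniformly nondegenerate and everything is treated by direct estimates). The key structural observation is that $\widehat P$ is the projection onto constants, so $\mathcal{H}_0(\mathbf{k})\widehat P=|\mathbf{k}|^2\widehat P$ and hence $\mathcal{R}(\mathbf{k},\varepsilon)^{s/2}\widehat P=\varepsilon^s(|\mathbf{k}|^2+\varepsilon^2)^{-s/2}\widehat P$ acts as a positive scalar; this lets one trade $\mathcal R$ for $\mathcal R^{1/2}$ freely on the $\widehat P$-subspace when combining bounds.

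For the inner region, I would apply the abstract Theorem~\ref{abstr_sin_sandwiched_general_thrm} to the family $A(t,\boldsymbol\theta)=\mathcal{A}(\mathbf{k})$, $\widehat A(t,\boldsymbol\theta)=\widehat{\mathcal{A}}(\mathbf{k})$ with $M=[f]$, $M_0=[f_0]$, using the identifications \eqref{f_0 hatS f_0 P = A^0} of $M_0\widehat S M_0$ as $\mathcal{A}^0(\mathbf{k})$ and \eqref{11.14a} of $t\widehat Z_Q\widehat P$ as $\Lambda_Q b(\mathbf{D}+\mathbf{k})\widehat P$, together with \eqref{R_P}. This yields \eqref{sndw1_LambdaQ_sin_main_est_wP} with the corrector $\Lambda_Q$ rather than $\Lambda$. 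The passage from $\Lambda_Q$ to $\Lambda$ required by \eqref{9.0} proceeds via the decomposition $\Lambda_Q=\Lambda+\Lambda_Q^0$ (see \eqref{Lambda_Q=Lambda+Lambda_Q^0}, with $|\Lambda_Q^0|$ bounded as in \eqref{9.2}); the extra term
\begin{equation*}
\widehat{\mathcal{A}}(\mathbf{k})^{1/2}\Lambda_Q^0 b(\mathbf{D}+\mathbf{k})f_0\mathcal{A}^0(\mathbf{k})^{-1/2}\sin\bigl(\varepsilon^{-1}\tau\mathcal{A}^0(\mathbf{k})^{1/2}\bigr)f_0^{-1}\mathcal R(\mathbf{k},\varepsilon)\widehat P
\end{equation*}
is controlled by combining $\|\widehat{\mathcal{A}}(\mathbf{k})^{1/2}\widehat P\|\le\alpha_1^{1/2}\|g\|_{L_\infty}^{1/2}|\mathbf{k}|$ from \eqref{hatA(k)_hatP_est} with the scalar factor $|\mathbf{k}|\varepsilon(|\mathbf{k}|^2+\varepsilon^2)^{-1}\le\varepsilon/|\mathbf{k}|$ (used in the milder form $\le\min\{1,\varepsilon/|\mathbf{k}|\}$), producing the $\mathcal{C}_7''\varepsilon$ correction in \eqref{sndw1_sin_main_est_wP}.

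For the outer region $|\mathbf{k}|>t_0$, the scalar identity gives $\|\mathcal R(\mathbf{k},\varepsilon)^{1/2}\widehat P\|\le t_0^{-1}\varepsilon$, and I would bound each of the three summands of $\widehat{\mathcal{A}}^{1/2}J\widehat P$ individually: the $f$-sandwich via $\mathcal{A}(\mathbf{k})=f^*\widehat{\mathcal{A}}(\mathbf{k})f$ (giving \eqref{sndw_sin_est_|k|>t^0_1}); the $f_0$-sandwich via $\widehat{\mathcal{A}}(\mathbf{k})^{1/2}=g^{1/2}b(\mathbf{D}+\mathbf{k})$ together with \eqref{g^0_est}; and the corrector via the key estimate \eqref{sqrt_hatA_Lambda_P_est} for $\|\widehat{\mathcal{A}}(\mathbf{k})^{1/2}\Lambda\widehat P_m\|$. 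Assembly yields \eqref{sndw_sin_est_|k|>t^0_-1}. For the $(I-\widehat P)$-component, valid on all of $\widetilde\Omega$, the corrector term $\Lambda b(\mathbf{D}+\mathbf{k})\widehat P$ vanishes, $\|\mathcal R^{1/2}(I-\widehat P)\|\le r_0^{-1}\varepsilon$ follows from the discrete Fourier expansion and \eqref{r_0}, and the two remaining summands are estimated as above, producing \eqref{sndw_sin_est_wo_hatP_-1}.

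Finally, since $\|T\mathcal R\widehat P\|\le\|T\mathcal R^{1/2}\widehat P\|$ and likewise for $(I-\widehat P)$ (because on each component $\mathcal R^{1/2}$ is a scalar bounded by $1$), adding the three regime estimates gives the claimed bound $\mathcal{C}_7(1+|\tau|)\varepsilon$, with the $\tau$-dependence coming exclusively from the abstract inner-region step via Theorem~\ref{abstr_sin_sandwiched_general_thrm}. The main obstacle I would expect is the $\Lambda_Q\to\Lambda$ substitution: naively the mismatch is $O(|\mathbf{k}|)$, which is $O(1)$ after the single smoothing by $\mathcal R^{1/2}$, so recovering $O(\varepsilon)$ requires exploiting the full smoothing $\mathcal R$ together with the $|\mathbf{k}|$-gain from \eqref{hatA(k)_hatP_est}; this cancellation is the one delicate point in an otherwise schematic argument.
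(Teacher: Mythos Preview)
Your proposal is correct and follows essentially the same route as the paper: the decomposition into $\widehat P$ and $(I-\widehat P)$ pieces, the inner/outer split on $\widehat P$, the application of Theorem~\ref{abstr_sin_sandwiched_general_thrm} together with the identifications \eqref{f_0 hatS f_0 P = A^0} and \eqref{11.14a} to obtain \eqref{sndw1_LambdaQ_sin_main_est_wP}, the $\Lambda_Q\to\Lambda$ correction via \eqref{Lambda_Q=Lambda+Lambda_Q^0}--\eqref{Lambda0_Q_est_1}, and the direct outer/complementary estimates \eqref{sndw_sin_est_|k|>t^0_-1}, \eqref{sndw_sin_est_wo_hatP_-1} are exactly what the paper does. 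One small clarification: the $\Lambda_Q^0$-term you flag as delicate is in fact handled with only $\mathcal R^{1/2}$ (see \eqref{Lambda0_Q_est_1}), since the $|\mathbf k|$-gain from \eqref{hatA(k)_hatP_est} against the scalar $\varepsilon(|\mathbf k|^2+\varepsilon^2)^{-1/2}\le 1$ already yields $O(\varepsilon)$; the full $\mathcal R$ is needed only for the abstract $\check J$-estimate.
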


\subsection{Approximation  of the sandwiched operator  {$\mathcal{A}(\mathbf{k})^{-1/2}\sin(\varepsilon^{-1} \tau \mathcal{A} (\mathbf{k})^{1/2})$} in the energy norm. Improvement of the results}
Now we apply Theorem~\ref{abstr_sin_sandwiched_ench_thrm_1} assuming that Condition
\ref{cond_BB} is satisfied.  Taking~(\ref{R_P}) and~(\ref{f_0 hatS f_0 P = A^0}) into account, we have \begin{equation*}
\|\widehat{\mathcal{A}}(\mathbf{k})^{1/2} \check{J} (\mathbf{k}, \eps^{-1}\tau) \mathcal{R}(\mathbf{k}, \varepsilon)^{3/4} \widehat{P}\|_{L_2(\Omega) \to L_2(\Omega)} \le 
\mathcal{C}'_8  (1 + |\tau|)^{1/2} \varepsilon, 
 \quad \varepsilon > 0, \; \tau \in \mathbb{R}, \; |\mathbf{k}| \le t_0.
\end{equation*}

Together with~(\ref{Lambda0_Q_est_1}),  (\ref{sndw_sin_est_|k|>t^0_-1}), and  (\ref{sndw_sin_est_wo_hatP_-1}), this yields the following result.

\begin{theorem}
	\label{sndw_sin_enchanced_thrm_1}
Suppose that Condition \emph{\ref{cond_BB}} is satisfied.
  Then for $\tau \in \mathbb{R},$ $\varepsilon > 0,$ and $\mathbf{k} \in \widetilde{\Omega}$ we have
  	\begin{align*}
	\bigl\| \widehat{\mathcal{A}}(\mathbf{k})^{1/2} J(\mathbf{k}, \varepsilon^{-1} \tau)  \mathcal{R}(\mathbf{k}, \varepsilon)^{3/4} \bigr\|_{L_2(\Omega) \to L_2 (\Omega) }  \le \mathcal{C}_8 (1 + |\tau|)^{1/2} \varepsilon,
	\end{align*}
where $\mathcal{C}_8$   depends only on  $\alpha_0,\alpha_1,\|g\|_{L_\infty},\|g^{-1}\|_{L_\infty},\|f\|_{L_\infty},\|f^{-1}\|_{L_\infty},r_0,$ and~$r_1$.
\end{theorem}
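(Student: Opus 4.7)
The plan is to mimic the structure of the proof of Theorem~\ref{sndw_A(k)_sin_general_thrm}, but replacing the use of the abstract Theorem~\ref{abstr_sin_sandwiched_general_thrm} by the sharper abstract Theorem~\ref{abstr_sin_sandwiched_ench_thrm_1}, which is available precisely because Condition~\ref{cond_BB} holds. I split the estimate into three regimes, corresponding to the decomposition $I = \widehat{P} + (I-\widehat{P})$ on $L_2(\Omega;\AC^n)$ combined with the splitting $|\mathbf{k}| \le t_0$ versus $|\mathbf{k}| > t_0$ in the Brillouin zone.

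First, for $|\mathbf{k}| \le t_0$, I apply Theorem~\ref{abstr_sin_sandwiched_ench_thrm_1} to the family $A(t,\boldsymbol{\theta}) = \mathcal{A}(\mathbf{k})$ (with $\mathbf{k} = t\boldsymbol{\theta}$), whose spectral germ is $S(\boldsymbol{\theta})$ and whose \textquotedblleft hat\textquotedblright{} counterpart is $\widehat{\mathcal{A}}(\mathbf{k})$. Because Condition~\ref{cond_BB} gives $\widehat{N}_Q(\boldsymbol{\theta}) \equiv 0$, the abstract hypothesis is satisfied uniformly in~$\boldsymbol{\theta}$. Using \eqref{R_P}, \eqref{f_0 hatS f_0 P = A^0}, and \eqref{11.14a} to translate the abstract smoothing factor $\varepsilon^{3/2}(t^2+\varepsilon^2)^{-3/4}\widehat{P}$ into $\mathcal{R}(\mathbf{k},\varepsilon)^{3/4}\widehat{P}$, and the abstract bordered sine into $\check J(\mathbf{k},\varepsilon^{-1}\tau)$, I obtain
\begin{equation*}
\|\widehat{\mathcal{A}}(\mathbf{k})^{1/2} \check{J}(\mathbf{k}, \varepsilon^{-1} \tau) \mathcal{R}(\mathbf{k}, \varepsilon)^{3/4}\widehat{P}\|_{L_2(\Omega)\to L_2(\Omega)} \le \mathcal{C}'_8 (1+|\tau|)^{1/2} \varepsilon, \quad |\mathbf{k}|\le t_0.
\end{equation*}
The constant depends only on the data listed in Remark~\ref{abstr_constants_remark} together with \eqref{A(k)_nondegenerated_and_c_*}, \eqref{delta_fixation}, \eqref{t0_fixation}, \eqref{X_1_estimate}.

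Second, I pass from $\check J$ to the corrector $J$ from~\eqref{9.0}. Since $\Lambda_Q = \Lambda + \Lambda_Q^0$ with a constant matrix $\Lambda_Q^0$ satisfying~\eqref{9.2}, the difference $\check J - J$ contains the factor $\Lambda_Q^0\, b(\mathbf{D}+\mathbf{k})$ acting on $f_0\mathcal{A}^0(\mathbf{k})^{-1/2}\sin(\cdot)f_0^{-1}$. The bound \eqref{Lambda0_Q_est_1}, which is independent of $\tau$, then controls $\|\widehat{\mathcal{A}}(\mathbf{k})^{1/2}(\check J - J)(\mathbf{k},\varepsilon^{-1}\tau)\mathcal{R}(\mathbf{k},\varepsilon)^{1/2}\widehat{P}\|$ by $\mathrm{const}\cdot\varepsilon$; since $\|\mathcal{R}(\mathbf{k},\varepsilon)^{3/4}\widehat{P}\|\le\|\mathcal{R}(\mathbf{k},\varepsilon)^{1/2}\widehat{P}\|$ in our range, this costs only an additive $O(\varepsilon)$ and does not spoil the $(1+|\tau|)^{1/2}$ dependence.

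Third, I handle the two remaining pieces by brute force, reusing the $\tau$-independent bounds already established in the course of proving Theorem~\ref{sndw_A(k)_sin_general_thrm}. For $|\mathbf{k}| > t_0$ with the projector $\widehat{P}$ on the right, \eqref{sndw_sin_est_|k|>t^0_-1} already gives an $O(\varepsilon)$ bound, which trivially implies the required $O((1+|\tau|)^{1/2}\varepsilon)$ bound after replacing $\mathcal{R}^{1/2}\widehat{P}$ by the smaller operator $\mathcal{R}^{3/4}\widehat{P}$ (this uses $\|\mathcal{R}(\mathbf{k},\varepsilon)^{1/4}\widehat{P}\| \le 1$). For the $(I-\widehat{P})$-part on all of $\widetilde\Omega$, \eqref{sndw_sin_est_wo_hatP_-1} combined with $\|\mathcal{R}(\mathbf{k},\varepsilon)^{1/4}(I-\widehat{P})\| \le 1$ (an immediate consequence of \eqref{R(k,eps)(I-P)_est}) yields the same conclusion. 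Summing the three contributions finishes the proof with $\mathcal{C}_8 = \mathcal{C}_8' + \mathcal{C}_7'' + \mathcal{C}_7''' + \check{\mathcal{C}}_7$, which depends only on the parameters listed in the statement.

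The only non-routine step is the first one, where the \textquotedblleft energy\textquotedblright{} factor $\widehat{\mathcal{A}}(\mathbf{k})^{1/2}$ must be recognized as coming from $A(t,\boldsymbol{\theta})^{1/2}$ via $A(t,\boldsymbol{\theta}) = f^*\widehat{A}(t,\boldsymbol{\theta}) f$ together with the isomorphism $M = [f]$; this is precisely the setup of \S\ref{abstr_sandwiched_section} and accounts for why the correct corrector involves $\Lambda_Q$ rather than $\Lambda$ in the first place. Once this identification is made, the rest consists of elementary $L_2(\Omega)$ estimates already collected in \S\ref{sec12}, and no new analytic difficulty arises compared with Theorem~\ref{sndw_A(k)_sin_general_thrm}.
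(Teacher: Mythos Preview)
Your proposal is correct and follows essentially the same route as the paper: apply the abstract Theorem~\ref{abstr_sin_sandwiched_ench_thrm_1} on $\{|\mathbf{k}|\le t_0\}$ with the projector $\widehat P$, pass from $\check J$ to $J$ via \eqref{Lambda0_Q_est_1}, and handle the remaining regimes by the $\tau$-independent estimates \eqref{sndw_sin_est_|k|>t^0_-1} and \eqref{sndw_sin_est_wo_hatP_-1}. The only cosmetic addition you make is the explicit observation $\|\mathcal{R}(\mathbf{k},\varepsilon)^{1/4}\|\le 1$ to reduce the $\mathcal{R}^{3/4}$ case to the already-proved $\mathcal{R}^{1/2}$ bounds, which the paper leaves implicit.
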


Similarly, the following result is deduced from Theorem~\ref{abstr_sin_sandwiched_ench_thrm_2}
and relations~\eqref{Lambda0_Q_est_1},  \eqref{sndw_sin_est_|k|>t^0_-1}   (with $t_0$ replaced by $t^{00}$), and \eqref{sndw_sin_est_wo_hatP_-1}.

\begin{theorem}
	\label{sndw_sin_enchanced_thrm_2}
	Suppose that Condition~\emph{\ref{sndw_cond1}} \emph{(}or more restrictive 
	Condition~\emph{\ref{sndw_cond2}}\emph{)} is satisfied. Then for  $\tau \in \mathbb{R},$ $\varepsilon > 0,$ and $\mathbf{k} \in \widetilde{\Omega}$ we have
	\begin{equation*}
	\begin{split}
	\| \widehat{\mathcal{A}}(\mathbf{k})^{1/2} J (\mathbf{k}, \eps^{-1} \tau) \mathcal{R}(\mathbf{k}, \varepsilon)^{3/4} \|_{L_2(\Omega) \to L_2(\Omega)} \le \mathcal{C}_9 (1 + |\tau|)^{1/2} \varepsilon.
	\end{split}
	\end{equation*}
	The constant $\mathcal{C}_9$  depends on $\alpha_0,$ $\alpha_1,$ $\|g\|_{L_\infty},$ $\|g^{-1}\|_{L_\infty},$ $\|f\|_{L_\infty},$ $\|f^{-1}\|_{L_\infty},$ $r_0,$ $r_1,$ and also on $n$ and $c^\circ$.
\end{theorem}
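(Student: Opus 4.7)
The strategy mirrors the proof of Theorem~\ref{sndw_sin_enchanced_thrm_1}: glue together the abstract estimate (which gives us the required bound on the range of $\widehat{P}$ in the small-$t$ regime) with trivial bounds in the complementary regions. Throughout, $\mathbf{k} = t\boldsymbol{\theta}$, so that $A(t,\boldsymbol{\theta}) = \mathcal{A}(\mathbf{k})$ and $\widehat{A}(t,\boldsymbol{\theta}) = \widehat{\mathcal{A}}(\mathbf{k})$.

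First, I would verify that the abstract Theorem~\ref{abstr_sin_sandwiched_ench_thrm_2} applies uniformly in $\boldsymbol{\theta}$. Condition~\ref{sndw_cond1}($1^\circ$) gives $\widehat{N}_{0,Q}(\boldsymbol{\theta}) = 0$ for all $\boldsymbol{\theta} \in \mathbb{S}^{d-1}$, which is exactly the hypothesis needed. The key structural point is that Condition~\ref{sndw_cond1}($2^\circ$) allows us to define the number $c^{\circ}$ in~\eqref{c^circ} independently of $\boldsymbol{\theta}$: by compactness of $\mathbb{S}^{d-1}$ and continuity of the $\gamma_k(\boldsymbol{\theta})$, the minimum in~\eqref{c^circ} is strictly positive only because the pairs in $\mathcal{K}$ correspond to branches that never touch. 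Consequently the threshold $t^{00}$ can be chosen uniform in $\boldsymbol{\theta}$. Applying Theorem~\ref{abstr_sin_sandwiched_ench_thrm_2} to $A(t,\boldsymbol{\theta})$ and using~\eqref{R_P}, \eqref{f_0 hatS f_0 P = A^0}, \eqref{11.14a}, I obtain
\begin{equation*}
\| \widehat{\mathcal{A}}(\mathbf{k})^{1/2} \check{J}(\mathbf{k},\varepsilon^{-1}\tau) \mathcal{R}(\mathbf{k},\varepsilon)^{3/4} \widehat{P}\|_{L_2 \to L_2} \le \mathcal{C}'_9 (1+|\tau|)^{1/2} \varepsilon, \quad |\mathbf{k}| \le t^{00},
\end{equation*}
with $\mathcal{C}'_9$ depending on the parameters listed plus $n$ and $c^{\circ}$.

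Next, I replace the corrector $\Lambda_Q$ appearing in $\check{J}$ by $\Lambda$, converting $\check{J}$ into $J$. Since $\Lambda_Q = \Lambda + \Lambda_Q^0$ with constant $\Lambda_Q^0$ bounded by~\eqref{9.2}, the difference term
$\widehat{\mathcal{A}}(\mathbf{k})^{1/2} \Lambda_Q^0 b(\mathbf{D}+\mathbf{k}) f_0 \mathcal{A}^0(\mathbf{k})^{-1/2} \sin(\varepsilon^{-1}\tau \mathcal{A}^0(\mathbf{k})^{1/2}) f_0^{-1} \mathcal{R}(\mathbf{k},\varepsilon)^{3/4} \widehat{P}$
is controlled exactly as in~\eqref{Lambda0_Q_est_1} (using that $\mathcal{R}(\mathbf{k},\varepsilon)^{3/4}\widehat{P}$ is bounded by $\mathcal{R}(\mathbf{k},\varepsilon)^{1/2}\widehat{P}$ in the relevant regime up to a factor controlled by $t^{00}$). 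This yields the analog of~\eqref{sndw1_sin_main_est_wP} with exponent $3/4$ on the smoothing factor.

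For $|\mathbf{k}| > t^{00}$ the bound becomes elementary: the analog of~\eqref{sndw_sin_est_|k|>t^0_-1} holds with $t_0$ replaced by $t^{00}$ in the constant, because estimates~\eqref{sndw_sin_est_|k|>t^0_1}, \eqref{sndw_sin_est_|k|>t^0_3}, \eqref{sndw_sin_est_|k|>t^0_5} are independent of the threshold and $\|\mathcal{R}(\mathbf{k},\varepsilon)^{3/4}\widehat{P}\| \le (t^{00})^{-3/2}\varepsilon^{3/2} \le \mathrm{const}\cdot\varepsilon$ in that region. Finally, the component $(I-\widehat{P})$ is handled verbatim by~\eqref{sndw_sin_est_wo_hatP_-1}, now with $\mathcal{R}(\mathbf{k},\varepsilon)^{1/2}$ replaced by $\mathcal{R}(\mathbf{k},\varepsilon)^{3/4}$ — the factor $\mathcal{R}(\mathbf{k},\varepsilon)^{1/4}(I-\widehat{P})$ is uniformly bounded by $r_0^{-1/2}\varepsilon^{1/2}$ via~\eqref{R(k,eps)(I-P)_est}. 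Summing the three contributions and absorbing constants gives the claimed estimate with the stated dependencies.

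The main obstacle, and the reason for imposing Condition~\ref{sndw_cond1}, is ensuring that the threshold $t^{00}$ in the abstract Theorem~\ref{abstr_sin_sandwiched_ench_thrm_2} can be chosen independent of $\boldsymbol{\theta}$. Without the non-crossing hypothesis $2^\circ$, the distances $|\gamma_k(\boldsymbol{\theta})-\gamma_r(\boldsymbol{\theta})|$ for pairs $(k,r)\in\mathcal{K}$ could vanish at some $\boldsymbol{\theta}_0$, forcing $c^{\circ}_{kr}(\boldsymbol{\theta})\to 0$ and preventing a uniform application of the abstract result. The rest of the proof is a routine concatenation of the already-proved abstract estimate with boundedness bounds outside the threshold ball.
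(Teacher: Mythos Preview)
Your proposal is correct and follows essentially the same route as the paper: apply the abstract Theorem~\ref{abstr_sin_sandwiched_ench_thrm_2} on the region $|\mathbf{k}|\le t^{00}$ (which is well-defined uniformly in $\boldsymbol{\theta}$ thanks to Condition~\ref{sndw_cond1} and the definition~\eqref{c^circ}), pass from $\check{J}$ to $J$ via~\eqref{Lambda0_Q_est_1}, then use the analog of~\eqref{sndw_sin_est_|k|>t^0_-1} with $t_0$ replaced by $t^{00}$ and~\eqref{sndw_sin_est_wo_hatP_-1} in the complementary regions. One small remark: your bound $\|\mathcal{R}(\mathbf{k},\varepsilon)^{3/4}\widehat{P}\|\le (t^{00})^{-3/2}\varepsilon^{3/2}$ does not give $\mathrm{const}\cdot\varepsilon$ for large $\varepsilon$; the cleaner observation (which the paper uses implicitly) is simply that $\|\mathcal{R}(\mathbf{k},\varepsilon)^{3/4}\| \le \|\mathcal{R}(\mathbf{k},\varepsilon)^{1/2}\|$, so the already-established estimates with exponent $1/2$ in~\eqref{Lambda0_Q_est_1}, \eqref{sndw_sin_est_|k|>t^0_-1}, \eqref{sndw_sin_est_wo_hatP_-1} carry over directly.
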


\section{Sharpness of the results of \S\ref{sec12}\label{sec13}}

\subsection{Sharpness of the results regarding  the smoothing factor}

In the statements of the present section  we impose one of the following two conditions.

\begin{condition}
\label{cond13.1}
Let $\widehat{N}_{0,Q} (\boldsymbol{\theta})$ be the operator defined by~\emph{(\ref{N0Q_invar_repr})}. Suppose that $\widehat{N}_{0,Q} (\boldsymbol{\theta}_0) \ne 0$ at some point  $\boldsymbol{\theta}_0 \in \mathbb{S}^{d-1}$.
\end{condition}

\begin{condition}
\label{cond13.2}
	 Let $\widehat{N}_{0,Q} (\boldsymbol{\theta})$ and
	$\widehat{\mathcal N}^{(q)}_{Q} (\boldsymbol{\theta})$ be the operators defined by~\eqref{N0Q_invar_repr} and \eqref{8.32a}, respectively. Suppose that  $\widehat{N}_{0,Q} (\boldsymbol{\theta})=0$ for all  
	 $\boldsymbol{\theta} \in \mathbb{S}^{d-1}$. Suppose that  
	 ${\widehat{\mathcal N}^{(q)}_{Q} (\boldsymbol{\theta}_0) \ne 0}$ for some 
	 $\boldsymbol{\theta}_0 \in \mathbb{S}^{d-1}$ and $q \in \{ 1,\dots, p(\boldsymbol{\theta}_0)\}$.
\end{condition}

We need the following lemma (see \cite[Lemma 9.8]{DSu}).

\begin{lemma}[see~\cite{DSu}]
	\label{A(k)_Lipschitz_lemma}
	Let $\delta$ be defined by~\emph{(\ref{delta_fixation})} and let $t_0$ be given by~\emph{(\ref{t0_fixation})}. Suppose that $F (\mathbf{k})$~is the spectral projection of the operator $\mathcal{A}(\mathbf{k})$ for the interval $[0, \delta]$. Then for $|\mathbf{k}| \le t_0$ and  $|\mathbf{k}_0| \le t_0$ we have 
	\begin{align*}
	\| \mathcal{A}(\mathbf{k})^{1/2} F(\mathbf{k})  - \mathcal{A}(\mathbf{k}_0)^{1/2} F(\mathbf{k}_0)\|_{L_2(\Omega) \to L_2 (\Omega) } &\le C' | \mathbf{k} - \mathbf{k}_0|,
	\\
	\|  \cos ( \tau \mathcal{A}(\mathbf{k})^{1/2}) F(\mathbf{k})  -  \cos ( \tau \mathcal{A}(\mathbf{k}_0)^{1/2}) F(\mathbf{k}_0)\|_{L_2(\Omega) \to L_2 (\Omega) } &\le C''(\tau) | \mathbf{k} - \mathbf{k}_0|,
	\\
	\| \mathcal{A}(\mathbf{k})^{-1/2} \sin ( \tau \mathcal{A}(\mathbf{k})^{1/2}) F(\mathbf{k})  - \mathcal{A}(\mathbf{k}_0)^{-1/2} \sin ( \tau \mathcal{A}(\mathbf{k}_0)^{1/2})& F(\mathbf{k}_0)\|_{L_2(\Omega) \to L_2 (\Omega) } 
	\le C'''(\tau) | \mathbf{k} - \mathbf{k}_0|.
	\end{align*}
\end{lemma}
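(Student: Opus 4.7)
\textbf{Proof plan for Lemma \ref{A(k)_Lipschitz_lemma}.} The plan is to reproduce, with obvious modifications, the proof of the ``un-sandwiched'' analog Lemma~\ref{hatA(k)_Lipschitz_lemma} (see \cite[Lemma~7.9]{DSu}). The starting observation is that $\mathcal{X}(\mathbf{k}) - \mathcal{X}(\mathbf{k}_0) = h(\x)\,b(\mathbf{k}-\mathbf{k}_0)\,f(\x)$ is the operator of multiplication by a matrix of norm at most $\alpha_1^{1/2}\|g\|_{L_\infty}^{1/2}\|f\|_{L_\infty}|\mathbf{k}-\mathbf{k}_0|$, which gives a Lipschitz bound for $\mathcal{X}(\mathbf{k})$ and, via the quadratic form $\mathfrak{a}(\mathbf{k})[\,\cdot\,,\,\cdot\,]$, a Lipschitz estimate for the sesquilinear form of $\mathcal{A}(\mathbf{k})$.

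The next step is to set up the Riesz functional calculus on a contour $\gamma$ enclosing the interval $[0,\delta]$ at distance $\gtrsim\delta$ from the remainder $\sigma(\mathcal{A}(\mathbf{k}))\cap[8\delta,\infty)$; such a contour works uniformly for all $|\mathbf{k}|\le t_0$ because of \eqref{d0_est} rescaled to $\mathcal{A}(\mathbf{k})$. Then $F(\mathbf{k}) = -(2\pi i)^{-1}\oint_\gamma (\mathcal{A}(\mathbf{k})-zI)^{-1}\,dz$, and the resolvent identity
\begin{equation*}
(\mathcal{A}(\mathbf{k})-zI)^{-1} - (\mathcal{A}(\mathbf{k}_0)-zI)^{-1} = -(\mathcal{A}(\mathbf{k})-zI)^{-1}(\mathcal{A}(\mathbf{k}) - \mathcal{A}(\mathbf{k}_0))(\mathcal{A}(\mathbf{k}_0)-zI)^{-1}
\end{equation*}
combined with the Lipschitz bound on $\mathcal{A}(\mathbf{k})$ (understood on the form domain) and the uniform resolvent bound $\|(\mathcal{A}(\mathbf{k})-zI)^{-1}\|\le C/\delta$ yields Lipschitz continuity of $F(\mathbf{k})$. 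The same contour serves $\cos(\tau\mathcal{A}(\mathbf{k})^{1/2})F(\mathbf{k})$ and $\mathcal{A}(\mathbf{k})^{-1/2}\sin(\tau\mathcal{A}(\mathbf{k})^{1/2})F(\mathbf{k})$: both $\cos(\tau\sqrt{z})$ and $\sin(\tau\sqrt{z})/\sqrt{z}$ are entire functions of $z$ (even, resp. odd, power series in $\sqrt{z}$), so the Riesz formula
$\varphi(\mathcal{A}(\mathbf{k}))F(\mathbf{k}) = -(2\pi i)^{-1}\oint_\gamma \varphi(z)(\mathcal{A}(\mathbf{k})-zI)^{-1}\,dz$
applies, and repeating the resolvent-difference manipulation produces the second and third inequalities with constants $C''(\tau)=C\sup_{z\in\gamma}|\cos(\tau\sqrt{z})|$ and $C'''(\tau)=C\sup_{z\in\gamma}|\sin(\tau\sqrt{z})/\sqrt{z}|$, both growing at most linearly in $|\tau|$ on the bounded contour.

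The first inequality, involving $\mathcal{A}(\mathbf{k})^{1/2}F(\mathbf{k})$, is the only one that cannot be treated by a single contour because $\sqrt{z}$ is not analytic at $z=0$, which is an accumulation point of $\sigma(\mathcal{A}(\mathbf{k}))\cap[0,\delta]$ as $\mathbf{k}\to 0$. The intended device is to write
$\mathcal{A}(\mathbf{k})^{1/2}F(\mathbf{k}) = U(\mathbf{k})^*\,\mathcal{X}(\mathbf{k})F(\mathbf{k})$
via the polar decomposition of $T(\mathbf{k}) := \mathcal{X}(\mathbf{k})F(\mathbf{k})$, since $T(\mathbf{k})^*T(\mathbf{k}) = F(\mathbf{k})\mathcal{A}(\mathbf{k})F(\mathbf{k}) = \mathcal{A}(\mathbf{k})F(\mathbf{k})$ and hence $|T(\mathbf{k})|=\mathcal{A}(\mathbf{k})^{1/2}F(\mathbf{k})$. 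Lipschitzness of $T(\mathbf{k})$ follows from that of $\mathcal{X}(\mathbf{k})$ and of $F(\mathbf{k})$ already established; the partial isometry $U(\mathbf{k})$ maps the $n$-dimensional space $F(\mathbf{k})L_2(\Omega)$ isometrically into $\mathcal{X}(\mathbf{k})F(\mathbf{k})L_2(\Omega)$, and can be shown Lipschitz by differentiating the identity $T(\mathbf{k})=U(\mathbf{k})|T(\mathbf{k})|$ on the finite-dimensional invariant subspace and using the lower bound $|T(\mathbf{k})|^2 = \mathcal{A}(\mathbf{k})F(\mathbf{k}) \ge c_*|\mathbf{k}|^2 F(\mathbf{k})$ together with the trivial bound $\|\mathcal{A}(\mathbf{k})^{1/2}F(\mathbf{k}) -\mathcal{A}(\mathbf{k}_0)^{1/2}F(\mathbf{k}_0)\|\le \|\mathcal{A}(\mathbf{k})^{1/2}F(\mathbf{k})\|+\|\mathcal{A}(\mathbf{k}_0)^{1/2}F(\mathbf{k}_0)\| = O(|\mathbf{k}|+|\mathbf{k}_0|)$, which covers the case $|\mathbf{k}-\mathbf{k}_0|\gtrsim \max(|\mathbf{k}|,|\mathbf{k}_0|)$.

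\textbf{Main obstacle.} Everything except the first inequality is a standard exercise in contour functional calculus once the resolvent Lipschitz bound is in hand. The genuine technical point is the $\sqrt{z}$ case, where one must either invoke the polar-decomposition reduction outlined above or work with a custom contour in $\mathbb{C}\setminus(-\infty,0)$ shrinking around $0$ at a controlled rate. Since this is precisely where \cite[Lemma~7.9]{DSu} places its careful estimate, I expect to import that argument verbatim, replacing $\widehat{\mathcal{A}}(\mathbf{k})$ by $\mathcal{A}(\mathbf{k})=f^*\widehat{\mathcal{A}}(\mathbf{k})f$ and $\widehat{c}_*$, $\widehat{\delta}$, $\widehat{t}_0$ by their un-hatted counterparts \eqref{A(k)_nondegenerated_and_c_*}, \eqref{delta_fixation}, \eqref{t0_fixation}; the multiplicative factors $\|f\|_{L_\infty}$, $\|f^{-1}\|_{L_\infty}$ produced by the substitution are absorbed into the constants $C'$, $C''(\tau)$, $C'''(\tau)$.
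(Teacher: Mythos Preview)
The paper does not prove this lemma; it merely cites \cite[Lemma~9.8]{DSu}, which is the sandwiched analog of \cite[Lemma~7.9]{DSu}. Your plan---to reproduce the proof of \cite[Lemma~7.9]{DSu} with $\widehat{\mathcal{A}}(\mathbf{k})$ replaced by $\mathcal{A}(\mathbf{k})=f^*\widehat{\mathcal{A}}(\mathbf{k})f$ and the constants $\widehat{c}_*,\widehat{\delta},\widehat{t}_0$ replaced by $c_*,\delta,t_0$---is precisely what the citation encodes, so your approach and the paper's coincide.

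Your contour-integral sketch for the second and third inequalities is correct and complete: $\cos(\tau\sqrt{z})$ and $z^{-1/2}\sin(\tau\sqrt{z})$ are entire in $z$, and the resolvent-difference bound you describe (using the form-Lipschitz estimate for $\mathcal{X}(\mathbf{k})$ and the uniform spectral gap $[\delta,8\delta]$) goes through verbatim. The one place your outline is genuinely incomplete is the first inequality. The polar-decomposition route you propose is delicate: the partial isometry $U(\mathbf{k})$ in $T(\mathbf{k})=U(\mathbf{k})|T(\mathbf{k})|$ is not even well defined at $\mathbf{k}=0$ (where $T(0)=\mathcal{X}(0)P=0$), and Lipschitz continuity of $T$ alone does not imply Lipschitz continuity of $|T|$ in operator norm. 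Your two-regime split (trivial bound when $|\mathbf{k}-\mathbf{k}_0|\gtrsim\max(|\mathbf{k}|,|\mathbf{k}_0|)$, contour argument otherwise) is the right repair: in the complementary regime both points satisfy $|\mathbf{k}|,|\mathbf{k}_0|\ge\tfrac12\max(|\mathbf{k}|,|\mathbf{k}_0|)=:M/2$, hence $\sigma(\mathcal{A}(\mathbf{k}))\cap[0,\delta]\subset[c_*M^2/4,\delta]$, and one may run the Riesz integral for $\sqrt{z}$ on a contour enclosing $[c_*M^2/4,\delta]$ and avoiding a neighborhood of the origin, with the $M$-dependent resolvent bounds balanced against the trivial estimate. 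This is almost certainly how \cite{DSu} proceeds; just make sure to transcribe those quantitative balances rather than relying on the polar-decomposition heuristic.
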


Applying Theorem~\ref{th6.1}, we confirm that Theorems~\ref{th12.1}
and~\ref{sndw_A(k)_sin_general_thrm} are sharp.

\begin{theorem}
	\label{th13.2}
	Suppose that Condition \emph{\ref{cond13.1}} is satisfied.

\noindent $1^\circ$. Let $0 \ne \tau \in \mathbb{R}$ and $0 \le s < 2$. Then there does not 
exist  a constant $\mathcal{C}(\tau) > 0$ such that  the estimate
		\begin{equation}
		\label{13.1}
		\bigl\|  J_1(\mathbf{k}, \varepsilon^{-1} \tau)  \mathcal{R}(\mathbf{k}, \varepsilon)^{s/2} \bigr\|_{L_2(\Omega) \to L_2 (\Omega) }  \le \mathcal{C} (\tau) \varepsilon
		\end{equation}
holds for almost all $\mathbf{k} \in \widetilde{\Omega}$ and sufficiently small $\varepsilon > 0$.

\noindent
$2^\circ$. Let $0 \ne \tau \in \mathbb{R}$ and $0 \le r < 1$. Then there does not exist a constant 
$\mathcal{C}(\tau) > 0$ such that the estimate
		\begin{equation}
		\label{13.2}
		\bigl\|  J_2(\mathbf{k}, \varepsilon^{-1} \tau)  \mathcal{R}(\mathbf{k}, \varepsilon)^{r/2} \bigr\|_{L_2(\Omega) \to L_2 (\Omega) }  \le \mathcal{C} (\tau)
		\end{equation}
holds for almost all $\mathbf{k} \in \widetilde{\Omega}$ and sufficiently small $\varepsilon > 0$.

\noindent
$3^\circ$. Let $0 \ne \tau \in \mathbb{R}$ and $0 \le r < 1$. Then there does not exist a constant
$\mathcal{C}(\tau) > 0$ such that the estimate
		\begin{equation}
		\label{13.3}
		\bigl\|  J_3(\mathbf{k}, \varepsilon^{-1} \tau)  \mathcal{R}(\mathbf{k}, \varepsilon)^{r/2} \bigr\|_{L_2(\Omega) \to L_2 (\Omega) }  \le \mathcal{C} (\tau) 
		\end{equation}
holds for almost all $\mathbf{k} \in \widetilde{\Omega}$ and sufficiently small $\varepsilon > 0$.

\noindent $4^\circ$. Let $0 \ne \tau \in \mathbb{R}$ and $0 \le s < 2$. Then there does not exist a constant
$\mathcal{C}(\tau) > 0$ such that the estimate
		\begin{equation}
		\label{s<2_sinA(k)_est_1}
		\bigl\| \widehat{\mathcal{A}}(\mathbf{k})^{1/2} J(\mathbf{k}, \varepsilon^{-1} \tau)  \mathcal{R}(\mathbf{k}, \varepsilon)^{s/2} \bigr\|_{L_2(\Omega) \to L_2 (\Omega) }  \le \mathcal{C} (\tau) \varepsilon
		\end{equation}
holds for almost all $\mathbf{k} \in \widetilde{\Omega}$ and sufficiently small $\varepsilon > 0$.
\end{theorem}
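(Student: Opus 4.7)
The plan is to argue by contradiction, mirroring the reductions already carried out in Theorems \ref{th10.1}, \ref{th10.2a}, and \ref{hat_s<2_sinA(k)_thrm} for the unsandwiched case, and reducing each of statements $1^\circ$--$4^\circ$ to the corresponding abstract sharpness result of Theorem~\ref{th6.1} applied to the family $A(t,\boldsymbol{\theta}_0) = \mathcal{A}(t\boldsymbol{\theta}_0)$. Condition~\ref{cond13.1} says $\widehat{N}_{0,Q}(\boldsymbol{\theta}_0) \neq 0$, which is precisely the abstract hypothesis of Theorem~\ref{th6.1} for this family.

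For statements $1^\circ$--$3^\circ$, I would proceed as follows. Assume the opposite, so that one of the estimates \eqref{13.1}--\eqref{13.3} holds for a.e.\ $\k \in \wt{\Omega}$ and small $\eps>0$. Multiplying the operator under the norm on the right by $\wh{P}$ and using \eqref{R_P}, the smoothing factor $\mathcal{R}(\k,\eps)^{s/2}\wh{P}$ reduces to the scalar $\eps^s(|\k|^2+\eps^2)^{-s/2}\wh{P}$. Next, one replaces $\wh{P}$ by the spectral projection $F(\k)$ of $\mathcal{A}(\k)$: by Proposition~\ref{abstr_F(t)_threshold_1} applied to the family $A(t,\boldsymbol{\theta})=\mathcal{A}(\k)$ we have $\|F(\k)-P\|\le C_1|\k|$, and the identity $Pf^*=f^{-1}\overline{Q}^{-1}\wh{P}$ (cf.\ \eqref{abstr_P_and_P_hat_relation}) links $P$ with $\wh{P}$ through the sandwiching factors. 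For statements $2^\circ$ and $3^\circ$, one also needs the uniform bound on $\mathcal{A}(\k)^{-1/2}(I-F(\k))$ provided by the spectral gap and \eqref{A(k)_nondegenerated_and_c_*}, exactly as in \eqref{10.7m}. At this point Lemma~\ref{A(k)_Lipschitz_lemma}, applied simultaneously to $\mathcal{A}(\k)$ and $\mathcal{A}^0(\k)$ (and conjugated by the bounded $f,f_0$), shows that the resulting operator is continuous in $\k$ on the ball $|\k|\le t_0$, so the estimate, a priori valid a.e., extends to every $\k$ in this ball. Specializing to $\k=t\boldsymbol{\theta}_0$, $0\le t\le t_0$, and undoing the $F(\k)\leftrightarrow \wh{P}$ substitution, one arrives at an inequality of the abstract form \eqref{6*.1}--\eqref{6*.3}. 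Statements $1^\circ$--$3^\circ$ of Theorem~\ref{th6.1} then yield a contradiction.

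Statement $4^\circ$ is treated analogously, following the template of the proof of Theorem~\ref{hat_s<2_sinA(k)_thrm}. One multiplies \eqref{s<2_sinA(k)_est_1} on the right by $\wh{P}$, uses the identity $(F(\k)-F_2(\k))\cdot\text{(factor)}=\wh{P}+\Lambda_Q b(\D+\k)\wh{P}$ coming from \eqref{abstr_F(t)_threshold_2}, \eqref{abstr_F1_K0_N_invar}, and \eqref{11.14a}, and absorbs the error $\|\mathcal{A}(\k)^{1/2}F_2(\k)\|\le C_{16}|\k|^2$ from \eqref{abstr_sqrtA(t)F2(t)_est} into the right-hand side. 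The Lipschitz Lemma~\ref{A(k)_Lipschitz_lemma} again upgrades the a.e.\ estimate to a pointwise one on $|\k|\le t_0$, and specializing to $\k=t\boldsymbol{\theta}_0$ produces an inequality of the abstract form \eqref{abstr_sndwchd_s<2_est_imp1}; statement~$4^\circ$ of Theorem~\ref{th6.1} delivers the contradiction.

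The hard part is the careful bookkeeping of the sandwiching factors $f,f_0$: since the projections $P$ and $\wh{P}$ act in different \textquotedblleft metrics\textquotedblright (the inner product in $L_2(\Omega;\AC^n)$ versus the $\overline{Q}$-weighted one on $\wh{\mathfrak{N}}$), one must use $Pf^*=f^{-1}\overline{Q}^{-1}\wh{P}$ and $f_0=(\overline{Q})^{-1/2}$ at the right moments to translate between the abstract estimates and the DO estimates without losing the correct power of $|\k|$. For statement $4^\circ$, the matching of the corrector $\Lambda_Q b(\D+\k)\wh{P}$ in \eqref{9.0} with the abstract term $t\wh{Z}_Q(\boldsymbol{\theta})\wh{P}$ via \eqref{11.14a} is precisely what makes the reduction work; the scalar shift $\Lambda_Q^0$ distinguishing $\Lambda_Q$ from $\Lambda$ contributes only an $O(\eps)$ error along the lines of \eqref{Lambda0_Q_est_1} and does not affect the sharpness argument.
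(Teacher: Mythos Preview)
Your proposal is correct and follows essentially the same route as the paper: argue by contradiction, multiply by $\wh{P}$ and use \eqref{R_P}, pass to $F(\k)$ via the threshold approximations and the spectral-gap bound, invoke Lemma~\ref{A(k)_Lipschitz_lemma} to upgrade the a.e.\ estimate to a pointwise one, specialize to $\k=t\boldsymbol{\theta}_0$, and contradict Theorem~\ref{th6.1}. The only cosmetic difference is in $4^\circ$: the paper first uses \eqref{Lambda0_Q_est_1} to pass from $\Lambda$ to $\Lambda_Q$ and then applies the abstract two-sided bound \eqref{abstr_sin_sandwiched_est_3} to transfer the estimate on $\wh{\mathcal{A}}(\k)^{1/2}J(\k,\cdot)$ to one on $\mathcal{A}(\k)^{1/2}\Sigma(t,\cdot)$ and works with the threshold expansion $(F(\k)-F_2(\k))P=(I+|\k|Z(\boldsymbol{\theta}))P$ in the \emph{unsandwiched} picture, which avoids the $f$-bookkeeping you flag as ``the hard part''.
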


\begin{proof}
Statements  $1^\circ$ and $3^\circ$ were proved in  \cite[Theorem 9.7]{DSu}.

Let us check statement $2^\circ$. 
(In the proof, we omit the index of the operator norm in  $L_2(\Omega;\AC^n)$.) We prove by contradiction. 
Suppose the opposite. Then  for some  $\tau \ne 0$ and $0 \le r < 1$ we have 
\begin{equation}
		\label{13.4}
		\bigl\|  J_2(\mathbf{k}, \varepsilon^{-1} \tau) \wh{P}  \bigr\|  
		\eps^r (|\k|^2 + \eps^2)^{-r/2} \le \mathcal{C} (\tau) 
		\end{equation}
for almost all  $\mathbf{k}  \in \widetilde{\Omega}$ and sufficiently small  $\varepsilon$.
 Obviously, 
 		\begin{equation}
		\label{13.4a}
		\bigl\|  f {\mathcal A}(\k)^{-1/2} \sin (\eps^{-1} \tau {\mathcal A}(\k)^{1/2}) F(\k)^\perp  f^{-1}
		\bigr\|   \le \| f\|_{L_\infty} \| f^{-1}\|_{L_\infty} \delta^{-1/2}.  
		\end{equation}
Combining this with  \eqref{13.4}, we see that for some constant $\wt{\mathcal C}(\tau)>0$ 
the estimate
\begin{equation}
	\label{13.7}
\begin{split}
\big\|& \bigl(f {\mathcal{A}}(\mathbf{k})^{-1/2} \sin (\eps^{-1} \tau {\mathcal{A}}(\mathbf{k})^{1/2}) F(\k) f^{-1} 
\\
&-f_0 {\mathcal{A}^0}(\mathbf{k})^{-1/2} \sin (\eps^{-1} \tau {\mathcal{A}}^0(\mathbf{k})^{1/2}) f_0^{-1} \bigr) \wh{P}
\big\|  \eps^r (|\k|^2 + \eps^2)^{-r/2} \le \wt{\mathcal C}(\tau)
\end{split}
\end{equation}
holds for almost all $\mathbf{k}  \in \widetilde{\Omega}$ and sufficiently small $\varepsilon$.

Let $|\k| \le t_0$. From Lemma~\ref{A(k)_Lipschitz_lemma} it follows that the operator under the norm sign in  \eqref{13.7} is continuous with respect to $\k$ in the ball $|\k| \le t_0$. Hence, estimate \eqref{13.7} is valid for any 
 $\k$ in this ball, in particular, for $\k = t \boldsymbol{\theta}_0$ if $t\le t_0$. Applying 
 inequality \eqref{13.4a} once again, we obtain 
\begin{align*}
\big\|& (f {\mathcal{A}}( t \boldsymbol{\theta}_0)^{-1/2} \sin (\eps^{-1} \tau {\mathcal{A}}( t \boldsymbol{\theta}_0)^{1/2})f^{-1}
 \\
&-f_0 {\mathcal{A}^0}( t \boldsymbol{\theta}_0)^{-1/2} \sin (\eps^{-1} \tau 
{\mathcal{A}}^0( t \boldsymbol{\theta}_0)^{1/2}) f_0^{-1} )\wh{P}\big \| 
 \eps^r (t^2 + \eps^2)^{-r/2} \le \wh{\mathcal C}(\tau)
\end{align*}
with some constant $\wh{\mathcal C}(\tau)>0$ for $t\le t_0$ and sufficiently small $\eps$.
In abstract terms, this estimate corresponds to  inequality \eqref{6*.2}. 
By our assumption, we have $\widehat{N}_{0,Q} (\boldsymbol{\theta}_0) \ne 0$. So, the assumption of 
Theorem~\ref{th6.1} is satisfied. 
Applying statement~$2^\circ$ of this theorem, we arrive at a contradiction. 

We proceed to the proof of statement $4^\circ$.
We prove by contradiction. Suppose that for some $\tau \ne 0$ and $1 \le s < 2$ there exists a constant $\mathcal{C}(\tau) > 0$ such that estimate~(\ref{s<2_sinA(k)_est_1}) holds for almost all $\mathbf{k} \in \widetilde{\Omega}$ and sufficiently small  $\varepsilon > 0$. 
	Multiplying the operator under the norm sign in \eqref{s<2_sinA(k)_est_1} by $\wh{P}$ and taking~\eqref{R_P} and  (\ref{Lambda0_Q_est_1}) into account, we see that for some constant $\widetilde{\mathcal{C}}(\tau) > 0$ the estimate  	
	\begin{multline}
	\label{s<2_proof_f1}
	\bigl\| \widehat{\mathcal{A}}(\mathbf{k})^{1/2} \bigl( f \mathcal{A}(\mathbf{k})^{-1/2} \sin (\varepsilon^{-1} \tau \mathcal{A}(\mathbf{k})^{1/2}) f^{-1}  
	\\ - (I + \Lambda_Q b(\mathbf{D} + \mathbf{k})) f_0 \mathcal{A}^0(\mathbf{k})^{-1/2} \sin (\varepsilon^{-1} \tau \mathcal{A}^0(\mathbf{k})^{1/2})f_0^{-1} \bigr) \widehat{P}  \bigr\| 
	\\
\times	\varepsilon^s (|\mathbf{k}|^2 + \varepsilon^2)^{-s/2}  \le \widetilde{\mathcal{C}}(\tau) \varepsilon
	\end{multline}
	holds for almost all $\mathbf{k} \in \wt{\Omega}$ and sufficiently small $\varepsilon$.
	
	Next, we apply~\eqref{abstr_sin_sandwiched_est_3} and the relation 
	$(I + |\mathbf{k}|Z(\boldsymbol{\theta})) P = (F(\mathbf{k}) - F_2(\mathbf{k}))P$ (see  \eqref{abstr_F(t)_threshold_2}, \eqref{abstr_F1_K0_N_invar}). Then from \eqref{s<2_proof_f1} it follows that 
	the estimate
	\begin{multline}
	\label{9.21}
	\bigl\| \mathcal{A}(\mathbf{k})^{1/2}  \bigl( \mathcal{A}(\mathbf{k})^{-1/2} \sin (\varepsilon^{-1} \tau \mathcal{A}(\mathbf{k})^{1/2})
	\\
	- (F(\mathbf{k}) - F_2(\mathbf{k}))   S(\mathbf{k})^{-1/2} \sin ( \varepsilon^{-1} \tau  S(\mathbf{k})^{1/2} ) P \bigr) P  \bigr\|
	\\
	\times\varepsilon^s (|\mathbf{k}|^2 + \varepsilon^2)^{-s/2}  \le \check{\mathcal{C}}(\tau) \varepsilon
	\end{multline}
	holds for almost all  $\mathbf{k} \!\in\! \widetilde{\Omega}$ and sufficiently small  $\varepsilon$ with some constant  $\check{\mathcal{C}}(\tau) \!>\!0$.

	By~\eqref{abstr_F(t)_threshold_1} and \eqref{abstr_sqrtA(t)F2(t)_est},
	\begin{align}
	\label{s<2_proof_f2}
	&\| F (\mathbf{k}) - P \| \le C_1 |\mathbf{k}|, \quad |\mathbf{k}| \le {t}_0,
	\\
	\label{s<2_proof_f3}
	&\| \mathcal{A}(\mathbf{k})^{1/2} F_2 (\mathbf{k}) \| \le C_{16} |\mathbf{k}|^2, \quad |\mathbf{k}| \le {t}_0.  
	\end{align}	
	From~\eqref{S_nondegenerated} and \eqref{9.21}--(\ref{s<2_proof_f3})  it follows that
	\begin{multline}
	\label{s<2_proof_f4}
	\bigl\| \mathcal{A}(\mathbf{k})^{1/2} F(\mathbf{k}) \bigl( \mathcal{A}(\mathbf{k})^{-1/2} \sin (\varepsilon^{-1} \tau \mathcal{A}(\mathbf{k})^{1/2}) F(\mathbf{k})
	\\
	-  S(\mathbf{k})^{-1/2} \sin ( \varepsilon^{-1} \tau  S(\mathbf{k})^{1/2} ) P \bigr) P  \bigr\| \varepsilon^s (|\mathbf{k}|^2 + \varepsilon^2)^{-s/2}  \le \check{\mathcal{C}}'(\tau) \varepsilon
	\end{multline}
	for almost all $\mathbf{k}$ in the ball $|\mathbf{k}| \le t_0$ and sufficiently small 
	$\varepsilon$ with some constant  $\check{\mathcal{C}}'(\tau) >0$.

	From  Lemma~\ref{A(k)_Lipschitz_lemma} it follows that the operator under the norm sign 
	in~(\ref{s<2_proof_f4}) is continuous with respect to $\mathbf{k}$ in the ball $|\mathbf{k}| \le t_0$. Hence, estimate~(\ref{s<2_proof_f4}) is valid for all  $\mathbf{k}$ in this ball. In particular, it holds for $\mathbf{k} = t\boldsymbol{\theta}_0$ if $t \le t_0$. Applying once again the formula $(F(\mathbf{k}) - F_2(\mathbf{k})) P = P + |\mathbf{k}| Z(\boldsymbol{\theta}) P$ and inequalities~ (\ref{S_nondegenerated}), (\ref{s<2_proof_f2}), (\ref{s<2_proof_f3}), and next estimate~(\ref{abstr_sin_sandwiched_est_1}), we obtain  
	\begin{align*}
	\big\| \widehat{\mathcal{A}}(t\boldsymbol{\theta}_0)^{1/2} ( f \mathcal{A}(t\boldsymbol{\theta}_0)^{-1/2}\! \sin (\varepsilon^{-1} \tau \mathcal{A}(t\boldsymbol{\theta}_0)^{1/2})f^{-1} \!
	- \!(I \!+\! \Lambda_Q b(t\boldsymbol{\theta}_0)) f_0 \mathcal{A}^0(t\boldsymbol{\theta}_0)^{-1/2}
	\\
	\times\sin (\varepsilon^{-1} \tau \mathcal{A}^0(t\boldsymbol{\theta}_0)^{1/2})f_0^{-1} ) \widehat{P}  \big\| 
	\varepsilon^s (t^2 + \varepsilon^2)^{-s/2} \le \check{\mathcal{C}}''(\tau) \varepsilon
	\end{align*}
	for all $t \le t_0$ and sufficiently small $\varepsilon$.
	In abstract terms, this estimate corresponds to estimate~(\ref{abstr_sndwchd_s<2_est_imp1}). 
	By our assumption,  $\widehat{N}_{0,Q} (\boldsymbol{\theta}_0) \ne 0$.
	Then, applying statement~$4^\circ$ of Theorem~\ref{th6.1}, we arrive at a contradiction. 
\end{proof}

Now, using  Theorem \ref{th6.2}, we confirm that Theorems \ref{th12.2}, \ref{th12.3},  \ref{sndw_sin_enchanced_thrm_1}, and \ref{sndw_sin_enchanced_thrm_2} are sharp.

\begin{theorem}
	\label{th13.3}
		Suppose that Condition \emph{\ref{cond13.2}} is satisfied.

\noindent $1^\circ$. Let $0 \ne \tau \in \mathbb{R}$ and $0 \le s < 3/2$. Then there does not exist a constant
$\mathcal{C}(\tau) > 0$ such that estimate  \eqref{13.1}  holds for almost all
		$\mathbf{k}  \in \widetilde{\Omega}$ and sufficiently small $\varepsilon > 0$.

\noindent
$2^\circ$.  Let $0 \ne \tau \in \mathbb{R}$ and $0 \le r < 1/2$.
Then there does not exist a constant $\mathcal{C}(\tau) > 0$ such that estimate \eqref{13.3} holds for almost all  
$\mathbf{k} \in \widetilde{\Omega}$ and sufficiently small $\varepsilon > 0$.

\noindent $3^\circ$. Let $0 \ne \tau \in \mathbb{R}$ and $0 \le s < 3/2$. Then there does not exist a constant 
$\mathcal{C}(\tau) > 0$ such that estimate \eqref{s<2_sinA(k)_est_1} holds for almost all $\mathbf{k} \in \widetilde{\Omega}$ and sufficiently small  $\varepsilon > 0$.
\end{theorem}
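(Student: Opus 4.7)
The strategy mirrors the proof of Theorem~\ref{th13.2}, but instead of invoking Theorem~\ref{th6.1} (which addresses the case $\widehat{N}_{0,Q}\neq 0$) we appeal to Theorem~\ref{th6.7}, which is the abstract sharpness statement for the regime $\widehat{N}_{0,Q}=0$, $\widehat{\mathcal{N}}^{(q)}_Q\neq 0$. In all three cases we argue by contradiction, reduce the DO estimate to an estimate along the ray $\mathbf{k}=t\boldsymbol{\theta}_0$, and then recognize the resulting inequality as one of the forbidden abstract estimates in Theorem~\ref{th6.7}.

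For statement $1^\circ$, I assume that \eqref{13.1} holds for some $\tau\neq 0$, $1\le s<3/2$, almost all $\mathbf{k}\in\widetilde{\Omega}$, and sufficiently small $\varepsilon$. Multiplying by $\widehat{P}$ and using \eqref{R_P} together with \eqref{f_0 hatS f_0 P = A^0}, I obtain an inequality of type~\eqref{6*.1} on the level of the unperturbed resolvent factor. I then split off $F(\mathbf{k})$ via the bound $\|F(\mathbf{k})-P\|\le C_1|\mathbf{k}|$ from~\eqref{abstr_F(t)_threshold_1}, which turns the left-hand side into the sandwiched expression $J_1(\mathbf{k},\varepsilon^{-1}\tau)\widehat{P}$ only up to a controlled remainder of order $\varepsilon$. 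The crucial continuity input is Lemma~\ref{A(k)_Lipschitz_lemma}: for fixed $\tau,\varepsilon$ the operator $\cos(\varepsilon^{-1}\tau\mathcal{A}(\mathbf{k})^{1/2})F(\mathbf{k})$ (and its $\mathcal{A}^0$-analog) is Lipschitz in $\mathbf{k}$ throughout the ball $|\mathbf{k}|\le t_0$, so the estimate, a priori valid almost everywhere, extends to every such $\mathbf{k}$, in particular to $\mathbf{k}=t\boldsymbol{\theta}_0$. The resulting inequality is precisely~\eqref{6*.1}; by Condition~\ref{cond13.2} the hypotheses of Theorem~\ref{th6.7}$(1^\circ)$ are met at $\boldsymbol{\theta}_0$, yielding a contradiction for $s<3/2$.

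Statement $2^\circ$ is treated in parallel: assuming \eqref{13.3} with $0\le r<1/2$ and applying the trivial bound
$$\bigl\| f\mathcal{A}(\mathbf{k})^{-1/2}\sin(\varepsilon^{-1}\tau\mathcal{A}(\mathbf{k})^{1/2})F(\mathbf{k})^{\perp} f^{*}\bigr\|\le \|f\|_{L_\infty}^2\delta^{-1/2},$$
I restrict attention to $J_3(\mathbf{k},\varepsilon^{-1}\tau)F(\mathbf{k})$ sandwiched against $\widehat{P}$. The third part of Lemma~\ref{A(k)_Lipschitz_lemma} gives Lipschitz continuity in $\mathbf{k}$, so the inequality propagates from almost all $\mathbf{k}$ to all $\mathbf{k}=t\boldsymbol{\theta}_0$ with $t\le t_0$. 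This matches \eqref{6*.3}, and Theorem~\ref{th6.7}$(2^\circ)$ (valid thanks to Condition~\ref{cond13.2}) yields the contradiction.

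Statement $3^\circ$ is the most delicate one: supposing \eqref{s<2_sinA(k)_est_1} with $1\le s<3/2$, I multiply by $\widehat{P}$, use~\eqref{R_P}, and remove the spurious constant summand of $\Lambda_Q$ using estimate~\eqref{Lambda0_Q_est_1} so that the corrector becomes $\Lambda_Q b(\mathbf{D}+\mathbf{k})\widehat{P}$. Then, exactly as in the proof of statement~$4^\circ$ of Theorem~\ref{th13.2}, I rewrite $I+\Lambda_Q b(\mathbf{D}+\mathbf{k})\widehat{P}=(F(\mathbf{k})-F_2(\mathbf{k}))P$ up to $\|\mathcal{A}(\mathbf{k})^{1/2}F_2(\mathbf{k})\|\le C_{16}|\mathbf{k}|^2$, and use Lemma~\ref{A(k)_Lipschitz_lemma} to pass from almost all $\mathbf{k}$ to the ray $\mathbf{k}=t\boldsymbol{\theta}_0$. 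The resulting inequality corresponds to \eqref{abstr_sndwchd_s<2_est_imp1}, and Theorem~\ref{th6.7}$(3^\circ)$ produces the contradiction.

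The main obstacle I anticipate is the accurate bookkeeping of the passage between the operator $(I+\Lambda_Q b(\mathbf{D}+\mathbf{k})\widehat{P})$ appearing naturally in $\check{J}(\mathbf{k},\tau)$ and the abstract skeleton $(I+tZ_Q)$ used in Theorem~\ref{th6.7}: one must simultaneously absorb the spurious $\Lambda_Q^0$-contribution (through \eqref{Lambda0_Q_est_1}) and identify the projection $F(\mathbf{k})-F_2(\mathbf{k})$ with $P+tZ_QP$ on the spectral subspace, keeping all remainders below the threshold $\varepsilon$ required by the target inequality. Once this identification is made and Lemma~\ref{A(k)_Lipschitz_lemma} is invoked to upgrade an a.e.\ inequality to a pointwise one on the ray, the abstract Theorem~\ref{th6.7} applies and the contradiction is immediate.
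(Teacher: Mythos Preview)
Your overall strategy is correct and matches the paper's proof essentially step for step: multiply by $\widehat{P}$, pass from $P$ to $F(\mathbf{k})$ via the threshold bound $\|F(\mathbf{k})-P\|\le C_1|\mathbf{k}|$, invoke Lemma~\ref{A(k)_Lipschitz_lemma} to extend the a.e.\ estimate to all $\mathbf{k}$ in the ball $|\mathbf{k}|\le t_0$, specialize to $\mathbf{k}=t\boldsymbol{\theta}_0$, and contradict the abstract sharpness theorem. For statement~$3^\circ$ the paper, like you, simply says the argument follows that of Theorem~\ref{th13.2}($4^\circ$).

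However, you invoke the wrong abstract theorem throughout. Theorem~\ref{th6.7} concerns sharpness with respect to the \emph{time dependence}: its hypotheses are $s\ge 3/2$ (respectively $r\ge 1/2$) and its conclusion rules out functions $\mathcal{C}(\tau)$ with $\mathcal{C}(\tau)/|\tau|^{1/2}\to 0$. That statement is vacuous for the ranges $s<3/2$, $r<1/2$ you are working in, and it says nothing about nonexistence of a \emph{constant} $\mathcal{C}(\tau)$ at fixed $\tau$. The theorem you need is Theorem~\ref{th6.2}, which is the abstract sharpness result regarding the smoothing factor under the hypothesis $\widehat{N}_{0,Q}=0$, $\widehat{\mathcal{N}}^{(q)}_Q\neq 0$: its parts $1^\circ$, $2^\circ$, $3^\circ$ rule out estimates \eqref{6*.1}, \eqref{6*.3}, \eqref{abstr_sndwchd_s<2_est_imp1} for $s<3/2$, $r<1/2$, $s<3/2$ respectively, exactly matching what you derive along the ray. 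Replace every reference to Theorem~\ref{th6.7} by Theorem~\ref{th6.2} and the argument goes through.

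One small wording issue in part~$3^\circ$: the operator $J(\mathbf{k},\tau)$ in~\eqref{9.0} is defined with $\Lambda$, not $\Lambda_Q$; estimate~\eqref{Lambda0_Q_est_1} is used to pass \emph{from} $\Lambda$ \emph{to} $\Lambda_Q$ (i.e., to $\check{J}$), after which \eqref{abstr_sin_sandwiched_est_3} and the identity $(I+|\mathbf{k}|Z(\boldsymbol{\theta}))P=(F(\mathbf{k})-F_2(\mathbf{k}))P$ bring the expression into the abstract form. Your description has this slightly backwards, but since you explicitly defer to the proof of Theorem~\ref{th13.2}($4^\circ$) this is harmless.
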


\begin{proof}
Let us check statement $1^\circ$. 
Suppose the opposite. Then it follows that for some $\tau \ne 0$ and $1 \le s< 3/2$ the estimate
		\begin{equation}
		\label{13.9}
		\bigl\|  J_1(\mathbf{k}, \varepsilon^{-1} \tau) \wh{P}  \bigr\|  
		\eps^s (|\k|^2 + \eps^2)^{-s/2} \le \mathcal{C} (\tau) \varepsilon
		\end{equation}
holds for almost all $\mathbf{k}  \in \widetilde{\Omega}$ and sufficiently small $\varepsilon$.

Let $|\k| \le t_0$.  Using the identity $f^{-1} \wh{P} = P f^* \overline{Q}$ (see \eqref{abstr_P_and_P_hat_relation}) 
and inequality \eqref{s<2_proof_f2}, from  \eqref{13.9} we deduce the estimate (with some constant 
$\widetilde{\mathcal{C}}(\tau) > 0$)
 \begin{equation}
	\label{13.10}
\| f  \cos (\eps^{-1} \tau {\mathcal{A}}(\mathbf{k})^{1/2}) F(\k) f^* \overline{Q}
-f_0  \cos (\eps^{-1} \tau {\mathcal{A}}^0(\mathbf{k})^{1/2}) f_0^{-1} \wh{P}\| 
\eps^s (|\k|^2 + \eps^2)^{-s/2} \le \wt{\mathcal C}(\tau) \eps
\end{equation}
for almost all  $\mathbf{k}  \in \widetilde{\Omega}$ and sufficiently small $\varepsilon$.
 From Lemma~\ref{A(k)_Lipschitz_lemma} it follows that the operator under the norm sign in \eqref{13.10}
 is continuous with resect to  $\k$ in the ball $|\k| \le t_0$. Hence, estimate \eqref{13.10} holds for all
 $\k$ in this ball. In particular, it is valid for $\k = t \boldsymbol{\theta}_0$ if $t\le t_0$. 
Applying  inequality  \eqref{s<2_proof_f2} and the identity
$P f^* \overline{Q}= f^{-1} \wh{P}$ once again, we obtain the estimate
\begin{equation*}
\| \bigl(f  \cos (\eps^{-1} \tau {\mathcal{A}}( t \boldsymbol{\theta}_0)^{1/2})f^{-1} -f_0 
\cos (\eps^{-1} \tau {\mathcal{A}}^0( t \boldsymbol{\theta}_0)^{1/2}) f_0^{-1} \bigr)\wh{P} \| 
\eps^s (t^2 + \eps^2)^{-s/2} \le \wh{\mathcal C}(\tau) \eps
\end{equation*}
with some constant $\wh{\mathcal C}(\tau)>0$ for $t\le t_0$ and sufficiently small  $\eps$.
This contradicts statement $1^\circ$ of Theorem \ref{th6.2}.

We proceed to the proof of statement $2^\circ$.
Suppose the opposite. Then  for some $\tau \ne 0$ and \hbox{$0 \le r < 1/2$} we have
		\begin{equation}
		\label{13.12}
		\bigl\|  J_3(\mathbf{k}, \varepsilon^{-1} \tau) \wh{P}  \bigr\|  
		\eps^r (|\k|^2 + \eps^2)^{-r/2} \le \mathcal{C} (\tau)
		\end{equation}
for almost all $\mathbf{k}  \in \widetilde{\Omega}$ and sufficiently small $\varepsilon$.
Obviously, 
 		\begin{equation}
		\label{13.13}
		\bigl\|  f {\mathcal A}(\k)^{-1/2} \sin (\eps^{-1} \tau {\mathcal A}(\k)^{1/2}) F(\k)^\perp  f^*
		\bigr\|   \le \| f\|_{L_\infty}^2  \delta^{-1/2}.  
		\end{equation}
Combining this with  \eqref{13.12}, we see that for some constant $\wt{\mathcal C}(\tau)>0$  
the estimate 
\begin{equation}
	\label{13.14}
\begin{split}
\bigl\|& \bigl(f {\mathcal{A}}(\mathbf{k})^{-1/2} \sin (\eps^{-1} \tau {\mathcal{A}}(\mathbf{k})^{1/2}) F(\k) f^*
\\
&-f_0 {\mathcal{A}^0}(\mathbf{k})^{-1/2} \sin (\eps^{-1} \tau {\mathcal{A}}^0(\mathbf{k})^{1/2}) f_0 \bigr) \wh{P}
\bigr\| 
 \eps^r (|\k|^2 + \eps^2)^{-r/2} \le \wt{\mathcal C}(\tau)
\end{split}
\end{equation}
holds for almost all  $\mathbf{k}  \in \widetilde{\Omega}$ and sufficiently small $\varepsilon$.
From Lemma~\ref{A(k)_Lipschitz_lemma} it follows that the operator under the norm sign in \eqref{13.14}
 is continuous with respect to $\k$ in the ball $|\k| \le t_0$. Hence, estimate \eqref{13.14} holds for all 
 $\k$ in this ball. In particular, it is valid for $\k = t \boldsymbol{\theta}_0$ if $t\le t_0$. 
Applying inequality \eqref{13.13} once again, we obtain the estimate
\begin{equation*}
	\begin{aligned}
\| \bigl(f  {\mathcal{A}}( t \boldsymbol{\theta}_0)^{-1/2}
&\sin (\eps^{-1} \tau {\mathcal{A}}( t \boldsymbol{\theta}_0)^{1/2})f^{*} -f_0 {\mathcal{A}}^0( t \boldsymbol{\theta}_0)^{-1/2}
\\ &\times
\sin (\eps^{-1} \tau {\mathcal{A}}^0( t \boldsymbol{\theta}_0)^{1/2}) f_0 \bigr)\wh{P}
\|
 \eps^r (t^2 + \eps^2)^{-r/2} \le \wh{\mathcal C}(\tau)
\end{aligned}
\end{equation*}
with some constant $\wh{\mathcal C}(\tau)>0$ for $t\le t_0$ and sufficiently small $\eps$.
This contradicts statement $2^\circ$ of Theorem \ref{th6.2}.

Statement $3^\circ$ is deduced from Theorem \ref{th6.2} (statement $3^\circ$)
similarly to the proof of statement $4^\circ$ of Theorem \ref{th13.2}.
\end{proof}

\subsection{Sharpness of the results with respect to time}

In the present subsection, we confirm the sharpness of the results of \S \ref{sec12} with respect to 
dependence on~$\tau$. The following statement demonstrates that Theorems \ref{th12.1} and~\ref{sndw_A(k)_sin_general_thrm} are sharp. It is easily deduced from Theorem 
\ref{th6.5} with the help of  the same arguments as  in the proof of 
 Theorem \ref{th13.2}.

\begin{theorem}
	\label{th13.6}
			Suppose that Condition \emph{\ref{cond13.1}} is satisfied.
	
\noindent $1^\circ$. Let $s\ge 2$.
There does not exist a positive function $\mathcal{C}(\tau)$ such that  $\lim_{\tau \to \infty} {\mathcal C}(\tau)/
|\tau|=0$ and estimate \eqref{13.1} holds for all   $\tau \in \R,$ almost all $\mathbf{k}  \in \widetilde{\Omega},$ 
and sufficiently small $\varepsilon > 0$.

\noindent
$2^\circ$. Let $r \ge 1$. There does not exist a positive function
$\mathcal{C}(\tau)$ such that  $\lim_{\tau \to \infty} {\mathcal C}(\tau)/ |\tau|=0$ and estimate \eqref{13.2} holds for all $\tau \in \R,$ almost all $\mathbf{k}  \in \widetilde{\Omega},$ and sufficiently small $\varepsilon > 0$.

\noindent
$3^\circ$.  Let $r \ge 1$. 
There does not exist a positive function $\mathcal{C}(\tau)$ such that  $\lim_{\tau \to \infty} {\mathcal C}(\tau)/
|\tau|=0$ and estimate \eqref{13.3} holds for all $\tau \in \R,$ almost all  $\mathbf{k}  \in \widetilde{\Omega},$ 
and sufficiently small $\varepsilon > 0$.

\noindent $4^\circ$. Let $s\ge 2$.
There does not exist a positive function $\mathcal{C}(\tau)$ such that  $\lim_{\tau \to \infty} {\mathcal C}(\tau)/
|\tau|=0$ and estimate  \eqref{s<2_sinA(k)_est_1} holds for all  $\tau \in \R,$ almost all $\mathbf{k}  \in \widetilde{\Omega},$ and sufficiently small $\varepsilon > 0$.
\end{theorem}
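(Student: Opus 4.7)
The plan is to deduce the four statements from Theorem~\ref{th6.5} by the same kind of reduction to the abstract scheme (at the frozen direction $\boldsymbol{\theta}_0$) that was used in the proof of Theorem~\ref{th13.2}. In each case, I argue by contradiction: I assume the opposite, multiply the operator in question by $\widehat{P}$ to isolate the action on the threshold subspace, use Lemma~\ref{A(k)_Lipschitz_lemma} to upgrade the estimate from \emph{almost every} $\mathbf{k}\in\widetilde{\Omega}$ to \emph{every} $\mathbf{k}$ in the ball $|\mathbf{k}|\le t_0$, specialize to the ray $\mathbf{k}=t\boldsymbol{\theta}_0$ (with $\boldsymbol{\theta}_0$ the point from Condition~\ref{cond13.1} where $\widehat{N}_{0,Q}(\boldsymbol{\theta}_0)\ne 0$), and read off an abstract estimate that contradicts the relevant statement of Theorem~\ref{th6.5}. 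Since Condition~\ref{cond13.1} gives the same hypothesis ($\widehat{N}_{0,Q}(\boldsymbol{\theta}_0)\ne 0$) as is required by Theorem~\ref{th6.5}, the abstract machinery is available.

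More concretely, for statement $1^\circ$ I follow the first half of the proof of Theorem~\ref{th13.3}(1$^\circ$) literally, only replacing the assumption $\mathcal{C}(\tau)=O(\varepsilon)$ by one with $\mathcal{C}(\tau)/|\tau|\to 0$: after applying $\widehat{P}$, using $f^{-1}\widehat{P}=Pf^*\overline Q$, \eqref{s<2_proof_f2} and Lemma~\ref{A(k)_Lipschitz_lemma} (the continuity of $\cos(\tau\mathcal{A}(\mathbf{k})^{1/2})F(\mathbf{k})$ in $\mathbf{k}$, applied at fixed $\tau,\varepsilon$) I pass to $\mathbf{k}=t\boldsymbol{\theta}_0$ and arrive at an estimate of the abstract form \eqref{6*.1}, whose constant still satisfies $\lim_{\tau\to\infty}\widetilde{\mathcal{C}}(\tau)/|\tau|=0$; statement $1^\circ$ of Theorem~\ref{th6.5} then yields a contradiction. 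Statements $2^\circ$ and $3^\circ$ are handled in the same way, mimicking the corresponding reductions in Theorems~\ref{th13.2}(2$^\circ$) and~\ref{th13.3}(2$^\circ$): for $J_2$ I use the uniform bound \eqref{13.4a} on $f\mathcal{A}(\mathbf{k})^{-1/2}\sin(\varepsilon^{-1}\tau\mathcal{A}(\mathbf{k})^{1/2})F(\mathbf{k})^\perp f^{-1}$, and for $J_3$ the analogous bound \eqref{13.13}, so that after Lemma~\ref{A(k)_Lipschitz_lemma} (now for the $\sin(\tau\mathcal{A}^{1/2})F(\mathbf{k})$ family) I reach the abstract inequalities \eqref{6*.2} and \eqref{6*.3}, and then invoke statements $2^\circ$ and $3^\circ$ of Theorem~\ref{th6.5}.

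For statement $4^\circ$ I proceed as in the proof of statement $4^\circ$ of Theorem~\ref{th13.2}: after multiplying by $\widehat{P}$ and using \eqref{Lambda0_Q_est_1} to replace $\Lambda$ by $\Lambda_Q$ (the replacement contributes only $O(\varepsilon)$, which is absorbed in the right-hand side since we are tracking only the growth in $\tau$), I apply \eqref{abstr_sin_sandwiched_est_3} together with the identity $(I+|\mathbf{k}|Z(\boldsymbol{\theta}))P=(F(\mathbf{k})-F_2(\mathbf{k}))P$, the projection estimate \eqref{s<2_proof_f2}, and the bound \eqref{s<2_proof_f3}. Lemma~\ref{A(k)_Lipschitz_lemma} then lets me pass to $\mathbf{k}=t\boldsymbol{\theta}_0$, producing an abstract estimate of the form \eqref{abstr_sndwchd_s<2_est_imp1} with a constant whose ratio to $|\tau|$ still vanishes at infinity. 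Statement $4^\circ$ of Theorem~\ref{th6.5} finishes the proof.

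The main obstacle is bookkeeping rather than any new idea: I have to verify that each step in the chain of reductions preserves the \emph{qualitative} condition $\mathcal{C}(\tau)/|\tau|\to 0$ (not merely the quantitative form $O(\varepsilon)$ used in \S\ref{sec13}), and in particular that the additive $O(\varepsilon)$ corrections coming from \eqref{s<2_proof_f2}, \eqref{s<2_proof_f3}, \eqref{13.4a}, \eqref{13.13}, and \eqref{Lambda0_Q_est_1} are indeed absorbed into the right-hand side after dividing by $|\tau|$. Since Lemma~\ref{A(k)_Lipschitz_lemma} is used here only as a continuity statement at each fixed $(\tau,\varepsilon)$ (to legitimize the specialization $\mathbf{k}\mapsto t\boldsymbol{\theta}_0$) and not as a quantitative bound, its $\tau$-dependent constant plays no role in the final estimate — this is the point at which the argument goes through without any new ingredient beyond what is already in Theorem~\ref{th6.5} and Lemma~\ref{A(k)_Lipschitz_lemma}.
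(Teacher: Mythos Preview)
Your proposal is correct and follows essentially the same approach as the paper: the paper states that Theorem~\ref{th13.6} ``is easily deduced from Theorem~\ref{th6.5} with the help of the same arguments as in the proof of Theorem~\ref{th13.2},'' which is precisely the contradiction-and-specialize-to-$\boldsymbol{\theta}_0$ scheme you describe. Your observation that the additive $O(\varepsilon)$ corrections from \eqref{s<2_proof_f2}, \eqref{s<2_proof_f3}, \eqref{13.4a}, \eqref{13.13}, \eqref{Lambda0_Q_est_1} are harmless (since a constant divided by $|\tau|$ vanishes) and that Lemma~\ref{A(k)_Lipschitz_lemma} is used only qualitatively at fixed $(\tau,\varepsilon)$ are exactly the points that make the transfer from the ``smoothing-factor'' sharpness proofs to the ``time'' sharpness proofs go through without change.
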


 Similarly, from Theorem \ref{th6.7} we deduce the following statement which confirms that Theorems 
 \ref{th12.2}, \ref{th12.3},  \ref{sndw_sin_enchanced_thrm_1}, and \ref{sndw_sin_enchanced_thrm_2} are sharp.
 
\begin{theorem}
	\label{th13.7}
		Suppose that Condition \emph{\ref{cond13.2}} is satisfied.

\noindent $1^\circ$. Let $s\ge 3/2$.
There does not exist a positive function $\mathcal{C}(\tau)$ such that  $\lim_{\tau \to \infty} {\mathcal C}(\tau)/ |\tau|^{1/2} =0$  and estimate \eqref{13.1} holds for all $\tau \in \R,$ almost all $\mathbf{k}  \in \widetilde{\Omega},$ and sufficiently small $\varepsilon > 0$.

\noindent
$2^\circ$. Let $r \ge 1/2$.
There does not exist a positive function $\mathcal{C}(\tau)$ such that  $\lim_{\tau \to \infty} {\mathcal C}(\tau)/
|\tau|^{1/2} =0$ and estimate  \eqref{13.3} holds for all $\tau \in \R,$ almost all $\mathbf{k}  \in \widetilde{\Omega},$
and sufficiently small $\varepsilon > 0$.

\noindent $3^\circ$. Let $s\ge 3/2$.
There does not exist a positive function $\mathcal{C}(\tau)$ such that  $\lim_{\tau \to \infty} {\mathcal C}(\tau)/
|\tau|^{1/2} =0$ and estimate  \eqref{s<2_sinA(k)_est_1} holds for all $\tau \in \R,$ almost all $\mathbf{k}  \in \widetilde{\Omega},$ and sufficiently small $\varepsilon > 0$.
 \end{theorem}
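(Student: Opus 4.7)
The plan is to argue by contradiction for each of the three statements, reducing the would-be estimate on the sandwiched family ${\mathcal A}(\k),{\mathcal A}^0(\k)$ to the abstract setting so that Theorem~\ref{th6.7} applies. The structure is parallel to the proof of Theorem~\ref{th13.3}, except that now the contradiction comes from the non-existence of a sublinear-in-$|\tau|^{1/2}$ bound rather than from a smoothing-exponent issue. All three statements follow the same three-step scheme: (i) multiply the operator under the norm sign by $\widehat{P}$, using a uniform bound for the $F(\k)^\perp$-component to absorb the error; (ii) invoke Lemma~\ref{A(k)_Lipschitz_lemma} to upgrade the resulting a.e.-in-$\k$ estimate to a pointwise estimate in the ball $|\k|\le t_0$ (the operators involved are continuous in $\k$ with constants that only depend on $\tau$ and $\eps$); (iii) substitute $\k=t\boldsymbol{\theta}_0$ and recognize the resulting inequality as an instance of the corresponding abstract bound from Theorem~\ref{th6.7}, whose hypothesis is secured by Condition~\ref{cond13.2}.

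For statement $1^\circ$, supposing the opposite gives, after multiplication by $\widehat{P}$ and use of \eqref{s<2_proof_f2} together with the identity $f^{-1}\widehat{P}=Pf^*\overline{Q}$, an inequality of the type
\begin{equation*}
\|\bigl(f\cos(\eps^{-1}\tau{\mathcal A}(\k)^{1/2})F(\k)f^*\overline Q-f_0\cos(\eps^{-1}\tau{\mathcal A}^0(\k)^{1/2})f_0^{-1}\widehat P\bigr)\|\,\eps^s(|\k|^2+\eps^2)^{-s/2}\le\wt{\mathcal C}(\tau)\eps
\end{equation*}
with $\wt{\mathcal C}(\tau)/|\tau|^{1/2}\to 0$. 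Specializing $\k=t\boldsymbol{\theta}_0$ after the continuity step produces estimate \eqref{6*.1} in abstract form at the point $\boldsymbol{\theta}_0$, contradicting statement $1^\circ$ of Theorem~\ref{th6.7}. Statement $2^\circ$ is handled identically, using instead the uniform bound \eqref{13.13} for the $F(\k)^\perp$-component of $f{\mathcal A}(\k)^{-1/2}\sin(\eps^{-1}\tau{\mathcal A}(\k)^{1/2})f^*$ and invoking statement $2^\circ$ of Theorem~\ref{th6.7}.

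Statement $3^\circ$ is the most delicate. After supposing the opposite, one multiplies by $\widehat{P}$, replaces $\Lambda_Q$ by $\Lambda$ at the cost of the admissible $O(\eps)$-error provided by \eqref{Lambda0_Q_est_1}, and then uses the identity $(F(\k)-F_2(\k))P=P+|\k|Z(\boldsymbol{\theta})P$ together with \eqref{S_nondegenerated}, \eqref{s<2_proof_f2}, and \eqref{s<2_proof_f3} (this is the same algebraic bookkeeping as in the proof of Theorem~\ref{th13.2}($4^\circ$)) to bring the operator under the norm into the form
\begin{equation*}
{\mathcal A}(\k)^{1/2}F(\k)\bigl({\mathcal A}(\k)^{-1/2}\sin(\eps^{-1}\tau{\mathcal A}(\k)^{1/2})F(\k)-S(\k)^{-1/2}\sin(\eps^{-1}\tau S(\k)^{1/2})P\bigr)P.
\end{equation*}
Lemma~\ref{A(k)_Lipschitz_lemma} again promotes the a.e.-in-$\k$ bound to a pointwise one in $|\k|\le t_0$; choosing $\k=t\boldsymbol{\theta}_0$ and translating back via \eqref{abstr_sin_sandwiched_est_3} yields the abstract inequality \eqref{abstr_sndwchd_s<2_est_imp1} at $\boldsymbol{\theta}_0$ with $s\ge 3/2$ and a positive function whose $|\tau|^{1/2}$-normalized limit at infinity is zero, contradicting statement $3^\circ$ of Theorem~\ref{th6.7}.

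The main obstacle is statement $3^\circ$: one must combine several small errors (the corrector swap $\Lambda\leftrightarrow\Lambda_Q$, the passage $F(\k)\to P+|\k|ZP$, and the $F_2(\k)$-correction to $F(\k)$) in such a way that each of them is bounded by $C\eps$ uniformly in $\tau$, so that the sublinear-in-$|\tau|^{1/2}$ hypothesis is preserved through the reduction. Thanks to Condition~\ref{cond13.2}, which makes $\widehat{\mathcal N}_Q^{(q)}(\boldsymbol{\theta}_0)\ne 0$ and thus triggers the abstract obstruction at the $t^4$-coefficient of the eigenvalue expansion \eqref{A_eigenvalues_series}, the hypothesis of Theorem~\ref{th6.7} at $\boldsymbol{\theta}_0$ is verified, and the contradiction closes the argument.
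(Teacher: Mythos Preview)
Your proposal is correct and follows essentially the same approach the paper indicates (the paper simply writes ``Similarly, from Theorem~\ref{th6.7} we deduce the following statement\ldots'', pointing back to the arguments in Theorems~\ref{th13.2} and~\ref{th13.3}). Two small labeling slips are worth noting but do not affect the argument: in statement~$3^\circ$ the swap goes from $\Lambda$ to $\Lambda_Q$ (the concrete corrector uses $\Lambda$, the abstract one uses $\widehat Z_Q$ which corresponds to $\Lambda_Q$), and the passage from the $\widehat{\mathcal A}^{1/2}J$-form to the ${\mathcal A}^{1/2}\Sigma$-form uses \eqref{abstr_sin_sandwiched_est_3} while the translation \emph{back} to \eqref{abstr_sndwchd_s<2_est_imp1} at $\boldsymbol{\theta}_0$ uses \eqref{abstr_sin_sandwiched_est_1}.
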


\section{Approximation for the operators $\cos(\varepsilon^{-1} \tau \mathcal{A}^{1/2})$ and
$\mathcal{A}^{-1/2} \sin(\varepsilon^{-1} \tau \mathcal{A}^{1/2})$}

\subsection{Approximation for the operators $\cos(\varepsilon^{-1} \tau \widehat{\mathcal{A}}^{1/2})$ and 
$\widehat{\mathcal{A}}^{-1/2} \sin(\varepsilon^{-1} \tau \widehat{\mathcal{A}}^{1/2})$ of the principal order}
 \label{sec14.1}

In $L_2 (\mathbb{R}^d; \mathbb{C}^n)$, consider the operator 
$$
\widehat{\mathcal{A}} = b(\mathbf{D})^* g(\mathbf{x}) b(\mathbf{D})
$$
 (see~(\ref{hatA})). Let $\widehat{\mathcal{A}}^0$~be the effective operator (see~(\ref{hatA0})). Denote 
\begin{align}
\label{14.1}
\widehat{J}_1(\tau) &:=   \cos( \tau \widehat{\mathcal{A}}^{1/2}) -  \cos(\tau (\widehat{\mathcal{A}}^0)^{1/2}),
\\
\label{14.2}
\widehat{J}_2(\tau) &:=  \widehat{\mathcal{A}}^{-1/2} \sin( \tau \widehat{\mathcal{A}}^{1/2}) - 
(\widehat{\mathcal{A}}^0)^{-1/2} \sin(\tau (\widehat{\mathcal{A}}^0)^{1/2}).
\end{align}
Recall the notation $\mathcal{H}_0 = - \Delta$ and put 
\begin{equation}
\label{R(epsilon)}
\mathcal{R} (\varepsilon) :=  \varepsilon^2 (\mathcal{H}_0 + \varepsilon^2 I)^{-1}.
\end{equation}
The operator $\mathcal{R} (\varepsilon)$ expands in the direct integral of the operators~(\ref{R(k, epsilon)}):
\begin{equation}
\label{14.3a}
\mathcal{R} (\varepsilon) = \mathcal{U}^{-1} \bigg( \int\limits_{\widetilde{\Omega}} \oplus  \mathcal{R} (\mathbf{k}, \varepsilon) \, d \mathbf{k}  \bigg) \mathcal{U}.
\end{equation}

Recall the notation \eqref{J1_hat}, \eqref{J2_hat}. From the expansions of the form~(\ref{decompose}) for 
$\widehat{\mathcal{A}}$ and~$\widehat{\mathcal{A}}^0$ and from \eqref{14.3a} it follows that 
\begin{equation}
\label{14.5}
\|  \widehat{J}_l(\varepsilon^{-1} \tau) 
\mathcal{R}(\varepsilon)^{s/2} \|_{L_2(\mathbb{R}^d) \to L_2(\mathbb{R}^d)}
 = \esssup_{\mathbf{k} \in \widetilde{\Omega}} \|  \widehat{J}_l(\mathbf{k}, \varepsilon^{-1} \tau) \mathcal{R}(\mathbf{k}, \varepsilon)^{s/2} \|_{L_2(\Omega) \to L_2(\Omega)},
\quad l=1,2.
\end{equation}
Therefore, Theorems~\ref{th9.1}, \ref{th9.2}, \ref{th9.4} and Propositions \ref{prop9.1a}, \ref{prop9.2a},
\ref{prop9.3a} directly imply the following statements.
Below we  \textit{combine the formulations} (on improvement of the results), so it is convenient 
\textit{to start a new numbering of the constants}.

\begin{theorem}
	\label{th14.1}
Let $\widehat{J}_1(\tau)$ and $\widehat{J}_2(\tau)$ be the operators defined by  \eqref{14.1}\textup, \eqref{14.2}.
  	For $\tau \in \mathbb{R}$ and $\varepsilon > 0$ we have
  	\begin{align}
	\label{14.6}
	&\bigl\|  \widehat{J}_1(\varepsilon^{-1} \tau) \mathcal{R}(\varepsilon) \bigr\|_{L_2(\mathbb{R}^d) \to L_2(\mathbb{R}^d)} \le \widehat{\mathrm{C}}_1(1 + |\tau|) \varepsilon,
	\\
	\label{14.7}
	&\bigl\| \widehat{J}_2(\varepsilon^{-1} \tau) \mathcal{R}(\varepsilon)^{1/2} \bigr\|_{L_2(\mathbb{R}^d) \to L_2(\mathbb{R}^d)} \le \widehat{\mathrm{C}}_2(1 + |\tau|),
	\\
	\label{14.7a}
	&\bigl\| \widehat{J}_2(\varepsilon^{-1} \tau) \bigr\|_{L_2(\mathbb{R}^d) \to L_2(\mathbb{R}^d)} \le \widehat{\mathrm{C}}'_2(1 + \eps^{-1/2}|\tau|^{1/2}).
	\end{align}
The constants   $\widehat{\mathrm{C}}_1,$ $\widehat{\mathrm{C}}_2,$ and $\widehat{\mathrm{C}}'_2$ depend only on $\alpha_0,$ $\alpha_1,$ $\|g\|_{L_\infty},$ $\|g^{-1}\|_{L_\infty},$ and $r_0$.
\end{theorem}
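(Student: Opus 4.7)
The plan is simply to lift the fiberwise estimates of \S\ref{sec9} to $L_2(\R^d;\AC^n)$ via the Gelfand transform. Since there is no new analysis to perform---everything has already been done uniformly in $\k\in\wt\Omega$---the argument is essentially a bookkeeping step, and the identity \eqref{14.5} does all the work.

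First I would note that under the Gelfand transform $\mathcal U$, the operators $\wh{\mathcal A}$ and $\wh{\mathcal A}^0$ expand as in \eqref{decompose} into the direct integrals of the fiber operators $\wh{\mathcal A}(\k)$ and $\wh{\mathcal A}^0(\k)$, respectively. Continuous functional calculus commutes with direct integral decompositions, so the operators $\cos(\tau \wh{\mathcal A}^{1/2})$, $\cos(\tau (\wh{\mathcal A}^0)^{1/2})$, $\wh{\mathcal A}^{-1/2}\sin(\tau \wh{\mathcal A}^{1/2})$, and $(\wh{\mathcal A}^0)^{-1/2}\sin(\tau (\wh{\mathcal A}^0)^{1/2})$ all decompose into the corresponding fiber operators. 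Consequently $\wh J_l(\tau)$ ($l=1,2$), defined in \eqref{14.1}, \eqref{14.2}, is the direct integral of $\wh J_l(\k,\tau)$ from \eqref{J1_hat}, \eqref{J2_hat}. Combined with \eqref{14.3a}, which expresses $\mathcal R(\eps)$ as the direct integral of the fiber resolvents $\mathcal R(\k,\eps)$ from \eqref{R(k, epsilon)}, this yields identity \eqref{14.5} for $s\ge 0$ (with the convention that $s=0$ means no smoothing factor).

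Given \eqref{14.5}, estimates \eqref{14.6} and \eqref{14.7} are immediate from \eqref{9.7} and \eqref{9.8} of Theorem \ref{th9.1}, respectively, by taking the essential supremum over $\k\in\wt\Omega$; the constants $\wh{\mathrm C}_1$ and $\wh{\mathrm C}_2$ may simply be taken equal to $\wh{\mathcal C}_1$ and $\wh{\mathcal C}_2$, so their dependence on the problem data is precisely as claimed. Estimate \eqref{14.7a} follows analogously from Proposition \ref{prop9.1a} (inequality \eqref{9.9a}), setting $\wh{\mathrm C}_2' := \wh{\mathcal C}_2'$.

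Since each of the underlying fiberwise bounds was established with constants depending only on $\alpha_0$, $\alpha_1$, $\|g\|_{L_\infty}$, $\|g^{-1}\|_{L_\infty}$, and $r_0$ (and not on $\boldsymbol\theta$ or $\k$---this uniformity being the substantive content of \S\ref{sec9}), the supremum is harmless, and no real obstacle arises in this theorem itself. The only point requiring care is to verify \eqref{14.5} rigorously for the unbounded functional calculus expressions involving $\wh{\mathcal A}^{-1/2}\sin$, but this is standard: one notes that on the subspace $\wh P^\perp L_2(\Omega;\AC^n)$ the operators are bounded uniformly in $\k$, while on $\wh P L_2(\Omega;\AC^n)$ one uses the nondegeneracy \eqref{A(k)_nondegenerated_and_c_*} of $\wh{\mathcal A}(\k)$ and $\wh{\mathcal A}^0(\k)$ to justify the decomposition.
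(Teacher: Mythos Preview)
Your proposal is correct and follows essentially the same approach as the paper: the paper states just before the theorem that Theorems~\ref{th9.1}, \ref{th9.2}, \ref{th9.4} and Propositions~\ref{prop9.1a}, \ref{prop9.2a}, \ref{prop9.3a} directly imply the results via the identity~\eqref{14.5}, which is precisely what you do (using Theorem~\ref{th9.1} for \eqref{14.6}--\eqref{14.7} and Proposition~\ref{prop9.1a} for \eqref{14.7a}). Your additional remark on justifying the direct integral decomposition for the unbounded functional calculus is a reasonable clarification that the paper leaves implicit.
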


Earlier, estimate \eqref{14.6} was obtained in  \cite[Theorem 9.2]{BSu5} and inequality \eqref{14.7} was proved in  \cite[Theorem 8.1]{M}.

\begin{theorem}
	\label{th14.2}
  Suppose that Condition \emph{\ref{cond_B}} or Condition~\emph{\ref{cond1}} {\rm (}or more restrictive Condition~\emph{\ref{cond2}}{\rm )} is satisfied.
 Then for  $\tau \in \mathbb{R}$ and $\varepsilon > 0$ we have 
	\begin{align}
	\nonumber
	&\bigl\|  \widehat{J}_1(\varepsilon^{-1} \tau) \mathcal{R}(\varepsilon)^{3/4} \bigr\|_{L_2(\mathbb{R}^d) \to L_2(\mathbb{R}^d)} \le \widehat{\mathrm{C}}_3(1 + |\tau|)^{1/2} \varepsilon,
	\\
	\nonumber
	&\bigl\| \widehat{J}_2(\varepsilon^{-1} \tau) \mathcal{R}(\varepsilon)^{1/4} \bigr\|_{L_2(\mathbb{R}^d) \to L_2(\mathbb{R}^d)} \le \widehat{\mathrm{C}}_4(1 + |\tau|)^{1/2},
	\\
	\label{14.9a}
	&\bigl\| \widehat{J}_2(\varepsilon^{-1} \tau) \bigr\|_{L_2(\mathbb{R}^d) \to L_2(\mathbb{R}^d)} \le \widehat{\mathrm{C}}'_4(1 + \eps^{-1/3}|\tau|^{1/3}).
	\end{align}
	Under Condition \emph{\ref{cond_B}}, the constants  $\widehat{\mathrm{C}}_3,$ $\widehat{\mathrm{C}}_4,$
	and $\widehat{\mathrm{C}}'_4$ depend only on $\alpha_0,$ $\alpha_1,$ $\|g\|_{L_\infty},$ 
	$\|g^{-1}\|_{L_\infty},$ and $r_0$. Under Condition~\emph{\ref{cond1}}, these constants depend on the same parameters and on $n,$ $\widehat{c}^{\circ}$.
\end{theorem}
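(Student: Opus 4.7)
The plan is to deduce the three estimates from the corresponding fiber bounds in \S\ref{sec9} via the direct integral decomposition, in complete analogy with the deduction of Theorem \ref{th14.1} from Theorem \ref{th9.1}. Under the Gelfand transform both $\wh{\mathcal A}$ and $\wh{\mathcal A}^0$ expand as in \eqref{decompose}, and hence so do the operator-valued functions $\wh{J}_1(\tau)$ and $\wh{J}_2(\tau)$ defined by \eqref{14.1}, \eqref{14.2}, with fibers $\wh{J}_1(\mathbf k,\tau)$, $\wh{J}_2(\mathbf k,\tau)$ from \eqref{J1_hat}, \eqref{J2_hat}. Combined with \eqref{14.3a}, this produces the identity \eqref{14.5}, which reduces everything to \emph{uniform-in-}$\mathbf k$ bounds on the fibers. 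Note that $\mathcal R(\eps)^{s/2}$ is a function of $\mathcal H_0 = -\Delta$ and thus expands in the direct integral of $\mathcal R(\mathbf k,\eps)^{s/2}$ for any $s\ge 0$, so \eqref{14.5} applies to the exponents $s=3/2$, $s=1/2$, and $s=0$ required here.

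First, I would treat the case of Condition \ref{cond_B}. Theorem \ref{th9.2} supplies, uniformly in $\mathbf k \in \wt\Omega$, the two fiber estimates \eqref{9.7a}, \eqref{9.8a}, with constants $\wh{\mathcal C}_3$, $\wh{\mathcal C}_4$ depending only on $\alpha_0$, $\alpha_1$, $\|g\|_{L_\infty}$, $\|g^{-1}\|_{L_\infty}$, $r_0$. Taking the essential supremum over $\mathbf k$ in \eqref{14.5} with $l=1$, $s=3/2$ and with $l=2$, $s=1/2$ yields the first two displayed inequalities of the theorem. For the third estimate \eqref{14.9a}, Proposition \ref{prop9.2a} gives the uniform-in-$\mathbf k$ bound
$$\|\wh{J}_2(\mathbf k,\eps^{-1}\tau)\|_{L_2(\Omega)\to L_2(\Omega)} \le \wh{\mathcal C}'_4(1+\eps^{-1/3}|\tau|^{1/3}),$$
and passage to the essential supremum (the case $s=0$ of \eqref{14.5}) completes this branch.

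Under Condition \ref{cond1} (or the more restrictive Condition \ref{cond2}) the argument is structurally identical, but now I would invoke Theorem \ref{th9.4} in place of Theorem \ref{th9.2} and Proposition \ref{prop9.3a} in place of Proposition \ref{prop9.2a}. The uniform-in-$\mathbf k$ fiber constants $\wh{\mathcal C}_5$, $\wh{\mathcal C}_6$, $\wh{\mathcal C}_6'$ from those results depend additionally on $n$ and $\wh{c}^\circ$, which is precisely the source of the stated dependence of $\wh{\mathrm C}_3$, $\wh{\mathrm C}_4$, $\wh{\mathrm C}'_4$ on those parameters in the second case.

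There is essentially no analytic obstacle: all substantive work has been carried out at the fiber level in \S\ref{sec9}, and Theorem \ref{th14.2} is the direct integral repackaging of those bounds. The only conceptual point worth emphasising is that, in contrast to the $\mathbf k$-dependent quantities $\wh{t}_0$ or $\wh{t}^{00}$ that entered the abstract theorems, the Chapter 2 constructions were deliberately arranged (see \eqref{hatt0_fixation}, \eqref{hatc^circ}) so that $\wh{t}_0$, $\wh{t}^{00}$, $\wh{c}^\circ$ can be chosen \emph{independent} of $\boldsymbol\theta$; this $\boldsymbol\theta$-uniformity of the fiber constants is what makes the essential supremum in \eqref{14.5} finite and is the sole nontrivial ingredient to verify in taking the direct integral.
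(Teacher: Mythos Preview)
Your proposal is correct and matches the paper's approach exactly: the paper states that Theorem~\ref{th14.2} follows directly from Theorems~\ref{th9.2}, \ref{th9.4} and Propositions~\ref{prop9.2a}, \ref{prop9.3a} via the direct integral identity~\eqref{14.5}. Your additional remark about the $\boldsymbol\theta$-uniformity of the fiber constants is apt and captures the one point that deserves comment.
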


\subsection{Approximation of the operator  
$\widehat{\mathcal{A}}^{-1/2} \sin(\varepsilon^{-1} \tau \widehat{\mathcal{A}}^{1/2})$ in the energy norm}
\label{sec14.2}

We need the operator $\Pi = \mathcal{U}^{-1} [\widehat{P}] \mathcal{U}$ acting in 
$L_2(\mathbb{R}^d; \mathbb{C}^n)$. Here $[\widehat{P}]$~is the orthogonal projection in $\mathcal{H}= \int_{\widetilde{\Omega}} \oplus L_2 (\Omega; \mathbb{C}^n) \, d \mathbf{k}$, acting on the fibers of the direct integral as the operator $\widehat{P}$ of averaging over the cell. In~\cite[(6.8)]{BSu3}, it was shown that $\Pi$
is given by 
\begin{equation*}
(\Pi \mathbf{u})(\mathbf{x}) = (2 \pi)^{-d/2} \int\limits_{\widetilde{\Omega}} e^{i \left\langle \mathbf{x}, \boldsymbol{\xi} \right\rangle} \widehat{\mathbf{u}} (\boldsymbol{\xi}) \, d \boldsymbol{\xi},
\end{equation*}
 where $\widehat{\mathbf{u}} (\boldsymbol{\xi})$~is the Fourier-image of a function $\mathbf{u} (\mathbf{x})$. 
 I.~e.,   $\Pi$ is the pseudodifferential operator in  $L_2(\mathbb{R}^d; \mathbb{C}^n)$, whose symbol is the characterictic function $\chi_{\widetilde{\Omega}} (\boldsymbol{\xi})$ of the set $\wt{\Omega}$.
Denote 
\begin{equation}\label{10.0}
\widehat{J}(\tau) :=  \widehat{\mathcal{A}}^{-1/2} \sin( \tau \widehat{\mathcal{A}}^{1/2}) - (I + \Lambda b(\mathbf{D}) \Pi) (\widehat{\mathcal{A}}^0)^{-1/2} \sin(\tau (\widehat{\mathcal{A}}^0)^{1/2}).
\end{equation}
Recall  notation (\ref{hatJ}). From the expansions of the form~(\ref{decompose}) for $\widehat{\mathcal{A}}$ and  $\widehat{\mathcal{A}}^0$ and from \eqref{14.3a} it follows that 
\begin{equation}
\label{hat_norms_and_Gelfand_transf}
\| \widehat{\mathcal{A}}^{1/2} \widehat{J}(\varepsilon^{-1} \tau) \mathcal{R}(\varepsilon)^{s/2} \|_{L_2(\mathbb{R}^d) \to L_2(\mathbb{R}^d)} 
= \esssup_{\mathbf{k} \in \widetilde{\Omega}} \| \widehat{\mathcal{A}}(\mathbf{k})^{1/2} \widehat{J}(\mathbf{k}, \varepsilon^{-1} \tau) \mathcal{R}(\mathbf{k}, \varepsilon)^{s/2} \|_{L_2(\Omega) \to L_2(\Omega)}.
\end{equation}
Therefore, Theorems~\ref{A(k)_sin_general_thrm}, \ref{A(k)_sin_enchanced_thrm_1}, and \ref{A(k)_sin_enchanced_thrm_2} directly imply the following statements.

\begin{theorem}[see~\cite{M}]
	\label{A_sin_general_thrm}
	Suppose that $\widehat{J}(\tau)$ is the operator defined by \eqref{10.0}. 
	For $\tau \in \mathbb{R}$ and $\varepsilon > 0$ we have 
	\begin{equation}
	\label{A_sin_general_est}
	\bigl\| \widehat{\mathcal{A}}^{1/2} \widehat{J}(\varepsilon^{-1} \tau) \mathcal{R}(\varepsilon) \bigr\|_{L_2(\mathbb{R}^d) \to L_2(\mathbb{R}^d)} \le \widehat{\mathrm{C}}_5 (1 + |\tau|) \varepsilon. 
	\end{equation}
	The constant $\widehat{\mathrm{C}}_5$ depends only on $\alpha_0,$ $\alpha_1,$ $\|g\|_{L_\infty},$ $\|g^{-1}\|_{L_\infty},$ $r_0,$ and $r_1$.
\end{theorem}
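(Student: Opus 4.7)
The plan is to reduce the global estimate on $\R^d$ to the already established fiber estimate of Theorem~\ref{A(k)_sin_general_thrm} by means of the Gelfand transform and the direct integral decomposition. The central point is to verify identity \eqref{hat_norms_and_Gelfand_transf} for $s=2$, after which the statement follows at once from taking the essential supremum in $\k$ of the uniform bound provided by Theorem~\ref{A(k)_sin_general_thrm}.

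First I would check that each operator appearing in $\wh{\mathcal{A}}^{1/2} \wh{J}(\eps^{-1}\tau) \mathcal{R}(\eps)$ is fibered by $\mathcal U$. For $\wh{\mathcal{A}}$ and $\wh{\mathcal{A}}^0$ this is \eqref{decompose}; consequently $\cos(\tau \wh{\mathcal A}^{1/2})$, $\wh{\mathcal A}^{-1/2}\sin(\tau \wh{\mathcal A}^{1/2})$, and the corresponding functions of $\wh{\mathcal A}^0$ expand in direct integrals of the fiber functions of $\wh{\mathcal{A}}(\k)$ and $\wh{\mathcal{A}}^0(\k)$. For $\mathcal H_0 = -\Delta$ the analogous decomposition gives \eqref{14.3a}, so $\mathcal{R}(\eps)$ is fibered by the operators $\mathcal{R}(\k,\eps)$.

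The slightly delicate ingredient is the corrector $(I + \Lambda b(\D)\Pi)$. Since $\Pi$ is the pseudodifferential projection with symbol $\chi_{\wt\Omega}(\boldsymbol{\xi})$, a direct computation with the Gelfand transform shows that $\mathcal U \Pi \mathcal U^{-1}$ acts on the fibers of $\H = \int_{\wt\Omega}\oplus L_2(\Omega;\AC^n)\, d\k$ as the averaging projection $\wh{P}$. Next, conjugation of $b(\D)$ by $\mathcal U$ yields, on the fiber over $\k$, the operator $b(\D + \k)$ (since $\mathcal U D_j \mathcal U^{-1}$ acts as $D_j + k_j$ on functions of $\x\in\Omega$). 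The periodic factor $\Lambda(\x)$ commutes with $\mathcal U$ because $\Lambda$ is $\Gamma$-periodic, so multiplication by $\Lambda$ becomes (fiberwise) multiplication by $\Lambda$ on $L_2(\Omega;\AC^{n\times m})$. Combining these, the corrector expands into the direct integral of the fiber correctors $I + \Lambda b(\D+\k)\wh{P}$. Therefore $\widehat J(\tau)$ is fibered by the operators $\wh{J}(\k,\tau)$ defined in \eqref{hatJ}, and identity \eqref{hat_norms_and_Gelfand_transf} follows from the general rule $\| \int_{\wt\Omega}\oplus T(\k)\,d\k \| = \esssup_{\k\in\wt\Omega} \|T(\k)\|$.

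Once \eqref{hat_norms_and_Gelfand_transf} with $s=2$ is established, Theorem~\ref{A(k)_sin_general_thrm} yields, uniformly in $\k\in\wt\Omega$,
\[
\| \wh{\mathcal A}(\k)^{1/2} \wh{J}(\k,\eps^{-1}\tau) \mathcal{R}(\k,\eps) \|_{L_2(\Omega)\to L_2(\Omega)} \le \wh{\mathcal C}_7 (1+|\tau|)\eps,
\]
and taking the essential supremum over $\k$ gives \eqref{A_sin_general_est} with $\wh{\mathrm C}_5 = \wh{\mathcal C}_7$, which depends only on $\alpha_0, \alpha_1, \|g\|_{L_\infty}, \|g^{-1}\|_{L_\infty}, r_0, r_1$. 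The main obstacle is the bookkeeping around the corrector: one must be careful that the pseudodifferential operator $\Pi$ truly restricts to $\wh P$ on each fiber (this uses the precise choice of $\wt\Omega$ as the Brillouin zone \eqref{Brillouin_zone}), and that the Gelfand image of $\Lambda b(\D)$ is the pointwise-in-$\k$ operator $\Lambda b(\D+\k)$ acting on $\widetilde H^1(\Omega;\AC^n)$; beyond this verification, the estimate is a direct consequence of the fiberwise analysis already carried out.
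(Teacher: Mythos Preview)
Your proposal is correct and follows exactly the paper's approach: the paper states identity \eqref{hat_norms_and_Gelfand_transf} (deriving it from the direct integral expansions of $\wh{\mathcal A}$, $\wh{\mathcal A}^0$, $\mathcal{R}(\eps)$, and the corrector) and then notes that Theorem~\ref{A(k)_sin_general_thrm} directly implies the result. Your additional justification of why $\Pi$ and the corrector $\Lambda b(\D)\Pi$ fiber correctly is sound and more detailed than the paper's brief treatment.
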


\begin{theorem}
\label{th14.5}
	Suppose that Condition \emph{\ref{cond_B}} or Condition~\emph{\ref{cond1}} {\rm (}or more restrictive Condition~\emph{\ref{cond2}}{\rm )} is satisfied. Then for  $\tau \in \mathbb{R}$ and $\varepsilon > 0$ we have 	\begin{equation*}
	\bigl\| \widehat{\mathcal{A}}^{1/2} \widehat{J}(\varepsilon^{-1} \tau) \mathcal{R}(\varepsilon)^{3/4} \bigr\|_{L_2(\mathbb{R}^d) \to L_2(\mathbb{R}^d)} \le \widehat{\mathrm{C}}_6(1 + |\tau|)^{1/2} \varepsilon. 
	\end{equation*}
Under Condtion \emph{\ref{cond_B}}, the constant   $\widehat{\mathrm{C}}_6$ depends only on  
 $\alpha_0,$ $\alpha_1,$ $\|g\|_{L_\infty},$ $\|g^{-1}\|_{L_\infty},$ $r_0,$ and $r_1$. Under 
 Condition~\emph{\ref{cond1}}, this constant depends on the same parameters and on  $n,$ $\widehat{c}^{\circ}$.
\end{theorem}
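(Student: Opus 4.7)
The plan is to mirror exactly the proof of Theorem~\ref{A_sin_general_thrm}, replacing the fiber-wise input (Theorem~\ref{A(k)_sin_general_thrm}) by its improved analogues under the stronger hypotheses, namely Theorem~\ref{A(k)_sin_enchanced_thrm_1} when Condition~\ref{cond_B} is imposed and Theorem~\ref{A(k)_sin_enchanced_thrm_2} when Condition~\ref{cond1} (or \ref{cond2}) is imposed. In either case the fiber-wise bound will be $\widehat{\mathcal{C}}_8 (1+|\tau|)^{1/2}\varepsilon$ or $\widehat{\mathcal{C}}_9(1+|\tau|)^{1/2}\varepsilon$, uniformly in $\mathbf{k}\in\widetilde{\Omega}$, which matches the target constant $\widehat{\mathrm{C}}_6$.

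The key observation, already recorded in Subsection~\ref{sec14.2}, is that under the Gelfand transform all ingredients of $\widehat{J}(\tau)$ decompose into a direct integral of the corresponding fiber operators $\widehat{J}(\mathbf{k},\tau)$ from~\eqref{hatJ}: the operators $\widehat{\mathcal{A}}$ and $\widehat{\mathcal{A}}^0$ expand as in~\eqref{decompose}; the resolvent smoothing $\mathcal{R}(\varepsilon)$ decomposes as in~\eqref{14.3a}; the projection $\Pi$ acts on each fiber as $\widehat{P}$; and $b(\mathbf{D})$ on the fiber becomes $b(\mathbf{D}+\mathbf{k})$, so $\Lambda b(\mathbf{D})\Pi$ is the direct integral of the fiber operators $\Lambda b(\mathbf{D}+\mathbf{k})\widehat{P}$. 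Consequently $\widehat{\mathcal{A}}^{1/2}\widehat{J}(\varepsilon^{-1}\tau)\mathcal{R}(\varepsilon)^{3/4}$ is itself the direct integral of the fiber operators $\widehat{\mathcal{A}}(\mathbf{k})^{1/2}\widehat{J}(\mathbf{k},\varepsilon^{-1}\tau)\mathcal{R}(\mathbf{k},\varepsilon)^{3/4}$, and its $L_2(\mathbb{R}^d)\to L_2(\mathbb{R}^d)$ norm equals the essential supremum in $\mathbf{k}$ of the fiber norms, exactly as in~\eqref{hat_norms_and_Gelfand_transf} with $s=3/2$.

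Having set this up, the proof reduces to a single line: take the essential supremum over $\mathbf{k}\in\widetilde{\Omega}$ of the uniform bound from Theorem~\ref{A(k)_sin_enchanced_thrm_1} (respectively Theorem~\ref{A(k)_sin_enchanced_thrm_2}), and set $\widehat{\mathrm{C}}_6:=\widehat{\mathcal{C}}_8$ (respectively $\widehat{\mathrm{C}}_6:=\widehat{\mathcal{C}}_9$). The stated dependence of $\widehat{\mathrm{C}}_6$ on the problem parameters is then inherited verbatim from the stated dependence of $\widehat{\mathcal{C}}_8$ and $\widehat{\mathcal{C}}_9$: under Condition~\ref{cond_B} only on $\alpha_0,\alpha_1,\|g\|_{L_\infty},\|g^{-1}\|_{L_\infty},r_0,r_1$, and under Condition~\ref{cond1} additionally on $n$ and $\widehat{c}^\circ$. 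Since there is no step beyond unscrambling the direct integral and quoting the fiber-wise theorem, there is in fact no substantive obstacle at this level; all the real work (the abstract improvement of the sandwiched sine approximation in the energy norm, and the uniform-in-$\boldsymbol{\theta}$ realization of the abstract constants for the DO family) has already been carried out in Chapters~1 and~2.
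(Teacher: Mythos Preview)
Your proposal is correct and follows exactly the paper's approach: the paper states that Theorems~\ref{A(k)_sin_enchanced_thrm_1} and~\ref{A(k)_sin_enchanced_thrm_2} directly imply Theorem~\ref{th14.5} via the identity~\eqref{hat_norms_and_Gelfand_transf} with $s=3/2$, and your write-up spells this out with the correct identification of constants ($\widehat{\mathrm{C}}_6=\widehat{\mathcal{C}}_8$ or $\widehat{\mathcal{C}}_9$) and their parameter dependence.
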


Theorem~\ref{A_sin_general_thrm} was known earlier (see~\cite[Theorem 8.1]{M}).

\subsection{Sharpness of the results of Subsections \ref{sec14.1}, \ref{14.2}\label{sec14.3}}

Applying theorems from \S\ref{sec10}, we confirm that the results of Subsections
\ref{sec14.1}, \ref{sec14.2} are sharp. We start with the sharpness regarding the smoothing factor. 
Let us show that Theorems \ref{th14.1} and \ref{A_sin_general_thrm} are sharp.

\begin{theorem}
	\label{th14.7}
	Suppose that Condition \emph{\ref{cond10.1}} is satisfied.
	
\noindent 
$1^\circ$. Let $0 \ne \tau \in \mathbb{R}$ and $0 \le s < 2$.  
Then there does not exist a constant $\mathcal{C}(\tau) > 0$ such that the estimate
	\begin{equation}
	\label{14.17}
	\bigl\|  \widehat{J}_1(\varepsilon^{-1} \tau) \mathcal{R}(\varepsilon)^{s/2} \bigr\|_{L_2(\mathbb{R}^d) \to L_2(\mathbb{R}^d)} \le \mathcal{C}(\tau) \varepsilon
	\end{equation}
	holds for all sufficiently small $\varepsilon > 0$.
	
\noindent 
$2^\circ$. Let $0 \ne \tau \in \mathbb{R}$ and $0 \le r < 1$.  
Then there does not exist a constant $\mathcal{C}(\tau) > 0$ such that the estimate
	\begin{equation}
	\label{14.18}
	\bigl\|  \widehat{J}_2(\varepsilon^{-1} \tau) \mathcal{R}(\varepsilon)^{r/2} \bigr\|_{L_2(\mathbb{R}^d) \to L_2(\mathbb{R}^d)} \le \mathcal{C}(\tau)
	\end{equation}
	holds for all sufficiently small $\varepsilon > 0$.

\noindent 
$3^\circ$. 
Let $0 \ne \tau \in \mathbb{R}$ and $0 \le s < 2$. Then there does not exist a constant $\mathcal{C}(\tau) > 0$ 
such that the estimate 
	\begin{equation}
	\label{hat_s<2_sinA_est}
	\bigl\| \widehat{\mathcal{A}}^{1/2} \widehat{J}(\varepsilon^{-1} \tau) \mathcal{R}(\varepsilon)^{s/2} \bigr\|_{L_2(\mathbb{R}^d) \to L_2(\mathbb{R}^d)} \le \mathcal{C}(\tau) \varepsilon
	\end{equation}
		holds for all sufficiently small $\varepsilon > 0$.
\end{theorem}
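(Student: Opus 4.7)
The plan is to reduce each of the three statements to the corresponding fiberwise sharpness results (statements of Theorem~\ref{th10.1} for parts $1^\circ$ and $2^\circ$, and Theorem~\ref{hat_s<2_sinA(k)_thrm} for part $3^\circ$) via the direct integral decomposition. The key observation is that, by the expansion $\wh{\mathcal A} = \mathcal{U}^{-1}\bigl(\int_{\wt\Omega}^\oplus \wh{\mathcal A}(\k)\,d\k\bigr)\mathcal{U}$ and the corresponding expansion for $\wh{\mathcal A}^0$, together with~\eqref{14.3a}, the norms on $\mathbb{R}^d$ coincide with essential suprema of fiber norms: this is precisely~\eqref{14.5} for statements $1^\circ$, $2^\circ$ and~\eqref{hat_norms_and_Gelfand_transf} for statement $3^\circ$.

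For statement $1^\circ$, I will argue by contradiction: if \eqref{14.17} held with some $\mathcal{C}(\tau)$ for all sufficiently small $\varepsilon>0$, then by~\eqref{14.5} with $l=1$, the inequality
\begin{equation*}
\bigl\| \wh{J}_1(\k,\varepsilon^{-1}\tau)\mathcal{R}(\k,\varepsilon)^{s/2}\bigr\|_{L_2(\Omega)\to L_2(\Omega)} \le \mathcal{C}(\tau)\varepsilon
\end{equation*}
would hold for almost all $\k\in\wt\Omega$ and sufficiently small $\varepsilon>0$. Under Condition~\ref{cond10.1}, this contradicts statement $1^\circ$ of Theorem~\ref{th10.1}. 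Statement $2^\circ$ is handled identically using~\eqref{14.5} with $l=2$ and statement $2^\circ$ of Theorem~\ref{th10.1}.

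For statement $3^\circ$, the same scheme applies: supposing~\eqref{hat_s<2_sinA_est} for some $0\le s<2$ and $\tau\ne 0$, the identity~\eqref{hat_norms_and_Gelfand_transf} immediately yields
\begin{equation*}
\bigl\| \wh{\mathcal A}(\k)^{1/2}\wh{J}(\k,\varepsilon^{-1}\tau)\mathcal{R}(\k,\varepsilon)^{s/2}\bigr\|_{L_2(\Omega)\to L_2(\Omega)} \le \mathcal{C}(\tau)\varepsilon
\end{equation*}
for almost all $\k\in\wt\Omega$ and sufficiently small $\varepsilon>0$, contradicting Theorem~\ref{hat_s<2_sinA(k)_thrm}.

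Since the essential sup identities~\eqref{14.5} and~\eqref{hat_norms_and_Gelfand_transf} are already established and the fiberwise theorems are at hand, there is no genuine obstacle: the argument is a one-line transfer via the direct integral. The only point requiring minimal care is to ensure that the fiberwise bound produced by a hypothetical global bound indeed holds on a set of full measure in $\wt\Omega$ (so that, in particular, it holds on some neighborhood of the ray $\{t\boldsymbol{\theta}_0\}$ where Condition~\ref{cond10.1} is activated), which is automatic from the definition of the direct integral norm.
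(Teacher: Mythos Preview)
Your proposal is correct and follows essentially the same approach as the paper: contradiction via the direct integral identities~\eqref{14.5} and~\eqref{hat_norms_and_Gelfand_transf}, reducing the global bound to a fiberwise bound for almost all $\k\in\wt\Omega$, and then invoking Theorem~\ref{th10.1} (for $1^\circ$, $2^\circ$) and Theorem~\ref{hat_s<2_sinA(k)_thrm} (for $3^\circ$). Your closing remark about the ``almost all $\k$'' issue is apt; in the paper this passage from a.e.\ $\k$ to the specific ray $\{t\boldsymbol{\theta}_0\}$ is handled inside those fiberwise theorems via the continuity provided by Lemma~\ref{hatA(k)_Lipschitz_lemma}.
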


\begin{proof}
For instance, let us prove statement $1^\circ$. We prove by contradiction. Suppose that for some $\tau \ne 0$ and  \hbox{$0 \le s < 2$} there exists a constant  $\mathcal{C}(\tau) > 0$ such that~\eqref{14.17} holds for all sufficiently small $\varepsilon > 0$. By~\eqref{14.5}, this means that  estimate~\eqref{10.1a} is valid for almost all $\mathbf{k} \in \widetilde{\Omega}$ and sufficiently small $\varepsilon$. But this contradicts statement $1^\circ$ of Theorem~\ref{th10.1}.

Similarly, statement $2^\circ$ follows from statement $2^\circ$ of Theorem~\ref{th10.1}, and statement $3^\circ$ follows from Theorem \ref{hat_s<2_sinA(k)_thrm}.
\end{proof}

Similarly, applying Theorems \ref{th10.2a} and \ref{hat_s<3/2_sinA(k)_thrm}, we arrive at the following result showing that Theorems \ref{th14.2} and \ref{th14.5} are sharp.

\begin{theorem}
	\label{th14.8}
	Suppose that Condition \emph{\ref{cond10.2}} is satisfied.
	
\noindent $1^\circ.$
 Let $0 \ne \tau \in \mathbb{R}$ and $0 \le s < 3/2$. 
 Then there does not exist a constant $\mathcal{C} (\tau) > 0$ such that  estimate  \eqref{14.17}
holds for all sufficiently small $\varepsilon > 0$.

\noindent $2^\circ.$
 Let $0 \ne \tau \in \mathbb{R}$ and $0 \le r < 1/2$. 
 Then there does not exist a constant $\mathcal{C} (\tau) > 0$ such that  estimate \eqref{14.18} 
 holds for all sufficiently small $\varepsilon > 0$.

\noindent $3^\circ.$ Let $0 \ne \tau \in \mathbb{R}$ and $0 \le s < 3/2$. 
Then there does not exist a constant $\mathcal{C} (\tau) > 0$ such that  estimate  \eqref{hat_s<2_sinA_est} 
holds for all sufficiently small $\varepsilon > 0$.
\end{theorem}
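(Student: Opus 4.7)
The plan is to reduce each of the three statements to the corresponding fiberwise sharpness result via the direct integral decomposition, in complete analogy with the proof of Theorem \ref{th14.7}. The two identities
$$
\|  \widehat{J}_l(\varepsilon^{-1} \tau) \mathcal{R}(\varepsilon)^{s/2} \|_{L_2(\mathbb{R}^d) \to L_2(\mathbb{R}^d)} = \esssup_{\mathbf{k} \in \widetilde{\Omega}} \|  \widehat{J}_l(\mathbf{k}, \varepsilon^{-1} \tau) \mathcal{R}(\mathbf{k}, \varepsilon)^{s/2} \|_{L_2(\Omega) \to L_2(\Omega)}, \quad l=1,2,
$$
(see \eqref{14.5}) and the analogous identity \eqref{hat_norms_and_Gelfand_transf} for $\widehat{\mathcal{A}}^{1/2}\widehat{J}(\varepsilon^{-1}\tau)\mathcal{R}(\varepsilon)^{s/2}$ are the only ingredients needed to transfer the sharpness results of \S\ref{sec10} to the level of the operators acting on $L_2(\mathbb{R}^d;\mathbb{C}^n)$.

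To establish statement $1^\circ$, I argue by contradiction: assume that for some $\tau\ne 0$, some $0\le s<3/2$, and some $\mathcal{C}(\tau)>0$ the inequality \eqref{14.17} holds for all sufficiently small $\varepsilon>0$. Applying \eqref{14.5} with $l=1$ then yields
$$
\|  \widehat{J}_1(\mathbf{k}, \varepsilon^{-1} \tau)  \mathcal{R}(\mathbf{k}, \varepsilon)^{s/2} \|_{L_2(\Omega) \to L_2(\Omega)} \le  \mathcal{C}(\tau)\, \varepsilon
$$
for almost all $\mathbf{k}\in\widetilde{\Omega}$ and sufficiently small $\varepsilon>0$; this directly contradicts statement $1^\circ$ of Theorem~\ref{th10.2a}. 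Statement $2^\circ$ is handled by the same reduction, now with $l=2$ in \eqref{14.5}, leading to a contradiction with statement $2^\circ$ of Theorem~\ref{th10.2a}. For statement $3^\circ$ I use \eqref{hat_norms_and_Gelfand_transf} to pass from the putative bound \eqref{hat_s<2_sinA_est} to its fiberwise counterpart \eqref{hat_s<2_sinA(k)_est}, which contradicts Theorem~\ref{hat_s<3/2_sinA(k)_thrm}.

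There is essentially no obstacle in this argument: Condition \ref{cond10.2} is precisely what is required in order to invoke Theorems \ref{th10.2a} and \ref{hat_s<3/2_sinA(k)_thrm} (it guarantees $\widehat{N}_0(\boldsymbol{\theta})\equiv 0$ together with $\widehat{\mathcal{N}}^{(q)}(\boldsymbol{\theta}_0)\ne 0$ at some point), and the identities \eqref{14.5}, \eqref{hat_norms_and_Gelfand_transf} themselves are immediate consequences of the Gelfand expansion \eqref{decompose} applied simultaneously to $\widehat{\mathcal{A}}$ and $\widehat{\mathcal{A}}^0$ together with the fibering \eqref{14.3a} of $\mathcal{R}(\varepsilon)$. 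In this sense the proof is a verbatim transcription of the proof of Theorem~\ref{th14.7}, with Theorems \ref{th10.1} and \ref{hat_s<2_sinA(k)_thrm} replaced by their \textquotedblleft improved\textquotedblright \ counterparts Theorems \ref{th10.2a} and \ref{hat_s<3/2_sinA(k)_thrm}; accordingly, I expect the written-out argument to be quite brief.
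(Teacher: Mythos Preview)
Your proposal is correct and follows exactly the paper's approach: the paper does not give a separate proof of Theorem~\ref{th14.8} but simply notes that it follows from Theorems~\ref{th10.2a} and~\ref{hat_s<3/2_sinA(k)_thrm} in the same way that Theorem~\ref{th14.7} follows from Theorems~\ref{th10.1} and~\ref{hat_s<2_sinA(k)_thrm}, i.e., by contradiction via the direct integral identities \eqref{14.5} and \eqref{hat_norms_and_Gelfand_transf}. Your write-up matches this verbatim.
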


We proceed to the sharpness of the results regarding the dependence of estimates on the parameter $\tau$.
 Theorems \ref{th10.6} and \ref{th10.8} imply the following statement confirming that 
Theorems \ref{th14.1} and \ref{A_sin_general_thrm} are sharp.

\begin{theorem}
	\label{th14.11}
	Suppose that Condition \emph{\ref{cond10.1}} is satisfied.
		
\noindent 
$1^\circ$. Let $s \ge 2$. There does not exist a positive function $\mathcal{C}(\tau)$ such that 
$\lim_{\tau \to \infty} \mathcal{C}(\tau) /|\tau| =0$ and \eqref{14.17} holds for  $\tau \in \R$ and sufficiently small $\varepsilon > 0$.

\noindent 
$2^\circ$. Let $r \ge 1$. There does not exist a positive function $\mathcal{C}(\tau)$ such that 
$\lim_{\tau \to \infty} \mathcal{C}(\tau) /|\tau| =0$ and  \eqref{14.18} holds for  $\tau \in \R$ 
and sufficiently small $\varepsilon > 0$.

\noindent 
$3^\circ$. Let $s \ge 2$. There does not exist a positive function
$\mathcal{C}(\tau)$ such that 
$\lim_{\tau \to \infty} \mathcal{C}(\tau) /|\tau| =0$ and \eqref{hat_s<2_sinA_est} holds for $\tau \in \R$ 
and sufficiently small $\varepsilon > 0$.
\end{theorem}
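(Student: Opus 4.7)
The plan is to reduce each of the three statements to the already established sharpness results at the fiber level (Theorems~\ref{th10.6} and \ref{th10.8}) by using the direct integral decompositions collected in Subsections~\ref{sec14.1} and~\ref{sec14.2}. The arguments for $1^\circ$, $2^\circ$, and $3^\circ$ all proceed by contradiction and follow the same template as the proof of Theorem~\ref{th14.7}, with ``smoothing factor'' replaced by ``dependence on $\tau$''.

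First I would treat $1^\circ$. Assume, for contradiction, that $s\ge 2$ and that some positive function $\mathcal{C}(\tau)$ exists with $\lim_{\tau\to\infty}\mathcal{C}(\tau)/|\tau|=0$ and with \eqref{14.17} valid for every $\tau\in\R$ and all sufficiently small $\eps>0$. Since $\widehat{\mathcal{A}}$ and $\widehat{\mathcal{A}}^0$ both admit direct integral decompositions of the form \eqref{decompose} and $\mathcal{R}(\eps)$ decomposes as in \eqref{14.3a}, the identity \eqref{14.5} gives
\begin{equation*}
\esssup_{\mathbf{k}\in\widetilde{\Omega}} \bigl\|\widehat{J}_1(\mathbf{k},\eps^{-1}\tau)\mathcal{R}(\mathbf{k},\eps)^{s/2}\bigr\|_{L_2(\Omega)\to L_2(\Omega)} \le \mathcal{C}(\tau)\,\eps
\end{equation*}
for every $\tau\in\R$ and all sufficiently small $\eps$. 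In particular, estimate \eqref{10.1a} from the fiber level holds for almost every $\mathbf{k}\in\widetilde{\Omega}$ and all sufficiently small $\eps$, with exactly the same function $\mathcal{C}(\tau)$ satisfying $\lim_{\tau\to\infty}\mathcal{C}(\tau)/|\tau|=0$. This directly contradicts statement~$1^\circ$ of Theorem~\ref{th10.6} (which applies because Condition~\ref{cond10.1} is in force). Statement $2^\circ$ is handled in an identical manner: the same direct integral identity \eqref{14.5} applied with $l=2$ converts any hypothetical global bound \eqref{14.18} into the fiber bound \eqref{10.2a} valid a.e.\ on~$\widetilde{\Omega}$, contradicting statement~$2^\circ$ of Theorem~\ref{th10.6}.

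For $3^\circ$, the plan is the same but using the energy-norm decomposition \eqref{hat_norms_and_Gelfand_transf}. Assuming the existence of a positive function $\mathcal{C}(\tau)$ with $\lim_{\tau\to\infty}\mathcal{C}(\tau)/|\tau|=0$ for which \eqref{hat_s<2_sinA_est} holds for all $\tau\in\R$ and all sufficiently small $\eps$, the identity \eqref{hat_norms_and_Gelfand_transf} yields
\begin{equation*}
\bigl\|\widehat{\mathcal{A}}(\mathbf{k})^{1/2}\widehat{J}(\mathbf{k},\eps^{-1}\tau)\mathcal{R}(\mathbf{k},\eps)^{s/2}\bigr\|_{L_2(\Omega)\to L_2(\Omega)} \le \mathcal{C}(\tau)\,\eps
\end{equation*}
for a.e.\ $\mathbf{k}\in\widetilde{\Omega}$, every $\tau\in\R$, and all sufficiently small $\eps$. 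This is precisely the inequality \eqref{hat_s<2_sinA(k)_est} in the form excluded by Theorem~\ref{th10.8} (which uses Condition~\ref{cond10.1} with $s\ge 2$), and the contradiction is obtained.

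The steps are essentially bookkeeping, so there is no serious analytic obstacle: the entire proof hinges on the passage from the global $L_2(\R^d)$ operator norm to the a.e.\ fiber norm via the Gelfand transform, after which the burden of work is already carried by Theorems~\ref{th10.6} and~\ref{th10.8}. The only subtlety worth checking carefully is that the exceptional $\mathbf{k}$-set of measure zero arising from the essential supremum does not interfere with the fiber-level contradiction, but this is automatic since Theorems~\ref{th10.6} and~\ref{th10.8} are both formulated precisely for ``almost all $\mathbf{k}\in\widetilde{\Omega}$''.
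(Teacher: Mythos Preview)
Your proposal is correct and takes essentially the same approach as the paper: the paper simply states that Theorem~\ref{th14.11} follows from Theorems~\ref{th10.6} and~\ref{th10.8} (via the direct integral identities \eqref{14.5} and \eqref{hat_norms_and_Gelfand_transf}), and you have filled in precisely those details.
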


 Theorems \ref{th10.7} and \ref{th10.9} lead to the following statement confirming that Theorems \ref{th14.2} and \ref{th14.5} are sharp.

\begin{theorem}
	\label{th14.12}
	Suppose that Condition \emph{\ref{cond10.2}} is satisfied.

\noindent 
$1^\circ$. Let $s \ge 3/2$. 
There does not exist a positive function $\mathcal{C}(\tau)$ such that 
$\lim_{\tau \to \infty} \mathcal{C}(\tau) /|\tau|^{1/2} =0$ and  \eqref{14.17} holds
for $\tau \in \R$ and sufficiently small $\varepsilon$.

\noindent 
$2^\circ$. Let $r \ge 1/2$.
There does not exist a positive function $\mathcal{C}(\tau)$ such that 
$\lim_{\tau \to \infty} \mathcal{C}(\tau) /|\tau|^{1/2} =0$ and  \eqref{14.18} holds
for $\tau \in \R$ and sufficiently small $\varepsilon$.

\noindent 
$3^\circ$. Let $s \ge 3/2$.
There does not exist a positive function $\mathcal{C}(\tau)$ such that 
$\lim_{\tau \to \infty} \mathcal{C}(\tau) /|\tau|^{1/2} =0$ and  \eqref{hat_s<2_sinA_est} holds
for $\tau \in \R$ and sufficiently small $\varepsilon$.
\end{theorem}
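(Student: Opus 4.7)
The plan is to reduce each of the three statements to the corresponding fiber-level sharpness result, namely Theorem~\ref{th10.7} (for $1^\circ$ and $2^\circ$) and Theorem~\ref{th10.9} (for $3^\circ$), following the same contradiction scheme used in the proof of Theorem~\ref{th14.11}. The key bridge between the global ($\R^d$) setting and the fibered setting is the direct integral identity \eqref{14.5} for the operators $\wh{J}_1$, $\wh{J}_2$ and the analogous identity \eqref{hat_norms_and_Gelfand_transf} for $\wh{\mathcal A}^{1/2}\wh{J}$.

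For statement $1^\circ$, I would argue by contradiction: assume there exists $s \ge 3/2$ and a positive function $\mathcal C(\tau)$ with $\lim_{\tau\to\infty}\mathcal C(\tau)/|\tau|^{1/2}=0$ such that \eqref{14.17} holds for all $\tau \in \R$ and sufficiently small $\eps>0$. By \eqref{14.5} with $l=1$, the inequality
\[
\|\wh{J}_1(\mathbf k,\eps^{-1}\tau)\,\mathcal R(\mathbf k,\eps)^{s/2}\|_{L_2(\Omega)\to L_2(\Omega)} \le \mathcal C(\tau)\,\eps
\]
would then hold for all $\tau \in \R$, almost all $\mathbf k\in\wt\Omega$, and sufficiently small $\eps$. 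Since Condition~\ref{cond10.2} is in force, this contradicts statement $1^\circ$ of Theorem~\ref{th10.7}. Statement $2^\circ$ is checked in exactly the same manner, using \eqref{14.5} with $l=2$ and statement $2^\circ$ of Theorem~\ref{th10.7}.

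For statement $3^\circ$, I would again proceed by contradiction and use identity \eqref{hat_norms_and_Gelfand_transf}: if \eqref{hat_s<2_sinA_est} held for some $s\ge 3/2$ with a positive function $\mathcal C(\tau)$ satisfying $\lim_{\tau\to\infty}\mathcal C(\tau)/|\tau|^{1/2}=0$, then the fiber-level inequality
\[
\|\wh{\mathcal A}(\mathbf k)^{1/2}\wh{J}(\mathbf k,\eps^{-1}\tau)\,\mathcal R(\mathbf k,\eps)^{s/2}\|_{L_2(\Omega)\to L_2(\Omega)} \le \mathcal C(\tau)\,\eps
\]
would hold for almost all $\mathbf k\in\wt\Omega$, all $\tau\in\R$, and sufficiently small $\eps>0$, contradicting Theorem~\ref{th10.9}.

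Since the argument is a clean two-line reduction per statement, there is essentially no obstacle: all the analytic work has already been done in Chapter~1 (propagation of the scalar sharpness via the $t_\dag$-substitution of \S4) and then lifted to $\wh{\mathcal A}(\mathbf k)$ in \S\ref{sec10} with the aid of the Lipschitz Lemma~\ref{hatA(k)_Lipschitz_lemma}. The only point requiring minor care is that, in applying \eqref{14.5} and \eqref{hat_norms_and_Gelfand_transf}, one must pass from a global bound to the essential supremum over $\mathbf k$, which automatically yields the ``almost all $\mathbf k$'' condition required by Theorems~\ref{th10.7} and~\ref{th10.9}; this transition is immediate from the direct integral decomposition and requires no additional input.
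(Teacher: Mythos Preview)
Your proposal is correct and follows exactly the approach of the paper: the paper states that Theorem~\ref{th14.12} is obtained from Theorems~\ref{th10.7} and~\ref{th10.9} via the direct integral identities \eqref{14.5} and \eqref{hat_norms_and_Gelfand_transf}, precisely as you describe. The contradiction scheme you outline mirrors the explicit argument given for the analogous Theorem~\ref{th14.7}.
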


\subsection{Approximation for the sandwiched operators $\cos(\varepsilon^{-1} \tau {\mathcal{A}}^{1/2})$ and 
${\mathcal{A}}^{-1/2} \sin(\varepsilon^{-1} \tau {\mathcal{A}}^{1/2})$ in the principal order}
 \label{sec14.4}

In $L_2 (\mathbb{R}^d; \mathbb{C}^n)$, we consider the operator~(\ref{A}). Let $f_0$~me the matrix~(\ref{f_0}) and let   $\mathcal{A}^0$~be the operator~(\ref{A0}). Denote 
{\allowdisplaybreaks
\begin{align}
\label{14.26}
{J}_1(\tau) &:= f  \cos( \tau {\mathcal{A}}^{1/2}) f^{-1} - f_0  \cos(\tau ({\mathcal{A}}^0)^{1/2}) f_0^{-1},
\\
\label{14.27}
{J}_2(\tau) &:=  f {\mathcal{A}}^{-1/2} \sin( \tau {\mathcal{A}}^{1/2}) f^{-1} - 
f_0 ({\mathcal{A}}^0)^{-1/2} \sin(\tau ({\mathcal{A}}^0)^{1/2}) f_0^{-1},
\\
\label{14.28}
{J}_3(\tau) &:=  f {\mathcal{A}}^{-1/2} \sin( \tau {\mathcal{A}}^{1/2}) f^{*} - 
f_0 ({\mathcal{A}}^0)^{-1/2} \sin(\tau ({\mathcal{A}}^0)^{1/2}) f_0.
\end{align}
}

We recall  notation \eqref{J1}--\eqref{J2tilde}. From the expansions of the form~(\ref{decompose}) for 
${\mathcal{A}}$ and ${\mathcal{A}}^0$ and from  \eqref{14.3a} it follows that 
\begin{equation*}
\|  {J}_l(\varepsilon^{-1} \tau) \mathcal{R}(\varepsilon)^{s/2} \|_{L_2(\mathbb{R}^d) \to L_2(\mathbb{R}^d)} = \esssup_{\mathbf{k} \in \widetilde{\Omega}} \| {J}_l(\mathbf{k}, \varepsilon^{-1} \tau) \mathcal{R}(\mathbf{k}, \varepsilon)^{s/2} \|_{L_2(\Omega) \to L_2(\Omega)}
\end{equation*}
for $l=1,2,3$. Therefore, Theorems~\ref{th12.1}, \ref{th12.2}, \ref{th12.3} and Propositions  \ref{prop12.1a}, 
\ref{prop12.2a}, \ref{prop12.3a} directly imply the following statements.

\begin{theorem}
	\label{th14.15}
	Let ${J}_1(\tau),$ ${J}_2(\tau),$ and $J_3(\tau)$ be the operators defined by 
	 \eqref{14.26}--\eqref{14.28}.
  	Then for $\tau \in \mathbb{R}$ and $\varepsilon > 0$ we have
  	\begin{align}
	\label{14.30}
	&\bigl\|  {J}_1(\varepsilon^{-1} \tau) \mathcal{R}(\varepsilon) \bigr\|_{L_2(\mathbb{R}^d) \to L_2(\mathbb{R}^d)} \le {\mathrm{C}}_1(1 + |\tau|) \varepsilon,
	\\
	\label{14.31}
	&\bigl\| {J}_2(\varepsilon^{-1} \tau) \mathcal{R}(\varepsilon)^{1/2} \bigr\|_{L_2(\mathbb{R}^d) \to L_2(\mathbb{R}^d)} \le {\mathrm{C}}_2(1 + |\tau|),
	\\
	\label{14.32}
	&\bigl\| {J}_3(\varepsilon^{-1} \tau) \mathcal{R}(\varepsilon)^{1/2} \bigr\|_{L_2(\mathbb{R}^d) \to L_2(\mathbb{R}^d)} \le \wt{\mathrm{C}}_2(1 + |\tau|),
	\\
\label{14.32a}
	&\bigl\| {J}_3(\varepsilon^{-1} \tau) \bigr\|_{L_2(\mathbb{R}^d) \to L_2(\mathbb{R}^d)} 
	\le {\mathrm{C}}_2'(1 + \eps^{-1/2} |\tau|^{1/2}). 	
	\end{align}
	The constants ${\mathrm{C}}_1,$ ${\mathrm{C}}_2,$ $\wt{\mathrm{C}}_2,$  ${\mathrm{C}}_2'$ 
	depend only on $\alpha_0,$ $\alpha_1,$ $\|g\|_{L_\infty},$ $\|g^{-1}\|_{L_\infty},$ $\|f\|_{L_\infty},$ $\|f^{-1}\|_{L_\infty},$ and $r_0$.
\end{theorem}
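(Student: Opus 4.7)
The plan is to deduce all four estimates by reducing the operator norms on $L_2(\R^d;\AC^n)$ to a uniform (in $\k$) control on the fiber operators $J_l(\k,\tau)$ via the Gelfand transform, and then to invoke Theorem~\ref{th12.1} and Proposition~\ref{prop12.1a} which have already been proved in the fibered setting.

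Concretely, I would first observe that under the Gelfand transform $\mathcal U$ the operators $\mathcal A$ and $\mathcal A^0$ expand in direct integrals as in \eqref{decompose}, the smoothing factor expands as in \eqref{14.3a}, and the bounded multiplication operators by the $\Gamma$-periodic matrices $f,f^{-1}$ commute with $\mathcal U$ in the sense that they act fiberwise as multiplications by $f,f^{-1}$ on each $L_2(\Omega;\AC^n)$; the constant matrices $f_0,f_0^{-1}$ trivially act in every fiber. Consequently, for any continuous bounded function $\Phi$ of $\mathcal A^{1/2}$ the operator $f\,\Phi(\mathcal A^{1/2})f^{-1}$ expands as the direct integral of $f\,\Phi(\mathcal A(\k)^{1/2})f^{-1}$, and the analogous identities hold for $f^*$ on the right and for $\mathcal A^0$. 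Combining these with \eqref{14.3a}, the operators $J_l(\eps^{-1}\tau)\mathcal R(\eps)^{s/2}$ ($l=1,2,3$), as well as $J_3(\eps^{-1}\tau)$ by itself, are seen to expand in direct integrals whose fibers are precisely $J_l(\k,\eps^{-1}\tau)\mathcal R(\k,\eps)^{s/2}$ and $J_3(\k,\eps^{-1}\tau)$, respectively.

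Since the norm of a direct integral of bounded operators equals the essential supremum of the fiber norms, we obtain
\[
\| J_l(\eps^{-1}\tau)\mathcal R(\eps)^{s/2}\|_{L_2(\R^d)\to L_2(\R^d)}
= \esssup_{\k\in\wt\Omega}\| J_l(\k,\eps^{-1}\tau)\mathcal R(\k,\eps)^{s/2}\|_{L_2(\Omega)\to L_2(\Omega)},
\]
and similarly for $J_3(\eps^{-1}\tau)$ without the smoothing factor. The estimates \eqref{14.30}, \eqref{14.31}, \eqref{14.32} then follow from the corresponding fiber bounds \eqref{12.7}, \eqref{12.8}, \eqref{12.8a} of Theorem~\ref{th12.1}, with the same constants $\mathrm C_1=\mathcal C_1$, $\mathrm C_2=\mathcal C_2$, $\widetilde{\mathrm C}_2=\widetilde{\mathcal C}_2$; the bound \eqref{14.32a} follows from Proposition~\ref{prop12.1a} with $\mathrm C_2'=\mathcal C_2'$. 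The dependence of the constants on $\alpha_0,\alpha_1,\|g\|_{L_\infty},\|g^{-1}\|_{L_\infty},\|f\|_{L_\infty},\|f^{-1}\|_{L_\infty}, r_0$ is inherited verbatim from the fiber estimates.

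There is essentially no technical obstacle here; the only points requiring care are the measurability of the fiber operators in $\k$ (which is clear because all ingredients depend continuously, in the appropriate sense, on $\k$, and $f,f^{-1}$ are independent of $\k$) and the fact that the estimates from Theorem~\ref{th12.1} and Proposition~\ref{prop12.1a} are uniform in $\k\in\wt\Omega$, which was already verified when those results were established by tracking the dependence of the constants on the parameters of the problem.
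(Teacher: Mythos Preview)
Your proposal is correct and follows essentially the same route as the paper: the paper also reduces the $L_2(\R^d)$ norms to $\esssup_{\k\in\wt\Omega}$ of the fiber norms via the direct integral expansions \eqref{decompose} and \eqref{14.3a}, and then invokes Theorem~\ref{th12.1} for \eqref{14.30}--\eqref{14.32} and Proposition~\ref{prop12.1a} for \eqref{14.32a}. Your remarks on measurability and on the uniformity in $\k$ of the constants are accurate and in the spirit of the paper's treatment.
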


Earlier, estimate \eqref{14.30} was obtained in  \cite[Theorem 10.2]{BSu5}, inequality \eqref{14.31} was proved in  \cite[Theorem 8.1]{M}, and \eqref{14.32} was proved in  \cite[Theorem 10.5]{DSu}.

\begin{theorem}
	\label{th14.16}
 Suppose that Condition \emph{\ref{cond_BB}} or Condition~\emph{\ref{sndw_cond1}} {\rm (}or more restrictive Condition~\emph{\ref{sndw_cond2}}{\rm )} is satisfied. 
 Then for $\tau \in \mathbb{R}$ and $\varepsilon > 0$ we have
 	\begin{align}
	\nonumber
	&\bigl\|  {J}_1(\varepsilon^{-1} \tau) \mathcal{R}(\varepsilon)^{3/4} \bigr\|_{L_2(\mathbb{R}^d) \to L_2(\mathbb{R}^d)} \le {\mathrm{C}}_3(1 + |\tau|)^{1/2} \varepsilon,
	\\
	\nonumber
	&\bigl\| {J}_3(\varepsilon^{-1} \tau) \mathcal{R}(\varepsilon)^{1/4} \bigr\|_{L_2(\mathbb{R}^d) \to L_2(\mathbb{R}^d)} \le {\mathrm{C}}_4(1 + |\tau|)^{1/2},
	\\
	\label{14.34a}
 &\bigl\| {J}_3(\varepsilon^{-1} \tau) \bigr\|_{L_2(\mathbb{R}^d) \to L_2(\mathbb{R}^d)} 
	\le {\mathrm{C}}_4'(1 + \eps^{-1/3} |\tau|^{1/3}). 	
	\end{align}
	Under Condition \emph{\ref{cond_BB}}, the constants  ${\mathrm{C}}_3,$ ${\mathrm{C}}_4,$ and ${\mathrm{C}}_4'$ depend only on $\alpha_0,$ $\alpha_1,$ 
	$\|g\|_{L_\infty},$ $\|g^{-1}\|_{L_\infty},$ $\|f\|_{L_\infty},$ $\|f^{-1}\|_{L_\infty},$ and $r_0$.
	Under Condition \emph{\ref{sndw_cond1}}, these constants depend on the same parameters and on  $n,$
	${c}^{\circ}$.
\end{theorem}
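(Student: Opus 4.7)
The proof of Theorem \ref{th14.16} proceeds by the same direct-integral strategy already used for Theorem \ref{th14.15}, the only new input being the improved fiber estimates of \S\ref{sec12}. The plan is as follows.

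First, I would observe that the operators ${J}_1(\varepsilon^{-1}\tau)$, ${J}_3(\varepsilon^{-1}\tau)$, and the smoothing factor $\mathcal{R}(\varepsilon)$ all expand in the direct integral under the Gelfand transform $\mathcal{U}$: the operators ${\mathcal A}$ and ${\mathcal A}^0$ decompose via \eqref{decompose}; multiplication by $f$, $f^{-1}$, $f_0$, $f_0^{-1}$ commutes with $\mathcal{U}$ (since these factors act fiberwise, either as periodic multiplication or as constant matrices); and $\mathcal{R}(\varepsilon)$ decomposes as in \eqref{14.3a}. Consequently, for $l=1,3$ and $s\ge 0$,
\begin{equation*}
\bigl\| {J}_l(\varepsilon^{-1}\tau)\mathcal{R}(\varepsilon)^{s/2}\bigr\|_{L_2(\mathbb{R}^d)\to L_2(\mathbb{R}^d)}
= \esssup_{\mathbf{k}\in\widetilde\Omega}\bigl\| {J}_l(\mathbf{k},\varepsilon^{-1}\tau)\mathcal{R}(\mathbf{k},\varepsilon)^{s/2}\bigr\|_{L_2(\Omega)\to L_2(\Omega)},
\end{equation*}
and similarly
\begin{equation*}
\bigl\| {J}_3(\varepsilon^{-1}\tau)\bigr\|_{L_2(\mathbb{R}^d)\to L_2(\mathbb{R}^d)}
= \esssup_{\mathbf{k}\in\widetilde\Omega}\bigl\| {J}_3(\mathbf{k},\varepsilon^{-1}\tau)\bigr\|_{L_2(\Omega)\to L_2(\Omega)}.
\end{equation*}
This is just the statement that the norm of a decomposable operator on a direct integral equals the essential supremum of the fiber norms.

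Second, with this reduction in hand, the three claimed bounds follow at once from the corresponding fiber estimates. Under Condition \ref{cond_BB} one invokes Theorem \ref{th12.2} (inequalities \eqref{12.7a}, \eqref{12.8b}) for the first two estimates, and Proposition \ref{prop12.2a} (inequality \eqref{12.9x}) for \eqref{14.34a}. Under Condition \ref{sndw_cond1} (which is in particular ensured by the more restrictive Condition \ref{sndw_cond2}) one uses instead Theorem \ref{th12.3} (inequality \eqref{12.18}) together with the analogous bound on $\|J_1(\mathbf{k},\varepsilon^{-1}\tau)\mathcal{R}(\mathbf{k},\varepsilon)^{3/4}\|$ contained in Theorem \ref{th12.3}, and Proposition \ref{prop12.3a} for the third estimate. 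Taking the essential supremum in $\mathbf{k}\in\widetilde\Omega$ preserves the uniform constants from \S\ref{sec12}, so one can set $\mathrm{C}_3=\mathcal{C}_5$, $\mathrm{C}_4=\mathcal{C}_6$, and $\mathrm{C}_4'=\mathcal{C}_6'$ (respectively $\mathcal{C}_3$, $\mathcal{C}_4$, $\mathcal{C}_4'$ under Condition \ref{cond_BB}), and the dependence on the parameters stated in the theorem is inherited exactly from the corresponding fiber constants.

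There is essentially no obstacle here beyond verifying that the fiber constants are uniform in $\boldsymbol\theta=\mathbf{k}/|\mathbf{k}|$, which is already guaranteed by the analysis of \S\ref{sec12} (the abstract constants depend only on $\alpha_0,\alpha_1,\|g\|_{L_\infty},\|g^{-1}\|_{L_\infty},\|f\|_{L_\infty},\|f^{-1}\|_{L_\infty},r_0$, and — under Condition \ref{sndw_cond1} — on $n$ and the uniform gap $c^\circ$ from \eqref{c^circ}, the uniformity of $c^\circ$ being the reason for imposing the nonintersection hypothesis in that condition). The only mild subtlety is the observation already used in Theorem \ref{th14.15}: the Gelfand transform intertwines $f$ and $f_0$ with fiberwise multiplication by the same matrix-valued functions, so the bordering factors in \eqref{14.26}--\eqref{14.28} faithfully match those in \eqref{J1}--\eqref{J2tilde} on each fiber. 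Once this is noted, the theorem follows with no further work.
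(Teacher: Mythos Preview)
Your proposal is correct and follows exactly the approach the paper takes: the paper states just before Theorems \ref{th14.15} and \ref{th14.16} the direct-integral identity $\|{J}_l(\varepsilon^{-1}\tau)\mathcal{R}(\varepsilon)^{s/2}\|_{L_2(\mathbb{R}^d)\to L_2(\mathbb{R}^d)} = \esssup_{\mathbf{k}\in\widetilde\Omega}\|{J}_l(\mathbf{k},\varepsilon^{-1}\tau)\mathcal{R}(\mathbf{k},\varepsilon)^{s/2}\|_{L_2(\Omega)\to L_2(\Omega)}$ and then says that Theorems \ref{th12.2}, \ref{th12.3} and Propositions \ref{prop12.2a}, \ref{prop12.3a} directly imply the result. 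Your identification of which fiber results feed which estimate, and of the resulting parameter dependence of the constants, matches the paper precisely.
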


\subsection{ Approximation for the sandwiched operator $\mathcal{A}^{-1/2} \sin(\varepsilon^{-1} \tau \mathcal{A}^{1/2})$ in the energy norm}\label{sec14.5}
  
Denote
\begin{equation}\label{10.7a}
J(\tau) :=  f \mathcal{A}^{-1/2} \sin( \tau \mathcal{A}^{1/2})f^{-1}
- (I + \Lambda b(\mathbf{D}) \Pi) 
f_0(\mathcal{A}^0)^{-1/2} \sin( \tau (\mathcal{A}^0)^{1/2}) f_0^{-1}.  
\end{equation} 
Similarly to~(\ref{hat_norms_and_Gelfand_transf}), from the direct integral expansion it follows that
\begin{equation*}
\| \widehat{\mathcal{A}}^{1/2} J(\varepsilon^{-1} \tau) \mathcal{R}(\varepsilon)^{s/2} \|_{L_2(\mathbb{R}^d) \to L_2(\mathbb{R}^d)} 
= \esssup_{\mathbf{k} \in \widetilde{\Omega}} \| \widehat{\mathcal{A}}(\mathbf{k})^{1/2} J(\mathbf{k}, \varepsilon^{-1} \tau) \mathcal{R}(\mathbf{k}, \varepsilon)^{s/2} \|_{L_2(\Omega) \to L_2(\Omega)}.
\end{equation*}
Therefore, Theorems~\ref{sndw_A(k)_sin_general_thrm}, \ref{sndw_sin_enchanced_thrm_1}, and \ref{sndw_sin_enchanced_thrm_2} directly imply the following statements.

\begin{theorem}[see~\cite{M}]
	\label{sndw_A_sin_general_thrm}
	 Let $J(\tau)$ be the operator defined by \eqref{10.7a}. For $\tau \in \mathbb{R}$ and $\varepsilon > 0$
	 we have 
	\begin{align*}
	\| \widehat{\mathcal{A}}^{1/2} J(\varepsilon^{-1} \tau) \mathcal{R}(\varepsilon) \|_{L_2(\mathbb{R}^d) \to L_2(\mathbb{R}^d)} \le \mathrm{C}_5(1+ |\tau|) \varepsilon,
	\end{align*}
	where $\mathrm{C}_5$ depends only on $\alpha_0,\alpha_1,\|g\|_{L_\infty},\|g^{-1}\|_{L_\infty},\|f\|_{L_\infty},\|f^{-1}\|_{L_\infty},r_0,$ and~$r_1$.
\end{theorem}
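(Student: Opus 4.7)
The plan is to reduce the estimate on $L_2(\mathbb{R}^d)$ to the fiber estimate already established in Theorem~\ref{sndw_A(k)_sin_general_thrm}, via the Floquet--Bloch decomposition. The key identity is that under the Gelfand transformation $\mathcal{U}$, the operator
$$
\widehat{\mathcal{A}}^{1/2} J(\varepsilon^{-1}\tau) \mathcal{R}(\varepsilon)
$$
becomes a direct integral of the fiber operators $\widehat{\mathcal{A}}(\mathbf{k})^{1/2} J(\mathbf{k}, \varepsilon^{-1}\tau) \mathcal{R}(\mathbf{k}, \varepsilon)$, with $J(\mathbf{k},\tau)$ given by \eqref{9.0}.

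First I would verify this fiberwise decomposition. By \eqref{decompose}, both $\mathcal{A}$ and $\mathcal{A}^0$ fiber as direct integrals of $\mathcal{A}(\mathbf{k})$ and $\mathcal{A}^0(\mathbf{k})$ respectively, and the same holds for their functional calculus, so $\mathcal{A}^{-1/2}\sin(\tau \mathcal{A}^{1/2})$ and its effective counterpart fiber correspondingly. Multiplication by the periodic matrices $f$, $f^{-1}$, $\Lambda$ (and by the constants $f_0$, $f_0^{-1}$) acts pointwise on each fiber of the direct integral $\mathcal{H} = \int^\oplus_{\widetilde\Omega} L_2(\Omega;\mathbb{C}^n)\,d\mathbf{k}$, and $b(\mathbf{D})$ on $L_2(\mathbb{R}^d)$ becomes $b(\mathbf{D}+\mathbf{k})$ on the $\mathbf{k}$-fiber. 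By definition of $\Pi = \mathcal{U}^{-1}[\widehat{P}]\mathcal{U}$, the pseudodifferential factor $\Pi$ becomes the averaging projection $\widehat{P}$ on each fiber. Putting these together, the operator $J(\tau)$ from \eqref{10.7a} fibers precisely as $J(\mathbf{k},\tau)$ from \eqref{9.0}. The factor $\widehat{\mathcal{A}}^{1/2}$ on the left fibers as $\widehat{\mathcal{A}}(\mathbf{k})^{1/2}$, and by \eqref{14.3a} the smoothing factor $\mathcal{R}(\varepsilon)$ fibers as $\mathcal{R}(\mathbf{k},\varepsilon)$.

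Next I would invoke the general fact that the norm of a direct integral operator equals the essential supremum of the fiber norms, obtaining
$$
\| \widehat{\mathcal{A}}^{1/2} J(\varepsilon^{-1}\tau) \mathcal{R}(\varepsilon)\|_{L_2(\mathbb{R}^d) \to L_2(\mathbb{R}^d)}
= \esssup_{\mathbf{k}\in\widetilde\Omega} \| \widehat{\mathcal{A}}(\mathbf{k})^{1/2} J(\mathbf{k}, \varepsilon^{-1}\tau) \mathcal{R}(\mathbf{k}, \varepsilon)\|_{L_2(\Omega) \to L_2(\Omega)}.
$$
Applying Theorem~\ref{sndw_A(k)_sin_general_thrm}, whose constant $\mathcal{C}_7$ is uniform in $\mathbf{k}$ and depends only on $\alpha_0, \alpha_1, \|g\|_{L_\infty}, \|g^{-1}\|_{L_\infty}, \|f\|_{L_\infty}, \|f^{-1}\|_{L_\infty}, r_0, r_1$, one deduces the desired bound with $\mathrm{C}_5 = \mathcal{C}_7$.

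The main obstacle, and the only nontrivial verification, is the fiberwise identification of $(I + \Lambda b(\mathbf{D}) \Pi) f_0 (\mathcal{A}^0)^{-1/2}\sin(\tau(\mathcal{A}^0)^{1/2}) f_0^{-1}$ as $(I + \Lambda b(\mathbf{D}+\mathbf{k}) \widehat{P}) f_0 \mathcal{A}^0(\mathbf{k})^{-1/2}\sin(\tau \mathcal{A}^0(\mathbf{k})^{1/2}) f_0^{-1}$ on each fiber, which requires commuting $\Pi$ past the other operators through its definition as $\mathcal{U}^{-1}[\widehat{P}]\mathcal{U}$; everything else is routine once the fiber decomposition is accepted. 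Since the analog of this reduction has already been carried out for Theorem~\ref{A_sin_general_thrm} in Subsection~\ref{sec14.2} and for Theorem~\ref{th14.15} in Subsection~\ref{sec14.4} (sandwiched case), no new ideas beyond the direct integral machinery and the uniform fiber estimate are needed.
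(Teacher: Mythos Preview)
Your proposal is correct and follows essentially the same approach as the paper: the paper reduces the estimate to the fiber estimate of Theorem~\ref{sndw_A(k)_sin_general_thrm} via the direct integral expansion, obtaining the identity
\[
\| \widehat{\mathcal{A}}^{1/2} J(\varepsilon^{-1} \tau) \mathcal{R}(\varepsilon)^{s/2} \|_{L_2(\mathbb{R}^d) \to L_2(\mathbb{R}^d)}
= \esssup_{\mathbf{k} \in \widetilde{\Omega}} \| \widehat{\mathcal{A}}(\mathbf{k})^{1/2} J(\mathbf{k}, \varepsilon^{-1} \tau) \mathcal{R}(\mathbf{k}, \varepsilon)^{s/2} \|_{L_2(\Omega) \to L_2(\Omega)},
\]
exactly as you describe, and then applies the uniform fiber bound. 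Your verification of the fiberwise identification (including the role of $\Pi = \mathcal{U}^{-1}[\widehat{P}]\mathcal{U}$) is more detailed than what the paper spells out, but the argument is the same.
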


\begin{theorem}
	\label{sndw_A_sin_enchncd_thrm_1}
	 Suppose that Condition \emph{\ref{cond_BB}} or Condition~\emph{\ref{sndw_cond1}} {\rm (}or more restrictive Condition~\emph{\ref{sndw_cond2}}{\rm )} is satisfied. 
Then for $\tau \in \mathbb{R}$ and $\varepsilon > 0$ we have
	\begin{align*}
	\| \widehat{\mathcal{A}}^{1/2} J(\varepsilon^{-1} \tau) \mathcal{R}(\varepsilon)^{3/4} \|_{L_2(\mathbb{R}^d) \to L_2(\mathbb{R}^d)} \le \mathrm{C}_6(1+ |\tau|)^{1/2} \varepsilon.
		\end{align*}
	Under Condition \emph{\ref{cond_BB}}, the constant  ${\mathrm{C}}_6$ depends only on $\alpha_0,$ $\alpha_1,$ $\|g\|_{L_\infty},$ $\|g^{-1}\|_{L_\infty},$ $\|f\|_{L_\infty},$ $\|f^{-1}\|_{L_\infty},$ $r_0,$ and $r_1$.
	Under Condition \emph{\ref{sndw_cond1}}, this constant depends on the same parameters and on $n,$
	${c}^{\circ}$.
\end{theorem}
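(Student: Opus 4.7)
The plan is to deduce Theorem \ref{sndw_A_sin_enchncd_thrm_1} from the fiber-wise estimates of Theorems \ref{sndw_sin_enchanced_thrm_1} and \ref{sndw_sin_enchanced_thrm_2} by means of the Gelfand direct integral decomposition, exactly as Theorem \ref{sndw_A_sin_general_thrm} was derived from Theorem \ref{sndw_A(k)_sin_general_thrm} earlier in this subsection.

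First, I would verify that the operator $J(\tau)$ defined in \eqref{10.7a} expands, under the Gelfand transform $\mathcal{U}$, into the direct integral of the fiber operators $J(\mathbf{k}, \tau)$ from \eqref{9.0}. For the summand $f \mathcal{A}^{-1/2} \sin(\tau \mathcal{A}^{1/2}) f^{-1}$ this follows immediately from the direct integral decomposition \eqref{decompose} applied to $\mathcal{A}$ and to $\mathcal{A}^0$, since $f$ and $f_0$ act fiber-wise (the latter being constant). For the corrector $\Lambda b(\mathbf{D}) \Pi$, note that under the Gelfand transform $b(\mathbf{D})$ corresponds fiber-wise to $b(\mathbf{D}+\mathbf{k})$, while by definition $\Pi = \mathcal{U}^{-1}[\widehat{P}]\mathcal{U}$ corresponds fiber-wise to the averaging projection $\widehat{P}$ in $L_2(\Omega;\mathbb{C}^n)$. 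Thus $\Lambda b(\mathbf{D})\Pi$ is realized on each fiber as $\Lambda b(\mathbf{D}+\mathbf{k})\widehat{P}$, which matches the corrector in \eqref{9.0}.

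Similarly, the operator $\widehat{\mathcal{A}}^{1/2}$ expands into the fibers $\widehat{\mathcal{A}}(\mathbf{k})^{1/2}$, and by \eqref{14.3a} the smoothing factor $\mathcal{R}(\varepsilon)^{3/4}$ expands into $\mathcal{R}(\mathbf{k},\varepsilon)^{3/4}$. Consequently,
\begin{equation*}
\|\widehat{\mathcal{A}}^{1/2} J(\varepsilon^{-1}\tau)\mathcal{R}(\varepsilon)^{3/4}\|_{L_2(\mathbb{R}^d)\to L_2(\mathbb{R}^d)}
= \esssup_{\mathbf{k}\in\widetilde{\Omega}} \|\widehat{\mathcal{A}}(\mathbf{k})^{1/2} J(\mathbf{k},\varepsilon^{-1}\tau)\mathcal{R}(\mathbf{k},\varepsilon)^{3/4}\|_{L_2(\Omega)\to L_2(\Omega)}.
\end{equation*}

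The final step is to apply the uniform fiber-wise estimates: under Condition \ref{cond_BB} we invoke Theorem \ref{sndw_sin_enchanced_thrm_1}, and under Condition \ref{sndw_cond1} (a fortiori under Condition \ref{sndw_cond2}) we invoke Theorem \ref{sndw_sin_enchanced_thrm_2}. In either case the bound $\le \mathcal{C}\,(1+|\tau|)^{1/2}\varepsilon$ holds uniformly in $\mathbf{k}\in\widetilde{\Omega}$, with the constant $\mathcal{C}$ depending only on the parameters listed in the statement. Passing to the essential supremum yields the claim with $\mathrm{C}_6 = \mathcal{C}_8$ under Condition \ref{cond_BB} and $\mathrm{C}_6 = \mathcal{C}_9$ under Condition \ref{sndw_cond1}.

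There is no real obstacle here: the deep work has already been done in \S\ref{sec12}, where the fiber-wise estimates were established uniformly in $\mathbf{k}$ (in particular, the uniformity in $\boldsymbol{\theta}\in\mathbb{S}^{d-1}$ achieved via the choice of $c^\circ$ independent of $\boldsymbol{\theta}$ under Condition \ref{sndw_cond1}). The only point meriting care is the correct identification of the corrector under the Gelfand transform, i.e.\ verifying that the pseudodifferential operator $\Pi$ with symbol $\chi_{\widetilde{\Omega}}$ fibers into $\widehat{P}$ on each $L_2(\Omega;\mathbb{C}^n)$; this is precisely the content of \cite[(6.8)]{BSu3} recalled in Subsection \ref{sec14.2}.
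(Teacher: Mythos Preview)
Your proposal is correct and follows essentially the same approach as the paper: the paper states (just before Theorem~\ref{sndw_A_sin_general_thrm}) the direct-integral identity
\[
\| \widehat{\mathcal{A}}^{1/2} J(\varepsilon^{-1} \tau) \mathcal{R}(\varepsilon)^{s/2} \|_{L_2(\mathbb{R}^d) \to L_2(\mathbb{R}^d)}
= \esssup_{\mathbf{k} \in \widetilde{\Omega}} \| \widehat{\mathcal{A}}(\mathbf{k})^{1/2} J(\mathbf{k}, \varepsilon^{-1} \tau) \mathcal{R}(\mathbf{k}, \varepsilon)^{s/2} \|_{L_2(\Omega) \to L_2(\Omega)},
\]
and then says that Theorems~\ref{sndw_sin_enchanced_thrm_1} and~\ref{sndw_sin_enchanced_thrm_2} directly imply Theorem~\ref{sndw_A_sin_enchncd_thrm_1}. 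Your write-up simply makes this argument more explicit, including the verification that the corrector $\Lambda b(\mathbf{D})\Pi$ fibers into $\Lambda b(\mathbf{D}+\mathbf{k})\widehat{P}$.
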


Theorem~\ref{sndw_A_sin_general_thrm} was known earlier  (see~\cite[Theorem 8.1]{M}).

\subsection{Sharpness of the results of Subsections \ref{sec14.4} and \ref{sec14.5}\label{sec14.6}} 

Theorems of  \S \ref{sec13} imply that the results of Subsections \ref{sec14.4} and \ref{sec14.5} are sharp. 
We start with the sharpness  regarding  the smoothing factor. Applying Theorem~\ref{th13.2}, we confirm that Theorems~\ref{th14.15} and~\ref{sndw_A_sin_general_thrm} are sharp.

\begin{theorem}
	\label{th14.21}
	Suppose that Condition \emph{\ref{cond13.1}} is satisfied.

\noindent $1^\circ$. Let $0 \ne \tau \in \mathbb{R}$ and $0 \le s < 2$. 
Then there does not exist a constant $\mathcal{C}(\tau) > 0$ such that the estimate
	\begin{equation}
	\label{14.39}
	\bigl\|  {J}_1(\varepsilon^{-1} \tau) \mathcal{R}(\varepsilon)^{s/2} \bigr\|_{L_2(\mathbb{R}^d) \to L_2(\mathbb{R}^d)} \le \mathcal{C}(\tau) \varepsilon
	\end{equation}
holds for all sufficiently small $\varepsilon > 0$.
	
\noindent 
$2^\circ$. Let $0 \ne \tau \in \mathbb{R}$ and $0 \le r < 1$. 
Then there does not exist a constant $\mathcal{C}(\tau) > 0$ such that the estimate 
	\begin{equation}
	\label{14.40}
	\bigl\|  {J}_2(\varepsilon^{-1} \tau) \mathcal{R}(\varepsilon)^{r/2} \bigr\|_{L_2(\mathbb{R}^d) \to L_2(\mathbb{R}^d)} \le \mathcal{C}(\tau) 
	\end{equation}
holds for all sufficiently small $\varepsilon > 0$.

\noindent 
$3^\circ$. Let  $0 \ne \tau \in \mathbb{R}$ and $0 \le r < 1$. 
Then there does not exist a constant $\mathcal{C}(\tau) > 0$ such that the estimate 
	\begin{equation}
	\label{14.41}
	\bigl\|  {J}_3(\varepsilon^{-1} \tau) \mathcal{R}(\varepsilon)^{r/2} \bigr\|_{L_2(\mathbb{R}^d) \to L_2(\mathbb{R}^d)} \le \mathcal{C}(\tau)
	\end{equation}
holds for all sufficiently small $\varepsilon > 0$.

\noindent $4^\circ$. Let $0 \ne \tau \in \mathbb{R}$ and $0 \le s < 2$. Then there does not exist a constant $\mathcal{C}(\tau) > 0$ such that the estimate
	\begin{equation}
		\label{s<2_sinA_est_1}
		\bigl\| \widehat{\mathcal{A}}^{1/2} J( \varepsilon^{-1} \tau)  \mathcal{R}( \varepsilon)^{s/2} \bigr\|_{L_2(\Omega) \to L_2 (\Omega) }  \le \mathcal{C} (\tau) \varepsilon
	\end{equation}
	holds for all sufficiently small $\varepsilon > 0$.
	\end{theorem}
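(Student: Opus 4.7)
The plan is to prove all four statements by contradiction, by transferring the hypothetical estimate on $L_2(\mathbb{R}^d;\mathbb{C}^n)$ down to the fibers of the direct integral decomposition and then invoking the corresponding fiber-level non-existence result of Theorem~\ref{th13.2}. This parallels the proof of Theorem~\ref{th14.7} but with the operators~$J_l$ and $J$ of \S\ref{sec14.4}--\ref{sec14.5} in place of their \textquotedblleft hat\textquotedblright\ counterparts.

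The first step is the direct integral identity. Since $\mathcal{A}$ and $\mathcal{A}^0$ admit expansions of the form~\eqref{decompose} under the Gelfand transform, and since $\mathcal{R}(\varepsilon)$ expands as in~\eqref{14.3a}, while multiplication by $f,f^{-1}$ (periodic, hence respecting the fiber structure) and by the constant matrix $f_0$ commute with the Gelfand transform in the standard sense, the operators ${J}_1(\tau), {J}_2(\tau), {J}_3(\tau)$ defined by~\eqref{14.26}--\eqref{14.28} expand in the direct integral of the fiber operators ${J}_1(\mathbf{k},\tau), {J}_2(\mathbf{k},\tau), {J}_3(\mathbf{k},\tau)$ from \S\ref{sec12}. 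Consequently, for every $s\ge 0$, $\tau\in\mathbb{R}$, $\varepsilon>0$, and $l=1,2,3$,
\begin{equation*}
\|{J}_l(\varepsilon^{-1}\tau) \mathcal{R}(\varepsilon)^{s/2}\|_{L_2(\mathbb{R}^d)\to L_2(\mathbb{R}^d)}
= \esssup_{\mathbf{k}\in\widetilde{\Omega}} \|{J}_l(\mathbf{k},\varepsilon^{-1}\tau) \mathcal{R}(\mathbf{k},\varepsilon)^{s/2}\|_{L_2(\Omega)\to L_2(\Omega)}.
\end{equation*}
An analogous identity holds for $\widehat{\mathcal{A}}^{1/2} J(\varepsilon^{-1}\tau) \mathcal{R}(\varepsilon)^{s/2}$ and its fiber version $\widehat{\mathcal{A}}(\mathbf{k})^{1/2} J(\mathbf{k},\varepsilon^{-1}\tau)\mathcal{R}(\mathbf{k},\varepsilon)^{s/2}$, using also that $\widehat{\mathcal{A}}$ is fibered and that the operator $\Pi$ appearing in~\eqref{10.7a} is the pseudodifferential operator with symbol~$\chi_{\widetilde{\Omega}}$, whose direct integral fibers act as the averaging projection~$\widehat{P}$.

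The second step is to close each contradiction. For statement $1^\circ$, assume that for some $\tau\ne 0$ and $0\le s<2$ there exists $\mathcal{C}(\tau)>0$ such that~\eqref{14.39} holds for all sufficiently small $\varepsilon>0$. The fiberwise identity above then forces estimate~\eqref{13.1} (with the same constant $\mathcal{C}(\tau)$) to hold for almost every $\mathbf{k}\in\widetilde{\Omega}$ and sufficiently small $\varepsilon>0$, contradicting statement~$1^\circ$ of Theorem~\ref{th13.2}. Statements~$2^\circ$ and~$3^\circ$ are proved identically, reducing~\eqref{14.40} to~\eqref{13.2} and~\eqref{14.41} to~\eqref{13.3}, and invoking statements~$2^\circ$ and~$3^\circ$ of Theorem~\ref{th13.2}, respectively. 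Statement~$4^\circ$ follows in the same way, reducing~\eqref{s<2_sinA_est_1} to~\eqref{s<2_sinA(k)_est_1} and applying statement~$4^\circ$ of Theorem~\ref{th13.2}. In every case Condition~\ref{cond13.1}, which is the hypothesis of the current theorem and of Theorem~\ref{th13.2}, is what makes the fiber-level sharpness available.

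There is no serious obstacle: all the genuine analytic difficulty (construction of the test sequence $\varepsilon_k\to 0$ and the appropriate momenta $\mathbf{k}= t\boldsymbol{\theta}_0$, use of Lemma~\ref{A(k)_Lipschitz_lemma} to migrate pointwise Lipschitz estimates to any direction where $\widehat{N}_{0,Q}(\boldsymbol{\theta}_0)\ne 0$, and reduction to the abstract non-improvability results of \S\ref{abstr_sandwiched_section}) has already been carried out inside Theorem~\ref{th13.2}. The only point requiring care is ensuring that the sandwiching factors $f,f^{-1},f_0,f_0^{-1}$ and the corrector factor $I+\Lambda b(\mathbf{D})\Pi$ are genuinely decomposable into the fibers of the direct integral so that the $\esssup$-formula is valid; this is immediate from the periodicity of $\Lambda$ and $f$, the constancy of $f_0$, and the pseudodifferential description of~$\Pi$.
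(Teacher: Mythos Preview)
Your proof is correct and follows exactly the approach indicated in the paper: the paper states that Theorem~\ref{th14.21} is obtained by applying Theorem~\ref{th13.2}, and the argument you spell out (passing via the direct integral identities for $J_l$ and for $\widehat{\mathcal{A}}^{1/2} J$ already recorded in \S\ref{sec14.4}--\ref{sec14.5}, then contradicting the fiber-level non-existence statements) is precisely the intended one, paralleling the proof of Theorem~\ref{th14.7}.
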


Theorem~\ref{th13.3} implies the following statement demonstrating that Theorems~\ref{th14.16} and \ref{sndw_A_sin_enchncd_thrm_1} are sharp.

\begin{theorem}
	\label{th14.22}
	Suppose that Condition \emph{\ref{cond13.2}} is satisfied.

\noindent $1^\circ$. Let $0 \ne \tau \in \mathbb{R}$ and  $0 \le s < 3/2$. Then there does not exist a constant $\mathcal{C}(\tau) > 0$ such that 
 \eqref{14.39} holds for all sufficiently small $\varepsilon > 0$.

\noindent 
$2^\circ$.  Let $0 \ne \tau \in \mathbb{R}$ and $0 \le r < 1/2$. 
Then there does not exist a constant $\mathcal{C}(\tau) > 0$ such that 
 \eqref{14.41} holds for all sufficiently small $\varepsilon > 0$.

\noindent 
$3^\circ$.  Let $0 \ne \tau \in \mathbb{R}$ and $0 \le s < 3/2$. 
Then there does not exist a constant $\mathcal{C}(\tau) > 0$ such that  \eqref{s<2_sinA_est_1} holds for all sufficiently small $\varepsilon > 0$.	
	\end{theorem}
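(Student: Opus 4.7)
\textbf{Proof proposal for Theorem \ref{th14.22}.} The plan is to reduce each of the three assertions to the corresponding sharpness statement of Theorem~\ref{th13.3} by means of the direct integral expansion, mirroring the template already used in the proofs of Theorems \ref{th14.7} and \ref{th14.21}. In the present case, Condition~\ref{cond13.2} is precisely the condition needed to apply Theorem~\ref{th13.3}, so no additional structural work on the germ is required.

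First I would argue by contradiction for statement~$1^\circ$. Suppose that for some $\tau \ne 0$ and some $0 \le s < 3/2$ there exists $\mathcal{C}(\tau) > 0$ such that \eqref{14.39} holds for all sufficiently small $\varepsilon>0$. Under the Gelfand transform the operators $\mathcal{A}$, $\mathcal{A}^0$ and $\mathcal{H}_0$ expand in direct integrals over $\widetilde{\Omega}$ (see \eqref{decompose} and \eqref{14.3a}), so the operator norm on $L_2(\mathbb{R}^d)$ equals the essential supremum over $\mathbf{k}\in\widetilde{\Omega}$ of the fiber norms; this is exactly the analog of \eqref{14.5} written for the sandwiched operator $J_1(\mathbf{k},\tau)$ from \eqref{J1}. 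Consequently, the assumed estimate \eqref{14.39} forces estimate \eqref{13.1} of Theorem~\ref{th13.3} to hold for almost every $\mathbf{k}\in\widetilde{\Omega}$ and all sufficiently small $\varepsilon$, which contradicts statement~$1^\circ$ of that theorem.

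Statement~$2^\circ$ is handled in exactly the same manner: the direct integral identity applied to $J_3(\tau)$ converts the hypothetical bound \eqref{14.41} into the pointwise-in-$\mathbf{k}$ bound \eqref{13.3}, and statement~$2^\circ$ of Theorem~\ref{th13.3} yields a contradiction for any $0 \le r < 1/2$. For statement~$3^\circ$, the same procedure is applied to the energy-type operator $\widehat{\mathcal{A}}^{1/2} J(\tau)$ from \eqref{10.7a}; here the relevant identity is the analog of \eqref{hat_norms_and_Gelfand_transf} (valid because $\widehat{\mathcal{A}}$ itself expands in the direct integral of $\widehat{\mathcal{A}}(\mathbf{k})$), and the contradiction is obtained from statement~$3^\circ$ of Theorem~\ref{th13.3}.

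There is no real obstacle beyond checking that each norm in the $(H^s\!\to\! L_2)$- or $(H^s\!\to\! H^1)$-type estimate factors correctly through the direct integral. The only point requiring a brief justification is the identity relating $\|\widehat{\mathcal{A}}^{1/2} J(\varepsilon^{-1}\tau) \mathcal{R}(\varepsilon)^{s/2}\|_{L_2(\mathbb{R}^d)\to L_2(\mathbb{R}^d)}$ to its fiberwise analog; this follows from the fact that all of $\mathcal{A}$, $\mathcal{A}^0$, $\widehat{\mathcal{A}}$ and $\mathcal{H}_0$ are fibered by $\mathcal{U}$ over the same base $\widetilde{\Omega}$, so that the pseudodifferential corrector $I + \Lambda b(\mathbf{D})\Pi$ is fibered as well. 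Once this is recorded, the three contradiction arguments are completely parallel, and the theorem follows.
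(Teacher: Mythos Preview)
Your proposal is correct and follows exactly the approach the paper takes: the paper does not write out a separate proof but simply records that Theorem~\ref{th14.22} is implied by Theorem~\ref{th13.3}, via the same direct-integral reduction used explicitly in the proof of Theorem~\ref{th14.7}. Your remarks about the fiberwise identity for $\widehat{\mathcal{A}}^{1/2} J(\varepsilon^{-1}\tau)\mathcal{R}(\varepsilon)^{s/2}$ are accurate and in fact spell out slightly more than the paper does.
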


We proceed to the sharpness of the results regarding the dependence of estimates on the parameter $\tau$. 
Applying Theorem \ref{th13.6}, we arrive at the following statement confirming that Theorems~\ref{th14.15} and~\ref{sndw_A_sin_general_thrm} are sharp.

\begin{theorem}
	\label{th14.21a}
	Suppose that Condition \emph{\ref{cond13.1}} is satisfied.
	 
\noindent $1^\circ$.  Let $s \ge 2$. 
There does not exist a positive function
$\mathcal{C}(\tau)$ such that  $\lim_{\tau \to \infty} \mathcal{C}(\tau)/|\tau| =0$ 
	and  \eqref{14.39} holds for $\tau \in \R$ and sufficiently small $\varepsilon > 0$.

\noindent $2^\circ$. Let $r \ge 1$. 
There does not exist a positive function
$\mathcal{C}(\tau)$ such that  $\lim_{\tau \to \infty} \mathcal{C}(\tau)/|\tau| =0$ 
	and \eqref{14.40} holds for $\tau \in \R$ and sufficiently small $\varepsilon > 0$.

\noindent $3^\circ$.  Let $r \ge 1$. 
There does not exist a positive function
$\mathcal{C}(\tau)$ such that  $\lim_{\tau \to \infty} \mathcal{C}(\tau)/|\tau| =0$ 
and \eqref{14.41} holds for $\tau \in \R$ and sufficiently small $\varepsilon > 0$.

\noindent $4^\circ$. Let  $s \ge 2$. There does not exist a positive function
$\mathcal{C}(\tau)$ such that  $\lim_{\tau \to \infty} \mathcal{C}(\tau)/|\tau| =0$ 
 and \eqref{s<2_sinA_est_1} holds for $\tau \in \R$ and sufficiently small $\varepsilon > 0$.
	\end{theorem}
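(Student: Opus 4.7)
The plan is to proceed by contradiction, using the direct integral expansion to reduce each of the four statements to the corresponding statement of Theorem~\ref{th13.6}, just as Theorems~\ref{th14.11} and~\ref{th14.12} were derived in the proofs of the preceding sharpness results for $\widehat{\mathcal{A}}_\eps$. The key identity is that, by the direct integral decomposition \eqref{decompose} for $\mathcal{A}$ and $\mathcal{A}^0$ together with \eqref{14.3a}, for each $l = 1,2,3$ and $s \ge 0$ we have
\begin{equation*}
\bigl\| J_l(\eps^{-1}\tau)\mathcal{R}(\eps)^{s/2}\bigr\|_{L_2(\R^d)\to L_2(\R^d)} = \esssup_{\k\in\wt{\Omega}} \bigl\| J_l(\k,\eps^{-1}\tau)\mathcal{R}(\k,\eps)^{s/2}\bigr\|_{L_2(\Omega)\to L_2(\Omega)},
\end{equation*}
and analogously for the energy-norm quantity $\|\wh{\mathcal A}^{1/2} J(\eps^{-1}\tau)\mathcal{R}(\eps)^{s/2}\|$ with fibers $\|\wh{\mathcal A}(\k)^{1/2} J(\k,\eps^{-1}\tau)\mathcal{R}(\k,\eps)^{s/2}\|$.

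To prove statement $1^\circ$, I would suppose, towards a contradiction, that for some $s \ge 2$ there is a positive function $\mathcal{C}(\tau)$ with $\lim_{\tau\to\infty}\mathcal{C}(\tau)/|\tau|=0$ such that \eqref{14.39} holds for all $\tau\in\R$ and all sufficiently small $\eps>0$. The identity above would then force the fiberwise estimate \eqref{13.1} to hold with the same $\mathcal{C}(\tau)$ for almost every $\k\in\wt{\Omega}$ and all sufficiently small $\eps>0$, contradicting statement $1^\circ$ of Theorem~\ref{th13.6}. Statements $2^\circ$ and $3^\circ$ are checked in exactly the same fashion, using \eqref{14.40}, \eqref{14.41} and invoking statements $2^\circ$, $3^\circ$ of Theorem~\ref{th13.6}. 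For statement $4^\circ$, the same argument applied to the energy-norm identity, together with statement $4^\circ$ of Theorem~\ref{th13.6}, yields the conclusion.

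There is essentially no real obstacle here, since all the heavy lifting is already done at the fiber level in \S\ref{sec13}; the only point to be careful about is the quantifier match between the hypothesis on $\R^d$ (inequality for all $\tau$ and sufficiently small $\eps$) and the conclusion needed on the fibers (inequality for all $\tau$, almost every $\k$, and sufficiently small $\eps$). Since the direct-integral identity is an essential supremum in $\k$ of fiber norms, if the $L_2(\R^d)$-norm satisfies the bound for a given pair $(\tau,\eps)$, then the fiber bound holds with the same constant for a.e.\ $\k\in\wt{\Omega}$; the exceptional null set may depend on $(\tau,\eps)$, but the countable intersection over a sequence $\eps_k\downarrow 0$ (and the countably many values of $\tau$ used in the proof of Theorem~\ref{th13.6}) still gives a full-measure set of $\k$ on which the required fiber inequality holds, which is precisely what is needed to derive a contradiction with Theorem~\ref{th13.6}.
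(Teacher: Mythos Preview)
Your proposal is correct and follows exactly the paper's approach: the paper simply states that Theorem~\ref{th14.21a} is obtained by applying Theorem~\ref{th13.6} via the direct integral identity, which is precisely what you do. Your additional remark on handling the exceptional null sets is a reasonable bit of extra care but not something the paper bothers to spell out.
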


From Theorem \ref{th13.7} we deduce  the following result demonstrating that Theorems \ref{th14.16} and \ref{sndw_A_sin_enchncd_thrm_1} are sharp.

\begin{theorem}
	\label{th14.22a}
	Suppose that Condition \emph{\ref{cond13.2}} is satisfied.

\noindent $1^\circ$.  Let $s \ge 3/2$. 
There does not exist a positive function
$\mathcal{C}(\tau)$ such that  $\lim_{\tau \to \infty} \mathcal{C}(\tau)/|\tau|^{1/2} =0$  
 and \eqref{14.39} holds for $\tau \in \R$ and sufficiently small $\varepsilon > 0$.
 
\noindent $2^\circ$.  Let $r \ge 1/2$. 
There does not exist a positive function
$\mathcal{C}(\tau)$ such that  $\lim_{\tau \to \infty} \mathcal{C}(\tau)/|\tau|^{1/2} =0$ 
and \eqref{14.41} holds for $\tau \in \R$ and sufficiently small $\varepsilon > 0$.

\noindent $3^\circ$. Let $s \ge 3/2$. 
There does not exist a positive function
$\mathcal{C}(\tau)$ such that  $\lim_{\tau \to \infty} \mathcal{C}(\tau)/|\tau|^{1/2} =0$ and \eqref{s<2_sinA_est_1} holds for $\tau \in \R$ and sufficiently small $\varepsilon > 0$.
	\end{theorem}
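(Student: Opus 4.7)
The strategy is to reduce Theorem~\ref{th14.22a} to the fiber-level sharpness result Theorem~\ref{th13.7}, via the Gelfand transform and direct integral decomposition, in direct parallel with how Theorems~\ref{th14.21} and \ref{th14.21a} were deduced from Theorems~\ref{th13.2} and \ref{th13.6}, respectively. All three statements are proved by contradiction, and the three arguments differ only in which auxiliary operator one transports to the fibers.

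First I would treat statement $1^\circ$. Assume for contradiction that, for some $s \ge 3/2$, there exists a positive function $\mathcal{C}(\tau)$ with $\lim_{\tau \to \infty} \mathcal{C}(\tau)/|\tau|^{1/2} = 0$ such that \eqref{14.39} holds for all $\tau \in \mathbb{R}$ and all sufficiently small $\varepsilon > 0$. The identity
\[
\| J_1(\varepsilon^{-1} \tau) \mathcal{R}(\varepsilon)^{s/2} \|_{L_2(\mathbb{R}^d) \to L_2(\mathbb{R}^d)} = \esssup_{\mathbf{k} \in \widetilde{\Omega}} \| J_1(\mathbf{k}, \varepsilon^{-1} \tau) \mathcal{R}(\mathbf{k}, \varepsilon)^{s/2} \|_{L_2(\Omega) \to L_2(\Omega)},
\]
which was recorded in Subsection~\ref{sec14.4} and follows from the direct integral decomposition \eqref{decompose} applied to both $\mathcal{A}$ and $\mathcal{A}^0$ together with \eqref{14.3a}, then transfers the hypothesized bound to the fibers: with the same function $\mathcal{C}(\tau)$, estimate \eqref{13.1} holds for almost all $\mathbf{k} \in \widetilde{\Omega}$ and sufficiently small $\varepsilon > 0$. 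This contradicts statement $1^\circ$ of Theorem~\ref{th13.7} (whose hypothesis, Condition~\ref{cond13.2}, is exactly what we are assuming here).

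Statements $2^\circ$ and $3^\circ$ are proved in the same manner. For $2^\circ$, one uses the analogous direct-integral identity for $\| J_3(\varepsilon^{-1} \tau) \mathcal{R}(\varepsilon)^{r/2} \|$ from Subsection~\ref{sec14.4} to pull the supposed bound back to the fibers, obtaining a contradiction with statement $2^\circ$ of Theorem~\ref{th13.7}. For $3^\circ$, one invokes \eqref{hat_norms_and_Gelfand_transf} adapted to the sandwiched operator $J(\tau)$ of \eqref{10.7a} (valid by the same argument as in Subsection~\ref{sec14.5}, since both $\mathcal{A}$ and $\mathcal{A}^0$ decompose into direct integrals with fibers $\mathcal{A}(\mathbf{k})$, $\mathcal{A}^0(\mathbf{k})$ and $\widehat{\mathcal{A}}$ decomposes with fibers $\widehat{\mathcal{A}}(\mathbf{k})$), which yields a violation of statement $3^\circ$ of Theorem~\ref{th13.7}.

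There is no genuine obstacle at this stage: the mathematical content of the no-go result sits entirely in the abstract Theorem~\ref{th4.final}, which has been transported to the operators $\mathcal{A}(\mathbf{k})$ via Theorem~\ref{th6.7} (using the sandwiched framework of \S\ref{abstr_sandwiched_section} and the identification of the threshold coefficients $\nu_l(\boldsymbol{\theta})$ through Proposition~\ref{Prop_nu1_theta_Q}) and then to almost-every-$\mathbf{k}$ statements in Theorem~\ref{th13.7}. The only mildly delicate point to verify is that the essential-supremum-over-$\widetilde{\Omega}$ identity is available for the energy-norm operator $\widehat{\mathcal{A}}^{1/2} J(\varepsilon^{-1}\tau) \mathcal{R}(\varepsilon)^{s/2}$, which requires noting that $\Pi = \mathcal{U}^{-1} [\widehat{P}] \mathcal{U}$ and $\Lambda b(\mathbf{D})$ act consistently on the fibers as $\widehat{P}$ and $\Lambda b(\mathbf{D}+\mathbf{k})\widehat{P}$; but this is precisely the content of Subsection~\ref{sec14.5} and requires no new computation.
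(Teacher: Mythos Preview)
Your proposal is correct and follows essentially the same approach as the paper, which simply states that Theorem~\ref{th14.22a} is deduced from Theorem~\ref{th13.7} via the direct integral identities recorded in Subsections~\ref{sec14.4} and~\ref{sec14.5}. You supply more detail than the paper does, but the logic is identical: assume the global bound, pass to the fibers by the $\esssup$ identity, and contradict the corresponding part of Theorem~\ref{th13.7}.
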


\subsection{On the possibility to remove the smoothing operator  $\Pi$ in the corrector}
Now, we consider the question about the possibility to remove the operator $\Pi$ in the corrector
(i.~e., to replace $\Pi$ by the identity operator keeping the same order of  errors)
in Theorems  \ref{A_sin_general_thrm}, \ref{th14.5},  \ref{sndw_A_sin_general_thrm}, and \ref{sndw_A_sin_enchncd_thrm_1}. We consider the  more general case of the operator $\A$ (then the results for  $\wh{\A}$ will follow in the case $f = \1$).

\begin{lemma}
\label{lem10.9}
For  $\tau \in \R$ and $\eps>0$ we have  
\begin{align}
\label{*.1}
 \| b(\D) (I\! - \!\Pi)  f_0(\mathcal{A}^0)^{-1/2} \sin( \eps^{-1} \tau (\mathcal{A}^0)^{1/2}) f_0^{-1}  {\mathcal R}(\eps)\|_{L_2(\R^d) \to H^2(\R^d)} &\le {\mathrm C}^{(1)} \eps^2,
\\
\label{*.2}
 \| b(\D) (I\! -\! \Pi)  f_0(\mathcal{A}^0)^{-1/2} \sin( \eps^{-1} \tau (\mathcal{A}^0)^{1/2}) f_0^{-1}  {\mathcal R}(\eps)^{3/4}\|_{L_2(\R^d) \to H^{3/2}(\R^d)} 
 &\le {\mathrm C}^{(2)} \eps^{3/2}.
\end{align}
The constants ${\mathrm C}^{(1)}$ and ${\mathrm C}^{(2)}$ depend on  $\|g^{-1}\|_{L_\infty},$  $\|f^{-1}\|_{L_\infty},$ and  $r_0$.
\end{lemma}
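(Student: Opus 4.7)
All operators appearing in the norms are Fourier multipliers with matrix-valued symbols, because $\mathcal{A}^0=f_0\,b(\D)^*g^0b(\D)\,f_0$ has constant coefficients and $\Pi$, $\mathcal{R}(\eps)$, $b(\D)$, $(1-\Delta)^{s/2}$ are all scalar/matrix multipliers. Since $\Pi=\mathcal{U}^{-1}[\widehat P]\mathcal{U}$ coincides with the PDO of symbol $\chi_{\wt\Omega}(\boldsymbol\xi)$, the operator $I-\Pi$ is the Fourier cut-off onto $\R^d\setminus\wt\Omega$. By the definition \eqref{Brillouin_zone} of the Brillouin zone and the inradius $r_0$, this symbol is supported in $\{|\boldsymbol\xi|\ge r_0\}$. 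Consequently the $(L_2\to H^s)$-norm to be estimated equals
\[
\esssup_{|\boldsymbol\xi|\ge r_0}(1+|\boldsymbol\xi|^2)^{s/2}\bigl| b(\boldsymbol\xi)f_0\mathcal{A}^0(\boldsymbol\xi)^{-1/2}\sin\bigl(\eps^{-1}\tau\mathcal{A}^0(\boldsymbol\xi)^{1/2}\bigr)f_0^{-1}\bigr|\cdot\Bigl(\tfrac{\eps^2}{|\boldsymbol\xi|^2+\eps^2}\Bigr)^{s/2},
\]
with $s=2$ (resp.\ $s=3/2$, with the factor in the last parenthesis raised to the power $3/4$ rather than $1$, matching the $\mathcal{R}(\eps)^{3/4}$ smoothing). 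So the lemma reduces to a pointwise symbol estimate.

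The first key step is a clean bound for the \emph{main symbol factor}
$M_0(\boldsymbol\xi):=b(\boldsymbol\xi)f_0\mathcal{A}^0(\boldsymbol\xi)^{-1/2}\sin\bigl(\eps^{-1}\tau\mathcal{A}^0(\boldsymbol\xi)^{1/2}\bigr)f_0^{-1}$,
independently of $\eps$ and $\tau$. Using that $f_0=(\overline{Q})^{-1/2}$ is Hermitian, one has $\mathcal{A}^0(\boldsymbol\xi)=f_0\,b(\boldsymbol\xi)^*g^0b(\boldsymbol\xi)\,f_0$, whence for any $u\in\AC^n$, setting $v:=f_0\mathcal{A}^0(\boldsymbol\xi)^{-1/2}u$,
\[
|u|^2=(\mathcal{A}^0(\boldsymbol\xi)^{1/2}f_0^{-1}v,f_0^{-1}v)=(g^0b(\boldsymbol\xi)v,b(\boldsymbol\xi)v)\ge |(g^0)^{-1}|^{-1}|b(\boldsymbol\xi)v|^2\ge \|g^{-1}\|_{L_\infty}^{-1}|b(\boldsymbol\xi)v|^2,
\]
using \eqref{g^0_est}. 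Hence $|b(\boldsymbol\xi)f_0\mathcal{A}^0(\boldsymbol\xi)^{-1/2}|\le \|g^{-1}\|_{L_\infty}^{1/2}$; combined with $|\sin(\cdot)|\le 1$ and $|f_0^{-1}|\le\|f^{-1}\|_{L_\infty}$ (see \eqref{f_0_estimates}), this yields the $\eps,\tau$-uniform bound
$|M_0(\boldsymbol\xi)|\le \|g^{-1}\|_{L_\infty}^{1/2}\|f^{-1}\|_{L_\infty}$ for all $\boldsymbol\xi\neq 0$.

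The second step is the elementary inequality, valid for $|\boldsymbol\xi|\ge r_0$:
\[
(1+|\boldsymbol\xi|^2)\,\tfrac{\eps^2}{|\boldsymbol\xi|^2+\eps^2}\le (1+|\boldsymbol\xi|^2)\,\tfrac{\eps^2}{|\boldsymbol\xi|^2}=\eps^2+\tfrac{\eps^2}{|\boldsymbol\xi|^2}\le (1+r_0^{-2})\eps^2.
\]
Plugging the bound of Step 1 and this inequality into the displayed supremum with $s=2$ immediately gives \eqref{*.1} with a constant depending only on $\|g^{-1}\|_{L_\infty}$, $\|f^{-1}\|_{L_\infty}$, and $r_0$. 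For \eqref{*.2} we use $s=3/2$ and the factor $(\eps^2/(|\boldsymbol\xi|^2+\eps^2))^{3/4}$; writing $(1+|\boldsymbol\xi|^2)^{3/4}(\eps^2/(|\boldsymbol\xi|^2+\eps^2))^{3/4}=\bigl[(1+|\boldsymbol\xi|^2)\eps^2/(|\boldsymbol\xi|^2+\eps^2)\bigr]^{3/4}\le \bigl[(1+r_0^{-2})\eps^2\bigr]^{3/4}$ and combining with the symbol bound of Step 1 produces the desired $\O(\eps^{3/2})$ estimate.

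There is no serious obstacle: the argument is a direct Fourier-multiplier computation, and the only mild point to verify is that the low-frequency cut-off coming from $I-\Pi$ is exactly what is needed to make the factor $\mathcal{R}(\eps)$ (which only helps for $|\boldsymbol\xi|\lesssim \eps$) still gain a full $\eps^2$ (resp.\ $\eps^{3/2}$) on the range $|\boldsymbol\xi|\ge r_0$; this is captured by the elementary bound in Step 2.
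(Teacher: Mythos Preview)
Your proposal is correct and follows essentially the same route as the paper: pass to the Fourier side, use that the symbol of $I-\Pi$ is $1-\chi_{\widetilde\Omega}(\boldsymbol\xi)$ (hence supported in $\{|\boldsymbol\xi|\ge r_0\}$), bound $|b(\boldsymbol\xi)f_0(\mathcal{A}^0(\boldsymbol\xi))^{-1/2}|\le\|g^{-1}\|_{L_\infty}^{1/2}$ together with $|\sin(\cdot)|\le 1$ and $|f_0^{-1}|\le\|f^{-1}\|_{L_\infty}$, and finish with the elementary estimate $(1+|\boldsymbol\xi|^2)\eps^2/(|\boldsymbol\xi|^2+\eps^2)\le(1+r_0^{-2})\eps^2$ on that range. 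One small slip: in your displayed chain you wrote $|u|^2=(\mathcal{A}^0(\boldsymbol\xi)^{1/2}f_0^{-1}v,f_0^{-1}v)$, whereas the correct intermediate expression is $(\mathcal{A}^0(\boldsymbol\xi)f_0^{-1}v,f_0^{-1}v)$; the subsequent identity $(g^0b(\boldsymbol\xi)v,b(\boldsymbol\xi)v)$ and the conclusion are unaffected.
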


\begin{proof}
Writing the norm in the left-hand side of  \eqref{*.1} in the Fourier-representation and recalling that the symbol of the operator  $\Pi$ is $\chi_{\wt{\Omega}}(\bxi)$ and the symbol of $\A^0$ is  
$f_0 b(\bxi)^* g^0 b(\bxi) f_0$, we obtain:
$$
\begin{aligned}
\|& b(\D) (I \!-\! \Pi)  f_0(\mathcal{A}^0)^{-1/2} \sin( \eps^{-1} \tau (\mathcal{A}^0)^{-1/2}) f_0^{-1}  {\mathcal R}(\eps)\|_{L_2(\R^d) \to H^2(\R^d)}
\\
&\!\!\le\! \sup_{\bxi \in \R^d} (1\!+ \!|\bxi|^2) (1\!-\! \chi_{\wt{\Omega}}(\bxi)) \bigl|b(\bxi)  f_0 
(f_0 b(\bxi)^* g^0 b(\bxi) f_0)^{-1/2}\bigr|
|f_0^{-1}| \eps^2 ( |\bxi|^2\! +\!\eps^2)^{-1}
\\
&\!\!\le \! \| g^{-1}\|^{1/2}_{L_\infty} \| f^{-1}\|_{L_\infty} \eps^2 
\sup_{ |\bxi| \ge r_0} (1\!+ \!|\bxi|^2) ( |\bxi|^2\! +\!\eps^2)^{-1} \!\le\! {\mathrm C}^{(1)} \eps^2,
\end{aligned}
$$
where ${\mathrm C}^{(1)} =  \| g^{-1}\|^{1/2}_{L_\infty} \| f^{-1}\|_{L_\infty} (1+ r_0^{-2})$.
We have used \eqref{g^0_est} and \eqref{f_0_estimates}.
 
 Similarly, one can check estimate  \eqref{*.2} with the constant 
 \begin{equation*}
{\mathrm C}^{(2)} =  \| g^{-1}\|^{1/2}_{L_\infty} \| f^{-1}\|_{L_\infty} (1+ r_0^{-2})^{3/4}.\qedhere
\end{equation*}
\end{proof}

 Let $[\Lambda]$ be the operator of multiplication by the $\Gamma$-periodic solution of  problem~\eqref{equation_for_Lambda}. We formulate the following additional conditions.
 
  \begin{condition}
  \label{cond_Lambda_1}
   The operator $ [\Lambda]$ is continuous from $H^2(\R^d)$ to $H^1(\R^d)$.
  \end{condition}

  \begin{condition}
  \label{cond_Lambda_2}
   The operator $[\Lambda]$ is continuous from $H^{3/2}(\R^d)$ to $H^1(\R^d)$.
  \end{condition}

Denote
\begin{align}
\label{10.17}
&\wh{J}^\circ\!(\tau) :=  \wh{\mathcal{A}}^{-\!1/2} \!\sin( \tau \wh{\mathcal{A}}^{1/2}) - (I + \Lambda b(\mathbf{D}) ) 
(\wh{\mathcal{A}}^0)^{-1/2} \sin( \tau (\wh{\mathcal{A}}^0)^{1/2}), 
\\
\label{10.18}
&J^\circ\!(\tau):=f \!\mathcal{A}^{-\!1/2} \! \sin( \tau \mathcal{A}^{1/2})f^{-\!1} \!\!- \!(I\!\! +\! \Lambda b(\mathbf{D}) ) 
f_0(\mathcal{A}^0)^{-\!1/2}\! \sin( \tau (\mathcal{A}^0)^{1/2}) f_0^{-\!1}\!.
\end{align}

  It is possible to remove the operator $\Pi$ in the estimates from Theorems \ref{A_sin_general_thrm} and \ref{sndw_A_sin_general_thrm} under Condition \ref{cond_Lambda_1}.
  
  \begin{theorem}
   \label{th10.12} 
  Suppose that Condition \emph{\ref{cond_Lambda_1}} is satisfied.  Let $\wh{J}^\circ(\tau)$ and ${J}^\circ(\tau)$ be the operators defined by \eqref{10.17} and  \eqref{10.18}.
  
  \noindent $1^\circ$. 
  For $\tau \in \R$ and $0< \eps \le 1$ we have
  \begin{equation}
	\label{10.19}
	\bigl\| \widehat{\mathcal{A}}^{1/2} \widehat{J}^\circ (\varepsilon^{-1} \tau) \mathcal{R}(\varepsilon) \bigr\|_{L_2(\mathbb{R}^d) \to L_2(\mathbb{R}^d)} \le \widehat{\mathrm{C}}^\circ_5 (1 + |\tau|) \varepsilon. 
	\end{equation}
	The constant  $\widehat{\mathrm{C}}^\circ_5$ depends on $\alpha_0,$ $\alpha_1,$ $\|g\|_{L_\infty},$ $\|g^{-1}\|_{L_\infty},$ $r_0,$ $r_1,$ and also on the norm $\| \D [\Lambda]\|_{H^2 \to L_2}$.
   
  \noindent $2^\circ$. 
  For $\tau \in \R$ and  $0< \eps \le 1$ we have
  \begin{equation}
	\label{10.20}
	\bigl\| \widehat{\mathcal{A}}^{1/2} {J}^\circ (\varepsilon^{-1} \tau) \mathcal{R}(\varepsilon) \bigr\|_{L_2(\mathbb{R}^d) \to L_2(\mathbb{R}^d)} \le {\mathrm{C}}^\circ_5 (1 + |\tau|) \varepsilon. 
	\end{equation}
	The constant  ${\mathrm{C}}^\circ_5$ depends on  $\alpha_0,$ $\alpha_1,$ $\|g\|_{L_\infty},$ $\|g^{-1}\|_{L_\infty},$ $\|f\|_{L_\infty},$ $\|f^{-1} \|_{L_\infty},$ $r_0,$ $r_1,$ and also on the norm 
	$\| \D [\Lambda]\|_{H^2 \to L_2}$.
    \end{theorem}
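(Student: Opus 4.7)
The plan is to reduce Theorem \ref{th10.12} to Theorems \ref{A_sin_general_thrm} and \ref{sndw_A_sin_general_thrm} by controlling the error incurred when $\Pi$ is replaced by the identity in the corrector. Comparing \eqref{10.0} with \eqref{10.17} and \eqref{10.7a} with \eqref{10.18}, one has the identities
\begin{align*}
\wh{J}^\circ(\tau) - \wh{J}(\tau) &= -\Lambda b(\D)(I - \Pi)(\wh{\mathcal A}^0)^{-1/2}\sin(\tau (\wh{\mathcal A}^0)^{1/2}),
\\
J^\circ(\tau) - J(\tau) &= -\Lambda b(\D)(I - \Pi) f_0 (\mathcal A^0)^{-1/2}\sin(\tau (\mathcal A^0)^{1/2}) f_0^{-1}.
\end{align*}
Thus it is enough to bound the norm of each of these differences, sandwiched between $\wh{\mathcal A}^{1/2}$ on the left and $\mathcal R(\eps)$ on the right (with $\tau$ replaced by $\eps^{-1}\tau$), by $C\eps$ in $L_2(\R^d) \to L_2(\R^d)$ for $0 < \eps \le 1$; adding these bounds to the estimates supplied by Theorems \ref{A_sin_general_thrm} and \ref{sndw_A_sin_general_thrm} will then yield \eqref{10.19} and \eqref{10.20}.

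The key mechanism is the $L_2 \to H^2$ bound \eqref{*.1} of Lemma \ref{lem10.9}, which exploits the fact that the symbol $1 - \chi_{\wt\Omega}(\bxi)$ is supported where $|\bxi| \ge r_0$ and therefore absorbs two powers of $|\bxi|$: for the operator without $f$ (setting $f = \1_n$, hence $f_0 = \1_n$) it reads
$$\|b(\D)(I-\Pi)(\wh{\mathcal A}^0)^{-1/2}\sin(\eps^{-1}\tau (\wh{\mathcal A}^0)^{1/2})\mathcal R(\eps)\|_{L_2(\R^d)\to H^2(\R^d)} \le \wh{\mathrm C}^{(1)} \eps^2$$
uniformly in $\tau \in \R$, and the sandwiched version is handled by Lemma \ref{lem10.9} directly. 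Condition \ref{cond_Lambda_1} then provides the boundedness of $[\Lambda]\colon H^2(\R^d) \to H^1(\R^d)$, while the factorization $\wh{\mathcal A} = b(\D)^* g(\x) b(\D)$ combined with \eqref{rank_alpha_ineq} gives the elementary estimate
$$\|\wh{\mathcal A}^{1/2} v\|_{L_2(\R^d)} = \|g^{1/2} b(\D) v\|_{L_2(\R^d)} \le \alpha_1^{1/2} \|g\|_{L_\infty}^{1/2} \|\D v\|_{L_2(\R^d)}, \quad v \in H^1(\R^d;\AC^n),$$
so $\wh{\mathcal A}^{1/2}\colon H^1(\R^d;\AC^n) \to L_2(\R^d;\AC^n)$ is bounded. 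Chaining the three mappings $L_2 \xrightarrow{\eps^2} H^2 \xrightarrow{[\Lambda]} H^1 \xrightarrow{\wh{\mathcal A}^{1/2}} L_2$ yields a uniform-in-$\tau$ bound of order $\eps^2$ on each of the two discrepancy operators, which for $0 < \eps \le 1$ is majorized by $\eps$; the resulting constants depend on $\|\D[\Lambda]\|_{H^2 \to L_2}$ through the middle arrow, as required.

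There is no genuine analytic obstruction: Condition \ref{cond_Lambda_1} is precisely the property of $[\Lambda]$ needed to convert the extra factor of ``two derivatives'' coming from the sharp cut-off $I - \Pi$ into the single derivative that can be swallowed by $\wh{\mathcal A}^{1/2}$. The only delicate point is keeping track of where $\tau$ enters: the corrector discrepancy is bounded uniformly in $\tau$ by $C\eps^2$, so the factor $(1+|\tau|)$ in $\wh{\mathrm C}^\circ_5$ and $\mathrm C^\circ_5$ is inherited entirely from Theorems \ref{A_sin_general_thrm} and \ref{sndw_A_sin_general_thrm}, and the restriction $\eps \le 1$ is used only to compare $\eps^2$ with $\eps$.
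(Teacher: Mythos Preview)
Your proposal is correct and follows essentially the same route as the paper: write $\wh J^\circ - \wh J$ (resp.\ $J^\circ - J$) as $-\Lambda b(\D)(I-\Pi)$ composed with the appropriate sine operator, invoke Lemma~\ref{lem10.9} for the $L_2 \to H^2$ bound of order $\eps^2$, and then use Condition~\ref{cond_Lambda_1} together with $\|\wh{\mathcal A}^{1/2} v\|_{L_2} \le \alpha_1^{1/2}\|g\|_{L_\infty}^{1/2}\|\D v\|_{L_2}$ to pass back to $L_2$; adding this $O(\eps^2)\le O(\eps)$ discrepancy to Theorems~\ref{A_sin_general_thrm} and~\ref{sndw_A_sin_general_thrm} gives the result. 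The paper merely compresses your three-arrow chain into the single bound $\|\wh{\mathcal A}^{1/2}[\Lambda]\|_{H^2\to L_2}\le \alpha_1^{1/2}\|g\|_{L_\infty}^{1/2}\|\D[\Lambda]\|_{H^2\to L_2}$, which is why only $\|\D[\Lambda]\|_{H^2\to L_2}$ (rather than the full $\|[\Lambda]\|_{H^2\to H^1}$) appears in the constant.
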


\begin{proof}
 Let us check statement $2^\circ$.  Statement $1^\circ$ is proved similarly.
  By \eqref{rank_alpha_ineq}, 
  $$
 \| \wh{\mathcal A}^{1/2} [\Lambda] \|_{H^2 \to L_2} = 
 \| g^{1/2} b(\D) [\Lambda] \|_{H^2 \to L_2} \le
 \alpha_1^{1/2}\|g\|_{L_\infty}^{1/2} \| \D [\Lambda] \|_{H^2 \to L_2}. 
 $$
 Combining this with  \eqref{*.1}, we see that the estimate
 $$
\| \wh{\mathcal A}^{1/2} [\Lambda] b(\D) (I \!-\! \Pi)  f_0(\mathcal{A}^0)^{-\!1/2}\! \sin( \eps^{-\!1} \tau (\mathcal{A}^0)^{1/2}) f_0^{-\!1}  {\mathcal R}(\eps)\|_{L_2(\R^d) \to L_2(\R^d)} \!\le \!
{\mathrm C}^{(3)}  \eps
$$
holds for $\tau \in \R$ and $0< \eps \le 1$.
Here ${\mathrm C}^{(3)} = {\mathrm C}^{(1)}  \alpha_1^{1/2}\|g\|_{L_\infty}^{1/2} \| \D [\Lambda] \|_{H^2 \to L_2}$.
 Using this inequality and  Theorem \ref{sndw_A_sin_general_thrm}, we arrive at  \eqref{10.20}.
\end{proof}

  It is possible to remove the operator $\Pi$ in the estimates from Theorems  \ref{th14.5} and    \ref{sndw_A_sin_enchncd_thrm_1}  under Condition \ref{cond_Lambda_2}.
  
  \begin{theorem}
  \label{th10.13}
  Suppose that Condition  \emph{\ref{cond_Lambda_2}} is satisfied.
  Let $\wh{J}^\circ(\tau)$ and ${J}^\circ(\tau)$ be the operators defined by \eqref{10.17} and  \eqref{10.18}.
  
 \noindent  $1^\circ$.  Under the assumptions of Theorem \emph{\ref{th14.5}}, 
 for $\tau \in \R$ and $0< \eps \le 1$ we have
  \begin{equation}
	\label{10.21}
	\bigl\| \widehat{\mathcal{A}}^{1/2} \widehat{J}^\circ (\varepsilon^{-1} \tau) \mathcal{R}(\varepsilon)^{3/4} \bigr\|_{L_2(\mathbb{R}^d) \to L_2(\mathbb{R}^d)} \le \widehat{\mathrm{C}}^\circ_6 (1 + |\tau|)^{1/2} \varepsilon. 
	\end{equation}
Under Condition \emph{\ref{cond_B}}, the constant $\widehat{\mathrm{C}}^\circ_6$ depends on $\alpha_0,$ $\alpha_1,$ $\|g\|_{L_\infty},$ $\|g^{-1}\|_{L_\infty},$ $r_0,$ $r_1,$ and also on the norm $\| \D [\Lambda]\|_{H^{3/2} \to L_2}$. Under Condition  \emph{\ref{cond1}}, this constant depends on the same parameters and on  $n,$
	$\wh{c}^{\circ}$.
   
 \noindent  $2^\circ$. Under the assumptions of Theorem \emph{\ref{sndw_A_sin_enchncd_thrm_1}}, for $\tau \in \R$ and $0< \eps \le 1$ we have
  \begin{equation*}
	\bigl\| \widehat{\mathcal{A}}^{1/2} {J}^\circ (\varepsilon^{-1} \tau) \mathcal{R}(\varepsilon)^{3/4} \bigr\|_{L_2(\mathbb{R}^d) \to L_2(\mathbb{R}^d)} \le {\mathrm{C}}^\circ_6(1 + |\tau|)^{1/2} \varepsilon. 
	\end{equation*}
	Under Condition  \emph{\ref{cond_BB}}, the constant   ${\mathrm{C}}^\circ_6$ depends on  $\alpha_0,$ $\alpha_1,$ $\|g\|_{L_\infty},$ $\|g^{-1}\|_{L_\infty},$ $\|f\|_{L_\infty},$ $\|f^{-1}\|_{L_\infty},$ $r_0,$   $r_1,$ and also on the norm $\| \D [\Lambda]\|_{H^{3/2} \to L_2}$. 
	Under Condition~\emph{\ref{sndw_cond1}}, this constant depends on the same parameters and on  $n,$
	${c}^{\circ}$.
    \end{theorem}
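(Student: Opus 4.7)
\textbf{Proof proposal for Theorem \ref{th10.13}.} The plan is to reduce the statement to the already established Theorems~\ref{th14.5} and~\ref{sndw_A_sin_enchncd_thrm_1} by showing that the difference ${J}^\circ(\eps^{-1}\tau) - {J}(\eps^{-1}\tau)$, multiplied by $\mathcal{R}(\eps)^{3/4}$ and by $\widehat{\mathcal A}^{1/2}$ on the left, has operator norm bounded by $\mathrm{C}\,\eps$ uniformly in $\tau\in\mathbb R$ and in $0<\eps\le 1$. Once this is in hand, estimates \eqref{10.21} and its sandwiched analogue follow by the triangle inequality from the corresponding estimates with the smoothed corrector.

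Concretely, by comparing \eqref{10.0} with \eqref{10.17} (in the hat case) and \eqref{10.7a} with \eqref{10.18} (in the sandwiched case), one has
\begin{equation*}
\widehat J^\circ(\tau) - \widehat J(\tau) = -\,[\Lambda]\, b(\mathbf D)(I-\Pi)(\widehat{\mathcal A}^0)^{-1/2}\sin(\tau(\widehat{\mathcal A}^0)^{1/2}),
\end{equation*}
and similarly
\begin{equation*}
J^\circ(\tau) - J(\tau) = -\,[\Lambda]\, b(\mathbf D)(I-\Pi)\, f_0(\mathcal A^0)^{-1/2}\sin(\tau(\mathcal A^0)^{1/2})f_0^{-1}.
\end{equation*}
Thus I would estimate, for example in the sandwiched case,
\begin{equation*}
\bigl\|\widehat{\mathcal A}^{1/2}[\Lambda]\,b(\mathbf D)(I-\Pi)\,f_0(\mathcal A^0)^{-1/2}\sin(\eps^{-1}\tau(\mathcal A^0)^{1/2})f_0^{-1}\mathcal R(\eps)^{3/4}\bigr\|_{L_2\to L_2}
\end{equation*}
by factoring through $H^{3/2}$: the right factor maps $L_2(\mathbb R^d)$ to $H^{3/2}(\mathbb R^d)$ with norm $\le \mathrm C^{(2)}\eps^{3/2}$ by \eqref{*.2} of Lemma~\ref{lem10.9}, while the left factor $\widehat{\mathcal A}^{1/2}[\Lambda]$ maps $H^{3/2}$ to $L_2$ boundedly, because by \eqref{rank_alpha_ineq}
\begin{equation*}
\|\widehat{\mathcal A}^{1/2}[\Lambda]\|_{H^{3/2}\to L_2} = \|g^{1/2}b(\mathbf D)[\Lambda]\|_{H^{3/2}\to L_2}\le \alpha_1^{1/2}\|g\|_{L_\infty}^{1/2}\|\mathbf D[\Lambda]\|_{H^{3/2}\to L_2},
\end{equation*}
and the last norm is finite precisely by Condition~\ref{cond_Lambda_2}. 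The composition is therefore bounded by a constant times $\eps^{3/2}\le \eps$ since $0<\eps\le 1$, uniformly in $\tau$.

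Adding this bound to the bound $\mathrm C_6(1+|\tau|)^{1/2}\eps$ supplied by Theorems~\ref{th14.5} or~\ref{sndw_A_sin_enchncd_thrm_1} (and picking the constant accordingly under Condition~\ref{cond_B} or~\ref{cond1}, resp.\ Condition~\ref{cond_BB} or~\ref{sndw_cond1}), one obtains the required inequalities with constants $\widehat{\mathrm C}^\circ_6$, ${\mathrm C}^\circ_6$ that depend on the same parameters as in the statement plus the operator norm $\|\mathbf D[\Lambda]\|_{H^{3/2}\to L_2}$. The main (mild) obstacle is really only a bookkeeping one: to check that the insertion of $f_0$ and $f_0^{-1}$ in the sandwiched variant, as well as the replacement of $\Lambda_Q$ by $\Lambda$ in \eqref{10.18}, does not spoil the mapping $L_2\to H^{3/2}$ argument; since $f_0$ is a constant matrix this is immediate, while the constant discrepancy $\Lambda_Q-\Lambda$ is absorbed exactly as in the derivation leading to \eqref{sndw1_sin_main_est_wP} via \eqref{Lambda0_Q_est_1}, so no genuinely new estimate is needed.
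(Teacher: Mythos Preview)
Your proposal is correct and follows essentially the same approach the paper takes: the paper does not write out a separate proof of Theorem~\ref{th10.13} but makes clear (via the explicit proof of the parallel Theorem~\ref{th10.12}) that one combines the bound $\|\widehat{\mathcal A}^{1/2}[\Lambda]\|_{H^{3/2}\to L_2}\le \alpha_1^{1/2}\|g\|_{L_\infty}^{1/2}\|\mathbf D[\Lambda]\|_{H^{3/2}\to L_2}$ with estimate~\eqref{*.2} of Lemma~\ref{lem10.9}, and then invokes Theorem~\ref{th14.5} (resp.\ Theorem~\ref{sndw_A_sin_enchncd_thrm_1}). One small remark: your closing comment about the $\Lambda_Q$ versus $\Lambda$ discrepancy is unnecessary here, since both $J(\tau)$ in~\eqref{10.7a} and $J^\circ(\tau)$ in~\eqref{10.18} are already defined with $\Lambda$, so their difference is exactly $-[\Lambda]b(\mathbf D)(I-\Pi)f_0(\mathcal A^0)^{-1/2}\sin(\tau(\mathcal A^0)^{1/2})f_0^{-1}$ with no residual constant term.
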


In some cases Condition \ref{cond_Lambda_1} or Condition  \ref{cond_Lambda_2} is satisfied automatically.  
We need the following results, the first one was obtained in  \cite[Proposition 9.3]{Su3},
and the second one was proved in  \cite[Lemma 8.3]{BSu4}.

\begin{proposition}[see~\cite{Su3}]
\label{prop10.10}
Let $\Lambda$ be the $\Gamma$-periodic solution of problem~\eqref{equation_for_Lambda}. 
Let $l=1$ for $d=1,$ $l>1$ for $d=2$, and $l =d/2$ for $d \ge 3$. Then the operator  
$ [\Lambda]$ is continuous from  $H^l(\R^d;\AC^m)$ to $H^1(\R^d;\AC^n),$ and the norm 
$\|  [\Lambda] \|_{H^l \to H^1}$ is controlled in terms of  $d,$  $\alpha_0,$ $\alpha_1,$ $\|g\|_{L_\infty},$ $\|g^{-1}\|_{L_\infty},$ and the parameters of the lattice $\Gamma,$
and for $d=2$ it depends also on  $l$.
\end{proposition}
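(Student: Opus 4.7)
The plan is to establish boundedness $[\Lambda]\colon H^l(\R^d;\AC^m)\to H^1(\R^d;\AC^n)$ by controlling the three $L_2(\R^d)$-norms of $\Lambda u$, $(\D\Lambda)u$, and $\Lambda\,\D u$ in terms of $\|u\|_{H^l(\R^d)}$, the second and third arising from the Leibniz identity $\D(\Lambda u)=(\D\Lambda)u+\Lambda\,\D u$. The $\Gamma$-periodicity of $\Lambda$ reduces each norm of $\Lambda$ (resp.\ $\D\Lambda$) on a translate $\Omega+\a$ to the corresponding norm on the fundamental cell $\Omega$, where the a priori bounds \eqref{Lambda_est}, \eqref{D_Lambda_est} apply and the Sobolev embedding $\wt H^1(\Omega)\hookrightarrow L_p(\Omega)$ holds — with $p=\infty$ for $d=1$, arbitrary $p<\infty$ for $d=2$ (constant blowing up as $p\to\infty$), and $p=2d/(d-2)$ for $d\ge3$.

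For the two ``regular'' products $\Lambda u$ and $\Lambda\,\D u$, I would apply Hölder on each translate $\Omega+\a$ and sum the resulting $\ell^2(\Gamma)$-series:
\begin{equation*}
\|\Lambda v\|_{L_2(\R^d)}^2\le \|\Lambda\|_{L_p(\Omega)}^2\sum_{\a\in\Gamma}\|v\|_{L_q(\Omega+\a)}^2,\qquad \tfrac1p+\tfrac1q=\tfrac12,
\end{equation*}
for $v=u$ or $v=\D u$. The right-hand side is dominated by $\|v\|_{L_q(\R^d)}^2$ (up to a harmless constant from the cell overlap after a smooth partition of unity), and matching exponents with the global Sobolev embedding $H^s(\R^d)\hookrightarrow L_q(\R^d)$ produces exactly the stated thresholds: for $d\ge3$ and $q=d$ one needs $s=d/2-1$, so for $v=\D u$ we get $l=d/2$; for $d=2$, the $p\to\infty$ limit forces $q\downarrow 2$ and hence any $l>1$; for $d=1$ the boundedness $\Lambda\in L_\infty$ makes these estimates trivial.

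The main obstacle is the singular term $(\D\Lambda)u$ in the critical case, where only $\D\Lambda\in L_2(\Omega)$ is available and, for $d\ge 2$, $u\in H^{d/2}(\R^d)$ does not embed into $L_\infty^{loc}$, so a cell-wise Hölder pairing fails. The remedy is to pass to the Fourier side: writing $\Lambda(\x)=|\Omega|^{-1/2}\sum_{\b\in\wt\Gamma}\hat\Lambda_\b\,e^{i\langle\b,\x\rangle}$, the Fourier transform of $(\D\Lambda)u$ becomes a shifted discrete convolution with kernel $\b\,\hat\Lambda_\b$, and a weighted Cauchy--Schwarz with weight $(1+|\b|^2)^{-l}$ yields
\begin{equation*}
\|(\D\Lambda)u\|_{L_2(\R^d)}\le C\,\|\D\Lambda\|_{L_2(\Omega)}\,\|u\|_{H^l(\R^d)},
\end{equation*}
provided $\sum_{\b\in\wt\Gamma}(1+|\b|^2)^{-l}<\infty$, which gives $l>d/2$ immediately. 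The endpoint $l=d/2$ in $d\ge3$ requires a slightly more delicate argument exploiting the extra summability $\sum_\b|\b|^2|\hat\Lambda_\b|^2=|\Omega|\,\|\D\Lambda\|_{L_2(\Omega)}^2<\infty$ and a Sobolev multiplier / Littlewood--Paley refinement; the $d=2$ endpoint fails for exactly the same reason, forcing the strict inequality $l>1$ with an $l$-dependent constant. In all cases the constants are controlled by $d$, $\|\Lambda\|_{L_2(\Omega)}$, $\|\D\Lambda\|_{L_2(\Omega)}$ and the parameters of $\Gamma$, hence by \eqref{Lambda_est}, \eqref{D_Lambda_est} through $\alpha_0,\alpha_1,\|g\|_{L_\infty},\|g^{-1}\|_{L_\infty}$, with an additional dependence on $l$ when $d=2$.
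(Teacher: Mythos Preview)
The paper does not prove this proposition; it is quoted from \cite[Proposition~9.3]{Su3}, so there is no argument here to compare against, and your sketch already goes further than what the present paper contains. Your Leibniz decomposition together with cell-wise H\"older and Sobolev handles $\Lambda u$ and $\Lambda\,\D u$ correctly and reproduces the stated thresholds. The real issue is the term $(\D\Lambda)u$ at the endpoint $l=d/2$ for $d\ge3$: your weighted Fourier Cauchy--Schwarz only reaches $l>d/2$, and the appeal to an unspecified ``Sobolev multiplier / Littlewood--Paley refinement'' for the endpoint is not a proof.

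That hand-wave cannot be completed as you intend, because the inequality you are implicitly aiming for,
\[
\|(\D\Lambda)u\|_{L_2(\R^d)}\le C\,\|\D\Lambda\|_{L_2(\Omega)}\,\|u\|_{H^{d/2}(\R^d)},
\]
is \emph{false} for a general periodic $\Lambda\in\wt H^1(\Omega)$. In $d=4$, for example, take $\Lambda$ with local singularity $|\x|^{-1}(-\log|\x|)^{-\beta}$, $\beta\in(1/2,1)$ (so $\Lambda\in H^1$), and a cut-off $u\sim(-\log|\x|)^{\alpha}$ with $\alpha\in(\beta-\tfrac12,\tfrac12)$ (so $u\in H^2$); then $(\D\Lambda)u\notin L_2$ while $\Lambda\,\D u\in L_2$, hence $\Lambda u\notin H^1$. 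Thus no refinement that feeds only on $\sum_\b|\b|^2|\hat\Lambda_\b|^2<\infty$ can reach the endpoint. What is actually needed is the extra regularity of $\Lambda$ supplied by the equation \eqref{equation_for_Lambda}: a Gehring/Meyers reverse-H\"older argument gives $\D\Lambda\in L_{2+\delta}(\Omega)$ for some $\delta>0$ depending only on $\alpha_0,\alpha_1,\|g\|_{L_\infty},\|g^{-1}\|_{L_\infty}$. With this in hand your own cell-wise scheme closes at $l=d/2$: pair $\D\Lambda\in L_{2+\delta}$ with $u\in L_q$, $q=2(2+\delta)/\delta<\infty$, use the local critical embedding $H^{d/2}\hookrightarrow L_q$ on each translate of $\Omega$, and sum. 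The resulting constant is then controlled by exactly the parameters listed in the statement.
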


\begin{proposition}[see~\cite{BSu4}]
\label{prop10.11}
Let $\Lambda$ be the  $\Gamma$-periodic solution of problem~\eqref{equation_for_Lambda}. 
Suppose that $\Lambda \in L_\infty$. Then the operator  
$ [\Lambda]$ is continuous from  $H^1(\R^d;\AC^m)$ to $H^1(\R^d;\AC^n),$ and the norm 
$\|   [\Lambda] \|_{H^1 \to H^1}$ is controlled in terms of 
$d,$  $\alpha_0,$ $\alpha_1,$ $\|g\|_{L_\infty},$ $\|g^{-1}\|_{L_\infty},$ the parameters of the lattice $\Gamma,$ and the norm $\|\Lambda\|_{L_\infty}$.
\end{proposition}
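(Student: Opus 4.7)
The $L_2$-part of the estimate is immediate from $\Lambda \in L_\infty$: namely, $\|\Lambda u\|_{L_2(\R^d)} \le \|\Lambda\|_{L_\infty} \|u\|_{L_2(\R^d)}$ for any $u \in H^1(\R^d;\AC^m)$. So the whole task is to estimate $\|\D(\Lambda u)\|_{L_2}$. Writing $\D(\Lambda u) = (\D\Lambda)\, u + \Lambda\, \D u$ and noting that $\|\Lambda \D u\|_{L_2} \le \|\Lambda\|_{L_\infty}\|\D u\|_{L_2}$, we are reduced to proving the bound
\begin{equation*}
\int_{\R^d} |(\D\Lambda)\, u|^2 \, d\x \le C \bigl( \|\Lambda\|_{L_\infty}^2\, \|\D u\|_{L_2}^2 + \|u\|_{L_2}^2 \bigr)
\end{equation*}
with $C$ depending only on $d$, $\alpha_0$, $\alpha_1$, $\|g\|_{L_\infty}$, $\|g^{-1}\|_{L_\infty}$, and the parameters of $\Gamma$.

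The plan is to derive this from a Caccioppoli-type inequality for $\Lambda$. By density I would take $u \in C_0^\infty(\R^d;\AC^m)$. The defining equation \eqref{equation_for_Lambda} says that each column $\Lambda^{(j)}$ satisfies $b(\D)^* g(\x)\bigl(b(\D)\Lambda^{(j)} + \mathbf{e}_j\bigr) = 0$ in the weak sense, i.e.,
\begin{equation*}
\int_{\R^d} \langle g (b(\D)\Lambda^{(j)} + \mathbf{e}_j),\, b(\D)\boldsymbol{\varphi}\rangle \, d\x = 0
\end{equation*}
for all $\boldsymbol{\varphi} \in H^1(\R^d;\AC^n)$ of compact support. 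I would test with $\boldsymbol{\varphi} = |u|^2\, \Lambda^{(j)}$ (or a regularization thereof), expand $b(\D)(|u|^2\Lambda^{(j)}) = |u|^2\, b(\D)\Lambda^{(j)} + \sum_l b_l (D_l |u|^2)\, \Lambda^{(j)}$, and use the lower estimate from \eqref{rank_alpha_ineq} together with the bound $g \ge \|g^{-1}\|_{L_\infty}^{-1}\mathbf{1}_m$. After moving the cross term to the right-hand side via Cauchy--Schwarz with small parameter and exploiting $\Lambda \in L_\infty$ to replace $|\Lambda^{(j)}|$ by $\|\Lambda\|_{L_\infty}$ where it multiplies a gradient, one obtains the desired Caccioppoli-type estimate
\begin{equation*}
\int_{\R^d} |u|^2\, |b(\D)\Lambda^{(j)}|^2\, d\x \le C \bigl( \|\Lambda\|_{L_\infty}^2 \int_{\R^d} |\D u|^2\, d\x + \int_{\R^d} |u|^2\, d\x \bigr).
\end{equation*}
The estimate for $|\D\Lambda^{(j)}|$ itself then follows from the lower bound in \eqref{rank_alpha_ineq} applied after a partition-of-unity argument on $\Gamma$-translates of $\Omega$, where on each cell one writes $u = \overline{u}_\a + (u - \overline{u}_\a)$ and uses the Friedrichs inequality to control the purely-gradient part. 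Summing over columns $j$ and over cells yields the claim.

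The main obstacle will be the careful matrix-valued bookkeeping: $\Lambda$ has size $n \times m$, the first-order system $b(\D)$ intertwines components, and the test function $|u|^2 \Lambda^{(j)}$ must first be justified by mollification and spatial truncation, since a priori we only know $\Lambda \in L_\infty \cap \widetilde{H}^1_{\mathrm{loc}}$. In particular, passing from $|b(\D)\Lambda|$ to $|\D\Lambda|$ requires using the full-rank condition \eqref{rank_alpha_ineq} on $b(\boldsymbol{\xi})$ in combination with the Fourier-series identity \eqref{D_and_fourier}, which only yields control of $\D\Lambda$ up to constants that are captured by $\|u\|_{L_2}$ (giving the second term on the right). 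Once this scheme is carried out, the dependence of the constant on $d$, $\alpha_0$, $\alpha_1$, $\|g\|_{L_\infty}$, $\|g^{-1}\|_{L_\infty}$, the parameters of $\Gamma$, and $\|\Lambda\|_{L_\infty}$ can be read off directly from each inequality invoked.
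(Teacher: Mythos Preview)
The paper does not prove this proposition; it is quoted from \cite[Lemma~8.3]{BSu4}. Your overall strategy --- reduce to the weighted estimate $\int_{\R^d} |(\D\Lambda)\,u|^2\,d\x \le C(\|\Lambda\|_{L_\infty}^2\|\D u\|_{L_2}^2 + \|u\|_{L_2}^2)$ and establish the latter by a Caccioppoli argument testing the equation for $\Lambda^{(j)}$ against $|u|^2\Lambda^{(j)}$ --- is the standard one, and the Caccioppoli step you sketch does give
\[
\int_{\R^d} |u|^2\,|b(\D)\Lambda^{(j)}|^2\,d\x \le C\bigl(\|\Lambda\|_{L_\infty}^2\|\D u\|_{L_2}^2 + \|u\|_{L_2}^2\bigr).
\]

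The gap is in your passage from $b(\D)\Lambda$ to the full gradient $\D\Lambda$. The rank condition \eqref{rank_alpha_ineq} yields $\|\D w\|_{L_2}^2 \le \alpha_0^{-1}\|b(\D)w\|_{L_2}^2$ only as a \emph{global} $L_2$ inequality (Plancherel on $\R^d$, or Fourier series on $\Omega$ for periodic mean-zero $w$); there is no weighted version $\int|u|^2|\D\Lambda|^2 \le \alpha_0^{-1}\int|u|^2|b(\D)\Lambda|^2$. Your cell decomposition $u = \overline{u}_\a + (u-\overline{u}_\a)$ handles the constant piece, but for the oscillating piece $w_\a := u-\overline{u}_\a$ the Friedrichs inequality only gives $\|w_\a\|_{L_2(\Omega_\a)} \le C\|\D u\|_{L_2(\Omega_\a)}$, and this is not enough to bound $\int_{\Omega_\a}|w_\a|^2|\D\Lambda|^2$ because $\D\Lambda$ is merely in $L_2(\Omega)$, with no higher integrability available. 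The correct device is to apply the global Korn-type inequality not to $\Lambda^{(j)}$ but to $w := \eta\Lambda^{(j)}$, where $\eta$ is a real scalar component of $u$: from $\eta\,\D\Lambda^{(j)} = \D w - (\D\eta)\Lambda^{(j)}$ and $b(\D)w = \eta\,b(\D)\Lambda^{(j)} + \sum_l(D_l\eta)\,b_l\Lambda^{(j)}$ one obtains
\[
\int\eta^2|\D\Lambda^{(j)}|^2 \le 4\alpha_0^{-1}\int\eta^2|b(\D)\Lambda^{(j)}|^2 + C(d,\alpha_0,\alpha_1)\,\|\Lambda\|_{L_\infty}^2\|\D\eta\|_{L_2}^2,
\]
and now your Caccioppoli bound on the first right-hand term closes the argument. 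This is precisely the content (at $\eps=1$) of Proposition~\ref{prop_PSu}, quoted in the paper from \cite{PSu}.
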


 We indicate  some cases where  Condition \ref{cond_Lambda_1} is satisfied.

\begin{proposition}
\label{prop10.12}
Suppose that at least one of the following assumptions holds\emph{:}

\noindent $1^\circ$. $d \le 4$\emph{;}

\noindent $2^\circ$. 
$\wh{\mathcal A}= \D^* g(\x) \D,$ where the matrix $g(\x)$ has real entries\emph{;}

\noindent $3^\circ$. 
$g^0 = \underline{g}$ \emph{(}i.~e., relations  
\emph{\eqref{g0=underline_g_relat}} are valid\emph{)}.

\noindent Then Condition \emph{\ref{cond_Lambda_1}} is a fortiori satisfied, and the norm  $\| [\Lambda]\|_{H^2 \to H^1}$ is controlled in terms of  $d,$ $\alpha_0,$ $\alpha_1,$ $\|g\|_{L_\infty},$ $\|g^{-1}\|_{L_\infty},$ and the parameters of the lattice~$\Gamma$.
\end{proposition}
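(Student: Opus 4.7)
The plan is to treat the three cases separately, in each one reducing to Proposition \ref{prop10.10} or Proposition \ref{prop10.11} and invoking the Sobolev embedding $H^2(\R^d) \hookrightarrow H^l(\R^d)$ for $l \le 2$ to bridge the gap between the available boundedness and the required $H^2 \to H^1$ boundedness.

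For case $1^\circ$ ($d \le 4$) I would apply Proposition \ref{prop10.10} directly. The permitted $l$ is $l=1$ for $d=1$, any $l \in (1,2]$ (say $l=2$) for $d=2$, $l = d/2 = 3/2$ for $d=3$, and $l = d/2 = 2$ for $d=4$; in every subcase $l \le 2$, so $H^2(\R^d) \hookrightarrow H^l(\R^d)$ finishes the argument, with the constant controlled as claimed. For case $2^\circ$ I would first verify that $\Lambda \in L_\infty(\R^d)$ and then apply Proposition \ref{prop10.11} together with the embedding $H^2 \hookrightarrow H^1$. Here $n=1$, $b(\D)=\D$, $m=d$, and the components $\Lambda_1,\ldots,\Lambda_d$ of the row-vector $\Lambda$ are real-valued periodic mean-zero weak solutions of the scalar cell problems $\operatorname{div}(g(\nabla \Lambda_k + \mathbf e_k)) = 0$; since $g$ has real entries, is symmetric, and satisfies the uniform ellipticity bounds, the De Giorgi--Nash--Moser theorem yields local H\"older continuity and, via $\Gamma$-periodicity, a global $L_\infty$-bound with norm controlled by $d$, $\alpha_0$, $\alpha_1$, $\|g\|_{L_\infty}$, $\|g^{-1}\|_{L_\infty}$, and the lattice parameters.

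Case $3^\circ$ is the main obstacle. The starting observation is that the equation $b(\D)\Lambda(\x) = g(\x)^{-1}g^0 - \mathbf 1_m$ is solvable in $\widetilde H^1$ if and only if $\int_\Omega (g^{-1}g^0 - \mathbf 1_m)\,d\x = 0$, i.e.\ if and only if $g^0 = \underline g$; its unique mean-zero solution $\Lambda^\ast$ automatically satisfies the cell equation $b(\D)^* g(b(\D)\Lambda^\ast + \mathbf 1_m) = b(\D)^* g^0 = 0$, so by uniqueness $\Lambda = \Lambda^\ast$ and $\widetilde g(\x) \equiv g^0$. Using \eqref{g0=underline_g_relat}, we write $g^{-1}(\x) = (g^0)^{-1} + b(\D) W(\x)$ with $W$ periodic in $\widetilde H^1$ (whose columns are the $\mathbf w_k$); then $b(\D)(\Lambda - W g^0) = 0$, and the fact that the kernel of $b(\D)$ on $\widetilde H^1(\Omega;\AC^n)$ reduces to constants, together with the normalization $\int_\Omega \Lambda = 0$, yields $\Lambda = (W - \overline W) g^0$. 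The hard step is to prove an $L_\infty$-bound for $W$ with norm controlled by the listed parameters: applying $b(\D)^*$ to $b(\D) W = g^{-1} - (g^0)^{-1}$ gives the constant-coefficient elliptic system $b(\D)^* b(\D) W = b(\D)^* g^{-1}$, whose principal symbol $b(\bxi)^* b(\bxi)$ is equivalent to $|\bxi|^2 \mathbf 1_n$, and the periodic $L_\infty$-regularity of $W$ then has to be established by a dedicated argument (Moser-type iteration adapted to the constant-coefficient divergence-form regime, or direct Fourier estimation exploiting the extremal character of $g^0 = \underline g$). Once $\|W\|_{L_\infty}$ is controlled, Proposition \ref{prop10.11} together with $H^2 \hookrightarrow H^1$ concludes case $3^\circ$.
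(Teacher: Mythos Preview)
Your treatment of cases $1^\circ$ and $2^\circ$ is correct and matches the paper exactly: for $d \le 4$ Proposition~\ref{prop10.10} applies with $l \le 2$, and in the real scalar case the De~Giorgi--Nash--Moser estimate (cited in the paper as \cite[Chapter~III, Theorem~13.1]{LaU}) gives $\Lambda \in L_\infty$, after which Proposition~\ref{prop10.11} finishes.

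For case $3^\circ$ your reduction is sound: under $g^0 = \underline g$ one has $\widetilde g \equiv g^0$, hence $b(\D)\Lambda = g^{-1}g^0 - \mathbf 1_m$, and combining with \eqref{g0=underline_g_relat} gives $\Lambda = (W - \overline W)g^0$. (A minor correction: solvability of $b(\D)\Lambda^\ast = g^{-1}g^0 - \mathbf 1_m$ is \emph{not} equivalent to the mean vanishing when $m > n$, since for each $\mathbf b \ne 0$ the range of $b(\mathbf b)$ is only $n$-dimensional; but solvability does follow directly from \eqref{g0=underline_g_relat}, so your conclusion stands.) The genuine gap is the $L_\infty$ bound for $W$ --- equivalently for $\Lambda$ --- which you correctly flag as the hard step but do not establish. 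Your suggested routes are not clearly viable as stated: the map recovering $W$ from $b(\D)W$ is a Fourier multiplier of degree $-1$ (the pseudoinverse $(b(\mathbf b)^*b(\mathbf b))^{-1}b(\mathbf b)^*$), and such multipliers are in general unbounded on $L_\infty$; and De~Giorgi--Nash--Moser iteration is not directly available for systems with $n \ge 2$. The paper does not carry out this step either --- it simply invokes \cite[Proposition~6.9]{BSu3}, which proves $\Lambda \in L_\infty$ with controlled norm under the hypothesis $g^0 = \underline g$, and then applies Proposition~\ref{prop10.11}. To close the gap you should consult that reference or supply a complete argument.
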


\begin{proof}
 For  $d \le 4$, Condition  \ref{cond_Lambda_1} is ensured 
by Proposition \ref{prop10.10}.

 In the case $2^\circ$, it follows from Theorem 13.1 of~\cite[Chapter III]{LaU}
 that  $\Lambda \in L_\infty$ (and the norm $\|\Lambda\|_{L_\infty}$ is estimated in terms of
 $d$, $\|g\|_{L_\infty}$,  $\|g^{-1}\|_{L_\infty},$ and $\Omega$). 
 It remains to apply Proposition \ref{prop10.11}.
 
  In the case where $g^0 = \underline{g}$, the relation $\Lambda \in L_\infty$ (together with a suitable estimate 
  for the norm $\|\Lambda\|_{L_\infty}$) was proved in  \cite[Proposition 6.9]{BSu3}.
   Again, we apply Proposition \ref{prop10.11}.
\end{proof}

Similarly, one can check the following statement which distinguishes  some cases where Condition  \ref{cond_Lambda_2} holds.

\begin{proposition}
\label{prop10.13}
Suppose that at least one of the following assumptions is satisfied\emph{:}

\noindent $1^\circ$. $d \le 3$\emph{;}

\noindent $2^\circ$. 
$\wh{\mathcal A}= \D^* g(\x) \D,$ where the matrix $g(\x)$ has real entries\emph{;}

\noindent $3^\circ$. 
$g^0 = \underline{g}$ \emph{(}i.~e., relations 
\emph{\eqref{g0=underline_g_relat}} are valid\emph{)}.

\noindent Then Condition \emph{\ref{cond_Lambda_2}} is a fortiori satisfied, and the norm $\| [\Lambda]\|_{H^{3/2} \to H^1}$ is controlled in terms of  $d,$ $\alpha_0,$ $\alpha_1,$ $\|g\|_{L_\infty},$ $\|g^{-1}\|_{L_\infty},$ and the parameters of the lattice~$\Gamma$.
\end{proposition}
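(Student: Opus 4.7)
My plan is to mirror the three-case argument used for Proposition \ref{prop10.12}, replacing the target embedding $H^2 \to H^1$ by $H^{3/2} \to H^1$ throughout. In each case the strategy is to produce continuity of $[\Lambda]$ from some space $H^l$ into $H^1$ (via Proposition \ref{prop10.10} or Proposition \ref{prop10.11}) and then use the continuous Sobolev embedding $H^{3/2}(\R^d) \hookrightarrow H^l(\R^d)$ when $l \le 3/2$. The three assumptions $1^\circ$, $2^\circ$, $3^\circ$ play entirely different roles and will be handled independently.

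For assumption $1^\circ$ (low dimensions), I would split according to $d$. For $d=3$ Proposition \ref{prop10.10} applies with $l = d/2 = 3/2$ and directly yields continuity of $[\Lambda] : H^{3/2}(\R^d;\AC^m) \to H^1(\R^d;\AC^n)$ with the norm controlled by the parameters listed. For $d=2$ the same proposition applies with any $l>1$; choosing $l = 3/2$ gives the required continuity, with the constant depending in addition on this fixed choice of $l$ (which we absorb into the list of allowed dependencies). For $d=1$ Proposition \ref{prop10.10} gives $[\Lambda] : H^1 \to H^1$ continuously, and the elementary embedding $H^{3/2}(\R) \hookrightarrow H^1(\R)$ finishes the case.

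For assumptions $2^\circ$ and $3^\circ$ I would argue via the $L_\infty$ bound on $\Lambda$, exactly as in Proposition~\ref{prop10.12}. Under~$2^\circ$, the scalar periodic solution $\Lambda$ of \eqref{equation_for_Lambda} belongs to $L_\infty$ by Theorem 13.1 of Chapter III of \cite{LaU}, with norm controlled by $d$, $\|g\|_{L_\infty}$, $\|g^{-1}\|_{L_\infty}$ and $\Omega$. Under~$3^\circ$, the relation $\Lambda \in L_\infty$ with a suitable bound is  \cite[Proposition 6.9]{BSu3}. In either case Proposition \ref{prop10.11} gives continuity $[\Lambda] : H^1(\R^d;\AC^m) \to H^1(\R^d;\AC^n)$, and composing with the embedding $H^{3/2}(\R^d) \hookrightarrow H^1(\R^d)$ yields Condition~\ref{cond_Lambda_2} with the announced dependence of the constant.

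There is no real obstacle here: this is a routine transcription of the proof of Proposition \ref{prop10.12} adapted to the weaker Sobolev target. The only small point to watch is the choice of the intermediate $l$ in Proposition \ref{prop10.10} when $d=2$, since that proposition requires $l>1$ strictly; fixing $l=3/2$ avoids any issue and keeps all constants under control in terms of the listed parameters.
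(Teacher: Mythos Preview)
Your proposal is correct and follows exactly the approach the paper intends: the paper does not give a separate proof but simply says that Proposition~\ref{prop10.13} is checked ``similarly'' to Proposition~\ref{prop10.12}, and your case-by-case transcription (using Proposition~\ref{prop10.10} with $l=3/2$ for $d\le 3$, and Proposition~\ref{prop10.11} via $\Lambda\in L_\infty$ for cases $2^\circ$ and $3^\circ$) is precisely that adaptation.
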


\begin{remark}
\label{rem_Lambda}
$1^\circ$.  For $d\le 4$ Condition \ref{cond_Lambda_1} is satisfied automatically. As was shown in  \cite[Lemma 8.7]{M}, for $d \ge 5$ condition $\Lambda \in L_d(\Omega)$ ensures Condition \ref{cond_Lambda_1}.

\noindent $2^\circ$. For $d\le 3$ Condition \ref{cond_Lambda_2} is satisfied automatically. By analogy with  \cite[Lemma 8.7]{M}, it is easily seen that for $d \ge 4$ condition $\Lambda \in L_{2d}(\Omega)$ ensures 
Condition~\ref{cond_Lambda_2}.
\end{remark}

\section*{Chapter 3. Homogenization problems for hyperbolic equations}

\section{Approximation for the operators $\cos(\tau \mathcal{A}_\varepsilon^{1/2})$ and $\mathcal{A}_\varepsilon^{-1/2} \sin(\tau \mathcal{A}_\varepsilon^{1/2})$}

\subsection{The operators $\widehat{\mathcal{A}}_\varepsilon$ and $\mathcal{A}_\varepsilon$. Statement of the problem}

If $\psi(\mathbf{x})$~is a measurable $\Gamma$-periodic function in $\mathbb{R}^d$, we denote  
$\psi^{\varepsilon}(\mathbf{x}) := \psi(\varepsilon^{-1} \mathbf{x}), \; \varepsilon > 0$. \emph{Our main objects}~are the operators  $\widehat{\mathcal{A}}_\varepsilon$ and $\mathcal{A}_\varepsilon$ acting in $L_2 (\mathbb{R}^d; \mathbb{C}^n)$ and formally given by 
\begin{align}
\label{Ahat_eps}
\widehat{\mathcal{A}}_\varepsilon &:= b(\mathbf{D})^* g^{\varepsilon}(\mathbf{x}) b(\mathbf{D}), \\
\label{A_eps}
\mathcal{A}_\varepsilon &:= (f^{\varepsilon}(\mathbf{x}))^* b(\mathbf{D})^* g^{\varepsilon}(\mathbf{x}) b(\mathbf{D}) f^{\varepsilon}(\mathbf{x}).
\end{align}
The precise definitions are given in terms of the quadratic forms (cf.~Subsection~\ref{A_oper_subsect}). The coefficients of the operators~(\ref{Ahat_eps}) and~(\ref{A_eps}) oscillate rapidly as $\varepsilon \to 0$.

\emph{Our goal}~is to obtain approximation for the operators $\cos(\tau \mathcal{A}_\varepsilon^{1/2})$ and $\mathcal{A}_\varepsilon^{-1/2} \sin(\tau \mathcal{A}_\varepsilon^{1/2})$ for small $\varepsilon$ and to apply the results to homogenization of the solutions of the Cauchy problem for hyperbolic equations.

\subsection{Scaling transformation}
Let $T_{\varepsilon}$~be a \emph{unitary scaling transformation in $L_2 (\mathbb{R}^d\!; \mathbb{C}^n)$}: 
$(T_{\varepsilon} \mathbf{u})(\mathbf{x}) = \varepsilon^{d/2} \mathbf{u} (\varepsilon \mathbf{x})$, $\varepsilon > 0$.
Then  $\mathcal{A}_\varepsilon = \varepsilon^{-2}T_{\varepsilon}^* \mathcal{A} T_{\varepsilon}$. Hence,
\begin{equation}
\label{sin_and_scale_transform}
\begin{split}
\cos(\tau \mathcal{A}_\varepsilon^{1/2}) &= T_{\varepsilon}^*  \cos(\varepsilon^{-1} \tau \mathcal{A}^{1/2}) T_{\varepsilon}, 
\\
\mathcal{A}_\varepsilon^{-1/2} \sin(\tau \mathcal{A}_\varepsilon^{1/2}) &= \varepsilon T_{\varepsilon}^* \mathcal{A}^{-1/2} \sin(\varepsilon^{-1} \tau \mathcal{A}^{1/2}) T_{\varepsilon}.
\end{split}
\end{equation}
Similar relations are valid also for  $\widehat{\mathcal{A}}_\varepsilon$.
Applying the scaling transformation to the resolvent of the operator $\mathcal{H}_0 = - \Delta$, we obtain 
\begin{equation}
\label{H0_resolv_and_scale_transform}
(\mathcal{H}_0 + I)^{-1} = \varepsilon^2 T_\varepsilon^* (\mathcal{H}_0 + \varepsilon^2 I)^{-1} T_\varepsilon = T_\varepsilon^* \mathcal{R} (\varepsilon) T_\varepsilon.
\end{equation}
Here $\mathcal{R} (\varepsilon)$ is the operator 
\eqref{R(epsilon)}.
 If $\psi(\mathbf{x})$~is a $\Gamma$-periodic function,~then  
\begin{equation}
\label{mult_op_and_scale_transform}
[\psi^{\varepsilon}] = T_\varepsilon^* [\psi] T_\varepsilon.
\end{equation}

\subsection{Approximation for the operators  
$\cos( \tau \widehat{\mathcal{A}}_\varepsilon^{1/2})$ and
$\widehat{\mathcal{A}}_\varepsilon^{-1/2}\sin( \tau \widehat{\mathcal{A}}_\varepsilon^{1/2})$ in the 
principal order\label{sec15.3}}

Denote
\begin{align}
\label{15.6}
	&\wh{J}_{1,\eps}(\tau) :=  \cos(\tau \widehat{\mathcal{A}}_{\varepsilon}^{1/2}) - 
	\cos(\tau (\widehat{\mathcal{A}}^0)^{1/2}),
	\\
\label{15.7}
	&\wh{J}_{2,\eps}(\tau) :=  \widehat{\mathcal{A}}_{\varepsilon}^{-1/2} \sin(\tau \widehat{\mathcal{A}}_{\varepsilon}^{1/2}) - (\widehat{\mathcal{A}}^0)^{-1/2}\sin(\tau (\widehat{\mathcal{A}}^0)^{1/2}).
\end{align}
Applying relations of the form~(\ref{sin_and_scale_transform}) for the operators $\wh{\mathcal{A}}_\varepsilon$ and $\wh{\mathcal{A}}^0$, and also~(\ref{H0_resolv_and_scale_transform}), for $\tau \in \R$ and $\eps>0$ we obtain  \begin{align}
\label{15.8}
 &\wh{J}_{1,\eps}(\tau)(\mathcal{H}_0 + I)^{-s/2} = T_{\varepsilon}^*  \wh{J}_1(\varepsilon^{-1} \tau) \mathcal{R} (\varepsilon)^{s/2} T_{\varepsilon}, 
 \\
\label{15.9}
 &\wh{J}_{2,\eps}(\tau)(\mathcal{H}_0 + I)^{-s/2} = \eps T_{\varepsilon}^*  \wh{J}_2(\varepsilon^{-1} \tau) \mathcal{R} (\varepsilon)^{s/2} T_{\varepsilon}.
\end{align}

Note that the operator $(\mathcal{H}_0 + I)^{s/2}$ is an isometric isomorphism of the Sobolev space  $H^s(\mathbb{R}^d; \mathbb{C}^n)$ onto $L_2(\mathbb{R}^d; \mathbb{C}^n)$.
Taking this into account, applying  Theorems \ref{th14.1}, \ref{th14.2}, and relations \eqref{15.8}, \eqref{15.9},
we directly obtain the following two theorems.

\begin{theorem}[see~\cite{BSu5,M}]
	\label{th15.1}
	Let $\widehat{\mathcal{A}}_{\varepsilon}$~be the operator~\emph{\eqref{Ahat_eps}} and let 
	$\widehat{\mathcal{A}}^0$~be the effective operator~\emph{\eqref{hatA0}}. 
	Let $\wh{J}_{1,\eps}(\tau)$ and  $\wh{J}_{2,\eps}(\tau)$ be the operators defined by  \eqref{15.6}, \eqref{15.7}. 
	Then for  $\tau \in \mathbb{R}$ and $\varepsilon > 0$ we have 
	\begin{align}
	\label{15.10}
	\bigl\| \wh{J}_{1,\eps}(\tau) 
	\bigr\|_{H^2(\mathbb{R}^d) \to L_2(\mathbb{R}^d)} \le \widehat{\mathrm{C}}_1 (1+|\tau|) \varepsilon,
	\\
	\label{15.11}
	\bigl\| \wh{J}_{2,\eps}(\tau) 
	\bigr\|_{H^1(\mathbb{R}^d) \to L_2(\mathbb{R}^d)} \le \widehat{\mathrm{C}}_2 (1+|\tau|) \varepsilon.
	\end{align}
	The constants $\widehat{\mathrm{C}}_1$ and $\widehat{\mathrm{C}}_2$ depend only on $\alpha_0,$ $\alpha_1,$ $\|g\|_{L_\infty},$ $\|g^{-1}\|_{L_\infty},$ and $r_0$.
\end{theorem}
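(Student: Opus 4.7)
The plan is to reduce Theorem 15.1 directly to Theorem 14.1 via the scaling transformation, exploiting the identities (15.8) and (15.9) already highlighted in the text. First I would observe that since $(\mathcal{H}_0 + I)^{s/2}$ is an isometric isomorphism from $H^s(\R^d;\AC^n)$ onto $L_2(\R^d;\AC^n)$, one has
\begin{equation*}
\bigl\| \wh{J}_{l,\eps}(\tau) \bigr\|_{H^s(\R^d)\to L_2(\R^d)} = \bigl\| \wh{J}_{l,\eps}(\tau)(\mathcal{H}_0 + I)^{-s/2} \bigr\|_{L_2(\R^d)\to L_2(\R^d)}, \qquad l = 1, 2.
\end{equation*}
So everything comes down to controlling the right-hand side of this equality.

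Next I would invoke the key identities (15.8) and (15.9), which express each smoothed operator as a conjugate of the analogous $\eps$-dependent object from Chapter~2 by the unitary scaling $T_\eps$. Since $T_\eps$ and $T_\eps^*$ are isometries on $L_2(\R^d;\AC^n)$, they drop out of the norm, yielding
\begin{equation*}
\bigl\| \wh{J}_{1,\eps}(\tau)(\mathcal{H}_0+I)^{-1}\bigr\|_{L_2\to L_2} = \bigl\| \wh{J}_1(\eps^{-1}\tau)\mathcal{R}(\eps) \bigr\|_{L_2\to L_2},
\end{equation*}
\begin{equation*}
\bigl\| \wh{J}_{2,\eps}(\tau)(\mathcal{H}_0+I)^{-1/2}\bigr\|_{L_2\to L_2} = \eps\, \bigl\| \wh{J}_2(\eps^{-1}\tau)\mathcal{R}(\eps)^{1/2}\bigr\|_{L_2\to L_2}.
\end{equation*}
The application of (15.9) produces the extra factor $\eps$ that is precisely what is needed to compensate the fact that the bound (14.7) in Theorem~\ref{th14.1} does not carry an $\eps$ on the right-hand side: multiplying by $\eps$ converts $\widehat{\mathrm{C}}_2(1+|\tau|)$ into $\widehat{\mathrm{C}}_2(1+|\tau|)\eps$, matching (15.11).

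Finally I would invoke estimates (14.6) and (14.7) of Theorem~\ref{th14.1} (with $s=2$ for the cosine case and $s=1$ for the sine case) and read off the required bounds (15.10), (15.11) with the same constants $\widehat{\mathrm{C}}_1$, $\widehat{\mathrm{C}}_2$ (depending only on $\alpha_0,\alpha_1,\|g\|_{L_\infty},\|g^{-1}\|_{L_\infty},r_0$, as asserted). There is essentially no obstacle here: the heavy work—the threshold analysis of the operator family $\wh{A}(t,\boldsymbol{\theta})$, the abstract cosine/sine approximations of \S\ref{abstr_aprox_thrm_section}, and the direct integral reduction yielding Theorem~\ref{th14.1}—has already been carried out in Chapters~1 and~2; the only thing to verify carefully is that the scaling identities (15.8), (15.9) hold. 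These follow from iterating the relations $\mathcal{A}_\eps = \eps^{-2} T_\eps^* \mathcal{A} T_\eps$ (giving (16.1)), $(\mathcal{H}_0 + I)^{-1} = T_\eps^* \mathcal{R}(\eps) T_\eps$ (giving (16.3)), and commuting $T_\eps$ through the spectral functional calculus, which is a routine consequence of the unitary equivalence of the operators. Thus the proof is a short assembly of already-proved pieces.
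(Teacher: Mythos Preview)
Your proposal is correct and follows essentially the same approach as the paper: reduce the $H^s\to L_2$ norm to an $L_2\to L_2$ norm via the isometry $(\mathcal{H}_0+I)^{s/2}$, apply the scaling identities (15.8)--(15.9) (using that $T_\eps$ is unitary), and then invoke estimates (14.6) and (14.7) of Theorem~\ref{th14.1}. The paper states this proof in one sentence immediately preceding the theorem, and your expansion of the details---in particular the bookkeeping of the extra factor $\eps$ coming from (15.9)---is accurate.
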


\begin{theorem}
	\label{th15.2}
	Suppose that the assumptions of Theorem \emph{\ref{th15.1}} are satisfied. Suppose that  
	 Condition \emph{\ref{cond_B}} or Condition~\emph{\ref{cond1}} \emph{(}or more restrictive Condition~\emph{\ref{cond2})} is satisfied. 
		Then for $\tau \in \mathbb{R}$ and $\varepsilon > 0$ we have 
	\begin{align}
	\label{15.12}
	\bigl\| \wh{J}_{1,\eps}(\tau) 
	\bigr\|_{H^{3/2}(\mathbb{R}^d) \to L_2(\mathbb{R}^d)} \le \widehat{\mathrm{C}}_3 (1+|\tau|)^{1/2} \varepsilon,
	\\
	\label{15.13}
	\bigl\| \wh{J}_{2,\eps}(\tau) 
	\bigr\|_{H^{1/2}(\mathbb{R}^d) \to L_2(\mathbb{R}^d)} \le \widehat{\mathrm{C}}_4 (1+|\tau|)^{1/2} \varepsilon.
	\end{align}
	Under Condition \emph{\ref{cond_B}} the constants $\widehat{\mathrm{C}}_3$ and $\widehat{\mathrm{C}}_4$ depend only on  $\alpha_0,$ $\alpha_1,$ $\|g\|_{L_\infty},$ $\|g^{-1}\|_{L_\infty},$ and $r_0$.
	Under Condition~\emph{\ref{cond1}}, these constants depend on the same parameters and on $n,$  $\wh{c}^\circ$.
\end{theorem}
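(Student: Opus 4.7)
The plan is to derive Theorem~\ref{th15.2} directly from Theorem~\ref{th14.2} via the scaling identities \eqref{15.8}, \eqref{15.9}, using the well-known fact that $(\mathcal{H}_0+I)^{s/2}$ is an isometric isomorphism of $H^s(\mathbb{R}^d;\AC^n)$ onto $L_2(\mathbb{R}^d;\AC^n)$ and that $T_\varepsilon$ is unitary on $L_2$. Since the sharpening of the smoothing exponent was already carried out at the level of the fiber operators in \S\ref{sec9} and lifted to $\R^d$ via the direct integral in Subsections \ref{sec14.1}--\ref{sec14.2}, the passage to $\A_\eps$ only requires a change of scale, which costs nothing.

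More precisely, to prove \eqref{15.12} I would choose $s=3/2$ in the identity \eqref{15.8}, so that
\begin{equation*}
\wh{J}_{1,\eps}(\tau)(\mathcal{H}_0 + I)^{-3/4} = T_{\varepsilon}^*\,\wh{J}_1(\varepsilon^{-1} \tau)\,\mathcal{R}(\varepsilon)^{3/4}\,T_{\varepsilon}.
\end{equation*}
Taking the $(L_2\to L_2)$-operator norm on both sides and using the unitarity of $T_\eps$, we get
\begin{equation*}
\|\wh{J}_{1,\eps}(\tau)\|_{H^{3/2}\to L_2}
= \|\wh{J}_{1,\eps}(\tau)(\mathcal{H}_0+I)^{-3/4}\|_{L_2\to L_2}
=\|\wh{J}_1(\varepsilon^{-1}\tau)\mathcal{R}(\varepsilon)^{3/4}\|_{L_2\to L_2}.
\end{equation*}
Under Condition~\ref{cond_B} or Condition~\ref{cond1}, Theorem~\ref{th14.2} bounds the right-hand side by $\widehat{\mathrm C}_3(1+|\tau|)^{1/2}\eps$, giving \eqref{15.12} with the same constant.

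For \eqref{15.13} I would choose $s=1/2$ in the identity \eqref{15.9}, obtaining
\begin{equation*}
\wh{J}_{2,\eps}(\tau)(\mathcal{H}_0 + I)^{-1/4} = \eps\,T_{\varepsilon}^*\,\wh{J}_2(\varepsilon^{-1} \tau)\,\mathcal{R}(\varepsilon)^{1/4}\,T_{\varepsilon},
\end{equation*}
and therefore
\begin{equation*}
\|\wh{J}_{2,\eps}(\tau)\|_{H^{1/2}\to L_2}
=\eps\,\|\wh{J}_2(\varepsilon^{-1}\tau)\mathcal{R}(\varepsilon)^{1/4}\|_{L_2\to L_2}.
\end{equation*}
Again by Theorem~\ref{th14.2}, the factor in $\eps$ in front of the norm combines with the bound $\widehat{\mathrm C}_4(1+|\tau|)^{1/2}$ on $\|\wh{J}_2(\eps^{-1}\tau)\mathcal{R}(\eps)^{1/4}\|$ to yield exactly \eqref{15.13} with constant $\widehat{\mathrm C}_4$.

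There is no genuine obstacle in this step: all the analytic work—the threshold approximations, the improvement under $\wh N=0$ or the cluster analysis under $\wh N_0=0$, the handling of the region $|\k|>\wh t_0$, and the $\eps$-dependence tracking—has already been absorbed into Theorem~\ref{th14.2}. The only thing to verify is the match of Sobolev exponents with the smoothing factor exponents, which is dictated by the identities \eqref{15.8} and \eqref{15.9}: the exponent $s$ of $(\mathcal{H}_0+I)^{-s/2}$ transforms into the exponent $s$ of $\mathcal{R}(\eps)^{s/2}$. Consequently the worst point—where one might expect difficulty—is simply the bookkeeping that $3/2$ pairs with $\mathcal{R}(\eps)^{3/4}$ for the cosine and that $1/2$ pairs with $\mathcal{R}(\eps)^{1/4}$ for the sine, the latter being compensated by the extra $\eps$ coming from the scaling of $\A^{-1/2}$.
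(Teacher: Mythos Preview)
Your proposal is correct and matches the paper's approach exactly: the paper states that Theorems~\ref{th15.1} and~\ref{th15.2} are obtained directly from Theorems~\ref{th14.1} and~\ref{th14.2} via the scaling identities \eqref{15.8}, \eqref{15.9} together with the fact that $(\mathcal{H}_0+I)^{s/2}$ is an isometric isomorphism $H^s\to L_2$. Your write-up simply makes this one-line deduction explicit, with the appropriate choices $s=3/2$ and $s=1/2$.
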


Theorem \ref{th15.1} was known earlier: estimate \eqref{15.10} was obtained in  \cite[Theorem 13.1]{BSu5},
and \eqref{15.11} was proved in \cite[Theorem~9.1]{M}.

By using interpolation, we deduce the following corollaries from Theorems \ref{th15.1} and \ref{th15.2}.

\begin{corollary}
	\label{cor15.1a}
	Under the assumptions of Theorem \emph{\ref{th15.1}}, we have
	\begin{align}
	\label{15.16}
	&\bigl\| \wh{J}_{1,\eps}(\tau) 
	\bigr\|_{H^s(\mathbb{R}^d) \to L_2(\mathbb{R}^d)} \le \widehat{\mathfrak{C}}_1(s) (1+|\tau|)^{s/2} \varepsilon^{s/2},
	\quad 0\le s \le 2,\ \tau \in \R, \ \eps>0;
	\\
	\label{15.17}
	&\bigl\| \wh{J}_{2,\eps}(\tau) 
	\bigr\|_{H^r(\mathbb{R}^d) \to L_2(\mathbb{R}^d)} \le \widehat{\mathfrak{C}}_2(r) (1+|\tau|)^{(r+1)/2}
	 \varepsilon^{(r+1)/2},
	\quad 0 \le r \le 1,\ \tau \in \R,\ 0< \eps \le 1;
	 \\
	 \label{hat_J_2epsD*_gen_intrpld}
	&\bigl\| \wh{J}_{2,\eps}(\tau) \mathbf{D}^*\bigr\|_{H^s(\mathbb{R}^d) \to L_2(\mathbb{R}^d)} \le \widehat{\mathfrak{C}}'_2(s) (1+|\tau|)^{s/2} \varepsilon^{s/2},
	\quad 0 \le s \le 2,\ \tau \in \R,\  \eps > 0.
	\end{align}
\end{corollary}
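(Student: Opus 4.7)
\textbf{Proof plan for Corollary \ref{cor15.1a}.} The three inequalities will be obtained by operator interpolation between endpoint estimates that are either already proved or follow immediately from earlier results in the paper. Writing
\[
 \|\wh{J}_{l,\eps}(\tau)\|_{H^s \to L_2} = \|\wh{J}_{l,\eps}(\tau)\, (\mathcal{H}_0+I)^{-s/2}\|_{L_2 \to L_2},
\]
each bound reduces to a family of $L_2 \to L_2$ operator norms parametrised holomorphically in $s$, so the Stein--Hadamard three lines theorem applied to $z \mapsto \wh{J}_{l,\eps}(\tau)\, (\mathcal{H}_0+I)^{-z/2}$ (after the usual $e^{\delta(z-z_0)^2}$ regularisation) yields interpolated constants of the form $C_0^{1-\theta} C_1^\theta$.

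For \eqref{15.16} I take as endpoints the trivial bound $\|\wh{J}_{1,\eps}(\tau)\|_{L_2 \to L_2} \le 2$ (which holds since $\|\cos(\tau \wh{\A}_\eps^{1/2})\| \le 1$ and $\|\cos(\tau (\wh{\A}^0)^{1/2})\| \le 1$ by functional calculus) and inequality \eqref{15.10} at $s=2$; interpolation with $\theta = s/2$ delivers \eqref{15.16}. For \eqref{15.17} the upper endpoint at $r=1$ is \eqref{15.11}. The endpoint at $r=0$ requires a uniform $L_2$ estimate on $\wh{J}_{2,\eps}(\tau)$: combining the scaling identity \eqref{15.9} at $s=0$ with \eqref{14.7a} gives
\[
 \|\wh{J}_{2,\eps}(\tau)\|_{L_2 \to L_2} = \eps\,\|\wh{J}_2(\eps^{-1}\tau)\|_{L_2 \to L_2} \le \widehat{\mathrm{C}}_2'(\eps + \eps^{1/2}|\tau|^{1/2}),
\]
which for $0 < \eps \le 1$ is majorised by $C(1+|\tau|)^{1/2}\eps^{1/2}$. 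Interpolation with $\theta = r$ then produces precisely the exponent $(r+1)/2$ in both $(1+|\tau|)$ and $\eps$, which is \eqref{15.17}.

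Estimate \eqref{hat_J_2epsD*_gen_intrpld} is obtained analogously, with endpoints $s=0$ and $s=2$. The upper endpoint follows from \eqref{15.11} together with the elementary bound $\|\D^*\|_{H^2 \to H^1} \le C$, giving $\|\wh{J}_{2,\eps}(\tau)\D^*\|_{H^2 \to L_2} \le C(1+|\tau|)\eps$. The lower endpoint, namely the uniform bound $\|\wh{J}_{2,\eps}(\tau)\D^*\|_{L_2 \to L_2} \le C$ independent of $\eps$ and $\tau$, is the delicate point. Writing $\Psi_\tau(\wh{\A}_\eps) := \wh{\A}_\eps^{-1/2} \sin(\tau \wh{\A}_\eps^{1/2})$, one has $\wh{\A}_\eps^{1/2}\Psi_\tau(\wh{\A}_\eps) = \sin(\tau \wh{\A}_\eps^{1/2})$ of operator norm at most $1$ in $L_2$. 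Combined with the coercive inequality
\[
 \alpha_0 \|\D u\|_{L_2}^2 \le \|b(\D) u\|_{L_2}^2 \le \|g^{-1}\|_{L_\infty}\,\|\wh{\A}_\eps^{1/2} u\|_{L_2}^2,
\]
which is an immediate consequence of \eqref{rank_alpha_ineq} and the definition of $\wh{\A}_\eps$, this yields $\|\D\,\Psi_\tau(\wh{\A}_\eps)\|_{L_2 \to L_2} \le \alpha_0^{-1/2}\|g^{-1}\|_{L_\infty}^{1/2}$. Self-adjointness of $\Psi_\tau(\wh{\A}_\eps)$ on $L_2$ and duality then transfer this to the required bound on $\Psi_\tau(\wh{\A}_\eps)\D^*$, and the same argument applies with $\wh{\A}^0$ in place of $\wh{\A}_\eps$. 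Interpolation with $\theta = s/2$ produces \eqref{hat_J_2epsD*_gen_intrpld}.

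The principal technical point is the $s = 0$ endpoint in \eqref{hat_J_2epsD*_gen_intrpld}: the derivative $\D^*$ must be absorbed into the factor $\wh{\A}_\eps^{-1/2}$, and this is made possible precisely by the coercivity of $b(\D)$ relative to $\wh{\A}_\eps^{1/2}$ together with the self-adjointness of the sine-type operator. Once the endpoint bounds are in place, the interpolation step is routine, and the constants $\widehat{\mathfrak{C}}_1(s)$, $\widehat{\mathfrak{C}}_2(r)$, $\widehat{\mathfrak{C}}'_2(s)$ are given explicitly as products of the endpoint constants raised to the corresponding interpolation powers.
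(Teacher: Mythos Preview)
Your proposal is correct and follows essentially the same route as the paper: trivial $L_2$ bound plus \eqref{15.10} for \eqref{15.16}; the $r=0$ endpoint via \eqref{14.7a} and \eqref{15.9} plus \eqref{15.11} for \eqref{15.17}; and for \eqref{hat_J_2epsD*_gen_intrpld} the coercivity estimate $\widehat{c}_*\|\D u\|^2 \le \|\widehat{\mathcal{A}}_\eps^{1/2} u\|^2$ (your constant $\alpha_0^{-1/2}\|g^{-1}\|_{L_\infty}^{1/2}$ is exactly $\widehat{c}_*^{-1/2}$) combined with self-adjointness and duality, then interpolation against the $H^2\to L_2$ bound inherited from \eqref{15.11}. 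The paper states the interpolation step without naming the three-lines theorem, but the argument is the same.
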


\begin{proof}
Obviously, 
	\begin{equation}
	\label{15.18}
	\bigl\| \wh{J}_{1,\eps}(\tau) \bigr\|_{L_2(\mathbb{R}^d) \to L_2(\mathbb{R}^d)} \le 2,
	\quad \tau \in \R, \ \eps>0.
\end{equation}
Interpolating between \eqref{15.18} and \eqref{15.10}, we arrive at estimate \eqref{15.16} with the constant 
$\widehat{\mathfrak{C}}_1(s) = 2^{1-s/2} \wh{\mathrm{C}}_1^{s/2}$.

By \eqref{14.7a} and \eqref{15.9} (with $s=0$), for $\tau \in \R$ and $0< \eps \le 1$ we have 
	\begin{align}
	\label{15.19}
	\bigl\| \wh{J}_{2,\eps}(\tau) 
	\bigr\|_{L_2(\mathbb{R}^d) \to L_2(\mathbb{R}^d)} \le \widehat{\mathrm{C}}'_2 \eps(1+\eps^{-1/2}|\tau|^{1/2})
	\le 2 \widehat{\mathrm{C}}'_2 \eps^{1/2}(1+|\tau|)^{1/2}.
	\end{align}
Interpolating between  \eqref{15.19} and \eqref{15.11}, we obtain estimate \eqref{15.17} 
with the constant 
$\widehat{\mathfrak{C}}_2(r) = (2 \wh{\mathrm{C}}'_2)^{1-r} \wh{\mathrm{C}}_2^{r}$.

Next, applying the analog of~\eqref{a_form_ineq} for the operator $\widehat{\mathcal{A}}_\varepsilon$, 
we have  
$$
\|\mathbf{D} \widehat{\mathcal{A}}_\varepsilon^{-1/2} \sin(\tau \widehat{\mathcal{A}}_\varepsilon^{1/2})\|_{L_2 \to L_2} \le \widehat{c}_*^{-1/2}.
$$
 Using a similar estimate for the operator $(\widehat{\mathcal{A}}^0)^{-1/2} \sin(\tau (\widehat{\mathcal{A}}^0)^{1/2})$ and passing to  the adjoint operators, we obtain 
\begin{equation}
\label{hat_J_2epsD*_L2_L2}
\bigl\| \wh{J}_{2,\eps}(\tau) \mathbf{D}^*\bigr\|_{L_2(\mathbb{R}^d) \to L_2(\mathbb{R}^d)} \le 2 \widehat{c}_*^{-1/2},
\quad \tau \in \R, \ \eps>0.
\end{equation}
Interpolating between~(\ref{hat_J_2epsD*_L2_L2}) and the estimate $\| \wh{J}_{2,\eps}(\tau) \mathbf{D}^*\|_{H^2 \to L_2} \le \widehat{\mathrm{C}}_2 (1+|\tau|) \varepsilon$ (which obviously follows from~\eqref{15.11}), we obtain~(\ref{hat_J_2epsD*_gen_intrpld}) with the constant $\widehat{\mathfrak{C}}'_2(s) = (2 \widehat{c}_*^{-1/2})^{1-s/2} \wh{\mathrm{C}}_2^{s/2}$.
\end{proof}

\begin{corollary}
	\label{cor15.2a}
	Under the assumptions of Theorem \emph{\ref{th15.2}}, we have
	\begin{align}
	\label{15.20}
	&\bigl\| \wh{J}_{1,\eps}(\tau) 
	\bigr\|_{H^s(\mathbb{R}^d) \to L_2(\mathbb{R}^d)} \le \widehat{\mathfrak{C}}_3(s) (1+|\tau|)^{s/3} \varepsilon^{2 s/3},
	\quad 0\le s \le 3/2,\ \tau \in \R, \ \eps>0;
	\\
	\label{15.21}
	&\bigl\| \wh{J}_{2,\eps}(\tau) 
	\bigr\|_{H^r(\mathbb{R}^d) \to L_2(\mathbb{R}^d)} \le \widehat{\mathfrak{C}}_4(r) (1+|\tau|)^{(r+1)/3}
	 \varepsilon^{2(r+1)/3},
	 \quad 0 \le r \le 1/2,\ \tau \in \R,\ 0< \eps \le 1;
	\\
	\label{hat_J_2epsD*_enchcd_intrpld}
	&\bigl\| \wh{J}_{2,\eps}(\tau) \mathbf{D}^*\bigr\|_{H^s(\mathbb{R}^d) \to L_2(\mathbb{R}^d)} \le \widehat{\mathfrak{C}}'_4(s) (1+|\tau|)^{s/3} \varepsilon^{2s/3},
	\quad 0 \le s \le 3/2,\ \tau \in \R,\  \eps >0.
	\end{align}
\end{corollary}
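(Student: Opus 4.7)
The plan is to follow exactly the interpolation strategy used in the preceding Corollary~\ref{cor15.1a}, combining a base $L_2\to L_2$ endpoint with the strong endpoint supplied by Theorem~\ref{th15.2}, but now calibrated against the \emph{improved} rates $(1+|\tau|)^{1/2}\varepsilon$ in the $H^{3/2}\to L_2$ and $H^{1/2}\to L_2$ norms. The only genuinely new input (compared with Corollary~\ref{cor15.1a}) is the \emph{strengthened} $L_2\to L_2$ estimate for $\wh{J}_{2,\eps}(\tau)$ that is available under Condition~\ref{cond_B} (or \ref{cond1}), namely the one stemming from Propositions~\ref{prop9.2a} and \ref{prop9.3a}, which improves the exponent in $\tau/\eps$ from $1/2$ to $1/3$.

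First, for \eqref{15.20} I interpolate between the trivial bound \eqref{15.18} and \eqref{15.12}. With interpolation parameter $\theta=2s/3\in[0,1]$ for $0\le s\le 3/2$, this yields \eqref{15.20} with $\widehat{\mathfrak C}_3(s)=2^{1-2s/3}\widehat{\mathrm C}_3^{\,2s/3}$.

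Second, for \eqref{15.21} the key step is to derive an $L_2\to L_2$ baseline. Under Condition~\ref{cond_B} (respectively \ref{cond1}), Proposition~\ref{prop9.2a} (respectively \ref{prop9.3a}) gives
$\| \wh{J}_2(\mathbf{k},\varepsilon^{-1}\tau)\|_{L_2(\Omega)\to L_2(\Omega)} \le \wh{\mathcal C}_4'(1+\varepsilon^{-1/3}|\tau|^{1/3})$ uniformly in $\mathbf{k}\in\wt\Omega$. Feeding this into \eqref{15.9} with $s=0$ and using $\|T_\varepsilon\|=1$ produces
\begin{equation*}
\|\wh{J}_{2,\eps}(\tau)\|_{L_2(\R^d)\to L_2(\R^d)} \le \varepsilon\,\wh{\mathcal C}_4'\bigl(1+\varepsilon^{-1/3}|\tau|^{1/3}\bigr) \le 2\wh{\mathcal C}_4'\,\varepsilon^{2/3}(1+|\tau|)^{1/3},\qquad 0<\varepsilon\le 1.
\end{equation*}
Interpolating this baseline with the $H^{1/2}\to L_2$ endpoint \eqref{15.13} at parameter $\theta=2r\in[0,1]$ gives \eqref{15.21} with $\widehat{\mathfrak C}_4(r)=(2\wh{\mathcal C}_4')^{1-2r}\widehat{\mathrm C}_4^{\,2r}$; the powers of $\varepsilon$ combine as $\tfrac{2}{3}(1-2r)+2r=\tfrac{2(r+1)}{3}$ and the powers of $(1+|\tau|)$ as $\tfrac{1}{3}(1-2r)+r=\tfrac{r+1}{3}$, as required.

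Third, for \eqref{hat_J_2epsD*_enchcd_intrpld} I use the $L_2\to L_2$ bound \eqref{hat_J_2epsD*_L2_L2} already established in the proof of Corollary~\ref{cor15.1a} and the endpoint $\|\wh{J}_{2,\eps}(\tau)\mathbf D^*\|_{H^{3/2}\to L_2}\le \widehat{\mathrm C}_4(1+|\tau|)^{1/2}\varepsilon$, which follows from \eqref{15.13} together with the boundedness of $\mathbf D^*:H^{3/2}\to H^{1/2}$. Interpolating at parameter $\theta=2s/3$ gives \eqref{hat_J_2epsD*_enchcd_intrpld} with $\widehat{\mathfrak C}_4'(s)=(2\widehat c_*^{-1/2})^{1-2s/3}\widehat{\mathrm C}_4^{\,2s/3}$. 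There is no serious obstacle; the only step requiring attention is the clean derivation of the $\varepsilon^{2/3}(1+|\tau|)^{1/3}$ baseline bound, which is exactly where Propositions~\ref{prop9.2a} and \ref{prop9.3a} (i.e., the improvement relative to Proposition~\ref{prop9.1a}) enter decisively.
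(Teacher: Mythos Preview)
Your proof is correct and follows essentially the same approach as the paper: the same three interpolations, the same endpoints, and the same explicit constants. The only cosmetic difference is that for the $L_2\to L_2$ baseline on $\wh{J}_{2,\eps}(\tau)$ you invoke Propositions~\ref{prop9.2a}/\ref{prop9.3a} directly via \eqref{15.9}, whereas the paper cites the already-assembled estimate \eqref{14.9a} from Theorem~\ref{th14.2}; these are the same bound.
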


\begin{proof}
Interpolating between \eqref{15.18} and \eqref{15.12}, we arrive at estimate \eqref{15.20} with the constant 
$\widehat{\mathfrak{C}}_3(s) = 2^{1-2s/3} \wh{\mathrm{C}}_3^{2s/3}$.

By \eqref{14.9a} and \eqref{15.9} (with $s=0$), for $\tau \in \R$ and $0< \eps \le 1$ we have 
	\begin{align}
	\label{15.22a}
	\bigl\| \wh{J}_{2,\eps}(\tau) 
	\bigr\|_{L_2(\mathbb{R}^d) \to L_2(\mathbb{R}^d)} \le \widehat{\mathrm{C}}'_4 \eps(1+\eps^{-1/3}|\tau|^{1/3})
	\le 2 \widehat{\mathrm{C}}'_4 \eps^{2/3}(1+|\tau|)^{1/3}.
	\end{align}
Interpolating between  \eqref{15.22a} and \eqref{15.13}, we obtain estimate \eqref{15.21} with the constant 
$\widehat{\mathfrak{C}}_4(r) = (2 \wh{\mathrm{C}}'_4)^{1-2r} \wh{\mathrm{C}}_4^{2r}$.

Interpolating between~(\ref{hat_J_2epsD*_L2_L2}) and the estimate  
$$
\| \wh{J}_{2,\eps}(\tau) \mathbf{D}^*\|_{H^{3/2} \to L_2} \le \widehat{\mathrm{C}}_4 (1+|\tau|)^{1/2} \varepsilon
$$ 
(which obviously follows from~\eqref{15.13}), we obtain~(\ref{hat_J_2epsD*_enchcd_intrpld}) with the constant $\widehat{\mathfrak{C}}'_4(s) = (2 \widehat{c}_*^{-1/2})^{1-2s/3} \wh{\mathrm{C}}_4^{2s/3}$.
\end{proof}

\begin{remark}
$1^\circ$. Under the assumptions of Theorem \ref{th15.1}, we can consider large values of time 
$\tau = O(\eps^{-\alpha})$, $0< \alpha < 1$, and get the qualified estimates:
$$
\begin{aligned}
\bigl\| \wh{J}_{1,\eps}(\tau) 
	\bigr\|_{H^s(\mathbb{R}^d) \to L_2(\mathbb{R}^d)} &= O(\eps^{s(1-\alpha)/2}),& 0 &\le s \le 2;
	\\
\bigl\| \wh{J}_{2,\eps}(\tau) 
	\bigr\|_{H^r(\mathbb{R}^d) \to L_2(\mathbb{R}^d)} &= O(\eps^{(r+1)(1-\alpha)/2}),& 0 &\le r \le 1;
	\\
	\bigl\| \wh{J}_{2,\eps}(\tau) \mathbf{D}^*\bigr\|_{H^s(\mathbb{R}^d) \to L_2(\mathbb{R}^d)} &= O(\eps^{s(1-\alpha)/2}), & 0 &\le s \le 2.	
\end{aligned}
$$
$2^\circ$. Under the assumptions of Theorem \ref{th15.2}, we can consider large values of time  
$\tau = O(\eps^{-\alpha})$, $0< \alpha < 2$, and get the qualified estimates:
$$
\begin{aligned}
\bigl\| \wh{J}_{1,\eps}(\tau) 
	\bigr\|_{H^s(\mathbb{R}^d) \to L_2(\mathbb{R}^d)} &= O(\eps^{s(2-\alpha)/3}),& 0 &\le s \le 3/2;
	\\
\bigl\| \wh{J}_{2,\eps}(\tau) 
	\bigr\|_{H^r(\mathbb{R}^d) \to L_2(\mathbb{R}^d)} &= O(\eps^{(r+1)(2-\alpha)/3}),& 0 &\le r \le 1/2;
	\\
	\bigl\| \wh{J}_{2,\eps}(\tau) \mathbf{D}^*\bigr\|_{H^s(\mathbb{R}^d) \to L_2(\mathbb{R}^d)} &= O(\eps^{s(2-\alpha)/3}), & 0 &\le s \le 3/2.		
\end{aligned}
$$
\end{remark}

\subsection{Approximation for the operator $\widehat{\mathcal{A}}_\varepsilon^{-1/2}\sin( \tau \widehat{\mathcal{A}}_\varepsilon^{1/2})$  in the energy norm\label{sec15.4}}
We put $\Pi_\varepsilon := T_\varepsilon^* \Pi T_\varepsilon$. Then $\Pi_\varepsilon$~is the pseudodifferential operator in $L_2(\mathbb{R}^d; \mathbb{C}^n)$ with the symbol 
$\chi_{\widetilde{\Omega}/\varepsilon} (\bxi)$:
\begin{equation}
\label{Pi_eps}
(\Pi_\varepsilon \mathbf{u}) (\mathbf{x}) = (2 \pi)^{-d/2} \int\limits_{\widetilde{\Omega}/\varepsilon} e^{i \left\langle\mathbf{x}, \boldsymbol{\xi} \right\rangle} \widehat{\mathbf{u}} (\boldsymbol{\xi}) \, d \boldsymbol{\xi}.
\end{equation}

The following statements were proved in~\cite[Subsection~10.2]{BSu4} and \cite[Proposition 1.4]{PSu}, respectively. 

\begin{proposition}[see~\cite{BSu4}]
	\label{Pi_eps_prop_2}
	Let $\Phi(\mathbf{x})$~be a $\Gamma$-periodic function in $\mathbb{R}^d$ such that $\Phi \in L_2(\Omega)$. Then the operator $[\Phi^\varepsilon] \Pi_\varepsilon$ is bounded in  $L_2(\mathbb{R}^d; \mathbb{C}^n)$ and 
	satisfies the estimate 
	\begin{equation*}
	\| [\Phi^\varepsilon] \Pi_\varepsilon \|_{L_2(\mathbb{R}^d) \to L_2(\mathbb{R}^d)} \le |\Omega|^{-1/2} \|\Phi\|_{L_2(\Omega)}, \quad \varepsilon > 0.
	\end{equation*}
\end{proposition}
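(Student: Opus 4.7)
My plan is to exploit Plancherel's theorem together with the tiling property of translates of the Brillouin zone. First, I would expand the $\varepsilon\Gamma$-periodic function $\Phi^\varepsilon$ in its Fourier series
$\Phi^\varepsilon(\mathbf{x}) = |\Omega|^{-1/2} \sum_{\mathbf{b} \in \widetilde{\Gamma}} \widehat{\Phi}_{\mathbf{b}} e^{i\langle \varepsilon^{-1}\mathbf{b}, \mathbf{x}\rangle}$, where the coefficients satisfy $\sum_{\mathbf{b}} |\widehat{\Phi}_{\mathbf{b}}|^2 = \|\Phi\|_{L_2(\Omega)}^2$ by \eqref{fourier_unitary}. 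Applying this expansion to $\Pi_\varepsilon \mathbf{u}$ and using the Fourier representation \eqref{Pi_eps}, I obtain, after taking the Fourier transform,
\begin{equation*}
\mathcal{F}\bigl([\Phi^\varepsilon]\Pi_\varepsilon \mathbf{u}\bigr)(\boldsymbol{\xi}) = |\Omega|^{-1/2} \sum_{\mathbf{b}\in\widetilde{\Gamma}} \widehat{\Phi}_{\mathbf{b}}\, \chi_{\widetilde{\Omega}/\varepsilon}(\boldsymbol{\xi} - \varepsilon^{-1}\mathbf{b})\, \widehat{\mathbf{u}}(\boldsymbol{\xi} - \varepsilon^{-1}\mathbf{b}).
\end{equation*}

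The key observation is the tiling property: by the definition \eqref{Brillouin_zone} of the Brillouin zone, the translates $\{\widetilde{\Omega}/\varepsilon + \varepsilon^{-1}\mathbf{b}\}_{\mathbf{b}\in\widetilde{\Gamma}}$ cover $\mathbb{R}^d$ with pairwise intersections of measure zero. Therefore, for almost every $\boldsymbol{\xi} \in \mathbb{R}^d$ there exists a unique $\mathbf{b} = \mathbf{b}(\boldsymbol{\xi}) \in \widetilde{\Gamma}$ for which $\chi_{\widetilde{\Omega}/\varepsilon}(\boldsymbol{\xi} - \varepsilon^{-1}\mathbf{b}) = 1$, and the sum collapses to a single nonzero term. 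This is the only place where anything nontrivial happens, and it is the main point of the argument.

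Next, I would apply Plancherel and partition the integration over $\mathbb{R}^d$ into the pieces $\widetilde{\Omega}/\varepsilon + \varepsilon^{-1}\mathbf{b}$. Using the translation invariance of Lebesgue measure to shift each piece to $\widetilde{\Omega}/\varepsilon$, I would compute
\begin{equation*}
\bigl\|[\Phi^\varepsilon]\Pi_\varepsilon \mathbf{u}\bigr\|_{L_2}^2 = |\Omega|^{-1} \sum_{\mathbf{b}\in\widetilde{\Gamma}} |\widehat{\Phi}_{\mathbf{b}}|^2 \int_{\widetilde{\Omega}/\varepsilon} |\widehat{\mathbf{u}}(\boldsymbol{\eta})|^2\, d\boldsymbol{\eta} = |\Omega|^{-1}\|\Phi\|_{L_2(\Omega)}^2 \|\Pi_\varepsilon\mathbf{u}\|_{L_2}^2.
\end{equation*}
Since $\Pi_\varepsilon$ is an orthogonal projection, $\|\Pi_\varepsilon\mathbf{u}\|_{L_2} \le \|\mathbf{u}\|_{L_2}$, and the claimed bound follows. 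There is no serious obstacle in this argument; the only point requiring care is the disjointness of the translates of $\widetilde{\Omega}/\varepsilon$, which is automatic from \eqref{Brillouin_zone}.
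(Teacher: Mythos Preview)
Your argument is correct and is the standard one. The paper does not prove this proposition itself but cites it from \cite[Subsection~10.2]{BSu4}; your Fourier-side computation via the tiling property of the Brillouin zone translates is precisely the approach used there, and in fact yields the exact identity $\|[\Phi^\varepsilon]\Pi_\varepsilon \mathbf{u}\|_{L_2}^2 = |\Omega|^{-1}\|\Phi\|_{L_2(\Omega)}^2\|\Pi_\varepsilon \mathbf{u}\|_{L_2}^2$, not merely the inequality. The only point one might add for full rigor is a one-line density remark (first take $\Phi$ a trigonometric polynomial, then pass to the limit using Parseval), since for general $\Phi\in L_2(\Omega)$ the termwise Fourier manipulation needs this justification; this is routine.
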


\begin{proposition}[see~\cite{PSu}]
	\label{I-Pi_eps}
	 For any function $\u \in H^1(\R^d;\AC^n)$ and any $\eps >0$ we have 
	 \begin{equation*}
	\|  \Pi_\varepsilon \u - \u \|_{L_2(\mathbb{R}^d) \to L_2(\mathbb{R}^d)} \le
	\eps r_0^{-1} \| \D \u \|_{L_2(\R^d)}.
	\end{equation*}
\end{proposition}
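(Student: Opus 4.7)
The plan is to prove Proposition \ref{I-Pi_eps} by a direct Fourier-multiplier computation using the Plancherel identity, exploiting the geometric fact that the ball of radius $r_0$ is inscribed in $\clos\wt{\Omega}$.

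First, I would recall that $\Pi_\eps$ is defined in \eqref{Pi_eps} as the Fourier multiplier with symbol $\chi_{\wt{\Omega}/\eps}(\bxi)$. Hence $\widehat{(\Pi_\eps \u - \u)}(\bxi) = (\chi_{\wt{\Omega}/\eps}(\bxi) - 1) \widehat{\u}(\bxi) = -\chi_{\R^d \setminus (\wt{\Omega}/\eps)}(\bxi) \widehat{\u}(\bxi)$. By the Plancherel identity,
\begin{equation*}
\|\Pi_\eps \u - \u\|_{L_2(\R^d)}^2 = \int_{\R^d \setminus (\wt{\Omega}/\eps)} |\widehat{\u}(\bxi)|^2 \, d\bxi.
\end{equation*}

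Next, since $r_0$ is the radius of the ball inscribed in $\clos\wt{\Omega}$, the ball of radius $r_0/\eps$ is contained in $\wt{\Omega}/\eps$. Therefore $\R^d \setminus (\wt{\Omega}/\eps) \subset \{ \bxi \in \R^d \colon |\bxi| \ge r_0/\eps\}$, and on this set the elementary inequality $1 \le (\eps/r_0)^2 |\bxi|^2$ is available. Substituting this bound,
\begin{equation*}
\int_{\R^d \setminus (\wt{\Omega}/\eps)} |\widehat{\u}(\bxi)|^2 \, d\bxi \le \frac{\eps^2}{r_0^2} \int_{\R^d \setminus (\wt{\Omega}/\eps)} |\bxi|^2 |\widehat{\u}(\bxi)|^2 \, d\bxi \le \frac{\eps^2}{r_0^2} \int_{\R^d} |\bxi|^2 |\widehat{\u}(\bxi)|^2 \, d\bxi = \frac{\eps^2}{r_0^2} \|\D \u\|_{L_2(\R^d)}^2,
\end{equation*}
where the last identity is again Plancherel applied to $\D \u$. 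Taking the square root yields the desired estimate.

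There is no real obstacle here; the statement is a one-line Fourier cutoff lemma. The only point requiring care is the correct identification of the geometry, namely that the inradius $r_0$ of $\wt{\Omega}$ from \eqref{r_0} rescales to $r_0/\eps$ as the inradius of $\wt{\Omega}/\eps$, which is exactly what makes the constant $r_0^{-1}$ appear. The hypothesis $\u \in H^1(\R^d;\AC^n)$ is used only to ensure that $|\bxi| \widehat{\u} \in L_2(\R^d)$, which is needed to make the second integral finite.
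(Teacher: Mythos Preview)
Your proof is correct and is precisely the standard Fourier-multiplier argument for this cutoff estimate. Note that the paper does not actually prove this proposition; it merely quotes it from \cite[Proposition~1.4]{PSu}, so there is no in-paper proof to compare against --- but the argument you give is exactly the one found in that reference.
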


Denote  
\begin{equation}
\label{11.7}
	\wh{J}_\eps(\tau) :=  \widehat{\mathcal{A}}_{\varepsilon}^{-1/2} \sin(\tau \widehat{\mathcal{A}}_{\varepsilon}^{1/2}) - (I+\varepsilon \Lambda^\varepsilon b(\mathbf{D}) \Pi_\varepsilon)  (\widehat{\mathcal{A}}^0)^{-1/2} \sin(\tau (\widehat{\mathcal{A}}^0)^{1/2}).
\end{equation}
Applying relations of the form~(\ref{sin_and_scale_transform}) for the operators $\wh{\mathcal{A}}_\varepsilon$ and $\wh{\mathcal{A}}^0$, and also~(\ref{H0_resolv_and_scale_transform}) and~(\ref{mult_op_and_scale_transform}), we obtain 
\begin{equation}
\label{11.*0}
\widehat{\mathcal{A}}_{\varepsilon}^{1/2} \wh{J}_\eps(\tau)
(\mathcal{H}_0 + I)^{-s/2} = T_{\varepsilon}^* \widehat{\mathcal{A}}^{1/2} \wh{J}(\varepsilon^{-1} \tau) \mathcal{R} (\varepsilon)^{s/2} T_{\varepsilon}, \quad \varepsilon>0.
\end{equation}

The following result was proved  in \cite[Theorems 9.5, 10.8]{M} (see also \cite[Theorem 2]{M2}); 
for completeness, we give the proof.

\begin{theorem}[see~\cite{M}]
	\label{A_eps_sin_general_thrm}
	Let $\widehat{\mathcal{A}}_{\varepsilon}$~be the operator~\emph{\eqref{Ahat_eps}}, and let 
	$\widehat{\mathcal{A}}^0$~be the effective operator~\emph{\eqref{hatA0}}. 
	Suppose that $\Lambda(\mathbf{x})$~is the  $\Gamma$-periodic solution of 
	problem~\emph{(\ref{equation_for_Lambda})}. Let $\Pi_\varepsilon$~be the 
	operator~\emph{(\ref{Pi_eps})}.  Let $\wh{J}_\eps(\tau)$ be the operator defined by~\eqref{11.7}. 
	Denote  
\begin{equation}
\label{11.*00}
	\wh{I}_\eps(\tau) :=  g^\eps b(\D) \widehat{\mathcal{A}}_{\varepsilon}^{-1/2} \sin(\tau \widehat{\mathcal{A}}_{\varepsilon}^{1/2}) - \wt{g}^\varepsilon  b(\mathbf{D}) \Pi_\varepsilon (\widehat{\mathcal{A}}^0)^{-1/2} \sin(\tau (\widehat{\mathcal{A}}^0)^{1/2}),
\end{equation}	
where $\wt{g}$ is defined by  \eqref{g_tilde}.
	Then for $\tau \in \mathbb{R}$ and $\varepsilon > 0$ we have 
	\begin{align}
	\label{A_eps_sin_general_est}
	\bigl\| \wh{J}_\eps(\tau) 
	\bigr\|_{H^2(\mathbb{R}^d) \to H^1(\mathbb{R}^d)} \le \widehat{\mathrm{C}}_{7} (1+|\tau|) \varepsilon,
	\\
	\label{A_eps_flux_general_est}
	\bigl\| \wh{I}_\eps(\tau) 
	\bigr\|_{H^2(\mathbb{R}^d) \to L_2(\mathbb{R}^d)} \le \widehat{\mathrm{C}}_{8} (1+|\tau|) \varepsilon.
	\end{align}
The constants $\widehat{\mathrm{C}}_{7}$ and $\widehat{\mathrm{C}}_{8}$ depend only on $\alpha_0,$ $\alpha_1,$ $\|g\|_{L_\infty},$ $\|g^{-1}\|_{L_\infty},$ $r_0,$ and $r_1$.
\end{theorem}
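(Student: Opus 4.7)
The plan is to reduce everything to the $\mathbf{k}$-representation results of Chapter 2 via the scaling transformation, then extract the two estimates by combining an energy-type control of the gradient with an $L_2$-control of the function itself.

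\textbf{Step 1: Scaling reduction.} First I would apply the identity \eqref{11.*0} with $s=2$, noting that $(\mathcal{H}_0+I)^{-1}$ is an isometric isomorphism from $L_2(\mathbb{R}^d)$ onto $H^2(\mathbb{R}^d)$ and $T_\eps$ is unitary. Combined with Theorem~\ref{A_sin_general_thrm}, this immediately yields
\begin{equation*}
\|\widehat{\mathcal{A}}_\eps^{1/2}\widehat{J}_\eps(\tau)\|_{H^2(\R^d)\to L_2(\R^d)} \le \widehat{\mathrm{C}}_5(1+|\tau|)\eps.
\end{equation*}
This handles the ``energy'' half of \eqref{A_eps_sin_general_est} once I translate it to the gradient norm using the form coercivity (the scaled analog of \eqref{a_form_ineq} with $f=\1_n$): $\|\D\v\|_{L_2}^2 \le \alpha_0^{-1}\|g^{-1}\|_{L_\infty}\|\widehat{\mathcal{A}}_\eps^{1/2}\v\|_{L_2}^2$.

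\textbf{Step 2: $L_2$-part of the $H^1$ estimate.} Write
\begin{equation*}
\widehat{J}_\eps(\tau) = \widehat{J}_{2,\eps}(\tau) - \eps\Lambda^\eps b(\D)\Pi_\eps(\widehat{\mathcal{A}}^0)^{-1/2}\sin(\tau(\widehat{\mathcal{A}}^0)^{1/2}).
\end{equation*}
The first term satisfies \eqref{15.11} from Theorem~\ref{th15.1}. For the corrector, since $b(\D)$ and $\Pi_\eps$ are Fourier multipliers they commute, and Proposition~\ref{Pi_eps_prop_2} with $\Phi=\Lambda\in L_2(\Omega)$ (bound \eqref{Lambda_est}) gives uniform boundedness of $\Lambda^\eps\Pi_\eps$ on $L_2$; the operator $b(\D)(\widehat{\mathcal{A}}^0)^{-1/2}\sin(\tau(\widehat{\mathcal{A}}^0)^{1/2})$ is uniformly bounded in $L_2$ thanks to $|b(\bxi)(\bxi^*g^0\bxi)^{-1/2}|\le\|g^{-1}\|_{L_\infty}^{1/2}$. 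Combined with Step~1, this gives \eqref{A_eps_sin_general_est}.

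\textbf{Step 3: Flux estimate.} I would relate $\widehat{I}_\eps(\tau)$ to $g^\eps b(\D)\widehat{J}_\eps(\tau)$. Using $\widetilde{g}^\eps = g^\eps(\1_m + (b(\D)\Lambda)^\eps)$ together with the pointwise identity $\eps\,b(\D)\Lambda^\eps = (b(\D)\Lambda)^\eps$ (as functions), the residue simplifies to
\begin{equation*}
\widehat{I}_\eps(\tau) - g^\eps b(\D)\widehat{J}_\eps(\tau) = g^\eps b(\D)(I-\Pi_\eps)(\widehat{\mathcal{A}}^0)^{-1/2}\sin(\tau(\widehat{\mathcal{A}}^0)^{1/2}) + \eps\, g^\eps\sum_l b_l\Lambda^\eps D_l b(\D)\Pi_\eps(\widehat{\mathcal{A}}^0)^{-1/2}\sin(\tau(\widehat{\mathcal{A}}^0)^{1/2}).
\end{equation*}
The first residue piece is $O(\eps)\|\u\|_{H^2}$ by Proposition~\ref{I-Pi_eps}; the second piece loses two derivatives on $\u$ but gains them back through $(\widehat{\mathcal{A}}^0)^{-1/2}\sin(\cdot)$ times $D_l b(\D)$ applied to $H^2$ data, with $\Lambda^\eps\Pi_\eps$ controlled by Proposition~\ref{Pi_eps_prop_2}. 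Adding this to $\|g^\eps b(\D)\widehat{J}_\eps(\tau)\u\|_{L_2}\le \|g\|_{L_\infty}^{1/2}\|\widehat{\mathcal{A}}_\eps^{1/2}\widehat{J}_\eps(\tau)\u\|_{L_2}$ from Step~1 yields \eqref{A_eps_flux_general_est}.

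\textbf{Main obstacle.} The delicate point is Step~3: the naive Leibniz expansion of $b(\D)\bigl(\eps\Lambda^\eps b(\D)\cdot\bigr)$ produces a term with \emph{two} derivatives falling on $\u$, which is genuinely $H^2$-data. One must exploit that this second-derivative term appears behind $\Pi_\eps(\widehat{\mathcal{A}}^0)^{-1/2}\sin(\tau(\widehat{\mathcal{A}}^0)^{1/2})$, so the composition $D_l b(\D)\Pi_\eps(\widehat{\mathcal{A}}^0)^{-1/2}$ is a Fourier multiplier bounded uniformly on $L_2(\R^d)$ by $\|g^{-1}\|_{L_\infty}^{1/2}$ (times a factor depending on $r_1$, since the symbol cut-off confines to $\bxi\in\wt\Omega/\eps$). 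Only after this observation does the factor $\eps$ survive cleanly, giving the claimed $(1+|\tau|)\eps$ bound.
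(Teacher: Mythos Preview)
Your Steps 1--3 are correct and follow the paper's argument essentially verbatim: the scaling identity \eqref{11.*0} plus Theorem~\ref{A_sin_general_thrm} gives the $\widehat{\mathcal{A}}_\eps^{1/2}$-bound, coercivity converts it to a gradient bound, the $L_2$-part comes from the decomposition $\widehat{J}_\eps=\widehat{J}_{2,\eps}-\text{corrector}$, and your flux identity in Step~3 is exactly \eqref{12.*02}.

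However, your ``Main obstacle'' paragraph contains an error that would break the proof if executed as written. You claim that $D_l\,b(\D)\,\Pi_\eps\,(\widehat{\mathcal{A}}^0)^{-1/2}$ is a Fourier multiplier bounded \emph{uniformly} on $L_2$, with the cutoff $\Pi_\eps$ supplying the bound. This is false: the symbol has norm $\le \|g^{-1}\|_{L_\infty}^{1/2}|\xi_l|$ on $\widetilde{\Omega}/\eps$, and $|\xi_l|$ ranges up to $r_1/\eps$ there, so the $L_2\to L_2$ operator norm is of order $r_1/\eps$, not $r_1$. Multiplying by the prefactor $\eps$ would then give only an $O(1)$ contribution for this residue piece, not the required $O(\eps)$.

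The correct argument is the one you already sketched in Step~3 (``applied to $H^2$ data'') and is precisely what the paper does in~\eqref{12.*04}: one bounds $D_l\,b(\D)\,(\widehat{\mathcal{A}}^0)^{-1/2}\sin(\cdot)$ as an operator from $H^1$ to $L_2$, since
\[
\|D_l\,b(\D)\,(\widehat{\mathcal{A}}^0)^{-1/2}\sin(\cdot)\,\u\|_{L_2}\le\|g^{-1}\|_{L_\infty}^{1/2}\|D_l\u\|_{L_2}\le\|g^{-1}\|_{L_\infty}^{1/2}\|\u\|_{H^2}.
\]
The projector $\Pi_\eps$ plays no role in this step; its only job is to make $\Lambda^\eps\Pi_\eps^{(m)}$ bounded on $L_2$ via Proposition~\ref{Pi_eps_prop_2}. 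With this fix the factor $\eps$ survives, and the $r_1$-dependence of $\widehat{\mathrm{C}}_8$ enters only through $\widehat{\mathrm{C}}_5$ from Theorem~\ref{A_sin_general_thrm}, not through the residue term.
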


\begin{proof}
	Using \eqref{11.*0},  from~(\ref{A_sin_general_est}) we obtain 
	\begin{equation}
	\label{15.22}
	\bigl\| \widehat{\mathcal{A}}_{\varepsilon}^{1/2} \wh{J}_\eps(\tau)
	 (\mathcal{H}_0 + I)^{-1} \bigr\|_{L_2(\mathbb{R}^d) \to L_2(\mathbb{R}^d)} \le \widehat{\mathrm{C}}_5 (1+|\tau|) \varepsilon. 
	\end{equation}
	Similarly to~(\ref{a_form_ineq}), 
	\begin{equation}
	\label{15.31a}
	\widehat{c}_* \|\mathbf{D}\mathbf{u}\|_{L_2(\mathbb{R}^d)}^2 \le \| \widehat{\mathcal{A}}_\varepsilon^{1/2} \mathbf{u}\|_{L_2(\mathbb{R}^d)}^2, \quad \mathbf{u} \in H^1(\mathbb{R}^d; \mathbb{C}^n).
	\end{equation}
	Hence,
	\begin{equation}
	\label{11.*1}
	\bigl\| \mathbf{D}  \wh{J}_\eps(\tau)
	 (\mathcal{H}_0 + I)^{-1} \bigr\|_{L_2(\mathbb{R}^d) \to L_2(\mathbb{R}^d)} \le 
	 \widehat{c}_*^{-1/2} \widehat{\mathrm{C}}_5 (1+|\tau|) \varepsilon. 
	\end{equation}
	Next, by \eqref{15.11},
	\begin{equation}
	\label{M_th9.1}
	\big\| \big( \widehat{\mathcal{A}}_{\varepsilon}^{-1/2} \sin(\tau \widehat{\mathcal{A}}_{\varepsilon}^{1/2}) - (\widehat{\mathcal{A}}^0)^{-1/2} \sin(\tau (\widehat{\mathcal{A}}^0)^{1/2}) \big) (\mathcal{H}_0 + I)^{-1/2} \big\|_{L_2 \to L_2}
	\le \widehat{\mathrm{C}}_2 (1+|\tau|) \varepsilon.
	\end{equation}
	
	Now, we estimate the norm of the corrector. Let $\Pi_\varepsilon^{(m)}$~be the pseudodifferential operator in  $L_2(\mathbb{R}^d; \mathbb{C}^m)$ with the symbol $\chi_{\widetilde{\Omega}/\varepsilon}(\boldsymbol{\xi})$. According to  Proposition~\ref{Pi_eps_prop_2} and~(\ref{Lambda_est}),
	\begin{equation}
	\label{Lambda_eps_Pi_eps_est}
	\| \Lambda^\varepsilon \Pi_\varepsilon^{(m)} \|_{L_2(\mathbb{R}^d) \to L_2(\mathbb{R}^d)} \le M_1.
	\end{equation}
	Using~(\ref{g^0_est}) and~(\ref{Lambda_eps_Pi_eps_est}), we obtain 
	\begin{equation}
	\label{11.*3}
	\begin{split}
	\| \varepsilon \Lambda^\varepsilon b(\mathbf{D})& \Pi_\varepsilon  (\widehat{\mathcal{A}}^0)^{-1/2} \sin(\tau (\widehat{\mathcal{A}}^0)^{1/2})   \|_{L_2(\mathbb{R}^d) \to L_2(\mathbb{R}^d)} 
	\\ 
	&\le
	\varepsilon \| \Lambda^\varepsilon \Pi_\varepsilon^{(m)} \|_{L_2(\mathbb{R}^d) \to L_2(\mathbb{R}^d)} \|b(\mathbf{D}) (\widehat{\mathcal{A}}^0)^{-1/2} \|_{L_2(\mathbb{R}^d) \to L_2(\mathbb{R}^d)}
\le  	\varepsilon M_1 \|g^{-1}\|_{L_\infty}^{1/2}.
	\end{split}
	\end{equation}
	Together with  \eqref{M_th9.1} this implies 
	\begin{equation}
	\label{11.*2}
	\bigl\| \wh{J}_\eps(\tau)
	 (\mathcal{H}_0 + I)^{-1} \bigr\|_{L_2(\mathbb{R}^d) \to L_2(\mathbb{R}^d)} \le  
	 \bigl( \widehat{\mathrm{C}}_2 + M_1 \|g^{-1}\|_{L_\infty}^{1/2} \bigr) 
	 (1+|\tau|) \varepsilon.
	\end{equation}
Estimates \eqref{11.*1} and \eqref{11.*2} yield inequality  \eqref{A_eps_sin_general_est} with the constant  
$$
\wh{\mathrm C}_{7} = \widehat{c}_*^{-1/2} \widehat{\mathrm{C}}_5 +  \widehat{\mathrm{C}}_2 + M_1 \|g^{-1}\|_{L_\infty}^{1/2}.
$$

Now, we check estimate \eqref{A_eps_flux_general_est}. 
From  \eqref{15.22} it follows that 
\begin{equation}
\label{12.*01}
		\| g^\varepsilon b(\D) \wh{J}_\eps(\tau) \|_{H^2(\R^d) \to L_2 (\mathbb{R}^d)} \le 
		 \| g\|^{1/2}_{L_\infty} \widehat{\mathrm{C}}_5 (1+|\tau|)  \varepsilon.
\end{equation}
  Taking  \eqref{g_tilde} into account, we have 
{\allowdisplaybreaks
\begin{multline}
\label{12.*02}
  g^\eps b(\D) (I+\varepsilon \Lambda^\varepsilon b(\mathbf{D}) \Pi_\varepsilon)  (\widehat{\mathcal{A}}^0)^{-1/2} \sin(\tau (\widehat{\mathcal{A}}^0)^{1/2})
  \\
  =
   \wt{g}^\eps b(\D) \Pi_\eps (\widehat{\mathcal{A}}^0)^{-1/2} \sin(\tau (\widehat{\mathcal{A}}^0)^{1/2})
   + g^\eps b(\D) (I - \Pi_\eps) (\widehat{\mathcal{A}}^0)^{-1/2} \sin(\tau (\widehat{\mathcal{A}}^0)^{1/2})
   \\
   + \eps g^\eps \sum_{l=1}^d b_l \Lambda^\eps D_l b(\D) \Pi_\eps 
   (\widehat{\mathcal{A}}^0)^{-1/2} \sin(\tau (\widehat{\mathcal{A}}^0)^{1/2}).
  \end{multline}
 }
\hspace{-3mm}
  By Proposition~\ref{I-Pi_eps}, 
  \begin{equation}
  \label{12.*03}
  \begin{aligned}
 &\| g^\eps b(\D) (I - \Pi_\eps) (\widehat{\mathcal{A}}^0)^{-1/2} \sin(\tau (\widehat{\mathcal{A}}^0)^{1/2}) \|_{H^2(\R^d) \to L_2(\R^d)}
 \\
 & \le \|g\|_{L_\infty}\|g^{-1} \|^{1/2}_{L_\infty} \| I - \Pi_\eps\|_{H^2(\R^d)\to L_2(\R^d)} 
 \le \eps r_0^{-1} \|g\|_{L_\infty}\|g^{-1} \|^{1/2}_{L_\infty}.
 \end{aligned}
  \end{equation}
  Next, from \eqref{5.7a} and \eqref{Lambda_eps_Pi_eps_est} it follows that  
  \begin{equation}
  \label{12.*04}
  \begin{split}
  \big\| \eps g^\eps \sum_{l=1}^d b_l \Lambda^\eps D_l b(\D) \Pi_\eps 
 (\widehat{\mathcal{A}}^0)^{-1/2} \sin(\tau (\widehat{\mathcal{A}}^0)^{1/2})
  \big\|_{H^2(\R^d) \to L_2(\R^d)}
 \\
 \le \eps \|g\|_{L_\infty} \|g^{-1}\|^{1/2}_{L_\infty} \alpha_1^{1/2} M_1 d^{1/2}.
 \end{split}
  \end{equation}
  As a result, relations \eqref{12.*01}--\eqref{12.*04} together with \eqref{11.7} and \eqref{11.*00} imply  
  \eqref{A_eps_flux_general_est}.
  \end{proof}

Using interpolation, we deduce the following result from Theorem~\ref{A_eps_sin_general_thrm}.

\begin{corollary}
	\label{A_eps_sin_general_interpltd_thrm}
	Suppose that the assumptions of Theorem~\emph{\ref{A_eps_sin_general_thrm}} are satisfied. 
	Then for \hbox{$0 \le s \le 2$}\textup, $\tau \in \R,$ and $\eps >0$ we have
	\begin{align}
	\label{15.43b}
	\| \D \wh{J}_\eps(\tau)
	\|_{H^{s}(\mathbb{R}^d) \to L_2(\mathbb{R}^d)} \le \widehat{\mathfrak{C}}_5(s) 
	(1 + |\tau|)^{s/2} \varepsilon^{s/2},
	\\
	\label{15.44b}
	\| \wh{I}_\eps(\tau)
	\|_{H^{s}(\mathbb{R}^d) \to L_2(\mathbb{R}^d)} \le \widehat{\mathfrak{C}}_6(s) (1 + |\tau|)^{s/2} 
	\varepsilon^{s/2}.
	\end{align}
\end{corollary}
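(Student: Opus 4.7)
The statement is a standard interpolation corollary attached to Theorem~\ref{A_eps_sin_general_thrm}. The plan is to establish, at the two endpoints $s=2$ and $s=0$, uniform operator bounds for both $\D\,\wh{J}_\eps(\tau)$ and $\wh{I}_\eps(\tau)$ regarded as operators from a Sobolev space to $L_2(\R^d)$, and then to interpolate.

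At the upper endpoint $s=2$, estimate \eqref{A_eps_sin_general_est} gives $\|\wh{J}_\eps(\tau)\|_{H^2\to H^1}\le\wh{\mathrm{C}}_7(1+|\tau|)\varepsilon$, which trivially implies the bound $\|\D\,\wh{J}_\eps(\tau)\|_{H^2\to L_2}\le\wh{\mathrm{C}}_7(1+|\tau|)\varepsilon$; estimate \eqref{A_eps_flux_general_est} is the required bound for $\wh{I}_\eps(\tau)$. At the lower endpoint $s=0$, I must show that $\|\D\,\wh{J}_\eps(\tau)\|_{L_2\to L_2}$ and $\|\wh{I}_\eps(\tau)\|_{L_2\to L_2}$ are bounded by constants independent of $\tau$ and $\eps$. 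The bound for $\wh{I}_\eps(\tau)$ is straightforward: for the first term use the standard identity $\|g^\eps b(\D)\wh{\mathcal{A}}_\eps^{-1/2}\u\|_{L_2}^2=(\wh{\mathcal A}_\eps\wh{\mathcal A}_\eps^{-1/2}\u,\wh{\mathcal A}_\eps^{-1/2}\u)_{L_2}=\|\u\|^2_{L_2}\cdot\|g\|_{L_\infty}$ (up to the factor $\|g\|_{L_\infty}$), and for the second term commute $b(\D)$ past $\Pi_\eps$ and apply Proposition~\ref{Pi_eps_prop_2} to $\Phi=\wt g\in L_2(\Omega)$ (using \eqref{6.11a}) combined with the bound $\|b(\D)(\wh{\mathcal A}^0)^{-1/2}\|_{L_2\to L_2}\le\|g^{-1}\|_{L_\infty}^{1/2}$.

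The main obstacle, and the only non-routine piece of the argument, is the uniform $L_2\to L_2$ bound for $\D\,\wh{J}_\eps(\tau)$. Decompose
\[
\D\,\wh{J}_\eps(\tau)=\D\,\wh{\mathcal{A}}_\eps^{-1/2}\sin(\tau\wh{\mathcal{A}}_\eps^{1/2})-\D\,(\wh{\mathcal{A}}^0)^{-1/2}\sin(\tau(\wh{\mathcal{A}}^0)^{1/2})-\D\bigl(\eps\Lambda^\eps b(\D)\Pi_\eps(\wh{\mathcal{A}}^0)^{-1/2}\sin(\tau(\wh{\mathcal{A}}^0)^{1/2})\bigr).
\]
The first two summands are controlled by $\wh{c}_*^{-1/2}$ via \eqref{15.31a}. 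The corrector is expanded via Leibniz as $(\D\Lambda)^\eps b(\D)\Pi_\eps(\wh{\mathcal{A}}^0)^{-1/2}\sin+\eps\Lambda^\eps\D b(\D)\Pi_\eps(\wh{\mathcal{A}}^0)^{-1/2}\sin$. For the first piece, commute $b(\D)$ past $\Pi_\eps$ and apply Proposition~\ref{Pi_eps_prop_2} to $\D\Lambda\in L_2(\Omega)$ (see \eqref{D_Lambda_est}). For the second piece, since the output has Fourier support in $\wt\Omega/\eps$, insert $\Pi_\eps^{(m)}$ between $\Lambda^\eps$ and $\D b(\D)$, use $\|\Lambda^\eps\Pi_\eps^{(m)}\|_{L_2\to L_2}\le M_1$ from \eqref{Lambda_eps_Pi_eps_est}, and estimate $\|\eps\D b(\D)\Pi_\eps(\wh{\mathcal A}^0)^{-1/2}\|_{L_2\to L_2}$ by its Fourier symbol $|\eps\,\xi_j\,b(\xi)(b(\xi)^* g^0 b(\xi))^{-1/2}\chi_{\wt\Omega/\eps}(\xi)|\le \eps\alpha_1^{1/2}|\xi|\wh{c}_*^{-1/2}\le r_1\alpha_1^{1/2}\wh{c}_*^{-1/2}$, using $b(\xi)^*g^0b(\xi)\ge\wh{c}_*|\xi|^2\1_n$ together with the cutoff $|\xi|\le r_1/\eps$ imposed by $\Pi_\eps$. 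This is where the factor $r_1$ in $\wh{\mathfrak C}_5(s)$ enters.

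Finally, I interpolate between the two endpoints. For each of $T=\D\,\wh{J}_\eps(\tau)$ and $T=\wh{I}_\eps(\tau)$ we have $\|T\|_{L_2\to L_2}\le C_0$ and $\|T\|_{H^2\to L_2}\le C_2(1+|\tau|)\eps$, and the complex interpolation identity $(L_2,H^2)_{s/2}=H^s$ for $0\le s\le 2$ yields $\|T\|_{H^s\to L_2}\le C_0^{1-s/2}\bigl(C_2(1+|\tau|)\eps\bigr)^{s/2}$, which is exactly the claimed form with $\wh{\mathfrak C}_5(s)=(2\wh{c}_*^{-1/2}+M_2\|g^{-1}\|^{1/2}_{L_\infty}+M_1 r_1\alpha_1^{1/2}\wh{c}_*^{-1/2})^{1-s/2}\wh{\mathrm{C}}_7^{s/2}$ and an analogous expression for $\wh{\mathfrak C}_6(s)$.
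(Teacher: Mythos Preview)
Your proof is correct and follows essentially the same route as the paper: endpoint bounds at $s=0$ and $s=2$ followed by complex interpolation, with the $L_2\to L_2$ bound for $\D\wh{J}_\eps(\tau)$ obtained by the same Leibniz decomposition of the corrector and the same use of Proposition~\ref{Pi_eps_prop_2} for $(\D\Lambda)^\eps\Pi_\eps^{(m)}$ and $\Lambda^\eps\Pi_\eps^{(m)}$. The only cosmetic differences are that the paper uses the sharper constant $\wh{c}_*^{-1/2}\wh{\mathrm C}_5$ at $s=2$ (from \eqref{11.*1}) instead of $\wh{\mathrm C}_7$, and it estimates the second corrector piece by factoring out $\|\D\Pi_\eps\|_{L_2\to L_2}\le \eps^{-1}r_1$ rather than bounding the full symbol at once, yielding $M_1 r_1\|g^{-1}\|_{L_\infty}^{1/2}$ in place of your $M_1 r_1\alpha_1^{1/2}\wh{c}_*^{-1/2}$.
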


\begin{proof}
	We rewrite estimate  \eqref{11.*1} in the form 
	\begin{equation}
	\label{15.44}
	\bigl\| \mathbf{D}  \wh{J}_\eps(\tau) \bigr\|_{H^2(\mathbb{R}^d) \to L_2(\mathbb{R}^d)} \le 
	 \widehat{c}_*^{-1/2} \widehat{\mathrm{C}}_5 (1+|\tau|) \varepsilon. 
	\end{equation}
	
Now, we estimate the quantity $\bigl\| \mathbf{D}  \wh{J}_\eps(\tau) \bigr\|_{L_2(\mathbb{R}^d) \to L_2(\mathbb{R}^d)}$. From \eqref{15.31a} and the similar estimate  for the operator $\widehat{\mathcal{A}}^0$ it follows that
	\begin{equation}
	\label{15.45}	
\| \D ( \widehat{\mathcal{A}}_{\varepsilon}^{-\!1/2} \!\sin ( \tau \widehat{\mathcal{A}}_{\varepsilon}^{1/2})\!	- \!( \widehat{\mathcal{A}}^0 )^{-\!1/2} \!\sin ( \tau (\widehat{\mathcal{A}}^0)^{1/2}))\|_{L_2(\R^d) \to L_2(\R^d)}\!\! \le\! 2 \wh{c}_*^{-1/2}\!\!.
	\end{equation}
	Next, 
	\begin{equation}
	\label{15.46}	
	\begin{split}
 D_l &\bigl( \eps \Lambda^\eps b(\D) \Pi_\eps 
( \widehat{\mathcal{A}}^0 )^{-1/2} \sin ( \tau (\widehat{\mathcal{A}}^0)^{1/2}) \bigr)
\\
&=
( D_l \Lambda)^\eps \Pi_\eps^{(m)} b(\D)  
(\widehat{\mathcal{A}}^0 )^{-1/2} \sin ( \tau (\widehat{\mathcal{A}}^0)^{1/2}) 
\\
&\quad+
 \eps \Lambda^\eps \Pi_\eps^{(m)} b(\D) 
(\widehat{\mathcal{A}}^0 )^{-1/2} \sin ( \tau (\widehat{\mathcal{A}}^0)^{1/2}) D_l \Pi_\eps,  
\quad  l=1,\dots,d.
	\end{split}
	\end{equation}
According to Proposition~\ref{Pi_eps_prop_2} and~\eqref{D_Lambda_est},
	\begin{equation}
	\label{15.48}
	\| (\D \Lambda)^\varepsilon \Pi_\varepsilon^{(m)} \|_{L_2(\mathbb{R}^d) \to L_2(\mathbb{R}^d)} \le M_2.
	\end{equation}
Hence,
	\begin{equation}
	\label{15.49}
	\big \| ( \D \Lambda)^\eps \Pi_\eps^{(m)} b(\D)  
(\widehat{\mathcal{A}}^0 )^{-\!1/2}\! \sin ( \tau (\widehat{\mathcal{A}}^0)^{1/2}) \big\|_{L_2(\R^d) \to L_2(\R^d)}
\le M_2 \| g^{-\!1}\|_{L_\infty}^{1/2}\!.
	\end{equation}
	Next, we have 
\begin{equation}
	\label{15.50}
\begin{aligned}
	\bigl \| & \eps \Lambda^\eps \Pi_\eps^{(m)} b(\D)  
(\widehat{\mathcal{A}}^0 )^{-1/2} \sin ( \tau (\widehat{\mathcal{A}}^0)^{1/2}) \D \Pi_\eps \bigr\|_{L_2(\R^d) \to L_2(\R^d)} 
\\
&\le \eps 
	\bigl \| \Lambda^\eps \Pi_\eps^{(m)} \bigr\|_{L_2 \to L_2} \bigl \| b(\D)  (\widehat{\mathcal{A}}^0 )^{-1/2} \bigr \|_{L_2 \to L_2}  \bigl \| \D \Pi_\eps \bigr\|_{L_2 \to L_2}.
\end{aligned}	
\end{equation}
By	\eqref{Pi_eps}, 
	\begin{equation}
	\label{15.51}
	\bigl \|  \D \Pi_\eps \bigr\|_{L_2(\R^d) \to L_2(\R^d)} = \sup_{\boldsymbol{\xi} \in \wt{\Omega}/\eps} |\boldsymbol{\xi}| \le \eps^{-1} r_1.
	\end{equation}
Relations  \eqref{Lambda_eps_Pi_eps_est}, \eqref{15.50}, and \eqref{15.51} imply that
\begin{equation}
	\label{15.52}
	\big \|  \eps \Lambda^\eps \Pi_\eps^{(m)} b(\D)  
(\widehat{\mathcal{A}}^0 )^{-1/2} \sin ( \tau (\widehat{\mathcal{A}}^0)^{1/2}) \D \Pi_\eps \big\|_{L_2(\R^d) \to L_2(\R^d)} 
\le M_1 \| g^{-1}\|_{L_\infty}^{1/2} r_1. 
\end{equation}
	
	As a result, from \eqref{15.46}, \eqref{15.49}, and \eqref{15.52} it follows that  
\begin{equation}
	\label{15.53}
	\big \| \D  \eps \Lambda^\eps b(\D) \Pi_\eps 
( \widehat{\mathcal{A}}^0 )^{-1/2} \sin ( \tau (\widehat{\mathcal{A}}^0)^{1/2}) 
 \big\|_{L_2(\R^d) \to L_2(\R^d)}
 \le (M_1 r_1 + M_2) \| g^{-1}\|_{L_\infty}^{1/2}.
	\end{equation}
	Combining \eqref{15.45} and \eqref{15.53}, we obtain 
\begin{equation}
	\label{15.54}
	\bigl \| \D \wh{J}_\eps(\tau)   \bigr\|_{L_2(\R^d) \to L_2(\R^d)} \le \wh{\mathrm C}_{7}'
	= 2 \wh{c}_*^{-1/2} +(M_1 r_1+ M_2) \| g^{-1}\|_{L_\infty}^{1/2}.
	\end{equation}
	
	Interpolating between  \eqref{15.54} and \eqref{15.44}, we arrive at estimate \eqref{15.43b} with the constant  $\wh{\mathfrak{C}}_5(s)= (\wh{\mathrm C}_{7}')^{1-s/2} ( \widehat{c}_*^{-1/2} \widehat{\mathrm{C}}_5)^{s/2}$.	
	
	We proceed to the proof of estimate \eqref{15.44b}. Let us estimate the norm  
	$\bigl \| \wh{I}_\eps(\tau)   \bigr\|_{L_2 \to L_2}$. Obviously,
\begin{equation}
	\label{15.55}
	\bigl \| g^\eps b(\D) \widehat{\mathcal{A}}_{\varepsilon}^{-1/2} \sin ( \tau \widehat{\mathcal{A}}_{\varepsilon}^{1/2})    \bigr\|_{L_2(\R^d) \to L_2(\R^d)} \le \| g\|_{L_\infty}^{1/2}. 
	\end{equation}
	Next, from \eqref{g_tilde}, \eqref{6.11a}, and Proposition~\ref{Pi_eps_prop_2} it follows that 
\begin{equation}
	\label{15.55a}
	\bigl \| \wt{g}^\eps \Pi_\eps^{(m)}   \bigr\|_{L_2(\R^d) \to L_2(\R^d)} \le 2 \| g\|_{L_\infty}. 
\end{equation}
	Therefore,  
\begin{equation}
	\label{15.56}
	\bigl \| \wt{g}^\eps b(\D) \Pi_\eps (\widehat{\mathcal{A}}^0)^{-1/2} 
	\sin ( \tau (\widehat{\mathcal{A}}^0)^{1/2})    \bigr\|_{L_2(\R^d) \to L_2(\R^d)} \le 2
	\| g \|_{L_\infty} \| g^{-1}\|_{L_\infty}^{1/2}. 
	\end{equation}
	Combining  \eqref{15.55} and \eqref{15.56}, we obtain 
\begin{equation}
	\label{15.57}
	\bigl \| \wh{I}_\eps(\tau)   \bigr\|_{L_2(\R^d) \to L_2(\R^d)} \le \wh{\mathrm{C}}_{8}'
	= \| g\|_{L_\infty}^{1/2} +  2 \| g \|_{L_\infty} \| g^{-1}\|_{L_\infty}^{1/2}.
	\end{equation}

Interpolating between  \eqref{15.57} and \eqref{A_eps_flux_general_est}, we arrive at estimate \eqref{15.44b} with the constant $\wh{\mathfrak{C}}_6(s)= (\wh{\mathrm C}_{8}')^{1-s/2}  \wh{\mathrm C}_{8}^{s/2}$.	
\end{proof}

\begin{remark}
	\label{rem15.11a}
	From \eqref{15.19}, \eqref{11.*3}, and \eqref{15.54} it follows that 
	\begin{equation}
	\label{15.*}
	\|  \wh{J}_\eps(\tau)
	\|_{L_2(\mathbb{R}^d) \to H^1(\mathbb{R}^d)} \le \widehat{\mathrm{C}}_{7}''
	\bigl( 1+ (1+|\tau|)^{1/2}\eps^{1/2}\bigr), \quad \tau \in \R, \ 0< \eps \le 1.
	\end{equation}
	Interpolating between \eqref{15.*} and \eqref{A_eps_sin_general_est},  for $\tau \in \R$ and  $0< \eps \le 1$
	we obtain 
	\begin{equation}
	\label{15.**}
	\|  \wh{J}_\eps(\tau)\|_{H^s(\mathbb{R}^d) \to H^1(\mathbb{R}^d)}
	\le
	\wh{\mathfrak{C}}_5'(s) (1 + |\tau|)^{s/2} \varepsilon^{s/2} \bigl( 1+ (1+|\tau|)^{1/2}\eps^{1/2}\bigr)^{1-s/2},
	\quad 0 \le s \le 2.
	\end{equation}
	This estimate is interesting for bounded values of  $(1+|\tau|)\eps$, in this case 
	the right-hand side of  \eqref{15.**} does not exceed   
	$C(1 + |\tau|)^{s/2} \varepsilon^{s/2}$, i.~e., has the same order as estimate \eqref{15.43b}.
\end{remark}

By analogy with the proof of Theorem~\ref{A_eps_sin_general_thrm}, we deduce the following statement from 
Theorem \ref{th14.5}.

\begin{theorem}
	\label{A_eps_sin_enchcd_thrm_1}
	Suppose that the assumptions of Theorem~\emph{\ref{A_eps_sin_general_thrm}} are satisfied. Suppose that  Condition \emph{\ref{cond_B}} or Condition~\emph{\ref{cond1}} \emph{(}or more restrictive Condition~\emph{\ref{cond2})} is satisfied.
	Then for $\tau \in \mathbb{R}$ and $\varepsilon > 0$ we have 
	\begin{align}
	\label{11.*5}
	\bigl\| \wh{J}_\eps(\tau)  
	\bigr\|_{H^{3/2}(\mathbb{R}^d) \to H^1(\mathbb{R}^d)} \le \widehat{\mathrm{C}}_{9}(1+|\tau|)^{1/2} \varepsilon,
	\\
	\nonumber
	\bigl\| \wh{I}_\eps(\tau)  
	\bigr\|_{H^{3/2}(\mathbb{R}^d) \to L_2(\mathbb{R}^d)} \le \widehat{\mathrm{C}}_{10} (1+|\tau|)^{1/2} \varepsilon.
	\end{align}
	Under Condition \emph{\ref{cond_B}}, the constants $\widehat{\mathrm{C}}_{9}$ and 
	$\widehat{\mathrm{C}}_{10}$ depend only on $\alpha_0,$ $\alpha_1,$ $\|g\|_{L_\infty},$ $\|g^{-1}\|_{L_\infty},$ $r_0,$ and $r_1$.
	Under Condition~\emph{\ref{cond1}}, these constants depend on the same parameters and on 
	$n,$ $\widehat{c}^{\circ}$.
\end{theorem}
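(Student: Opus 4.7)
The plan is to follow exactly the argument used for Theorem~\ref{A_eps_sin_general_thrm}, but replace every application of Theorem~\ref{A_sin_general_thrm} by its improved version Theorem~\ref{th14.5}, and replace every application of Theorem~\ref{th15.1} by Theorem~\ref{th15.2}. The improvement in the smoothing factor from $\mathcal R(\eps)$ to $\mathcal R(\eps)^{3/4}$ (and the corresponding drop from $H^2$ to $H^{3/2}$ after rescaling) is the only genuinely new ingredient; all auxiliary bounds on the corrector are identical to those in \S\ref{sec15.4}. Since the abstract improvement theorems~\ref{th5.6}, \ref{th5.7} were designed precisely so that their conclusions have the form $(1+|\tau|)^{1/2}\eps$, the $\tau$-dependence in \eqref{11.*5} will come out automatically.

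First I will establish the $H^1$-norm bound for $\wh J_\eps(\tau)$. Applying the scaling identity \eqref{11.*0} with $s=3/2$ and invoking Theorem~\ref{th14.5}, I get
\begin{equation*}
\bigl\| \widehat{\mathcal A}_\eps^{1/2}\,\wh J_\eps(\tau)\bigr\|_{H^{3/2}(\R^d)\to L_2(\R^d)} \le \wh{\mathrm C}_6\,(1+|\tau|)^{1/2}\eps.
\end{equation*}
Combining this with the analog \eqref{15.31a} of the form lower bound, this immediately yields $\|\D\,\wh J_\eps(\tau)\|_{H^{3/2}\to L_2}\le \wh c_*^{-1/2}\wh{\mathrm C}_6(1+|\tau|)^{1/2}\eps$. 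The pure $L_2$-bound on $\wh J_\eps(\tau)$ follows by writing
\begin{equation*}
\wh J_\eps(\tau)=\wh J_{2,\eps}(\tau)-\eps\Lambda^\eps b(\D)\Pi_\eps (\widehat{\mathcal A}^0)^{-1/2}\sin(\tau(\widehat{\mathcal A}^0)^{1/2}),
\end{equation*}
using \eqref{15.13} to estimate the first term by $\wh{\mathrm C}_4(1+|\tau|)^{1/2}\eps$ on $H^{1/2}\supset H^{3/2}$, and using the bound \eqref{11.*3} (which is $O(\eps)$) for the corrector. Summing the $L_2$ and $\D$ contributions gives \eqref{11.*5}.

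Next I will establish the flux estimate. Using exactly the decomposition \eqref{12.*02}, one writes
\begin{equation*}
\wh I_\eps(\tau) = g^\eps b(\D)\,\wh J_\eps(\tau) - g^\eps b(\D)(I-\Pi_\eps)(\widehat{\mathcal A}^0)^{-1/2}\sin(\tau(\widehat{\mathcal A}^0)^{1/2}) - \eps g^\eps\sum_{l=1}^d b_l\Lambda^\eps D_l b(\D)\Pi_\eps(\widehat{\mathcal A}^0)^{-1/2}\sin(\tau(\widehat{\mathcal A}^0)^{1/2}).
\end{equation*}
The first term is controlled by Theorem~\ref{th14.5} and the scaling identity \eqref{11.*0} as above. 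For the second term, a symbol computation shows $\|b(\D)(I-\Pi_\eps)(\widehat{\mathcal A}^0)^{-1/2}\|_{H^{3/2}\to L_2}=O(\eps^{3/2})$, since the multiplier is $O(1)$ on $|\xi|\ge r_0/\eps$ and $(1+|\xi|^2)^{-3/4}\le (\eps/r_0)^{3/2}$ there. For the third term, Proposition~\ref{Pi_eps_prop_2} (applied to $\Lambda^\eps\Pi_\eps^{(m)}$) combined with the symbol bound
\begin{equation*}
\sup_{\xi\in\widetilde\Omega/\eps} \frac{|\xi_l||b(\xi)|\,|(f_0 b(\xi)^*g^0 b(\xi)f_0)^{-1/2}|}{(1+|\xi|^2)^{3/4}} \le \sup_\xi \frac{C|\xi|}{(1+|\xi|^2)^{3/4}}=O(1)
\end{equation*}
(the key point being that $1-3/2=-1/2<0$ at infinity, so no blow-up in $\eps$) gives an $O(\eps)$ bound on $H^{3/2}\to L_2$. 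Adding the three estimates yields the desired bound on $\wh I_\eps(\tau)$.

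The main obstacle is the third term in the flux decomposition. In the proof of Theorem~\ref{A_eps_sin_general_thrm} one works on $H^2$ and the symbol $|\xi|/(1+|\xi|^2)$ is manifestly $O(1)$, so the factor of $\eps$ in front of $\Lambda^\eps$ directly yields $O(\eps)$. Passing to $H^{3/2}$, the analogous symbol $|\xi|/(1+|\xi|^2)^{3/4}$ is only marginally bounded (it decays like $|\xi|^{-1/2}$ at infinity), so one must verify that the combination of $(\widehat{\mathcal A}^0)^{-1/2}$ and the $H^{3/2}$ smoothing really does absorb one derivative from $D_l b(\D)$; this has to be done by a careful symbolic estimate rather than by the cruder operator-norm manipulations used elsewhere. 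Once this point is cleared, the remaining steps are routine and parallel \S\ref{sec15.4} verbatim.
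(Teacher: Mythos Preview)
Your proposal is correct and follows essentially the same route as the paper, which simply instructs one to repeat the proof of Theorem~\ref{A_eps_sin_general_thrm} with Theorem~\ref{th14.5} replacing Theorem~\ref{A_sin_general_thrm}. Your flagged ``main obstacle'' for the third flux term is actually not an obstacle: the original bound \eqref{12.*04} already used only one derivative of smoothing (since $D_l b(\D)(\widehat{\mathcal A}^0)^{-1/2}$ is bounded $H^1\to L_2$), so the embedding $H^{3/2}\hookrightarrow H^1$ gives the same $O(\eps)$ bound directly, with no symbolic estimate needed (and the $f_0$ in your displayed symbol should be absent, since this is the $\widehat{\mathcal A}$ case).
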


By interpolation, we deduce the following corollary from Theorem \ref{A_eps_sin_enchcd_thrm_1} and relations~\eqref{15.54},~\eqref{15.57}. 

\begin{corollary}
	\label{A_eps_sin_enchcd_interpltd_thrm_1}
	Under the assumptions of Theorem~\emph{\ref{A_eps_sin_enchcd_thrm_1}}, for $0\le s \le 3/2,$ $\tau \in \R,$ and $\eps >0$ we have
		\begin{align}
		\label{15.333}
	\bigl\| \D \wh{J}_\eps(\tau)
	\bigr\|_{H^{s}(\mathbb{R}^d) \to L_2(\mathbb{R}^d)} \le \widehat{\mathfrak{C}}_7(s) 
	(1 + |\tau|)^{s/3} \varepsilon^{2s/3}, 
	\\ 
	\nonumber
	\bigl\| \wh{I}_\eps(\tau)
	\bigr\|_{H^{s}(\mathbb{R}^d) \to L_2(\mathbb{R}^d)} \le \widehat{\mathfrak{C}}_{8}(s) 
	(1 + |\tau|)^{s/3} \varepsilon^{2s/3}.
	\end{align}
	\end{corollary}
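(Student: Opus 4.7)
The plan is to deduce this corollary by the same interpolation mechanism used in the proofs of Corollaries \ref{cor15.1a}, \ref{cor15.2a}, and \ref{A_eps_sin_general_interpltd_thrm}. Both estimates have the form of a standard complex interpolation inequality $\|T\|_{H^s \to L_2} \le \|T\|_{L_2 \to L_2}^{1-2s/3} \|T\|_{H^{3/2} \to L_2}^{2s/3}$ applied between a $\tau$- and $\eps$-independent endpoint at $s=0$ and the enhanced endpoint at $s=3/2$ obtained from Theorem \ref{A_eps_sin_enchcd_thrm_1}.

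For the first inequality, I would use as the left endpoint ($s=0$) the uniform bound \eqref{15.54}, namely $\|\D\,\wh{J}_\eps(\tau)\|_{L_2 \to L_2} \le \wh{\mathrm{C}}_{7}'$, which holds without any additional conditions and in particular under Condition \ref{cond_B} or Condition \ref{cond1}. For the right endpoint ($s=3/2$), estimate \eqref{11.*5} of Theorem \ref{A_eps_sin_enchcd_thrm_1} gives
\begin{equation*}
\|\D\,\wh{J}_\eps(\tau)\|_{H^{3/2} \to L_2} \le \|\wh{J}_\eps(\tau)\|_{H^{3/2} \to H^1} \le \wh{\mathrm{C}}_{9}(1+|\tau|)^{1/2}\eps.
\end{equation*}
Interpolating yields \eqref{15.333} with constant $\wh{\mathfrak{C}}_7(s) = (\wh{\mathrm{C}}_{7}')^{1-2s/3}\wh{\mathrm{C}}_{9}^{2s/3}$, since $((1+|\tau|)^{1/2}\eps)^{2s/3} = (1+|\tau|)^{s/3}\eps^{2s/3}$.

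For the second inequality, the argument is entirely parallel: the $s=0$ endpoint is the uniform bound \eqref{15.57}, $\|\wh{I}_\eps(\tau)\|_{L_2 \to L_2} \le \wh{\mathrm{C}}_{8}'$, and the $s=3/2$ endpoint is the second estimate of Theorem \ref{A_eps_sin_enchcd_thrm_1}. Interpolation produces the claimed bound with constant $\wh{\mathfrak{C}}_{8}(s) = (\wh{\mathrm{C}}_{8}')^{1-2s/3}\wh{\mathrm{C}}_{10}^{2s/3}$.

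There is no real obstacle here; the only thing to verify is that the two endpoint estimates indeed persist under the dichotomy of hypotheses (Condition \ref{cond_B} vs.\ Condition \ref{cond1}). The $s=0$ bounds \eqref{15.54} and \eqref{15.57} are proved without any such assumption, so it is only the $s=3/2$ endpoint that carries the enhancement, and Theorem \ref{A_eps_sin_enchcd_thrm_1} handles both cases (with the dependence of $\wh{\mathrm{C}}_9, \wh{\mathrm{C}}_{10}$ on $n$ and $\wh{c}^\circ$ under Condition \ref{cond1}). Consequently the interpolated constants $\wh{\mathfrak{C}}_7(s), \wh{\mathfrak{C}}_8(s)$ inherit the same parameter dependence as stated in Theorem \ref{A_eps_sin_enchcd_thrm_1}.
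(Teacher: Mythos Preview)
Your proposal is correct and matches the paper's approach exactly: the paper likewise deduces the corollary by interpolating between the uniform $L_2\to L_2$ bounds \eqref{15.54}, \eqref{15.57} at $s=0$ and the enhanced $H^{3/2}$ estimates of Theorem~\ref{A_eps_sin_enchcd_thrm_1} at $s=3/2$. Your observation that the $s=0$ endpoint bounds require no extra hypothesis is also in line with how those bounds are established earlier in the paper.
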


\begin{remark}
	\label{rem15.15a}
	Under the assumptions of Theorem~\ref{A_eps_sin_enchcd_thrm_1}, from \eqref{15.22a}, \eqref{11.*3}, 
	and \eqref{15.54} it follows that 
	\begin{equation}
	\label{15.***}
	\|  \wh{J}_\eps(\tau)
	\|_{L_2(\mathbb{R}^d) \to H^1(\mathbb{R}^d)} \le \widehat{\mathrm{C}}_{9}'
	\bigl( 1+ (1+|\tau|)^{1/3}\eps^{2/3}\bigr), \quad \tau \in \R, \ 0< \eps \le 1.
	\end{equation}
	Interpolating between \eqref{15.***} and \eqref{11.*5},  for $0\le s \le 3/2$, $\tau \in \R$, 
	and $0< \eps \le 1$ we obtain 
	\begin{equation}
	\label{15.****}
	\|  \wh{J}_\eps(\tau)\|_{H^s(\mathbb{R}^d) \to H^1(\mathbb{R}^d)} \!\le\!
	\wh{\mathfrak{C}}_7'(s) (1\! + \!|\tau|)^{s/3} \varepsilon^{2s/3} ( 1\!\!+\! (1\!+\!|\tau|)^{1/3}\eps^{2/3})^{1\!-2s/3}\!.
	\end{equation}
	For bounded values of  $(1\!+\!|\tau|)^{1/2}\eps$,
	the right-hand side of  \!\eqref{15.****} does not exceed   
	$C(1\! + \!|\tau|)^{s/3} \!\varepsilon^{2s/3}$\!, i.~e., has the same order as estimate~\!\eqref{15.333}.
\end{remark}

\begin{remark}
$1^\circ$.  Under the assumptions of Theorem \ref{A_eps_sin_general_thrm}, for  $\tau = O(\eps^{-\alpha})$,
$0< \alpha < 1$,  we get the qualified estimates\emph{:}
$$
\begin{aligned}
\bigl\| \D \wh{J}_{\eps}(\tau) 
	\bigr\|_{H^s(\mathbb{R}^d) \to L_2(\mathbb{R}^d)} &= O(\eps^{s(1-\alpha)/2}),\quad 
	0 \le s \le 2;
	\\
\bigl\| \wh{I}_{\eps}(\tau) 
	\bigr\|_{H^s(\mathbb{R}^d) \to L_2(\mathbb{R}^d)} &= O(\eps^{s(1-\alpha)/2}),\quad 0 \le s \le 2.	
\end{aligned}
$$
$2^\circ$. Under the assumptions of Theorem \ref{A_eps_sin_enchcd_thrm_1}, for
 $\tau = O(\eps^{-\alpha})$, $0< \alpha < 2$, we get the qualified estimates:
$$
\begin{aligned}
\bigl\| \D \wh{J}_{\eps}(\tau) 
	\bigr\|_{H^s(\mathbb{R}^d) \to L_2(\mathbb{R}^d)} &= O(\eps^{s(2-\alpha)/3}),\quad 0 \le s \le 3/2;
	\\
\bigl\| \wh{I}_{\eps}(\tau) 
	\bigr\|_{H^s(\mathbb{R}^d) \to L_2(\mathbb{R}^d)} &= O(\eps^{s(2-\alpha)/3}),\quad 0 \le s \le 3/2.	
\end{aligned}
$$
\end{remark}

\subsection{Sharpness of the results of Subsections \ref{sec15.3} and \ref{sec15.4}}

Applying theorems of Subsection \ref{sec14.3}, we confirm that the results of Subsections
 \ref{sec15.3} and \ref{sec15.4} are sharp. First, we discuss the sharpness of the results regarding  the type of the operator norm.
 The following statement, confirming that Theorems \ref{th15.1} and \ref{A_eps_sin_general_thrm} are sharp, is deduced from Theorem \ref{th14.7}.
 
\begin{theorem}
	\label{th15.17}
	Suppose that Condition~\emph{\ref{cond10.1}} is satisfied. 
	
	\noindent $1^\circ$. Let $0 \ne \tau \in \mathbb{R}$ and $0 \le s < 2$. Then there does not exist a constant $\mathcal{C}(\tau) > 0$ such that the estimate 
	\begin{equation}
	\label{15.61}
	\bigl\| \wh{J}_{1,\eps}(\tau)
	\bigr\|_{H^s(\mathbb{R}^d) \to L_2(\mathbb{R}^d)} \le \mathcal{C}(\tau) \varepsilon
	\end{equation}
	holds for all sufficiently small $\varepsilon > 0$.
	
	\noindent $2^\circ$. Let $0 \ne \tau \in \mathbb{R}$ and  $0 \le r < 1$. Then there does not exist a constant  $\mathcal{C}(\tau) > 0$ such that the estimate
	\begin{equation}
	\label{15.62}
	\bigl\| \wh{J}_{2,\eps}(\tau)
	\bigr\|_{H^r(\mathbb{R}^d) \to L_2(\mathbb{R}^d)} \le \mathcal{C}(\tau) \varepsilon
	\end{equation}
holds for all sufficiently small $\varepsilon > 0$.

	\noindent $3^\circ$. Let $0 \ne \tau \in \mathbb{R}$ and  $0 \le s < 2$. Then there does not exist a constant $\mathcal{C}(\tau) > 0$ such that the estimate
	\begin{equation}
	\label{s<2_sin_est1}
	\bigl\| \wh{J}_\eps(\tau)
	\bigr\|_{H^s(\mathbb{R}^d) \to H^1(\mathbb{R}^d)} \le \mathcal{C}(\tau) \varepsilon
	\end{equation}
	holds for all sufficiently small $\varepsilon > 0$.	
	\end{theorem}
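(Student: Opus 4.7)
The plan is to reduce all three statements to the corresponding sharpness results in Theorem~\ref{th14.7} by means of the scaling transform $T_\eps$ and the identities \eqref{15.8}, \eqref{15.9}, and \eqref{11.*0}. The argument will be by contradiction: in each case I assume the forbidden estimate and rescale it back to the non-$\eps$ scale, where Theorem~\ref{th14.7} already rules out such a bound.

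For statement $1^\circ$, suppose \eqref{15.61} holds for some $0\le s<2$ and all sufficiently small $\eps>0$. Because $(\mathcal{H}_0+I)^{s/2}$ is an isometric isomorphism $H^s(\R^d)\to L_2(\R^d)$, this is equivalent to
\[
\bigl\| \wh{J}_{1,\eps}(\tau)(\mathcal{H}_0+I)^{-s/2}\bigr\|_{L_2\to L_2}\le \mathcal{C}(\tau)\eps.
\]
By the intertwining identity \eqref{15.8} and the unitarity of $T_\eps$, the left-hand side equals $\|\wh{J}_1(\eps^{-1}\tau)\mathcal{R}(\eps)^{s/2}\|_{L_2\to L_2}$. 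This contradicts statement~$1^\circ$ of Theorem~\ref{th14.7}. Statement $2^\circ$ is handled identically using \eqref{15.9}: if \eqref{15.62} holds, then
\[
\eps\,\bigl\|\wh{J}_2(\eps^{-1}\tau)\mathcal{R}(\eps)^{r/2}\bigr\|_{L_2\to L_2}=\bigl\|\wh{J}_{2,\eps}(\tau)(\mathcal{H}_0+I)^{-r/2}\bigr\|_{L_2\to L_2}\le\mathcal{C}(\tau)\eps,
\]
so $\|\wh{J}_2(\eps^{-1}\tau)\mathcal{R}(\eps)^{r/2}\|_{L_2\to L_2}\le \mathcal{C}(\tau)$, contradicting statement~$2^\circ$ of Theorem~\ref{th14.7}.

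For statement $3^\circ$, the trick is to convert the $H^1$ norm on the target to the \textquotedblleft energy\textquotedblright\ norm $\|\wh{\A}_\eps^{1/2}\,\cdot\,\|_{L_2}$, which fits the scaling identity \eqref{11.*0}. Using \eqref{rank_alpha_ineq} one checks the elementary bound
\[
\bigl\|\wh{\A}_\eps^{1/2}\u\bigr\|_{L_2(\R^d)}\le (\alpha_1\|g\|_{L_\infty})^{1/2}\|\u\|_{H^1(\R^d)},\quad \u\in H^1(\R^d;\AC^n).
\]
If \eqref{s<2_sin_est1} held for some $0\le s<2$, this bound together with \eqref{11.*0} would give
\[
\bigl\|\wh{\A}^{1/2}\wh{J}(\eps^{-1}\tau)\mathcal{R}(\eps)^{s/2}\bigr\|_{L_2\to L_2}=\bigl\|\wh{\A}_\eps^{1/2}\wh{J}_\eps(\tau)(\mathcal{H}_0+I)^{-s/2}\bigr\|_{L_2\to L_2}\le (\alpha_1\|g\|_{L_\infty})^{1/2}\mathcal{C}(\tau)\eps,
\]
which contradicts statement~$3^\circ$ of Theorem~\ref{th14.7}.

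There is no genuine obstacle here: the arguments are purely bookkeeping once the scaling identities are in place. The only point that needs a touch of care is statement $3^\circ$, where one must remember that the energy norm of $\wh{\A}_\eps^{1/2}\u$ is dominated by the $H^1$ norm (so the assumed $H^s\!\to\!H^1$ estimate really does produce an $H^s\!\to\!L_2$ estimate with $\wh{\A}_\eps^{1/2}$ attached), and that Condition~\ref{cond10.1} is exactly the hypothesis required to invoke Theorem~\ref{th14.7}.
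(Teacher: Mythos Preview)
Your proof is correct and follows essentially the same route as the paper: all three parts reduce to Theorem~\ref{th14.7} via the scaling identities \eqref{15.8}, \eqref{15.9}, and \eqref{11.*0}. The only cosmetic difference is in $3^\circ$, where the paper first extracts the bound on $\|\mathbf{D}\wh{J}_\eps(\tau)(\mathcal{H}_0+I)^{-s/2}\|_{L_2\to L_2}$ from the $H^1$ norm and then passes to $\|\wh{\A}_\eps^{1/2}\,\cdot\,\|_{L_2}$, whereas you combine these two steps into a single inequality; the content is identical.
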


\begin{proof}
 Let us check statement $1^\circ$. Suppose that for some $\tau \ne 0$ and $0\le s <2$
estimate \eqref{15.61} holds for sufficiently small~$\eps$. Applying the scaling transformation (see \eqref{15.8}), we see that estimate \eqref{14.17} is satisfied. But this contradicts statement $1^\circ$ of Theorem  \ref{th14.7}.

Statement $2^\circ$ follows from   \eqref{15.9} and statement  $2^\circ$ of Theorem  \ref{th14.7}.

 We proceed to the proof of  statement $3^\circ$. Suppose that for some $\tau \ne 0$ and $0 \le s < 2$ 
 estimate  \eqref{s<2_sin_est1} is satisfied. Then  
\begin{equation*}
\big\| \mathbf{D} \wh{J}_\eps(\tau)
(\mathcal{H}_0 + I)^{-s/2} \big\|_{L_2(\mathbb{R}^d) \to L_2(\mathbb{R}^d)} \le {\mathcal{C}}(\tau) \varepsilon
\end{equation*}
for sufficiently small $\varepsilon$.  Hence, estimate
\begin{equation*}
\bigl\| \widehat{\mathcal{A}}_\varepsilon^{1/2} \wh{J}_\eps(\tau)
(\mathcal{H}_0 + I)^{-s/2}
\bigr\|_{L_2(\mathbb{R}^d) \to L_2(\mathbb{R}^d)} \le \wh{\mathcal{C}}(\tau) \varepsilon
\end{equation*}
is also satisfied for sufficiently small $\varepsilon$ (with some constant $\wh{\mathcal{C}}(\tau)$).
Applying the scaling transformation, we see that  estimate~(\ref{hat_s<2_sinA_est}) holds for sufficiently 
small $\varepsilon$. But this contradicts statement $3^\circ$ of Theorem~\ref{th14.7}.
\end{proof}

Next, Theorem \ref{th14.8} allows us to confirm that 
Theorems \ref{th15.2} and \ref{A_eps_sin_enchcd_thrm_1} are sharp.

\begin{theorem}
	\label{th15.18}
	Suppose that Condition~\emph{\ref{cond10.2}} is satisfied. 
	
	\noindent $1^\circ$. Let $0 \ne \tau \in \mathbb{R}$ and $0 \le s < 3/2$. Then there does not exist a constant $\mathcal{C}(\tau) > 0$ such that estimate 
	\eqref{15.61} holds for sufficiently small $\varepsilon > 0$.
	
	\noindent $2^\circ$. Let $0 \ne \tau \in \mathbb{R}$ and $0 \le r < 1/2$. Then there does not exist a constant  $\mathcal{C}(\tau) > 0$ such that estimate 
	\eqref{15.62} holds for sufficiently small $\varepsilon > 0$.

	\noindent $3^\circ$. Let $0 \ne \tau \in \mathbb{R}$ and $0 \le s < 3/2$. Then there does not exist a constant  $\mathcal{C}(\tau) > 0$ such that estimate   \eqref{s<2_sin_est1} holds for sufficiently small $\varepsilon > 0$.
	\end{theorem}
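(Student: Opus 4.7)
The plan is to mirror exactly the argument used for Theorem~\ref{th15.17}, but with Theorem~\ref{th14.7} replaced by Theorem~\ref{th14.8}, which supplies sharpness in the improved range under Condition~\ref{cond10.2}. Throughout, I argue by contradiction: assuming one of the estimates \eqref{15.61}, \eqref{15.62}, or \eqref{s<2_sin_est1} holds for a fixed $\tau\ne0$ and some $s$ (resp.\ $r$) in the forbidden range, I scale back to an estimate on $\R^d$ involving $\mathcal{R}(\varepsilon)^{s/2}$, and invoke the corresponding statement of Theorem~\ref{th14.8}.

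For statement $1^\circ$, assume~\eqref{15.61} holds for some $0<\tau$ and some $s\in[0,3/2)$, for all sufficiently small $\varepsilon$. Applying the scaling identity~\eqref{15.8} and the unitarity of $T_\varepsilon$, the assumed bound is equivalent to
\begin{equation*}
\bigl\| \widehat{J}_1(\varepsilon^{-1}\tau)\,\mathcal{R}(\varepsilon)^{s/2}\bigr\|_{L_2(\R^d)\to L_2(\R^d)}\le \mathcal{C}(\tau)\,\varepsilon
\end{equation*}
for all sufficiently small $\varepsilon>0$, which is precisely~\eqref{14.17}. This contradicts statement~$1^\circ$ of Theorem~\ref{th14.8}. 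Statement $2^\circ$ follows analogously: assuming~\eqref{15.62} for some $\tau\ne0$ and $r\in[0,1/2)$, relation~\eqref{15.9} yields
\begin{equation*}
\bigl\| \widehat{J}_2(\varepsilon^{-1}\tau)\,\mathcal{R}(\varepsilon)^{r/2}\bigr\|_{L_2\to L_2}\le \mathcal{C}(\tau),
\end{equation*}
i.e.\ estimate~\eqref{14.18}, contradicting statement~$2^\circ$ of Theorem~\ref{th14.8}.

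For statement $3^\circ$, assume~\eqref{s<2_sin_est1} holds for some $\tau\ne0$ and some $s\in[0,3/2)$. Restricting the range in the $H^1$ norm gives
\begin{equation*}
\bigl\| \mathbf{D}\,\widehat{J}_\varepsilon(\tau)\,(\mathcal{H}_0+I)^{-s/2}\bigr\|_{L_2\to L_2}\le \mathcal{C}(\tau)\,\varepsilon,
\end{equation*}
and combining with the analog of~\eqref{15.31a} for the operator $\widehat{\mathcal{A}}_\varepsilon$ (applied to the first summand of $\widehat{J}_\varepsilon(\tau)$) together with the fact that $\widehat{\mathcal{A}}^{1/2}$ is controlled by $\mathbf{D}$ via the ellipticity condition (both for $\widehat{\mathcal{A}}_\varepsilon$ and for the corrector term, via~\eqref{11.*3} showing the corrector is $O(\varepsilon)$ in $L_2$), one deduces
\begin{equation*}
\bigl\| \widehat{\mathcal{A}}_\varepsilon^{1/2}\,\widehat{J}_\varepsilon(\tau)\,(\mathcal{H}_0+I)^{-s/2}\bigr\|_{L_2\to L_2}\le \widehat{\mathcal{C}}(\tau)\,\varepsilon
\end{equation*}
for all small $\varepsilon$. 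By~\eqref{11.*0} this is equivalent to
\begin{equation*}
\bigl\| \widehat{\mathcal{A}}^{1/2}\,\widehat{J}(\varepsilon^{-1}\tau)\,\mathcal{R}(\varepsilon)^{s/2}\bigr\|_{L_2\to L_2}\le \widehat{\mathcal{C}}(\tau)\,\varepsilon,
\end{equation*}
which is~\eqref{hat_s<2_sinA_est}. This contradicts statement~$3^\circ$ of Theorem~\ref{th14.8}.

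The main obstacle is statement~$3^\circ$, where one must be careful in passing between the $H^1$ target norm and the mixed norm $\|\widehat{\mathcal{A}}_\varepsilon^{1/2}\cdot\|_{L_2}$: the $H^1$ norm dominates $\|\mathbf{D}\cdot\|_{L_2}$, and ellipticity gives the reverse domination by $\widehat{\mathcal{A}}_\varepsilon^{1/2}$ modulo an $L_2$ term, while the corrector piece $\varepsilon\Lambda^\varepsilon b(\mathbf{D})\Pi_\varepsilon(\widehat{\mathcal{A}}^0)^{-1/2}\sin(\tau(\widehat{\mathcal{A}}^0)^{1/2})$ that distinguishes $\widehat{J}_\varepsilon(\tau)$ from $\widehat{J}_{2,\varepsilon}(\tau)$ is already of order $O(\varepsilon)$ in $L_2$ by~\eqref{11.*3} and does not affect the contradiction. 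Everything else is routine translation via the scaling transform, exactly parallel to the proof of Theorem~\ref{th15.17}.
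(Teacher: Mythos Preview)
Your proposal is correct and follows the paper's approach exactly: the paper deduces Theorem~\ref{th15.18} from Theorem~\ref{th14.8} via the same scaling argument used in the proof of Theorem~\ref{th15.17} (replacing Theorem~\ref{th14.7} by Theorem~\ref{th14.8}). One small cleanup in your treatment of~$3^\circ$: the reference to~\eqref{15.31a} is misplaced, since that is the \emph{lower} bound $\widehat{c}_*\|\mathbf{D}u\|^2\le\|\widehat{\mathcal{A}}_\varepsilon^{1/2}u\|^2$; what you actually need (and correctly name as ``the ellipticity condition'') is the \emph{upper} bound from~\eqref{a_form_ineq}, $\|\widehat{\mathcal{A}}_\varepsilon^{1/2}u\|^2\le\alpha_1\|g\|_{L_\infty}\|\mathbf{D}u\|^2$, applied directly to $u=\widehat{J}_\varepsilon(\tau)(\mathcal{H}_0+I)^{-s/2}w$ --- the detour through the corrector via~\eqref{11.*3} is unnecessary.
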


 Now we discuss the sharpness of the results  regarding  the dependence of estimates on the parameter $\tau$. 
 Theorem \ref{th14.11} implies the following statement  which shows that Theorems \ref{th15.1} 
 and  \ref{A_eps_sin_general_thrm} are sharp.

\begin{theorem}
	\label{th15.21}
	Suppose that Condition~\emph{\ref{cond10.1}} is satisfied. 
	
	\noindent $1^\circ$. Let $s \ge 2$. There does not exist a positive function 
	$\mathcal{C}(\tau)$ such that  $\lim_{\tau \to \infty} {\mathcal C}(\tau)/|\tau|=0$ and estimate \eqref{15.61}
	holds for $\tau \in \R$ and sufficiently small $\varepsilon > 0$.

	\noindent $2^\circ$. Let $r \ge 1$. There does not exist a positive function
	$\mathcal{C}(\tau)$ such that 
	$\lim_{\tau \to \infty} {\mathcal C}(\tau)/|\tau|=0$ and estimate \eqref{15.62}
	holds for $\tau \in \R$ and sufficiently small $\varepsilon > 0$.

	\noindent $3^\circ$. Let $s \ge 2$. There does not exist a positive function $\mathcal{C}(\tau)$ such that  $\lim_{\tau \to \infty} {\mathcal C}(\tau)/|\tau|=0$ and estimate  \eqref{s<2_sin_est1}
	holds for $\tau \in \R$ and sufficiently small $\varepsilon > 0$.
\end{theorem}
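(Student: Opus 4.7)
\textbf{Proof proposal for Theorem \ref{th15.21}.} The plan is to argue by contradiction in each of the three statements, using the unitary scaling transformation $T_\eps$ to transfer a hypothetical estimate for the $\eps$-scaled operator to an estimate for the unscaled operator-valued function of $\wh{\mathcal A}$, and then to invoke Theorem \ref{th14.11} to derive a contradiction. This follows the same template as the proof of Theorem \ref{th15.17}, but with the pointwise hypothesis on $\mathcal{C}(\tau)$ replaced by the growth condition $\lim_{\tau \to \infty} \mathcal{C}(\tau)/|\tau|=0$ (respectively $/|\tau|^{1/2}$-type conditions, which are not needed here).

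For statement $1^\circ$, I would suppose that for some $s \ge 2$ there exists a positive function $\mathcal{C}(\tau)$ with $\lim_{\tau\to\infty}\mathcal{C}(\tau)/|\tau| = 0$ for which \eqref{15.61} holds for all $\tau \in \R$ and sufficiently small $\eps$. By definition of the $H^s \to L_2$ norm, this is equivalent to
\[
\bigl\| \wh{J}_{1,\eps}(\tau)(\mathcal H_0 + I)^{-s/2}\bigr\|_{L_2(\R^d)\to L_2(\R^d)} \le \mathcal{C}(\tau)\,\eps.
\]
Applying the scaling identity \eqref{15.8} and the unitarity of $T_\eps$, this is in turn equivalent to $\|\wh{J}_1(\eps^{-1}\tau)\mathcal R(\eps)^{s/2}\|_{L_2 \to L_2} \le \mathcal{C}(\tau)\eps$, i.e., precisely estimate \eqref{14.17}, with the same $\mathcal{C}(\tau)$. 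This contradicts statement $1^\circ$ of Theorem \ref{th14.11}.

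Statement $2^\circ$ is handled in the same way, using the scaling relation \eqref{15.9} instead of \eqref{15.8}: the hypothetical inequality $\|\wh{J}_{2,\eps}(\tau)\|_{H^r \to L_2} \le \mathcal{C}(\tau)\eps$ becomes, after the change of variables $T_\eps$, exactly \eqref{14.18}, and then statement $2^\circ$ of Theorem \ref{th14.11} yields the required contradiction. For statement $3^\circ$, assuming \eqref{s<2_sin_est1} for some $s \ge 2$ and some admissible $\mathcal{C}(\tau)$, I would first drop the $L_2$-part of the $H^1$-norm and retain only $\|\D\,\wh{J}_\eps(\tau)\|_{H^s \to L_2} \le \mathcal{C}(\tau)\eps$, and then use the upper half of \eqref{a_form_ineq} (applied to $\wh{\mathcal A}_\eps$) to bound $\|\wh{\mathcal A}_\eps^{1/2} u\|_{L_2} \le \alpha_1^{1/2}\|g\|_{L_\infty}^{1/2}\|\D u\|_{L_2}$, yielding $\|\wh{\mathcal A}_\eps^{1/2}\wh{J}_\eps(\tau)\|_{H^s \to L_2} \le \wh{\mathcal C}(\tau)\eps$ with $\wh{\mathcal C}(\tau) = \alpha_1^{1/2}\|g\|_{L_\infty}^{1/2}\mathcal{C}(\tau)$, which also satisfies $\lim_{\tau\to\infty}\wh{\mathcal C}(\tau)/|\tau|=0$. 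Applying the scaling identity \eqref{11.*0} then gives \eqref{hat_s<2_sinA_est} for this $\wh{\mathcal C}(\tau)$, contradicting statement $3^\circ$ of Theorem \ref{th14.11}.

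There is no real obstacle here: each of the three reductions is an essentially algebraic manipulation (change of variables and, for $3^\circ$, a bounded coercivity estimate), and the substance of the sharpness assertion is entirely encoded in Theorem \ref{th14.11}. The only point that requires minor care is the observation, used in statement $3^\circ$, that replacing $\D$ by $\wh{\mathcal A}_\eps^{1/2}$ preserves the hypothesis $\lim_{\tau\to\infty}\mathcal{C}(\tau)/|\tau|=0$, which is clear since we only multiply $\mathcal{C}(\tau)$ by a constant independent of $\tau$ and~$\eps$.
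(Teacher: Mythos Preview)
Your proposal is correct and follows essentially the same route as the paper: each statement is reduced, via the scaling identities \eqref{15.8}, \eqref{15.9}, \eqref{11.*0}, to the corresponding statement of Theorem~\ref{th14.11}, and for $3^\circ$ one first passes from $\D$ to $\wh{\mathcal A}_\eps^{1/2}$ using the form bound for $\wh{\mathcal A}_\eps$. The paper does not give a detailed proof of Theorem~\ref{th15.21} (it just says it is implied by Theorem~\ref{th14.11}), but the template is exactly the proof of Theorem~\ref{th15.17}, which matches your argument line by line.
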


 Theorem \ref{th14.12} shows that Theorems \ref{th15.2} and \ref{A_eps_sin_enchcd_thrm_1} are sharp.

\begin{theorem}
	\label{th15.22}
	Suppose that Condition~\emph{\ref{cond10.2}} is satisfied. 
	
	\noindent $1^\circ$. Let $s \ge 3/2$. There does not exist a positive function $\mathcal{C}(\tau)$ such that 
	$\lim_{\tau \to \infty} {\mathcal C}(\tau)/|\tau|^{1/2}=0$ and estimate \eqref{15.61}
holds for $\tau \in \R$ and sufficiently small $\varepsilon > 0$.

	\noindent $2^\circ$. 
	Let $r \ge 1/2$. There does not exist a positive function $\mathcal{C}(\tau)$ such that 
	$\lim_{\tau \to \infty} {\mathcal C}(\tau)/|\tau|^{1/2}=0$ and estimate \eqref{15.62}	
	holds for $\tau \in \R$ and sufficiently small $\varepsilon > 0$.

	\noindent $3^\circ$. Let $s \ge 3/2$. There does not exist a positive function $\mathcal{C}(\tau)$ such that 
	$\lim_{\tau \to \infty} {\mathcal C}(\tau)/|\tau|^{1/2}=0$ and estimate \eqref{s<2_sin_est1}
	holds for $\tau \in \R$ and sufficiently small $\varepsilon > 0$.
\end{theorem}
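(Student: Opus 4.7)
\smallskip

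\noindent\textbf{Proof proposal for Theorem \ref{th15.22}.} The plan is to argue by contradiction in each of the three statements and reduce them, via the scaling identities \eqref{15.8}, \eqref{15.9}, \eqref{11.*0}, to the corresponding abstract statements already proved in Theorem \ref{th14.12}. This is the same mechanism used for Theorems \ref{th15.17}, \ref{th15.18}, and \ref{th15.21}, so it should go through with only notational changes; the substantive content is carried by Theorem \ref{th14.12} (and, ultimately, by Theorems \ref{th13.6}, \ref{th13.7}).

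\smallskip

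For statement $1^\circ$, I suppose toward a contradiction that some $s \ge 3/2$ and some positive function $\mathcal{C}(\tau)$ with $\mathcal{C}(\tau)/|\tau|^{1/2}\to 0$ satisfy \eqref{15.61} for all $\tau \in \R$ and sufficiently small $\eps>0$. Rewriting \eqref{15.61} as $\|\wh{J}_{1,\eps}(\tau)(\mathcal{H}_0+I)^{-s/2}\|_{L_2\to L_2}\le \mathcal{C}(\tau)\eps$ and applying the identity \eqref{15.8} (with $T_\eps$ unitary), I conclude that
\begin{equation*}
\|\wh{J}_1(\eps^{-1}\tau)\,\mathcal{R}(\eps)^{s/2}\|_{L_2(\R^d)\to L_2(\R^d)}\le \mathcal{C}(\tau)\eps
\end{equation*}
for the same $\tau$ and $\eps$. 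This contradicts statement $1^\circ$ of Theorem \ref{th14.12}. Statement $2^\circ$ is entirely analogous, using \eqref{15.9} in place of \eqref{15.8}; one extra factor of $\eps$ appears, but it is absorbed into a new constant $\wt{\mathcal C}(\tau)$ with the same asymptotic behaviour as $\mathcal{C}(\tau)$, so the hypotheses of statement $2^\circ$ of Theorem \ref{th14.12} are satisfied and yield a contradiction.

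\smallskip

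For statement $3^\circ$, I suppose \eqref{s<2_sin_est1} holds for some $s\ge 3/2$ with $\mathcal{C}(\tau)/|\tau|^{1/2}\to 0$. The key observation is that, by the upper bound in a relation of the form \eqref{a_form_ineq} applied to $\wh{\mathcal A}_\eps$, we have $\|\wh{\mathcal A}_\eps^{1/2}\u\|_{L_2}\le (\alpha_1\|g\|_{L_\infty})^{1/2}\|\u\|_{H^1}$, so \eqref{s<2_sin_est1} upgrades to
\begin{equation*}
\|\wh{\mathcal A}_\eps^{1/2}\wh{J}_\eps(\tau)(\mathcal H_0+I)^{-s/2}\|_{L_2\to L_2}\le \wh{\mathcal C}(\tau)\eps,
\end{equation*}
with $\wh{\mathcal C}(\tau)/|\tau|^{1/2}\to 0$. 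The scaling identity \eqref{11.*0} then transforms this into the estimate $\|\wh{\mathcal A}^{1/2}\wh{J}(\eps^{-1}\tau)\mathcal{R}(\eps)^{s/2}\|_{L_2\to L_2}\le \wh{\mathcal C}(\tau)\eps$, which contradicts statement $3^\circ$ of Theorem \ref{th14.12}.

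\smallskip

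I do not foresee a genuine obstacle here: each reduction is a one-line application of a scaling identity that has already been recorded, together with an elementary norm comparison. The only minor point to be careful about is the bookkeeping in statement $3^\circ$, where one must insert the quadratic-form inequality $\|\wh{\mathcal A}_\eps^{1/2}\u\|_{L_2}\le C\|\u\|_{H^1}$ with a constant independent of $\eps$ before scaling, so that the function $\wh{\mathcal C}(\tau)$ retains the property $\wh{\mathcal C}(\tau)/|\tau|^{1/2}\to 0$. All the real work has been done in Theorem \ref{th14.12} (and before that in Theorems \ref{th13.6}--\ref{th13.7}, which reduce to the abstract Theorems \ref{th4.*2} and \ref{th4.final} via Lemmas \ref{hatA(k)_Lipschitz_lemma} and \ref{A(k)_Lipschitz_lemma}).
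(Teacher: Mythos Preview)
Your proposal is correct and follows essentially the same route as the paper: reduce each statement via the scaling identities \eqref{15.8}, \eqref{15.9}, \eqref{11.*0} to the corresponding statement of Theorem~\ref{th14.12}, exactly as in the proof of Theorem~\ref{th15.17}. Two minor remarks: in statement~$2^\circ$ the extra $\eps$ in \eqref{15.9} cancels exactly with the $\eps$ on the right of \eqref{15.62}, so no new constant is needed; and your closing parenthetical should point to Theorems~\ref{th10.7} and~\ref{th10.9} (for $\wh{\mathcal A}$) rather than Theorems~\ref{th13.6}--\ref{th13.7} (which concern the sandwiched operator).
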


\subsection{Approximation for the sandwiched operators $\cos(\tau \mathcal{A}_\varepsilon^{1/2})$ and $\mathcal{A}_\varepsilon^{-1/2} \sin(\tau \mathcal{A}_\varepsilon^{1/2})$ in the principal order\label{sec15.5}}
Now we proceed to consideration of the operator $\mathcal{A}_\varepsilon$ (see~(\ref{A_eps})). 
Let $\mathcal{A}^0$~be the operator~(\ref{A0}).  Denote 
\begin{align}
\label{15.70}
	{J}_{1,\eps}(\tau) &:=  
	f^\varepsilon  \cos ( \tau \mathcal{A}_\varepsilon^{1/2}) (f^\varepsilon)^{-1} - 
	f_0  \cos( \tau (\mathcal{A}^0)^{1/2}) f_0^{-1},
	\\
	\label{15.71}
	{J}_{2,\eps}(\tau) &:=  
	f^\varepsilon \mathcal{A}_\varepsilon^{-1/2} \sin ( \tau \mathcal{A}_\varepsilon^{1/2}) (f^\varepsilon)^{-1} - 
	f_0 (\mathcal{A}^0)^{-1/2} \sin( \tau (\mathcal{A}^0)^{1/2}) f_0^{-1},
	\\
	\label{15.72}
	{J}_{3,\eps}(\tau) &:=  
	f^\varepsilon \mathcal{A}_\varepsilon^{-1/2} \sin ( \tau \mathcal{A}_\varepsilon^{1/2}) (f^\varepsilon)^{*} - 
	f_0 (\mathcal{A}^0)^{-1/2} \sin( \tau (\mathcal{A}^0)^{1/2}) f_0.
\end{align}
Relations  \eqref{sin_and_scale_transform} and \eqref{H0_resolv_and_scale_transform}  imply that 
\begin{align}
\label{15.73}
{J}_{1,\eps}(\tau) (\mathcal{H}_0+I)^{-s/2}=  &\,
T_\eps^* J_1(\eps^{-1} \tau) \mathcal{R}(\eps)^{s/2} T_\eps,
\\
\label{15.74}
{J}_{l,\eps}(\tau) (\mathcal{H}_0+I)^{-s/2}= & \,\eps
T_\eps^* J_l(\eps^{-1} \tau) \mathcal{R}(\eps)^{s/2} T_\eps,\quad l=2,3.
\end{align}

Applying Theorems \ref{th14.15} and \ref{th14.16} and taking \eqref{15.73}, \eqref{15.74} into account, we obtain the following two theorems.

\begin{theorem}[see~\cite{BSu5,M,DSu}]
	\label{th15.25}
Let $\mathcal{A}_{\varepsilon}$~be the operator~\emph{(\ref{A_eps})}, and let $\mathcal{A}^0$~be the 
operator~\emph{(\ref{A0})}. Let $J_{1,\eps}(\tau),$ $J_{2,\eps}(\tau),$  and
$J_{3,\eps}(\tau)$ be the operators defined by  \eqref{15.70}--\eqref{15.72}. 
Then for $\tau \in \mathbb{R}$ and $\varepsilon > 0$ we have 
\begin{align}
\label{15.75}
\bigl\| J_{1,\eps}(\tau) \bigr\|_{H^{2}(\mathbb{R}^d) \to L_2(\mathbb{R}^d)} 
\le \mathrm{C}_1 (1+|\tau|) \varepsilon,
\\
\label{15.76}
\bigl\| J_{2,\eps}(\tau) \bigr\|_{H^{1}(\mathbb{R}^d) \to L_2(\mathbb{R}^d)} 
\le \mathrm{C}_2 (1+|\tau|) \varepsilon,
\\
\label{15.77}
\bigl\| J_{3,\eps}(\tau) \bigr\|_{H^{1}(\mathbb{R}^d) \to L_2(\mathbb{R}^d)} 
\le \wt{\mathrm{C}}_2 (1+|\tau|) \varepsilon,
\end{align}
where  ${\mathrm{C}}_1,$ ${\mathrm{C}}_2,$ $\wt{\mathrm{C}}_2$ depend on $\alpha_0,$ $\alpha_1,$ 
$\|g\|_{L_\infty},$ $\|g^{-1}\|_{L_\infty},$ $\|f\|_{L_\infty},$ $\|f^{-1}\|_{L_\infty},$ and $r_0$.
\end{theorem}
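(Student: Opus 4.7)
The plan is to reduce Theorem \ref{th15.25} to Theorem \ref{th14.15} by means of the unitary scaling transformation $T_\varepsilon$, exactly in parallel with the proofs of Theorems \ref{th15.1} and \ref{A_eps_sin_general_thrm} in the unbordered case. Three ingredients are at hand: the scaling identities \eqref{15.73}, \eqref{15.74} for the sandwiched families $J_{l,\varepsilon}$; the fact that the operator $(\mathcal{H}_0+I)^{s/2}$ is an isometric isomorphism of $H^s(\mathbb{R}^d;\mathbb{C}^n)$ onto $L_2(\mathbb{R}^d;\mathbb{C}^n)$; and the principal-order bounds \eqref{14.30}--\eqref{14.32} for $J_1(\varepsilon^{-1}\tau)\mathcal{R}(\varepsilon)$ and $J_l(\varepsilon^{-1}\tau)\mathcal{R}(\varepsilon)^{1/2}$, $l=2,3$.

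First I would handle \eqref{15.75}. Applying \eqref{15.73} with $s=2$ gives
\[
J_{1,\varepsilon}(\tau)(\mathcal{H}_0+I)^{-1} = T_\varepsilon^{*}\,J_1(\varepsilon^{-1}\tau)\,\mathcal{R}(\varepsilon)\,T_\varepsilon,
\]
so unitarity of $T_\varepsilon$ yields
\[
\|J_{1,\varepsilon}(\tau)\|_{H^2 \to L_2} = \|J_{1,\varepsilon}(\tau)(\mathcal{H}_0+I)^{-1}\|_{L_2 \to L_2} = \|J_1(\varepsilon^{-1}\tau)\mathcal{R}(\varepsilon)\|_{L_2 \to L_2},
\]
and \eqref{14.30} finishes this case with $\mathrm{C}_1$ as there.

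For \eqref{15.76} and \eqref{15.77} I would use \eqref{15.74} with $s=1$, noting the extra factor $\varepsilon$ in front. For $l=2,3$ this gives
\[
\|J_{l,\varepsilon}(\tau)\|_{H^1 \to L_2} = \varepsilon\,\|J_l(\varepsilon^{-1}\tau)\mathcal{R}(\varepsilon)^{1/2}\|_{L_2 \to L_2},
\]
and the bounds \eqref{14.31}, \eqref{14.32} with parameter $\varepsilon^{-1}\tau$ replaced by $\tau$ (so that $1+|\varepsilon^{-1}\tau|\cdot\varepsilon \le 1+|\tau|$ when $\varepsilon\le 1$, and trivially otherwise after a harmless adjustment of constants) deliver the estimates \eqref{15.76}, \eqref{15.77} with the constants $\mathrm{C}_2$, $\widetilde{\mathrm{C}}_2$ inherited from Theorem \ref{th14.15}.

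There is essentially no obstacle here: the entire argument is a one-line scaling reduction, and all the analytic work lives in Theorem \ref{th14.15}, which in turn was obtained from the direct-integral decomposition and the fiberwise Theorem \ref{th12.1}. The only point requiring minor care is keeping the dependence on $\tau$ clean in the inequality $\varepsilon(1+|\varepsilon^{-1}\tau|)\le 1+|\tau|$ used after applying \eqref{14.31}, \eqref{14.32}; this is the reason the right-hand sides of \eqref{15.76}, \eqref{15.77} carry the factor $\varepsilon$ rather than just $(1+|\tau|)$. The dependence of $\mathrm{C}_1,\mathrm{C}_2,\widetilde{\mathrm{C}}_2$ on $\alpha_0,\alpha_1,\|g\|_{L_\infty},\|g^{-1}\|_{L_\infty},\|f\|_{L_\infty},\|f^{-1}\|_{L_\infty},r_0$ is automatically inherited from Theorem \ref{th14.15}.
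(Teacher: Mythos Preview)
Your reduction is correct and is exactly the route the paper takes: apply the scaling identities \eqref{15.73}, \eqref{15.74}, use that $(\mathcal{H}_0+I)^{s/2}$ is an isometric isomorphism $H^s\to L_2$, and invoke Theorem~\ref{th14.15}. One small clean-up: your parenthetical about ``$1+|\varepsilon^{-1}\tau|\cdot\varepsilon \le 1+|\tau|$'' is unnecessary, since estimates \eqref{14.31} and \eqref{14.32} are already stated with $J_l(\varepsilon^{-1}\tau)\mathcal{R}(\varepsilon)^{1/2}$ on the left and $(1+|\tau|)$ on the right, so multiplying by the extra factor $\varepsilon$ from \eqref{15.74} gives \eqref{15.76}, \eqref{15.77} directly with no further manipulation.
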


\begin{theorem}
	\label{th15.26}
Suppose that the assumptions of Theorem \emph{\ref{th15.25}} are satisfied.
Suppose that  Condition \emph{\ref{cond_BB}} or Condition~\emph{\ref{sndw_cond1}} {\rm (}or more  restrictive Condition~\emph{\ref{sndw_cond2}}{\rm )} is satisfied. Then for $\tau \in \mathbb{R}$ and $\varepsilon > 0$ we have 
\begin{align}
\label{15.78}
\bigl\| J_{1,\eps}(\tau) \bigr\|_{H^{3/2}(\mathbb{R}^d) \to L_2(\mathbb{R}^d)} 
\le \mathrm{C}_3 (1+|\tau|)^{1/2} \varepsilon,
\\
\label{15.79}
\bigl\| J_{3,\eps}(\tau) \bigr\|_{H^{1/2}(\mathbb{R}^d) \to L_2(\mathbb{R}^d)} 
\le {\mathrm{C}}_4 (1+|\tau|)^{1/2} \varepsilon.
\end{align}
	Under Condition  \emph{\ref{cond_BB}}, the constants   ${\mathrm{C}}_3$ and ${\mathrm{C}}_4$ depend on  $\alpha_0,$ $\alpha_1,$ $\|g\|_{L_\infty},$ $\|g^{-1}\|_{L_\infty},$ $\|f\|_{L_\infty},$ $\|f^{-1}\|_{L_\infty},$ and $r_0$.
	Under Condition \emph{\ref{sndw_cond1}}, these constants depend on the same parameters and on 
	$n,$  $c^\circ$.
\end{theorem}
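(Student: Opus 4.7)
The plan is to reduce the assertion to Theorem~\ref{th14.16} via the scaling transformation, exactly as was done for Theorem~\ref{th15.25}. The hypotheses of Theorem~\ref{th15.26} (namely Condition~\ref{cond_BB}, or Condition~\ref{sndw_cond1} / Condition~\ref{sndw_cond2}) are precisely what are needed to invoke the improved bounds of Theorem~\ref{th14.16} for the ``unscaled'' operators $J_1(\tau)$ and $J_3(\tau)$ defined in \eqref{14.26}, \eqref{14.28}.

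First I would record the scaling identities \eqref{15.73}, \eqref{15.74}, specialized to $l=1$, $s=3/2$ and to $l=3$, $s=1/2$:
\begin{align*}
J_{1,\eps}(\tau) (\mathcal{H}_0+I)^{-3/4} &= T_\eps^* J_1(\eps^{-1}\tau)\,\mathcal{R}(\eps)^{3/4} T_\eps, \\
J_{3,\eps}(\tau) (\mathcal{H}_0+I)^{-1/4} &= \eps\, T_\eps^* J_3(\eps^{-1}\tau)\,\mathcal{R}(\eps)^{1/4} T_\eps.
\end{align*}
Since $T_\eps$ is unitary on $L_2(\R^d;\AC^n)$, taking $L_2\!\to\! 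L_2$ norms on both sides gives
\[
\|J_{1,\eps}(\tau) (\mathcal{H}_0+I)^{-3/4}\|_{L_2\to L_2}=\|J_1(\eps^{-1}\tau)\,\mathcal{R}(\eps)^{3/4}\|_{L_2\to L_2},
\]
\[
\|J_{3,\eps}(\tau) (\mathcal{H}_0+I)^{-1/4}\|_{L_2\to L_2}=\eps\,\|J_3(\eps^{-1}\tau)\,\mathcal{R}(\eps)^{1/4}\|_{L_2\to L_2}.
\]

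Next I apply Theorem~\ref{th14.16} (with $\tau$ replaced by $\eps^{-1}\tau$) to bound the right-hand sides. This yields $\mathrm{C}_3(1+|\tau|)^{1/2}\eps$ in the first case and $\eps\cdot\mathrm{C}_4(1+|\tau|)^{1/2}$ in the second, with constants depending on the parameters listed in the statement (and additionally on $n,c^\circ$ under Condition~\ref{sndw_cond1}). Finally, since $(\mathcal{H}_0+I)^{s/2}$ is an isometric isomorphism from $H^s(\R^d;\AC^n)$ onto $L_2(\R^d;\AC^n)$, the bound on $\|J_{1,\eps}(\tau)(\mathcal{H}_0+I)^{-3/4}\|_{L_2\to L_2}$ is equivalent to \eqref{15.78}, and similarly the bound on $\|J_{3,\eps}(\tau)(\mathcal{H}_0+I)^{-1/4}\|_{L_2\to L_2}$ is equivalent to \eqref{15.79}.

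There is essentially no obstacle here: all the analytic work has been carried out at the level of the direct-integral fibers in \S\ref{sec12} and transferred to the full-space setting in \S\ref{sec14.4}. The present theorem is purely a rescaling of Theorem~\ref{th14.16}, so the only thing to verify carefully is the bookkeeping of the smoothing exponents (the extra $\eps$ factor in \eqref{15.74} is precisely what converts the $O(1)$ bound for $J_3(\eps^{-1}\tau)\mathcal{R}(\eps)^{1/4}$ into an $O(\eps)$ bound for $J_{3,\eps}(\tau)$ in the $H^{1/2}\!\to\! L_2$ norm). The fact that the hypotheses of Theorem~\ref{th15.26} coincide with those of Theorem~\ref{th14.16} is immediate from the definitions, so no further analysis is required.
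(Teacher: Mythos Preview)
Your proposal is correct and follows exactly the paper's approach: the paper simply states that Theorems~\ref{th15.25} and~\ref{th15.26} are obtained by applying Theorems~\ref{th14.15} and~\ref{th14.16} together with the scaling identities \eqref{15.73}, \eqref{15.74}. One trivial remark: Theorem~\ref{th14.16} is already formulated for $J_l(\varepsilon^{-1}\tau)$, so no substitution ``$\tau\mapsto\varepsilon^{-1}\tau$'' is needed there.
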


Theorem \ref{th15.25} was known earlier: estimate \eqref{15.75} was obtained in  \cite[Theorem~13.3]{BSu5},
inequality \eqref{15.76} was proved in \cite[Theorem 9.1]{M}, and \eqref{15.77} was proved in 
 \cite[Theorem 11.6]{DSu}.

By interpolation, we  deduce the following corollaries from Theorems  \ref{th15.25} and \ref{th15.26}.

\begin{corollary}
	\label{cor15.28}
	Under the assumptions of Theorem  \emph{\ref{th15.25}}, we have  
	\begin{align}
	\label{15.82}
	&\bigl\| {J}_{1,\eps}(\tau) 
	\bigr\|_{H^s(\mathbb{R}^d) \to L_2(\mathbb{R}^d)} \le {\mathfrak{C}}_1(s) (1+|\tau|)^{s/2} \varepsilon^{s/2},
	\quad 0\le s \le 2,\ \tau \in \R, \ \eps>0;
	\\
	\label{15.83}
	&\bigl\| {J}_{3,\eps}(\tau) 
	\bigr\|_{H^r(\mathbb{R}^d) \to L_2(\mathbb{R}^d)} \le {\mathfrak{C}}_2(r) (1+|\tau|)^{(r+1)/2}
	 \varepsilon^{(r+1)/2},
	 \quad 0 \le r \le 1,\ \tau \in \R,\ 0< \eps \le 1;
	\\
	\label{J_3epsD*_gen_intrpld}
	&\bigl\| {J}_{3,\eps}(\tau) \mathbf{D}^*
	\bigr\|_{H^s(\mathbb{R}^d) \to L_2(\mathbb{R}^d)} \le {\mathfrak{C}}'_2(s) (1+|\tau|)^{s/2} \varepsilon^{s/2},
	\quad 0\le s \le 2,\ \tau \in \R, \ \eps>0.
	\end{align}
\end{corollary}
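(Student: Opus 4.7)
The plan is to mimic the proof of Corollary \ref{cor15.1a}, obtaining each of the three estimates \eqref{15.82}--\eqref{J_3epsD*_gen_intrpld} by interpolating between the endpoint bounds from Theorem \ref{th15.25} and suitable trivial-style $(L_2\to L_2)$ bounds. The interpolation theorem in question is complex interpolation between Sobolev spaces, so all constants can be written as products of the endpoint constants raised to appropriate powers $s/2$ and $1-s/2$ (respectively $(r+1)/2$ and so on).

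For \eqref{15.82} the trivial endpoint is $\|J_{1,\eps}(\tau)\|_{L_2\to L_2} \le 2\|f\|_{L_\infty}\|f^{-1}\|_{L_\infty}$, obtained by scaling \eqref{12.9_1} via the identity \eqref{15.73} with $s=0$ (equivalently, simply from the definition \eqref{15.70} and the uniform boundedness of $\cos$). Interpolating with \eqref{15.75} yields \eqref{15.82} with $\mathfrak{C}_1(s) = (2\|f\|_{L_\infty}\|f^{-1}\|_{L_\infty})^{1-s/2}\mathrm{C}_1^{s/2}$. For \eqref{15.83}, I need an $(L_2\to L_2)$-bound of order $\eps^{1/2}(1+|\tau|)^{1/2}$; scaling \eqref{14.32a} through \eqref{15.74} with $s=0$ gives $\|J_{3,\eps}(\tau)\|_{L_2\to L_2} \le \eps\mathrm{C}_2'(1+\eps^{-1/2}|\tau|^{1/2}) \le 2\mathrm{C}_2'\eps^{1/2}(1+|\tau|)^{1/2}$ for $0<\eps\le 1$. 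Interpolating with \eqref{15.77} gives \eqref{15.83}.

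The slightly more delicate point is \eqref{J_3epsD*_gen_intrpld}, where the smoothing is applied from the right by $\mathbf{D}^*$ rather than via the resolvent of $\mathcal{H}_0$. I plan to first observe that $J_{3,\eps}(\tau)$ is selfadjoint (both summands being conjugation-symmetric), so $\|J_{3,\eps}(\tau)\mathbf{D}^*\|_{L_2\to L_2} = \|\mathbf{D}J_{3,\eps}(\tau)\|_{L_2\to L_2}$. Then, using the form lower bound (the analog of \eqref{a_form_ineq} for $\mathcal{A}_\eps$) in the form $\|\mathbf{D}(f^\eps\mathbf{u})\|_{L_2}\le (\alpha_0\|g^{-1}\|_{L_\infty}^{-1})^{-1/2}\|\mathcal{A}_\eps^{1/2}\mathbf{u}\|_{L_2}$ applied to $\mathbf{u} = \mathcal{A}_\eps^{-1/2}\sin(\tau\mathcal{A}_\eps^{1/2})(f^\eps)^*\mathbf{w}$, together with the analogous constant-coefficient bound for the $\mathcal{A}^0$-term, gives a uniform constant $\mathrm{C}_2''$ with $\|J_{3,\eps}(\tau)\mathbf{D}^*\|_{L_2\to L_2}\le \mathrm{C}_2''$. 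On the other hand, \eqref{15.77} trivially yields $\|J_{3,\eps}(\tau)\mathbf{D}^*\|_{H^2\to L_2}\le \wt{\mathrm{C}}_2(1+|\tau|)\eps$ since $\mathbf{D}^*$ is bounded from $H^2$ to $H^1$. Interpolating between these two endpoints gives \eqref{J_3epsD*_gen_intrpld}.

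The main (mild) obstacle will be the third estimate, specifically justifying the uniform energy bound on $\mathbf{D}J_{3,\eps}(\tau)$: one must correctly account for the presence of $f^\eps$ on both sides, use the coercivity of $\mathfrak{a}_\eps$ on $f^\eps\mathbf{u}$ rather than on $\mathbf{u}$ itself, and handle the analogous bound for the effective term (where $f_0$ is constant and the form is just $\alpha_0\|g^{-1}\|_{L_\infty}^{-1}|f_0^{-1}|^{-2}\|\mathbf{D}\mathbf{u}\|_{L_2}^2$ from below, yielding a uniform constant). All other steps are standard interpolation accounting, and the constants $\mathfrak{C}_1(s), \mathfrak{C}_2(r), \mathfrak{C}_2'(s)$ depend only on $\alpha_0,\alpha_1,\|g\|_{L_\infty},\|g^{-1}\|_{L_\infty},\|f\|_{L_\infty},\|f^{-1}\|_{L_\infty}$, and $r_0$.
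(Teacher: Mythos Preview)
Your proposal is correct and follows essentially the same approach as the paper's own proof: interpolation between the endpoint estimates \eqref{15.75}, \eqref{15.77} and the trivial $(L_2\to L_2)$ bounds (the paper uses exactly the constants $\mathfrak{C}_1(s)=(2\|f\|_{L_\infty}\|f^{-1}\|_{L_\infty})^{1-s/2}\mathrm{C}_1^{s/2}$, $\mathfrak{C}_2(r)=(2\mathrm{C}_2')^{1-r}\wt{\mathrm{C}}_2^{\,r}$, and $\mathfrak{C}_2'(s)=(2\widehat{c}_*^{-1/2}\|f\|_{L_\infty})^{1-s/2}\wt{\mathrm{C}}_2^{\,s/2}$). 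For \eqref{J_3epsD*_gen_intrpld} the paper likewise bounds $\|\mathbf{D}f^\eps\mathcal{A}_\eps^{-1/2}\sin(\tau\mathcal{A}_\eps^{1/2})(f^\eps)^*\|_{L_2\to L_2}\le \widehat{c}_*^{-1/2}\|f\|_{L_\infty}$ via the form lower bound and then ``passes to the adjoint operators,'' which is exactly your selfadjointness argument.
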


\begin{proof}
By  \eqref{f_0_estimates}, 
	\begin{equation}
	\label{15.84}
	\bigl\| {J}_{1,\eps}(\tau) \bigr\|_{L_2(\mathbb{R}^d) \to L_2(\mathbb{R}^d)} \le 2 \|f\|_{L_\infty} \|f^{-1}\|_{L_\infty},
	\quad \tau \in \R, \ \eps>0.
\end{equation}
Interpolating between \eqref{15.84} and \eqref{15.75}, we arrive at estimate  \eqref{15.82} with the constant  
${\mathfrak{C}}_1(s) = (2 \|f\|_{L_\infty} \|f^{-1}\|_{L_\infty})^{1-s/2} {\mathrm{C}}_1^{s/2}$.

By \eqref{14.32a} and \eqref{15.74} (with $s=0$), for  $\tau \in \R$ and $0< \eps \le 1$ we have  
	\begin{align}
	\label{15.85}
	\bigl\| {J}_{3,\eps}(\tau) 
	\bigr\|_{L_2(\mathbb{R}^d) \to L_2(\mathbb{R}^d)} \le {\mathrm{C}}'_2 \eps(1+\eps^{-1/2}|\tau|^{1/2})
	\le 2 {\mathrm{C}}'_2 \eps^{1/2}(1+|\tau|)^{1/2}.
	\end{align}
Interpolating between  \eqref{15.85} and \eqref{15.77}, we obtain estimate  \eqref{15.83} with the constant  
${\mathfrak{C}}_2(r) = (2 {\mathrm{C}}'_2)^{1-r} \wt{\mathrm{C}}_2^{r}$.

Next, using the analog of~\eqref{a_form_ineq} for $\mathcal{A}_\varepsilon$, we obtain   
$$
\bigl\| \mathbf{D} f^\varepsilon \mathcal{A}_\varepsilon^{-1/2} \sin(\tau \mathcal{A}_\varepsilon^{1/2}) (f^\varepsilon)^* \bigr\|_{L_2(\mathbb{R}^d) \to L_2(\mathbb{R}^d)} \le \widehat{c}_*^{-1/2} \| f\|_{L_\infty}.
$$
Applying a similar estimate  for the operator  $\D f_0(\mathcal{A}^0)^{-1/2} \sin(\tau (\mathcal{A}^0)^{1/2})f_0$ and 
 passing to the adjoint operators, we get 
\begin{equation}
\label{J_3epsD*_gen_L2_L2}
\bigl\| {J}_{3,\eps}(\tau) \mathbf{D}^*
\bigr\|_{L_2(\mathbb{R}^d) \to L_2(\mathbb{R}^d)} \le 2\widehat{c}_*^{-1/2} \| f \|_{L_\infty}.
\end{equation}
Interpolating between~(\ref{J_3epsD*_gen_L2_L2}) and the estimate 
$\| J_{3,\eps}(\tau) \mathbf{D}^* \|_{H^{2} \to L_2} 
\le \wt{\mathrm{C}}_2 (1+|\tau|) \varepsilon$ (which obviously follows from~\eqref{15.77}), we obtain~(\ref{J_3epsD*_gen_intrpld}) with the constant ${\mathfrak{C}}'_2(s) = (2 \widehat{c}_*^{-1/2} \|f\|_{L_\infty})^{1-s/2} \wt{\mathrm{C}}_2^{s/2}$.
\end{proof}

\begin{remark}
\label{rem15.28a}
Under the assumptions of Theorem \ref{th15.25}, it is possible to obtain the result for the operator 
${J}_{2,\eps}(\tau)$, interpolating between the obvious estimate 
$\| {J}_{2,\eps}(\tau) \|_{L_2 \to L_2} \le 2 |\tau| \|f\|_{L_\infty} \|f^{-1}\|_{L_\infty}$ and \eqref{15.76}. 
This yields 
$$
\| {J}_{2,\eps}(\tau) \|_{H^r(\R^d) \to L_2(\R^d)} \le \widetilde{\mathfrak C}_2(r) (1+ |\tau|) \eps^r, 
\quad 0\le r \le 1, \ \tau \in \R,\ \eps>0.
$$
It is impossible to obtain an analog of estimate \eqref{15.83} for ${J}_{2,\eps}(\tau)$. See Remark~\ref{rem12.1}.
\end{remark}

\begin{corollary}
	\label{cor15.29}
	Under the assumptions of Theorem \emph{\ref{th15.26}}, we have 
	\begin{align}
	\label{15.86}
	&\bigl\| {J}_{1,\eps}(\tau) 
	\bigr\|_{H^s(\mathbb{R}^d) \to L_2(\mathbb{R}^d)} \le {\mathfrak{C}}_3(s) (1+|\tau|)^{s/3} \varepsilon^{2 s/3},
	\quad 0\le s \le 3/2,\ \tau \in \R, \ \eps>0;
	\\
	\label{15.87}
	&\bigl\| {J}_{3,\eps}(\tau) 
	\bigr\|_{H^r(\mathbb{R}^d) \to L_2(\mathbb{R}^d)} \le {\mathfrak{C}}_4(r) (1+|\tau|)^{(r+1)/3}
	 \varepsilon^{2(r+1)/3},
	 \quad 0 \le r \le 1/2,\ \tau \in \R,\ 0< \eps \le 1;
	\\
	\label{J_3epsD*_enchcd_intrpltd}
	&\bigl\| {J}_{3,\eps}(\tau) \mathbf{D}^*
	\bigr\|_{H^s(\mathbb{R}^d) \to L_2(\mathbb{R}^d)} \le {\mathfrak{C}}'_4(s) (1+|\tau|)^{s/3} \varepsilon^{2s/3},
	\quad 0\le s \le 3/2,\ \tau \in \R, \ \eps>0.
	\end{align}
\end{corollary}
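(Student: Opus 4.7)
The plan is to mimic, line by line, the proof of Corollary \ref{cor15.28}, only substituting the baseline estimates by their enhanced counterparts that are available here thanks to Condition \ref{cond_BB} (or Condition \ref{sndw_cond1}). All three estimates follow by complex interpolation of a trivial (in $\tau$) uniform bound against the sharpened inequality provided by Theorem~\ref{th15.26}.

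For \eqref{15.86}, I would interpolate between the trivial bound \eqref{15.84}, i.e.\ $\|J_{1,\eps}(\tau)\|_{L_2\to L_2}\le 2\|f\|_{L_\infty}\|f^{-1}\|_{L_\infty}$, and the improved estimate \eqref{15.78} from Theorem~\ref{th15.26}, which gives $\|J_{1,\eps}(\tau)\|_{H^{3/2}\to L_2}\le \mathrm{C}_3 (1+|\tau|)^{1/2}\eps$. The interpolation parameter is chosen so that the target Sobolev exponent is $s\in[0,3/2]$; the constant ${\mathfrak C}_3(s)$ then has the form $(2\|f\|_{L_\infty}\|f^{-1}\|_{L_\infty})^{1-2s/3}\,\mathrm{C}_3^{2s/3}$.

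For \eqref{15.87}, the key preliminary step is to obtain a $(L_2\to L_2)$-bound for $J_{3,\eps}(\tau)$ of order $\eps^{2/3}(1+|\tau|)^{1/3}$. This comes from the enhanced Proposition-type estimate \eqref{14.34a} combined with the scaling identity \eqref{15.74} at $s=0$: for $0<\eps\le 1$, $\tau\in\mathbb R$,
\begin{equation*}
\|J_{3,\eps}(\tau)\|_{L_2(\mathbb R^d)\to L_2(\mathbb R^d)}\le \mathrm{C}_4'\eps(1+\eps^{-1/3}|\tau|^{1/3})\le 2\mathrm{C}_4'\eps^{2/3}(1+|\tau|)^{1/3}.
\end{equation*}
Interpolating this with \eqref{15.79} yields \eqref{15.87} with constant ${\mathfrak C}_4(r)=(2\mathrm{C}_4')^{1-2r}\mathrm{C}_4^{2r}$ for $0\le r\le 1/2$. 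For \eqref{J_3epsD*_enchcd_intrpltd}, the uniform $L_2\to L_2$ bound \eqref{J_3epsD*_gen_L2_L2}, which does not rely on any additional condition, is interpolated against the obvious consequence of \eqref{15.79}, namely $\|J_{3,\eps}(\tau)\mathbf{D}^*\|_{H^{3/2}\to L_2}\le \mathrm{C}_4(1+|\tau|)^{1/2}\eps$, producing ${\mathfrak C}_4'(s)=(2\widehat c_*^{-1/2}\|f\|_{L_\infty})^{1-2s/3}\mathrm{C}_4^{2s/3}$.

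There is no serious obstacle here: once the improved endpoint estimates of Theorem \ref{th15.26} and the enhanced $(L_2\to L_2)$-bound for $J_{3,\eps}(\tau)$ are in hand, the argument is pure complex interpolation and the only care required is bookkeeping of the exponents $2s/3$ and $2(r+1)/3$ (rather than $s/2$ and $(r+1)/2$ as in Corollary \ref{cor15.28}), which reflects the gain in the convergence rate under the hypotheses of Theorem \ref{th15.26}. Under Condition \ref{cond_BB} the constants depend only on the data listed in Theorem \ref{th15.26}; under Condition \ref{sndw_cond1} they also depend on $n$ and $c^\circ$, exactly as in that theorem.
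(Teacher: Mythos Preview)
Your proposal is correct and follows exactly the paper's own proof: interpolate \eqref{15.84} with \eqref{15.78} for \eqref{15.86}; derive the $(L_2\to L_2)$-bound \eqref{15.88} from \eqref{14.34a} and \eqref{15.74} (with $s=0$) and interpolate it with \eqref{15.79} for \eqref{15.87}; and interpolate \eqref{J_3epsD*_gen_L2_L2} with the obvious consequence of \eqref{15.79} for \eqref{J_3epsD*_enchcd_intrpltd}. Even the explicit constants you write down coincide with those in the paper.
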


\begin{proof}
Interpolating between \eqref{15.84} and \eqref{15.78}, we arrive at estimate \eqref{15.86} with the constant
${\mathfrak{C}}_3(s) = (2 \| f\|_{L_\infty} \| f^{-1}\|_{L_\infty})^{1-2s/3} {\mathrm{C}}_3^{2s/3}$.

By \eqref{14.34a} and \eqref{15.74} (with $s=0$), for $\tau \in \R$ and $0< \eps \le 1$ we have 
	\begin{align}
	\label{15.88}
	\bigl\| {J}_{3,\eps}(\tau) 
	\bigr\|_{L_2(\mathbb{R}^d) \to L_2(\mathbb{R}^d)} \le {\mathrm{C}}'_4 \eps(1+\eps^{-1/3}|\tau|^{1/3})
	\le 2 {\mathrm{C}}'_4 \eps^{2/3}(1+|\tau|)^{1/3}.
	\end{align}
Interpolating between \eqref{15.88} and \eqref{15.79}, we obtain estimate \eqref{15.87} with the constant 
${\mathfrak{C}}_4(r) = (2 {\mathrm{C}}'_4)^{1-2r} {\mathrm{C}}_4^{2r}$.

Finally, interpolating between~(\ref{J_3epsD*_gen_L2_L2})  and the estimate  
$$
\| J_{3,\eps}(\tau) \mathbf{D}^* \|_{H^{3/2} \to L_2} \le {\mathrm{C}}_4 (1+|\tau|)^{1/2} \varepsilon
$$ 
(which obviously follows from~(\ref{15.79})), we obtain~(\ref{J_3epsD*_enchcd_intrpltd}) with the constant ${\mathfrak{C}}'_4(s) = (2 \widehat{c}_*^{-1/2} \|f\|_{L_\infty})^{1-2s/3} {\mathrm{C}}_4^{2s/3}$.
\end{proof}

\begin{remark}
$1^\circ$.  Under the assumptions of Theorem \ref{th15.25}, for $\tau\! =\! O(\eps^{-\alpha})$, ${0\!< \!\alpha\! <\! 1}$, we obtain the qualified estimates 
$$
\begin{aligned}
\bigl\| {J}_{1,\eps}(\tau) 
	\bigr\|_{H^s(\mathbb{R}^d) \to L_2(\mathbb{R}^d)} &= O(\eps^{s(1-\alpha)/2}),& 0 &\le s \le 2;
	\\
\bigl\| {J}_{3,\eps}(\tau) 
	\bigr\|_{H^r(\mathbb{R}^d) \to L_2(\mathbb{R}^d)} &= O(\eps^{(r+1)(1-\alpha)/2}),& 0 &\le r \le 1;
	\\
	\bigl\| {J}_{3,\eps}(\tau) \mathbf{D}^* 
	\bigr\|_{H^s(\mathbb{R}^d) \to L_2(\mathbb{R}^d)} &= O(\eps^{s(1-\alpha)/2}),& 0 &\le s \le 2.	
\end{aligned}
$$
$2^\circ$. Under the assumptions of Theorem \ref{th15.26}, for $\tau = O(\eps^{-\alpha})$, $0< \alpha < 2$,  we obtain the qualified estimates
$$
\begin{aligned}
\bigl\| {J}_{1,\eps}(\tau) 
	\bigr\|_{H^s(\mathbb{R}^d) \to L_2(\mathbb{R}^d)} &= O(\eps^{s(2-\alpha)/3}),& 0 &\le s \le 3/2;
	\\
\bigl\| {J}_{3,\eps}(\tau) 
	\bigr\|_{H^r(\mathbb{R}^d) \to L_2(\mathbb{R}^d)} &= O(\eps^{(r+1)(2-\alpha)/3}),& 0 &\le r \le 1/2;
	\\
	\bigl\| {J}_{3,\eps}(\tau) \mathbf{D}^*
	\bigr\|_{H^s(\mathbb{R}^d) \to L_2(\mathbb{R}^d)} &= O(\eps^{s(2-\alpha)/3}),& 0 &\le s \le 3/2.
\end{aligned}
$$
\end{remark}

\subsection{Approximation for the sandwiched operator $\mathcal{A}_\varepsilon^{-1/2} \sin(\tau \mathcal{A}_\varepsilon^{1/2})$ in the energy norm\label{sec15.6}}

Denote 
\begin{equation}
\label{11.*7}
	{J}_\eps(\tau) :=  
	f^\varepsilon \mathcal{A}_\varepsilon^{-1/2} \sin ( \tau \mathcal{A}_\varepsilon^{1/2}) (f^\varepsilon)^{-1}
	- (I+\varepsilon \Lambda^\varepsilon b(\mathbf{D}) \Pi_\varepsilon) f_0 (\mathcal{A}^0)^{-1/2} \sin( \tau (\mathcal{A}^0)^{1/2}) f_0^{-1}.
\end{equation}
Applying relations of the form~(\ref{sin_and_scale_transform}) for the operators $\mathcal{A}_\varepsilon$, $\mathcal{A}^0$, and also~(\ref{H0_resolv_and_scale_transform}) and~(\ref{mult_op_and_scale_transform}), we obtain 
\begin{align*}
\begin{split}
\widehat{\mathcal{A}}_{\varepsilon}^{1/2} J_\eps(\tau)
(\mathcal{H}_0 + I)^{-s/2} = T_{\varepsilon}^* \widehat{\mathcal{A}}^{1/2} J(\varepsilon^{-1} \tau) \mathcal{R} (\varepsilon)^{s/2} T_{\varepsilon}, \quad \varepsilon>0.
\end{split}
\end{align*}
By analogy with the proof of Theorem~\ref{A_eps_sin_general_thrm},  using this identity, we deduce 
the following result from Theorem~\ref{sndw_A_sin_general_thrm}  (see~\cite[Theorems~9.5, 10.8]{M}).

\begin{theorem}[see~\cite{M}]
	\label{sndw_A_eps_sin_general_thrm}
Let $\mathcal{A}_{\varepsilon}$~be the operator~\emph{(\ref{A_eps})}, and let $\mathcal{A}^0$~be the operator~\emph{(\ref{A0})}. Suppose that the operator $J_\eps(\tau)$ is given by \eqref{11.*7}. Denote
\begin{align*}
I_\eps(\tau) :=  
g^\eps b(\D) f^\varepsilon \mathcal{A}_\varepsilon^{-1/2} \sin ( \tau \mathcal{A}_\varepsilon^{1/2}) (f^\varepsilon)^{-1} 
- \wt{g}^\eps b(\D) \Pi_\varepsilon f_0 (\mathcal{A}^0)^{-1/2} \sin( \tau (\mathcal{A}^0)^{1/2}) f_0^{-1}.
\end{align*}
Then for $\tau \in \mathbb{R}$ and $\varepsilon > 0$ we have 
\begin{align}
\label{15.92}
\bigl\| J_\eps(\tau) \bigr\|_{H^{2}(\mathbb{R}^d) \to H^1(\mathbb{R}^d)} 
\le \mathrm{C}_{7} (1+|\tau|) \varepsilon,
\\
\label{15.93}
\bigl\| I_\eps(\tau) \bigr\|_{H^{2}(\mathbb{R}^d) \to L_2(\mathbb{R}^d)} 
\le \mathrm{C}_{8} (1+|\tau|) \varepsilon.
\end{align}
	The constants  ${\mathrm{C}}_{7}$ and $\mathrm{C}_{8}$ depend on  $\alpha_0,\alpha_1,\|g\|_{L_\infty},\|g^{-1}\|_{L_\infty},\|f\|_{L_\infty},\|f^{-1}\|_{L_\infty},$ $r_0,$ and~$r_1$.
\end{theorem}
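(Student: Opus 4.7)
My plan is to reproduce the proof of Theorem~\ref{A_eps_sin_general_thrm} in the sandwiched setting, with the scalar input Theorem~\ref{A_sin_general_thrm} replaced by its sandwiched version Theorem~\ref{sndw_A_sin_general_thrm}. First, combining $\mathcal{A}_\varepsilon = \varepsilon^{-2} T_\varepsilon^* \mathcal{A} T_\varepsilon$ with \eqref{H0_resolv_and_scale_transform} and \eqref{mult_op_and_scale_transform}, I would verify the identity
$$
\widehat{\mathcal{A}}_\varepsilon^{1/2} J_\varepsilon(\tau) (\mathcal{H}_0 + I)^{-1} = T_\varepsilon^* \widehat{\mathcal{A}}^{1/2} J(\varepsilon^{-1}\tau)\, \mathcal{R}(\varepsilon)\, T_\varepsilon,
$$
which is the sandwiched analogue of \eqref{11.*0}. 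Since $T_\varepsilon$ is unitary, Theorem~\ref{sndw_A_sin_general_thrm} immediately yields $\|\widehat{\mathcal{A}}_\varepsilon^{1/2} J_\varepsilon(\tau)\|_{H^2 \to L_2} \le \mathrm{C}_5 (1+|\tau|)\varepsilon$.

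For the estimate \eqref{15.92}, the lower bound $\widehat{c}_* \|\mathbf{D}\mathbf{u}\|_{L_2}^2 \le \|\widehat{\mathcal{A}}_\varepsilon^{1/2}\mathbf{u}\|_{L_2}^2$ (analogue of \eqref{15.31a}) converts the above to $\|\mathbf{D} J_\varepsilon(\tau)\|_{H^2 \to L_2} \le C(1+|\tau|)\varepsilon$. It remains to control $\|J_\varepsilon(\tau)\|_{H^2 \to L_2}$: the principal part is dominated by $\mathrm{C}_2(1+|\tau|)\varepsilon$ directly from \eqref{15.76} of Theorem~\ref{th15.25}, while the corrector $\varepsilon \Lambda^\varepsilon b(\mathbf{D}) \Pi_\varepsilon f_0 (\mathcal{A}^0)^{-1/2}\sin(\tau (\mathcal{A}^0)^{1/2}) f_0^{-1}$ is bounded of order $O(\varepsilon)$ in the $L_2 \to L_2$ norm by Proposition~\ref{Pi_eps_prop_2}, estimate \eqref{Lambda_est}, and the elementary bound $\|b(\mathbf{D}) f_0 (\mathcal{A}^0)^{-1/2} f_0^{-1}\|_{L_2 \to L_2} \le \|g^{-1}\|_{L_\infty}^{1/2} \|f^{-1}\|_{L_\infty}$, which follows from \eqref{g^0_est} and \eqref{f_0_estimates}. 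Adding the two contributions produces \eqref{15.92}.

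For the flux estimate \eqref{15.93}, I would parallel the derivation of \eqref{A_eps_flux_general_est}. The pointwise inequality $\|g^\varepsilon b(\mathbf{D})\mathbf{u}\|_{L_2} \le \|g\|_{L_\infty}^{1/2}\|\widehat{\mathcal{A}}_\varepsilon^{1/2}\mathbf{u}\|_{L_2}$ converts the Step~1 bound into control of $\|g^\varepsilon b(\mathbf{D}) f^\varepsilon \mathcal{A}_\varepsilon^{-1/2}\sin(\tau\mathcal{A}_\varepsilon^{1/2})(f^\varepsilon)^{-1}\|_{H^2 \to L_2}$ from the $J_\varepsilon(\tau)$-side. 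On the effective side, using $\widetilde g = g(b(\mathbf{D})\Lambda + \mathbf{1}_m)$ from \eqref{g_tilde}, the expression $g^\varepsilon b(\mathbf{D})(I + \varepsilon \Lambda^\varepsilon b(\mathbf{D}) \Pi_\varepsilon) f_0 (\mathcal{A}^0)^{-1/2}\sin(\tau(\mathcal{A}^0)^{1/2})f_0^{-1}$ splits into three pieces: the principal $\widetilde{g}^\varepsilon b(\mathbf{D})\Pi_\varepsilon(\cdots)$ (which is precisely the effective term inside $I_\varepsilon(\tau)$); a remainder $g^\varepsilon b(\mathbf{D})(I-\Pi_\varepsilon)(\cdots)$ of order $O(\varepsilon)$ from $H^2$ to $L_2$ by Proposition~\ref{I-Pi_eps}; and a remainder $\varepsilon g^\varepsilon \sum_l b_l \Lambda^\varepsilon D_l b(\mathbf{D}) \Pi_\varepsilon(\cdots)$ of order $O(\varepsilon)$ by Proposition~\ref{Pi_eps_prop_2}, \eqref{Lambda_est}, and \eqref{5.7a}. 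Summing yields \eqref{15.93}.

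The conceptually interesting step is the abstract sandwiched input Theorem~\ref{sndw_A_sin_general_thrm}, which is already available; the remaining work is bookkeeping of the factors $f^\varepsilon, f_0, (f^\varepsilon)^{-1}, f_0^{-1}$ through the energy identity and the three-term flux decomposition. The main technical care is required at the point where the energy bound in Step~1 is translated into a bound on $g^\varepsilon b(\mathbf{D}) J_\varepsilon(\tau)$: one must ensure that sandwiching with $f^\varepsilon,(f^\varepsilon)^{-1}$ on the $\mathcal{A}_\varepsilon$-side and with $f_0, f_0^{-1}$ on the $\mathcal{A}^0$-side is consistent with the identity $\widehat{\mathcal{A}}_\varepsilon^{1/2} J_\varepsilon(\tau) = (g^\varepsilon)^{1/2} b(\mathbf{D}) J_\varepsilon(\tau)$ in the $L_2$-sense, which follows from the factorization $\mathcal{A}_\varepsilon = (f^\varepsilon)^* \widehat{\mathcal{A}}_\varepsilon f^\varepsilon$ and $\mathcal{A}^0 = f_0 \widehat{\mathcal{A}}^0 f_0$.
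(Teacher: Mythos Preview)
Your proposal is correct and follows essentially the same route as the paper: the paper states that the result is deduced from Theorem~\ref{sndw_A_sin_general_thrm} via the scaling identity, ``by analogy with the proof of Theorem~\ref{A_eps_sin_general_thrm}'', and your outline reproduces exactly that analogy (energy bound from the scaling identity plus the lower estimate \eqref{15.31a}, $L_2$-part from \eqref{15.76} and the corrector bound, and the flux estimate via the three-term decomposition \eqref{12.*02} with the extra $f_0,f_0^{-1}$ factors handled by \eqref{g^0_est}, \eqref{f_0_estimates}). One phrasing point: what you call ``the identity $\widehat{\mathcal{A}}_\varepsilon^{1/2} J_\varepsilon(\tau) = (g^\varepsilon)^{1/2} b(\mathbf{D}) J_\varepsilon(\tau)$'' is a norm equality $\|\widehat{\mathcal{A}}_\varepsilon^{1/2}\mathbf{u}\|_{L_2}=\|(g^\varepsilon)^{1/2}b(\mathbf{D})\mathbf{u}\|_{L_2}$ rather than an operator identity, but that is precisely what the argument needs.
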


With the help of interpolation, we deduce the following result from Theorem  \ref{sndw_A_eps_sin_general_thrm}.

\begin{corollary}
	\label{cor15.34}
	Under the assumptions of Theorem~\emph{\ref{sndw_A_eps_sin_general_thrm}}, for 
	$0 \le s \le 2,$ $\tau \in \R,$ and $\eps >0$ we have 
	\begin{align}
	\label{15.94}
	\| \D {J}_\eps(\tau)
	\|_{H^{s}(\mathbb{R}^d) \to L_2(\mathbb{R}^d)} & \le {\mathfrak{C}}_5(s) (1 + |\tau|)^{s/2} \varepsilon^{s/2},
	\\
	\label{15.95}
	\| {I}_\eps(\tau)
	\|_{H^{s}(\mathbb{R}^d) \to L_2(\mathbb{R}^d)} & \le {\mathfrak{C}}_6(s) (1 + |\tau|)^{s/2} \varepsilon^{s/2}.
	\end{align}
\end{corollary}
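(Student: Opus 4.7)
The strategy mirrors the proof of Corollary \ref{A_eps_sin_general_interpltd_thrm}: obtain uniform $L_2 \to L_2$ bounds for the operators $\D J_\eps(\tau)$ and $I_\eps(\tau)$, then interpolate these with the $H^2 \to L_2$ bounds \eqref{15.92}, \eqref{15.93} from Theorem \ref{sndw_A_eps_sin_general_thrm}. The complex-interpolation result then yields \eqref{15.94} and \eqref{15.95} with constants
$\mathfrak{C}_5(s) = K_1^{1-s/2} \mathrm{C}_7^{s/2}$, $\mathfrak{C}_6(s) = K_2^{1-s/2} \mathrm{C}_8^{s/2}$, where $K_1, K_2$ are the uniform $L_2\to L_2$ constants to be established.

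For the $L_2\to L_2$ bound on $\D J_\eps(\tau)$, I would split $J_\eps(\tau)$ according to \eqref{11.*7} and treat the principal part and the corrector separately. For the principal part, write $\v=\mathcal{A}_\eps^{-1/2}\sin(\tau\mathcal{A}_\eps^{1/2})(f^\eps)^{-1}\u$; the analog of~\eqref{a_form_ineq} for $\mathcal{A}_\eps$ gives $\alpha_0\|g^{-1}\|_{L_\infty}^{-1}\|\D(f^\eps\v)\|_{L_2}^2 \le \|\mathcal{A}_\eps^{1/2}\v\|_{L_2}^2 \le \|f^{-1}\|_{L_\infty}^2\|\u\|_{L_2}^2$, so $\|\D f^\eps\mathcal{A}_\eps^{-1/2}\sin(\tau\mathcal{A}_\eps^{1/2})(f^\eps)^{-1}\|_{L_2\to L_2}$ is uniformly bounded. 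The analogous estimate with $f^\eps$ replaced by $f_0$ and $\mathcal{A}_\eps$ by $\mathcal{A}^0$ is immediate. For the corrector $\eps\Lambda^\eps b(\D)\Pi_\eps f_0(\mathcal{A}^0)^{-1/2}\sin(\tau(\mathcal{A}^0)^{1/2})f_0^{-1}$, I would copy the computation \eqref{15.46}--\eqref{15.53} verbatim: apply Leibniz in $\D$ (two terms, one with $(\D\Lambda)^\eps$, one with an extra $\D\Pi_\eps$), then use Proposition \ref{Pi_eps_prop_2} together with \eqref{Lambda_est}, \eqref{D_Lambda_est} to bound $\|\Lambda^\eps\Pi_\eps^{(m)}\|$ and $\|(\D\Lambda)^\eps\Pi_\eps^{(m)}\|$, combined with $\|\D\Pi_\eps\|\le\eps^{-1}r_1$ from \eqref{15.51} and $\|b(\D)f_0(\mathcal{A}^0)^{-1/2}\|\le\|g^{-1}\|_{L_\infty}^{1/2}$. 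This yields the required uniform constant $K_1$.

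For the $L_2\to L_2$ bound on $I_\eps(\tau)$, the argument is shorter and parallels \eqref{15.55}--\eqref{15.57}: trivially $\|g^\eps b(\D)f^\eps\mathcal{A}_\eps^{-1/2}\sin(\tau\mathcal{A}_\eps^{1/2})(f^\eps)^{-1}\|_{L_2\to L_2}\le\|g\|_{L_\infty}^{1/2}\|f^{-1}\|_{L_\infty}$, while $\|\wt g^\eps\Pi_\eps^{(m)}\|\le 2\|g\|_{L_\infty}$ by \eqref{15.55a}, giving the required bound on the second term via $\|b(\D)f_0(\mathcal{A}^0)^{-1/2}\|\le\|g^{-1}\|_{L_\infty}^{1/2}$. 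This furnishes $K_2$.

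The main technical point is the handling of the derivative hitting $\Pi_\eps$ in the corrector term for $\D J_\eps(\tau)$: the factor $\eps^{-1}r_1$ from \eqref{15.51} must be cancelled by the explicit $\eps$ in front of $\Lambda^\eps$, so one needs to distribute $\D$ carefully (via Leibniz) so that at most one derivative falls on $\Pi_\eps$ at a time. Apart from that, everything reduces to the same inequalities used in Subsection~\ref{sec15.4}, applied with the sandwiching factors $f^\eps$, $(f^\eps)^{-1}$, $f_0$, $f_0^{-1}$, whose $L_\infty$ norms simply enlarge the constants. Once $K_1$ and $K_2$ are in hand, complex interpolation between the $L_2\to L_2$ bounds and \eqref{15.92}, \eqref{15.93} gives \eqref{15.94} and \eqref{15.95}.
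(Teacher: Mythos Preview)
Your proposal is correct and follows essentially the same route as the paper: establish uniform $L_2\to L_2$ bounds for $\D J_\eps(\tau)$ and $I_\eps(\tau)$ by repeating the computations \eqref{15.45}--\eqref{15.53} and \eqref{15.55}--\eqref{15.57} with the sandwiching factors $f^\eps,(f^\eps)^{-1},f_0,f_0^{-1}$ inserted, then interpolate with \eqref{15.92}, \eqref{15.93}. The paper records the resulting constants as $\mathrm{C}_7'=2c_*^{-1/2}+(M_1r_1+M_2)\|g^{-1}\|_{L_\infty}^{1/2}\|f^{-1}\|_{L_\infty}$ and $\mathrm{C}_8'=(\|g\|_{L_\infty}^{1/2}+2\|g\|_{L_\infty}\|g^{-1}\|_{L_\infty}^{1/2})\|f^{-1}\|_{L_\infty}$, which is exactly what your argument produces.
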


\begin{proof}
By \eqref{15.92}, 
\begin{equation}
\label{15.96}
\bigl\| \D J_\eps(\tau) \bigr\|_{H^{2}(\mathbb{R}^d) \to L_2(\mathbb{R}^d)} 
\le \mathrm{C}_{7} (1+|\tau|) \varepsilon, \quad \tau \in \R,\ \eps>0.
\end{equation}

By analogy with \eqref{15.45}--\eqref{15.53}, it is easy to check that  
\begin{equation}
\label{15.97}
\bigl\| \D J_\eps(\tau) \bigr\|_{L_2(\mathbb{R}^d) \to L_2(\mathbb{R}^d)} 
\le \mathrm{C}_{7}', \quad \tau \in \R,\ \eps>0,
\end{equation}
where $\mathrm{C}_{7}'\!\! = \! 2 c_*^{-\!1/2}\!\!+\! (M_1 r_1\! +\! M_2)  \| g^{-\!1}\|^{1/2}_{L_\infty} \| f^{-\!1}\|_{L_\infty}$\!. Interpolating between \eqref{15.97} and~\eqref{15.96}, we arrive at estimate \eqref{15.94} with the constant
${{\mathfrak C}_5(s)\!=\! ({\mathrm C}'_{7})^{1\!-\!s/2} {\mathrm C}_{7}^{s/2}}$.

\smallskip

Let us check \eqref{15.95}. Similarly to \eqref{15.55}--\eqref{15.57}, it is easily seen that 
\begin{equation}
\label{15.98}
\bigl\| I_\eps(\tau) \bigr\|_{L_2(\mathbb{R}^d) \to L_2(\mathbb{R}^d)} 
\le \mathrm{C}_{8}', \quad \tau \in \R,\ \eps>0,
\end{equation}
where $\mathrm{C}_{8}'\! = \! ( \| g\|_{L_\infty}^{1/2}\!\! +\! 2 \| g\|_{L_\infty} \| g^{-\!1}\|^{1/2}_{L_\infty}) 
\| f^{-1}\|_{L_\infty}$. Interpolating between  \eqref{15.98} and~\eqref{15.93}, we arrive at estimate  \eqref{15.95} 
with the constant  
${\mathfrak C}_6(s)\!= \!({\mathrm C}'_{8})^{1\!-\!s/2} {\mathrm C}_{8}^{s/2}$\!\!.
\end{proof}

\begin{remark}
	\label{rem15.36a}
	Taking \eqref{Lambda_eps_Pi_eps_est} into account, we have 
$$
	\|  {J}_\eps(\tau)\|_{L_2(\mathbb{R}^d) \to L_2(\mathbb{R}^d)} \le 
	2 |\tau| \|f\|_{L_\infty} \| f^{-1}\|_{L_\infty} + \eps M_1 \|g^{-1}\|^{1/2}_{L_\infty} \| f^{-1}\|_{L_\infty}.
$$
	Together with  \eqref{15.97}, this implies  
	\begin{equation}
	\label{15.98a}
	\|  {J}_\eps(\tau)\|_{L_2(\mathbb{R}^d) \to H^1(\mathbb{R}^d)} \le {\mathrm{C}}_{7}''(1+|\tau|), \quad \tau \in \R, \ 0< \eps \le 1.
	\end{equation}
	Interpolating between  \eqref{15.98a} and \eqref{15.92}, for $\tau \in \R$ and  $0< \eps \le 1$ we obtain 
	\begin{equation*}
	\|  {J}_\eps(\tau)\|_{H^s(\mathbb{R}^d) \to H^1(\mathbb{R}^d)} \le
	{\mathfrak{C}}_5'(s) (1 + |\tau|) \varepsilon^{s/2},	\quad 0 \le s \le 2.
	\end{equation*}
It is impossible to obtain estimate for  $\|  {J}_\eps(\tau)\|_{H^s \to H^1}$ of the same order as in \eqref{15.94},
because there is no analog of inequality \eqref{15.19} for the operator  
$J_{2,\eps}(\tau)$; cf. Remark \ref{rem15.11a}.
\end{remark}

By analogy with the proof of Theorem~\ref{A_eps_sin_general_thrm}, we deduce the following statement from Theorem \ref{sndw_A_sin_enchncd_thrm_1}.

\begin{theorem}
	\label{th15.35}
Suppose that the assumptions of Theorem~\emph{\ref{sndw_A_eps_sin_general_thrm}} are satisfied.
Suppose that Condition \emph{\ref{cond_BB}} or Condition~\emph{\ref{sndw_cond1}} \emph{(}or more restrictive Condition~\emph{\ref{sndw_cond2}}\emph{)} is satisfied. Then for  $\tau \in \mathbb{R}$ and
 $\varepsilon > 0$ we have 
\begin{align*}
\bigl\| J_\eps(\tau) \bigr\|_{H^{3/2}(\mathbb{R}^d) \to H^1(\mathbb{R}^d)} 
&\le \mathrm{C}_{9} (1+|\tau|)^{1/2} \varepsilon,
\\
\bigl\| I_\eps(\tau) \bigr\|_{H^{3/2}(\mathbb{R}^d) \to L_2(\mathbb{R}^d)} 
&\le \mathrm{C}_{10} (1+|\tau|)^{1/2} \varepsilon.
\end{align*}
	Under Condition \emph{\ref{cond_BB}}, the constants  $\mathrm{C}_{9}$ and $\mathrm{C}_{10}$ depend only on  $\alpha_0,$ $\alpha_1,$ $\|g\|_{L_\infty},$ $\|g^{-1}\|_{L_\infty},$ $\|f\|_{L_\infty},$ $\|f^{-1}\|_{L_\infty},$ $r_0,$ and $r_1$.
 Under Condition~\emph{\ref{sndw_cond1}}, these constants depend on the same parameters and on $n,$ $c^\circ$.
\end{theorem}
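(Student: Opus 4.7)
The plan is to mimic the proof of Theorem~\ref{A_eps_sin_general_thrm}, substituting Theorem~\ref{sndw_A_sin_enchncd_thrm_1} for Theorem~\ref{sndw_A_sin_general_thrm} as the abstract input. Applying the scaling relations~\eqref{sin_and_scale_transform}, \eqref{H0_resolv_and_scale_transform}, and \eqref{mult_op_and_scale_transform} to $\mathcal A_\eps$ and $\mathcal A^0$ yields the operator identity
\[
\wh{\mathcal A}_\eps^{1/2} J_\eps(\tau)(\mathcal H_0 + I)^{-3/4} = T_\eps^* \wh{\mathcal A}^{1/2} J(\eps^{-1}\tau)\mathcal R(\eps)^{3/4} T_\eps,
\]
so that Theorem~\ref{sndw_A_sin_enchncd_thrm_1} gives $\|\wh{\mathcal A}_\eps^{1/2} J_\eps(\tau)\|_{H^{3/2}\to L_2} \le \mathrm C_6(1+|\tau|)^{1/2}\eps$. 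The coercivity $\wh c_*\|\D\u\|_{L_2}^2 \le \|\wh{\mathcal A}_\eps^{1/2}\u\|_{L_2}^2$ (analog of~\eqref{15.31a}) then extracts the gradient contribution $\|\D J_\eps(\tau)\|_{H^{3/2}\to L_2} \le \wh c_*^{-1/2}\mathrm C_6(1+|\tau|)^{1/2}\eps$.

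For the $L_2$ part of the $H^1$ norm, decompose $J_\eps(\tau) = J_{2,\eps}(\tau) - \eps\Lambda^\eps b(\D)\Pi_\eps f_0(\mathcal A^0)^{-1/2}\sin(\tau(\mathcal A^0)^{1/2})f_0^{-1}$; the corrector term is $O(\eps)$ in $L_2\to L_2$ via~\eqref{Lambda_eps_Pi_eps_est} and~\eqref{g^0_est}. The required estimate $\|J_{2,\eps}(\tau)\|_{H^{3/2}\to L_2} \le C(1+|\tau|)^{1/2}\eps$ is the main obstacle, since Remark~\ref{rem12.1} blocks the direct analog of~\eqref{15.79} for $J_{2,\eps}$. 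The way around is the algebraic identity
\[
J_{2,\eps}(\tau) = J_{3,\eps}(\tau)(f^\eps f^{\eps,*})^{-1} + f_0(\mathcal A^0)^{-1/2}\sin(\tau(\mathcal A^0)^{1/2})\bigl[f_0(f^\eps f^{\eps,*})^{-1} - f_0^{-1}\bigr];
\]
the first term inherits the improved bound from~\eqref{15.79} using the boundedness of $(ff^*)^{-1}$, while the residual periodic multiplier has zero mean (since $\overline{(ff^*)^{-1}} = \overline Q = f_0^{-2}$), and the $H^{3/2}$-regularity of $u$ combined with the smoothing of $(\mathcal A^0)^{-1/2}\sin(\tau(\mathcal A^0)^{1/2})$ on its high-frequency image yields the required $O((1+|\tau|)^{1/2}\eps)$ bound by a standard oscillating-factor argument.

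For the flux estimate, apply $g^\eps b(\D)$ to the scaling identity; since $\wh{\mathcal A}_\eps = b(\D)^* g^\eps b(\D)$, this gives $\|g^\eps b(\D) J_\eps(\tau)\|_{H^{3/2}\to L_2} \le \|g\|_{L_\infty}^{1/2}\mathrm C_6(1+|\tau|)^{1/2}\eps$. Expanding the corrector as in~\eqref{12.*02},
\[
g^\eps b(\D)(I + \eps\Lambda^\eps b(\D)\Pi_\eps) = \wt g^\eps b(\D)\Pi_\eps + g^\eps b(\D)(I-\Pi_\eps) + \eps g^\eps\textstyle\sum_l b_l\Lambda^\eps D_l b(\D)\Pi_\eps,
\]
together with Proposition~\ref{I-Pi_eps}, \eqref{Lambda_eps_Pi_eps_est}, and \eqref{15.48} controls the last two terms by $O(\eps)$, giving the bound on $\|I_\eps(\tau)\|_{H^{3/2}\to L_2}$. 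The hardest step is the $L_2$-bound on $J_{2,\eps}$ addressed in the second paragraph, which requires the algebraic reduction to $J_{3,\eps}$ plus the zero-mean oscillating-factor analysis; once this is in hand, the rest of the argument is a direct transcription of the template used in the proofs of Theorems~\ref{A_eps_sin_general_thrm} and~\ref{A_eps_sin_enchcd_thrm_1}.
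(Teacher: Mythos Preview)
Your treatment of the gradient bound and of the flux estimate $I_\eps(\tau)$ follows the paper's template exactly: via the scaling identity you pull Theorem~\ref{sndw_A_sin_enchncd_thrm_1} back to control $\widehat{\mathcal A}_\eps^{1/2}J_\eps(\tau)$, then use coercivity~\eqref{15.31a} for $\|\D J_\eps(\tau)\|$, and expand the corrector as in~\eqref{12.*02}--\eqref{12.*04} for $I_\eps$. This part is correct and matches what the paper (tersely) intends.

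You are also right that the $L_2$ part of the $H^1$-norm is where the difficulty lies, and that Remark~\ref{rem12.1} blocks a direct improved bound for $J_{2,\eps}$. The paper's own proof is just the one-line ``by analogy with Theorem~\ref{A_eps_sin_general_thrm}'' and does not isolate this step; following that template literally one uses~\eqref{15.76} for the $L_2$ part, which only yields the factor $(1+|\tau|)\eps$ rather than $(1+|\tau|)^{1/2}\eps$. So the paper does not supply the argument you are trying to construct.

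However, your proposed fix has a genuine gap. In the identity
\[
J_{2,\eps}(\tau) = J_{3,\eps}(\tau)\,[Q^\eps] + f_0(\mathcal A^0)^{-1/2}\sin(\tau(\mathcal A^0)^{1/2})\,f_0\bigl[Q^\eps-\overline Q\bigr],
\]
the first term cannot be controlled via~\eqref{15.79}: that estimate is an $(H^{1/2}\!\to L_2)$-bound, but multiplication by the rapidly oscillating $Q^\eps$ is \emph{not} bounded from $H^{3/2}$ to $H^{1/2}$ uniformly in~$\eps$ (only $Q\in L_\infty$ is assumed, so differentiating $Q^\eps$ costs a factor~$\eps^{-1}$). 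Using instead the available $(L_2\!\to L_2)$-bound~\eqref{15.88} for $J_{3,\eps}$ gives only $\eps^{2/3}(1+|\tau|)^{1/3}$, which is weaker than $\eps(1+|\tau|)^{1/2}$ as $\eps\to 0$. For the second term, the zero-mean trick may buy one factor of~$\eps$, but the operator $f_0(\mathcal A^0)^{-1/2}\sin(\tau(\mathcal A^0)^{1/2})f_0$ has $(L_2\!\to L_2)$-norm of order~$|\tau|$ on low frequencies, so it is unclear how the ``standard oscillating-factor argument'' produces growth only like~$(1+|\tau|)^{1/2}$; this step needs a concrete mechanism, not an appeal to smoothing.
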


By interpolation, we deduce the following corollary from Theorem \ref{th15.35} and estimates \eqref{15.97}, \eqref{15.98}.

\begin{corollary}
	\label{cor15.37}
	Under the assumptions of Theorem~\emph{\ref{th15.35}}, we have
	\begin{align*}
	\| \D {J}_\eps(\tau)
	\|_{H^{s}(\mathbb{R}^d) \to L_2(\mathbb{R}^d)} & \le {\mathfrak{C}}_7(s) (1 + |\tau|)^{s/3} \varepsilon^{2s/3},
	\quad 0 \le s \le 3/2,\ \tau \in \R,\ \eps >0,
	\\
	\| {I}_\eps(\tau)
	\|_{H^{s}(\mathbb{R}^d) \to L_2(\mathbb{R}^d)} & \le {\mathfrak{C}}_{8}(s) (1 + |\tau|)^{s/3} \varepsilon^{2s/3},
	\quad 0 \le s \le 3/2,\ \tau \in \R,\ \eps >0.
	\end{align*}
	\end{corollary}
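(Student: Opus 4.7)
The plan is to proceed by real interpolation, mimicking the proof of Corollary~\ref{cor15.34} but now using the improved $H^{3/2}\to H^1$ estimate from Theorem~\ref{th15.35} as the \emph{upper endpoint} in place of the $H^2\to H^1$ estimate from Theorem~\ref{sndw_A_eps_sin_general_thrm}. Since the improvement under Condition~\ref{cond_BB} or Condition~\ref{sndw_cond1} concerns only the smoothing exponent and the power of $(1+|\tau|)$, and since the $L_2\to L_2$ bounds used in Corollary~\ref{cor15.34} (namely \eqref{15.97} and \eqref{15.98}) are purely general (they follow from form estimates and Propositions~\ref{Pi_eps_prop_2}, \ref{I-Pi_eps} without any additional assumption), those same $L_2\to L_2$ bounds remain available here.

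First I would pass from Theorem~\ref{th15.35} to an $H^{3/2}\to L_2$ bound on $\D J_\eps(\tau)$ by simply discarding the $L_2$-part of the $H^1$-norm, yielding
\begin{equation*}
\| \D J_\eps(\tau)\|_{H^{3/2}(\R^d)\to L_2(\R^d)} \le \mathrm{C}_9 (1+|\tau|)^{1/2}\eps.
\end{equation*}
The $L_2\to L_2$ estimate $\| \D J_\eps(\tau)\|_{L_2\to L_2}\le \mathrm{C}_7'$ from \eqref{15.97} is unchanged. Interpolating between these two endpoints with parameter $\theta=2s/3\in[0,1]$ produces the first estimate with constant $\mathfrak{C}_7(s)=(\mathrm{C}_7')^{1-2s/3}\mathrm{C}_9^{2s/3}$, the powers $(1+|\tau|)^{s/3}$ and $\eps^{2s/3}$ emerging automatically from $\theta/2$ applied to the factor $(1+|\tau|)^{1/2}\eps$.

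Second, for $I_\eps(\tau)$ I would use the $H^{3/2}\to L_2$ bound from Theorem~\ref{th15.35},
\begin{equation*}
\| I_\eps(\tau)\|_{H^{3/2}(\R^d)\to L_2(\R^d)} \le \mathrm{C}_{10}(1+|\tau|)^{1/2}\eps,
\end{equation*}
as the upper endpoint and the $L_2\to L_2$ estimate $\| I_\eps(\tau)\|_{L_2\to L_2}\le \mathrm{C}_8'$ from \eqref{15.98} as the lower endpoint. Real interpolation with $\theta=2s/3$ yields the second estimate with constant $\mathfrak{C}_8(s)=(\mathrm{C}_8')^{1-2s/3}\mathrm{C}_{10}^{2s/3}$.

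There is essentially no obstacle: the only point to be careful about is to verify that \eqref{15.97} and \eqref{15.98} (which were established in the course of proving Corollary~\ref{cor15.34}) are derived without invoking Condition~\ref{cond_BB} or Condition~\ref{sndw_cond1}, so that they are legitimate lower endpoints in the present interpolation. Inspection of their derivations --- which rest only on boundedness of $f,f^{-1},g,g^{-1}$, the form inequality \eqref{a_form_ineq} for $\mathcal{A}_\eps$, Proposition~\ref{Pi_eps_prop_2}, and the norm bounds \eqref{Lambda_est}, \eqref{D_Lambda_est} --- confirms this. The constants $\mathrm{C}_7'$ and $\mathrm{C}_8'$ then transfer verbatim, and the interpolation is standard.
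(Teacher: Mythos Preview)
Your proposal is correct and follows exactly the paper's approach: the paper states that Corollary~\ref{cor15.37} is deduced ``by interpolation'' from Theorem~\ref{th15.35} and estimates \eqref{15.97}, \eqref{15.98}, which is precisely the interpolation you carry out with $\theta=2s/3$. Your remark that \eqref{15.97} and \eqref{15.98} were proved without any of the extra conditions (and hence remain valid lower endpoints here) is the only nontrivial point to check, and you address it correctly.
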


\begin{remark}
1) Under the assumptions of Theorem \ref{sndw_A_eps_sin_general_thrm}, for 
 $\tau \!= \!O(\eps^{-\alpha})$, ${0\!<\! \alpha\! <\! 1}$, we obtain the qualified estimates
 {\allowdisplaybreaks
\begin{align*}
\bigl\| \D {J}_{\eps}(\tau) 
	\bigr\|_{H^s(\mathbb{R}^d) \to L_2(\mathbb{R}^d)} &= O(\eps^{s(1-\alpha)/2}),\quad 0 \le s \le 2;
	\\
\bigl\| {I}_{\eps}(\tau) 
	\bigr\|_{H^s(\mathbb{R}^d) \to L_2(\mathbb{R}^d)} &= O(\eps^{s(1-\alpha)/2}),\quad 0 \le s \le 2.	
\end{align*}
}
2) Under the assumptions of Theorem \ref{th15.35}, for $\tau = O(\eps^{-\alpha})$,
$0< \alpha < 2$, we obtain the qualified estimates
$$
\begin{aligned}
\bigl\| \D {J}_{\eps}(\tau) 
	\bigr\|_{H^s(\mathbb{R}^d) \to L_2(\mathbb{R}^d)} &= O(\eps^{s(2-\alpha)/3}),\quad 0 \le s \le 3/2;
	\\
\bigl\| {I}_{\eps}(\tau) 
	\bigr\|_{H^s(\mathbb{R}^d) \to L_2(\mathbb{R}^d)} &= O(\eps^{s(2-\alpha)/3}),\quad 0 \le s \le 3/2.	
\end{aligned}
$$
\end{remark}

\subsection{Sharpness of the results of Subsections \ref{sec15.5} and \ref{sec15.6}}

 Applying theorems from Subsection \ref{sec14.6}, we confirm that the results of Subsections
 \ref{sec15.5} and \ref{sec15.6} are sharp. First, we discuss the sharpness of the results  regarding  the type of the operator norm. The following statement confirming the sharpness of Theorems
 \ref{th15.25} and \ref{sndw_A_eps_sin_general_thrm} is deduced from Theorem  \ref{th14.21} by the scaling transformation.
 
\begin{theorem}
	\label{th15.40}
 Suppose that Condition~\emph{\ref{cond13.1}} is satisfied. 
	
	\noindent $1^\circ$. Let $0 \ne \tau \in \mathbb{R}$ and $0 \le s < 2$. Then there does not exist a constant 
	$\mathcal{C}(\tau) > 0$ such that the estimate 
	\begin{equation}
	\label{15.108}
	\left\| {J}_{1,\eps}(\tau)
	\right\|_{H^s(\mathbb{R}^d) \to L_2(\mathbb{R}^d)} \le \mathcal{C}(\tau) \varepsilon
	\end{equation}
	holds for all sufficiently small $\varepsilon > 0$.

	\noindent $2^\circ$. Let $0 \ne \tau \in \mathbb{R}$ and $0 \le r < 1$. 
	Then there does not exist a constant $\mathcal{C}(\tau) > 0$ such that the estimate
	\begin{equation}
	\label{15.109}
	\left\| {J}_{2,\eps}(\tau)
	\right\|_{H^r(\mathbb{R}^d) \to L_2(\mathbb{R}^d)} \le \mathcal{C}(\tau) \varepsilon
	\end{equation}
	holds for all sufficiently small $\varepsilon > 0$.

	\noindent $3^\circ$. Let $0 \ne \tau \in \mathbb{R}$ and  $0 \le r < 1$. Then there does not exist a constant  $\mathcal{C}(\tau) > 0$ such that the estimate
	\begin{equation}
	\label{15.110}
	\left\| {J}_{3,\eps}(\tau)
	\right\|_{H^r(\mathbb{R}^d) \to L_2(\mathbb{R}^d)} \le \mathcal{C}(\tau) \varepsilon
	\end{equation}
holds for all sufficiently small $\varepsilon > 0$.

	\noindent $4^\circ$. Let $0 \ne \tau \in \mathbb{R}$ and $0 \le s < 2$. Then there does not exist a constant  $\mathcal{C}(\tau) > 0$ such that the estimate 
\begin{equation}
	\label{15.111}
	\left\| {J}_{\eps}(\tau)
	\right\|_{H^s(\mathbb{R}^d) \to H^1(\mathbb{R}^d)} \le \mathcal{C}(\tau) \varepsilon
	\end{equation}
	holds for all sufficiently small $\varepsilon > 0$.
	\end{theorem}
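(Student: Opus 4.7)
The plan is to prove all four statements by contradiction, reducing each to the corresponding sharpness assertion of Theorem \ref{th14.21} via the scaling transformation. For statement $1^\circ$, I would suppose that \eqref{15.108} holds for some $\tau \ne 0$ and some $0 \le s < 2$. Writing the $H^s \to L_2$-norm on the left as $\|J_{1,\eps}(\tau)(\mathcal{H}_0 + I)^{-s/2}\|_{L_2 \to L_2}$ and invoking \eqref{15.73} together with the unitarity of $T_\eps$, I obtain
\begin{equation*}
\| J_1(\eps^{-1}\tau)\, \mathcal{R}(\eps)^{s/2} \|_{L_2(\R^d) \to L_2(\R^d)} \le \mathcal{C}(\tau)\,\eps
\end{equation*}
for all sufficiently small $\eps > 0$, which is precisely \eqref{14.39}. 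This contradicts statement $1^\circ$ of Theorem \ref{th14.21}, since Condition \ref{cond13.1} is assumed.

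For statements $2^\circ$ and $3^\circ$, the reduction is identical except that \eqref{15.74} produces an extra factor of $\eps$ on the right-hand side. If, say, \eqref{15.109} were to hold, then $\eps\,\|J_2(\eps^{-1}\tau)\mathcal{R}(\eps)^{r/2}\|_{L_2\to L_2} \le \mathcal{C}(\tau)\eps$, so the ``free'' $\eps$ factor cancels and we get the inequality \eqref{14.40}, contradicting statement $2^\circ$ of Theorem \ref{th14.21}. Statement $3^\circ$ is obtained in exactly the same way using \eqref{15.74} with $l=3$ and statement $3^\circ$ of Theorem \ref{th14.21}.

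Statement $4^\circ$ requires a brief additional step to pass from the $H^1$-norm to the $\wh{\A}_\eps^{1/2}$-norm. Assuming \eqref{15.111}, the $H^1$-norm on the left dominates $\|\D J_\eps(\tau)(\mathcal{H}_0+I)^{-s/2}\|_{L_2 \to L_2} \le \mathcal{C}(\tau)\eps$. The elementary ellipticity bound $\|\wh{\A}_\eps^{1/2}\u\|_{L_2} \le (\alpha_1\|g\|_{L_\infty})^{1/2}\|\D\u\|_{L_2}$ for $\u \in H^1(\R^d;\AC^n)$, which follows from \eqref{Ahat_eps} and \eqref{rank_alpha_ineq}, then yields
\begin{equation*}
\| \wh{\A}_\eps^{1/2}\, J_\eps(\tau)\,(\mathcal{H}_0+I)^{-s/2} \|_{L_2 \to L_2} \le \wt{\mathcal{C}}(\tau)\,\eps.
\end{equation*}
The scaling identity for $\wh{\A}_\eps^{1/2} J_\eps(\tau)(\mathcal{H}_0+I)^{-s/2}$ stated at the start of Subsection \ref{sec15.6} now converts this into $\|\wh{\A}^{1/2} J(\eps^{-1}\tau)\mathcal{R}(\eps)^{s/2}\|_{L_2\to L_2} \le \wt{\mathcal{C}}(\tau)\eps$, that is, \eqref{s<2_sinA_est_1}, in contradiction with statement $4^\circ$ of Theorem \ref{th14.21}.

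The whole argument is mechanical once the scaling identities \eqref{15.73}, \eqref{15.74}, and the one for $J_\eps(\tau)$ from Subsection \ref{sec15.6} are applied systematically; the only minor subtlety lies in statement $4^\circ$, where the $H^1 \to \wh{\A}_\eps^{1/2}$ reduction must be carried out before the scaling. This mirrors the reduction used in Theorem \ref{th15.17} for the corresponding operators associated with $\wh{\A}_\eps$.
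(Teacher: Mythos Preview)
Your proof is correct and follows exactly the approach the paper indicates: the paper simply states that Theorem~\ref{th15.40} ``is deduced from Theorem~\ref{th14.21} by the scaling transformation,'' and your argument carries out precisely this reduction, including the extra step in statement~$4^\circ$ of passing from the $H^1$-norm to the $\wh{\mathcal A}_\eps^{1/2}$-norm before applying the scaling identity from Subsection~\ref{sec15.6}, which mirrors the proof of Theorem~\ref{th15.17}.
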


Next, Theorem \ref{th14.22} confirms that Theorems \ref{th15.26} and \ref{th15.35} are sharp.

\begin{theorem}
	\label{th15.41}
	Suppose that Condition~\emph{\ref{cond13.2}} is satisfied. 
	
	\noindent $1^\circ$. Let $0 \ne \tau \in \mathbb{R}$ and $0 \le s < 3/2$. Then there does not exist a constant $\mathcal{C}(\tau) > 0$ such that estimate \eqref{15.108} holds for all sufficiently small $\varepsilon$.

	\noindent $2^\circ$. Let $0 \ne \tau \in \mathbb{R}$ and $0 \le r < 1/2$. 
	Then there does not exist a constant  $\mathcal{C}(\tau) > 0$ such that estimate 
	\eqref{15.110} holds for all sufficiently small  $\varepsilon$.

\noindent $3^\circ$. Let $0 \ne \tau \in \mathbb{R}$ and $0 \le s < 3/2$. Then there does not exist a constant  $\mathcal{C}(\tau) > 0$ such that estimate  \eqref{15.111} holds for all sufficiently small $\varepsilon$.
\end{theorem}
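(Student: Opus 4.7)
The strategy is the same as for Theorem \ref{th15.40}: reduce each statement to Theorem \ref{th14.22} via the scaling transformation. Each of the three assertions is proved by contradiction, assuming the inequality in question holds for sufficiently small $\varepsilon$ and deriving a bound on the corresponding \emph{unscaled} operator which Theorem~\ref{th14.22} rules out.

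For statement $1^\circ$, suppose $\|J_{1,\eps}(\tau)\|_{H^s\to L_2}\le \mathcal{C}(\tau)\eps$ for some $\tau\ne 0$ and some $0\le s<3/2$. Since $(\mathcal{H}_0+I)^{s/2}$ is an isometric isomorphism from $H^s(\R^d;\AC^n)$ onto $L_2(\R^d;\AC^n)$, this rewrites as $\|J_{1,\eps}(\tau)(\mathcal{H}_0+I)^{-s/2}\|_{L_2\to L_2}\le \mathcal{C}(\tau)\eps$. Inserting the scaling identity \eqref{15.73} and using unitarity of $T_\eps$, we obtain $\|J_1(\eps^{-1}\tau)\mathcal{R}(\eps)^{s/2}\|_{L_2\to L_2}\le \mathcal{C}(\tau)\eps$ for all sufficiently small $\eps$, which contradicts statement~$1^\circ$ of Theorem~\ref{th14.22}. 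Statement $2^\circ$ is proved identically, this time using the scaling identity \eqref{15.74} (with $l=3$): the factor $\eps$ on the right cancels against the assumed $\eps$ on the left, producing $\|J_3(\eps^{-1}\tau)\mathcal{R}(\eps)^{r/2}\|_{L_2\to L_2}\le \mathcal{C}(\tau)$, which contradicts statement~$2^\circ$ of Theorem~\ref{th14.22}.

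For statement $3^\circ$ there is one additional bookkeeping step: passing from the $H^1$-norm to the ``energy'' norm $\|\wh{\mathcal A}_\eps^{1/2}\,\cdot\,\|_{L_2}$. Suppose $\|J_\eps(\tau)\|_{H^s\to H^1}\le \mathcal{C}(\tau)\eps$ holds for sufficiently small $\eps$. The upper estimate $\|\wh{\mathcal A}_\eps^{1/2}\mathbf{v}\|_{L_2}\le \alpha_1^{1/2}\|g\|_{L_\infty}^{1/2}\|\D\mathbf{v}\|_{L_2}\le \alpha_1^{1/2}\|g\|_{L_\infty}^{1/2}\|\mathbf{v}\|_{H^1}$ (which is the upper half of \eqref{a_form_ineq} applied to $\wh{\mathcal A}_\eps$) yields
\[
\bigl\|\wh{\mathcal A}_\eps^{1/2} J_\eps(\tau)(\mathcal{H}_0+I)^{-s/2}\bigr\|_{L_2\to L_2}\le \alpha_1^{1/2}\|g\|_{L_\infty}^{1/2}\,\mathcal{C}(\tau)\,\eps.
\]
Now the scaling identity (the analog of \eqref{11.*0} for the sandwiched operator $J_\eps$, derived from \eqref{sin_and_scale_transform}, \eqref{H0_resolv_and_scale_transform}, and \eqref{mult_op_and_scale_transform}) gives
$$
\wh{\mathcal A}_\eps^{1/2} J_\eps(\tau)(\mathcal{H}_0+I)^{-s/2}=T_\eps^* \wh{\mathcal A}^{1/2} J(\eps^{-1}\tau)\mathcal{R}(\eps)^{s/2} T_\eps,
$$
so unitarity of $T_\eps$ delivers $\|\wh{\mathcal A}^{1/2} J(\eps^{-1}\tau)\mathcal{R}(\eps)^{s/2}\|_{L_2\to L_2}\le \widetilde{\mathcal C}(\tau)\eps$ for all sufficiently small $\eps$, contradicting statement~$3^\circ$ of Theorem~\ref{th14.22}.

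There is no serious obstacle: the abstract sharpness of \S\ref{sec10}, lifted to the sandwiched setting in \S\ref{sec13}, and then to the whole-space operators in Subsection~\ref{sec14.6}, was precisely designed so that the scaling transformation passes it to the $\eps$-periodic operators. The only point that requires a moment's care is the direction of the inequality between the $H^1$-norm and the energy norm in statement $3^\circ$: we use the elementary upper bound on $\|\wh{\mathcal A}_\eps^{1/2}\mathbf v\|_{L_2}$ in terms of $\|\mathbf v\|_{H^1}$, which is the easy direction (unlike the lower Gårding-type inequality \eqref{15.31a}).
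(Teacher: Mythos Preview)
Your proposal is correct and follows essentially the same approach as the paper: the paper indicates that Theorem~\ref{th15.41} is deduced from Theorem~\ref{th14.22} via the scaling transformation, exactly as Theorem~\ref{th15.40} was deduced from Theorem~\ref{th14.21}, and your argument mirrors the explicit proof given for the analogous Theorem~\ref{th15.17} (using \eqref{15.73}, \eqref{15.74}, and, for statement~$3^\circ$, the upper bound $\|\wh{\mathcal A}_\eps^{1/2}\mathbf v\|_{L_2}\le \alpha_1^{1/2}\|g\|_{L_\infty}^{1/2}\|\D\mathbf v\|_{L_2}$ followed by the scaling identity).
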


Now we discuss the sharpness of the results  regarding the  dependence of estimates on the parameter $\tau$. 
Theorem \ref{th14.21a} implies the following statement demonstrating that Theorems 
\ref{th15.25} and \ref{sndw_A_eps_sin_general_thrm} are sharp.
 
\begin{theorem}
	\label{th15.43}
	Suppose that Condition~\emph{\ref{cond13.1}} is satisfied. 
	
	\noindent $1^\circ$. Let $s \ge 2$. There does not exist a positive function $\mathcal{C}(\tau)$ such that  $\lim_{\tau \to \infty} {\mathcal C}(\tau)/|\tau| =0$ 
	 and estimate \eqref{15.108} holds for  $\tau\in \R$ and sufficiently small $\varepsilon > 0$.
		
	\noindent $2^\circ$. 
	Let $r \ge 1$. There does not exist a positive function $\mathcal{C}(\tau)$ such that 
	$\lim_{\tau \to \infty} {\mathcal C}(\tau)/|\tau| =0$ 
	  and estimate \eqref{15.109} holds for  $\tau\in \R$ and sufficiently small $\varepsilon > 0$.
		
	\noindent $3^\circ$. Let $r \ge 1$. There does not exist a positive function $\mathcal{C}(\tau)$ such that 
	$\lim_{\tau \to \infty} {\mathcal C}(\tau)/|\tau| =0$ 
	  and estimate \eqref{15.110} holds for  $\tau\in \R$ and sufficiently small $\varepsilon > 0$.
		
	\noindent $4^\circ$. Let $s \ge 2$. There does not exist a positive function $\mathcal{C}(\tau)$ such that  $\lim_{\tau \to \infty} {\mathcal C}(\tau)/|\tau| =0$ 
	 and estimate \eqref{15.111} holds for  $\tau\in \R$ and sufficiently small $\varepsilon > 0$.
\end{theorem}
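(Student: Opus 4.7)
The plan is to prove each statement by contradiction, using the scaling transformation to reduce the assertion to the corresponding sharpness statement for the unscaled operators, namely Theorem \ref{th14.21a}, whose four parts correspond exactly to the four parts of the present theorem.

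For statement $1^\circ$, I would assume for contradiction that for some $s \ge 2$ there exists a positive function $\mathcal{C}(\tau)$ with $\lim_{\tau\to\infty}\mathcal{C}(\tau)/|\tau| = 0$ such that \eqref{15.108} holds for all $\tau \in \R$ and sufficiently small $\varepsilon > 0$. This inequality is equivalent to
\begin{equation*}
\bigl\| J_{1,\eps}(\tau)(\mathcal{H}_0 + I)^{-s/2}\bigr\|_{L_2(\R^d)\to L_2(\R^d)} \le \mathcal{C}(\tau)\,\eps.
\end{equation*}
Invoking the scaling identity \eqref{15.73} and the unitarity of $T_\eps$, the left-hand side equals $\|J_1(\eps^{-1}\tau)\mathcal{R}(\eps)^{s/2}\|_{L_2\to L_2}$, so \eqref{14.39} is satisfied for all $\tau \in \R$ and sufficiently small $\eps$, with the same function $\mathcal{C}(\tau)$. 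This contradicts statement $1^\circ$ of Theorem \ref{th14.21a}. Statements $2^\circ$ and $3^\circ$ are analogous: assuming \eqref{15.109} (respectively \eqref{15.110}), relation \eqref{15.74} yields $\|J_l(\eps^{-1}\tau)\mathcal{R}(\eps)^{r/2}\|_{L_2\to L_2} \le \mathcal{C}(\tau)$ for $l=2$ (resp.\ $l=3$), which gives \eqref{14.40} or \eqref{14.41} and contradicts the corresponding statement of Theorem \ref{th14.21a}.

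For statement $4^\circ$, assuming \eqref{15.111} holds for some $s \ge 2$, in particular
$\|\mathbf{D}\,J_\eps(\tau)(\mathcal{H}_0+I)^{-s/2}\|_{L_2 \to L_2} \le \mathcal{C}(\tau)\eps$.
Using the upper bound in the quadratic form estimate (analog of the right inequality in \eqref{a_form_ineq}) for the operator $\widehat{\mathcal{A}}_\eps = b(\mathbf{D})^* g^\eps b(\mathbf{D})$, namely $\|\widehat{\mathcal{A}}_\eps^{1/2}\mathbf{u}\|_{L_2} \le \alpha_1^{1/2}\|g\|_{L_\infty}^{1/2}\|\mathbf{D}\mathbf{u}\|_{L_2}$, it follows that
\begin{equation*}
\bigl\|\widehat{\mathcal{A}}_\eps^{1/2} J_\eps(\tau)(\mathcal{H}_0+I)^{-s/2}\bigr\|_{L_2\to L_2} \le \widehat{\mathcal{C}}(\tau)\,\eps
\end{equation*}
with $\widehat{\mathcal{C}}(\tau) = \alpha_1^{1/2}\|g\|_{L_\infty}^{1/2}\mathcal{C}(\tau)$, so $\widehat{\mathcal{C}}(\tau)/|\tau| \to 0$ as $\tau \to \infty$. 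Applying the scaling identity that precedes Theorem \ref{sndw_A_eps_sin_general_thrm}, this transforms to $\|\widehat{\mathcal{A}}^{1/2} J(\eps^{-1}\tau)\mathcal{R}(\eps)^{s/2}\|_{L_2 \to L_2} \le \widehat{\mathcal{C}}(\tau)\eps$, i.e.\ estimate \eqref{s<2_sinA_est_1} holds for $\tau\in\R$ and sufficiently small $\eps$, contradicting statement $4^\circ$ of Theorem \ref{th14.21a}.

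The proof is essentially routine once the scaling dictionary between $J_{l,\eps}(\tau)$ and $J_l(\eps^{-1}\tau)$ is in place, so there is no serious obstacle; the only point requiring care is the bookkeeping of $\eps$-factors (one power of $\eps$ is absorbed by $T_\eps^* \cdot T_\eps$ for $J_2$, $J_3$, $J$, but not for $J_1$), and the verification in statement $4^\circ$ that dominating $\widehat{\mathcal{A}}_\eps^{1/2}$ by $\mathbf{D}$ introduces only an $\eps$-independent factor, so that the Lipschitz-in-$\tau$ smallness of $\mathcal{C}(\tau)$ is preserved under passage to $\widehat{\mathcal{C}}(\tau)$.
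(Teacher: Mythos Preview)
Your proposal is correct and follows essentially the same approach as the paper: the paper does not write out a detailed proof of this theorem, stating only that it is implied by Theorem~\ref{th14.21a}, and the reduction via the scaling identities \eqref{15.73}, \eqref{15.74} (and, for $4^\circ$, the passage from $\mathbf{D}$ to $\widehat{\mathcal{A}}_\eps^{1/2}$ followed by the scaling identity preceding Theorem~\ref{sndw_A_eps_sin_general_thrm}) is exactly what is intended. Your bookkeeping of the $\eps$-factors and the observation that the constant relating $\widehat{\mathcal{A}}_\eps^{1/2}$ and $\mathbf{D}$ is $\eps$-independent are the only points requiring care, and you handle them correctly.
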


Theorem \ref{th14.22a} demonstrates that Theorems \ref{th15.26} and \ref{th15.35} are sharp.

\begin{theorem}
	\label{th15.44}
	Suppose that Condition~\emph{\ref{cond13.2}} is satisfied. 
	
	\noindent $1^\circ$. Let $s \ge 3/2$. There does not exist a positive function $\mathcal{C}(\tau)$ such that 
	$\lim_{\tau \to \infty} {\mathcal C}(\tau)/|\tau|^{1/2} =0$  and estimate \eqref{15.108} holds for  $\tau\in \R$ and sufficiently small $\varepsilon > 0$.
	
	\noindent $2^\circ$. Let $r \ge 1/2$. There does not exist a positive function $\mathcal{C}(\tau)$ such that 
	$\lim_{\tau \to \infty} {\mathcal C}(\tau)/|\tau|^{1/2} =0$  and estimate \eqref{15.110} holds for  $\tau\in \R$ and sufficiently small $\varepsilon > 0$.
	
	\noindent $3^\circ$. Let $s \ge 3/2$. There does not exist a positive function $\mathcal{C}(\tau)$ such that 
	$\lim_{\tau \to \infty} {\mathcal C}(\tau)/|\tau|^{1/2} =0$ 
	 and estimate \eqref{15.111} holds for  $\tau\in \R$ and sufficiently small $\varepsilon > 0$.
\end{theorem}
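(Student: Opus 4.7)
The plan is to argue by contradiction in each of the three parts, exactly following the template of the proof of Theorem~\ref{th15.17} (which treats the analogous sharpness assertions in the general case) but invoking the enhanced-conditions sharpness result Theorem~\ref{th14.22a} in place of Theorem~\ref{th14.21a}. In all three parts the core tool is the scaling identities \eqref{15.73}, \eqref{15.74} together with the direct-integral framework behind Theorem~\ref{th14.22a}.

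For statement $1^\circ$, assume the contrary: for some $s \ge 3/2$ there exists $\mathcal{C}(\tau)>0$ with $\lim_{\tau\to\infty}\mathcal{C}(\tau)/|\tau|^{1/2}=0$ such that \eqref{15.108} holds for all $\tau\in\R$ and sufficiently small $\varepsilon$. Rewriting $\|J_{1,\eps}(\tau)\|_{H^s\to L_2} = \|J_{1,\eps}(\tau)(\mathcal{H}_0+I)^{-s/2}\|_{L_2\to L_2}$ and applying the unitary scaling identity \eqref{15.73}, we obtain
\begin{equation*}
\|J_1(\varepsilon^{-1}\tau)\mathcal{R}(\varepsilon)^{s/2}\|_{L_2(\R^d)\to L_2(\R^d)} \le \mathcal{C}(\tau)\,\varepsilon
\end{equation*}
under the same hypotheses on $\tau$ and $\varepsilon$. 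This contradicts statement~$1^\circ$ of Theorem~\ref{th14.22a}. For statement $2^\circ$, the argument is the same, but one uses \eqref{15.74} for $l=3$, which produces an extra factor of $\varepsilon$ that cancels the $\varepsilon$ present in the nondisprovable estimate \eqref{14.41}; this gives a contradiction with statement~$2^\circ$ of Theorem~\ref{th14.22a}.

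For statement $3^\circ$, assume $\|J_\eps(\tau)\|_{H^s\to H^1}\le \mathcal{C}(\tau)\varepsilon$ holds with $\mathcal{C}(\tau)/|\tau|^{1/2}\to 0$. Decomposing the $H^1$-norm, we extract in particular
\begin{equation*}
\|\mathbf{D}\, J_\eps(\tau)(\mathcal{H}_0+I)^{-s/2}\|_{L_2\to L_2} \le \mathcal{C}(\tau)\,\varepsilon.
\end{equation*}
Now I would invoke the form bound $\|\widehat{\mathcal{A}}_\varepsilon^{1/2}\mathbf{u}\|_{L_2}\le (\alpha_1\|g\|_{L_\infty})^{1/2}\|\mathbf{D}\mathbf{u}\|_{L_2}$ (the upper counterpart of \eqref{15.31a}, which is immediate from the upper bound in the analog of \eqref{a_form_ineq} for $\widehat{\mathcal{A}}_\varepsilon$) to promote this to
\begin{equation*}
\|\widehat{\mathcal{A}}_\varepsilon^{1/2} J_\eps(\tau)(\mathcal{H}_0+I)^{-s/2}\|_{L_2\to L_2} \le \widehat{\mathcal{C}}(\tau)\,\varepsilon,
\end{equation*}
with $\widehat{\mathcal{C}}(\tau)/|\tau|^{1/2}\to 0$. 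Applying the scaling identity that precedes Theorem~\ref{sndw_A_eps_sin_general_thrm} (of the same form as \eqref{11.*0} but for the sandwiched family) converts this to
\begin{equation*}
\|\widehat{\mathcal{A}}^{1/2} J(\varepsilon^{-1}\tau)\mathcal{R}(\varepsilon)^{s/2}\|_{L_2\to L_2}\le \widehat{\mathcal{C}}(\tau)\,\varepsilon,
\end{equation*}
i.e., estimate \eqref{s<2_sinA_est_1} with a function $\widehat{\mathcal{C}}(\tau)$ of the type ruled out by statement~$3^\circ$ of Theorem~\ref{th14.22a}. Contradiction.

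The only subtlety, and thus the main obstacle to keep track of, is the reduction in statement $3^\circ$ from an $(H^s\to H^1)$-bound on $J_\eps(\tau)$ to an $(L_2\to L_2)$-bound on $\widehat{\mathcal{A}}_\varepsilon^{1/2}J_\eps(\tau)(\mathcal{H}_0+I)^{-s/2}$, which must be done with uniform-in-$\varepsilon$ constants; this is ensured by the fact that the upper form bound for $\widehat{\mathcal{A}}_\varepsilon$ involves only $\alpha_1$ and $\|g\|_{L_\infty}$ and is independent of $\varepsilon$. Once that step is in place, the three contradictions flow mechanically from Theorem~\ref{th14.22a} via the scaling identities, and no additional $\tau$-asymptotic analysis beyond what is already encoded in Theorem~\ref{th14.22a} is needed.
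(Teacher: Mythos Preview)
Your proposal is correct and matches the paper's intended approach: the paper does not write out a proof of Theorem~\ref{th15.44} but simply notes that it follows from Theorem~\ref{th14.22a}, and the mechanism is precisely the one you spell out --- the scaling identities \eqref{15.73}, \eqref{15.74}, and the one preceding Theorem~\ref{sndw_A_eps_sin_general_thrm}, followed by contradiction with the three parts of Theorem~\ref{th14.22a}. Your treatment of statement~$3^\circ$, including the passage from the $H^1$-bound to an $\widehat{\mathcal{A}}_\varepsilon^{1/2}$-bound via the ($\varepsilon$-independent) upper form inequality, mirrors exactly the template of the proof of Theorem~\ref{th15.17}($3^\circ$).
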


\subsection{On the possibility to remove the smoothing operator $\Pi_\eps$ in the corrector}
Now we consider the question about possibility to remove the operator $\Pi_\eps$ from the corrector
in Theorems  \ref{A_eps_sin_general_thrm}, \ref{A_eps_sin_enchcd_thrm_1},  \ref{sndw_A_eps_sin_general_thrm}, \ref{th15.35}. 

Denote 
{\allowdisplaybreaks
\begin{align}
\label{15.119}
	\wh{J}^\circ_\eps(\tau) &:=  
	 \wh{\mathcal{A}}_\varepsilon^{-1/2} \sin ( \tau \wh{\mathcal{A}}_\varepsilon^{1/2})  - 
	(I+\varepsilon \Lambda^\varepsilon b(\mathbf{D})) (\wh{\mathcal{A}}^0)^{-1/2} 
	\sin( \tau (\wh{\mathcal{A}}^0)^{1/2}),
\\
\label{15.120}
	\wh{I}^\circ_\eps(\tau) &:=  
	 g^\eps b(\D) \wh{\mathcal{A}}_\varepsilon^{-1/2} \sin ( \tau \wh{\mathcal{A}}_\varepsilon^{1/2})  - 
	\wt{g}^\eps b(\mathbf{D}) (\wh{\mathcal{A}}^0)^{-1/2} \sin( \tau (\wh{\mathcal{A}}^0)^{1/2}),
\\
\label{15.121}
	{J}^\circ_\eps(\tau)  &:= 
	f^\varepsilon \mathcal{A}_\varepsilon^{-1/2} \sin ( \tau \mathcal{A}_\varepsilon^{1/2}) (f^\varepsilon)^{-1}
	- 	(I+\varepsilon \Lambda^\varepsilon b(\mathbf{D})) f_0 (\mathcal{A}^0)^{-1/2} \sin( \tau (\mathcal{A}^0)^{1/2}) f_0^{-1},
\\	
\label{15.122}
	{I}^\circ_\eps(\tau) &:=  
g^\eps b(\D) f^\varepsilon \mathcal{A}_\varepsilon^{-1/2} \sin ( \tau \mathcal{A}_\varepsilon^{1/2}) (f^\varepsilon)^{-1}- \wt{g}^\eps  b(\mathbf{D}) f_0 (\mathcal{A}^0)^{-1/2} \sin( \tau (\mathcal{A}^0)^{1/2}) f_0^{-1}.
\end{align}
}

 From Theorem \ref{th10.12} we deduce the following result.

  \begin{theorem}\label{th15.48}
  Suppose that Condition \emph{\ref{cond_Lambda_1}} is satisfied.
  
   \noindent $1^\circ$. Under the assumptions of Theorem \emph{\ref{A_eps_sin_general_thrm}}, the operators
   \eqref{15.119} and \eqref{15.120} satisfy the following estimates for $\tau \in \R$ and $0< \eps \le 1$\emph{:}
  \begin{align}
	\label{15.125}
	\bigl\|  \widehat{J}^\circ_\varepsilon( \tau)  \bigr\|_{H^2(\mathbb{R}^d) \to H^1(\mathbb{R}^d)} \le \widehat{\mathrm{C}}^\circ_{7}(1 + |\tau|) \varepsilon,
\\
\label{15.126}
	\bigl\|  \widehat{I}^\circ_\varepsilon( \tau)  \bigr\|_{H^2(\mathbb{R}^d) \to L_2(\mathbb{R}^d)} \le \widehat{\mathrm{C}}^\circ_{8}(1 + |\tau|) \varepsilon.	
	\end{align}
	The constants $\widehat{\mathrm{C}}^\circ_{7}$ and $\widehat{\mathrm{C}}^\circ_{8}$ 
	depend on  $\alpha_0,$ $\alpha_1,$ $\|g\|_{L_\infty},$ $\|g^{-1}\|_{L_\infty},$ $r_0,$ $r_1,$ and also on the norm  $\|  [\Lambda]\|_{H^2 \to H^1}$.
   
   \noindent $2^\circ$. Under the assumptions of Theorem \emph{\ref{sndw_A_eps_sin_general_thrm}}, the operators 
   \eqref{15.121} and  \eqref{15.122} satisfy the following estimates for $\tau \in \R$ and $0< \eps \le 1$\emph{:}   
  \begin{align}
	\label{15.127}
	\bigl\|  {J}^\circ_\varepsilon( \tau)  \bigr\|_{H^2(\mathbb{R}^d) \to H^1(\mathbb{R}^d)} \le {\mathrm{C}}^\circ_{7}(1 + |\tau|) \varepsilon,
\\
\label{15.128}
	\bigl\|  {I}^\circ_\varepsilon( \tau)  \bigr\|_{H^2(\mathbb{R}^d) \to L_2(\mathbb{R}^d)} \le {\mathrm{C}}^\circ_{8}(1 + |\tau|) \varepsilon.	
	\end{align}
	The constants ${\mathrm{C}}^\circ_{7}$ and ${\mathrm{C}}^\circ_{8}$  depend on 
	$\alpha_0,$ $\alpha_1,$ $\|g\|_{L_\infty},$ $\|g^{-1}\|_{L_\infty},$ $\|f\|_{L_\infty},$ $\|f^{-1}\|_{L_\infty},$ $r_0,$ $r_1,$ and also on the norm $\|  [\Lambda]\|_{H^2 \to H^1}$.
     \end{theorem}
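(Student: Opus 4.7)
The plan is to prove Theorem~\ref{th15.48} by scaling reduction to Theorem~\ref{th10.12}, precisely mirroring the way Theorem~\ref{A_eps_sin_general_thrm} was deduced from Theorem~\ref{A_sin_general_thrm} in Subsection~\ref{sec15.4}. The starting point is the commutation rule $T_\varepsilon^* b(\mathbf{D}) T_\varepsilon = \varepsilon b(\mathbf{D})$, which together with \eqref{mult_op_and_scale_transform} implies $(I + \varepsilon \Lambda^\varepsilon b(\mathbf{D})) = T_\varepsilon^* (I + \Lambda b(\mathbf{D})) T_\varepsilon$; combining with \eqref{sin_and_scale_transform} and \eqref{H0_resolv_and_scale_transform} then yields the analog of \eqref{11.*0},
$$\widehat{\mathcal A}_\varepsilon^{1/2}\,\widehat J^\circ_\varepsilon(\tau)\,(\mathcal{H}_0 + I)^{-1} \;=\; T_\varepsilon^*\,\widehat{\mathcal A}^{1/2}\,\widehat J^\circ(\varepsilon^{-1}\tau)\,\mathcal R(\varepsilon)\,T_\varepsilon.$$

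For estimate \eqref{15.125} I would apply \eqref{10.19} of Theorem~\ref{th10.12}($1^\circ$) to the right-hand side (valid for $0 < \varepsilon \le 1$), obtaining $\|\widehat{\mathcal A}_\varepsilon^{1/2}\widehat J^\circ_\varepsilon(\tau)\|_{H^2\to L_2} \le \widehat{\mathrm{C}}_5^\circ(1+|\tau|)\varepsilon$ and hence $\|\mathbf{D}\,\widehat J^\circ_\varepsilon(\tau)\|_{H^2\to L_2} \le C(1+|\tau|)\varepsilon$ via \eqref{15.31a}. To control the $L_2$-part of the $H^1$-norm I would write $\widehat J^\circ_\varepsilon(\tau) = \widehat J_\varepsilon(\tau) - \varepsilon\Lambda^\varepsilon b(\mathbf{D})(I - \Pi_\varepsilon)(\widehat{\mathcal A}^0)^{-1/2}\sin(\tau(\widehat{\mathcal A}^0)^{1/2})$: the first summand is bounded by \eqref{A_eps_sin_general_est}, while the second, after applying the scaling identity (which absorbs the explicit $\varepsilon$-factor), reduces to bounding $\|\Lambda\, b(\mathbf{D})(I - \Pi)(\widehat{\mathcal A}^0)^{-1/2}\sin(\varepsilon^{-1}\tau(\widehat{\mathcal A}^0)^{1/2})\mathcal R(\varepsilon)\|_{L_2\to L_2}$, which is $O(\varepsilon^2)$ by factoring through $H^2$ and composing Lemma~\ref{lem10.9} with the $(H^2\to L_2)$-continuity of $[\Lambda]$ (a weaker consequence of Condition~\ref{cond_Lambda_1}). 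For the flux \eqref{15.126} I would use the product-rule identity
$$\widehat I^\circ_\varepsilon(\tau) \;=\; g^\varepsilon b(\mathbf{D})\widehat J^\circ_\varepsilon(\tau) \;+\; \varepsilon\,g^\varepsilon \sum_{l=1}^d b_l\, \Lambda^\varepsilon D_l\, b(\mathbf{D})(\widehat{\mathcal A}^0)^{-1/2}\sin(\tau(\widehat{\mathcal A}^0)^{1/2}),$$
bounding the first summand via \eqref{15.125} together with $g\in L_\infty$ and $b(\mathbf{D}):H^1\to L_2$, and the second by the same scaling argument adapted from \eqref{12.*04}, with Proposition~\ref{Pi_eps_prop_2} replaced by the stronger Sobolev-multiplier property of $[\Lambda]$ supplied by Condition~\ref{cond_Lambda_1}. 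Part~$2^\circ$ is strictly parallel, the only change being the insertion of $f^\varepsilon = T_\varepsilon^* f T_\varepsilon$ and $f_0$, reducing \eqref{15.127}--\eqref{15.128} to Theorem~\ref{th10.12}($2^\circ$) plus Theorem~\ref{sndw_A_eps_sin_general_thrm}.

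The hardest step will be the flux estimates \eqref{15.126} and \eqref{15.128}: one must verify that Condition~\ref{cond_Lambda_1} alone, without $\Lambda\in L_\infty$ or $\widetilde g\in L_\infty$, is strong enough to replace Proposition~\ref{Pi_eps_prop_2}, which was the sole tool used to tame the fast-oscillating factor $\Lambda^\varepsilon$ in the smoothed version $\Lambda^\varepsilon b(\mathbf{D})\Pi_\varepsilon$ throughout the proof of Theorem~\ref{A_eps_sin_general_thrm}. The resolution is that the extra derivative available on the input space ($H^2$ rather than $H^1$), combined with the $O(\varepsilon^2)$ gain furnished by Lemma~\ref{lem10.9} on the high-frequency tail $I - \Pi$, compensates exactly for the loss, and this compensation is encoded precisely by the $(H^2\to H^1)$-continuity of the multiplier $[\Lambda]$ demanded in Condition~\ref{cond_Lambda_1}.
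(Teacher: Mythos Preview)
Your approach is correct and essentially matches the paper's. Two minor differences are worth noting. First, for the $L_2$-part of the $H^1$-norm in \eqref{15.125}, the paper does not split $\widehat J^\circ_\varepsilon = \widehat J_\varepsilon - \varepsilon\Lambda^\varepsilon b(\mathbf{D})(I-\Pi_\varepsilon)(\cdots)$ and invoke Lemma~\ref{lem10.9}; instead it estimates the \emph{full} corrector $\varepsilon\Lambda^\varepsilon b(\mathbf{D})(\widehat{\mathcal A}^0)^{-1/2}\sin(\tau(\widehat{\mathcal A}^0)^{1/2})$ directly after scaling, using only $\|[\Lambda]\|_{H^2\to L_2}$ together with the elementary bound $\|\mathcal R(\varepsilon)\|_{L_2\to H^2}\le 2$ for $0<\varepsilon\le 1$, then combines this with \eqref{15.11}. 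Your route via Lemma~\ref{lem10.9} works too (and even gives an extra power of $\varepsilon$), but is slightly longer. Second, for the flux term $\varepsilon g^\varepsilon\sum_l b_l\Lambda^\varepsilon D_l b(\mathbf{D})(\cdots)$ the paper, after scaling, factors through $H^1$ rather than $H^2$: it needs $\|[\Lambda]\|_{H^1\to L_2}$ and the bound $\|D_l\mathcal R(\varepsilon)\|_{L_2\to H^1}\le 2\varepsilon$. The passage from Condition~\ref{cond_Lambda_1} (boundedness $H^2\to H^1$) to boundedness $H^1\to L_2$ is a nontrivial multiplier-theory fact, for which the paper cites \cite[Subsection~1.3.2]{MSh}; you should make this step explicit. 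Your closing paragraph slightly overstates the role of Lemma~\ref{lem10.9} for the flux estimate --- there is no $I-\Pi$ tail in the product-rule identity, so that lemma is not used there.
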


\begin{proof}
Let us check statement $1^\circ$. Statement $2^\circ$ is proved similarly.

From \eqref{10.19} and \eqref{15.31a} it follows that
  \begin{equation}
	\label{15.129}
	\bigl\|  \D \widehat{J}^\circ_\varepsilon( \tau)  \bigr\|_{H^2(\mathbb{R}^d) \to L_2(\mathbb{R}^d)} \le 
	\wh{c}_*^{-1/2} \widehat{\mathrm C}^\circ_{5}(1 + |\tau|) \varepsilon,
	\quad \tau \in \R,\ 0< \eps \le 1.
\end{equation}
	
	Now we estimate the norm  $\bigl\|  \widehat{J}^\circ_\varepsilon( \tau)  \bigr\|_{H^2 \to L_2}$. 
	For the operator $\wh{\mathcal A}_\eps^{-1/2} \sin (\tau \wh{\mathcal A}_\eps^{1/2})
	-  (\wh{\mathcal A}^0)^{-1/2} \sin (\tau (\wh{\mathcal A}^0)^{1/2})$, we apply estimate \eqref{15.11}.
	In order to estimate the corrector, we use the scaling transformation:
 {\allowdisplaybreaks
\begin{multline}
	\label{15.131}
	\bigl\|   \eps \Lambda^\eps b(\D)  (\wh{\mathcal A}^0)^{-1/2} \sin (\tau (\wh{\mathcal A}^0)^{1/2})
	\bigr\|_{H^2(\mathbb{R}^d) \to L_2(\mathbb{R}^d)} 
	\\
	 =
	\eps \bigl\|  
	\Lambda^\eps b(\D)  (\wh{\mathcal A}^0)^{-1/2} \sin (\tau (\wh{\mathcal A}^0)^{1/2})
	( {\mathcal H}_0 +I)^{-1}\bigr\|_{L_2(\mathbb{R}^d) \to L_2(\mathbb{R}^d)} 
	\\
	 =\eps \bigl\|  
	\Lambda b(\D)  (\wh{\mathcal A}^0)^{-1/2} \sin ( \eps^{-1}\tau (\wh{\mathcal A}^0)^{1/2}) \mathcal{R}(\eps)
	\bigr\|_{L_2(\mathbb{R}^d) \to L_2(\mathbb{R}^d)} 
	\\
	 \le \eps \| \Lambda \mathcal{R}(\eps)\|_{L_2(\mathbb{R}^d) \to L_2(\mathbb{R}^d)} \bigl\| b(\D)  (\wh{\mathcal A}^0)^{-1/2}\bigr\|_{L_2(\mathbb{R}^d) \to L_2(\mathbb{R}^d)}
	\\
	\le \eps \| [\Lambda]\|_{H^2(\mathbb{R}^d) \to L_2(\mathbb{R}^d)} 
	\|\mathcal{R}(\eps)\|_{L_2(\mathbb{R}^d) \to H^2(\mathbb{R}^d)} 
	\| g^{-1}\|^{1/2}_{L_\infty}.
	\end{multline}
	}
	\hspace{-3mm}
We have taken into account that the operator $\mathcal{R}(\eps)$ commutes with differentiation, and then also with the functions of  $\wh{\mathcal A}^0$. Next,
\begin{equation}
\label{15.132}
\|\mathcal{R}(\eps) \|_{L_2(\mathbb{R}^d) \to H^2(\mathbb{R}^d)}
= \sup_{\bxi\in \R^d}
(1+ |\bxi|^2) \eps^2 (|\bxi|^2 + \eps^2)^{-1} \le 1+\eps^2 \le 2, \quad 0< \eps \le 1.
\end{equation}
As a result, relations  \eqref{15.11}, \eqref{15.131}, and \eqref{15.132} imply that  
$$
\bigl\|  \widehat{J}^\circ_\varepsilon( \tau)  \bigr\|_{H^2 (\R^d)\to L_2(\R^d)}\!\! \le \!
\bigl(  \widehat{\mathrm C}_{2} + 2 \| g^{-1}\|^{1/2}_{L_\infty}  \| [\Lambda]\|_{H^2 \to L_2} \bigr)
(1+|\tau|) \eps,\ \; \tau \!\in\! \R,\ 0\!< \!\eps \!\le \!1.
$$
Combining this with	\eqref{15.129}, we arrive at the required estimate \eqref{15.125}.

Now we check  \eqref{15.126}. From  \eqref{10.19} it follows that  
  \begin{equation}
	\label{15.133}
	\bigl\|  g^\eps b(\D) \widehat{J}^\circ_{\varepsilon}( \tau)  \bigr\|_{H^2(\mathbb{R}^d) \to L_2(\mathbb{R}^d)}\!\! \le\! 
	\| g\|_{L_\infty}^{1/2} \widehat{\mathrm{C}}^\circ_{5}(1\! +\! |\tau|) \varepsilon,
	\quad\! \tau \!\in \!\R,\ 0\!<\! \eps\! \le\! 1.
\end{equation}
By \eqref{g_tilde}, 
  \begin{equation}
	\label{15.134}
	\begin{split}
	 g^\eps b(\D) (I &+ \eps \Lambda^\eps b(\D))(\wh{\mathcal A}^0)^{-1/2} \sin (\tau (\wh{\mathcal A}^0)^{1/2})
	\\
	& = \wt{g}^\eps b(\D)( \wh{\mathcal A}^0)^{-1/2} \sin (\tau (\wh{\mathcal A}^0)^{1/2})
	\\
	&\quad+ \eps g^\eps \sum_{l=1}^d b_l \Lambda^\eps D_l b(\D)
	 ( \wh{\mathcal A}^0)^{-1/2} \sin (\tau (\wh{\mathcal A}^0)^{1/2}).
\end{split}
\end{equation}
Let us estimate the  $(H^2 \!\to\! L_2)$-norm of the second summand. Similarly to \eqref{15.131}, we have 
 \begin{equation}
	\label{15.135}
	\begin{aligned}
	&\eps \bigl\|    \Lambda^\eps D_l b(\D)  (\wh{\mathcal A}^0)^{-1/2} \sin (\tau (\wh{\mathcal A}^0)^{1/2})
	\bigr\|_{H^2(\mathbb{R}^d) \to L_2(\mathbb{R}^d)} 
	\\
	& = \bigl\|  
	\Lambda D_l b(\D)  (\wh{\mathcal A}^0)^{-1/2} \sin ( \eps^{-1}\tau (\wh{\mathcal A}^0)^{1/2}) \mathcal{R}(\eps)
	\bigr\|_{L_2(\mathbb{R}^d) \to L_2(\mathbb{R}^d)} 
	\\
	& \le \| \Lambda D_l \mathcal{R}(\eps)\|_{L_2(\mathbb{R}^d) \to L_2(\mathbb{R}^d)} \bigl\| b(\D)  (\wh{\mathcal A}^0)^{-1/2}\bigr\|_{L_2(\mathbb{R}^d) \to L_2(\mathbb{R}^d)}
	\\
	& \le \| [\Lambda]\|_{H^1(\mathbb{R}^d) \to L_2(\mathbb{R}^d)} 
	\|D_l \mathcal{R}(\eps)\|_{L_2(\mathbb{R}^d) \to H^1(\mathbb{R}^d)} 
	\| g^{-1}\|^{1/2}_{L_\infty}.
	\end{aligned}
\end{equation}
Note that Condition \ref{cond_Lambda_1} ensures that the operator  
$[\Lambda]$ is bounded from $H^1({\mathbb{R}}^d; {\mathbb{C}}^m)$ to $L_2({\mathbb{R}}^d; {\mathbb{C}}^n)$.
The norm $\|[\Lambda]\|_{H^1 \to L_2}$ is controlled in terms of $\|[\Lambda]\|_{H^2 \to H^1}$;
see \cite[Subsection 1.3.2]{MSh}. Obviously, for $0< \eps \le 1$ we have 
\begin{equation}
\label{15.136}
\| D_l \mathcal{R}(\eps)\|_{L_2(\mathbb{R}^d) \to H^1(\mathbb{R}^d)}\!\! = \!\sup_{\bxi\in \R^d}
(1\!+\! |\bxi|^2)^{1/2} |\xi_l| \eps^2 (|\bxi|^2\!\! + \!\eps^2)^{-1} \!\!\le \!\eps\!+\!\eps^2\! \le\! 2\eps.
\end{equation}
 From \eqref{15.135} and \eqref{15.136} it is seen that the $(H^2 \to L_2)$-norm of the second term in \eqref{15.134}
 does not exceed $C \eps$. Together with  \eqref{15.133} this implies   \eqref{15.126}.
\end{proof}

Similarly, from Theorem \ref{th10.13} we deduce the following statement.

  \begin{theorem}
  \label{th15.49}
  Suppose that Condition \emph{\ref{cond_Lambda_2}} is satisfied.
  
  \noindent  $1^\circ$. Under the assumptions of Theorem \emph{\ref{A_eps_sin_enchcd_thrm_1}}, for $\tau \in \R$ 
  and $0< \eps \le 1$ we have
  \begin{align}
	\label{15.135a}
	\bigl\|  \widehat{J}^\circ_\varepsilon(\tau) \bigr\|_{H^{3/2}(\mathbb{R}^d) \to H^1(\mathbb{R}^d)} &\le \widehat{\mathrm{C}}^\circ_{9} (1 + |\tau|)^{1/2} \varepsilon,
	\\
	\label{15.136a}
	\bigl\|  \widehat{I}^\circ_\varepsilon(\tau) \bigr\|_{H^{3/2}(\mathbb{R}^d) \to L_2(\mathbb{R}^d)} &\le \widehat{\mathrm{C}}^\circ_{10} (1 + |\tau|)^{1/2} \varepsilon.
	\end{align}
	Under Condition \emph{\ref{cond_B}}, the constants $\widehat{\mathrm{C}}^\circ_{9}$ and $\widehat{\mathrm{C}}^\circ_{10}$ depend on  $\alpha_0,$ $\alpha_1,$ $\|g\|_{L_\infty},$ $\|g^{-1}\|_{L_\infty},$ $r_0,$ $r_1,$ and also on the norm  $\| [\Lambda]\|_{H^{3/2} \to H^1}$. Under Condition \emph{\ref{cond1}}, these constants depend on the same parameters and on $n,$ $\wh{c}^\circ$.
   
  \noindent $2^\circ$. Under the assumptions of Theorem \emph{\ref{th15.35}}, for $\tau \in \R$ and $0< \eps \le 1$
   we have 
  \begin{align}
	\label{15.137}
	\bigl\|  {J}^\circ_\varepsilon(\tau) \bigr\|_{H^{3/2}(\mathbb{R}^d) \to H^1(\mathbb{R}^d)}
	 &\le {\mathrm{C}}^\circ_{9} (1 + |\tau|)^{1/2} \varepsilon,
	\\
	\nonumber
	\bigl\|  {I}^\circ_\varepsilon(\tau) \bigr\|_{H^{3/2}(\mathbb{R}^d) \to L_2(\mathbb{R}^d)} 
	&\le {\mathrm{C}}^\circ_{10} (1 + |\tau|)^{1/2} \varepsilon.
	\end{align}
	Under Condition \emph{\ref{cond_BB}}, the constants ${\mathrm{C}}^\circ_{9}$ and ${\mathrm{C}}^\circ_{10}$ depend on  $\alpha_0,$ $\alpha_1,$ 
	$\|g\|_{L_\infty},$ $\|g^{-1}\|_{L_\infty},$ $\|f\|_{L_\infty},$ $\|f^{-1}\|_{L_\infty},$ $r_0,$ $r_1,$ and also on the norm $\| [\Lambda]\|_{H^{3/2} \to H^1}$. Under Condition~\emph{\ref{sndw_cond1}}, these constants depend on the same parameters and on $n,$~$c^\circ$.
   \end{theorem}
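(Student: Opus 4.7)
\smallskip

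\noindent\textbf{Proof proposal for Theorem \ref{th15.49}.}
The plan is to mimic the proof of Theorem \ref{th15.48}, but using Theorem \ref{th10.13} (the improved $(H^{3/2}\to H^1)$-result under Condition \ref{cond_Lambda_2}) in place of Theorem \ref{th10.12}, and paying extra attention to the fractional Sobolev scale. I would verify only statement $1^\circ$ in detail; statement $2^\circ$ is parallel with $\widehat{\mathcal{A}}_\eps$ replaced by $\mathcal{A}_\eps$ and with the sandwiching factors $f^\eps,(f^\eps)^{-1},f_0,f_0^{-1}$ inserted.

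First I would apply Theorem \ref{th10.13}$(1^\circ)$ to obtain
$\| \wh{\mathcal A}^{1/2}\wh{J}^\circ(\eps^{-1}\tau)\mathcal R(\eps)^{3/4}\|_{L_2\to L_2}\le \wh{\mathrm C}_6^\circ(1+|\tau|)^{1/2}\eps$,
and then transfer this to the $\eps$-scale via an analog of \eqref{11.*0} with $s=3/2$. Combined with the coercivity estimate \eqref{15.31a}, this yields
$\| \mathbf{D}\wh{J}^\circ_\eps(\tau)\|_{H^{3/2}(\R^d)\to L_2(\R^d)}\le \wh{c}_*^{-1/2}\wh{\mathrm C}_6^\circ(1+|\tau|)^{1/2}\eps$.
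To upgrade this to the $(H^{3/2}\to H^1)$-bound \eqref{15.135a}, I additionally need the $(H^{3/2}\to L_2)$-bound on $\wh{J}^\circ_\eps(\tau)$ itself. Here the principal-part estimate \eqref{15.13} (from Theorem \ref{th15.2}) takes care of the first two terms $\wh{J}_{2,\eps}(\tau)$, and I must control the corrector $\eps\Lambda^\eps b(\mathbf D)(\wh{\mathcal A}^0)^{-1/2}\sin(\tau(\wh{\mathcal A}^0)^{1/2})$. By the scaling identity, using that $b(\mathbf D)$, $(\wh{\mathcal A}^0)^{-1/2}$, $\sin(\cdot)$ and $\mathcal R(\eps)^{3/4}$ are all Fourier multipliers and hence commute,
\begin{equation*}
\bigl\|\eps\Lambda^\eps b(\mathbf D)(\wh{\mathcal A}^0)^{-\!1/2}\!\sin(\tau(\wh{\mathcal A}^0)^{\!1/2})(\mathcal H_0\!+\!I)^{-3/4}\bigr\|_{L_2\to L_2}\!\!\le\! \|[\Lambda]\mathcal R(\eps)^{3/4}\|_{L_2\to L_2}\|g^{-1}\|_{L_\infty}^{1/2},
\end{equation*}
and the factor $\|[\Lambda]\mathcal R(\eps)^{3/4}\|_{L_2\to L_2}\le \|[\Lambda]\|_{H^{3/2}\to L_2}\cdot\|\mathcal R(\eps)^{3/4}\|_{L_2\to H^{3/2}}$ is controlled by Condition \ref{cond_Lambda_2} (which implies $\|[\Lambda]\|_{H^{3/2}\to L_2}<\infty$, see \cite[Subsection 1.3.2]{MSh}) together with the elementary Fourier estimate $\|\mathcal R(\eps)^{3/4}\|_{L_2\to H^{3/2}}\le 2^{3/4}$ for $0<\eps\le 1$.

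For the flux estimate \eqref{15.136a}, as in \eqref{12.*02} I would use the identity
$\wh{I}^\circ_\eps(\tau)=g^\eps b(\mathbf D)\wh{J}^\circ_\eps(\tau)+\eps g^\eps\sum_{l=1}^d b_l\Lambda^\eps D_l b(\mathbf D)(\wh{\mathcal A}^0)^{-1/2}\sin(\tau(\wh{\mathcal A}^0)^{1/2})$,
so that the first summand is controlled in $(H^{3/2}\to L_2)$-norm directly from the already established bound $\|g^{1/2}b(\mathbf D)\wh{J}^\circ(\eps^{-1}\tau)\mathcal R(\eps)^{3/4}\|_{L_2\to L_2}\le\|g\|_{L_\infty}^{1/2}\wh{\mathrm C}_6^\circ(1+|\tau|)^{1/2}\eps$. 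For the second summand, by scaling and commutativity,
\begin{equation*}
\bigl\|\eps\Lambda^\eps D_l b(\mathbf D)(\wh{\mathcal A}^0)^{-1/2}\sin(\tau(\wh{\mathcal A}^0)^{1/2})(\mathcal H_0+I)^{-3/4}\bigr\|_{L_2\to L_2}\le\|[\Lambda]\|_{H^{1/2}\to L_2}\|D_l\mathcal R(\eps)^{3/4}\|_{L_2\to H^{1/2}}\|g^{-1}\|_{L_\infty}^{1/2},
\end{equation*}
and a direct Fourier computation gives $\|D_l\mathcal R(\eps)^{3/4}\|_{L_2\to H^{1/2}}\le 2^{1/4}\eps$ for $0<\eps\le 1$. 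The norm $\|[\Lambda]\|_{H^{1/2}\to L_2}$ is in turn controlled by $\|[\Lambda]\|_{H^{3/2}\to H^1}$ (again by \cite[Subsection 1.3.2]{MSh}), so Condition \ref{cond_Lambda_2} suffices. Summing these two contributions produces \eqref{15.136a}.

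The main technical obstacle is the corrector control at the fractional level $H^{3/2}$: unlike the integer scale used in Theorem \ref{th15.48}, one has to bound the intermediate multiplier norms $\|[\Lambda]\|_{H^{3/2}\to L_2}$ and $\|[\Lambda]\|_{H^{1/2}\to L_2}$ purely from the hypothesis $\|[\Lambda]\|_{H^{3/2}\to H^1}<\infty$ of Condition \ref{cond_Lambda_2}, and to do the accompanying fractional Fourier-multiplier computations for $\mathcal R(\eps)^{3/4}$ and $D_l\mathcal R(\eps)^{3/4}$ carefully so that one indeed extracts a clean factor $\eps$ (respectively $1$) uniformly in $0<\eps\le 1$. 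Once these multiplier and Sobolev-multiplier estimates are in place, statement $2^\circ$ follows by exactly the same steps applied to the operator $J^\circ_\eps(\tau)$, using Theorem \ref{th10.13}$(2^\circ)$, the identity analogous to \eqref{12.*02} for the sandwiched flux, the coercivity \eqref{15.31a} for $\mathcal A_\eps$, and the principal-order estimate for $J_{2,\eps}(\tau)$ from Theorem \ref{th15.26}.
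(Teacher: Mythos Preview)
Your plan coincides with the paper's proof, which carries out statement $1^\circ$ in detail (exactly via Theorem~\ref{th10.13}, coercivity \eqref{15.31a}, the principal-part estimate \eqref{15.13}, the scaling identity for the corrector, and the fractional multiplier bounds $\|\mathcal R(\eps)^{3/4}\|_{L_2\to H^{3/2}}\le 2^{3/4}$, $\|D_l\mathcal R(\eps)^{3/4}\|_{L_2\to H^{1/2}}\le 2^{1/4}\eps$) and then says ``Statement $2^\circ$ is proved similarly.'' Two small points are worth fixing.

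First, in your displayed bound for the corrector $\eps\Lambda^\eps b(\mathbf D)(\wh{\mathcal A}^0)^{-1/2}\sin(\tau(\wh{\mathcal A}^0)^{1/2})(\mathcal H_0+I)^{-3/4}$ you dropped a factor $\eps$: the scaling gives $\eps\,\|[\Lambda]\mathcal R(\eps)^{3/4}\|_{L_2\to L_2}\|g^{-1}\|_{L_\infty}^{1/2}$, and this $\eps$ is exactly what is needed for \eqref{15.135a}. (Your second corrector display, for the flux term, is correct as written.)

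Second, for statement $2^\circ$ you invoke ``the principal-order estimate for $J_{2,\eps}(\tau)$ from Theorem~\ref{th15.26}'', but Theorem~\ref{th15.26} contains improved bounds only for $J_{1,\eps}$ and $J_{3,\eps}$; there is \emph{no} improved $(H^{1/2}\to L_2)$-estimate for $J_{2,\eps}$ (see Remark~\ref{rem12.1}). The available estimate is \eqref{15.76} from Theorem~\ref{th15.25}. The paper does not spell this step out either, so your sketch is at the same level as the paper's ``proved similarly'', but the reference should be corrected. (The paper also cites \cite[Subsection~2.2.2]{MSh}, rather than 1.3.2, for controlling $\|[\Lambda]\|_{H^{1/2}\to L_2}$ by $\|[\Lambda]\|_{H^{3/2}\to H^1}$.)
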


\begin{proof}
Let us check statement $1^\circ$. Statement $2^\circ$ is proved similarly.

From \eqref{10.21} and \eqref{15.31a} it follows that 
  \begin{equation}
	\label{15.143}
	\bigl\|  \D \widehat{J}^\circ_\varepsilon( \tau)  \bigr\|_{H^{3/2}(\mathbb{R}^d) \to L_2(\mathbb{R}^d)}\! \le 
	\wh{c}_*^{-1/2} \widehat{\mathrm{C}}^\circ_{6}(1 \!+\! |\tau|)^{1/2} \varepsilon,
	\quad \!\tau \!\in\! \R,\ 0\!<\! \eps \!\le\! 1.
\end{equation}
	
	Now we estimate the norm $\bigl\|  \widehat{J}^\circ_\varepsilon( \tau)  \bigr\|_{H^{3/2} \to L_2}$. 
For the operator $\wh{\mathcal A}_\eps^{-1/2} \sin (\tau \wh{\mathcal A}_\eps^{1/2})
	-  (\wh{\mathcal A}^0)^{-1/2} \sin (\tau (\wh{\mathcal A}^0)^{1/2})$, we apply \eqref{15.13}.
	Similarly to \eqref{15.131}, 
	 \begin{equation}
	\label{15.145}
	\begin{aligned}
	&\bigl\|   \eps \Lambda^\eps b(\D)  (\wh{\mathcal A}^0)^{-1/2} \sin (\tau (\wh{\mathcal A}^0)^{1/2})
	\bigr\|_{H^{3/2}(\mathbb{R}^d) \to L_2(\mathbb{R}^d)} 
	\\
	& =\eps \bigl\|  
	\Lambda b(\D)  (\wh{\mathcal A}^0)^{-1/2} \sin ( \eps^{-1}\tau (\wh{\mathcal A}^0)^{1/2}) \mathcal{R}(\eps)^{3/4}
	\bigr\|_{L_2(\mathbb{R}^d) \to L_2(\mathbb{R}^d)} 
	\\
	& \le \eps \| g^{-1}\|^{1/2}_{L_\infty} \| [\Lambda]\|_{H^{3/2}(\mathbb{R}^d) \to L_2(\mathbb{R}^d)} 
	\|\mathcal{R}(\eps)^{3/4}\|_{L_2(\mathbb{R}^d) \to H^{3/2}(\mathbb{R}^d)}. 
	\end{aligned}
\end{equation}
Obviously, for $0< \eps \le 1$ we have 
\begin{equation}
\label{15.146}
\|\mathcal{R}(\eps)^{3/4} \|_{L_2(\mathbb{R}^d) \to H^{3/2}(\mathbb{R}^d)} 
= \sup_{\bxi\in \R^d} (1+ |\bxi|^2)^{3/4} \eps^{3/2} (|\bxi|^2 + \eps^2)^{-3/4}
 \le (1+\eps^2)^{3/4} \le 2^{3/4}.
\end{equation}
As a result, relations  \eqref{15.13}, \eqref{15.145}, and \eqref{15.146} imply that  
$$
\bigl\|  \widehat{J}^\circ_\varepsilon( \tau)  \bigr\|_{H^{3/2} (\R^d)\to L_2(\R^d)} \le 
\bigl(  \widehat{\mathrm{C}}_{4} + 2^{3/4} \| g^{-1}\|^{1/2}_{L_\infty}   \| [\Lambda]\|_{H^{3/2} \to L_2} \bigr)
(1+|\tau|)^{1/2} \eps
$$
for $\tau \in \R$ and $0< \eps \le 1$.
Together with \eqref{15.143}, this yields the required estimate  \eqref{15.135a}.

Now, we check \eqref{15.136a}. From  \eqref{10.21} we deduce 
  \begin{equation}
	\label{15.147}
	\bigl\|  g^\eps b(\D) \widehat{J}^\circ_{\varepsilon}( \tau)  \bigr\|_{H^{3/2}(\mathbb{R}^d) \to L_2(\mathbb{R}^d)} \le 
	\| g\|_{L_\infty}^{1/2} \widehat{\mathrm{C}}^\circ_{6}(1 + |\tau|)^{1/2} \varepsilon,
\quad	\tau \in \R,\ 0< \eps \le 1.
\end{equation}
We use  \eqref{15.134} and estimate the  $(H^{3/2} \to L_2)$-norm of the second term. Similarly to \eqref{15.135}, we have 
 \begin{equation}
	\label{15.148}
	\begin{aligned}
	&\eps \bigl\|    \Lambda^\eps D_l b(\D)  (\wh{\mathcal A}^0)^{-1/2} \sin (\tau (\wh{\mathcal A}^0)^{1/2})
	\bigr\|_{H^{3/2}(\mathbb{R}^d) \to L_2(\mathbb{R}^d)} 
	\\
	& = \bigl\|  
	\Lambda D_l b(\D)  (\wh{\mathcal A}^0)^{-1/2} \sin ( \eps^{-1}\tau (\wh{\mathcal A}^0)^{1/2}) \mathcal{R}(\eps)^{3/4}
	\bigr\|_{L_2(\mathbb{R}^d) \to L_2(\mathbb{R}^d)} 
	\\
	& \le \| g^{-1}\|^{1/2}_{L_\infty} \| [\Lambda]\|_{H^{1/2}(\mathbb{R}^d) \to L_2(\mathbb{R}^d)} 
	\|D_l \mathcal{R}(\eps)^{3/4}\|_{L_2(\mathbb{R}^d) \to H^{1/2}(\mathbb{R}^d)}. 
	\end{aligned}
\end{equation}
Note that Condition \ref{cond_Lambda_2} ensures that the operator $[\Lambda]$ is bounded from $H^{1/2}(\R^d;\AC^n)$ to $L_2(\R^d;\AC^n)$, and the norm $\| [\Lambda] \|_{H^{1/2} \to L_2}$ is controlled by $\| [\Lambda] \|_{H^{3/2} \to H^1}$; see \cite[Subsection 2.2.2]{MSh}. Obviously, for $0< \eps \le 1$ we have 
\begin{equation*}
\| D_l \mathcal{R}(\eps)^{3/4} \|_{L_2(\mathbb{R}^d) \to H^{1/2}(\mathbb{R}^d)} = \sup_{\bxi\in \R^d}
(1+ |\bxi|^2)^{1/4} |\xi_l| \eps^{3/2} (|\bxi|^2 + \eps^2)^{-3/4} \le  2^{1/4} \eps.
\end{equation*}
 Together with  \eqref{15.148} this implies that the  $(H^{3/2} \to L_2)$-norm of the second term in~\eqref{15.134}
does not exceed $C \eps$. Combining this with  \eqref{15.147}, we arrive at  \eqref{15.136a}.
\end{proof}

\subsection{Interpolational results without smoothing}
Interpolational results without smoothing operator differ from the results of Corollaries \ref{A_eps_sin_general_interpltd_thrm}, \ref{A_eps_sin_enchcd_interpltd_thrm_1},
  \ref{cor15.28}, \ref{cor15.29}.
 The reason is that  the operators  $\eps \Lambda^\eps b(\D) (\wh{\mathcal A}^0)^{-1/2} \sin (\tau \wh{\mathcal A}^0)^{1/2})$ and $\eps \Lambda^\eps b(\D) f_0 ({\mathcal A}^0)^{-1/2} \sin (\tau {\mathcal A}^0)^{1/2}) f_0^{-1}$ 
 are not bounded from  $L_2(\R^d; \AC^n)$ to $H^1(\R^d; \AC^n)$. 
 
 We impose an additional condition.

  \begin{condition}
  \label{cond_Lambda_infty}
   Suppose that the   $\Gamma$-periodic solution $\Lambda$ of problem \eqref{equation_for_Lambda}
   is bounded\textup, i.~e.,  $\Lambda \in L_\infty$.
   \end{condition}

We need the following statement; see  \cite[Corollary 2.4]{PSu}.

\begin{proposition} [see~\cite{PSu}]
\label{prop_PSu}
Suppose that Condition  \emph{\ref{cond_Lambda_infty}} is satisfied. Then for any function $u \in H^1(\R^d)$ 
and $\eps>0$ we have
\begin{equation*}
\int\limits_{\R^d} | (\D \Lambda)^\eps(\x)|^2 |u(\x)|^2 \,d\x \le \beta_1 \| u\|^2_{L_2(\R^d)}
+ \beta_2 \eps^2 \| \Lambda\|^2_{L_\infty} \| \D u\|^2_{L_2(\R^d)}.
\end{equation*}
The constants $\beta_1$ and $\beta_2$ depend on  $m,$ $d,$ $\alpha_0,$ $\alpha_1,$ 
$\| g\|_{L_\infty},$ and $\| g^{-1}\|_{L_\infty}$.
\end{proposition}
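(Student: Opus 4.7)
The plan is first to reduce to the case $\eps = 1$ by the dilation $\y = \eps^{-1}\x$. Under this change of variables, $(\D\Lambda)^\eps(\x) = (\D\Lambda)(\y)$, $\|u\|_{L_2(\R^d)}^2 = \eps^d \|\tilde u\|_{L_2(\R^d)}^2$ and $\|\D u\|_{L_2(\R^d)}^2 = \eps^{d-2}\|\D \tilde u\|_{L_2(\R^d)}^2$ with $\tilde u(\y) := u(\eps \y)$, and all powers of $\eps$ cancel to reveal that the claim is equivalent to the $\eps = 1$ estimate
$$\int_{\R^d} |\D\Lambda(\y)|^2 |v(\y)|^2 \, d\y \le \beta_1 \|v\|_{L_2}^2 + \beta_2 \|\Lambda\|_\infty^2 \|\D v\|_{L_2}^2, \quad v \in H^1(\R^d).$$
I would fix the $k$-th column $\Lambda_k \in \wt H^1(\Omega;\AC^n)\cap L_\infty$ of the matrix $\Lambda$, which satisfies the weak equation $b(\D)^* g (b(\D)\Lambda_k + \e_k) = 0$, bound $\int |v|^2 |\D\Lambda_k|^2$, and sum over $k=1,\dots,m$.

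The first ingredient is a commutator-type reduction from $|\D\Lambda_k|$ to $|b(\D)\Lambda_k|$ in the weighted $L_2$-norm. Starting from the Leibniz identity $v\,D_j \Lambda_k = D_j(v\Lambda_k) - (D_j v)\Lambda_k$, one gets $|v|^2|\D\Lambda_k|^2 \le 2|\D(v\Lambda_k)|^2 + 2|\D v|^2 |\Lambda_k|^2$; the Plancherel-type inequality $\int |\D f|^2 \le \alpha_0^{-1}\int |b(\D) f|^2$ (a direct Fourier-side consequence of \eqref{rank_alpha_ineq}) applied to $f = v\Lambda_k$, together with the expansion $b(\D)(v\Lambda_k) = v\, b(\D)\Lambda_k + \sum_j (D_j v) b_j \Lambda_k$ and the bound $|b_j|\le \alpha_1^{1/2}$ from \eqref{5.7a}, produces
$$\int |v|^2 |\D\Lambda_k|^2 \, d\x \le C_0 \int |v|^2 |b(\D)\Lambda_k|^2 \, d\x + C_0' \|\Lambda\|_\infty^2 \|\D v\|_{L_2}^2,$$
with $C_0, C_0'$ depending only on $d,\alpha_0,\alpha_1$. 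It therefore suffices to control the weighted $L_2$-norm of $b(\D)\Lambda_k$ by the right-hand side of the claim.

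The second ingredient is a Caccioppoli-type identity obtained by testing the equation for $\Lambda_k$ against $\eta := |v|^2 \Lambda_k$, which is a legitimate element of $H^1(\R^d;\AC^n)$ precisely because $\Lambda \in L_\infty$ (one first truncates $v$ to compact support and passes to the limit). Using $b(\D)\eta = |v|^2 b(\D)\Lambda_k + \sum_j D_j(|v|^2) b_j\Lambda_k$, the identity $\int \langle g(b(\D)\Lambda_k + \e_k), b(\D)\eta\rangle d\x = 0$ rearranges as
$$\int |v|^2 \langle g\, b(\D)\Lambda_k, b(\D)\Lambda_k\rangle \, d\x = -I_1-I_2-I_3,$$
where $I_1 := \int \langle g\, b(\D)\Lambda_k, \sum_j D_j(|v|^2) b_j\Lambda_k\rangle d\x$, $I_2 := \int |v|^2 \langle g\e_k, b(\D)\Lambda_k\rangle d\x$ and $I_3 := \int \sum_j D_j(|v|^2) \langle g\e_k, b_j\Lambda_k\rangle d\x$. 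The left-hand side is coercive: $\ge \|g^{-1}\|_\infty^{-1}\! \int |v|^2 |b(\D)\Lambda_k|^2$. For $I_1$ and $I_2$, after using $|D_j(|v|^2)| \le 2|v||\D v|$ and $\|\Lambda_k\|_\infty \le \|\Lambda\|_\infty$, Young's inequality $2AB \le \delta A^2 + \delta^{-1} B^2$ with a small $\delta$ absorbs the $\int |v|^2 |b(\D)\Lambda_k|^2$ piece into the left-hand side and leaves a remainder of the required form $C_1 \|v\|_{L_2}^2 + C_2 \|\Lambda\|_\infty^2 \|\D v\|_{L_2}^2$. The delicate point is $I_3$: the natural estimate gives $|I_3| \le 2\|g\|_\infty \alpha_1^{1/2} \|\Lambda\|_\infty \|v\|_{L_2}\|\D v\|_{L_2}$, a product that mixes a single power of $\|\Lambda\|_\infty$ with $\|v\|_{L_2}\|\D v\|_{L_2}$ and cannot be absorbed into a $\|\Lambda\|_\infty$-free $\|v\|_{L_2}^2$ term by any symmetric splitting. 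The resolution — and the main obstacle in the argument — is to apply Young's inequality in the asymmetric form
$$2\|\Lambda\|_\infty \|v\|_{L_2}\|\D v\|_{L_2} \le \|v\|_{L_2}^2 + \|\Lambda\|_\infty^2 \|\D v\|_{L_2}^2,$$
which is exactly tailored to the structure of the claim. Combining with the commutator bound from the first ingredient and summing over $k=1,\dots,m$ yields the desired inequality with $\beta_1, \beta_2$ depending only on $m, d, \alpha_0, \alpha_1, \|g\|_\infty, \|g^{-1}\|_\infty$.
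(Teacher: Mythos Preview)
Your argument is correct. The scaling reduction to $\eps=1$ is standard, the Leibniz/ellipticity step that trades $|\D\Lambda_k|$ for $|b(\D)\Lambda_k|$ via $\int|\D f|^2\le\alpha_0^{-1}\int|b(\D)f|^2$ is valid for $f=v\Lambda_k\in H^1(\R^d;\AC^n)$, and the Caccioppoli identity from testing the cell equation against $|v|^2\Lambda_k$ (first for $v\in C_c^\infty$, then extending by density and Fatou) yields exactly the three terms you describe. The handling of $I_1,I_2,I_3$ is fine; in particular the ``asymmetric'' Young inequality you single out for $I_3$ is just the elementary bound $2ab\le a^2+b^2$ with $a=\|v\|_{L_2}$ and $b=\|\Lambda\|_\infty\|\D v\|_{L_2}$, so there is no real obstacle there---you may want to tone down the emphasis.

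Note, however, that the present paper does \emph{not} give a proof of this proposition: it is quoted verbatim from \cite[Corollary~2.4]{PSu} and used as a black box. So there is no ``paper's own proof'' to compare against here. Your write-up supplies a self-contained argument that matches the standard Caccioppoli-type proof one would expect (and that underlies the cited result in \cite{PSu}); it is a reasonable reconstruction.
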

 
We rely on the following statement.
 
 \begin{proposition}
 \label{prop11.19}
 Suppose that Condition \emph{\ref{cond_Lambda_infty}} is satisfied. 
 Then for  ${0\!<\! \eps \!\le\! 1}$ and $\tau \in \R$ we have 
 \begin{align}
 \label{11.*20}
\bigl\| \D \widehat{J}^\circ_\varepsilon( \tau)  \bigr\|_{H^{1}(\mathbb{R}^d) \to L_2(\mathbb{R}^d)} \le 
\wh{\mathrm{C}}_{11},
\\
\label{11.*21}
\bigl\|  \wh{I}^\circ_\varepsilon( \tau)  \bigr\|_{H^{1}(\mathbb{R}^d) \to L_2(\mathbb{R}^d)} 
\le \wh{\mathrm{C}}_{12}.
\end{align}
 The constants $\wh{\mathrm{C}}_{11}$ and $\wh{\mathrm{C}}_{12}$ depend on  $m,$ $d,$ $\alpha_0,$ $\alpha_1,$ 
$\| g\|_{L_\infty},$  $\| g^{-1}\|_{L_\infty},$ and $\| \Lambda\|_{L_\infty}$.
 \end{proposition}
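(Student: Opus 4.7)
The plan is to estimate the two operators separately, in each case peeling off a \textquotedblleft main\textquotedblright \ piece that is uniformly bounded from $L_2$ to $L_2$ (by elementary functional calculus for $\wh{\A}_\eps$ and $\wh{\A}^0$) and leaving a corrector that is controlled in the $(H^1 \to L_2)$-norm via Proposition~\ref{prop_PSu}, which is the only place where Condition~\ref{cond_Lambda_infty} is used. Since we only need uniform bounds (not convergence as $\eps \to 0$), no cancellation between the main terms and the corrector is needed.

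For \eqref{11.*20}, write $\D \wh{J}^\circ_\eps(\tau) = \D \wh{\A}_\eps^{-1/2}\sin(\tau \wh{\A}_\eps^{1/2}) - \D (\wh{\A}^0)^{-1/2}\sin(\tau (\wh{\A}^0)^{1/2}) - \D\,(\eps \Lambda^\eps b(\D))(\wh{\A}^0)^{-1/2}\sin(\tau (\wh{\A}^0)^{1/2})$. The first two terms are bounded on $L_2$ by $\wh{c}_*^{-1/2}$, via \eqref{15.31a} and its counterpart for $\wh{\A}^0$, applied to the uniformly bounded factor $\sin(\cdot)$. For the corrector, the product rule gives $D_j(\eps \Lambda^\eps b(\D) v) = (D_j\Lambda)^\eps b(\D) v + \eps \Lambda^\eps b(\D) D_j v$, where $v = (\wh{\A}^0)^{-1/2}\sin(\tau (\wh{\A}^0)^{1/2}) u$. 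Setting $w = b(\D) v$, the estimates $\|w\|_{L_2} \le \|g^{-1}\|^{1/2}_{L_\infty}\|u\|_{L_2}$ and $\|\D w\|_{L_2} \le \|g^{-1}\|^{1/2}_{L_\infty}\|\D u\|_{L_2}$ hold. The term $\eps \Lambda^\eps b(\D) D_j v$ is dominated by $\eps \|\Lambda\|_{L_\infty}\|g^{-1}\|^{1/2}_{L_\infty}\|\D u\|_{L_2}$, and Proposition~\ref{prop_PSu} applied with $w$ in place of $u$ yields $\|(D_j\Lambda)^\eps w\|_{L_2}^2 \le \beta_1 \|w\|^2_{L_2} + \beta_2 \eps^2 \|\Lambda\|^2_{L_\infty}\|\D w\|^2_{L_2}$, which is bounded by $C \|u\|^2_{H^1}$ for $0 < \eps \le 1$.

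For \eqref{11.*21}, the first summand $g^\eps b(\D) \wh{\A}_\eps^{-1/2} \sin(\tau \wh{\A}_\eps^{1/2})$ is uniformly bounded on $L_2$ by $\|g\|^{1/2}_{L_\infty}$, because $\|(g^\eps)^{1/2} b(\D) \wh{\A}_\eps^{-1/2}\|_{L_2 \to L_2} \le 1$ by the very definition $\wh{\A}_\eps = (g^\eps)^{1/2} b(\D))^{*}(g^\eps)^{1/2} b(\D))$. For the second summand, the key step is the decomposition coming from \eqref{g_tilde}, namely $\wt{g}(\x) = g(\x) + g(\x) b(\D)\Lambda(\x)$; together with $w = b(\D)(\wh{\A}^0)^{-1/2} \sin(\tau (\wh{\A}^0)^{1/2}) u$ this gives
\begin{equation*}
\wt{g}^\eps b(\D) (\wh{\A}^0)^{-1/2}\sin(\tau (\wh{\A}^0)^{1/2}) u = g^\eps w + g^\eps (b(\D)\Lambda)^\eps w.
\end{equation*}
The first piece is bounded by $\|g\|_{L_\infty}\|g^{-1}\|^{1/2}_{L_\infty}\|u\|_{L_2}$. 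The second is treated by expanding $(b(\D)\Lambda)^\eps = \sum_{l=1}^d b_l (D_l \Lambda)^\eps$ and applying Proposition~\ref{prop_PSu} to each $\|(D_l\Lambda)^\eps w\|_{L_2}$, which gives the bound $C\bigl(\|w\|_{L_2} + \eps \|\Lambda\|_{L_\infty}\|\D w\|_{L_2}\bigr) \le C' \|u\|_{H^1}$ for $0 < \eps \le 1$.

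The main obstacle is the $\wt{g}^\eps$-term in $\wh{I}^\circ_\eps(\tau)$: the periodic factor $\wt{g}$ belongs only to $L_2(\Omega)$ and is \emph{not} an $L_\infty$-multiplier on $L_2(\R^d)$, so the naive bound $\|\wt{g}^\eps w\|_{L_2} \lesssim \|\wt{g}\|_{L_\infty} \|w\|_{L_2}$ is unavailable. The resolution is precisely the algebraic identity $\wt{g} = g + g\, b(\D)\Lambda$ combined with Proposition~\ref{prop_PSu}, i.e., the fact that under Condition~\ref{cond_Lambda_infty} the rapidly oscillating function $(D\Lambda)^\eps$ \emph{does} act as a bounded multiplier from $H^1(\R^d)$ to $L_2(\R^d)$ with norm uniform in $\eps \le 1$. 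All constants come out independent of $\tau$ and $\eps$ because $\|\sin\|_{L_2 \to L_2} \le 1$ and both semigroup estimates used are $\tau$-independent.
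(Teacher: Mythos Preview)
Your proof is correct and follows essentially the same approach as the paper: for \eqref{11.*20} you split off the main part via \eqref{15.31a} (the paper cites \eqref{15.45}) and handle the corrector by the product rule plus Proposition~\ref{prop_PSu}; for \eqref{11.*21} you use the identity $\wt{g} = g + g\,b(\D)\Lambda$ coming from \eqref{g_tilde} and again Proposition~\ref{prop_PSu}, exactly as the paper does. The only cosmetic difference is that the paper groups the two $g^\eps b(\D)(\cdot)$-terms into a single difference $\wh{I}^\circ_{1,\eps}(\tau) = g^\eps b(\D)\bigl(\wh{\A}_\eps^{-1/2}\sin - (\wh{\A}^0)^{-1/2}\sin\bigr)$ before estimating, while you bound them separately; the ingredients and constants are the same.
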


\begin{proof}
Let us check \eqref{11.*20}.
We estimate the norm of the corrector. By Proposition \ref{prop_PSu}, we have 
\begin{equation*}
\begin{aligned}
\| \D \eps \Lambda^\eps b&(\D) (\wh{\mathcal A}^0)^{-1/2} \sin (\tau \wh{\mathcal A}^0)^{1/2}) 
\|_{H^1(\R^d) \to L_2(\R^d)}
\\
&\le \| (\D\Lambda)^\eps b(\D) (\wh{\mathcal A}^0)^{-1/2} \sin (\tau (\wh{\mathcal A}^0)^{1/2}) \|_{H^1 \to L_2}
\\
&\quad+ \eps \|\Lambda \|_{L_\infty} \|\D b(\D) (\wh{\mathcal A}^0)^{-1/2} \sin (\tau (\wh{\mathcal A}^0)^{1/2}) \|_{H^1\to L_2}
\\
& \le  \sqrt{\beta_1} 
\| b(\D) (\wh{\mathcal A}^0)^{-1/2} \sin (\tau (\wh{\mathcal A}^0)^{1/2}) \|_{H^1\to L_2}
\\
&\quad+
 \bigl(1 + \sqrt{\beta_2} \bigr)\eps \|\Lambda \|_{L_\infty} \|\D b(\D) 
 (\wh{\mathcal A}^0)^{-1/2} \sin (\tau (\wh{\mathcal A}^0)^{1/2}) \|_{H^1\to L_2}
 \\
 &\le \sqrt{\beta_1} \| g^{-1}\|_{L_\infty}^{1/2} +\bigl(1 + \sqrt{\beta_2} \bigr)\eps \|\Lambda \|_{L_\infty} 
 \| g^{-1}\|_{L_\infty}^{1/2}.
\end{aligned}
\end{equation*}
Together with  \eqref{15.45}, this implies \eqref{11.*20}.

Now we check estimate  \eqref{11.*21}. By \eqref{g_tilde} and \eqref{15.120},
$$
\begin{aligned}
\wh{I}^\circ_\varepsilon( \tau) &= g^\eps b(\D) \bigl( \wh{\mathcal A}_\eps^{-1/2} \sin (\tau \wh{\mathcal A}_\eps^{1/2}) - (\wh{\mathcal A}^0)^{-1/2} \sin (\tau (\wh{\mathcal A}^0)^{1/2}) \bigr)
\\
&- g^\eps (b(\D) \Lambda)^\eps b(\D) (\wh{\mathcal A}^0)^{-1/2} \sin (\tau (\wh{\mathcal A}^0)^{1/2}). 
\end{aligned}
$$
Denote the terms on the right by  $\wh{I}^\circ_{1,\varepsilon}( \tau)$ and $\wh{I}^\circ_{2,\varepsilon}( \tau)$. 
Obviously,
$$
\| \wh{I}^\circ_{1,\varepsilon}( \tau)\|_{L_2(\R^d) \to L_2(\R^d)} 
 \le \| g\|_{L_\infty}^{1/2} + \| g\|_{L_\infty} \| g^{-1} \|_{L_\infty}^{1/2}.
$$
Using the relation $b(\D) \Lambda = \sum_{l=1}^d b_l D_l \Lambda$ and Proposition \ref{prop_PSu} 
and taking \eqref{5.7a} into account, we obtain 
{\allowdisplaybreaks
\begin{align*}
&\| \wh{I}^\circ_{2,\varepsilon}( \tau)\|_{H^1(\R^d) \to L_2(\R^d)} 
\\
& \le \| g\|_{L_\infty} (d \alpha_1)^{1/2}
\sqrt{\beta_1} \|  b(\D)(\wh{\mathcal A}^0)^{-1/2} \sin (\tau (\wh{\mathcal A}^0)^{1/2}) \|_{H^1 \to L_2}
\\
&\quad
+  \| g\|_{L_\infty} (d \alpha_1)^{1/2} \sqrt{\beta_2}\eps \| \Lambda \|_{L_\infty} 
\| \D  b(\D)(\wh{\mathcal A}^0)^{-1/2} \sin (\tau (\wh{\mathcal A}^0)^{1/2}) \|_{H^1 \to L_2}
\\
 &\le \| g\|_{L_\infty} (d\alpha_1)^{1/2} \bigl( \sqrt{\beta_1}\| g^{-1}\|^{1/2}_{L_\infty}  
 + \sqrt{\beta_2} \eps \| \Lambda \|_{L_\infty} \| g^{-1}\|^{1/2}_{L_\infty} \bigr).
\end{align*}
}
\hspace{-3mm}
As a result, we arrive at estimate \eqref{11.*21}.
\end{proof}

 According to Remark \ref{rem_Lambda}, Condition \ref{cond_Lambda_infty} ensures that 
 Conditions  \ref{cond_Lambda_1} and \ref{cond_Lambda_2} are satisfied.
Using interpolation, we deduce the following corollary from  Theorems \ref{th15.48}$(1^\circ)$, \ref{th15.49}$(1^\circ)$ and Proposition \ref{prop11.19}.

\begin{corollary}
\label{cor15.53}
 Suppose that Condition  \emph{\ref{cond_Lambda_infty}} is satisfied.
  
  \noindent $1^\circ$. Under the assumptions of Theorem \emph{\ref{A_eps_sin_general_thrm}}, for   
   $\tau \in \R$ and $0< \eps \le 1$ we have  
  \begin{align}
	\label{15.153}
	&\bigl\|  \D \widehat{J}^\circ_\varepsilon( \tau)  \bigr\|_{H^{1+r}(\mathbb{R}^d) \to L_2(\mathbb{R}^d)} \le \widehat{\mathfrak{C}}^\circ_5(r)(1 + |\tau|)^r \varepsilon^r,\quad 0\le r \le 1, 
	\\
	\nonumber
	&\bigl\|   \widehat{I}^\circ_\varepsilon( \tau)  \bigr\|_{H^{1+r}(\mathbb{R}^d) \to L_2(\mathbb{R}^d)} \le \widehat{\mathfrak{C}}^\circ_6 (r)(1 + |\tau|)^r \varepsilon^r,\quad 0\le r \le 1.
	\end{align}
	
  \noindent $2^\circ$. Under the assumptions of Theorem \emph{\ref{A_eps_sin_enchcd_thrm_1}}, for   
   $\tau \in \R$ and $0< \eps \le 1$ we have  
  \begin{align}
	\label{15.155}
	&\bigl\|  \D \widehat{J}^\circ_\varepsilon( \tau)  \bigr\|_{H^{1+r}(\mathbb{R}^d) \to L_2(\mathbb{R}^d)} \le \widehat{\mathfrak{C}}^\circ_7(r)(1 + |\tau|)^r \varepsilon^{2r},\quad 0\le r \le 1/2, 
	\\
	\nonumber
	&\bigl\|   \widehat{I}^\circ_\varepsilon( \tau)  \bigr\|_{H^{1+r}(\mathbb{R}^d) \to L_2(\mathbb{R}^d)} \le \widehat{\mathfrak{C}}^\circ_{8} (r)(1 + |\tau|)^r \varepsilon^{2r},\quad 0\le r \le 1/2.
	\end{align}
 \end{corollary}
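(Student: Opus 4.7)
The plan is to deduce both statements by standard complex interpolation on the Sobolev scale, using Proposition~\ref{prop11.19} as the endpoint at $H^1$ (where the bounds are independent of $\tau$ and $\varepsilon$) and Theorems~\ref{th15.48}$(1^\circ)$ and \ref{th15.49}$(1^\circ)$ as the endpoints at $H^2$ and $H^{3/2}$ respectively. Since the operator $(\mathcal{H}_0+I)^{s/2}$ is an isometric isomorphism of $H^s(\mathbb{R}^d;\mathbb{C}^n)$ onto $L_2(\mathbb{R}^d;\mathbb{C}^n)$, the scale $\{H^s\}$ is a complex interpolation scale, and any operator $T$ that is bounded simultaneously $H^{s_0}\to L_2$ with norm $M_0$ and $H^{s_1}\to L_2$ with norm $M_1$ is bounded $H^{(1-\theta)s_0+\theta s_1}\to L_2$ with norm $\le M_0^{1-\theta} M_1^{\theta}$ for $0\le\theta\le 1$.

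For statement~$1^\circ$, I would apply this with $s_0=1$ and $s_1=2$. For the operator $\mathbf{D}\widehat{J}^\circ_\varepsilon(\tau)$, the endpoint at $H^1$ is \eqref{11.*20}, giving the $\tau$- and $\varepsilon$-independent bound $\widehat{\mathrm{C}}_{11}$; the endpoint at $H^2$ follows from \eqref{15.125} combined with the trivial estimate $\|\mathbf{D}\mathbf{u}\|_{L_2}\le\|\mathbf{u}\|_{H^1}$, giving $\widehat{\mathrm{C}}^\circ_7(1+|\tau|)\varepsilon$. Writing $1+r=(1-r)\cdot 1+r\cdot 2$ and interpolating with $\theta=r\in[0,1]$ yields
\[
\bigl\|\mathbf{D}\widehat{J}^\circ_\varepsilon(\tau)\bigr\|_{H^{1+r}\to L_2}\le \widehat{\mathrm{C}}_{11}^{1-r}\bigl(\widehat{\mathrm{C}}^\circ_7\bigr)^{r}(1+|\tau|)^{r}\varepsilon^{r},
\]
which is \eqref{15.153} with $\widehat{\mathfrak{C}}^\circ_5(r)=\widehat{\mathrm{C}}_{11}^{1-r}(\widehat{\mathrm{C}}^\circ_7)^{r}$. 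The estimate for $\widehat{I}^\circ_\varepsilon(\tau)$ is obtained in the same way from \eqref{11.*21} and \eqref{15.126}.

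For statement~$2^\circ$, I would apply the same interpolation recipe with $s_0=1$ and $s_1=3/2$. The endpoint at $H^1$ is again given by Proposition~\ref{prop11.19} (this bound is valid under the hypotheses of Theorem~\ref{A_eps_sin_enchcd_thrm_1} as well, since Condition~\ref{cond_Lambda_infty} is assumed), and the endpoint at $H^{3/2}$ comes from \eqref{15.135a} and \eqref{15.136a}, which supply the factor $(1+|\tau|)^{1/2}\varepsilon$. Writing $1+r=(1-2r)\cdot 1+2r\cdot(3/2)$ for $r\in[0,1/2]$ and taking $\theta=2r$, we obtain
\[
\bigl\|\mathbf{D}\widehat{J}^\circ_\varepsilon(\tau)\bigr\|_{H^{1+r}\to L_2}\le \widehat{\mathrm{C}}_{11}^{1-2r}\bigl(\widehat{\mathrm{C}}^\circ_9\bigr)^{2r}(1+|\tau|)^{r}\varepsilon^{2r},
\]
which is \eqref{15.155}, and similarly for $\widehat{I}^\circ_\varepsilon(\tau)$.

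There is no substantive obstacle: the only point worth checking is that the endpoint bounds at $H^1$ provided by Proposition~\ref{prop11.19} are genuinely uniform in $\tau$, so that the $\tau$-dependence of the interpolated norm is dictated entirely by the high-regularity endpoint; this is precisely what Condition~\ref{cond_Lambda_infty} buys through its use in the proof of Proposition~\ref{prop11.19}. Once this is in place, the rest is a one-line application of the Riesz--Thorin/Calder\'on interpolation theorem on the Sobolev scale.
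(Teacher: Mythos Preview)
Your proposal is correct and follows essentially the same route as the paper: the corollary is stated immediately after the remark that Condition~\ref{cond_Lambda_infty} ensures Conditions~\ref{cond_Lambda_1} and~\ref{cond_Lambda_2}, and the paper's one-line proof is exactly ``using interpolation, we deduce the following corollary from Theorems~\ref{th15.48}$(1^\circ)$, \ref{th15.49}$(1^\circ)$ and Proposition~\ref{prop11.19}.'' The only point you leave implicit is that Condition~\ref{cond_Lambda_infty} guarantees the hypotheses of Theorems~\ref{th15.48} and~\ref{th15.49} (via Remark~\ref{rem_Lambda}), so that the $H^2$ and $H^{3/2}$ endpoints are indeed available; otherwise your interpolation argument, choice of endpoints, and explicit constants match the paper exactly.
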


\begin{remark}
\label{rem15.56}
Suppose that Condition \ref{cond_Lambda_infty} is satisfied.
 
 \noindent $1^\circ$.  Under the assumptions of Theorem  \ref{A_eps_sin_general_thrm}, from  \eqref{15.11}, \eqref{11.*20}, and the obvious estimate 
 \begin{equation}
 \label{15.158aa}
 \| \eps \Lambda^\eps b(\D) (\wh{\mathcal A}^0)^{-1/2} \sin (\tau (\wh{\mathcal A}^0)^{1/2})
 \|_{L_2(\R^d) \to L_2(\R^d)}  \le \eps \| \Lambda \|_{L_\infty} \| g^{-1}\|_{L_\infty}^{1/2}
\end{equation} 
 it follows that 
 \begin{equation}
 \label{15.158a}
 \bigl\|  \widehat{J}^\circ_\varepsilon( \tau)  \bigr\|_{H^1(\mathbb{R}^d) \to H^1(\mathbb{R}^d)} \le
 \wh{\mathrm{C}}_{13} \bigl(1+ (1+|\tau|) \eps \bigr),
 \quad \tau \in \R,\ 0< \eps \le 1.
\end{equation}
Interpolating between \eqref{15.158a} and \eqref{15.125}, for $\tau \in \R$ and $0< \eps \le 1$ we obtain  
 \begin{equation*}
 \bigl\|  \widehat{J}^\circ_\varepsilon( \tau)  \bigr\|_{H^{1+r}(\mathbb{R}^d) \to H^1(\mathbb{R}^d)} \le
 \wh{\mathfrak{C}}_{9}(r) (1+|\tau|)^{r} \eps^{r} \bigl(1+ (1+|\tau|) \eps \bigr)^{1-r},
 \quad 0 \le r \le 1.
\end{equation*}
For bounded values of $(1+|\tau|)\eps$ the right-hand side does not exceed 
$C (1+|\tau|)^{r} \eps^{r}$, i.~e., has the same order as estimate \eqref{15.153}.

\noindent $2^\circ$. Under the assumptions of Theorem  \ref{A_eps_sin_enchcd_thrm_1},
 from \eqref{15.13}, \eqref{11.*20}, and \eqref{15.158aa} it follows that 
 \begin{equation}
 \label{15.158c}
 \bigl\|  \widehat{J}^\circ_\varepsilon( \tau)  \bigr\|_{H^1(\mathbb{R}^d) \to H^1(\mathbb{R}^d)} \le
 \wh{\mathrm{C}}_{14} \bigl(1+ (1+|\tau|)^{1/2} \eps \bigr),
 \quad \tau \in \R,\ 0< \eps \le 1.
\end{equation}
Interpolating between  \eqref{15.158c} and \eqref{15.135a}, for $\tau \in \R$ and $0< \eps \le 1$ we have  
 \begin{equation}
 \label{15.158d}
 \bigl\|  \widehat{J}^\circ_\varepsilon( \tau)  \bigr\|_{H^{1+r}(\mathbb{R}^d) \to H^1(\mathbb{R}^d)} \le
 \wh{\mathfrak{C}}_{10}(r) (1\!+\!|\tau|)^{r} \eps^{2r} \bigl(1\!+\! (1\!+\!|\tau|)^{1/2} \eps \bigr)^{1\!-2r},
 \quad 0 \le r \le 1/2.
\end{equation}
For bounded values of $(1+|\tau|)^{1/2} \eps$ the right-hand side does not exceed 
\hbox{$C (1+|\tau|)^{r} \eps^{2r}$}, i.~e., has the same order as estimate~\eqref{15.155}.
\end{remark}

It is easy to check the analog of  Proposition   \ref{prop11.19}  for the operators 
${J}^\circ_\varepsilon( \tau)$ and~${I}^\circ_\varepsilon( \tau)$.

 \begin{proposition}
 \label{prop15.56a}
 Suppose that Condition  \emph{\ref{cond_Lambda_infty}} is satisfied. Then for  ${0\!<\! \eps \!\le\! 1}$ and 
 $\tau \in \R$ we have 
 \begin{align}
 \label{15.174a}
\bigl\| \D {J}^\circ_\varepsilon( \tau)  \bigr\|_{H^{1}(\mathbb{R}^d) \to L_2(\mathbb{R}^d)} \le 
{\mathrm{C}}_{11},
\\
\nonumber
\bigl\|  {I}^\circ_\varepsilon( \tau)  \bigr\|_{H^{1}(\mathbb{R}^d) \to L_2(\mathbb{R}^d)} 
\le {\mathrm{C}}_{12}.
\end{align}
 The constants ${\mathrm{C}}_{11}$ and ${\mathrm{C}}_{12}$ depend on 
  $m,$ $d,$ $\alpha_0,$ $\alpha_1,$  $\| g\|_{L_\infty},$ $\| g^{-1}\|_{L_\infty},$ $\| f\|_{L_\infty},$ $\| f^{-1}\|_{L_\infty},$ and also on $\| \Lambda\|_{L_\infty}$.
 \end{proposition}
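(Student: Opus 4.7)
The plan is to mimic the proof of Proposition \ref{prop11.19}, of which Proposition~15.56a is the sandwiched analog: all of the required ingredients have already been assembled, and the central analytic tool is again the Meyers-type estimate of Proposition \ref{prop_PSu}, which substitutes the (generally false) pointwise boundedness of $\D\Lambda$ by an $H^1$-controlled $L_2$-bound for $(\D\Lambda)^\eps u$.

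For the first estimate $\|\D J^\circ_\eps(\tau)\|_{H^1 \to L_2} \le \mathrm{C}_{11}$, I would first split
\[
\D J^\circ_\eps(\tau) = \D\bigl[f^\eps \mathcal{A}_\eps^{-1/2}\sin(\tau \mathcal{A}_\eps^{1/2})(f^\eps)^{-1} - f_0(\mathcal{A}^0)^{-1/2}\sin(\tau(\mathcal{A}^0)^{1/2})f_0^{-1}\bigr] - \D\bigl[\eps\Lambda^\eps b(\D) f_0(\mathcal{A}^0)^{-1/2}\sin(\tau(\mathcal{A}^0)^{1/2})f_0^{-1}\bigr].
\]
The first bracket is uniformly bounded in $L_2 \to L_2$ by combining the factorization $\|\D(f^\eps u)\|_{L_2} \le \widehat{c}_*^{-1/2}\|\mathcal{A}_\eps^{1/2} u\|_{L_2}$ (the analog of \eqref{15.31a} for $\mathcal{A}_\eps$) with $\|\sin(\tau \mathcal{A}_\eps^{1/2})\| \le 1$ and $\|(f^\eps)^{-1}\| \le \|f^{-1}\|_{L_\infty}$, and the parallel argument for $\mathcal{A}^0$, $f_0$. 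In the second bracket the Leibniz rule generates the two summands $(\D\Lambda)^\eps b(\D) f_0(\cdots)f_0^{-1}$ and $\eps \Lambda^\eps \D b(\D) f_0(\cdots) f_0^{-1}$. Since $f_0$ and $\mathcal{A}^0$ are constant-coefficient, the interior operator $b(\D) f_0(\mathcal{A}^0)^{-1/2}\sin(\tau(\mathcal{A}^0)^{1/2})f_0^{-1}$ commutes with $\D$ and is bounded from $H^s$ to $H^s$ with norm $\|g^{-1}\|_{L_\infty}^{1/2}\|f^{-1}\|_{L_\infty}$, so Proposition \ref{prop_PSu} applied to $u = b(\D) f_0(\cdots) f_0^{-1}\phi$ controls $\|(\D\Lambda)^\eps u\|_{L_2}$ by $C(\|\phi\|_{L_2} + \eps\|\D\phi\|_{L_2}) \le C'\|\phi\|_{H^1}$, while Condition \ref{cond_Lambda_infty} dispatches the $\eps\Lambda^\eps\D(\cdots)$ term.

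For the second estimate $\|I^\circ_\eps(\tau)\|_{H^1 \to L_2} \le \mathrm{C}_{12}$, I would invoke the algebraic identity $\wt{g}^\eps b(\D) v = g^\eps b(\D) v + g^\eps(b(\D)\Lambda)^\eps b(\D) v$, obtained from $\wt{g} = g(b(\D)\Lambda + \1_m)$ (cf.~the computation before \eqref{15.134}), to rewrite
\[
I^\circ_\eps(\tau) = g^\eps b(\D)\bigl[f^\eps \mathcal{A}_\eps^{-1/2}\sin(\tau\mathcal{A}_\eps^{1/2})(f^\eps)^{-1} - f_0(\mathcal{A}^0)^{-1/2}\sin(\tau(\mathcal{A}^0)^{1/2})f_0^{-1}\bigr] - g^\eps (b(\D)\Lambda)^\eps b(\D) f_0(\mathcal{A}^0)^{-1/2}\sin(\tau(\mathcal{A}^0)^{1/2})f_0^{-1}.
\]
The first summand is uniformly bounded in $L_2 \to L_2$ by $\|g^\eps\| \le \|g\|_{L_\infty}$ together with the factorization $\|g^{1/2}b(\D)f^\eps u\|_{L_2} = \|\mathcal{A}_\eps^{1/2}u\|_{L_2}$ (and the analog for $\mathcal{A}^0$), each half controlled by $\|g\|_{L_\infty}^{1/2}\|g^{-1}\|_{L_\infty}^{1/2}\|f^{-1}\|_{L_\infty}$ combined with $\|\sin\|\le 1$. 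Expanding $(b(\D)\Lambda)^\eps = \sum_l b_l (D_l\Lambda)^\eps$ via \eqref{5.7a}, the second summand is handled exactly as in the derivation of \eqref{11.*21}: Proposition \ref{prop_PSu} applied to each component $(D_l\Lambda)^\eps u$ with $u = b(\D) f_0(\cdots)f_0^{-1}\phi$ delivers the required $(H^1 \to L_2)$-bound.

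The main obstacle is, in fact, purely bookkeeping: the Leibniz rule generates several terms mixing the oscillating factor $\Lambda^\eps$ with the non-commuting multipliers $f^\eps$, $(f^\eps)^{-1}$, but since $f_0$ and $\mathcal{A}^0$ have constant coefficients they commute cleanly with $\D$, so the entire argument reduces to that of Proposition \ref{prop11.19} with additional $L_\infty$-factors $\|f\|_{L_\infty}$, $\|f^{-1}\|_{L_\infty}$ absorbed into the constants. No conceptually new input beyond Proposition \ref{prop_PSu} and the coercivity of $\mathcal{A}_\eps$ is required.
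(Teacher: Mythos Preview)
Your proposal is correct and follows precisely the route the paper intends: the paper's proof is the single sentence ``It is easy to check the analog of Proposition~\ref{prop11.19} for the operators ${J}^\circ_\varepsilon(\tau)$ and ${I}^\circ_\varepsilon(\tau)$,'' and your argument is exactly that analog, with the sandwiching factors $f^\eps$, $(f^\eps)^{-1}$, $f_0$, $f_0^{-1}$ inserted and handled via the form lower bound for $\mathcal{A}_\eps$ and the constant-coefficient nature of $f_0$, $\mathcal{A}^0$. The only minor imprecision is in your stated constant for the first half of the $I^\circ_\eps$ principal term (it is $\|g\|_{L_\infty}^{1/2}\|f^{-1}\|_{L_\infty}$ rather than $\|g\|_{L_\infty}^{1/2}\|g^{-1}\|_{L_\infty}^{1/2}\|f^{-1}\|_{L_\infty}$, cf.\ the bound for $\wh{I}^\circ_{1,\eps}$ in the proof of Proposition~\ref{prop11.19}), but this is immaterial to the argument.
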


With the help of interpolation, Theorems \ref{th15.48}$(2^\circ)$, \ref{th15.49}$(2^\circ)$ 
and Proposition~\ref{prop15.56a} imply the following corollary.

\begin{corollary}
\label{cor15.54b}
  Suppose that Condition \emph{\ref{cond_Lambda_infty}} is satisfied.
  
\noindent   $1^\circ$. Under the assumptions of Theorem \emph{\ref{sndw_A_eps_sin_general_thrm}}, for   
   $\tau \in \R$ and $0< \eps \le 1$ we have  
  \begin{align}
	\label{15.176}
	&\bigl\|  \D {J}^\circ_\varepsilon( \tau)  \bigr\|_{H^{1+r}(\mathbb{R}^d) \to L_2(\mathbb{R}^d)} \le {\mathfrak{C}}^\circ_5(r)(1 + |\tau|)^r \varepsilon^r,\quad 0\le r \le 1, 
	\\
	\nonumber
	&\bigl\|   {I}^\circ_\varepsilon( \tau)  \bigr\|_{H^{1+r}(\mathbb{R}^d) \to L_2(\mathbb{R}^d)} \le {\mathfrak{C}}^\circ_6 (r)(1 + |\tau|)^r \varepsilon^r,\quad 0\le r \le 1.
	\end{align}
	
   \noindent $2^\circ$. Under the assumptions of Theorem \emph{\ref{th15.35}}, for  
   $\tau \in \R$ and $0< \eps \le 1$ we have  
  \begin{align*}
	\bigl\|  \D {J}^\circ_\varepsilon( \tau)  \bigr\|_{H^{1+r}(\mathbb{R}^d) \to L_2(\mathbb{R}^d)} &\le {\mathfrak{C}}^\circ_7(r)(1 + |\tau|)^r \varepsilon^{2r},\quad 0\le r \le 1/2, 
	\\
	\bigl\|   {I}^\circ_\varepsilon( \tau)  \bigr\|_{H^{1+r}(\mathbb{R}^d) \to L_2(\mathbb{R}^d)} &\le {\mathfrak{C}}^\circ_{8} (r)(1 + |\tau|)^r \varepsilon^{2r},\quad 0\le r \le 1/2.
	\end{align*}
 \end{corollary}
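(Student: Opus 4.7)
The plan is to prove Corollary~\ref{cor15.54b} by straightforward complex interpolation between Sobolev spaces, combining the already established endpoint bounds. First I would observe that Condition~\ref{cond_Lambda_infty} is a priori stronger than Conditions~\ref{cond_Lambda_1} and \ref{cond_Lambda_2} (by Remark~\ref{rem_Lambda}), so all the estimates of Theorems~\ref{th15.48}$(2^\circ)$, \ref{th15.49}$(2^\circ)$, and Proposition~\ref{prop15.56a} apply simultaneously to the operators $J^\circ_\eps(\tau)$ and $I^\circ_\eps(\tau)$.

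For part $1^\circ$, I would extract from Theorem~\ref{th15.48}$(2^\circ)$ the consequence
\begin{equation*}
\bigl\| \D {J}^\circ_\eps(\tau) \bigr\|_{H^2(\R^d) \to L_2(\R^d)} \le {\mathrm C}^\circ_{7} (1+|\tau|) \eps,
\qquad
\bigl\| {I}^\circ_\eps(\tau) \bigr\|_{H^2(\R^d) \to L_2(\R^d)} \le {\mathrm C}^\circ_{8} (1+|\tau|) \eps,
\end{equation*}
valid for $\tau \in \R$ and $0<\eps\le 1$ (the first inequality follows because $\|\D u\|_{L_2} \le \|u\|_{H^1}$). Combined with the uniform bounds \eqref{15.174a} of Proposition~\ref{prop15.56a} for the $(H^1 \to L_2)$-norms, complex interpolation between the Sobolev endpoints $H^1$ and $H^2$ (here $(H^1, H^2)_\theta = H^{1+\theta}$, with $\theta = r \in [0,1]$) directly yields the desired estimates \eqref{15.176} of part~$1^\circ$ with constants $\mathfrak{C}^\circ_5(r) = C_{11}^{1-r}(\mathrm{C}^\circ_7)^r$ and $\mathfrak{C}^\circ_6(r) = C_{12}^{1-r}(\mathrm{C}^\circ_8)^r$.

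For part $2^\circ$, the procedure is identical, only now I interpolate between the $(H^1 \to L_2)$ endpoint from Proposition~\ref{prop15.56a} and the $(H^{3/2} \to L_2)$ endpoint supplied by Theorem~\ref{th15.49}$(2^\circ)$, namely
\begin{equation*}
\bigl\| \D {J}^\circ_\eps(\tau) \bigr\|_{H^{3/2}(\R^d) \to L_2(\R^d)} \le {\mathrm C}^\circ_{9} (1+|\tau|)^{1/2} \eps,
\qquad
\bigl\| {I}^\circ_\eps(\tau) \bigr\|_{H^{3/2}(\R^d) \to L_2(\R^d)} \le {\mathrm C}^\circ_{10} (1+|\tau|)^{1/2} \eps.
\end{equation*}
Since $(H^1, H^{3/2})_\theta = H^{1+\theta/2}$, setting $\theta = 2r$ with $r\in[0,1/2]$ produces the target domain $H^{1+r}$, and interpolation multiplies the endpoint bounds to give $(1+|\tau|)^{r} \eps^{2r}$, as required.

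I do not expect any serious obstacle: the endpoint estimates are already established in the paper, and the only substantive issue is to cite the correct interpolation identity for the Bessel potential (Sobolev) scale $\{H^s(\R^d;\AC^n)\}$ together with the elementary rule $\|T\|_{H^{1+\theta} \to L_2} \le \|T\|_{H^1 \to L_2}^{1-\theta} \|T\|_{H^{s_1} \to L_2}^{\theta}$ for bounded linear operators $T$. The only minor bookkeeping point is to check that the interpolation constants do not blow up as $r$ approaches the endpoints of the allowed range, which is automatic from the multiplicative form above.
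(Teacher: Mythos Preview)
Your proposal is correct and follows exactly the approach indicated in the paper, which derives the corollary by interpolating between the $(H^1\to L_2)$ bounds of Proposition~\ref{prop15.56a} and the $(H^2\to L_2)$ (respectively, $(H^{3/2}\to L_2)$) endpoint bounds extracted from Theorem~\ref{th15.48}$(2^\circ)$ (respectively, Theorem~\ref{th15.49}$(2^\circ)$). Your observation that Condition~\ref{cond_Lambda_infty} implies Conditions~\ref{cond_Lambda_1} and~\ref{cond_Lambda_2} via Remark~\ref{rem_Lambda}, and your explicit identification of the interpolation constants, are both accurate.
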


\begin{remark}
\label{rem15.59}
Suppose that Condition \ref{cond_Lambda_infty} is satisfied.
 
\noindent $1^\circ$.  Under the assumptions of Theorem   \ref{sndw_A_eps_sin_general_thrm},
 from  \eqref{15.76}, \eqref{15.174a}, and the  obvious estimate 
 \begin{equation*}
 \| \eps \Lambda^\eps b(\D) f_0 ({\mathcal A}^0)^{-1/2} 
 \sin (\tau ({\mathcal A}^0)^{1/2}) f_0^{-1} \|_{L_2(\R^d) \to L_2(\R^d)} 
\le \eps \| \Lambda \|_{L_\infty} \| g^{-1}\|_{L_\infty}^{1/2}
 \| f^{-1}\|_{L_\infty}
\end{equation*} 
 it follows that 
 \begin{equation}
 \label{15.183}
 \bigl\|  {J}^\circ_\varepsilon( \tau)  \bigr\|_{H^1(\mathbb{R}^d) \to H^1(\mathbb{R}^d)} \le
 {\mathrm{C}}_{13} \bigl(1+ (1+|\tau|) \eps \bigr),
 \quad \tau \in \R,\ 0< \eps \le 1.
\end{equation}
Interpolating between \eqref{15.183} and \eqref{15.127}, for $\tau \in \R$ and $0< \eps \le 1$ we obtain 
 \begin{equation*}
 \bigl\|  {J}^\circ_\varepsilon( \tau)  \bigr\|_{H^{1+r}(\mathbb{R}^d) \to H^1(\mathbb{R}^d)} \le
 {\mathfrak{C}}_{9}(r) (1+|\tau|)^{r} \eps^{r} \bigl(1+ (1+|\tau|) \eps \bigr)^{1-r},
 \quad 0 \le r \le 1.
\end{equation*}
For bounded values of $(1+|\tau|)\eps$ the right-hand side does not exceed 
$C (1+|\tau|)^{r} \eps^{r}$, i.~e., has the same order as estimate \eqref{15.176}.

\noindent $2^\circ$.  Under the assumptions of Theorem  {\ref{th15.35}}, interpolating between
\eqref{15.183} and \eqref{15.137}, for $\tau \in \R$ and $0< \eps \le 1$ we have 
 \begin{equation*}
 \bigl\|  {J}^\circ_\varepsilon( \tau)  \bigr\|_{H^{1+r}(\mathbb{R}^d) \to H^1(\mathbb{R}^d)} \le
 {\mathfrak{C}}_{10}(r) (1+|\tau|)^{r} \eps^{2r} \bigl(1+ (1+|\tau|) \eps \bigr)^{1-2r},
 \quad 0 \le r \le 1/2.
\end{equation*}
The order of this estimate is worse than the order of  \eqref{15.158d}. The reason is that there is no analog of estimate  \eqref{15.79} for the operator $J_{2,\eps}(\tau)$.
\end{remark}

Some cases where Condition  \ref{cond_Lambda_infty} is a fortiori satisfied were given in  \cite[Lemma 8.7]{BSu4}.

\begin{proposition}
\label{prop11.22}
Suppose that at least one of the following assumptions holds\emph{:}

\noindent $1^\circ$. $d \le 2$\emph{;}

\noindent $2^\circ$.  $\wh{\mathcal A}= \D^* g(\x) \D,$ where the matrix $g(\x)$ has real entries\emph{;}

\noindent $3^\circ$.  $g^0 = \underline{g}$ \emph{(}i.~e.\textup, relations 
\emph{\eqref{g0=underline_g_relat}} are valid\emph{)}.

\noindent Then Condition \emph{\ref{cond_Lambda_infty}} is a fortiori satisfied\textup, and the norm 
$\| \Lambda\|_{L_\infty}$ is controlled in terms of $d,$ $\alpha_0,$ $\alpha_1,$ $\|g\|_{L_\infty},$ 
$\|g^{-1}\|_{L_\infty},$ and the parameters of the lattice~$\Gamma$.
\end{proposition}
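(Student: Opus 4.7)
The three cases call for rather different arguments; I would organise the proof accordingly.

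\medskip

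\textbf{Case 1 ($d\le 2$).} For $d=1$, the defining equation \eqref{equation_for_Lambda} reduces to an ODE on the one–dimensional cell: integrating once and inverting $g$ (using $g,g^{-1}\in L_\infty$) yields an explicit formula for $\mathbf{D}\Lambda$, and a second integration against the zero–mean condition gives $\Lambda\in L_\infty$ with the quantitative bound stated. For $d=2$, I would appeal to Meyers' self–improvement theorem: the $\widetilde H^1$–solution $\Lambda$ of \eqref{equation_for_Lambda} automatically satisfies $\mathbf{D}\Lambda\in L_p(\Omega)$ for some $p>2$, with $p$ and the corresponding norm depending only on $\alpha_0,\alpha_1,\|g\|_{L_\infty},\|g^{-1}\|_{L_\infty}$. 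Since $p>d=2$, the Sobolev embedding $W^{1,p}(\Omega)\hookrightarrow L_\infty(\Omega)$ combined with the zero–mean normalisation gives $\Lambda\in L_\infty(\Omega)$ with norm controlled by the stated parameters.

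\medskip

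\textbf{Case 2 ($\widehat{\mathcal A}=\mathbf{D}^*g(\x)\mathbf{D}$ with real $g$).} In this setting, the defining problem for each column of $\Lambda$ is a scalar (componentwise) divergence–form elliptic equation with real, bounded, uniformly elliptic coefficients and a divergence–form right–hand side built from $L_\infty$ data. This is precisely the set–up of Theorem~13.1 of~\cite[Ch.~III]{LaU} (the De~Giorgi–Nash–Moser local boundedness theorem), already cited earlier in the paper in the proof of Proposition~\ref{prop10.12}. Applied to the $\Gamma$–periodic extension, it yields $\Lambda\in L_\infty(\mathbb{R}^d)$ with norm controlled by $d$, $\|g\|_{L_\infty}$, $\|g^{-1}\|_{L_\infty}$ and $\Omega$.

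\medskip

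\textbf{Case 3 ($g^0=\underline g$).} Use the characterisation from the proposition following \eqref{g0=underline_g_relat}: each column $\mathbf{l}_k$ of $g^{-1}$ admits a representation $\mathbf{l}_k=\mathbf{l}_k^0+b(\mathbf{D})\mathbf{w}_k$ with $\mathbf{l}_k^0\in\mathbb{C}^m$ and $\mathbf{w}_k\in\widetilde H^1(\Omega;\mathbb{C}^n)$. The key observation is that under the hypothesis $g^0=\underline g$ the matrix $\widetilde g(\x)=g(\x)(b(\mathbf{D})\Lambda+\mathbf{1}_m)$ coincides with its mean $g^0$ pointwise: this is the equality case in the Cauchy–Schwarz / Jensen inequality that produces the Reuss lower bound $\underline g\le g^0$, and forces $\widetilde g$ to be constant. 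Solving this identity for $b(\mathbf{D})\Lambda$ gives $b(\mathbf{D})\Lambda=g(\x)^{-1}g^0-\mathbf{1}_m\in L_\infty$. Substituting the representation $g^{-1}=(g^0)^{-1}+b(\mathbf{D})W$, where $W$ is the matrix with columns $\mathbf{w}_k$, yields $b(\mathbf{D})(\Lambda-Wg^0)=0$; combined with the zero–mean normalisation this identifies $\Lambda$ with $Wg^0$ up to an additive constant, and the boundedness of $g^{-1}$ together with the construction of $W$ gives $\Lambda\in L_\infty$ with the stated dependence of the norm.

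\medskip

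The main obstacle is Case~3: proving that $\widetilde g\equiv g^0$ pointwise whenever $g^0=\underline g$ (rather than merely on average), and then using the representation of $g^{-1}$ to identify $\Lambda$ with a specific $L_\infty$ expression, requires careful manipulation of the defining PDE and of the variational characterisation of $g^0$. Cases~1 and~2 are more routine once the correct regularity theorem (Meyers in $d=2$, De~Giorgi–Nash–Moser in Case~2) is invoked, and $d=1$ is essentially elementary.
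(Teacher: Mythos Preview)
The paper does not give a self-contained proof here; it refers to \cite[Lemma~8.7]{BSu4}, and the parallel Proposition~\ref{prop10.12} cites \cite[Ch.~III, Theorem~13.1]{LaU} for Case~$2^\circ$ and \cite[Proposition~6.9]{BSu3} for Case~$3^\circ$. Your treatment of Cases~$1^\circ$ and~$2^\circ$ is correct and matches those references in spirit (Meyers plus Sobolev embedding for $d=2$; De~Giorgi--Nash--Moser for the real scalar equation in Case~$2^\circ$, exactly as the paper invokes in the proof of Proposition~\ref{prop10.12}).

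In Case~$3^\circ$ there is a genuine gap. Your chain is right up to the last sentence: $\widetilde g\equiv g^0$ pointwise (this is \cite[Remark~3.5]{BSu3}, also quoted later in the paper), hence $b(\mathbf D)\Lambda=g^{-1}g^0-\mathbf 1_m\in L_\infty$; and via \eqref{g0=underline_g_relat} one has $g^{-1}=(g^0)^{-1}+b(\mathbf D)W$, so $b(\mathbf D)(\Lambda-Wg^0)=0$ and, by injectivity of $b(\mathbf D)$ on mean-zero periodic functions, $\Lambda=Wg^0$ after normalising $W$. But your final clause --- ``the boundedness of $g^{-1}$ together with the construction of $W$ gives $\Lambda\in L_\infty$'' --- does not follow: the $\mathbf w_k$ in \eqref{g0=underline_g_relat} are only assumed to lie in $\widetilde H^1(\Omega;\mathbb C^n)$, and knowing $b(\mathbf D)\mathbf w_k=\mathbf l_k-\mathbf l_k^0\in L_\infty$ does not by itself force $\mathbf w_k\in L_\infty$ (when $m>n$, $b(\mathbf D)$ does not recover the full gradient). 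What is missing is a regularity step: the left inverse $(b(\mathbf D)^*b(\mathbf D))^{-1}b(\mathbf D)^*$ on mean-zero periodic functions is a periodic pseudodifferential operator of order $-1$ with symbol smooth off the origin; since $b(\mathbf D)\Lambda\in L_\infty\subset L^p(\Omega)$ for every finite $p$, this yields $\Lambda\in W^{1,p}(\Omega)$ for every $p$, and taking $p>d$ gives $\Lambda\in L_\infty$ by Sobolev embedding, with $\|\Lambda\|_{L_\infty}$ controlled by the stated data.
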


\subsection{ Special cases}

Suppose that $g^0 = \overline{g}$, i.~e.,   relations \eqref{g0=overline_g_relat} are satisfied.
Then the $\Gamma$-periodic solution of problem \eqref{equation_for_Lambda} is equal to zero: $\Lambda =0$.
In this case, the corrector is equal to zero and the operator~\eqref{11.7} takes the form  
\hbox{$\wh{J}_\eps(\tau) = \wh{J}_{2,\eps}(\tau)$}.
According to  \eqref{N(theta)} and \eqref{L(theta)}, we also have 
$\widehat{N} (\boldsymbol{\theta})=0$ for any $\boldsymbol{\theta} \in \mathbb{S}^{d-1}$. 
Thus, the assumptions of Corollary  \ref{A_eps_sin_enchcd_interpltd_thrm_1} are satisfied.
We arrive at the following statement.

\begin{proposition}
 Let $g^0 = \overline{g},$ i.~e., relations~\eqref{g0=overline_g_relat} are valid.
 Then for $\tau \in \R$ and $0< \eps \le 1$ we have 
  $$
  \begin{aligned}
 \|  \D \bigl(\wh{\mathcal{A}}_\varepsilon^{-1/2} \sin ( \tau \wh{\mathcal{A}}_\varepsilon^{1/2})  - 
	 (\wh{\mathcal{A}}^0)^{-1/2} \sin( \tau (\wh{\mathcal{A}}^0)^{1/2}) \bigr)\|_{H^s(\R^d) \to L_2(\R^d)}
	 \\
	 \le \wh{\mathfrak{C}}_{7}(s) (1+ |\tau|)^{s/3} \eps^{2s/3},\quad 0\le s \le 3/2.
  \end{aligned}
  $$
\end{proposition}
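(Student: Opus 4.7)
The plan is to reduce the proposition directly to Corollary \ref{A_eps_sin_enchcd_interpltd_thrm_1} by verifying that the hypothesis $g^0=\overline{g}$ forces both the vanishing of the corrector in the operator $\wh{J}_\eps(\tau)$ from \eqref{11.7} and Condition \ref{cond_B}, so that the enhanced estimate applies and yields exactly the claimed bound.

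First, I would observe that when \eqref{g0=overline_g_relat} holds, the matrix $\Lambda(\x)\equiv 0$ is an admissible solution of problem \eqref{equation_for_Lambda}: indeed, the constant matrix $b(\D)\Lambda+\1_m=\1_m$ satisfies $b(\D)^*g(\x)\1_m=0$ precisely by \eqref{g0=overline_g_relat}, and the zero-average condition is trivially met, so by uniqueness $\Lambda=0$. Plugging $\Lambda=0$ into \eqref{L(theta)} gives $L(\boldsymbol{\theta})=0$, and then \eqref{N(theta)} yields $\wh{N}(\boldsymbol{\theta})=0$ for all $\boldsymbol{\theta}\in\mathbb{S}^{d-1}$; thus Condition \ref{cond_B} holds. (This matches Proposition \ref{N=0_proposit}$(2^\circ)$.)

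Next, since $\Lambda=0$, the corrector $\eps\Lambda^\eps b(\D)\Pi_\eps$ in the definition \eqref{11.7} of $\wh{J}_\eps(\tau)$ vanishes identically, so
\[
\wh{J}_\eps(\tau)=\wh{\mathcal A}_\eps^{-1/2}\sin(\tau\wh{\mathcal A}_\eps^{1/2})-(\wh{\mathcal A}^0)^{-1/2}\sin(\tau(\wh{\mathcal A}^0)^{1/2})=\wh{J}_{2,\eps}(\tau).
\]
Therefore $\D\bigl(\wh{\mathcal A}_\eps^{-1/2}\sin(\tau\wh{\mathcal A}_\eps^{1/2})-(\wh{\mathcal A}^0)^{-1/2}\sin(\tau(\wh{\mathcal A}^0)^{1/2})\bigr)=\D\wh{J}_\eps(\tau)$, and the claimed estimate is exactly the first inequality \eqref{15.333} in Corollary \ref{A_eps_sin_enchcd_interpltd_thrm_1}, whose hypotheses (those of Theorem \ref{A_eps_sin_enchcd_thrm_1}, together with Condition \ref{cond_B}) we have just verified.

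There is essentially no hard step here; the only item worth double-checking is the identification $\wh{J}_\eps(\tau)=\wh{J}_{2,\eps}(\tau)$ under $\Lambda=0$, which follows by direct inspection of \eqref{11.7} and \eqref{15.7}. The content of the proposition is thus that the improved regime of Theorem \ref{A_eps_sin_enchcd_thrm_1} (which normally provides approximation in the energy norm with a nontrivial corrector) degenerates, in the case $g^0=\overline{g}$, to a genuine corrector-free estimate in the $(H^s\to H^1)$-norm on the difference of the sine operators themselves.
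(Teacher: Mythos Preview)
Your proposal is correct and follows essentially the same approach as the paper: the paper's argument is precisely that $g^0=\overline{g}$ forces $\Lambda=0$, hence the corrector vanishes and $\wh{J}_\eps(\tau)=\wh{J}_{2,\eps}(\tau)$, while $\wh{N}(\boldsymbol{\theta})=0$ by \eqref{N(theta)}--\eqref{L(theta)}, so Corollary~\ref{A_eps_sin_enchcd_interpltd_thrm_1} applies. Your version is slightly more detailed in justifying $\Lambda=0$ via uniqueness, but the logic is identical.
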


 Similarly, if $g^0 = \overline{g}$, then
 the operator \eqref{11.*7}  takes the form ${J}_\eps(\tau) = J_{2,\eps}(\tau)$.
 According to  \eqref{Lambda_Q=Lambda+Lambda_Q^0}, we have $\Lambda_Q(\mathbf{x}) = 0$, whence 
$\widehat{N}_Q (\boldsymbol{\theta}) =0$ for any 
$\boldsymbol{\theta} \in \mathbb{S}^{d-1}$; see  \eqref{N_Q(theta)}, \eqref{L_Q(theta)}.
 Thus, the assumptions of  Corollary \ref{cor15.37} are satisfied.
 We obtain the following statement.
 
\begin{proposition}
 Let $g^0 = \overline{g},$ i.~e., relations~\eqref{g0=overline_g_relat} are valid.
 Then for \hbox{$0\le s \le 3/2,$} $\tau \in \R,$ and $0< \eps \le 1$ we have 
  $$
  \begin{aligned}
 \|  \D \bigl(
  f^\varepsilon \mathcal{A}_\varepsilon^{-1/2} \sin ( \tau \mathcal{A}_\varepsilon^{1/2}) (f^\varepsilon)^{-1} - 
	 f_0 (\mathcal{A}^0)^{-1/2} \sin( \tau (\mathcal{A}^0)^{1/2}) f_0^{-1}
 \bigr)\|_{H^s(\R^d) \to L_2(\R^d)}
	\\
	 \le {\mathfrak{C}}_{7}(s) (1+ |\tau|)^{s/3} \eps^{2s/3}.
  \end{aligned}
  $$
\end{proposition}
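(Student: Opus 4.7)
The proof plan is a direct reduction to Corollary~\ref{cor15.37}. The key observation is that the hypothesis $g^0 = \overline{g}$ collapses the corrector to zero and simultaneously verifies the structural Condition~\ref{cond_BB} needed to invoke the improved estimates.

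First I would note that when $g^0 = \overline{g}$ (that is, when \eqref{g0=overline_g_relat} holds), the very same argument as in the preceding Proposition (for $\wh{\A}$) shows that the $\Gamma$-periodic solution $\Lambda$ of \eqref{equation_for_Lambda} vanishes identically. Consequently also $\Lambda_Q = 0$ by \eqref{Lambda_Q=Lambda+Lambda_Q^0}, and from the explicit formulas \eqref{N_Q(theta)}, \eqref{L_Q(theta)} we get $\widehat{N}_Q(\boldsymbol{\theta}) = 0$ for every $\boldsymbol{\theta} \in \mathbb{S}^{d-1}$. (This is exactly assertion $2^\circ$ of Proposition~\ref{N_Q=0_proposit}.) Thus Condition~\ref{cond_BB} is automatically fulfilled, and so the hypotheses of Theorem~\ref{th15.35} and Corollary~\ref{cor15.37} are in force.

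Next, inspecting the definition \eqref{11.*7} of $J_\eps(\tau)$, the term $\eps \Lambda^\eps b(\mathbf{D}) \Pi_\eps$ in the corrector disappears because $\Lambda \equiv 0$. Therefore
\begin{equation*}
J_\eps(\tau) \;=\; f^\varepsilon \mathcal{A}_\varepsilon^{-1/2} \sin(\tau \mathcal{A}_\varepsilon^{1/2}) (f^\varepsilon)^{-1} - f_0 (\mathcal{A}^0)^{-1/2} \sin(\tau (\mathcal{A}^0)^{1/2}) f_0^{-1} \;=\; J_{2,\eps}(\tau),
\end{equation*}
so the operator appearing under the norm sign in the statement is nothing but $\D J_\eps(\tau)$.

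Finally, applying the first estimate of Corollary~\ref{cor15.37} (which is valid under Condition~\ref{cond_BB}), we conclude that for $0 \le s \le 3/2$, $\tau \in \R$, and $0 < \eps \le 1$
\begin{equation*}
\| \D J_\eps(\tau) \|_{H^s(\R^d) \to L_2(\R^d)} \le \mathfrak{C}_7(s) (1+|\tau|)^{s/3} \eps^{2s/3},
\end{equation*}
which is exactly the claimed inequality. There is no genuine obstacle here: the work consists entirely in verifying the two vanishing statements $\Lambda = 0$ and $\widehat{N}_Q = 0$, after which the result is read off from the already-proved Corollary~\ref{cor15.37}. The only thing one might double-check is that, because the corrector vanishes, the estimate holds on the entire range $0 < \eps \le 1$ (indeed $\eps > 0$), matching the statement.
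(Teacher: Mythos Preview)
Your proposal is correct and follows essentially the same approach as the paper: both note that $g^0=\overline{g}$ forces $\Lambda\equiv 0$ (hence $\Lambda_Q\equiv 0$ and $\widehat{N}_Q(\boldsymbol{\theta})\equiv 0$, so Condition~\ref{cond_BB} holds), observe that this collapses $J_\eps(\tau)$ to $J_{2,\eps}(\tau)$, and then read off the estimate from Corollary~\ref{cor15.37}. Your closing remark about the range of $\eps$ is unnecessary, since Corollary~\ref{cor15.37} already applies for all $\eps>0$.
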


Now, we consider the case where  $g^0= \underline{g}$, i.~e., relations  
\eqref{g0=underline_g_relat} are satisfied. According to  \cite[Remark 3.5]{BSu3}, in this case we have  
$\wt{g}(\x) = g^0 = \underline{g}$.
Then the operator  \eqref{15.120} obviously satisfies the estimate
\begin{equation}
\label{15.178}
\| \wh{I}^\circ_\eps(\tau) \|_{L_2(\R^d) \to L_2(\R^d)} \le 2 \| g\|_{L_\infty}^{1/2}.
\end{equation}
From Proposition~\ref{N=0_proposit}($3^\circ$) it follows that  $\wh{N}(\boldsymbol{\theta})=0$ for all 
$\boldsymbol{\theta} \in \mathbb{S}^{d-1}.$ Moreover, by Proposition \ref{prop11.22}$(3^\circ)$, 
 Condition \ref{cond_Lambda_infty} is satisfied. 
By Theorem \ref{th15.49}($1^\circ$), estimate  \eqref{15.136a} holds.
Interpolating between  \eqref{15.178} and \eqref{15.136a}, we arrive at the following statement.

\begin{proposition}
 Suppose that $g^0 = \underline{g},$ i.~e., relations~\eqref{g0=underline_g_relat} are satisfied.
 Then for \hbox{$0\le s \le 3/2,$} $\tau \in \R,$ and $0< \eps \le 1$ we have 
  \begin{multline*}
 \|  g^\eps b(\D) \wh{\mathcal{A}}_\varepsilon^{-1/2} \sin ( \tau \wh{\mathcal{A}}_\varepsilon^{1/2}) - 
	 g^0 b(\D) (\wh{\mathcal{A}}^0)^{-1/2} \sin( \tau (\wh{\mathcal{A}}^0)^{1/2}) \|_{H^{s}(\R^d) \to L_2(\R^d)}
	 \\
	 \le \wh{\mathfrak{C}}_{11}(s) (1+ |\tau|)^{s/3} \eps^{2s/3}.
  \end{multline*}
 \end{proposition}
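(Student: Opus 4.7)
The plan is to recognize the operator in the statement as the corrector-free flux operator $\wh{I}^\circ_\eps(\tau)$ defined in~\eqref{15.120}, and then obtain the claim by interpolation between an elementary $(L_2\!\to\! L_2)$-bound and the $(H^{3/2}\!\to\! L_2)$-estimate~\eqref{15.136a} supplied by Theorem~\ref{th15.49}($1^\circ$).

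First I would note that when $g^0=\underline{g}$, the reference to~\cite[Remark~3.5]{BSu3} used just before~\eqref{15.178} gives $\wt g(\x)=g^0$, so $\wt g^\eps = g^0$ identically. Consequently
\[
g^\eps b(\D)\wh{\mathcal A}_\eps^{-1/2}\sin(\tau\wh{\mathcal A}_\eps^{1/2}) - g^0 b(\D)(\wh{\mathcal A}^0)^{-1/2}\sin(\tau(\wh{\mathcal A}^0)^{1/2}) = \wh I^\circ_\eps(\tau).
\]
Thus the whole statement reduces to an estimate of $\wh I^\circ_\eps(\tau)$ in the $(H^s\to L_2)$-norm.

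Next I would verify that the hypotheses needed to apply Theorem~\ref{th15.49}($1^\circ$) are in force. By Proposition~\ref{N=0_proposit}($3^\circ$), the condition $g^0=\underline g$ implies $\wh N(\boldsymbol\theta)\equiv 0$, so Condition~\ref{cond_B} holds. By Proposition~\ref{prop11.22}($3^\circ$), the same hypothesis yields $\Lambda\in L_\infty$, i.e.\ Condition~\ref{cond_Lambda_infty}; and Condition~\ref{cond_Lambda_infty} ensures Condition~\ref{cond_Lambda_2} in view of Remark~\ref{rem_Lambda}($2^\circ$). Hence Theorem~\ref{th15.49}($1^\circ$) applies, giving
\[
\|\wh I^\circ_\eps(\tau)\|_{H^{3/2}(\R^d)\to L_2(\R^d)} \le \wh{\mathrm C}^\circ_{10}(1+|\tau|)^{1/2}\eps,\qquad \tau\in\R,\ 0<\eps\le 1.
\]
At the other endpoint, since $\|g^\eps b(\D)\wh{\mathcal A}_\eps^{-1/2}\sin(\tau\wh{\mathcal A}_\eps^{1/2})\|_{L_2\to L_2}\le \|g\|_{L_\infty}^{1/2}$ and, by~\eqref{g^0_est}, $\|g^0 b(\D)(\wh{\mathcal A}^0)^{-1/2}\sin(\tau(\wh{\mathcal A}^0)^{1/2})\|_{L_2\to L_2}\le \|g\|_{L_\infty}^{1/2}$, we get the uniform bound~\eqref{15.178}:
\[
\|\wh I^\circ_\eps(\tau)\|_{L_2(\R^d)\to L_2(\R^d)} \le 2\|g\|_{L_\infty}^{1/2},\qquad \tau\in\R,\ \eps>0.
\]

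Finally, real interpolation between these two endpoint estimates on the scale of Sobolev spaces (using $[L_2,H^{3/2}]_{\theta}=H^{3\theta/2}$ with $\theta=2s/3$) yields
\[
\|\wh I^\circ_\eps(\tau)\|_{H^s(\R^d)\to L_2(\R^d)} \le \wh{\mathfrak C}_{11}(s)(1+|\tau|)^{s/3}\eps^{2s/3},\qquad 0\le s\le 3/2,
\]
with $\wh{\mathfrak C}_{11}(s) = (2\|g\|_{L_\infty}^{1/2})^{1-2s/3}(\wh{\mathrm C}^\circ_{10})^{2s/3}$, which is exactly the required conclusion. There is no real obstacle here: all the substantive work is packaged in Theorem~\ref{th15.49}($1^\circ$), whose hypotheses are made automatic by the single assumption $g^0=\underline g$ via the two propositions cited above; the only thing to check carefully is the identification $\wt g^\eps = g^0$, after which the claim follows by routine interpolation.
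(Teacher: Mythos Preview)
Your proposal is correct and follows essentially the same approach as the paper: identify the operator with $\wh I^\circ_\eps(\tau)$ via $\wt g=g^0$, verify Conditions~\ref{cond_B} and~\ref{cond_Lambda_2} using Propositions~\ref{N=0_proposit}($3^\circ$) and~\ref{prop11.22}($3^\circ$), apply Theorem~\ref{th15.49}($1^\circ$) for the $(H^{3/2}\to L_2)$-endpoint, and interpolate against the elementary $(L_2\to L_2)$-bound~\eqref{15.178}. The paper does exactly this, only omitting the explicit mention of Remark~\ref{rem_Lambda} when passing from $\Lambda\in L_\infty$ to Condition~\ref{cond_Lambda_2}.
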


Similarly, for $g^0 = \underline{g}$ the operator  \eqref{15.122} admits the estimate
\begin{equation}
\label{15.178*}
\| {I}^\circ_\eps(\tau) \|_{L_2(\R^d) \to L_2(\R^d)} \le 2 \| g\|_{L_\infty}^{1/2} \| f^{-1} \|_{L_\infty}.
\end{equation}
Note that the operator $\widehat{N}_Q (\boldsymbol{\theta})$ can be nonzero for some 
$\boldsymbol{\theta}$ (there is no analog of Proposition \ref{N=0_proposit}($3^\circ$)). 
Therefore, we apply Theorem~\ref{th15.48}$(2^\circ)$.
Interpolating between  \eqref{15.178*} and \eqref{15.128}, we arrive at the following statement.

\begin{proposition}
 Suppose that $g^0 = \underline{g},$ i.~e., relations~\eqref{g0=underline_g_relat} are satisfied.
 Then for  $0\le s \le 2,$ $\tau \in \R,$ and $0< \eps \le 1$ we have
 \begin{multline*}
 \bigl \| g^\eps b(\D) f^\eps {\mathcal{A}}_\varepsilon^{-1/2} \sin ( \tau {\mathcal{A}}_\varepsilon^{1/2}) (f^\eps)^{-1}  
  \\
  -  g^0 b(\D) f_0 ({\mathcal{A}}^0)^{-1/2} \sin( \tau ({\mathcal{A}}^0)^{1/2}) f_0^{-1} \bigr\|_{H^{s}(\R^d) \to L_2(\R^d)}
	 \\
	\le {\mathfrak{C}}_{11}(s) (1+ |\tau|)^{s/2} \eps^{s/2}.
 \end{multline*}
\end{proposition}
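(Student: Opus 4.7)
The plan is to recognize that under the hypothesis $g^0 = \underline{g}$, the quantity to be estimated is exactly the operator ${I}^\circ_\eps(\tau)$ introduced in \eqref{15.122}, and then to deduce the bound by combining Theorem \ref{th15.48}($2^\circ$) with a trivial uniform bound followed by interpolation. Indeed, as recalled just before the previous proposition, the relation $g^0 = \underline{g}$ implies $\wt{g}(\x) \equiv g^0$, so that $\wt{g}^\eps = g^0$; consequently the operator in the statement coincides with $I^\circ_\eps(\tau)$ from \eqref{15.122}.

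First I would establish the uniform $(L_2 \to L_2)$-bound, which is the analog of \eqref{15.178*}: combining $\|g^\eps b(\D) f^\eps \A_\eps^{-1/2}\|_{L_2 \to L_2} \le \|g\|_{L_\infty}^{1/2}$ (a direct consequence of the factorization $\A_\eps = (f^\eps)^* b(\D)^*g^\eps b(\D) f^\eps$) with the analogous estimate for $\A^0$ and with $\|\sin(\tau \A_\eps^{1/2})\|_{L_2 \to L_2}\le 1$, I obtain
\begin{equation*}
\| I^\circ_\eps(\tau)\|_{L_2(\R^d) \to L_2(\R^d)} \le 2 \|g\|_{L_\infty}^{1/2}\|f^{-1}\|_{L_\infty},\quad \tau \in \R,\ \eps>0.
\end{equation*}

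Next I would apply Theorem \ref{th15.48}($2^\circ$) to obtain \eqref{15.128}, namely $\|I^\circ_\eps(\tau)\|_{H^2 \to L_2} \le {\mathrm C}^\circ_8(1+|\tau|)\eps$ for $\tau \in \R$ and $0<\eps \le 1$. The hypothesis of that theorem is Condition \ref{cond_Lambda_1}, which is guaranteed in our setting by statement $3^\circ$ of Proposition \ref{prop10.12}, since we assume $g^0 = \underline{g}$. Interpolating between the uniform $(L_2 \to L_2)$-bound and the $(H^2 \to L_2)$-bound then yields the claimed estimate with exponents $(1+|\tau|)^{s/2}\eps^{s/2}$ for all $0 \le s \le 2$.

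The main conceptual observation, and also the only potential obstacle, is that Proposition \ref{N_Q=0_proposit} does \emph{not} list $g^0 = \underline{g}$ among the conditions that force $\wh{N}_Q(\boldsymbol{\theta}) \equiv 0$ (in contrast with the unsandwiched case, where Proposition \ref{N=0_proposit}($3^\circ$) applies). Therefore the improved Theorem \ref{th15.49}($2^\circ$), which would give the sharper $\eps^{2s/3}$ scaling, is not available under the present hypothesis, and one must work with the general Theorem \ref{th15.48}($2^\circ$); this is exactly why the exponent in the statement is $s/2$ rather than $2s/3$. Once this structural point is acknowledged, the proof reduces to invoking the two inputs above and interpolating, and no further calculation is needed.
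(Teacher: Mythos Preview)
Your proposal is correct and follows essentially the same approach as the paper: recognize that $g^0=\underline{g}$ forces $\wt g^\eps=g^0$ so the operator is $I^\circ_\eps(\tau)$, establish the uniform $(L_2\to L_2)$ bound \eqref{15.178*}, apply Theorem~\ref{th15.48}($2^\circ$) for the $(H^2\to L_2)$ bound, and interpolate. You even make explicit the verification of Condition~\ref{cond_Lambda_1} via Proposition~\ref{prop10.12}($3^\circ$) and correctly note why the improved Theorem~\ref{th15.49}($2^\circ$) is unavailable here, exactly as the paper remarks.
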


\section{Homogenization of the Cauchy problem \\for a hyperbolic equation}

\subsection{The Cauchy problem with the operator $\widehat{\mathcal{A}}_\varepsilon$}
 Let $\mathbf{u}_\varepsilon (\mathbf{x}, \tau)$~be the solution of the following Cauchy problem:
\begin{equation}
\label{nonhomog_Cauchy_hatA_eps}
\left\{
\begin{aligned}
&\frac{\partial^2 \mathbf{u}_\varepsilon (\mathbf{x}, \tau)}{\partial \tau^2} = - b(\mathbf{D})^* g^\varepsilon (\mathbf{x}) b(\mathbf{D}) \mathbf{u}_\varepsilon (\mathbf{x}, \tau) + \mathbf{F} (\mathbf{x}, \tau)
+\D^* \mathbf{G} (\mathbf{x}, \tau), \\
& \mathbf{u}_\varepsilon (\mathbf{x}, 0) =\boldsymbol{\phi}(\x), \quad \frac{\partial \mathbf{u}_\varepsilon }{\partial \tau} (\mathbf{x}, 0) = \boldsymbol{\psi}(\mathbf{x}) + \D^* \boldsymbol{\rho}(\mathbf{x}),
\end{aligned}
\right.
\end{equation}
where $\boldsymbol{\rho} = \operatorname{col} \{\boldsymbol{\rho}_1,\dots, \boldsymbol{\rho}_d\}$, 
$\mathbf{G} = \operatorname{col} \{ \mathbf{G}_1, \dots, \mathbf{G}_d\}$; 
$\boldsymbol{\phi}, \boldsymbol{\psi}, \boldsymbol{\rho}_j \in L_2 (\mathbb{R}^d; \mathbb{C}^n)$, 
 $\mathbf{F}, \mathbf{G}_j \in L_{1, \mathrm{loc}} (\mathbb{R}; L_2 (\mathbb{R}^d; \mathbb{C}^n) )$~are given functions. The solution of this problem admits the following representation 
\begin{equation}
\label{11.*32}
\begin{split}
\mathbf{u}_\varepsilon (\,\cdot\,, \tau) &= \cos(\tau \widehat{\mathcal{A}}_\varepsilon^{1/2}) \boldsymbol{\phi} +
\widehat{\mathcal{A}}_\varepsilon^{-1/2} \sin(\tau \widehat{\mathcal{A}}_\varepsilon^{1/2}) 
(\boldsymbol{\psi}+ \D^* \boldsymbol{\rho})
\\
 &+ \int\limits_{0}^{\tau} \widehat{\mathcal{A}}_\varepsilon^{-1/2} \sin((\tau - \widetilde{\tau}) \widehat{\mathcal{A}}_\varepsilon^{1/2}) (\mathbf{F} (\,\cdot\,, \widetilde{\tau}) + \D^* {\mathbf G}(\,\cdot\,, \widetilde{\tau})) \, d \widetilde{\tau} .
\end{split}
\end{equation}
Let $\mathbf{u}_0 (\mathbf{x}, \tau)$~be the solution of the \textquotedblleft homogenized\textquotedblright \ problem:
\begin{equation}
\label{nonhomog_Cauchy_hatA_0}
\left\{
\begin{aligned}
&\frac{\partial^2 \mathbf{u}_0 (\mathbf{x}, \tau)}{\partial \tau^2} = - b(\mathbf{D})^* g^0 b(\mathbf{D}) \mathbf{u}_0 (\mathbf{x}, \tau) + \mathbf{F} (\mathbf{x}, \tau) +\D^* \mathbf{G} (\mathbf{x}, \tau), \\
& \mathbf{u}_0 (\mathbf{x}, 0) = \boldsymbol{\phi}(\x), \quad \frac{\partial \mathbf{u}_0 }{\partial \tau} (\mathbf{x}, 0) = \boldsymbol{\psi}(\mathbf{x}) +\D^* \boldsymbol{\rho}(\mathbf{x}).
\end{aligned}
\right.
\end{equation}
Then 
\begin{equation}
\label{11.*33}
\begin{split}
\mathbf{u}_0 (\,\cdot\,, \tau) &= 
\cos(\tau (\widehat{\mathcal{A}}^0)^{1/2}) \boldsymbol{\phi} +
(\widehat{\mathcal{A}}^0)^{-1/2} \sin(\tau (\widehat{\mathcal{A}}^0)^{1/2}) (\boldsymbol{\psi} + \D^* \boldsymbol{\rho})
\\
&+ \int\limits_{0}^{\tau} (\widehat{\mathcal{A}}^0)^{-1/2} \sin((\tau - \widetilde{\tau}) (\widehat{\mathcal{A}}^0)^{1/2})( \mathbf{F} (\,\cdot\,, \widetilde{\tau}) + \D^* {\mathbf G}(\,\cdot\,, \widetilde{\tau})) \, d \widetilde{\tau}.
\end{split}
\end{equation}

\begin{theorem}
	\label{th16.1}
	Let $\mathbf{u}_\varepsilon$~be the solution of problem~\eqref{nonhomog_Cauchy_hatA_eps}, and let  
	$\mathbf{u}_0$~be the solution of the homogenized problem~\eqref{nonhomog_Cauchy_hatA_0}.

	\noindent	$1^\circ$.  If  $\boldsymbol{\phi} \in H^{s}(\mathbb{R}^d; \mathbb{C}^n),$ 
	$\boldsymbol{\psi} \in H^{r}(\mathbb{R}^d; \mathbb{C}^n),$
	$\boldsymbol{\rho} \in H^{s}(\mathbb{R}^d; \mathbb{C}^{dn}),$
	$\mathbf{F} \in L_{1, \mathrm{loc}}(\mathbb{R}; H^{r}(\mathbb{R}^d; \mathbb{C}^n)),$ and
	$\mathbf{G} \in L_{1, \mathrm{loc}}(\mathbb{R}; H^{s}(\mathbb{R}^d; \mathbb{C}^{dn})),$ 
	where $0 \le s \le 2,$ $0\le r \le 1,$
		then for $\tau \in \mathbb{R}$ and $0< \varepsilon \le 1$ we have
		\begin{equation}
		\label{16.5}
		\begin{aligned}
		&\| \mathbf{u}_\varepsilon(\,\cdot\,, \tau) - \mathbf{u}_0 (\,\cdot\,, \tau) \|_{L_2 (\mathbb{R}^d)} \le 
		\widehat{\mathfrak{C}}_1 (s) (1+|\tau|)^{s/2}  \varepsilon^{s/2}  
		\| \boldsymbol{\phi} \|_{H^{s}(\mathbb{R}^d)} 
		\\
		&+ \widehat{\mathfrak{C}}_2 (r) (1+|\tau|)^{(r+1)/2}  \varepsilon^{(r+1)/2} \big( 
		\| \boldsymbol{\psi} \|_{H^{r}(\mathbb{R}^d)} 
		+ \|\mathbf{F} \|_{L_1((0,\tau);H^{r}(\mathbb{R}^d))}  \big)
		\\
		&+ \widehat{\mathfrak{C}}_2'(s) (1+|\tau|)^{s/2}  \varepsilon^{s/2} \big( 
		\| \boldsymbol{\rho} \|_{H^{s}(\mathbb{R}^d)} 
		+ \|\mathbf{G} \|_{L_1((0,\tau);H^{s}(\mathbb{R}^d))}  \big).
		\end{aligned}
		\end{equation}
		
\noindent		$2^\circ$. If $\boldsymbol{\phi}, \boldsymbol{\psi} \in L_2 (\mathbb{R}^d; \mathbb{C}^n),$
$\boldsymbol{\rho} \in L_2 (\mathbb{R}^d; \mathbb{C}^{dn}),$ 
$\mathbf{F} \in L_{1, \mathrm{loc}} (\mathbb{R}; L_2 (\mathbb{R}^d; \mathbb{C}^n) ),$  and
$\mathbf{G} \in L_{1, \mathrm{loc}} (\mathbb{R}; L_2 (\mathbb{R}^d; \mathbb{C}^{dn}) ),$ then for  $\tau \in \R$
we have 
$$
\lim_{\eps \to 0}\| \mathbf{u}_\varepsilon (\,\cdot\,, \tau) - \mathbf{u}_0 (\,\cdot\,, \tau) \|_{L_2(\R^d)} =0.
$$
	\end{theorem}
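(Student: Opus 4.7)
The plan is to reduce everything to the operator estimates already obtained in Corollary \ref{cor15.1a}. Subtracting \eqref{11.*33} from \eqref{11.*32} and recalling the definitions \eqref{15.6}, \eqref{15.7}, we get the representation
\begin{equation*}
\begin{aligned}
\mathbf{u}_\varepsilon(\,\cdot\,,\tau)-\mathbf{u}_0(\,\cdot\,,\tau)
&= \wh{J}_{1,\varepsilon}(\tau)\boldsymbol{\phi}
+ \wh{J}_{2,\varepsilon}(\tau)\boldsymbol{\psi}
+ \wh{J}_{2,\varepsilon}(\tau)\mathbf{D}^{*}\boldsymbol{\rho} \\
&\quad + \int_{0}^{\tau}\wh{J}_{2,\varepsilon}(\tau-\widetilde{\tau})\,\mathbf{F}(\,\cdot\,,\widetilde{\tau})\,d\widetilde{\tau}
+ \int_{0}^{\tau}\wh{J}_{2,\varepsilon}(\tau-\widetilde{\tau})\,\mathbf{D}^{*}\mathbf{G}(\,\cdot\,,\widetilde{\tau})\,d\widetilde{\tau}.
\end{aligned}
\end{equation*}
Thus the problem splits into estimating five terms of three structural types.

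For statement $1^\circ$, I would estimate each of the five terms in the $L_{2}(\mathbb{R}^{d})$-norm using Corollary \ref{cor15.1a}: the $\boldsymbol{\phi}$-term is controlled by \eqref{15.16}, the $\boldsymbol{\psi}$-term by \eqref{15.17}, and the $\mathbf{D}^{*}\boldsymbol{\rho}$-term by \eqref{hat_J_2epsD*_gen_intrpld}. For the two integral terms, I would pull the $L_{2}$-norm inside the time integral, then apply \eqref{15.17} (resp.\ \eqref{hat_J_2epsD*_gen_intrpld}) pointwise in $\widetilde{\tau}$ with the argument $\tau-\widetilde{\tau}$ in place of $\tau$. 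The elementary bound
\begin{equation*}
(1+|\tau-\widetilde{\tau}|)^{(r+1)/2}\le (1+|\tau|)^{(r+1)/2},\qquad \widetilde{\tau}\in(0,\tau),
\end{equation*}
(and the analogous one with exponent $s/2$) allows the $\tau$-dependent factor to be pulled out, producing precisely the right-hand side of \eqref{16.5}. Since \eqref{15.17} requires $0<\varepsilon\le 1$, the same restriction is inherited by the theorem.

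For statement $2^\circ$, I would use a standard density argument. The operators $\cos(\tau\wh{\mathcal{A}}_\varepsilon^{1/2})$ and $\cos(\tau(\wh{\mathcal{A}}^{0})^{1/2})$ are uniformly bounded by $1$ on $L_{2}(\mathbb{R}^{d})$, while $\wh{\mathcal{A}}_\varepsilon^{-1/2}\sin(\tau\wh{\mathcal{A}}_\varepsilon^{1/2})$ and the corresponding homogenized operator are bounded by $|\tau|$ on $L_{2}$; the auxiliary operators with $\mathbf{D}^{*}$ are bounded by $|\tau|$ multiplied by $\widehat{c}_{*}^{-1/2}$ (cf.\ \eqref{hat_J_2epsD*_L2_L2}). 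Given $\boldsymbol{\phi},\boldsymbol{\psi}\in L_{2}$, $\boldsymbol{\rho}\in L_{2}$, and $\mathbf{F},\mathbf{G}\in L_{1,\mathrm{loc}}(\mathbb{R};L_{2})$, I would approximate them in the respective norms by functions from $H^{1}$ (resp.\ $L_{1,\mathrm{loc}}(\mathbb{R};H^{1})$), apply statement $1^\circ$ to the smooth pieces (which vanish as $\varepsilon\to 0$), and use the uniform bounds to control the remainders, with a standard $\eta/2$-argument.

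There is no substantial obstacle here; this is essentially a bookkeeping exercise. The only point requiring slight care is the integral terms, where one must verify that the monotonicity of $(1+|\cdot|)^{\alpha}$ allows the time-dependent constant to be factored out of the time integral, leaving an $L_{1}$-norm of the data. No new analytic input beyond Corollary \ref{cor15.1a} is needed.
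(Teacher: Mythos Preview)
Your proposal is correct and follows essentially the same route as the paper: statement $1^\circ$ is obtained by subtracting the representations \eqref{11.*32}, \eqref{11.*33} and applying Corollary~\ref{cor15.1a} term by term, and statement $2^\circ$ is deduced from $1^\circ$ by a density argument (which the paper phrases simply as ``by the Banach--Steinhaus theorem''). One small slip: the uniform bound on $\wh{J}_{2,\varepsilon}(\tau)\mathbf{D}^*$ from \eqref{hat_J_2epsD*_L2_L2} is $2\wh{c}_*^{-1/2}$, not $|\tau|\,\wh{c}_*^{-1/2}$, but this does not affect the argument.
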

	
	\begin{proof}
	Estimate \eqref{16.5} directly follows from Corollary \ref{cor15.1a} and representations \eqref{11.*32}, \eqref{11.*33}. 
Statement~$2^\circ$ follows from~$1^\circ$, by the Banach--Steinhaus theorem.
	\end{proof}

Statement $1^\circ$ of Theorem \ref{th16.1} can be improved under some additional assumptions.
 Corollary  \ref{cor15.2a} implies the following result.

\begin{theorem}
	\label{th16.2}
	Suppose that  $\mathbf{u}_\varepsilon$~is the solution of problem~\eqref{nonhomog_Cauchy_hatA_eps} and 
	$\mathbf{u}_0$~is the solution of the homogenized problem~\eqref{nonhomog_Cauchy_hatA_0}.
	Suppose that Condition \emph{\ref{cond_B}} or Condition~\emph{\ref{cond1}} \emph{(}or more restrictive Condition~\emph{\ref{cond2})} is satisfied. 
If  $\boldsymbol{\phi} \in H^{s}(\mathbb{R}^d; \mathbb{C}^n),$
		$\boldsymbol{\psi} \in H^{r}(\mathbb{R}^d; \mathbb{C}^n),$
		$\boldsymbol{\rho} \in H^{s}(\mathbb{R}^d; \mathbb{C}^{dn}),$
	$\mathbf{F} \in L_{1, \mathrm{loc}}(\mathbb{R}; H^{r}(\mathbb{R}^d; \mathbb{C}^n)),$ and
	$\mathbf{G} \in L_{1, \mathrm{loc}}(\mathbb{R}; H^{s}(\mathbb{R}^d; \mathbb{C}^{dn})),$ 
	where $0 \le s \le 3/2,$ $0\le r \le 1/2,$
		then for  $\tau \in \mathbb{R}$ and $0< \varepsilon \le 1$ we have 
		\begin{equation*}
		\begin{aligned}
		&\| \mathbf{u}_\varepsilon(\,\cdot\,, \tau) - \mathbf{u}_0 (\,\cdot\,, \tau) \|_{L_2 (\mathbb{R}^d)} \le 
		\widehat{\mathfrak{C}}_3 (s) (1+|\tau|)^{s/3}  \varepsilon^{2s/3}  
		\| \boldsymbol{\phi} \|_{H^{s}(\mathbb{R}^d)} 
		\\
		&+ \widehat{\mathfrak{C}}_4 (r) (1+|\tau|)^{(r+1)/3}  \varepsilon^{2(r+1)/3} \big( 
		\| \boldsymbol{\psi} \|_{H^{r}(\mathbb{R}^d)} 
		+ \|\mathbf{F} \|_{L_1((0,\tau);H^{r}(\mathbb{R}^d))}  \big)
		\\
		&+ \widehat{\mathfrak{C}}'_4 (s) (1+|\tau|)^{s/3}  \varepsilon^{2s /3} \big( 
		\| \boldsymbol{\rho} \|_{H^{s}(\mathbb{R}^d)} 
		+ \|\mathbf{G} \|_{L_1((0,\tau);H^{s}(\mathbb{R}^d))}  \big).
		\end{aligned}
		\end{equation*}
	\end{theorem}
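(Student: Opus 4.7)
The plan is to follow exactly the same strategy as in the proof of Theorem~\ref{th16.1}, replacing the use of Corollary~\ref{cor15.1a} by the improved Corollary~\ref{cor15.2a}. Since the additional hypothesis (Condition~\ref{cond_B}, \ref{cond1}, or \ref{cond2}) is precisely what is required to apply Corollary~\ref{cor15.2a}, no new ideas are needed: the work reduces to bookkeeping of the three source terms in representations \eqref{11.*32} and \eqref{11.*33}.

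First I would subtract \eqref{11.*33} from \eqref{11.*32} and split the resulting difference into five pieces corresponding to the data $\boldsymbol{\phi}$, $\boldsymbol{\psi}$, $\boldsymbol{\rho}$, $\mathbf{F}$, and $\mathbf{G}$. The first piece is $\wh{J}_{1,\eps}(\tau)\boldsymbol{\phi}$, to which I apply estimate \eqref{15.20} with exponent $s \in [0,3/2]$. The second piece, $\wh{J}_{2,\eps}(\tau)\boldsymbol{\psi}$, is controlled via \eqref{15.21} with exponent $r \in [0,1/2]$. The third piece involves $\boldsymbol{\rho}$, which enters as $\wh{J}_{2,\eps}(\tau)\mathbf{D}^*\boldsymbol{\rho}$, and for this I use the dual-type estimate \eqref{hat_J_2epsD*_enchcd_intrpld} with exponent $s\in[0,3/2]$.

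The remaining two pieces are the Duhamel integrals $\int_0^\tau \wh{J}_{2,\eps}(\tau - \wt{\tau}) \mathbf{F}(\cdot,\wt{\tau})\,d\wt{\tau}$ and $\int_0^\tau \wh{J}_{2,\eps}(\tau - \wt{\tau}) \mathbf{D}^* \mathbf{G}(\cdot,\wt{\tau})\,d\wt{\tau}$. For these I bring the $L_2$-norm under the integral sign (Minkowski's inequality), apply \eqref{15.21} and \eqref{hat_J_2epsD*_enchcd_intrpld} pointwise in $\wt{\tau}$, and estimate the factor $(1+|\tau - \wt{\tau}|)^{(r+1)/3}$ or $(1+|\tau - \wt{\tau}|)^{s/3}$ by $(1+|\tau|)^{(r+1)/3}$ or $(1+|\tau|)^{s/3}$ respectively (since $|\wt{\tau}| \le |\tau|$ on the interval of integration). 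This yields the $L_1((0,\tau);H^r)$ and $L_1((0,\tau);H^s)$ norms appearing in the statement.

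There is no substantial obstacle: the argument is mechanical once Corollary~\ref{cor15.2a} is in hand, and the constants $\wh{\mathfrak{C}}_3(s)$, $\wh{\mathfrak{C}}_4(r)$, $\wh{\mathfrak{C}}'_4(s)$ carry over verbatim from that corollary. The only minor point to verify is that all three estimates from the corollary are valid in the range $0 < \eps \le 1$ (which is indeed imposed in \eqref{15.21}, while \eqref{15.20} and \eqref{hat_J_2epsD*_enchcd_intrpld} hold for all $\eps > 0$), so the hypothesis $0 < \eps \le 1$ in the statement of Theorem~\ref{th16.2} is sufficient. Summing the five contributions gives the claimed inequality.
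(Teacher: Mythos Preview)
Your proposal is correct and matches the paper's approach exactly: the paper simply states that ``Corollary~\ref{cor15.2a} implies the following result,'' and your write-up spells out precisely how the representations \eqref{11.*32}, \eqref{11.*33} combine with the three estimates \eqref{15.20}, \eqref{15.21}, \eqref{hat_J_2epsD*_enchcd_intrpld} of that corollary.
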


Now, suppose that  $\boldsymbol{\phi}=0$,  $\boldsymbol{\rho}=0$, and $\mathbf{G}=0$. Denote by  
$\mathbf{v}_\varepsilon$ the first order approximation to the  solution of problem  
\eqref{nonhomog_Cauchy_hatA_eps}:
\begin{equation}
\label{v_eps_first_approx}
\mathbf{v}_\varepsilon(\mathbf{x}, \tau) := \mathbf{u}_0 (\mathbf{x}, \tau) + \varepsilon \Lambda^\varepsilon(\x) b(\mathbf{D}) (\Pi_\varepsilon \mathbf{u}_0 ) (\mathbf{x}, \tau).
\end{equation}
We also introduce  notation for the ``flux'':
\begin{equation}
\label{p_eps}
\mathbf{p}_\varepsilon(\mathbf{x}, \tau) := g^\varepsilon (\x) b(\mathbf{D}) \mathbf{u}_\eps (\mathbf{x}, \tau).
\end{equation}

\begin{theorem}
	\label{th16.4}
	Suppose that $\mathbf{u}_\varepsilon$~is the solution of problem~\emph{(\ref{nonhomog_Cauchy_hatA_eps})}  with $\boldsymbol{\phi}=0,$ $\boldsymbol{\rho}=0,$ and $\mathbf{G}=0$. Let $\mathbf{v}_\varepsilon$ and $\mathbf{p}_\varepsilon$ be defined by  \eqref{v_eps_first_approx} and \eqref{p_eps}. Denote  
	$\q_\eps(\mathbf{x}, \tau) := \wt{g}^\eps (\x)b(\D) (\Pi_\eps \mathbf{u}_0)(\x, \tau)$.

\noindent		$1^\circ$.  If  $\boldsymbol{\psi} \in H^{2}(\mathbb{R}^d; \mathbb{C}^n)$ and $\mathbf{F} \in L_{1, \mathrm{loc}}(\mathbb{R}; H^{2}(\mathbb{R}^d; \mathbb{C}^n)),$  
		then for  $\tau \in \mathbb{R}$ and  $0 < \varepsilon \le 1$ we have 
		{\allowdisplaybreaks
		\begin{align*}
		\| \mathbf{u}_\varepsilon(\,\cdot\,, \tau) - \mathbf{v}_\varepsilon (\,\cdot\,, \tau) \|_{H^1 (\mathbb{R}^d)} \le 
		\widehat{\mathrm{C}}_{7} (1+|\tau|)  \varepsilon \bigl( \| \boldsymbol{\psi} \|_{H^{2}(\mathbb{R}^d)} + \|\mathbf{F} \|_{L_1((0,\tau);H^{2}(\mathbb{R}^d))}  \bigr),
		\\
		\| \mathbf{p}_\varepsilon(\,\cdot\,, \tau) - \q_\eps(\,\cdot\,, \tau) \|_{L_2 (\mathbb{R}^d)} \le 
		\widehat{\mathrm{C}}_{8} (1+|\tau|)  \varepsilon \bigl( \| \boldsymbol{\psi} \|_{H^{2}(\mathbb{R}^d)} + \|\mathbf{F} \|_{L_1((0,\tau);H^{2}(\mathbb{R}^d))}  \bigr).
		\end{align*}
	
	\noindent	$2^\circ$.  If  
		$\boldsymbol{\psi} \in H^{s}(\mathbb{R}^d; \mathbb{C}^n)$ and $\mathbf{F} \in L_{1, \mathrm{loc}}(\mathbb{R}; H^{s}(\mathbb{R}^d; \mathbb{C}^n)),$ $0 \le s \le 2,$ 
		then for $\tau \in \mathbb{R}$ and $0 < \varepsilon \le 1$ we have
		\begin{align*}
		\| \D \mathbf{u}_\varepsilon(\,\cdot\,, \tau) &-\D \mathbf{v}_\varepsilon (\,\cdot\,, \tau) \|_{L_2 (\mathbb{R}^d)} 
		\le 
		\widehat{\mathfrak{C}}_5 (s) (1+|\tau|)^{s/2}  \varepsilon^{s/2} \big( \| \boldsymbol{\psi} \|_{H^{s}(\mathbb{R}^d)} + \|\mathbf{F} \|_{L_1((0,\tau);H^{s}(\mathbb{R}^d))}  \big),
		\\
		\|  \mathbf{p}_\varepsilon(\,\cdot\,, \tau)& - \q_\eps (\,\cdot\,, \tau) 
		 \|_{L_2 (\mathbb{R}^d)} 
		\le
		\widehat{\mathfrak{C}}_6 (s) (1+|\tau|)^{s/2}  \varepsilon^{s/2} \big( \| \boldsymbol{\psi} \|_{H^{s}(\mathbb{R}^d)} + \|\mathbf{F} \|_{L_1((0,\tau);H^{s}(\mathbb{R}^d))}  \big).
		\end{align*}
		}
		
	\noindent	$3^\circ$. If $\boldsymbol{\psi} \in L_2 (\mathbb{R}^d; \mathbb{C}^n)$  and $\mathbf{F} \in L_{1, \mathrm{loc}} (\mathbb{R}; L_2 (\mathbb{R}^d; \mathbb{C}^n) ),$ then for  $\tau \in \R$ we have
	$$
	\lim\limits_{\varepsilon \to 0} \| \mathbf{u}_\varepsilon (\,\cdot\,, \tau) - \mathbf{v}_\varepsilon (\,\cdot\,, \tau) \|_{H^1(\mathbb{R}^d)} = 0
	$$
	and 
	$$
	\lim\limits_{\varepsilon \to 0} \| 
		\mathbf{p}_\varepsilon(\,\cdot\,, \tau) - \q_\eps(\,\cdot\,, \tau) \|_{L_2 (\mathbb{R}^d)} = 0.
		$$
	\end{theorem}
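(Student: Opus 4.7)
The plan is to rewrite $\mathbf{u}_\varepsilon-\mathbf{v}_\varepsilon$ and $\mathbf{p}_\varepsilon-\mathbf{q}_\varepsilon$ as $\widehat{J}_\varepsilon(\tau)$ (resp.\ $\widehat{I}_\varepsilon(\tau)$) applied to $\boldsymbol{\psi}$, plus a Duhamel-type integral of the same operators applied to $\mathbf{F}(\cdot,\widetilde\tau)$; then direct application of Theorem~\ref{A_eps_sin_general_thrm}, Corollary~\ref{A_eps_sin_general_interpltd_thrm} and Remark~\ref{rem15.11a} yields all three statements. Concretely, since $\boldsymbol{\phi}=0$, $\boldsymbol{\rho}=0$ and $\mathbf{G}=0$, formulas \eqref{11.*32} and \eqref{11.*33} read
\begin{equation*}
\mathbf{u}_\varepsilon(\cdot,\tau)=\widehat{\mathcal{A}}_\varepsilon^{-1/2}\sin(\tau\widehat{\mathcal{A}}_\varepsilon^{1/2})\boldsymbol{\psi}+\int_0^\tau \widehat{\mathcal{A}}_\varepsilon^{-1/2}\sin((\tau-\widetilde\tau)\widehat{\mathcal{A}}_\varepsilon^{1/2})\mathbf{F}(\cdot,\widetilde\tau)\,d\widetilde\tau,
\end{equation*}
and similarly for $\mathbf{u}_0$ with $\widehat{\mathcal{A}}^0$ replacing $\widehat{\mathcal{A}}_\varepsilon$. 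Recalling the definition \eqref{11.7} of $\widehat{J}_\varepsilon(\tau)$, the definition \eqref{v_eps_first_approx} of $\mathbf{v}_\varepsilon$, and the fact that the operator $\varepsilon\Lambda^\varepsilon b(\mathbf{D})\Pi_\varepsilon$ commutes with the scalar time integration, one obtains
\begin{equation*}
\mathbf{u}_\varepsilon(\cdot,\tau)-\mathbf{v}_\varepsilon(\cdot,\tau)=\widehat{J}_\varepsilon(\tau)\boldsymbol{\psi}+\int_0^\tau \widehat{J}_\varepsilon(\tau-\widetilde\tau)\mathbf{F}(\cdot,\widetilde\tau)\,d\widetilde\tau,
\end{equation*}
and, using \eqref{11.*00} and \eqref{p_eps},
\begin{equation*}
\mathbf{p}_\varepsilon(\cdot,\tau)-\mathbf{q}_\varepsilon(\cdot,\tau)=\widehat{I}_\varepsilon(\tau)\boldsymbol{\psi}+\int_0^\tau \widehat{I}_\varepsilon(\tau-\widetilde\tau)\mathbf{F}(\cdot,\widetilde\tau)\,d\widetilde\tau.
\end{equation*}

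For statement $1^\circ$, the bound on $\|\mathbf{u}_\varepsilon-\mathbf{v}_\varepsilon\|_{H^1}$ follows by applying estimate \eqref{A_eps_sin_general_est} of Theorem~\ref{A_eps_sin_general_thrm} pointwise in $\widetilde\tau$ and integrating, together with the elementary bound $(1+|\tau-\widetilde\tau|)\le(1+|\tau|)$ for $\widetilde\tau$ between $0$ and $\tau$. The bound on $\|\mathbf{p}_\varepsilon-\mathbf{q}_\varepsilon\|_{L_2}$ follows from \eqref{A_eps_flux_general_est} in the same way. For statement $2^\circ$, the estimate for $\|\mathbf{D}(\mathbf{u}_\varepsilon-\mathbf{v}_\varepsilon)\|_{L_2}$ is obtained identically from \eqref{15.43b} (interpolated form), and the one for $\|\mathbf{p}_\varepsilon-\mathbf{q}_\varepsilon\|_{L_2}$ from \eqref{15.44b}. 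Statement $3^\circ$ follows from $2^\circ$ (applied with $s=1$ for the $H^1$-convergence via Remark~\ref{rem15.11a}, or with $s=0$ for the $L_2$ flux statement) by a Banach--Steinhaus argument: for smooth compactly supported $\boldsymbol{\psi}$ and $\mathbf{F}$ we have convergence to zero with a rate, and the family of operators $\mathbf{u}_\varepsilon-\mathbf{v}_\varepsilon\mapsto(\cdot)$ and $\mathbf{p}_\varepsilon-\mathbf{q}_\varepsilon\mapsto(\cdot)$ is uniformly bounded (for $\tau$ fixed and $\varepsilon\in(0,1]$) from the appropriate spaces to $H^1$ resp.~$L_2$, so density concludes.

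The only point requiring attention is the $H^1$-norm bound in statement $1^\circ$: estimate \eqref{A_eps_sin_general_est} directly controls $\|\widehat{J}_\varepsilon(\tau)\|_{H^2\to H^1}$, but the right-hand side must involve only $(1+|\tau|)\varepsilon$ times the $H^2$-norm of the data, so one checks that no extra factor appears from the Duhamel integration; this is automatic because the time integral has length $|\tau|$ which is already absorbed into $(1+|\tau|)\|\mathbf{F}\|_{L_1((0,\tau);H^2)}$ after bounding the integrand uniformly in $\widetilde\tau$. Thus the only genuine work is the bookkeeping of the Duhamel integral; no new analytical ingredient beyond the results of \S\ref{sec15.3}, \S\ref{sec15.4} is needed. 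The main obstacle, if any, is justifying the passage $2^\circ\Rightarrow 3^\circ$ in the $H^1$-norm for $\mathbf{u}_\varepsilon-\mathbf{v}_\varepsilon$ with merely $L_2$ data: one must verify uniform boundedness in $\varepsilon$ of $\widehat{J}_\varepsilon(\tau)\colon L_2\to H^1$, which is precisely the content of Remark~\ref{rem15.11a} (estimate \eqref{15.*}), so density of $H^2\cap(\text{test functions})$ in $L_2$ closes the argument.
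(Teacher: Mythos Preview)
Your proof is correct and follows essentially the same route as the paper: the paper too derives statement~$1^\circ$ from Theorem~\ref{A_eps_sin_general_thrm} via the representations \eqref{11.*32}, \eqref{11.*33}, statement~$2^\circ$ from Corollary~\ref{A_eps_sin_general_interpltd_thrm}, and statement~$3^\circ$ from~$1^\circ$ by the Banach--Steinhaus theorem together with Remark~\ref{rem15.11a}. Your write-up simply makes the Duhamel bookkeeping and the uniform-boundedness step more explicit.
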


\begin{proof}
Statement $1^\circ$ follows from Theorem \ref{A_eps_sin_general_thrm} and representations 
\eqref{11.*32}, \eqref{11.*33}. 
Similarly, statement $2^\circ$ follows from Corollary~\ref{A_eps_sin_general_interpltd_thrm}.

Taking Remark \ref{rem15.11a} into account, we deduce statement $3^\circ$ from  $1^\circ$ by the Banach--Steinhaus theorem.
\end{proof}

\begin{remark}
\label{rem16.5}
By Remark \ref{rem15.11a}, under the assumptions of Theorem \ref{th16.4}$(2^\circ)$, for 
 $0\le s \le 2$, $\tau \in \R,$ and $0< \eps \le 1$ we have 
	\begin{equation*}
		\begin{split}
		\|  \mathbf{u}_\varepsilon(\,\cdot\,, \tau) - \mathbf{v}_\varepsilon (\,\cdot\,, \tau) \|_{H^1 (\mathbb{R}^d)} &\le 
		\widehat{\mathfrak{C}}_5' (s) (1+|\tau|)^{s/2}  \varepsilon^{s/2}
		\bigl( 1+ (1+|\tau|)^{1/2}  \varepsilon^{1/2}\bigr)^{1-s/2}
		\\
		 &\times \bigl( \| \boldsymbol{\psi} \|_{H^{s}(\mathbb{R}^d)} + 
		 \|\mathbf{F} \|_{L_1((0,\tau);H^{s}(\mathbb{R}^d))}  \bigr).
		\end{split}
		\end{equation*}
		For bounded values of $(1+|\tau|) \varepsilon$ the right-hand side is of order
		$(1+|\tau|)^{s/2}  \varepsilon^{s/2}$.
\end{remark}

Statements $1^\circ$ and $2^\circ$ of Theorem \ref{th16.4} can be improved under some additional assumptions. 
The following result is deduced from Theorem \ref{A_eps_sin_enchcd_thrm_1} and Corollary 
\ref{A_eps_sin_enchcd_interpltd_thrm_1}.

\begin{theorem}
	\label{th16.6}
Suppose that the assumptions of Theorem \emph{\ref{th16.4}} are satisfied.
Suppose that Condition \emph{\ref{cond_B}} or Condition~\emph{\ref{cond1}} \emph{(}or more restrictive Condition~\emph{\ref{cond2})} is satisfied.	
	
\noindent	$1^\circ$.  If  $\boldsymbol{\psi} \in H^{3/2}(\mathbb{R}^d; \mathbb{C}^n)$ and $\mathbf{F} \in L_{1, \mathrm{loc}}(\mathbb{R}; H^{3/2}(\mathbb{R}^d; \mathbb{C}^n)),$  
		then for  $\tau \in \mathbb{R}$ and  $0 < \varepsilon \le 1$ we have
		\begin{align*}
		\| \mathbf{u}_\varepsilon(\cdot, \tau)\! -\! \mathbf{v}_\varepsilon (\cdot, \tau) \|_{H^1 (\mathbb{R}^d)}\!\! \le \!
		\widehat{\mathrm{C}}_{9} (1\!+\!|\tau|)^{1/2}  \varepsilon \big( \| \boldsymbol{\psi} \|_{H^{3/2}(\mathbb{R}^d)} \!\!+\! \|\mathbf{F} \|_{L_1((0,\tau);H^{3/2}(\mathbb{R}^d))}  \big),
		\\
		\| \mathbf{p}_\varepsilon(\cdot, \tau) \!-\! \q_\eps(\cdot, \tau) \|_{L_2 (\mathbb{R}^d)}\!\! \le \!
		\widehat{\mathrm{C}}_{10} (1\!+\!|\tau|)^{1/2}  \varepsilon \big( \| \boldsymbol{\psi} \|_{H^{3/2}(\mathbb{R}^d)}\!\! +\! \|\mathbf{F} \|_{L_1((0,\tau);H^{3/2}(\mathbb{R}^d))}  \big).
		\end{align*}
	
\noindent	$2^\circ$.  If   
		$\boldsymbol{\psi} \in H^{s}(\mathbb{R}^d; \mathbb{C}^n)$ and $\mathbf{F} \in L_{1, \mathrm{loc}}(\mathbb{R}; H^{s}(\mathbb{R}^d; \mathbb{C}^n)),$ $0 \le s \le 3/2,$ 
		then for  $\tau \in \mathbb{R}$ and  $0 < \varepsilon \le 1$  we have 
		{\allowdisplaybreaks
		\begin{align*}
		\| \D \mathbf{u}_\varepsilon(\,\cdot\,, \tau)& -\D \mathbf{v}_\varepsilon (\,\cdot\,, \tau) \|_{L_2 (\mathbb{R}^d)} 
	\le 	\widehat{\mathfrak{C}}_7 (s) (1+|\tau|)^{s/3}  \varepsilon^{2 s/3} \big( \| \boldsymbol{\psi} \|_{H^{s}(\mathbb{R}^d)} + \|\mathbf{F} \|_{L_1((0,\tau);H^{s}(\mathbb{R}^d))}  \big),
		\\
		\|  \mathbf{p}_\varepsilon(\,\cdot\,, \tau) &- \q_\eps (\,\cdot\,, \tau) 
		 \|_{L_2 (\mathbb{R}^d)} 
	\le 	\widehat{\mathfrak{C}}_{8} (s) (1+|\tau|)^{s/3}  \varepsilon^{2s/3} \big( \| \boldsymbol{\psi} \|_{H^{s}(\mathbb{R}^d)} + \|\mathbf{F} \|_{L_1((0,\tau);H^{s}(\mathbb{R}^d))}  \big).
		\end{align*}
		}
	\end{theorem}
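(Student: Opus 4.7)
The plan is to reduce Theorem \ref{th16.6} to operator-norm estimates already at hand, namely Theorem \ref{A_eps_sin_enchcd_thrm_1} and Corollary \ref{A_eps_sin_enchcd_interpltd_thrm_1}, exactly mirroring the passage from Theorem \ref{A_eps_sin_general_thrm} and Corollary \ref{A_eps_sin_general_interpltd_thrm} to Theorem \ref{th16.4}. Since $\boldsymbol{\phi}=0$, $\boldsymbol{\rho}=0$, and $\mathbf{G}=0$, the Duhamel formulas \eqref{11.*32}, \eqref{11.*33} give
\[
\mathbf{u}_\varepsilon(\cdot,\tau) = \widehat{\mathcal A}_\varepsilon^{-1/2}\sin(\tau\widehat{\mathcal A}_\varepsilon^{1/2})\boldsymbol\psi
+ \int_0^\tau \widehat{\mathcal A}_\varepsilon^{-1/2}\sin((\tau-\widetilde\tau)\widehat{\mathcal A}_\varepsilon^{1/2})\mathbf F(\cdot,\widetilde\tau)\,d\widetilde\tau,
\]
and the analogous identity for $\mathbf{u}_0$ with $\widehat{\mathcal A}^0$ in place of $\widehat{\mathcal A}_\varepsilon$. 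Subtracting these identities and comparing with definitions \eqref{v_eps_first_approx}, \eqref{11.7}, \eqref{p_eps}, and \eqref{11.*00}, I would observe the key algebraic identities
\[
\mathbf{u}_\varepsilon(\cdot,\tau) - \mathbf{v}_\varepsilon(\cdot,\tau)
= \widehat J_\varepsilon(\tau)\boldsymbol\psi + \int_0^\tau \widehat J_\varepsilon(\tau-\widetilde\tau)\mathbf F(\cdot,\widetilde\tau)\,d\widetilde\tau,
\]
\[
\mathbf{p}_\varepsilon(\cdot,\tau) - \mathbf{q}_\varepsilon(\cdot,\tau)
= \widehat I_\varepsilon(\tau)\boldsymbol\psi + \int_0^\tau \widehat I_\varepsilon(\tau-\widetilde\tau)\mathbf F(\cdot,\widetilde\tau)\,d\widetilde\tau,
\]
which reduce the whole theorem to the operator bounds on $\widehat J_\varepsilon$ and $\widehat I_\varepsilon$ from Chapter~3.

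For statement $1^\circ$, I would apply the $(H^{3/2}\to H^1)$ and $(H^{3/2}\to L_2)$ bounds from Theorem \ref{A_eps_sin_enchcd_thrm_1} to each summand, using Minkowski's integral inequality for the Duhamel term to move the norm inside the integral, and then using that $(1+|\tau-\widetilde\tau|)^{1/2}\le(1+|\tau|)^{1/2}$ for $\widetilde\tau$ between $0$ and $\tau$ to pull the time factor out. This directly yields the two displayed estimates in $1^\circ$ with the constants $\widehat{\mathrm C}_9$ and $\widehat{\mathrm C}_{10}$ inherited from Theorem \ref{A_eps_sin_enchcd_thrm_1}. For statement $2^\circ$, the same scheme applies verbatim, now invoking the interpolated estimates \eqref{15.333} of Corollary \ref{A_eps_sin_enchcd_interpltd_thrm_1}: for the homogeneous term one uses $\|\mathbf D\widehat J_\varepsilon(\tau)\boldsymbol\psi\|_{L_2}\le\widehat{\mathfrak C}_7(s)(1+|\tau|)^{s/3}\varepsilon^{2s/3}\|\boldsymbol\psi\|_{H^s}$, and likewise for the flux; Minkowski handles the Duhamel contribution.

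There is essentially no genuine obstacle, since the dispersive analysis has all been absorbed into the abstract and fiber-wise results of Chapters~1--2 and then transferred to the physical scale in \S\ref{sec15.4}. The only points to mind are bookkeeping: verifying that the passage from $\mathbf{u}_0$ to $\mathbf{v}_\varepsilon$ picks up exactly the corrector $\varepsilon\Lambda^\varepsilon b(\mathbf D)\Pi_\varepsilon \mathbf{u}_0$ appearing in $\widehat J_\varepsilon$, and that the flux comparison picks up the modified flux $\widetilde g^\varepsilon b(\mathbf D)\Pi_\varepsilon \mathbf{u}_0$ appearing in $\widehat I_\varepsilon$; these are immediate from the definitions. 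The only regularity requirement in $\tau$ for the Duhamel term is that $\tau\mapsto\mathbf F(\cdot,\widetilde\tau)$ be strongly integrable with values in $H^{3/2}$ (respectively $H^s$), which is precisely the hypothesis $\mathbf F\in L_{1,\mathrm{loc}}(\mathbb R;H^{3/2})$ (respectively $H^s$). Hence the full theorem follows by a direct application of Theorem \ref{A_eps_sin_enchcd_thrm_1} and Corollary \ref{A_eps_sin_enchcd_interpltd_thrm_1} combined with Minkowski's inequality.
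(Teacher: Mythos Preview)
Your proposal is correct and follows exactly the approach the paper itself indicates: the paper states only that Theorem~\ref{th16.6} ``is deduced from Theorem~\ref{A_eps_sin_enchcd_thrm_1} and Corollary~\ref{A_eps_sin_enchcd_interpltd_thrm_1},'' and your reduction via the Duhamel identities for $\mathbf{u}_\varepsilon-\mathbf{v}_\varepsilon$ and $\mathbf{p}_\varepsilon-\mathbf{q}_\varepsilon$, followed by Minkowski's inequality and the monotonicity $(1+|\tau-\widetilde\tau|)^{1/2}\le(1+|\tau|)^{1/2}$, is precisely how this deduction is carried out (mirroring the proof of Theorem~\ref{th16.4}).
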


\begin{remark}
By Remark \ref{rem15.15a}, under the assumptions of Theorem  \ref{th16.6}$(2^\circ)$,
for \hbox{$0\le s \le 3/2$}, $\tau \in \R,$ and $0< \eps \le 1$ we have 
	\begin{equation*}
		\begin{split}
		\|  \mathbf{u}_\varepsilon(\,\cdot\,, \tau) - \mathbf{v}_\varepsilon (\,\cdot\,, \tau) \|_{H^1 (\mathbb{R}^d)} &\le 
		\widehat{\mathfrak{C}}_7' (s) (1+|\tau|)^{s/3}  \varepsilon^{2 s/3}
		\bigl( 1+ (1+|\tau|)^{1/3}  \varepsilon^{2/3}\bigr)^{1-2s/3}
		\\
		 &\times \bigl( \| \boldsymbol{\psi} \|_{H^{s}(\mathbb{R}^d)} + 
		 \|\mathbf{F} \|_{L_1((0,\tau);H^{s}(\mathbb{R}^d))}  \bigr).
		\end{split}
		\end{equation*}
		For bounded values of  $(1+|\tau|)^{1/2} \varepsilon$ the right-hand side is of order
		$(1+|\tau|)^{s/3}  \varepsilon^{2s/3}$.
\end{remark}

 Now, we discuss the possibility to replace the first order approximation \eqref{v_eps_first_approx}  by 
 \begin{equation}
\label{16.27}
\mathbf{v}^0_\varepsilon(\mathbf{x}, \tau) := \mathbf{u}_0 (\mathbf{x}, \tau) + \varepsilon \Lambda^\varepsilon(\x) b(\mathbf{D}) \mathbf{u}_0 (\mathbf{x}, \tau).
\end{equation}
The following result is deduced from Theorem \ref{th15.48}$(1^\circ)$, Corollary \ref{cor15.53}($1^\circ$) 
and Remark \ref{rem15.56}$(1^\circ)$.

\begin{theorem}
	\label{th16.9}
	Suppose that  $\mathbf{u}_\varepsilon$~is the solution of problem~\emph{(\ref{nonhomog_Cauchy_hatA_eps})} with $\boldsymbol{\phi}=0,$ $\boldsymbol{\rho}=0,$ and $\mathbf{G}=0$. Let $\mathbf{v}^0_\varepsilon$ and $\mathbf{p}_\varepsilon$ be defined by  \eqref{16.27} and \eqref{p_eps}. Denote $\q_\eps^0(\x,\tau):=\wt{g}^\eps (\x) b(\D) \mathbf{u}_0(\x, \tau)$.

	\noindent	$1^\circ$. 
		Suppose that Condition \emph{\ref{cond_Lambda_1}} is satisfied.
		 If  $\boldsymbol{\psi} \in H^{2}(\mathbb{R}^d; \mathbb{C}^n)$ and $\mathbf{F} \in L_{1, \mathrm{loc}}(\mathbb{R}; H^{2}(\mathbb{R}^d; \mathbb{C}^n)),$  
		then for $\tau \in \mathbb{R}$ and  $0 < \varepsilon \le 1$ we have 
				\begin{align*}
		\| \mathbf{u}_\varepsilon(\,\cdot\,, \tau) - \mathbf{v}^0_\varepsilon (\,\cdot\,, \tau) \|_{H^1 (\mathbb{R}^d)} \le 
		\widehat{\mathrm{C}}^\circ_{7} (1+|\tau|)  \varepsilon \bigl( \| \boldsymbol{\psi} \|_{H^{2}(\mathbb{R}^d)} + \|\mathbf{F} \|_{L_1((0,\tau);H^{2}(\mathbb{R}^d))}  \bigr),
		\\
		\| \mathbf{p}_\varepsilon(\,\cdot\,, \tau) - \q^0_\eps (\,\cdot\,, \tau) \|_{L_2 (\mathbb{R}^d)} \le 
		\widehat{\mathrm{C}}^\circ_{8} (1+|\tau|)  \varepsilon \bigl( \| \boldsymbol{\psi} \|_{H^{2}(\mathbb{R}^d)} + \|\mathbf{F} \|_{L_1((0,\tau);H^{2}(\mathbb{R}^d))}  \bigr).
		\end{align*}
	
	\noindent	$2^\circ$.  
		Suppose that Condition \emph{\ref{cond_Lambda_infty}} is satisfied.
		If   $\boldsymbol{\psi} \in H^{1+r}(\mathbb{R}^d; \mathbb{C}^n)$ and $\mathbf{F} \in L_{1, \mathrm{loc}}(\mathbb{R}; H^{1+r}(\mathbb{R}^d; \mathbb{C}^n)),$ $0 \le r \le 1,$ 
		then for $\tau \in \mathbb{R}$ and $0 < \varepsilon \le 1$  we have
		\begin{align*}
		\| \D \mathbf{u}_\varepsilon(\,\cdot\,, \tau) &-\D \mathbf{v}^0_\varepsilon (\,\cdot\,, \tau) \|_{L_2 (\mathbb{R}^d)} 
	\le 	\widehat{\mathfrak{C}}^\circ_5 (r) (1+|\tau|)^{r}  \varepsilon^{r} \big( \| \boldsymbol{\psi} \|_{H^{1+r}(\mathbb{R}^d)} + \|\mathbf{F} \|_{L_1((0,\tau);H^{1+r}(\mathbb{R}^d))}  \big),
		\\
		\|  \mathbf{p}_\varepsilon(\,\cdot\,, \tau)& - \q_\eps^0(\,\cdot\,, \tau) 
		 \|_{L_2 (\mathbb{R}^d)} 
	\le 	\widehat{\mathfrak{C}}_6^\circ (r) (1+|\tau|)^{r}  \varepsilon^{r} \big( \| \boldsymbol{\psi} \|_{H^{1+r}(\mathbb{R}^d)} + \|\mathbf{F} \|_{L_1((0,\tau);H^{1+r}(\mathbb{R}^d))}  \big).
		\end{align*}
		
\noindent		$3^\circ$. 
		Suppose that Condition  \emph{\ref{cond_Lambda_infty}} is satisfied.
		If $\boldsymbol{\psi} \in H^1 (\mathbb{R}^d; \mathbb{C}^n)$ and  $\mathbf{F} \in L_{1, \mathrm{loc}} (\mathbb{R}; H^1 (\mathbb{R}^d; \mathbb{C}^n) ),$ then for  $\tau \in \R$ we have
		$$
		\lim\limits_{\varepsilon \to 0} \| \mathbf{u}_\varepsilon (\,\cdot\,, \tau) - \mathbf{v}^0_\varepsilon (\,\cdot\,, \tau) \|_{H^1(\mathbb{R}^d)} = 0 \ \mbox{ and }\ \lim\limits_{\varepsilon \to 0} \mathbf{p}_\varepsilon(\,\cdot\,, \tau) - \q^0_\eps(\,\cdot\,, \tau) 
		 \|_{L_2 (\mathbb{R}^d)} = 0.
		$$
	\end{theorem}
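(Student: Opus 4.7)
The plan is to reduce everything to the operator-valued estimates for
$\widehat{J}^\circ_\varepsilon(\tau)$ and $\widehat{I}^\circ_\varepsilon(\tau)$ proved in \S\,15. Since $\boldsymbol{\phi}=0$, $\boldsymbol{\rho}=0$, $\mathbf{G}=0$, the representation formulas \eqref{11.*32}, \eqref{11.*33} for $\mathbf{u}_\varepsilon$ and $\mathbf{u}_0$, combined with the definitions \eqref{15.119}, \eqref{15.120} and \eqref{16.27}, yield the Duhamel identities
\begin{equation*}
\mathbf{u}_\varepsilon(\,\cdot\,,\tau)-\mathbf{v}^0_\varepsilon(\,\cdot\,,\tau)=\widehat{J}^\circ_\varepsilon(\tau)\boldsymbol{\psi}+\int_0^\tau \widehat{J}^\circ_\varepsilon(\tau-\widetilde\tau)\mathbf{F}(\,\cdot\,,\widetilde\tau)\,d\widetilde\tau,
\end{equation*}
\begin{equation*}
\mathbf{p}_\varepsilon(\,\cdot\,,\tau)-\mathbf{q}^0_\varepsilon(\,\cdot\,,\tau)=\widehat{I}^\circ_\varepsilon(\tau)\boldsymbol{\psi}+\int_0^\tau \widehat{I}^\circ_\varepsilon(\tau-\widetilde\tau)\mathbf{F}(\,\cdot\,,\widetilde\tau)\,d\widetilde\tau.
\end{equation*}
This reduces each of the three statements to the corresponding norm bound on $\widehat{J}^\circ_\varepsilon(\tau)$ and $\widehat{I}^\circ_\varepsilon(\tau)$, combined with Minkowski's integral inequality and the trivial estimate $(1+|\tau-\widetilde\tau|)^\alpha\le(1+|\tau|)^\alpha$ for $\widetilde\tau\in[0,\tau]$, $\alpha\ge0$.

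For statement $1^\circ$, under Condition~\ref{cond_Lambda_1}, I apply \eqref{15.125} and \eqref{15.126} from Theorem~\ref{th15.48}$(1^\circ)$ to the first (``initial data'') term and, termwise under the integral, to the Duhamel term; Minkowski in $\widetilde\tau$ delivers the claimed bounds with $\|\mathbf{F}\|_{L_1((0,\tau);H^2)}$. Statement $2^\circ$ is identical in structure: under Condition~\ref{cond_Lambda_infty}, Corollary~\ref{cor15.53}$(1^\circ)$ supplies estimates on $\|\mathbf{D}\widehat{J}^\circ_\varepsilon(\tau)\|_{H^{1+r}\to L_2}$ and $\|\widehat{I}^\circ_\varepsilon(\tau)\|_{H^{1+r}\to L_2}$, which after Duhamel produce the required $H^{1+r}$-data bounds of order $(1+|\tau|)^r\varepsilon^r$.

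For statement $3^\circ$ I apply the Banach--Steinhaus principle. Fix $\tau\in\mathbb{R}$. Under Condition~\ref{cond_Lambda_infty}, estimate \eqref{15.158a} from Remark~\ref{rem15.56}$(1^\circ)$ shows that $\|\widehat{J}^\circ_\varepsilon(\tau)\|_{H^1\to H^1}$ is uniformly bounded for $0<\varepsilon\le1$, and Proposition~\ref{prop11.19}, formula~\eqref{11.*21}, gives the corresponding $\varepsilon$-uniform bound on $\|\widehat{I}^\circ_\varepsilon(\tau)\|_{H^1\to L_2}$. On the dense subset $H^2\subset H^1$ the conclusion of $2^\circ$ with $r=1$ provides the pointwise convergences $\|\widehat{J}^\circ_\varepsilon(\tau)\boldsymbol{\psi}\|_{H^1}\to0$ and $\|\widehat{I}^\circ_\varepsilon(\tau)\boldsymbol{\psi}\|_{L_2}\to0$; Banach--Steinhaus then upgrades them to strong convergence on all of~$H^1$. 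The Duhamel term is treated by Lebesgue's dominated convergence theorem: the uniform operator bounds furnish the $\varepsilon$-independent integrable majorant $C\|\mathbf{F}(\,\cdot\,,\widetilde\tau)\|_{H^1}$, and pointwise convergence of the integrand in $\widetilde\tau$ passes the limit under the integral sign. The only subtlety is that the $H^1$-uniform bound on $\widehat{J}^\circ_\varepsilon(\tau)$ (and in fact the well-posedness of~$\mathbf{v}^0_\varepsilon$ in~$H^1$) is precisely what forces the hypothesis $\Lambda\in L_\infty$; without it, the unsmoothed corrector $\varepsilon\Lambda^\varepsilon b(\mathbf{D})\mathbf{u}_0$ need not lie in~$H^1$ at all, and this is the main obstacle that Condition~\ref{cond_Lambda_infty} is designed to remove.
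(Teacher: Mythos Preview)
Your proof is correct and follows the same approach as the paper, which simply states that the result ``is deduced from Theorem~\ref{th15.48}$(1^\circ)$, Corollary~\ref{cor15.53}$(1^\circ)$ and Remark~\ref{rem15.56}$(1^\circ)$.'' One small imprecision: for the Banach--Steinhaus argument in~$3^\circ$, the dense-subset convergence $\|\widehat{J}^\circ_\varepsilon(\tau)\boldsymbol{\psi}\|_{H^1}\to 0$ for $\boldsymbol{\psi}\in H^2$ should be drawn from statement~$1^\circ$ (which gives the full $H^1$ norm), not from~$2^\circ$ with $r=1$ (which only controls the gradient); this is harmless since Condition~\ref{cond_Lambda_infty} implies Condition~\ref{cond_Lambda_1}.
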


\begin{remark}
By Remark \ref{rem15.56}$(1^\circ)$, 
under the assumptions of Theorem \ref{th16.9}$(2^\circ)$, for \hbox{$0\le r \le 1$}, $\tau \in \R,$ and $0< \eps \le 1$ we have
	\begin{align*}
		\|& \mathbf{u}_\varepsilon(\,\cdot\,, \tau) - \mathbf{v}^0_\varepsilon (\,\cdot\,, \tau) 
		\|_{H^1 (\mathbb{R}^d)} 
		\\
		&\le 
		\widehat{\mathfrak{C}}_{9} (r) (1+|\tau|)^{r}  \varepsilon^{r} 
		\big( 1+ (1+|\tau|)\eps\big)^{1-r}
		\big( \| \boldsymbol{\psi} \|_{H^{1+r}(\mathbb{R}^d)} + \|\mathbf{F} \|_{L_1((0,\tau);H^{1+r}(\mathbb{R}^d))}  \big).
		\end{align*}
For bounded values of  $(1+|\tau|)\eps$, the right-hand side is of order $ (1+|\tau|)^{r}  \varepsilon^{r}$.
\end{remark}

Statements $1^\circ$ and $2^\circ$ of Theorem \ref{th16.9} can be improved under some additional assumptions.
 Theorem \ref{th15.49}$(1^\circ)$ and Corollary \ref{cor15.53}($2^\circ$) imply the following result.

\begin{theorem}
  \label{th16.10}
  Suppose that $\mathbf{u}_\varepsilon$~is the solution of problem~\emph{(\ref{nonhomog_Cauchy_hatA_eps})} with
	$\boldsymbol{\phi}=0,$ $\boldsymbol{\rho}=0,$ and $\mathbf{G}=0$. Let $\mathbf{v}^0_\varepsilon$ and $\mathbf{p}_\varepsilon$ be defined by  \eqref{16.27} and \eqref{p_eps}\textup, and let $\q_\eps^0(\x,\tau):=\wt{g}^\eps (\x) b(\D) \mathbf{u}_0(\x, \tau)$. Suppose that Condition \emph{\ref{cond_B}} or Condition~\emph{\ref{cond1}} \emph{(}or more restrictive Condition~\emph{\ref{cond2})} is satisfied.

\noindent		$1^\circ$. 
		Suppose that Condition \emph{\ref{cond_Lambda_2}} is satisfied.
		 If  $\boldsymbol{\psi} \in H^{3/2}(\mathbb{R}^d; \mathbb{C}^n)$ and $\mathbf{F} \in L_{1, \mathrm{loc}}(\mathbb{R}; H^{3/2}(\mathbb{R}^d; \mathbb{C}^n)),$  
		then for $\tau \in \mathbb{R}$ and  $0 < \varepsilon \le 1$ we have 
		\begin{align*}
		\| \mathbf{u}_\varepsilon(\,\cdot\,, \tau)\! -\! \mathbf{v}^0_\varepsilon (\,\cdot\,, \tau) \|_{H^1 (\mathbb{R}^d)} \le 
		\widehat{\mathrm{C}}^\circ_{9} (1\!+\!|\tau|)^{1/2}  \varepsilon \big( \| \boldsymbol{\psi} \|_{H^{3/2}(\mathbb{R}^d)}\!\! +\! \|\mathbf{F} \|_{L_1((0,\tau);H^{3/2}(\mathbb{R}^d))}  \big),
		\\
		\| \mathbf{p}_\varepsilon(\,\cdot\,, \tau)\! - \!\q^0_\eps(\,\cdot\,, \tau) \|_{L_2 (\mathbb{R}^d)} \le 
		\widehat{\mathrm{C}}^\circ_{10} (1\!+\!|\tau|)^{1/2}  \varepsilon \big( \| \boldsymbol{\psi} \|_{H^{3/2}(\mathbb{R}^d)}\!\! +\! \|\mathbf{F} \|_{L_1((0,\tau);H^{3/2}(\mathbb{R}^d))}  \big).
		\end{align*}
	
	\noindent	$2^\circ$.  
		Suppose that Condition \emph{\ref{cond_Lambda_infty}} is satisfied.
	If  $\boldsymbol{\psi} \in H^{1+r}(\mathbb{R}^d; \mathbb{C}^n)$ and $\mathbf{F} \in L_{1, \mathrm{loc}}(\mathbb{R}; H^{1+r}(\mathbb{R}^d; \mathbb{C}^n)),$ $0 \le r \le 1/2,$ 
		then for  $\tau \in \mathbb{R}$ and  $0 < \varepsilon \le 1$  we have
		\begin{align*}
		\|  \D \mathbf{u}_\varepsilon(\,\cdot\,, \tau) &-\D \mathbf{v}^0_\varepsilon (\,\cdot\,, \tau) 
		\|_{L_2 (\mathbb{R}^d)}
		\le  \widehat{\mathfrak{C}}^\circ_7 (r) (1+|\tau|)^{r}  \varepsilon^{2r} \bigl( \| \boldsymbol{\psi} \|_{H^{1+r}(\mathbb{R}^d)} + \|\mathbf{F} \|_{L_1((0,\tau);H^{1+r}(\mathbb{R}^d))}  \bigr),
		\\
		\|  \mathbf{p}_\varepsilon(\,\cdot\,, \tau) &- \q^0_\eps (\,\cdot\,, \tau) 
		 \|_{L_2 (\mathbb{R}^d)} 
	\le  \widehat{\mathfrak{C}}_{8}^\circ (r) (1+|\tau|)^{r}  \varepsilon^{2r} \bigl( \| \boldsymbol{\psi} \|_{H^{1+r}(\mathbb{R}^d)} + \|\mathbf{F} \|_{L_1((0,\tau);H^{1+r}(\mathbb{R}^d))}  \bigr).
		\end{align*}
  \end{theorem}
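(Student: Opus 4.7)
\smallskip

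The proof is essentially a direct assembly of the operator-norm estimates already prepared in Theorem~\ref{th15.49}$(1^\circ)$ and Corollary~\ref{cor15.53}$(2^\circ)$, combined with the Duhamel representation of the solution. My plan is as follows.

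First, I would record the explicit representations. Since $\boldsymbol\phi=0$, $\boldsymbol\rho=0$, and $\mathbf{G}=0$, the formula \eqref{11.*32} reduces to
\begin{equation*}
\mathbf{u}_\varepsilon(\cdot,\tau) = \widehat{\mathcal{A}}_\varepsilon^{-1/2}\sin(\tau\widehat{\mathcal{A}}_\varepsilon^{1/2})\boldsymbol\psi + \int_0^\tau \widehat{\mathcal{A}}_\varepsilon^{-1/2}\sin((\tau-\widetilde\tau)\widehat{\mathcal{A}}_\varepsilon^{1/2})\mathbf{F}(\cdot,\widetilde\tau)\,d\widetilde\tau,
\end{equation*}
and analogously for $\mathbf{u}_0$. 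From the definition \eqref{16.27} of $\mathbf{v}^0_\varepsilon$ together with the definition \eqref{15.119} of $\widehat{J}^\circ_\varepsilon(\tau)$, I obtain
\begin{equation*}
\mathbf{u}_\varepsilon(\cdot,\tau) - \mathbf{v}^0_\varepsilon(\cdot,\tau) = \widehat{J}^\circ_\varepsilon(\tau)\boldsymbol\psi + \int_0^\tau \widehat{J}^\circ_\varepsilon(\tau-\widetilde\tau)\mathbf{F}(\cdot,\widetilde\tau)\,d\widetilde\tau.
\end{equation*}
Similarly, from the definitions of $\mathbf{p}_\varepsilon$, $\q^0_\varepsilon$, and \eqref{15.120}, a direct computation gives
\begin{equation*}
\mathbf{p}_\varepsilon(\cdot,\tau) - \q^0_\varepsilon(\cdot,\tau) = \widehat{I}^\circ_\varepsilon(\tau)\boldsymbol\psi + \int_0^\tau \widehat{I}^\circ_\varepsilon(\tau-\widetilde\tau)\mathbf{F}(\cdot,\widetilde\tau)\,d\widetilde\tau.
\end{equation*}

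Second, for statement $1^\circ$, I apply the two estimates \eqref{15.135a} and \eqref{15.136a} from Theorem~\ref{th15.49}$(1^\circ)$, which are valid precisely because Condition~\ref{cond_Lambda_2} and either Condition~\ref{cond_B} or Condition~\ref{cond1} are assumed. Estimating the norms under the integrals using $(1+|\tau-\widetilde\tau|)^{1/2}\le (1+|\tau|)^{1/2}$ for $\widetilde\tau\in[0,\tau]$ (and the analogous bound for $\tau<0$), I pull the factor outside and produce $\|\mathbf{F}\|_{L_1((0,\tau);H^{3/2})}$.

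Third, for statement $2^\circ$, I replace the $(H^{3/2}\to H^1)$- and $(H^{3/2}\to L_2)$-bounds by the interpolated estimates \eqref{15.155} for $\D\widehat{J}^\circ_\varepsilon(\tau)$ and its companion for $\widehat{I}^\circ_\varepsilon(\tau)$ from Corollary~\ref{cor15.53}$(2^\circ)$; these require Condition~\ref{cond_Lambda_infty}, which is assumed in $2^\circ$. Again, the same convolution argument produces the $L_1$-norm of $\mathbf{F}$ with the same $(1+|\tau|)^{r}\varepsilon^{2r}$ factor.

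There is no genuine obstacle here: the entire content of the theorem is packaged into the operator estimates of Theorem~\ref{th15.49}$(1^\circ)$ and Corollary~\ref{cor15.53}$(2^\circ)$, and the only remaining work is the standard Duhamel bookkeeping. The one small point that deserves care is verifying that $\widehat{I}^\circ_\varepsilon(\tau)$ correctly encodes the flux error $\mathbf{p}_\varepsilon - \q^0_\varepsilon$ (in particular that the corrector $\varepsilon \Lambda^\varepsilon b(\D)\mathbf{u}_0$ is absorbed into $\widetilde g^\varepsilon b(\D)\mathbf{u}_0$ via the identity $g(b(\D)\Lambda + \mathbf{1}_m) = \widetilde g$); this is a direct consequence of \eqref{g_tilde} and of relation \eqref{15.134}, which was already used in the proof of Theorem~\ref{th15.48}.
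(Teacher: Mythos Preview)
Your proposal is correct and matches the paper's approach exactly: the paper states that Theorem~\ref{th16.10} is deduced from Theorem~\ref{th15.49}$(1^\circ)$ and Corollary~\ref{cor15.53}$(2^\circ)$, and your Duhamel bookkeeping is precisely the routine step that turns those operator-norm bounds into estimates on the solutions. Your closing remark about \eqref{15.134} is slightly superfluous, since the identity $\mathbf{p}_\varepsilon - \q^0_\varepsilon = \widehat I^\circ_\varepsilon(\tau)\boldsymbol\psi + \int_0^\tau \widehat I^\circ_\varepsilon(\tau-\widetilde\tau)\mathbf F(\cdot,\widetilde\tau)\,d\widetilde\tau$ follows immediately from the definitions \eqref{p_eps}, \eqref{15.120}, and $\q^0_\varepsilon = \widetilde g^\varepsilon b(\mathbf D)\mathbf u_0$ without any further manipulation.
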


\begin{remark}
\label{rem16.12}
By Remark \ref{rem15.56}$(2^\circ)$, under the assumptions of Theorem \ref{th16.10}$(2^\circ)$, for \hbox{$0\le r \le 1/2$}, $\tau \in \R,$ and $0< \eps \le 1$ we have
		\begin{equation*}
		\begin{aligned}
		\| \mathbf{u}_\varepsilon(\,\cdot\,, \tau) - \mathbf{v}^0_\varepsilon (\,\cdot\,, \tau) 
		\|_{H^1 (\mathbb{R}^d)} &\le 
		\widehat{\mathfrak{C}}_{10} (r) (1+|\tau|)^{r}  \varepsilon^{2r} 
		\bigl( 1+ (1+|\tau|)^{1/2}\eps\bigr)^{1-2r}
		\\
		&\times \bigl( \| \boldsymbol{\psi} \|_{H^{1+r}(\mathbb{R}^d)} + \|\mathbf{F} \|_{L_1((0,\tau);H^{1+r}(\mathbb{R}^d))}  \bigr).
		\end{aligned}
		\end{equation*}
For bounded values of $(1+|\tau|)^{1/2}\eps$  the right-hand side is of order $ (1+|\tau|)^{r}  \varepsilon^{2r}$.
\end{remark}

\subsection{The Cauchy problem with the operator $\mathcal{A}_\varepsilon$}
Various statements of the Cauchy problem are possible. 
We consider a single statement of the problem:
\begin{equation}
\label{nonhomog_Cauchy_wQ}
\begin{cases}
 Q^\varepsilon (\mathbf{x}) \frac{\partial^2 \mathbf{u}_\varepsilon (\mathbf{x}, \tau)}{\partial \tau^2} = - b(\mathbf{D})^* g^\varepsilon (\mathbf{x}) b(\mathbf{D}) \mathbf{u}_\varepsilon (\mathbf{x}, \tau) 
\\ \qquad\qquad\qquad\qquad
+  Q^\eps(\x)\mathbf{F}_1 (\mathbf{x}, \tau) + \mathbf{F}_2 (\mathbf{x}, \tau) + \D^* \mathbf{G} (\mathbf{x}, \tau), 
\\
 \mathbf{u}_\varepsilon (\mathbf{x}, 0)\! = \!\boldsymbol{\phi}(\mathbf{x}), \quad
 \!\!\frac{\partial \mathbf{u}_\varepsilon }{\partial \tau} (\mathbf{x}, 0) \!= \!\boldsymbol{\psi}_1(\mathbf{x})
 \!+ \!(Q^\eps(\x))^{-1} (\boldsymbol{\psi}_2(\mathbf{x}) \!+\! \D^* \boldsymbol{\rho}(\mathbf{x})).
\end{cases}
\end{equation}
Here  
$$
\boldsymbol{\rho}\! = \!\operatorname{col} \{ \boldsymbol{\rho}_1,\dots, \boldsymbol{\rho}_d\},
\quad 
\mathbf{G} \!= \!\operatorname{col} \{ \mathbf{G}_1,\dots, \mathbf{G}_d \},
$$ 
${\boldsymbol{\phi}, \boldsymbol{\psi}_1, \boldsymbol{\psi}_2, \boldsymbol{\rho}_j \!\in\! L_2 (\mathbb{R}^d; \mathbb{C}^n), \, \mathbf{F}_1, \mathbf{F}_2, \mathbf{G}_j \in L_{1, \mathrm{loc}} (\mathbb{R}; L_2 (\mathbb{R}^d; \mathbb{C}^n) )}$ are given functions, 
$Q(\mathbf{x})$~is a $\Gamma$-periodic Hermitian $(n \times n)$-matrix-valued function such that 
\hbox{$Q(\mathbf{x}) > 0$} and $Q, Q^{-1} \in L_\infty$.
We factorize the matrix $Q(\x)^{-1}$:
$
Q(\x)^{-1} = f(\x) f(\x)^*.
$
Without loss of generality, assume that the $(n\times n)$-matrix-valued function  $f(\x)$ is periodic. 
Automatically, we have  $f, f^{-1} \in L_\infty$.
Let $\mathcal{A}_\varepsilon$~be the operator~(\ref{A_eps}). 

 By substitution $\mathbf{z}_\varepsilon (\,\cdot\,, \tau) := (f^\varepsilon)^{-1} \mathbf{u}_\varepsilon (\,\cdot\,, \tau)$,    problem~(\ref{nonhomog_Cauchy_wQ}) can be rewritten as follows:
\begin{equation*}
\left\{
\begin{aligned}
\frac{\partial^2 \mathbf{z}_\varepsilon (\mathbf{x}, \tau)}{\partial \tau^2} =& - 
({\mathcal A}_\eps
\mathbf{z}_\varepsilon) (\mathbf{x}, \tau) 
+  (f^\varepsilon (\mathbf{x}))^{-1} \mathbf{F}_1 (\mathbf{x}, \tau) +
(f^\varepsilon (\mathbf{x}))^*( \mathbf{F}_2 (\mathbf{x}, \tau) + \D^* \mathbf{G} (\mathbf{x}, \tau)), 
\\
 \mathbf{z}_\varepsilon (\mathbf{x}, 0) = & (f^\eps(\x))^{-1}\boldsymbol{\phi}(\mathbf{x}), 
\\
\frac{\partial \mathbf{z}_\varepsilon }{\partial \tau} (\mathbf{x}, 0) = &
(f^\varepsilon (\mathbf{x}))^{-1} \boldsymbol{\psi}_1(\mathbf{x})
+ (f^\varepsilon (\mathbf{x}))^{*} (\boldsymbol{\psi}_2(\mathbf{x})+ \D^* \boldsymbol{\rho}(\mathbf{x})).
\end{aligned}
\right.
\end{equation*}
Writing down representation for the solution $\mathbf{z}_\varepsilon$ of this problem, we arrive at the following representation for  $\mathbf{u}_\varepsilon = f^\eps \mathbf{z}_\varepsilon$:
\begin{multline}
\label{16.47}
\mathbf{u}_\varepsilon (\,\cdot\,, \tau) 
=
f^\eps \cos(\tau \mathcal{A}_\varepsilon^{1/2}) (f^\varepsilon)^{-1} \boldsymbol{\phi} +
f^\eps \mathcal{A}_\varepsilon^{-1/2} \sin(\tau \mathcal{A}_\varepsilon^{1/2}) (f^\varepsilon)^{-1} \boldsymbol{\psi}_1
 \\
+ f^\eps \mathcal{A}_\varepsilon^{-1/2} \sin(\tau \mathcal{A}_\varepsilon^{1/2}) (f^\varepsilon)^* (\boldsymbol{\psi}_2
 +\D^* \boldsymbol{\rho})
 \\
+  \int\limits_{0}^{\tau} f^\eps \mathcal{A}_\varepsilon^{-1/2} \sin((\tau - \widetilde{\tau}) \mathcal{A}_\varepsilon^{1/2}) (f^\varepsilon)^{-1} \mathbf{F}_1 (\,\cdot\,, \widetilde{\tau}) \, d \widetilde{\tau}
  \\
+\int\limits_{0}^{\tau} f^\eps \mathcal{A}_\varepsilon^{-1/2} \sin((\tau - \widetilde{\tau}) \mathcal{A}_\varepsilon^{1/2}) (f^\varepsilon)^{*} (\mathbf{F}_2 (\,\cdot\,, \widetilde{\tau}) + \D^* {\mathbf G} (\,\cdot\,, \widetilde{\tau}) )\, d \widetilde{\tau}.
\end{multline}

Let $\mathbf{u}_0 (\mathbf{x}, \tau)$~be the solution of the ``homogenized\/'' problem
\begin{equation}
\begin{cases}
 \overline{Q} \frac{\partial^2 \mathbf{u}_0 (\mathbf{x}, \tau)}{\partial \tau^2}
 = - b(\mathbf{D})^* g^0 b(\mathbf{D}) \mathbf{u}_0 (\mathbf{x}, \tau) 
+ \overline{Q}\mathbf{F}_1 (\mathbf{x}, \tau) + \mathbf{F}_2 (\mathbf{x}, \tau)+\D^* \mathbf{G} (\mathbf{x}, \tau), 
\\
 \mathbf{u}_0 (\mathbf{x}, 0)\! = \!\boldsymbol{\phi}(\mathbf{x}), 
\quad \frac{\partial \mathbf{u}_0 }{\partial \tau} (\mathbf{x}, 0)\! =\! \boldsymbol{\psi}_1(\mathbf{x}) \!+\!
  (\overline{Q})^{-1} (\boldsymbol{\psi}_2(\mathbf{x})\! + \! \D^* \boldsymbol{\rho}(\mathbf{x})),\label{nonhomog_Cauchy_wQ_eff}
\end{cases}
\end{equation}
where $\overline{Q}$~is the mean value of the matrix $Q(\mathbf{x})$ over $\Omega$. 
Putting $f_0 = (\overline{Q})^{-1/2}$ and substituting $\mathbf{z}_0 (\,\cdot\,, \tau) :=
f_0^{-1} \mathbf{u}_0 (\,\cdot\,, \tau)$,  we obtain the representation 
{\allowdisplaybreaks
\begin{multline}
\label{16.49}
 \mathbf{u}_0 (\,\cdot\,, \tau)
=  f_0  \cos(\tau (\mathcal{A}^0)^{1/2}) f_0^{-1} \boldsymbol{\phi} +  
 f_0  (\mathcal{A}^0)^{-1/2} \sin(\tau (\mathcal{A}^0)^{1/2}) f_0^{-1} \boldsymbol{\psi}_1
 \\
 + f_0  (\mathcal{A}^0)^{-1/2} \sin(\tau (\mathcal{A}^0)^{1/2}) f_0 (\boldsymbol{\psi}_2
 + \D^* \boldsymbol{\rho})
 \\
 + \int\limits_{0}^{\tau} f_0  (\mathcal{A}^0)^{-1/2} \sin((\tau- \widetilde{\tau}) (\mathcal{A}^0)^{1/2}) f_0^{-1} 
\mathbf{F}_1 (\,\cdot\,, \widetilde{\tau}) \, d \widetilde{\tau}
\\
 +   \int\limits_{0}^{\tau} f_0 (\mathcal{A}^0)^{-1/2} \sin((\tau - \widetilde{\tau}) (\mathcal{A}^0)^{1/2}) f_0 
 (\mathbf{F}_2 (\,\cdot\,, \widetilde{\tau})  + \D^* \mathbf{G}(\,\cdot\,,\widetilde{\tau} )) \, d \widetilde{\tau}.
\end{multline}
}

Applying Theorem \ref{th15.25}, Corollary \ref{cor15.28}, Remark \ref{rem15.28a},
and using representations \eqref{16.47}, \eqref{16.49}, we arrive at the following result.

\begin{theorem}
  \label{th16.13}
 Suppose that $\mathbf{u}_\varepsilon$~is the solution of problem~\eqref{nonhomog_Cauchy_wQ} 
  and $\mathbf{u}_0$ is the solution of the homogenized problem \eqref{nonhomog_Cauchy_wQ_eff}.
  
	\noindent	$1^\circ$. 
		 If  $\boldsymbol{\rho}=0,$ ${\mathbf G} =0,$ 
		 $\boldsymbol{\phi} \in H^{2}(\mathbb{R}^d; \mathbb{C}^n),$
		 $\boldsymbol{\psi}_1,  \boldsymbol{\psi}_2 \in H^{1}(\mathbb{R}^d; \mathbb{C}^n),$ and 
		 $\mathbf{F}_1, \mathbf{F}_2 \in L_{1, \mathrm{loc}}(\mathbb{R}; H^{1}(\mathbb{R}^d; \mathbb{C}^n)),$  
		then for  $\tau \in \mathbb{R}$ and  $\varepsilon >0$ we have 
		\begin{multline*}
		\| \mathbf{u}_\varepsilon(\,\cdot\,, \tau) - \mathbf{u}_0 (\,\cdot\,, \tau) \|_{L_2 (\mathbb{R}^d)} \le 
		{\mathrm{C}}_1 (1+|\tau|)  \varepsilon  \| \boldsymbol{\phi} \|_{H^{2}(\mathbb{R}^d)}
		\\
		+ {\mathrm{C}}_2(1+|\tau|)  \varepsilon
		\bigl( \| \boldsymbol{\psi}_1 \|_{H^{1}(\mathbb{R}^d)} + \|\mathbf{F}_1 \|_{L_1((0,\tau);H^{1}(\mathbb{R}^d))}  \bigr) 
		\\
		+ \wt{\mathrm{C}}_2(1+|\tau|)  \varepsilon
		\bigl( \| \boldsymbol{\psi}_2 \|_{H^{1}(\mathbb{R}^d)} + \|\mathbf{F}_2 \|_{L_1((0,\tau);H^{1}(\mathbb{R}^d))}  \bigr). 
		\end{multline*}

\noindent		$2^\circ$. 
		 If   $\boldsymbol{\phi} \in H^{s}(\mathbb{R}^d; \mathbb{C}^n),$
		 $\boldsymbol{\psi}_1,  \boldsymbol{\psi}_2 \in H^{r}(\mathbb{R}^d; \mathbb{C}^n),$
		  $\boldsymbol{\rho} \in H^{s}(\mathbb{R}^d; \mathbb{C}^{dn}),$  
		 $\mathbf{F}_1, \mathbf{F}_2 \in L_{1, \mathrm{loc}}(\mathbb{R}; H^{r}(\mathbb{R}^d; \mathbb{C}^n)),$  
		  $\mathbf{G} \in L_{1, \mathrm{loc}}(\mathbb{R}; H^{s}(\mathbb{R}^d; \mathbb{C}^n)),$
		where $0\le s\le 2,$ $0 \le r \le 1,$ then for  $\tau \in \mathbb{R}$ and  $0< \varepsilon \le 1$ we have 
		\begin{multline*}
		\| \mathbf{u}_\varepsilon(\,\cdot\,, \tau) - \mathbf{u}_0 (\,\cdot\,, \tau) \|_{L_2 (\mathbb{R}^d)} \le 
		{\mathfrak{C}}_1(s) (1+|\tau|)^{s/2}  \varepsilon^{s/2}  \| \boldsymbol{\phi} \|_{H^{s}(\mathbb{R}^d)}
		\\
		+ {\mathfrak{C}}_2(r) (1+|\tau|)^{(r+1)/2}  \varepsilon^{(r+1)/2}
		\bigl( \| \boldsymbol{\psi}_2 \|_{H^{r}(\mathbb{R}^d)} + \|\mathbf{F}_2 \|_{L_1((0,\tau);H^{r}(\mathbb{R}^d))}  \bigr) 
		\\
		+ {\mathfrak{C}}'_2(s) (1+|\tau|)^{s/2}  \varepsilon^{s/2}
		\bigl( \| \boldsymbol{\rho} \|_{H^s(\mathbb{R}^d)} + \|\mathbf{G} \|_{L_1((0,\tau);H^s(\mathbb{R}^d))}  \bigr) 
		\\
		+ \wt{\mathfrak{C}}_2(r)(1+|\tau|)  \varepsilon^r
		\bigl( \| \boldsymbol{\psi}_1 \|_{H^{r}(\mathbb{R}^d)} + \|\mathbf{F}_1 \|_{L_1((0,\tau);H^{r}(\mathbb{R}^d))}  \bigr). 
		\end{multline*}

\noindent	$3^\circ$.
	If  
	$
	\boldsymbol{\phi}, \boldsymbol{\psi}_1,  \boldsymbol{\psi}_2 \in L_2(\mathbb{R}^d; \mathbb{C}^n),\quad \boldsymbol{\rho} \in L_2(\mathbb{R}^d; \mathbb{C}^{dn}),$ 	 $\mathbf{F}_1, \mathbf{F}_2 \in L_{1, \mathrm{loc}}(\mathbb{R}; L_2(\mathbb{R}^d; \mathbb{C}^n)),$ and $\mathbf{G} \in L_{1, \mathrm{loc}}(\mathbb{R}; L_2(\mathbb{R}^d; \mathbb{C}^{dn})),$
	then  
		 $$
		 \lim_{\eps \to 0}  \| \mathbf{u}_\varepsilon(\,\cdot\,, \tau) - \mathbf{u}_0 (\,\cdot\,, \tau) \|_{L_2 (\mathbb{R}^d)}
		 =0, \quad \tau \in \R.
		 $$
		 \end{theorem}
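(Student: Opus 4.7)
The plan is to reduce statements $1^\circ$ and $2^\circ$ to the operator-norm estimates already established in Theorem~\ref{th15.25}, Corollary~\ref{cor15.28}, and Remark~\ref{rem15.28a} by subtracting the Duhamel representations \eqref{16.47} and \eqref{16.49} term by term. Written out, $\mathbf{u}_\varepsilon(\,\cdot\,,\tau) - \mathbf{u}_0(\,\cdot\,,\tau)$ splits into five contributions coming from $\boldsymbol{\phi}$, $\boldsymbol{\psi}_1$, $\boldsymbol{\psi}_2+\D^*\boldsymbol{\rho}$, $\mathbf{F}_1$, and $\mathbf{F}_2+\D^*\mathbf{G}$, which, by definitions \eqref{15.70}--\eqref{15.72}, are exactly $J_{1,\eps}(\tau)\boldsymbol{\phi}$, $J_{2,\eps}(\tau)\boldsymbol{\psi}_1$, $J_{3,\eps}(\tau)(\boldsymbol{\psi}_2+\D^*\boldsymbol{\rho})$, and the time-integrals of $J_{2,\eps}(\tau-\widetilde{\tau})\mathbf{F}_1(\,\cdot\,,\widetilde{\tau})$ and $J_{3,\eps}(\tau-\widetilde{\tau})(\mathbf{F}_2(\,\cdot\,,\widetilde{\tau})+\D^*\mathbf{G}(\,\cdot\,,\widetilde{\tau}))$. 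For the integrated terms, Minkowski's inequality in $\widetilde{\tau}$ together with the monotonicity $1+|\tau-\widetilde{\tau}|\le 1+|\tau|$ on $[0,\tau]$ converts the pointwise operator bounds into $L_1((0,\tau);H^\bullet)$-norms of $\mathbf{F}_j$ and $\mathbf{G}$, as stated.

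For statement $1^\circ$ (with $\boldsymbol{\rho}=0$ and $\mathbf{G}=0$) I would apply \eqref{15.75} to bound the $\boldsymbol{\phi}$-contribution, Remark~\ref{rem15.28a} with $r=1$ to bound the $\boldsymbol{\psi}_1$ and $\mathbf{F}_1$ contributions (producing the $\wt{\mathrm{C}}_2$-type term with the $H^1$-norm), and \eqref{15.77} to bound the $\boldsymbol{\psi}_2$ and $\mathbf{F}_2$ contributions. Statement $2^\circ$ is analogous but uses the interpolated inequalities: \eqref{15.82} for $J_{1,\eps}$ on $\boldsymbol{\phi}$; estimate from Remark~\ref{rem15.28a} for $J_{2,\eps}$ on $\boldsymbol{\psi}_1$ and $\mathbf{F}_1$; \eqref{15.83} for $J_{3,\eps}$ on $\boldsymbol{\psi}_2$ and $\mathbf{F}_2$, which yields the stronger rate $\eps^{(r+1)/2}$ thanks to the $\mathcal{A}_\eps^{-1/2}$ factor; and \eqref{J_3epsD*_gen_intrpld} for $J_{3,\eps}\D^*$ on $\boldsymbol{\rho}$ and $\mathbf{G}$, whose natural Sobolev exponent is $s$ (matching the $H^s$-norm of the $\boldsymbol{\phi}$-term). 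The restriction $0<\eps\le 1$ in $2^\circ$ comes from the range of validity of \eqref{15.83} and the estimate in Remark~\ref{rem15.28a} as stated.

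Statement $3^\circ$ is then a standard Banach--Steinhaus / density argument. Given arbitrary $L_2$ (resp.\ $L_{1,\mathrm{loc}}(L_2)$) data, approximate $\boldsymbol{\phi},\boldsymbol{\psi}_1,\boldsymbol{\psi}_2,\boldsymbol{\rho}$ and $\mathbf{F}_1,\mathbf{F}_2,\mathbf{G}$ in norm by Schwartz-class data; for the smooth data, statement $2^\circ$ with any fixed $s,r>0$ gives convergence to zero of the $L_2$-difference as $\eps\to 0$; the tail is controlled uniformly in $\eps$ by the elementary $L_2\to L_2$ bounds on $J_{1,\eps}$, $J_{2,\eps}$, $J_{3,\eps}$, and $J_{3,\eps}\D^*$ (cf.\ \eqref{15.84}, \eqref{J_3epsD*_gen_L2_L2}, and the obvious bound for $J_{2,\eps}$ noted in Remark~\ref{rem15.28a}, where one absorbs the factor $|\tau|$ into a $\tau$-dependent constant since $\tau$ is fixed).

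I do not anticipate a substantive obstacle, as the theorem is essentially a bookkeeping translation of previously established operator-norm estimates through the explicit Duhamel formulas~\eqref{16.47},~\eqref{16.49}. The only step that requires care is the correct matching of Sobolev exponents to the various data terms, and in particular the observation that the factor $\D^*$ attached to $\boldsymbol{\rho}$ and $\mathbf{G}$ must be handled via the $J_{3,\eps}\D^*$-estimate \eqref{J_3epsD*_gen_intrpld} rather than by naively losing a derivative on $\boldsymbol{\rho}$ and $\mathbf{G}$.
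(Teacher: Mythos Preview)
Your proposal is correct and follows exactly the approach the paper takes: the paper states that Theorem~\ref{th16.13} follows by ``Applying Theorem~\ref{th15.25}, Corollary~\ref{cor15.28}, Remark~\ref{rem15.28a}, and using representations~\eqref{16.47},~\eqref{16.49},'' which is precisely your term-by-term subtraction of the Duhamel formulas combined with the operator-norm bounds on $J_{1,\eps}$, $J_{2,\eps}$, $J_{3,\eps}$, and $J_{3,\eps}\D^*$. One cosmetic slip: in statement~$1^\circ$ the $\boldsymbol{\psi}_1,\mathbf{F}_1$ contribution carries the constant $\mathrm{C}_2$ (from~\eqref{15.76}, equivalently Remark~\ref{rem15.28a} at $r=1$), while $\wt{\mathrm{C}}_2$ is the constant for the $\boldsymbol{\psi}_2,\mathbf{F}_2$ terms via~\eqref{15.77}; you have the right estimates attached to the right operators, just the constant names interchanged in your parenthetical remark.
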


In the case where $\boldsymbol{\psi}_1 =0$ and $\mathbf{F}_1 =0$, it is possible to improve statements~$1^\circ$
and~$2^\circ$ of Theorem \ref{th16.13} under some additional assumptions. 
Corollary~\ref{cor15.29} leads to the following result.

\begin{theorem}
  \label{th16.14}
  Suppose that $\mathbf{u}_\varepsilon$~is the solution of problem~\eqref{nonhomog_Cauchy_wQ} 
  and $\mathbf{u}_0$ is the solution of the homogenized problem \eqref{nonhomog_Cauchy_wQ_eff} with  
  $\boldsymbol{\psi}_1 =0$ and $\mathbf{F}_1 =0$.
   Suppose that Condition \emph{\ref{cond_BB}} or Condition~\emph{\ref{sndw_cond1}} {\rm (}or more restrictive Condition~\emph{\ref{sndw_cond2}}{\rm )} is satisfied.
		 If  $\boldsymbol{\phi} \in H^{s}(\mathbb{R}^d; \mathbb{C}^n),$
		 $ \boldsymbol{\psi}_2 \in H^{r}(\mathbb{R}^d; \mathbb{C}^n),$
		 $\boldsymbol{\rho} \in H^{s}(\mathbb{R}^d; \mathbb{C}^n),$
		 $\mathbf{F}_2 \in L_{1, \mathrm{loc}}(\mathbb{R}; H^{r}(\mathbb{R}^d; \mathbb{C}^n)),$  and
		 $\mathbf{G} \in L_{1, \mathrm{loc}}(\mathbb{R}; H^{s}(\mathbb{R}^d; \mathbb{C}^n)),$  
		where $0\le s\le 3/2,$ $0 \le r \le 1/2,$ then for $\tau \in \mathbb{R}$ and  $0< \varepsilon \le 1$ we have
		{\allowdisplaybreaks\begin{multline*}
		\| \mathbf{u}_\varepsilon(\,\cdot\,, \tau) - \mathbf{u}_0 (\,\cdot\,, \tau)  \|_{L_2 (\mathbb{R}^d)} \le 
		{\mathfrak{C}}_3(s) (1+|\tau|)^{s/3}  \varepsilon^{2s/3}  \| \boldsymbol{\phi} \|_{H^{s}(\mathbb{R}^d)}
		\\
		+ {\mathfrak{C}}_4(r) (1+|\tau|)^{(r+1)/3}  \varepsilon^{2(r+1)/3}
		\bigl( \| \boldsymbol{\psi}_2 \|_{H^{r}(\mathbb{R}^d)} + \|\mathbf{F}_2 \|_{L_1((0,\tau);H^{r}(\mathbb{R}^d))}  \bigr)
		\\
		+ {\mathfrak{C}}'_4(s) (1+|\tau|)^{s/3}  \varepsilon^{2s/3}
		\bigl( \| \boldsymbol{\rho} \|_{H^{s}(\mathbb{R}^d)} + \|\mathbf{G} \|_{L_1((0,\tau);H^{s}(\mathbb{R}^d))}  \bigr). 
		\end{multline*}
		}
		 \end{theorem}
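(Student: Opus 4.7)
The proof will proceed in exact parallel with Theorem \ref{th16.13} but using the improved estimates from Corollary \ref{cor15.29} in place of those from Corollary \ref{cor15.28}. The plan is first to use the representations \eqref{16.47} and \eqref{16.49} for $\mathbf{u}_\varepsilon$ and $\mathbf{u}_0$; then, since $\boldsymbol{\psi}_1=0$ and $\mathbf{F}_1=0$, the difference $\mathbf{u}_\varepsilon(\cdot,\tau)-\mathbf{u}_0(\cdot,\tau)$ splits into three contributions which I can write, in the notation of \eqref{15.70}--\eqref{15.72}, as
\begin{equation*}
\mathbf{u}_\varepsilon(\cdot,\tau)-\mathbf{u}_0(\cdot,\tau)
 = J_{1,\varepsilon}(\tau)\boldsymbol{\phi} + J_{3,\varepsilon}(\tau)\bigl(\boldsymbol{\psi}_2+\mathbf{D}^*\boldsymbol{\rho}\bigr)
 + \int_0^\tau J_{3,\varepsilon}(\tau-\widetilde{\tau})\bigl(\mathbf{F}_2(\cdot,\widetilde{\tau})+\mathbf{D}^*\mathbf{G}(\cdot,\widetilde{\tau})\bigr)\,d\widetilde{\tau}.
\end{equation*}
Here the absence of the $\boldsymbol{\psi}_1$ and $\mathbf{F}_1$ terms is crucial, because those would require controlling $J_{2,\varepsilon}(\tau)$, for which no improved enhanced estimate is available (cf.\ Remarks \ref{rem12.1} and \ref{rem15.28a}). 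Thus, the hypothesis $\boldsymbol{\psi}_1=\mathbf{F}_1=0$ is not cosmetic but intrinsic to the improvement.

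Next, I will apply the three estimates of Corollary \ref{cor15.29} termwise. To the first summand, I apply \eqref{15.86} with the given $s$, which yields the factor $\mathfrak{C}_3(s)(1+|\tau|)^{s/3}\varepsilon^{2s/3}\|\boldsymbol{\phi}\|_{H^s(\mathbb{R}^d)}$. For the piece of the second summand involving $\boldsymbol{\psi}_2$, I apply \eqref{15.87} with the given $r\in[0,1/2]$, producing $\mathfrak{C}_4(r)(1+|\tau|)^{(r+1)/3}\varepsilon^{2(r+1)/3}\|\boldsymbol{\psi}_2\|_{H^r(\mathbb{R}^d)}$. For the piece involving $\mathbf{D}^*\boldsymbol{\rho}$, I apply \eqref{J_3epsD*_enchcd_intrpltd} with the given $s\in[0,3/2]$, obtaining $\mathfrak{C}'_4(s)(1+|\tau|)^{s/3}\varepsilon^{2s/3}\|\boldsymbol{\rho}\|_{H^s(\mathbb{R}^d)}$.

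Finally, for the integral term I will use Minkowski's inequality for integrals, passing the $L_2(\mathbb{R}^d)$-norm inside the integral in $\widetilde{\tau}$, and then apply the same two bounds \eqref{15.87} and \eqref{J_3epsD*_enchcd_intrpltd} uniformly in $\widetilde{\tau}\in(0,\tau)$. Since the prefactors $(1+|\tau-\widetilde{\tau}|)^{(r+1)/3}$ and $(1+|\tau-\widetilde{\tau}|)^{s/3}$ are bounded by $(1+|\tau|)^{(r+1)/3}$ and $(1+|\tau|)^{s/3}$ respectively on the relevant interval, they factor out and leave $L_1((0,\tau);H^{r})$ and $L_1((0,\tau);H^s)$ norms of $\mathbf{F}_2$ and $\mathbf{G}$. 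Summing the four contributions gives precisely the asserted inequality. There is no serious obstacle here: the work has already been done in establishing Corollary \ref{cor15.29}; the only subtlety worth flagging is the verification that the regularities of $\boldsymbol{\phi}$, $\boldsymbol{\rho}$, and $\mathbf{G}$ lie in the admissible range $[0,3/2]$ and those of $\boldsymbol{\psi}_2$ and $\mathbf{F}_2$ in $[0,1/2]$, which is exactly what the hypotheses provide.
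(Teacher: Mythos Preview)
Your proposal is correct and follows exactly the approach of the paper, which simply states that ``Corollary~\ref{cor15.29} leads to the following result.'' You have faithfully unpacked this one-line justification: using the representations \eqref{16.47}--\eqref{16.49} with $\boldsymbol{\psi}_1=0$, $\mathbf{F}_1=0$ to express the difference via $J_{1,\varepsilon}$ and $J_{3,\varepsilon}$, then applying the three estimates \eqref{15.86}, \eqref{15.87}, \eqref{J_3epsD*_enchcd_intrpltd} of Corollary~\ref{cor15.29} termwise, with Minkowski's inequality handling the integral term.
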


Now, we assume that  $ \boldsymbol{\phi}=0$,  $\boldsymbol{\psi}_2=0$, $\boldsymbol{\rho}=0$,
 $\mathbf{F}_2=0$, and ${\mathbf G}=0$.
In this case it is possible to approximate the solution of problem~\eqref{nonhomog_Cauchy_wQ} in the energy norm.
Applying Theorem   \ref{sndw_A_eps_sin_general_thrm}, Corollary \ref{cor15.34}, and Remark \ref{rem15.36a}, we arrive at the following result.

\begin{theorem}
\label{th16.16}
  Suppose that $\mathbf{u}_\varepsilon$~is the solution of problem~\eqref{nonhomog_Cauchy_wQ} 
  with $ \boldsymbol{\phi}=0,$  $\boldsymbol{\psi}_2=0,$ $\boldsymbol{\rho}=0,$  $\mathbf{F}_2=0,$ and
  ${\mathbf G} =0$. Let $\mathbf{u}_0$ be the solution of the homogenized problem  \eqref{nonhomog_Cauchy_wQ_eff}.
  We put 
  $\mathbf{v}_\varepsilon:= \mathbf{u}_0 + \varepsilon \Lambda^\varepsilon  b(\mathbf{D}) \Pi_\eps \mathbf{u}_0,$
$\mathbf{p}_\varepsilon:= g^\eps b(\D) \mathbf{u}_\varepsilon,$ and
$\q_\eps:= \wt{g}^\eps b(\D)\Pi_\eps \mathbf{u}_0$.
  
 \noindent $1^\circ$.
   If  		 $\boldsymbol{\psi}_1 \in H^{2}(\mathbb{R}^d; \mathbb{C}^n)$ and
		 $\mathbf{F}_1 \in L_{1, \mathrm{loc}}(\mathbb{R}; H^{2}(\mathbb{R}^d; \mathbb{C}^n)),$  
		then for $\tau \in \mathbb{R}$ and  $\varepsilon >0$ we have  
\begin{align*}
\| \mathbf{u}_\varepsilon(\,\cdot\,, \tau) - \mathbf{v}_\eps (\,\cdot\,, \tau) \|_{H^1 (\mathbb{R}^d)}\le
{\mathrm{C}}_{7}(1+|\tau|)  \varepsilon
\bigl( \| \boldsymbol{\psi}_1 \|_{H^2(\mathbb{R}^d)} + \|\mathbf{F}_1 \|_{L_1((0,\tau);H^{2}(\mathbb{R}^d))}  \bigr),
\\
		\| \mathbf{p}_\varepsilon(\,\cdot\,, \tau) - \q_\eps (\,\cdot\,, \tau) \|_{L_2 (\mathbb{R}^d)} \le {\mathrm{C}}_{8}(1+|\tau|)  \varepsilon
\bigl( \| \boldsymbol{\psi}_1 \|_{H^2(\mathbb{R}^d)} + \|\mathbf{F}_1 \|_{L_1((0,\tau);H^{2}(\mathbb{R}^d))}  \bigr).
\end{align*}

\noindent		$2^\circ$.
		 If  
		 $\boldsymbol{\psi}_1 \in H^{s}(\mathbb{R}^d; \mathbb{C}^n)$ and  
		 $\mathbf{F}_1 \in L_{1, \mathrm{loc}}(\mathbb{R}; H^{s}(\mathbb{R}^d; \mathbb{C}^n)),$  
		where $0\le s\le 2,$  then for $\tau \in \mathbb{R}$ and  $\varepsilon >0$ we have 
\begin{align*}
\|  \D \mathbf{u}_\varepsilon(\,\cdot\,, \tau)& -\D \mathbf{v}_\eps (\,\cdot\,, \tau) \|_{L_2(\mathbb{R}^d)}
 \le {\mathfrak{C}}_{5}(s) (1+|\tau|)^{s/2}  \varepsilon^{s/2}
\bigl( \| \boldsymbol{\psi}_1 \|_{H^s(\mathbb{R}^d)} + \|\mathbf{F}_1 \|_{L_1((0,\tau);H^{s}(\mathbb{R}^d))}  \bigr),
\\
	\| \mathbf{p}_\varepsilon(\,\cdot\,, \tau)& - \mathbf{q}_\eps (\,\cdot\,, \tau) \|_{L_2 (\mathbb{R}^d)} 
\le {\mathfrak{C}}_{6}(s)(1+|\tau|)^{s/2}  \varepsilon^{s/2}
\bigl( \| \boldsymbol{\psi}_1 \|_{H^s(\mathbb{R}^d)} + \|\mathbf{F}_1 \|_{L_1((0,\tau);H^{s}(\mathbb{R}^d))}  \bigr).
\end{align*}
		
\noindent	$3^\circ$.
	 If 
	 	 $\boldsymbol{\psi}_1 \in L_2(\mathbb{R}^d; \mathbb{C}^n)$ and
		 $\mathbf{F}_1 \in L_{1, \mathrm{loc}}(\mathbb{R}; L_2(\mathbb{R}^d; \mathbb{C}^n)),$  
		 then for $\tau \in \R$ we have 
		 $$
		\lim_{\eps \to 0}  \| \mathbf{u}_\varepsilon(\,\cdot\,, \tau) - \mathbf{v}_\eps (\,\cdot\,, \tau) 
		 \|_{H^1 (\mathbb{R}^d)} =0\quad\mbox{and}\quad \lim_{\eps \to 0} 
		 \| \mathbf{p}_\varepsilon(\,\cdot\,, \tau) -  \mathbf{q}_\eps (\,\cdot\,, \tau)
		 \|_{L_2 (\mathbb{R}^d)} =0.
				$$
\end{theorem}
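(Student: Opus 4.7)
The plan is to reduce Theorem \ref{th16.16} to the operator-level estimates already established for the sandwiched sine operator. Using representations \eqref{16.47} and \eqref{16.49} with $\boldsymbol{\phi} = \boldsymbol{\psi}_2 = \boldsymbol{\rho} = 0$, $\mathbf{F}_2 = 0$, $\mathbf{G} = 0$, I would write
$$\mathbf{u}_\eps(\cdot,\tau) = f^\eps \A_\eps^{-1/2}\sin(\tau \A_\eps^{1/2})(f^\eps)^{-1}\bpsi_1 + \int_0^\tau f^\eps \A_\eps^{-1/2}\sin((\tau-\wt\tau)\A_\eps^{1/2})(f^\eps)^{-1}\F_1(\cdot,\wt\tau)\,d\wt\tau,$$
and the analogous formula for $\mathbf{u}_0$ with $f_0$ and $\A^0$. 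Since $\mathbf{v}_\eps = (I + \eps \Lambda^\eps b(\D)\Pi_\eps)\mathbf{u}_0$, subtracting yields, in the notation of \eqref{11.*7},
$$\mathbf{u}_\eps - \mathbf{v}_\eps = J_\eps(\tau)\bpsi_1 + \int_0^\tau J_\eps(\tau-\wt\tau)\F_1(\cdot,\wt\tau)\,d\wt\tau,$$
and, using the notation for $I_\eps(\tau)$ from Theorem \ref{sndw_A_eps_sin_general_thrm},
$$\mathbf{p}_\eps - \mathbf{q}_\eps = I_\eps(\tau)\bpsi_1 + \int_0^\tau I_\eps(\tau-\wt\tau)\F_1(\cdot,\wt\tau)\,d\wt\tau.$$

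For statement $1^\circ$, I would insert the estimates \eqref{15.92}, \eqref{15.93} from Theorem \ref{sndw_A_eps_sin_general_thrm} into these Duhamel representations. Taking $H^1$ and $L_2$ norms respectively and using the monotonicity $1 + |\tau - \wt\tau| \le 1 + |\tau|$ for $\wt\tau \in [0,\tau]$, the integrals are controlled by $(1+|\tau|)\eps \|\F_1\|_{L_1((0,\tau); H^2)}$, giving exactly the required estimates. Statement $2^\circ$ follows in the same way, now invoking \eqref{15.94} and \eqref{15.95} from Corollary \ref{cor15.34}, since $(1+|\tau-\wt\tau|)^{s/2} \le (1+|\tau|)^{s/2}$ for $\wt\tau \in [0,\tau]$ and $0 \le s \le 2$; the $H^s$ norm of $\F_1$ under the integral produces the $L_1((0,\tau); H^s)$ norm after integration.

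For statement $3^\circ$, I would apply the Banach–Steinhaus theorem. The key ingredient is uniform boundedness (for fixed $\tau$) of the operator families in $\eps$: by Remark \ref{rem15.36a} one has $\|J_\eps(\tau)\|_{L_2 \to H^1} \le \mathrm{C}_7''(1+|\tau|)$ uniformly in $0 < \eps \le 1$, while estimate \eqref{15.98} gives $\|I_\eps(\tau)\|_{L_2 \to L_2} \le \mathrm{C}_8'$ uniformly in $\tau \in \R$ and $\eps > 0$. Combining these uniform bounds with the convergence on the dense subsets $H^2 \subset L_2$ and $L_1((0,\tau); H^2) \subset L_1((0,\tau); L_2)$ provided by statement $1^\circ$, and using dominated convergence to pass to the limit under the integral sign (the integrands are dominated by $\mathrm{const}\cdot\|\F_1(\cdot,\wt\tau)\|_{L_2}$), one concludes both required limits. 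The main technical point is the uniform-in-$\eps$ majorization needed to invoke dominated convergence for the Duhamel integral; everything else is a direct assembly of previously proved operator estimates.
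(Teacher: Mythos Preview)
Your proposal is correct and follows essentially the same approach as the paper: the paper's proof simply says to apply Theorem~\ref{sndw_A_eps_sin_general_thrm}, Corollary~\ref{cor15.34}, and Remark~\ref{rem15.36a} together with representations \eqref{16.47} and \eqref{16.49}, which is exactly what you do. Your treatment of statement~$3^\circ$ via Banach--Steinhaus plus dominated convergence (using the uniform bound \eqref{15.98a} from Remark~\ref{rem15.36a} for $J_\eps$ and \eqref{15.98} for $I_\eps$) spells out what the paper leaves implicit, and mirrors the argument used for the analogous Theorem~\ref{th16.4}.
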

 
 \begin{remark}
 By Remark  \ref{rem15.36a}, under the assumptions of Theorem \ref{th16.16}$(2^\circ)$, for
  $0\le s \le 2$, $\tau \in \R$, and $0< \eps \le 1$ we have
\begin{equation*}
\| \mathbf{u}_\varepsilon(\,\cdot\,, \tau) -\mathbf{v}_\eps (\,\cdot\,, \tau) \|_{H^1 (\mathbb{R}^d)}\!\le\!
{\mathfrak{C}}'_{5}(s) (1\!+\!|\tau|)  \varepsilon^{s/2}
\big( \| \boldsymbol{\psi}_1 \|_{H^s(\mathbb{R}^d)}\! + \!\|\mathbf{F}_1 \|_{L_1((0,\tau);H^{s}(\mathbb{R}^d))}  \big).
\end{equation*}
 \end{remark}
 
 Statements $1^\circ$ and $2^\circ$ of Theorem \ref{th16.16} can be improved under some additional assumptions.
 Theorem \ref{th15.35} and Corollary \ref{cor15.37} imply the following result.

\begin{theorem}
\label{th16.17}
  Suppose that the assumptions of Theorem \emph{\ref{th16.16}} are satisfied.
  Suppose that Condition \emph{\ref{cond_BB}} or Condition~\emph{\ref{sndw_cond1}} \emph{(}or more restrictive Condition~\emph{\ref{sndw_cond2}}\emph{)} is satisfied.

  \noindent $1^\circ$.
   If   $\boldsymbol{\psi}_1 \in H^{3/2}(\mathbb{R}^d; \mathbb{C}^n)$ and
		 $\mathbf{F}_1 \in L_{1, \mathrm{loc}}(\mathbb{R}; H^{3/2}(\mathbb{R}^d; \mathbb{C}^n)),$  
		then for $\tau \in \mathbb{R}$ and   $\varepsilon >0$ we have  
\begin{align*}
&\!\| \mathbf{u}_\varepsilon(\,\cdot\,, \tau) \!-\! \mathbf{v}_\eps (\,\cdot\,, \tau) \|_{H^1(\mathbb{R}^d)} \le
{\mathrm{C}}_{9}(1\!+\!|\tau|)^{1/2}  \varepsilon
\big( \| \boldsymbol{\psi}_1 \|_{H^{3/2}(\mathbb{R}^d)} \!\!+\! \|\mathbf{F}_1 \|_{L_1((0,\tau);H^{3/2}(\mathbb{R}^d))}  \big),
\\
	&	\!\| \mathbf{p}_\varepsilon(\,\cdot\,, \tau)\! - \!\q_\eps (\,\cdot\,, \tau) \|_{L_2 (\mathbb{R}^d)} \le {\mathrm{C}}_{10}(1\!\!+\!|\tau|)^{1/2}  \varepsilon
\big( \| \boldsymbol{\psi}_1 \|_{H^{3/2}(\mathbb{R}^d)} \!\!+\! \!\|\mathbf{F}_1 \|_{L_1((0,\tau);H^{3/2}(\mathbb{R}^d))}  \big).
\end{align*}

	\noindent	$2^\circ$.
		 If   $\boldsymbol{\psi}_1 \in H^{s}(\mathbb{R}^d; \mathbb{C}^n)$ and
		 $\mathbf{F}_1 \in L_{1, \mathrm{loc}}(\mathbb{R}; H^{s}(\mathbb{R}^d; \mathbb{C}^n)),$  
		where $0\le s\le 3/2,$  then for $\tau \in \mathbb{R}$ and  $0< \varepsilon \le 1$ we have 
\begin{align*}
\| \D \mathbf{u}_\varepsilon(\,\cdot\,, \tau) &-\D \mathbf{v}_\eps (\,\cdot\,, \tau) \|_{L_2 (\mathbb{R}^d)}
\le {\mathfrak{C}}_{7}(s) (1+|\tau|)^{s/3}  \varepsilon^{2s/3}
\bigl( \| \boldsymbol{\psi}_1 \|_{H^s(\mathbb{R}^d)} + \|\mathbf{F}_1 \|_{L_1((0,\tau);H^{s}(\mathbb{R}^d))}  \bigr),
\\
		\| \mathbf{p}_\varepsilon(\,\cdot\,, \tau) &-  \mathbf{q}_\eps (\,\cdot\,, \tau) \|_{L_2 (\mathbb{R}^d)} 
	\le {\mathfrak{C}}_{8}(s) (1+|\tau|)^{s/3}  \varepsilon^{2s/3}
\bigl( \| \boldsymbol{\psi}_1 \|_{H^s(\mathbb{R}^d)} + \|\mathbf{F}_1 \|_{L_1((0,\tau);H^{s}(\mathbb{R}^d))}  \bigr).
\end{align*}
	\end{theorem}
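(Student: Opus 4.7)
The plan is to deduce Theorem \ref{th16.17} from Theorem \ref{th15.35} and Corollary \ref{cor15.37} by exactly the same mechanism used to pass from Theorem \ref{sndw_A_eps_sin_general_thrm} and Corollary \ref{cor15.34} to Theorem \ref{th16.16}, but with the sharper $(1+|\tau|)^{1/2}\eps$ and $(1+|\tau|)^{s/3}\eps^{2s/3}$ rates available under Condition \ref{cond_BB} or \ref{sndw_cond1}. Under the hypotheses $\boldsymbol{\phi}=0$, $\boldsymbol{\psi}_2=0$, $\boldsymbol{\rho}=0$, $\mathbf{F}_2=0$, $\mathbf{G}=0$, the representations \eqref{16.47} and \eqref{16.49} collapse to the $\boldsymbol{\psi}_1$-term and the $\mathbf{F}_1$-Duhamel integral, so the proof reduces to identifying the difference with the sandwiched operators $J_\eps(\tau)$ and $I_\eps(\tau)$ studied in \S\ref{sec15.6}.

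First I would note that the corrector $\eps\Lambda^\eps b(\D)\Pi_\eps$ is independent of $\tau$ and therefore commutes with the time-integration $\int_0^\tau\!\cdot\, d\tilde\tau$. Distributing $\mathbf{v}_\eps = \mathbf{u}_0 + \eps\Lambda^\eps b(\D)\Pi_\eps \mathbf{u}_0$ across the two surviving summands of \eqref{16.49} and subtracting from \eqref{16.47}, I get
\begin{equation*}
\mathbf{u}_\varepsilon(\cdot,\tau) - \mathbf{v}_\varepsilon(\cdot,\tau)
= J_\eps(\tau)\,\boldsymbol{\psi}_1 + \int_0^{\tau} J_\eps(\tau - \widetilde\tau)\,\mathbf{F}_1(\cdot,\widetilde\tau)\, d\widetilde\tau,
\end{equation*}
with $J_\eps$ as in \eqref{11.*7}. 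The first inequality of statement $1^\circ$ then follows from estimate \eqref{11.*5} of Theorem \ref{th15.35} applied with $\tau$ replaced by $\tau-\widetilde\tau$, using $(1+|\tau-\widetilde\tau|)^{1/2}\le(1+|\tau|)^{1/2}$ on $[0,\tau]$ and Minkowski's inequality in $L_1(\widetilde\tau)$ for the $H^1(\mathbb{R}^d)$-norm.

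For the flux, the key commutation fact is that $\Pi_\eps$, being a Fourier multiplier, commutes with $b(\D)$, with the constant matrix $f_0$, and with the functions of the constant-coefficient operator $\mathcal{A}^0$. Consequently
\begin{equation*}
\mathbf{p}_\varepsilon(\cdot,\tau) - \q_\eps(\cdot,\tau) = I_\eps(\tau)\,\boldsymbol{\psi}_1 + \int_0^{\tau} I_\eps(\tau-\widetilde\tau)\,\mathbf{F}_1(\cdot,\widetilde\tau)\, d\widetilde\tau,
\end{equation*}
where $I_\eps$ is the operator from Theorem \ref{sndw_A_eps_sin_general_thrm}. The second estimate of Theorem \ref{th15.35} now yields the second inequality of statement $1^\circ$. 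Statement $2^\circ$ is obtained by exactly the same identifications, applying the interpolated estimates of Corollary \ref{cor15.37} in place of the endpoint estimates of Theorem \ref{th15.35}.

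The only nontrivial point — which I expect to be the single real obstacle — is to verify cleanly that the corrector added in the definition of $\mathbf{v}_\eps$ and the smoothing $\Pi_\eps$ in the definition of $\q_\eps$ match exactly the corrector and smoothing built into $J_\eps$ and $I_\eps$. This requires two things: that the $\tau$-independent operator $\eps\Lambda^\eps b(\D)\Pi_\eps$ passes through the Duhamel integral, and that $\Pi_\eps$ commutes with the Fourier-multiplier factors $f_0$, $b(\D)$, and $(\mathcal{A}^0)^{-1/2}\sin(\tau(\mathcal{A}^0)^{1/2})$ acting on $f_0^{-1}\boldsymbol{\psi}_1$ and $f_0^{-1}\mathbf{F}_1(\cdot,\widetilde\tau)$. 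Both verifications are elementary; once they are in place, termwise application of Theorem \ref{th15.35} (respectively Corollary \ref{cor15.37}) completes the proof.
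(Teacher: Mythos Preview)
Your proposal is correct and follows exactly the paper's approach: the paper simply states that Theorem~\ref{th16.17} is implied by Theorem~\ref{th15.35} and Corollary~\ref{cor15.37}, via the same representation-and-apply mechanism used for Theorem~\ref{th16.16}. One small remark: the commutation of $\Pi_\eps$ with $f_0$, $b(\D)$, and functions of $\mathcal{A}^0$ that you flag as the ``only nontrivial point'' is in fact unnecessary, since the definitions of $J_\eps(\tau)$ in~\eqref{11.*7} and of $I_\eps(\tau)$ in Theorem~\ref{sndw_A_eps_sin_general_thrm} already place $\Pi_\eps$ to the left of the full homogenized propagator $f_0(\mathcal{A}^0)^{-1/2}\sin(\tau(\mathcal{A}^0)^{1/2})f_0^{-1}$, so the identification with $\mathbf{u}_\eps-\mathbf{v}_\eps$ and $\mathbf{p}_\eps-\mathbf{q}_\eps$ is literal, with no rearrangement needed.
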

 
 Now, we discuss the possibility to remove the smoothing operator from the corrector. 
  Theorem \ref{th15.48}$(2^\circ)$, Corollary \ref{cor15.54b}$(1^\circ)$, and Remark \ref{rem15.59}$(1^\circ)$ imply the following result.
  
 \begin{theorem}
 \label{th16.19}
 Suppose that  
 $\mathbf{u}_\varepsilon$~is the solution of problem~\eqref{nonhomog_Cauchy_wQ} 
  with $ \boldsymbol{\phi}=0,$  $\boldsymbol{\psi}_2=0,$ $\boldsymbol{\rho} =0,$  $\mathbf{F}_2=0,$
  and $\mathbf{G}=0$. Let
  $\mathbf{u}_0$ be the solution of the homogenized problem \eqref{nonhomog_Cauchy_wQ_eff}.
  We put 
$\mathbf{v}^0_\varepsilon:= \mathbf{u}_0 + \varepsilon \Lambda^\varepsilon  b(\mathbf{D}) \mathbf{u}_0,$
$\mathbf{p}_\varepsilon:= g^\eps b(\D) \mathbf{u}_\varepsilon,$ $\q_\eps^0:=\wt{g}^\eps b(\D) \mathbf{u}_0$.

	\noindent	$1^\circ$. 
		Suppose that Condition \emph{\ref{cond_Lambda_1}} is satisfied.
		If  $\boldsymbol{\psi}_1 \in H^{2}(\mathbb{R}^d; \mathbb{C}^n)$ and $\mathbf{F}_1 \in L_{1, \mathrm{loc}}(\mathbb{R}; H^{2}(\mathbb{R}^d; \mathbb{C}^n)),$  
		then for  $\tau \in \mathbb{R}$ and  $0 < \varepsilon \le 1$ we have 
		\begin{align*}
		\| \mathbf{u}_\varepsilon(\,\cdot\,, \tau) - \mathbf{v}^0_\varepsilon (\,\cdot\,, \tau) \|_{H^1 (\mathbb{R}^d)} \le 
		{\mathrm{C}}^\circ_{7} (1+|\tau|)  \varepsilon \bigl( \| \boldsymbol{\psi}_1 \|_{H^{2}(\mathbb{R}^d)} + \|\mathbf{F}_1 \|_{L_1((0,\tau);H^{2}(\mathbb{R}^d))}  \bigr),
		\\
		\| \mathbf{p}_\varepsilon(\,\cdot\,, \tau) - \q_\eps^0 (\,\cdot\,, \tau) \|_{L_2 (\mathbb{R}^d)} \le 
		{\mathrm{C}}^\circ_{8} (1+|\tau|)  \varepsilon \bigl( \| \boldsymbol{\psi} _1\|_{H^{2}(\mathbb{R}^d)} + \|\mathbf{F}_1 \|_{L_1((0,\tau);H^{2}(\mathbb{R}^d))}  \bigr).
		\end{align*}
	
	\noindent	$2^\circ$.  
		Suppose that Condition \emph{\ref{cond_Lambda_infty}} is satisfied.
		If  
		$\boldsymbol{\psi}_1 \in H^{1+r}(\mathbb{R}^d; \mathbb{C}^n)$ and $\mathbf{F}_1 \in L_{1, \mathrm{loc}}(\mathbb{R}; H^{1+r}(\mathbb{R}^d; \mathbb{C}^n)),$ $0 \le r \le 1,$ then for $\tau \in \mathbb{R}$ and $0 < \varepsilon \le 1$ we have 
		\begin{align*}
		\| \D \mathbf{u}_\varepsilon(\,\cdot\,, \tau)& -\D \mathbf{v}^0_\varepsilon (\,\cdot\,, \tau) \|_{L_2 (\mathbb{R}^d)}
	\le 
		{\mathfrak{C}}^\circ_5 (r) (1+|\tau|)^{r}  \varepsilon^{r} \bigl( \| \boldsymbol{\psi}_1 \|_{H^{1+r}(\mathbb{R}^d)} + \|\mathbf{F}_1 \|_{L_1((0,\tau);H^{1+r}(\mathbb{R}^d))}  \bigr),
		\\
		\|  \mathbf{p}_\varepsilon(\,\cdot\,, \tau)& - \q^0_\eps(\,\cdot\,, \tau) \|_{L_2 (\mathbb{R}^d)} 
	\le 
		{\mathfrak{C}}_6^\circ (r) (1+|\tau|)^{r}  \varepsilon^{r} \bigl( \| \boldsymbol{\psi}_1 \|_{H^{1+r}(\mathbb{R}^d)} + \|\mathbf{F}_1 \|_{L_1((0,\tau);H^{1+r}(\mathbb{R}^d))}  \bigr).
		\end{align*}
		
	\noindent	$3^\circ$. 
		Suppose that Condition \emph{\ref{cond_Lambda_infty}} is satisfied.
		If 
		$$
		\boldsymbol{\psi}_1 \in H^1 (\mathbb{R}^d, \mathbb{C}^n)\quad\mbox{and} \quad \mathbf{F}_1 \in L_{1, \mathrm{loc}} (\mathbb{R}; H^1 (\mathbb{R}^d, \mathbb{C}^n) ),
		$$
	then for  $\tau \in \R$ we have 
		$$
		\lim\limits_{\varepsilon \to 0} \| \mathbf{u}_\varepsilon (\,\cdot\,, \tau) - \mathbf{v}^0_\varepsilon (\,\cdot\,, \tau) \|_{H^1(\mathbb{R}^d)} = 0\quad\mbox{and}\quad \lim\limits_{\varepsilon \to 0} \| 
		\mathbf{p}_\varepsilon(\,\cdot\,, \tau) - \q^0_\eps (\,\cdot\,, \tau) \|_{L_2 (\mathbb{R}^d)} = 0.
		$$
 \end{theorem}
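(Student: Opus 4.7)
\medskip

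\noindent\textbf{Proof proposal for Theorem \ref{th16.19}.} The plan is to reduce the three statements to the abstract operator estimates already established in \S\ref{sec15.6}--\S\ref{sec15.7}, by using the Duhamel-type representation of the solution of problem~\eqref{nonhomog_Cauchy_wQ} under the simplifying assumptions $\boldsymbol{\phi}=0,$ $\boldsymbol{\psi}_2=0,$ $\boldsymbol{\rho}=0,$ $\mathbf{F}_2=0,$ $\mathbf{G}=0$. First I would specialize formulas \eqref{16.47} and \eqref{16.49} to obtain
\begin{align*}
\mathbf{u}_\varepsilon(\cdot,\tau) &= f^\varepsilon \mathcal{A}_\varepsilon^{-1/2} \sin(\tau \mathcal{A}_\varepsilon^{1/2})(f^\varepsilon)^{-1} \boldsymbol{\psi}_1 + \int_0^\tau f^\varepsilon \mathcal{A}_\varepsilon^{-1/2} \sin((\tau-\widetilde\tau)\mathcal{A}_\varepsilon^{1/2}) (f^\varepsilon)^{-1} \mathbf{F}_1(\cdot,\widetilde\tau)\, d\widetilde\tau,\\
\mathbf{u}_0(\cdot,\tau) &= f_0 (\mathcal{A}^0)^{-1/2} \sin(\tau (\mathcal{A}^0)^{1/2}) f_0^{-1} \boldsymbol{\psi}_1 + \int_0^\tau f_0 (\mathcal{A}^0)^{-1/2} \sin((\tau-\widetilde\tau)(\mathcal{A}^0)^{1/2}) f_0^{-1} \mathbf{F}_1(\cdot,\widetilde\tau)\, d\widetilde\tau.
\end{align*}
Comparing these expressions with the definition of $\mathbf{v}_\varepsilon^0$ and using the operator $J_\varepsilon^\circ(\tau)$ introduced in~\eqref{15.121}, I would obtain the identity
\begin{equation*}
\mathbf{u}_\varepsilon(\cdot,\tau) - \mathbf{v}_\varepsilon^0(\cdot,\tau) = J_\varepsilon^\circ(\tau) \boldsymbol{\psi}_1 + \int_0^\tau J_\varepsilon^\circ(\tau-\widetilde\tau)\, \mathbf{F}_1(\cdot,\widetilde\tau)\, d\widetilde\tau.
\end{equation*}
An analogous identity for the flux difference $\mathbf{p}_\varepsilon - \mathbf{q}_\varepsilon^0$ holds with $J_\varepsilon^\circ(\tau)$ replaced by $I_\varepsilon^\circ(\tau)$ from~\eqref{15.122}, since by construction $\mathbf{p}_\varepsilon(\cdot,\tau) = g^\varepsilon b(\mathbf{D}) \mathbf{u}_\varepsilon(\cdot,\tau)$ and $\mathbf{q}_\varepsilon^0(\cdot,\tau) = \widetilde g^\varepsilon b(\mathbf{D}) \mathbf{u}_0(\cdot,\tau)$ are precisely the actions of the two composite operators appearing in $I_\varepsilon^\circ(\tau)$ on $\boldsymbol{\psi}_1$ and (under the integral sign) on $\mathbf{F}_1(\cdot,\widetilde\tau)$.

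For statement $1^\circ$, I would then apply estimates \eqref{15.127} and \eqref{15.128} from Theorem \ref{th15.48}$(2^\circ)$ termwise to each of the two identities: the initial-data term is estimated directly, while the source term is controlled by passing the operator norm under the integral sign and using $\int_0^\tau (1+|\tau-\widetilde\tau|) \|\mathbf{F}_1(\cdot,\widetilde\tau)\|_{H^2}\, d\widetilde\tau \le (1+|\tau|) \|\mathbf{F}_1\|_{L_1((0,\tau);H^2(\R^d))}$. This yields precisely the required bounds with constants $\mathrm{C}_7^\circ, \mathrm{C}_8^\circ$. Statement $2^\circ$ follows in exactly the same way from Corollary \ref{cor15.54b}$(1^\circ)$, namely from estimates \eqref{15.176}; here the stronger regularity assumption $\Lambda \in L_\infty$ (Condition \ref{cond_Lambda_infty}) is what licenses us to interpolate down to the $H^{1+r}$ scale with $r \ge 0$.

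For statement $3^\circ$, I would invoke the Banach--Steinhaus theorem: statement $1^\circ$ together with Remark \ref{rem15.59}$(1^\circ)$ shows that the family of operators $\boldsymbol{\psi}_1 \mapsto \mathbf{u}_\varepsilon(\cdot,\tau) - \mathbf{v}_\varepsilon^0(\cdot,\tau)$ is uniformly bounded as a family from $H^1(\R^d;\AC^n)$ to $H^1(\R^d;\AC^n)$ (and similarly for $\mathbf{F}_1$ with the integral), and tends to zero pointwise on the dense subspace $H^2(\R^d;\AC^n) \subset H^1(\R^d;\AC^n)$; the conclusion follows. The convergence assertion for the flux difference is handled identically, using the corresponding uniform bound and the $L_2$-pointwise convergence on a dense set. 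No step of this plan presents a genuine obstacle: the hard analytic work has already been done in the proofs of Theorem \ref{th15.48}$(2^\circ)$ and Corollary \ref{cor15.54b}$(1^\circ)$, and what remains is a clean bookkeeping argument with Duhamel's formula. The only mild technical point worth care is ensuring that when $\boldsymbol{\psi}_1$ and $\mathbf{F}_1(\cdot,\widetilde\tau)$ are substituted into the $J_\varepsilon^\circ$ and $I_\varepsilon^\circ$ estimates, the interpolation exponents match in the two statements, but this is automatic from the regularity hypotheses imposed in each case.
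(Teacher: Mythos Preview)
Your proposal is correct and follows essentially the same route as the paper, which simply records that the theorem is implied by Theorem~\ref{th15.48}$(2^\circ)$, Corollary~\ref{cor15.54b}$(1^\circ)$, and Remark~\ref{rem15.59}$(1^\circ)$; you have spelled out explicitly the Duhamel identity linking $\mathbf{u}_\varepsilon-\mathbf{v}_\varepsilon^0$ and $\mathbf{p}_\varepsilon-\mathbf{q}_\varepsilon^0$ to the operators $J_\varepsilon^\circ(\tau)$ and $I_\varepsilon^\circ(\tau)$, and the Banach--Steinhaus step for $3^\circ$, exactly as intended. One cosmetic point: your reference to ``\S\ref{sec15.6}--\S 15.7'' should be to the subsections on removing the smoothing operator (Theorem~\ref{th15.48}, Proposition~\ref{prop15.56a}, Corollary~\ref{cor15.54b}); there is no \S 15.7.
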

 
 \begin{remark}
By Remark  \ref{rem15.59}$(1^\circ)$, under the assumptions of Theorem \ref{th16.19}$(2^\circ)$, for 
\hbox{$0\le r \le 1$}, $\tau \in \R,$ and $0< \eps \le 1$ we have
		\begin{equation*}
		\begin{aligned}
		&\| \mathbf{u}_\varepsilon(\,\cdot\,, \tau) - \mathbf{v}^0_\varepsilon (\,\cdot\,, \tau) 
		\|_{H^1 (\mathbb{R}^d)} 
		\\
		& \le 
		{\mathfrak{C}}_{9} (r) (1+|\tau|)^{r}  \varepsilon^{r} 
		\bigl( 1+ (1+|\tau|)\eps\bigr)^{1-r}
		\bigl( \| \boldsymbol{\psi}_1 \|_{H^{1+r}(\mathbb{R}^d)} + \|\mathbf{F}_1 \|_{L_1((0,\tau);H^{1+r}(\mathbb{R}^d))}  \bigr).
		\end{aligned}
		\end{equation*}
For bounded values of  $(1+|\tau|)\eps$ the right-hand side is of order $ (1+|\tau|)^{r}  \varepsilon^{r}$.
\end{remark}

Statements $1^\circ$ and $2^\circ$ of Theorem \ref{th16.19} can be improved under some additional assumptions.
 Theorem \ref{th15.49}$(2^\circ)$  and Corollary \ref{cor15.54b}$(2^\circ)$ imply the following result.

 \begin{theorem}
 \label{th16.20}
  Suppose that the assumptions of Theorem \emph{\ref{th16.19}} are satisfied. 
  Suppose that Condition \emph{\ref{cond_BB}} or Condition~\emph{\ref{sndw_cond1}} \emph{(}or more restrictive Condition~\emph{\ref{sndw_cond2}}\emph{)} is satisfied.

\noindent	$1^\circ$. 
		Suppose that Condition  \emph{\ref{cond_Lambda_2}} is satisfied.
		If  $\boldsymbol{\psi}_1 \in H^{3/2}(\mathbb{R}^d; \mathbb{C}^n)$ and $\mathbf{F}_1 \in L_{1, \mathrm{loc}}(\mathbb{R}; H^{3/2}(\mathbb{R}^d; \mathbb{C}^n)),$  then for $\tau \in \mathbb{R}$ and 
		 $0 < \varepsilon \le 1$ we have 
		\begin{align*}
		\| \mathbf{u}_\varepsilon(\,\cdot\,, \tau) &- \mathbf{v}^0_\varepsilon (\,\cdot\,, \tau) \|_{H^1 (\mathbb{R}^d)} 
	\le 		{\mathrm{C}}^\circ_{9} (1+|\tau|)^{1/2}  \varepsilon \bigl( \| \boldsymbol{\psi}_1 \|_{H^{3/2}(\mathbb{R}^d)} + \|\mathbf{F}_1 \|_{L_1((0,\tau);H^{3/2}(\mathbb{R}^d))}  \bigr),
		\\
		\| \mathbf{p}_\varepsilon(\,\cdot\,, \tau) &- 
		\q_\eps^0 (\,\cdot\,, \tau) \|_{L_2 (\mathbb{R}^d)}
	\le 		{\mathrm{C}}^\circ_{10} (1+|\tau|)^{1/2}  \varepsilon \bigl( \| \boldsymbol{\psi}_1 \|_{H^{3/2}(\mathbb{R}^d)} + \|\mathbf{F}_1 \|_{L_1((0,\tau);H^{3/2}(\mathbb{R}^d))}  \bigr).
		\end{align*}
	
	\noindent	$2^\circ$.  
		Suppose that Condition \emph{\ref{cond_Lambda_infty}} is satisfied.
		If $\boldsymbol{\psi}_1 \in H^{1+r}(\mathbb{R}^d; \mathbb{C}^n)$ and $\mathbf{F}_1 \in L_{1, \mathrm{loc}}(\mathbb{R}; H^{1+r}(\mathbb{R}^d; \mathbb{C}^n)),$ $0 \le r \le 1/2,$ 
		then for  $\tau \in \mathbb{R}$ and   $0 < \varepsilon \le 1$ we have 
		\begin{equation}
		\label{16.76}
		\| 
		\D \mathbf{u}_\varepsilon(\,\cdot\,, \tau) -\D \mathbf{v}^0_\varepsilon (\,\cdot\,, \tau) \|_{L_2 (\mathbb{R}^d)}
		 \le {\mathfrak{C}}^\circ_7 (r) (1+|\tau|)^{r}  \varepsilon^{2r} \bigl( \| \boldsymbol{\psi}_1 \|_{H^{1+r}(\mathbb{R}^d)} + \|\mathbf{F}_1 \|_{L_1((0,\tau);H^{1+r}(\mathbb{R}^d))}  \bigr),
		\end{equation}
	\begin{equation*}
		\| 
		 \mathbf{p}_\varepsilon(\,\cdot\,, \tau) - \q_\eps^0 (\,\cdot\,, \tau) 
		 \|_{L_2 (\mathbb{R}^d)} 
	\le 
		{\mathfrak{C}}_{8}^\circ (r) (1+|\tau|)^{r}  \varepsilon^{2 r} \bigl( \| \boldsymbol{\psi}_1 \|_{H^{1+r}(\mathbb{R}^d)} + \|\mathbf{F}_1 \|_{L_1((0,\tau);H^{1+r}(\mathbb{R}^d))}  \bigr).
		\end{equation*}
 \end{theorem}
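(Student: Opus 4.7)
The plan is to mimic the strategy used for Theorem \ref{th16.10}: express the difference $\mathbf{u}_\varepsilon-\mathbf{v}^0_\varepsilon$ and the flux discrepancy $\mathbf{p}_\varepsilon-\mathbf{q}^0_\eps$ through the operators $J^\circ_\varepsilon(\tau)$ and $I^\circ_\varepsilon(\tau)$ introduced in \eqref{15.121}, \eqref{15.122}, and then invoke the already-established operator bounds (Theorem \ref{th15.49}$(2^\circ)$ and Corollary \ref{cor15.54b}$(2^\circ)$) termwise in the Duhamel formula.

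First, I would specialize representation \eqref{16.47} to the data $\boldsymbol{\phi}=\boldsymbol{\psi}_2=\boldsymbol{\rho}=\mathbf{F}_2=\mathbf{G}=0$, obtaining
\[
\mathbf{u}_\varepsilon(\cdot,\tau)= f^\eps\mathcal{A}_\varepsilon^{-1/2}\sin(\tau\mathcal{A}_\varepsilon^{1/2})(f^\varepsilon)^{-1}\boldsymbol{\psi}_1
+\int_0^\tau f^\eps\mathcal{A}_\varepsilon^{-1/2}\sin((\tau-\widetilde\tau)\mathcal{A}_\varepsilon^{1/2})(f^\varepsilon)^{-1}\mathbf{F}_1(\cdot,\widetilde\tau)\,d\widetilde\tau,
\]
and the analogous specialization of \eqref{16.49} for $\mathbf{u}_0$. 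Subtracting the corrector $\varepsilon\Lambda^\varepsilon b(\mathbf{D})\mathbf{u}_0$ from $\mathbf{u}_0$ and rearranging, I get
\[
\mathbf{u}_\varepsilon(\cdot,\tau)-\mathbf{v}^0_\varepsilon(\cdot,\tau)= J^\circ_\varepsilon(\tau)\boldsymbol{\psi}_1+\int_0^\tau J^\circ_\varepsilon(\tau-\widetilde\tau)\mathbf{F}_1(\cdot,\widetilde\tau)\,d\widetilde\tau,
\]
and similarly $\mathbf{p}_\varepsilon(\cdot,\tau)-\mathbf{q}^0_\eps(\cdot,\tau)= I^\circ_\varepsilon(\tau)\boldsymbol{\psi}_1+\int_0^\tau I^\circ_\varepsilon(\tau-\widetilde\tau)\mathbf{F}_1(\cdot,\widetilde\tau)\,d\widetilde\tau$.

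For statement $1^\circ$, the assumptions of Theorem \ref{th15.49}$(2^\circ)$ are precisely what is imposed, so I would apply estimates \eqref{15.137} for the $(H^{3/2}\!\to\!H^1)$-norm of $J^\circ_\varepsilon(\tau-\widetilde\tau)$ and the companion flux estimate. Using Minkowski's integral inequality for the $L_1$-in-time convolution, together with the crude monotonicity $(1+|\tau-\widetilde\tau|)^{1/2}\le(1+|\tau|)^{1/2}$ on $[0,\tau]$, yields the two bounds of $1^\circ$ with constants inherited directly from $\mathrm{C}^\circ_9$ and $\mathrm{C}^\circ_{10}$. For statement $2^\circ$, the argument is verbatim but with Corollary \ref{cor15.54b}$(2^\circ)$ replacing Theorem \ref{th15.49}$(2^\circ)$: its estimates for $\|\D J^\circ_\varepsilon\|_{H^{1+r}\to L_2}$ and $\|I^\circ_\varepsilon\|_{H^{1+r}\to L_2}$ of order $(1+|\tau|)^r\varepsilon^{2r}$ (for $0\le r\le 1/2$) give \eqref{16.76} and its flux analog after the same Minkowski step; here I use $\|\D(\mathbf{u}_\varepsilon-\mathbf{v}^0_\varepsilon)\|_{L_2}$ rather than $\|\mathbf{u}_\varepsilon-\mathbf{v}^0_\varepsilon\|_{H^1}$ since the $L_2$-piece of the $H^1$-norm would require a separate (weaker) treatment as in Remark \ref{rem16.12}.

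There is essentially no real obstacle here; the whole argument is a mechanical transfer of the operator-level estimates of \S15 to the Cauchy-problem level via Duhamel, exactly as in the proof of Theorem \ref{th16.10}. The only point that requires a moment of care is verifying that Conditions \ref{cond_BB}/\ref{sndw_cond1} (needed for Theorem \ref{th15.49}$(2^\circ)$ and Corollary \ref{cor15.54b}$(2^\circ)$) plus Condition \ref{cond_Lambda_2} (for $1^\circ$) or Condition \ref{cond_Lambda_infty} (for $2^\circ$) are indeed what we are assuming, so that the cited estimates apply with no extra hypothesis, and that the constants listed in the statement match those of the underlying operator estimates.
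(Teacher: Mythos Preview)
Your proposal is correct and follows exactly the route indicated in the paper: the paper states that Theorem~\ref{th16.20} is deduced from Theorem~\ref{th15.49}$(2^\circ)$ and Corollary~\ref{cor15.54b}$(2^\circ)$, and your Duhamel argument via the representations \eqref{16.47}, \eqref{16.49} and the operators $J^\circ_\varepsilon$, $I^\circ_\varepsilon$ is precisely how this deduction goes through.
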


 \begin{remark}
By Remark \ref{rem15.59}$(2^\circ)$, under the assumptions of Theorem \ref{th16.20}$(2^\circ)$, for \hbox{$0\le r \le 1/2$}, $\tau \in \R,$ and $0< \eps \le 1$ we have
		\begin{equation*}
		\begin{aligned}
		\| \mathbf{u}_\varepsilon(\,\cdot\,, \tau) - \mathbf{v}^0_\varepsilon (\,\cdot\,, \tau) 
		\|_{H^1 (\mathbb{R}^d)} &\le 
		{\mathfrak{C}}_{10} (r) (1+|\tau|)^{r}  \varepsilon^{2r} 
		\bigl( 1+ (1+|\tau|)\eps\bigr)^{1-2r}
		\\
		&\times \bigl( \| \boldsymbol{\psi}_1 \|_{H^{1+r}(\mathbb{R}^d)} + \|\mathbf{F}_1 \|_{L_1((0,\tau);H^{1+r}(\mathbb{R}^d))}  \bigr).
		\end{aligned}
		\end{equation*}
The order of this estimate is worse than in  \eqref{16.76}.
\end{remark}

\section{Application of the general results: the acoustics equation}

\subsection{The model operator\label{sec17.1}}
In $L_2(\R^d)$, consider the operator
\begin{equation}
\label{17.1}
\wh{\mathcal A} = \D^* g(\x)\D = - \operatorname{div} g(\x) \nabla.
\end{equation}
Here $g(\x)$ is a $\Gamma$-periodic Hermitian $(d \times d)$-matrix-valued function such that 
$g(\x) >0$ and $g,g^{-1} \in L_\infty$. The operator \eqref{17.1} is a particular case of the operator
\eqref{hatA}. In this case, we have $n=1$, $m=d$, and $b(\D)=\D$. Obviously, condition~\eqref{rank_alpha_ineq}
is valid with $\alpha_0 = \alpha_1=1$. According to \eqref{hatA0}, the effective operator for the operator 
\eqref{17.1} is given by  
$$
\wh{\mathcal A}^0 = \D^* g^0 \D = - \operatorname{div} g^0 \nabla.
$$
According to  \eqref{g_tilde}, \eqref{g0}, the effective matrix $g^0$ is defined as follows. 
Let $\e_1,\dots, \e_d$ be the standard orthonormal basis in $\R^d$.
Let $\Phi_j \in \wt{H}^1(\Omega)$ be the weak $\Gamma$-periodic solution of the problem
$$
 \operatorname{div} g(\x) \left(\nabla \Phi_j(\x) + \e_j \right) =0,\quad \int\limits_\Omega \Phi_j(\x) \,d\x=0.
$$
Then $\Lambda(\x)$ is the row  $\Lambda(\x)= i(\Phi_1(\x),\dots,\Phi_d(\x))$, and $\wt{g}(\x)$ is the  $(d\times d)$-matrix with the columns 
$\wt{\mathbf g}_j(\x) = g(\x) \left(\nabla \Phi_j(\x) + \e_j \right)$, $j=1,\dots,d$. The effective matrix is given by  
$$
g^0 = |\Omega|^{-1} \int\limits_\Omega \wt{g}(\x) \, d\x.
$$ 
In the case where $d=1$, we have $m=n=1$, whence $g^0=\underline{g}$.

If $g(\x)$ is a symmetric matrix with real entries, then Proposition 
\ref{N=0_proposit}$(1^\circ)$ implies that $\wh{N}(\boldsymbol{\theta})=0$ for all
$\boldsymbol{\theta} \in \mathbb{S}^{d-1}$. If $g(\x)$ is a Hermitian matrix with complex entries, then in general the operator $\wh{N}(\boldsymbol{\theta})$ is not equal to zero.
Since $n=1$, then the operator $\wh{N}(\boldsymbol{\theta})= \wh{N}_0(\boldsymbol{\theta})$ 
is the operator of multiplication by $\wh{\mu}(\boldsymbol{\theta})$, where 
$\wh{\mu}(\boldsymbol{\theta})$ is the coefficient of  $t^3$ in the expansion for the first eigenvalue 
$$
\wh{\lambda}_1(t, \boldsymbol{\theta}) =  \wh{\gamma} (\boldsymbol{\theta}) t^2 
+ \wh{\mu} (\boldsymbol{\theta}) t^3 + \wh{\nu} (\boldsymbol{\theta}) t^4+\dots
$$
of the operator $\wh{\mathcal A}(\k) = \wh{A}(t,\boldsymbol{\theta})$. A calculation (see \cite[Subsection~10.3]{BSu3}) shows that 
$$
\begin{aligned}
&\wh{N}(\boldsymbol{\theta}) = \wh{\mu}(\boldsymbol{\theta}) = -i \sum_{j,l,k=1}^d
\left( a_{jlk} - a^*_{jlk}\right) \theta_j \theta_l \theta_k,\quad \boldsymbol{\theta} \in \mathbb{S}^{d-1},
\\
&a_{jlk} = |\Omega|^{-1} \int\limits_\Omega \Phi_j(\x)^* \langle g(\x) (\nabla \Phi_l(\x) + \e_l), \e_k\rangle \, d\x,
\quad j,l,k = 1,\dots,d.
\end{aligned}
$$
In \cite[Subsection 10.4]{BSu3}, there is an  example of the operator \eqref{17.1} with the Hermitian matrix  
$g(\x)$ with complex entries such that  $\wh{N}(\boldsymbol{\theta}) = \wh{\mu}(\boldsymbol{\theta}) \not\equiv 0$.

Now, we describe the operator $\wh{\mathcal N}^{(1)}(\boldsymbol{\theta})$ which is the operator of multiplication by $\wh{\nu}(\boldsymbol{\theta})$. Let $\Psi_{jl}(\x)$ be the  $\Gamma$-periodic solution of the problem
\begin{equation*}
- \operatorname{div} g(\x) \left(\nabla \Psi_{jl}(\x) - \Phi_j(\x) \e_l \right) =
g^0_{lj} - \wt{g}_{lj}(\x),\quad \int\limits_\Omega \Psi_{jl}(\x) \,d\x=0.
\end{equation*}
As was checked in \cite[Subsection 14.5]{VSu2},  we have
{\allowdisplaybreaks
\begin{align*}
\wh{\mathcal N}^{(1)}(\boldsymbol{\theta}) = \wh{\nu}(\boldsymbol{\theta})& =
\!\!\sum_{p,q,l,k=1}^d \!\!\left( \alpha_{pqlk} - (\overline{\Phi_p^* \Phi_q}) g^0_{lk} \right) \theta_p \theta_q \theta_l \theta_k,
\\
\alpha_{pqlk} &= |\Omega|^{-1}\! \int\limits_\Omega\!\! ( \wt{g}_{lp}(\x) \Psi_{qk}(\x) + \wt{g}_{kq}(\x) \Psi_{pl}(\x))\, d\x
\\
&\quad+ |\Omega|^{-1} \!\!\int\limits_\Omega \!\langle g(\x) ( \nabla \Psi_{qk}(\x) - \Phi_q(\x)\e_k, \nabla \Psi_{pl}(\x) - \Phi_p(\x)\e_l
\rangle\, d\x.
\end{align*}
} 
\begin{remark}
In \cite[Lemma 12.2]{D} it was shown that for $d=1$ and $g(x) \ne \operatorname{const}$ 
we always have  $\wh{\nu}(1) = \wh{\nu}(-1) \ne 0$. 
Therefore, the authors beleive that in the multidimensional case, as a rule, \hbox{$\wh{\nu}(\boldsymbol{\theta})
 \ne 0$}.   
\end{remark}

Consider the Cauchy problem
\begin{equation}
\label{17.4}
\left\{
\begin{aligned}
&\frac{\partial^2 {u}_\varepsilon (\mathbf{x}, \tau)}{\partial \tau^2} = - \mathbf{D}^* g^\varepsilon (\mathbf{x}) \mathbf{D} {u}_\varepsilon (\mathbf{x}, \tau), \\
& {u}_\varepsilon (\mathbf{x}, 0) =\phi(\x), \quad \frac{\partial {u}_\varepsilon }{\partial \tau} (\mathbf{x}, 0) = 
{\psi}(\mathbf{x}) + \D^* \boldsymbol{\rho}(\x),
\end{aligned}
\right.
\end{equation}
where ${\phi}, {\psi} \in L_2 (\mathbb{R}^d)$, $\boldsymbol{\rho} \in L_2 (\mathbb{R}^d; \AC^d)$. (For simplicity, we consider the homogeneous equation.)
Let $u_0$ be the solution of the homogenized problem
\begin{equation}
\label{17.5}
\left\{
\begin{aligned}
&\frac{\partial^2 {u}_0 (\mathbf{x}, \tau)}{\partial \tau^2} = - \mathbf{D}^* g^0 (\mathbf{x}) \mathbf{D} {u}_0 (\mathbf{x}, \tau), \\
& {u}_0 (\mathbf{x}, 0) =\phi(\x), \quad \frac{\partial {u}_0}{\partial \tau} (\mathbf{x}, 0) = {\psi}(\mathbf{x})
+\D^* \boldsymbol{\rho}(\x).
\end{aligned}
\right.
\end{equation}

Applying Theorem \ref{th16.1} in the general case and Theorem \ref{th16.2} in the ``real'' case, we obtain the following result.

\begin{proposition}
	\label{th17.1}
	Suppose that  ${u}_\varepsilon$~is the solution of problem~\eqref{17.4} and  
	${u}_0$~is the solution of the homogenized problem~\eqref{17.5}.

	\noindent	$1^\circ$.  If  ${\phi} \in H^{s}(\mathbb{R}^d),$
		${\psi} \in H^{r}(\mathbb{R}^d),$ and $\boldsymbol{\rho} \in H^s(\R^d;\AC^d),$ where $0 \le s \le 2,$ $0\le r \le 1,$
		then for  $\tau \in \mathbb{R}$ and $0< \varepsilon \le 1$ we have 
		\begin{equation*}
		\begin{aligned}
		&\| {u}_\varepsilon(\,\cdot\,, \tau) - {u}_0 (\,\cdot\,, \tau) \|_{L_2 (\mathbb{R}^d)} \le 
		\widehat{\mathfrak{C}}_1 (s) (1+|\tau|)^{s/2}  \varepsilon^{s/2}  
		\| {\phi} \|_{H^{s}(\mathbb{R}^d)} 
		\\
		&+ \widehat{\mathfrak{C}}_2 (r) (1+|\tau|)^{(r+1)/2}  \varepsilon^{(r+1)/2} 
		\| {\psi} \|_{H^{r}(\mathbb{R}^d)}
		+ \widehat{\mathfrak{C}}'_2 (s) (1+|\tau|)^{s/2}  \varepsilon^{s/2} 
		\| \boldsymbol{\rho} \|_{H^{s}(\mathbb{R}^d)}.
		\end{aligned}
		\end{equation*}
		If ${\phi}, {\psi} \in L_2 (\mathbb{R}^d)$  and  
		$\boldsymbol{\rho} \in L_2 (\mathbb{R}^d; \AC^d),$ then for $\tau \in \R$ we have 
	$$
	\lim\limits_{\varepsilon \to 0} \| {u}_\varepsilon (\,\cdot\,, \tau) - {u}_0 (\,\cdot\,, \tau) \|_{L_2(\mathbb{R}^d)} = 0.
	$$
		
\noindent	$2^\circ$. Let $g(\x)$ be a symmetric matrix with real entries. 
	If  ${\phi} \in H^{s}(\mathbb{R}^d),$ ${\psi} \in H^{r}(\mathbb{R}^d),$ and  
	$\boldsymbol{\rho} \in H^s(\R^d;\AC^d),$ 
	where $0 \le s \le 3/2,$ $0\le r \le 1/2,$ then for $\tau \in \mathbb{R}$ and $0< \varepsilon \le 1$ we have
		\begin{equation*}
		\begin{aligned}
		&\| {u}_\varepsilon(\,\cdot\,, \tau) - {u}_0 (\,\cdot\,, \tau) \|_{L_2 (\mathbb{R}^d)} \le 
		\widehat{\mathfrak{C}}_3 (s) (1+|\tau|)^{s/3}  \varepsilon^{2s/3}  
		\| {\phi} \|_{H^{s}(\mathbb{R}^d)} 
		\\
		&+ \widehat{\mathfrak{C}}_4 (r) (1+|\tau|)^{(r+1)/3}  \varepsilon^{2(r+1)/3}  
		\| {\psi} \|_{H^{r}(\mathbb{R}^d)}
		+ \widehat{\mathfrak{C}}'_4 (s) (1+|\tau|)^{s/3}  \varepsilon^{2s/3}  
		\| \boldsymbol{\rho} \|_{H^{s}(\mathbb{R}^d)}.
		\end{aligned}
		\end{equation*}
	\end{proposition}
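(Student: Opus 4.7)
The plan is to view Proposition 17.1 as a direct specialization of Theorems 16.1 and 16.2 to the scalar acoustics operator \eqref{17.1}, for which the abstract framework is already fully set up in Subsection 17.1. There is essentially no new analysis to do: the work is to verify that the acoustics operator falls into the admissible class and that the auxiliary Condition \ref{cond_B} holds in the real symmetric case.

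First I would check that the operator $\wh{\mathcal{A}}_\eps = \D^* g^\eps \D$ fits the framework of \eqref{Ahat_eps}: take $b(\D) = \D$, $n=1$, $m=d$, so that the symbol $b(\bxi) = \bxi$ plainly satisfies \eqref{rank_alpha_ineq} with $\alpha_0 = \alpha_1 = 1$. The assumption $g, g^{-1} \in L_\infty$ is given by hypothesis, and the Cauchy problem \eqref{17.4} is the instance of \eqref{nonhomog_Cauchy_hatA_eps} with $n=1$, $\boldsymbol{\phi} = \phi$, $\boldsymbol{\psi} = \psi$, $\boldsymbol{\rho} = \boldsymbol{\rho}$, and $\mathbf{F} = 0$, $\mathbf{G} = 0$. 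The effective operator $\wh{\mathcal{A}}^0 = \D^* g^0 \D$ with $g^0$ described in Subsection~\ref{sec17.1} coincides with the effective operator of \eqref{nonhomog_Cauchy_hatA_0}. Statement $1^\circ$ then follows by direct application of Theorem~\ref{th16.1} (both the quantitative estimate and the $\lim_{\eps\to 0}$-statement), noting that all terms involving $\mathbf{F}$ and $\mathbf{G}$ drop out.

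For statement $2^\circ$, I would invoke Proposition~\ref{N=0_proposit}$(1^\circ)$: when $g(\x)$ is a symmetric matrix with real entries, the operator $\wh{N}(\boldsymbol{\theta})$ defined by \eqref{N(theta)} vanishes identically on $\mathbb{S}^{d-1}$. Hence Condition~\ref{cond_B} holds, and the hypotheses of Theorem~\ref{th16.2} are satisfied (with $\mathbf{F} = 0$, $\mathbf{G} = 0$, $\boldsymbol{\phi} = \phi$, $\boldsymbol{\psi} = \psi$, $\boldsymbol{\rho} = \boldsymbol{\rho}$). Applying that theorem yields the three claimed estimates with the corresponding improved exponents $s/3$, $(r+1)/3$, $s/3$ and improved $\eps$-rates $2s/3$, $2(r+1)/3$, $2s/3$, in the enlarged regularity range $0 \le s \le 3/2$, $0 \le r \le 1/2$.

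There is no real obstacle: since $n=1$ here, the refined spectral Condition~\ref{cond1} collapses (there is only one eigenvalue branch of the germ, so items concerning multiplicity and non-intersection are vacuous), and one only needs Condition~\ref{cond_B}, which is already available from Proposition~\ref{N=0_proposit}. The only point requiring a line of verification is that the compatibility with the abstract parameters $\alpha_0, \alpha_1, \|g\|_{L_\infty}, \|g^{-1}\|_{L_\infty}, r_0$ is automatic; the constants $\widehat{\mathfrak{C}}_j(\cdot)$ in the resulting estimates are those produced by the corresponding theorems in Chapter~3, so no recomputation is needed.
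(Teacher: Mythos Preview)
Your proposal is correct and follows exactly the paper's own approach: the paper states that Proposition~\ref{th17.1} is obtained by ``applying Theorem~\ref{th16.1} in the general case and Theorem~\ref{th16.2} in the `real' case,'' and your write-up spells this out with the correct identifications ($n=1$, $m=d$, $b(\D)=\D$, $\mathbf{F}=0$, $\mathbf{G}=0$) and the correct invocation of Proposition~\ref{N=0_proposit}$(1^\circ)$ to secure Condition~\ref{cond_B} for part~$2^\circ$.
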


 Now, we consider the case where $\phi=0$ and $\boldsymbol{\rho} =0$, 
 and approximate the solution in the energy norm.
According to  \eqref{v_eps_first_approx}, the first order approximation takes the form 
		\begin{equation}
		\label{17.8}
v_\eps(\x,\tau) = u_0(\x,\tau) + \eps \sum_{j=1}^d \Phi_j^\eps(\x) (\Pi_\eps 
\partial_j u_0)(\x,\tau).
	\end{equation}
By Proposition \ref{prop11.22}$(2^\circ)$, in the ``real'' case we have $\Lambda \in L_\infty$, 
and then it is possible to use the first order approximation without smoothing: 
	\begin{equation}
		\label{17.9}
v_\eps^0(\x,\tau) = u_0(\x,\tau) + \eps \sum_{j=1}^d \Phi_j^\eps(\x)  \partial_j u_0(\x,\tau).
	\end{equation}

In the general case we apply Theorem \ref{th16.4} and Remark \ref{rem16.5}, and in the  ``real'' case we apply Theorem \ref{th16.10} and Remark \ref{rem16.12}.

\begin{proposition}
	\label{prop17.2}
	Suppose that ${u}_\varepsilon$ is the solution of problem~\eqref{17.4} and 
	${u}_0$ is the solution of problem~\eqref{17.5} with $\phi\!=\!0$ and $\boldsymbol{\rho}\! =\!0$.
	Let $v_\eps$ be given by \eqref{17.8} and $v_\eps^0$ by~\eqref{17.9}.
	
\noindent		$1^\circ$. 
		 If  ${\psi} \in H^{2}(\mathbb{R}^d),$ then for  $\tau \in \mathbb{R}$ and $0< \varepsilon \le 1$ we have
		\begin{equation*}
		\| {u}_\varepsilon(\,\cdot\,, \tau) - {v}_\eps (\,\cdot\,, \tau) \|_{H^1(\mathbb{R}^d)} \le 
		\widehat{\mathrm{C}}_{7} (1+|\tau|)  \varepsilon  \| {\psi} \|_{H^{2}(\mathbb{R}^d)}. 
		\end{equation*}
	If  ${\psi} \in H^{s}(\mathbb{R}^d),$ where $0 \le s \le 2,$ then for
		$\tau \in \mathbb{R}$ and $0< \varepsilon \le 1$ we have 
		\begin{align*}
		\|& \nabla {u}_\varepsilon(\,\cdot\,, \tau) - \nabla {v}_\eps (\,\cdot\,, \tau) \|_{L_2 (\mathbb{R}^d)} 
		\le \widehat{\mathfrak{C}}_5 (s) (1+|\tau|)^{s/2}  \varepsilon^{s/2}
		 \| {\psi} \|_{H^{s}(\mathbb{R}^d)},
		\\
		\|& g^\eps \nabla {u}_\varepsilon(\,\cdot\,, \tau) - \wt{g}^\eps  \Pi_\eps \nabla {u}_0 (\,\cdot\,, \tau) \|_{L_2 (\mathbb{R}^d)} 
		\le \widehat{\mathfrak{C}}_6 (s) (1+|\tau|)^{s/2}  \varepsilon^{s/2}
		  \| {\psi} \|_{H^{s}(\mathbb{R}^d)}.
		\end{align*}
		If ${\psi} \in L_2 (\mathbb{R}^d),$ then for $\tau \in \R$ we have
		\begin{equation*}
		\begin{split}
		&\lim\limits_{\varepsilon \to 0} \| {u}_\varepsilon (\,\cdot\,, \tau) - {v}_\eps (\,\cdot\,, \tau) \|_{H^1(\mathbb{R}^d)} = 0, 
		\\
		&\lim\limits_{\varepsilon \to 0} 
		\| g^\eps \nabla {u}_\varepsilon(\,\cdot\,, \tau) - \wt{g}^\eps  \Pi_\eps \nabla {u}_0 (\,\cdot\,, \tau) \|_{L_2 (\mathbb{R}^d)} = 0.
		\end{split}
		\end{equation*} 
	
\noindent	$2^\circ$. Let $g(\x)$ be a symmetric matrix with real entries. If 
		  ${\psi} \in H^{3/2}(\mathbb{R}^d),$ then for $\tau \in \mathbb{R}$ and $0< \varepsilon \le 1$ we have 
		\begin{equation*}
		\| {u}_\varepsilon(\,\cdot\,, \tau) - {v}^0_\eps (\,\cdot\,, \tau) \|_{H^1 (\mathbb{R}^d)} \le 
		\widehat{\mathrm{C}}_{9}^\circ (1+|\tau|)^{1/2}  \varepsilon  
		 \| {\psi} \|_{H^{3/2}(\mathbb{R}^d)}.
		\end{equation*}
	If ${\psi} \in H^{1+r}(\mathbb{R}^d),$ where $0 \le r \le 1/2,$ 
		then for $\tau \in \mathbb{R}$ and $0< \varepsilon \le 1$ we have 
		\begin{align*}
		\| & \nabla {u}_\varepsilon(\,\cdot\,, \tau) - \nabla {v}^0_\eps (\,\cdot\,, \tau) \|_{L_2 (\mathbb{R}^d)}
		 \le 
		\widehat{\mathfrak{C}}_7^\circ (r) (1+|\tau|)^{r}  \varepsilon^{2r}
		  \| {\psi} \|_{H^{1+r}(\mathbb{R}^d)},
		\\
		\| & g^\eps \nabla {u}_\varepsilon(\,\cdot\,, \tau) - \wt{g}^\eps  \nabla {u}_0 (\,\cdot\,, \tau) \|_{L_2 (\mathbb{R}^d)} 
		\le 
		\widehat{\mathfrak{C}}_{8}^\circ (r) (1+|\tau|)^{r}  \varepsilon^{2r}
		 \| {\psi} \|_{H^{1+r}(\mathbb{R}^d)}.
		\end{align*}
		If ${\psi} \in H^1 (\mathbb{R}^d),$ then for $\tau \in \R$ we have 
		\begin{equation*}
		\lim\limits_{\varepsilon \to 0} \| {u}_\varepsilon (\,\cdot\,, \tau) - {v}^0_\eps (\,\cdot\,, \tau) \|_{H^1(\mathbb{R}^d)}\!\! =\! 0, \quad \lim\limits_{\varepsilon \to 0} 
		\| g^\eps \nabla {u}_\varepsilon(\,\cdot\,, \tau) - \wt{g}^\eps  \nabla {u}_0 (\,\cdot\,, \tau) \|_{L_2 (\mathbb{R}^d)}\!\!=\!0.
		\end{equation*} 
	\end{proposition}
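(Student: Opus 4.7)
The plan is to recognize that the acoustics operator fits the general framework of Chapter~2 with the simplest possible parameters, and then to apply the corresponding results of \S16 essentially by direct specialization. First I would record the identifications already set up in Subsection~\ref{sec17.1}: the operator $\wh{\mathcal A} = \D^* g(\x)\D$ corresponds to $n=1$, $m=d$, $b(\D)=\D$, $f=1$, and $\alpha_0=\alpha_1=1$, so that the Cauchy problem~\eqref{17.4} is exactly~\eqref{nonhomog_Cauchy_hatA_eps} with $\boldsymbol{\phi}=0$, $\boldsymbol{\rho}=0$, $\mathbf{F}=0$, $\mathbf{G}=0$, and $\boldsymbol{\psi}=\psi$. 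Next I would check that the correctors match: with $\Lambda(\x) = i(\Phi_1(\x),\dots,\Phi_d(\x))$ one has $\Lambda b(\D) = \Lambda \D = \sum_{j=1}^d \Phi_j \partial_j$, so the approximations $v_\eps$ and $v_\eps^0$ defined in~\eqref{17.8} and~\eqref{17.9} coincide with the first-order approximations from~\eqref{v_eps_first_approx} and~\eqref{16.27}. Likewise, the flux $\mathbf{p}_\eps = g^\eps \nabla u_\eps$ differs from $g^\eps b(\D) u_\eps$ only by the scalar factor $-i$, which is invisible to $L_2$-norms.

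For statement~$1^\circ$, I would then invoke Theorem~\ref{th16.4} directly: part~$(1^\circ)$ yields the $H^1$-estimate with $\psi\in H^2$ and the corresponding flux estimate, while part~$(2^\circ)$ yields the intermediate $L_2$-estimates for $\nabla(u_\eps-v_\eps)$ and for the flux for $0\le s\le 2$. The convergence statements for $\psi\in L_2$ follow from part~$(3^\circ)$ of the same theorem (using Remark~\ref{rem15.11a} for the $H^1$-side and the flux bounds that underlie Theorem~\ref{A_eps_sin_general_thrm} and Corollary~\ref{A_eps_sin_general_interpltd_thrm}) via the Banach--Steinhaus theorem applied on $L_2(\R^d)$.

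For statement~$2^\circ$ the extra work is verifying the two structural conditions that upgrade the rates. Proposition~\ref{N=0_proposit}$(1^\circ)$ shows that when $g(\x)$ is real symmetric one has $\wh{N}(\boldsymbol{\theta})\equiv 0$, so Condition~\ref{cond_B} holds; and Proposition~\ref{prop11.22}$(2^\circ)$ gives $\Lambda\in L_\infty$, which is Condition~\ref{cond_Lambda_infty} and, via Remark~\ref{rem_Lambda}, also Conditions~\ref{cond_Lambda_1} and~\ref{cond_Lambda_2}. With these in force, Theorem~\ref{th16.10} supplies all four estimates of statement~$2^\circ$, and the vanishing of $\|u_\eps-v_\eps^0\|_{H^1}$ and of $\|g^\eps\nabla u_\eps - \wt g^\eps \nabla u_0\|_{L_2}$ for $\psi\in H^1$ follows by Banach--Steinhaus once the uniform bounds implicit in Proposition~\ref{prop11.19} and Remark~\ref{rem15.56}$(1^\circ)$ are noted.

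I do not expect a genuine obstacle: the argument is pure bookkeeping once the identifications are fixed. The one point meriting a sentence of care is that the multiplicity-driven Conditions~\ref{cond1}, \ref{cond2}, \ref{sndw_cond1}, \ref{sndw_cond2} are vacuous here, since for $n=1$ the spectral germ $\wh{S}(\boldsymbol{\theta}) = \langle g^0\boldsymbol{\theta},\boldsymbol{\theta}\rangle$ is a scalar with a single simple eigenvalue, so Condition~\ref{cond_B} alone suffices to trigger the enhanced theorems of Chapter~3. After this remark, the proof reduces to a sequence of references to \S16.
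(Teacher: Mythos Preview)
Your proposal is correct and follows essentially the same route as the paper: the paper states (just before the proposition) that in the general case one applies Theorem~\ref{th16.4} and Remark~\ref{rem16.5}, and in the real-symmetric case Theorem~\ref{th16.10} and Remark~\ref{rem16.12}, after verifying Condition~\ref{cond_B} via Proposition~\ref{N=0_proposit}$(1^\circ)$ and $\Lambda\in L_\infty$ via Proposition~\ref{prop11.22}$(2^\circ)$. Your identification of the corrector and flux, and your remark that the $n=1$ case trivializes the multiplicity conditions, are accurate and match the paper's setup.
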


	\subsection{The acoustics equation}
	Under the assumptions of Subsection \ref{sec17.1}, suppose in addition that $g(\x)$ is a symmetric matrix with real entries. The matrix $g(\x)$ characterizes the parameters of the acoustical  (in general, anisotropic) medium. Let 
	$Q(\x)$ be a $\Gamma$-periodic function in~$\R^d$ such that $Q(\x)>0$ and $Q, Q^{-1} \in L_\infty$. 
	This function  plays the role of the medium density. We put $f(\x) = Q(\x)^{-1/2}$.
	
	We consider the Cauchy problem for the acoustics equation in the medium with rapidly oscillating characteristics:
\begin{equation}
\label{17.15}
\left\{
\begin{aligned}
&Q^\eps(\x) \frac{\partial^2 {u}_\varepsilon (\mathbf{x}, \tau)}{\partial \tau^2} = - \mathbf{D}^* g^\varepsilon (\mathbf{x}) \mathbf{D} {u}_\varepsilon (\mathbf{x}, \tau), \\
& {u}_\varepsilon (\mathbf{x}, 0) =\phi(\x), \quad \frac{\partial {u}_\varepsilon }{\partial \tau} (\mathbf{x}, 0) = 
{\psi}_1(\mathbf{x}) + (Q^\eps)^{-1}({\psi}_2(\mathbf{x})+ \D^* \boldsymbol{\rho}(\x)),
\end{aligned}
\right.
\end{equation}
where ${\phi}, {\psi}_1, \psi_2 \in L_2 (\mathbb{R}^d)$, $\boldsymbol{\rho} \in L_2(\R^d;\AC^d)$.
(For simplicity, we consider the homogeneous equation.)
Suppose that  $u_0$ is the solution of the homogenized problem
\begin{equation}
\label{17.16}
\left\{
\begin{aligned}
& \overline{Q} \frac{\partial^2 {u}_0 (\mathbf{x}, \tau)}{\partial \tau^2} = - \mathbf{D}^* g^0 (\mathbf{x}) \mathbf{D} {u}_0 (\mathbf{x}, \tau), \\
& {u}_0 (\mathbf{x}, 0) =\phi(\x), \quad \frac{\partial {u}_0}{\partial \tau} (\mathbf{x}, 0) = {\psi}_1(\mathbf{x})+
(\overline{Q})^{-1}( {\psi}_2(\mathbf{x}) + \D^* \boldsymbol{\rho}(\x)).
\end{aligned}
\right.
\end{equation}

By Proposition \ref{N_Q=0_proposit}$(1^\circ)$, we have $\wh{N}_Q(\boldsymbol{\theta})=0$
for any $\boldsymbol{\theta} \in {\mathbb S}^{d-1}$.
In the general case we apply Theorem \ref{th16.13}, and in the case where $\psi_1=0$ we apply 
Theorem~\ref{th16.14}.

It is possible to approximate the solution in the energy norm if $\phi=0$,  ${\psi_2=0}$, and ${\boldsymbol{\rho} =0}$. As has been already mentioned, we have $\Lambda \in L_\infty$, and therefore Theorem \ref{th16.20} can be applied. Let us formulate the results.

\begin{proposition}
	\label{prop17.3}
	Let ${u}_\varepsilon$~be the solution of problem~\eqref{17.15}, and let  
	${u}_0$~be the solution of the homogenized problem~\eqref{17.16}.

	\noindent	$1^\circ$. 
		If ${\phi} \in H^{s}(\mathbb{R}^d),$ $\psi_1 \in H^\sigma(\R^d),$ $\psi_2 \in H^r(\R^d),$ and
		 $\boldsymbol{\rho} \in H^{s}(\mathbb{R}^d; \AC^d),$ where
		 \hbox{$0\le s \le 3/2$}, $0\le \sigma \le 1,$ $0\le r \le 1/2,$ 		
		 then for $\tau \in \mathbb{R}$ and $0< \varepsilon \le 1$ we have 
		{\allowdisplaybreaks
		\begin{align*}
		&\| {u}_\varepsilon(\,\cdot\,, \tau) - {u}_0 (\,\cdot\,, \tau) \|_{L_2 (\mathbb{R}^d)} \le 
		{\mathfrak{C}}_{3}(s) (1+|\tau|)^{s/3}  \varepsilon^{2s/3} \| \phi\|_{H^s(\R^d)}
		\\
		&+  
		\widetilde{\mathfrak{C}}_{2}(\sigma) (1+|\tau|)  \varepsilon^{\sigma} \| \psi_1 \|_{H^\sigma(\R^d)}+  
		{\mathfrak{C}}_{4}(r) (1+|\tau|)^{(1+r)/3}  \varepsilon^{2(1+r)/3} \| \psi_2\|_{H^r(\R^d)} 
		\\
		&+ {\mathfrak{C}}_{4}'(s) (1+|\tau|)^{s/3}  \varepsilon^{2s/3} \| \boldsymbol{\rho}\|_{H^s(\R^d)}.  
		\end{align*}
		}
		\hspace{-3mm}
		If $\phi, {\psi}_1, \psi_2 \in L_2 (\mathbb{R}^d)$ and $\boldsymbol{\rho} \in L_2(\R^d;\AC^d),$ then 
		$$
		\lim\limits_{\varepsilon \to 0} \| {u}_\varepsilon (\,\cdot\,, \tau) - {u}_0 (\,\cdot\,, \tau) 
		\|_{L_2(\mathbb{R}^d)} = 0
		$$
		for $\tau \in \mathbb{R}$.

	\noindent	$2^\circ$. Let $\phi=0,$ $\psi_2 =0,$ and $\boldsymbol{\rho}=0$. We put  
		$
		v_\eps^0 = u_0 + \eps \sum_{j=1}^d \Phi^\eps_j \partial_j u_0.
		$
		If  $\psi_1 \in H^{3/2}(\R^d),$ then for $\tau \in \mathbb{R}$ and $0< \varepsilon \le 1$ we have
		\begin{equation*}
		\| {u}_\varepsilon(\,\cdot\,, \tau) -  {v}^0_\eps (\,\cdot\,, \tau) \|_{H^1 (\mathbb{R}^d)} \le 
		{\mathrm{C}}_{9}^\circ (1+|\tau|)^{1/2}  \varepsilon
		 \| {\psi}_1 \|_{H^{3/2}(\mathbb{R}^d)}.
		 \end{equation*}
		 If $\psi_1 \in H^{1+r}(\R^d),$ where $0\le r \le 1/2,$ then for
		 $\tau \in \mathbb{R}$ and $0< \varepsilon \le 1$ we have
		 \begin{align*}
		\| \nabla {u}_\varepsilon(\,\cdot\,, \tau) - \nabla {v}^0_\eps (\,\cdot\,, \tau) \|_{L_2 (\mathbb{R}^d)} &\le 
		{\mathfrak{C}}_{7}^\circ (r) (1+|\tau|)^{r}  \varepsilon^{2r}
		 \| {\psi}_1 \|_{H^{1+r}(\mathbb{R}^d)} ,
		\\
	\| g^\eps \nabla {u}_\varepsilon(\,\cdot\,, \tau) - \wt{g}^\eps  \nabla {u}_0 (\,\cdot\,, \tau) \|_{L_2 (\mathbb{R}^d)} 	&\le 
		{\mathfrak{C}}^\circ_{8} (r) (1+|\tau|)^{r}  \varepsilon^{2r}
		  \| {\psi}_1 \|_{H^{1+r}(\mathbb{R}^d)}.
		\end{align*}
		If ${\psi}_1 \in H^1 (\mathbb{R}^d),$  then for $\tau \in \R$ we have 
		\begin{equation*}
		\lim\limits_{\varepsilon \to 0} \| {u}_\varepsilon (\,\cdot\,, \tau) - {v}^0_\eps (\,\cdot\,, \tau) \|_{H^1(\mathbb{R}^d)} \!\!= \!0, \quad \lim\limits_{\varepsilon \to 0} 
		\| g^\eps \nabla {u}_\varepsilon(\,\cdot\,, \tau) - \wt{g}^\eps  \nabla {u}_0 (\,\cdot\,, \tau) \|_{L_2 (\mathbb{R}^d)}\!\!=\!0.
		\end{equation*} 
	\end{proposition}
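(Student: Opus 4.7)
The plan is to verify that the acoustics problem \eqref{17.15}--\eqref{17.16} fits precisely into the abstract framework of \S16 for the operator $\mathcal{A}_\varepsilon$, and then to invoke the corresponding general theorems after splitting the data into parts that satisfy the respective hypotheses. Concretely, we are in the setting with $n=1$, $m=d$, $b(\D)=\D$, and $\alpha_0=\alpha_1=1$; the matrix $f(\x)=Q(\x)^{-1/2}$ is scalar, periodic, bounded, and positive, so $f,f^{-1}\in L_\infty$ and $Q(\x)^{-1}=f(\x)f(\x)^*$. Thus problem \eqref{17.15} is exactly \eqref{nonhomog_Cauchy_wQ} with $\mathbf{F}_1=\mathbf{F}_2=0$ and $\mathbf{G}=0$, and \eqref{17.16} is \eqref{nonhomog_Cauchy_wQ_eff} under the same simplification. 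Because $g(\x)$ is symmetric with real entries, Proposition \ref{N_Q=0_proposit}$(1^\circ)$ gives $\wh{N}_Q(\boldsymbol{\theta})=0$ for all $\boldsymbol{\theta}\in\mathbb{S}^{d-1}$, so Condition \ref{cond_BB} is satisfied. Moreover, by Proposition \ref{prop11.22}$(2^\circ)$, the solution $\Lambda$ of \eqref{equation_for_Lambda} lies in $L_\infty$, so Condition \ref{cond_Lambda_infty} holds, which in turn (by Remark \ref{rem_Lambda}) ensures Conditions \ref{cond_Lambda_1} and \ref{cond_Lambda_2}.

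For statement $1^\circ$ I would decompose $u_\varepsilon=u_\varepsilon^{(I)}+u_\varepsilon^{(II)}$ (and analogously $u_0=u_0^{(I)}+u_0^{(II)}$), where $u_\varepsilon^{(I)}$ solves \eqref{17.15} with data $(\phi,0,\psi_2,\boldsymbol{\rho})$ and $u_\varepsilon^{(II)}$ solves \eqref{17.15} with data $(0,\psi_1,0,0)$. To the difference $u_\varepsilon^{(I)}-u_0^{(I)}$ I apply Theorem \ref{th16.14} (which requires precisely $\psi_1=0$, $F_1=0$ and Condition \ref{cond_BB}) to obtain the three terms with rates $\varepsilon^{2s/3}$, $\varepsilon^{2(1+r)/3}$, $\varepsilon^{2s/3}$ involving the constants $\mathfrak{C}_3(s)$, $\mathfrak{C}_4(r)$, $\mathfrak{C}_4'(s)$. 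To the difference $u_\varepsilon^{(II)}-u_0^{(II)}$ I apply the $\widetilde{\mathfrak{C}}_2(\sigma)$-estimate of Remark \ref{rem15.28a} (there is no ``enhancement'' available for the $\psi_1$ channel, as explained in Remark \ref{rem12.1}), giving the $(1+|\tau|)\varepsilon^\sigma\|\psi_1\|_{H^\sigma}$ term. Adding the two pieces via the triangle inequality yields the asserted estimate. The vanishing statement for $L_2$-data then follows by the Banach--Steinhaus theorem, exactly as in Theorem \ref{th16.13}$(3^\circ)$.

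For statement $2^\circ$, under the additional assumptions $\phi=0$, $\psi_2=0$, $\boldsymbol{\rho}=0$ only the $\psi_1$-contribution survives. I would apply Theorem \ref{th16.20}: its statement $1^\circ$ (requiring Conditions \ref{cond_BB} and \ref{cond_Lambda_2}, both already verified) yields the $H^1$-norm bound with rate $(1+|\tau|)^{1/2}\varepsilon$; its statement $2^\circ$ (requiring additionally Condition \ref{cond_Lambda_infty}, verified above) yields the $L_2$-bounds on $\nabla(u_\varepsilon-v_\varepsilon^0)$ and on the flux $g^\varepsilon\nabla u_\varepsilon-\widetilde{g}^\varepsilon\nabla u_0$ with the interpolated rate $(1+|\tau|)^r\varepsilon^{2r}$. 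Note that $\Lambda(\x)=i(\Phi_1(\x),\dots,\Phi_d(\x))$ in this scalar setting, so the corrector $\eps \Lambda^\varepsilon b(\D) u_0$ takes the concrete form $\eps\sum_{j=1}^d\Phi_j^\varepsilon\partial_j u_0$ appearing in the definition of $v_\varepsilon^0$. The limit assertions for $\psi_1\in H^1$ then follow once again from Banach--Steinhaus, as in Theorem \ref{th16.20} and Remark \ref{rem16.12}.

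There is essentially no technical obstacle: the proof is a careful matching of the acoustics Cauchy problem to the abstract template and a linear decomposition of the data. The only subtle point is to resist the temptation to seek an ``enhanced'' estimate for the $\psi_1$-channel; by Remark \ref{rem12.1}, no analog of \eqref{15.79} is available for the operator $J_{2,\varepsilon}(\tau)$, which is why the $\psi_1$-term in statement $1^\circ$ retains the unimproved factor $\varepsilon^\sigma$ with $\sigma\in[0,1]$ rather than $\varepsilon^{2(1+r)/3}$ with $r\in[0,1/2]$, and why the interpolational rate in statement $2^\circ$ is $\varepsilon^{2r}$ only up to $r=1/2$.
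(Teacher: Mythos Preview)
Your proposal is correct and follows essentially the same route as the paper. The paper's proof sketch (the paragraph preceding Proposition~\ref{prop17.3}) says: verify $\wh{N}_Q(\boldsymbol{\theta})=0$ via Proposition~\ref{N_Q=0_proposit}$(1^\circ)$, then ``in the general case apply Theorem~\ref{th16.13}, and in the case where $\psi_1=0$ apply Theorem~\ref{th16.14}''; for $2^\circ$ note $\Lambda\in L_\infty$ and apply Theorem~\ref{th16.20}. Your explicit linear splitting $u_\varepsilon=u_\varepsilon^{(I)}+u_\varepsilon^{(II)}$ is precisely how one combines Theorems~\ref{th16.13} and~\ref{th16.14} to obtain the hybrid estimate with enhanced rates on $\phi,\psi_2,\boldsymbol{\rho}$ and the unimproved $\widetilde{\mathfrak{C}}_2(\sigma)$-rate on $\psi_1$; the only minor correction is that the limit assertion in $2^\circ$ is Theorem~\ref{th16.19}$(3^\circ)$ rather than Theorem~\ref{th16.20} (which has no limit statement), though the underlying Banach--Steinhaus mechanism you cite is the same.
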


\section{Application of the general results: the system of elasticity}

\subsection{The operator of elasticity theory}
Let $d \ge 2$. We represent the elasticity operator  as in  \cite[Chapter 5, \S 2]{BSu1}.
 Let $\zeta$ be an  orthogonal second rank tensor in $\R^d$.
In the standard orthonormal basis in $\R^d$, it is represented by a matrix 
$\zeta = \{\zeta_{jl}\}_{j,l=1}^d$. We consider  symmetric tensors $\zeta$ and identify them with vectors $\zeta_* \in \AC^m$, $2m= d(d+1)$, by the following rule. The vector $\zeta_*$ consists of all components  
$\zeta_{jl}$, $j \le l$, ordered in a fixed way. 

For the displacement vector $\u \in H^1(\R^d;\AC^d)$, we introduce the deformation tensor  
$e(\u) = \frac{1}{2} \{ \partial_l u_j + \partial_j u_l\}$. Let $e_*(\u)$ be the vector corresponding to the tensor 
$e(\u)$ in accordance with the rule described above. The relation $b(\D) \u = -i e_*(\u)$ determines   
an $(m\times d)$-matrix DO  $b(\D)$ uniquely (the symbol $b(\boldsymbol{\xi})$ of this DO  is a matrix with real entries). For instance, with an appropriate ordering, we have 
$$
b(\D) = \begin{pmatrix} \xi_1 & 0 \\  \frac{1}{2} \xi_2  & \frac{1}{2} \xi_1 \\ 0 & \xi_2 \end{pmatrix}, \quad d=2.
$$ 
In the case under consideration, $n=d$ and $m=d(d+1)/2$. It is easily seen that condition \eqref{rank_alpha_ineq}
is satisfied, and  $\alpha_0$, $\alpha_1$ depend only on $d$.

Let $\sigma(\u)$ be the stress tensor, and let $\sigma_*(\u)$ be the corresponding vector.
Then the Hooke law on  proportionality of stresses and deformations can be expressed by the relation
$\sigma_*(\u) = g(\x) e_*(\u)$, where $g(\x)$ is a symmetric  $(m\times m)$-matrix with real entries. The matrix $g$ characterizes the parameters of the elastic (in general, anisotropic) medium.
We assume that the matrix-valued function $g(\x)$ is periodic and such that $g(\x)>0$ and $g,g^{-1} \in L_\infty$.

The energy of elastic deformations is given by the quadratic form
\begin{equation}
\label{forma_w}
\begin{split}
w[\u,\u]& = \frac{1}{2} \int\limits_{\R^d} \langle \sigma_*(\u), e_*(\u) \rangle \,d\x
\\
&= \frac{1}{2} \int\limits_{\R^d} \langle g(\x) b(\D)\u, b(\D)\u \rangle \,d\x, \ \u \in H^1(\R^d;\AC^d).
\end{split}
\end{equation}
The operator $\mathcal W$ generated by this form  in the space $L_2(\R^d;\AC^d)$ is called the elasticity operator. Thus, we have $2 {\mathcal W} = \wh{\mathcal A} = b(\D)^* g(\x) b(\D)$.

In the case of isotropic medium, the matrix $g(\x)$ is expressed in terms of two functional parameters 
$\lambda(\x)$ and $\mu(\x)$ (the Lame parameters).  Here $\mu$ is the shear modulus. Often, another parameter
$K(\x)$ is introduced instead of $\lambda$;  $K(\x)$ is called the modulus of volume compression. We need yet another modulus $\beta(\x)$. Here are the relations:
$K(\x) = \lambda(\x) + \frac{2\mu(\x)}{d}$, $\beta(\x) = \mu(\x) + \frac{\lambda(\x)}{2}.$
In the isotropic case, the conditions that ensure the positive definiteness of the matrix $g(\x)$ are as follows:
$\mu(\x) \ge \mu_0 >0$ and $K(\x) \ge K_0 >0$.
As an example, we write down the matrix $g(\x)$ in the isotropic case for $d=2$:
$$
g(\x) = \begin{pmatrix} K(\x) + \mu(\x) & 0 & K(\x) - \mu(\x) \\
0 & 4 \mu(\x) & 0 \\ K(\x) - \mu(\x) & 0 & K(\x) + \mu(\x)\end{pmatrix}.
$$

\subsection{Homogenization of the elasticity system}
Now, we consider the elasticity operator 
${\mathcal W}_\eps = \frac{1}{2} \wh{\mathcal A}_\eps = \frac{1}{2} b(\D)^* g^\eps(\x) b(\D)$ with rapidly oscillating coefficients. The effective matrix $g^0$ and the effective operator ${\mathcal W}^0 = \frac{1}{2} \wh{\mathcal A}^0 = \frac{1}{2} b(\D)^* g^0 b(\D)$
are constructed by the general rules  (see Subsections \ref{sec8.2}, \ref{sec8.3}).
In the isotropic case, the effective medium is in general anisotropic.

In general, the operator $\wh{N}(\boldsymbol{\theta})$ is not equal to zero. 
Moreover, there are examples where  $\wh{N}_0(\boldsymbol{\theta}) \ne 0$ at some points
$\boldsymbol{\theta} \in {\mathbb S}^{d-1}$ (even in the isotropic case). See \cite[Example 8.7]{Su4}, \cite[Subsection 14.3]{DSu}.

Let $Q(\x)$ be a $\Gamma$-periodic symmetric $(d\times d)$-matrix-valued function with real entries and such that $Q(\x)>0$; $Q, Q^{-1} \in L_\infty$. (Usually,  $Q$ is a scalar function having the sense of  the density of the medium). Denote $f(\x)= Q(\x)^{-1/2}$.
Consider the Cauchy problem  for the elasticity system with rapidly oscillating coefficients:
\begin{equation}
\label{17.20}
\begin{cases}
Q^\eps(\x) \frac{\partial^2 {\u}_\varepsilon (\mathbf{x}, \tau)}{\partial \tau^2}\! = \!- {\mathcal W}_\eps {\u}_\varepsilon (\mathbf{x}, \tau), \\
 {\u}_\varepsilon (\mathbf{x}, 0) \!=\!\boldsymbol{\phi}(\x), \quad 
\frac{\partial {\u}_\varepsilon }{\partial \tau} (\mathbf{x}, 0)\! =\! 
\boldsymbol{\psi}_1(\mathbf{x})\! +\! (Q^\eps)^{-1} (\boldsymbol{\psi}_2(\mathbf{x})\!+\! \D^* \boldsymbol{\rho}(\x)),
\end{cases}
\end{equation}
where $\boldsymbol{\phi}, \boldsymbol{\psi}_1, \boldsymbol{\psi}_2 \in L_2 (\mathbb{R}^d; \mathbb{C}^d)$ and
$\boldsymbol{\rho} \in L_2 (\mathbb{R}^d; \mathbb{C}^{d^2})$.
(For simplicity, we consider the homogeneous equation.)
Let $\u_0$ be the solution of the homogenized problem
\begin{equation}
\label{17.21}
\left\{
\begin{aligned}
& \overline{Q} \frac{\partial^2 {\u}_0 (\mathbf{x}, \tau)}{\partial \tau^2} = - {\mathcal W}^0 
{\u}_0 (\mathbf{x}, \tau), \\
& {\u}_0 (\mathbf{x}, 0) =\boldsymbol{\phi}(\x), \quad \frac{\partial {\u}_0}{\partial \tau} (\mathbf{x}, 0) = 
\boldsymbol{\psi}_1(\mathbf{x})+
(\overline{Q})^{-1}( \boldsymbol{\psi}_2(\mathbf{x}) + \D^* \boldsymbol{\rho}(\x)).
\end{aligned}
\right.
\end{equation}

Theorem \ref{th16.13} can be applied.
It is possible to approximate the solution in the energy norm in the case where $\boldsymbol{\phi} =0$, 
$\boldsymbol{\psi}_2=0$, and $\boldsymbol{\rho}=0$.
We can apply Theorem \ref{th16.16}. Let us formulate the results.

\begin{proposition}
	\label{prop17.4}
	Let  ${\u}_\varepsilon$~be the solution of problem~\eqref{17.20}, and let  
	${\u}_0$~be the solution of the homogenized problem~\eqref{17.21}.

	\noindent	$1^\circ$. 
		 If  $\boldsymbol{\phi} \in H^{s}(\mathbb{R}^d;\AC^d),$ $\boldsymbol{\psi}_1, \boldsymbol{\psi}_2 \in 
		 H^r (\R^d; \AC^d),$ $\boldsymbol{\rho} \in H^{s}(\mathbb{R}^d;\AC^{d^2}),$
		 where ${0\le s \le 2,}$ ${0\le r \le 1,}$ 		
		 then for $\tau \in \mathbb{R}$ and $0< \varepsilon \le 1$ we have 
		\begin{equation*}
		\begin{aligned}
		&\| {\u}_\varepsilon(\,\cdot\,, \tau) - {\u}_0 (\,\cdot\,, \tau) \|_{L_2 (\mathbb{R}^d)} \le 
		{\mathfrak{C}}_{1}(s) (1+|\tau|)^{s/2}  \varepsilon^{s/2} \| \boldsymbol{\phi} \|_{H^s(\R^d)}
		\\
		&+  
		\widetilde{\mathfrak{C}}_{2}(r) (1+|\tau|)  \varepsilon^{r} \| \boldsymbol{\psi}_1 \|_{H^r(\R^d)}+  
		{\mathfrak{C}}_{2}(r) (1+|\tau|)^{(1+r)/2}  \varepsilon^{(1+r)/2} \| \boldsymbol{\psi}_2\|_{H^r (\R^d)} 
		\\
	&+{\mathfrak{C}}'_{2}(s) (1+|\tau|)^{s/2}  \varepsilon^{s/2} \| \boldsymbol{\rho} \|_{H^s (\R^d)}. 
		\end{aligned}
		\end{equation*}
		If $\boldsymbol{\phi}, \boldsymbol{\psi}_1, \boldsymbol{\psi}_2 \in 
		L_2 (\mathbb{R}^d; \mathbb{C}^d)$ and 
		$\boldsymbol{\rho} \in L_2 (\mathbb{R}^d; \mathbb{C}^{d^2}),$ then 
	$$
	\lim\limits_{\varepsilon \to 0} \| {\u}_\varepsilon (\,\cdot\,, \tau) - {\u}_0 (\,\cdot\,, \tau) \|_{L_2} = 0
	$$
	for $\tau \in \mathbb{R}$.

\noindent		$2^\circ$. Let  $\boldsymbol{\phi} =0,$  $\boldsymbol{\psi}_2 =0,$ and $\boldsymbol{\rho}=0$.  We put $\v_\eps = \u_0 + \eps \Lambda^\eps b(\D)\Pi_\eps \u_0.$
		If  
		 $\boldsymbol{\psi}_1 \in H^{2}(\R^d;\AC^d),$ then for $\tau \in \mathbb{R}$ and $0< \varepsilon \le 1$ we have
		\begin{equation*}
		\| {\u}_\varepsilon(\,\cdot\,, \tau) -  {\v}_\eps (\,\cdot\,, \tau) \|_{H^1 (\mathbb{R}^d)} \le 
		{\mathrm{C}}_{7} (1+|\tau|)  \varepsilon
		 \| \boldsymbol{\psi}_1 \|_{H^{2}(\mathbb{R}^d)}.
		 \end{equation*}
		  If $\boldsymbol{\psi}_1 \in H^{s}(\R^d;\AC^d),$ where $0\le s \le 2,$
		 then for  $\tau \in \mathbb{R}$ and $0< \varepsilon \le 1$ we have 
		\begin{align*}
		&\| \D {\u}_\varepsilon(\,\cdot\,, \tau)\! - \!\D {\v}_\eps (\,\cdot\,, \tau) \|_{L_2 (\mathbb{R}^d)}\! \le\! 
		{\mathfrak{C}}_{5} (s) (1\!+\!|\tau|)^{s/2}  \varepsilon^{s/2}
		 \| \boldsymbol{\psi}_1 \|_{H^{s}(\mathbb{R}^d)} ,
		\\
		&\| g^\eps b(\D) {\u}_\varepsilon(\,\cdot\,, \tau)\! - \!\wt{g}^\eps  b(\D) (\Pi_\eps {\u}_0) (\,\cdot\,, \tau) \|_{L_2 (\mathbb{R}^d)} \!\le \!
		{\mathfrak{C}}_{6} (s) (1\!+\!|\tau|)^{s/2}  \varepsilon^{s/2}
		  \| \boldsymbol{\psi}_1 \|_{H^{s}(\mathbb{R}^d)}.
		\end{align*}
		If $\boldsymbol{\psi}_1 \in L_2 (\mathbb{R}^d),$ then for  $\tau \in \R$ we have 
		{\allowdisplaybreaks
		\begin{align*}
		& \lim\limits_{\varepsilon \to 0} \| {\u}_\varepsilon (\,\cdot\,, \tau) - {\v}_\eps (\,\cdot\,, \tau) \|_{H^1(\mathbb{R}^d)} = 0, 
		\\ 
		& \lim\limits_{\varepsilon \to 0} 
		\| g^\eps b(\D) {\u}_\varepsilon(\,\cdot\,, \tau) - \wt{g}^\eps  b(\D) (\Pi_\eps {\u}_0) (\,\cdot\,, \tau)
		 \|_{L_2 (\mathbb{R}^d)} =0.
		\end{align*} 
		}
	\end{proposition}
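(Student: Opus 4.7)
The plan is to observe that problem \eqref{17.20}--\eqref{17.21} is a special case of the sandwiched Cauchy problem \eqref{nonhomog_Cauchy_wQ}--\eqref{nonhomog_Cauchy_wQ_eff}, and then invoke Theorems \ref{th16.13} and \ref{th16.16} directly. As set up in Subsection 17.1, the elasticity operator satisfies $2\mathcal{W} = \widehat{\mathcal{A}} = b(\D)^* g(\x) b(\D)$, with $n = d$, $m = d(d+1)/2$, a symmetric real matrix $g(\x)$, and a first-order DO $b(\D)$ whose symbol has maximal rank, so condition \eqref{rank_alpha_ineq} holds with constants depending only on $d$. Replacing the matrix $g$ by $\tfrac{1}{2}g$ (which merely rescales the constants), \eqref{17.20} takes exactly the form \eqref{nonhomog_Cauchy_wQ} with $\mathbf{F}_1 = \mathbf{F}_2 = 0$ and $\mathbf{G} = 0$, and $\u_0$ from \eqref{17.21} solves the corresponding homogenized problem \eqref{nonhomog_Cauchy_wQ_eff}.

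Statement $1^\circ$ then follows at once from Theorem \ref{th16.13}$(2^\circ)$: the four terms on the right-hand side correspond to the four pieces of initial data $\boldsymbol{\phi}$, $\boldsymbol{\psi}_1$, $\boldsymbol{\psi}_2$, $\boldsymbol{\rho}$, and all forcing-term contributions drop out. The convergence claim under the minimal hypothesis $\boldsymbol{\phi}, \boldsymbol{\psi}_1, \boldsymbol{\psi}_2 \in L_2(\R^d;\AC^d)$, $\boldsymbol{\rho} \in L_2(\R^d;\AC^{d^2})$ is obtained by the Banach--Steinhaus theorem: the quantitative estimate just established shows convergence to zero on a dense subspace of smooth data, while the family of relevant sandwiched operators $f^\eps \cos(\tau \mathcal{A}_\eps^{1/2})(f^\eps)^{-1}$ and the two sine counterparts is uniformly bounded on $L_2$ with respect to $\eps$ (for the sine with $\D^*$ in the data, one invokes the analogue of \eqref{a_form_ineq}).

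Statement $2^\circ$ follows similarly from Theorem \ref{th16.16}. With $\boldsymbol{\phi} = 0$, $\boldsymbol{\psi}_2 = 0$, $\boldsymbol{\rho} = 0$, and no forcing, the only nonzero initial datum is $\boldsymbol{\psi}_1$, and the first-order approximation $\v_\eps = \u_0 + \eps \Lambda^\eps b(\D) \Pi_\eps \u_0$ in the proposition is precisely the one appearing in that theorem. Parts $(1^\circ)$ and $(2^\circ)$ deliver the $H^1$-estimate for $\boldsymbol{\psi}_1 \in H^2$ and the interpolated estimates for $\| \D(\u_\eps - \v_\eps) \|_{L_2}$, respectively, as well as the flux estimate for $g^\eps b(\D)\u_\eps - \wt{g}^\eps b(\D) \Pi_\eps \u_0$. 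The concluding convergence assertion for $\boldsymbol{\psi}_1 \in L_2$ is again obtained via Banach--Steinhaus, using the uniform $(L_2 \to H^1)$ bound implicit in Remark \ref{rem15.11a} for the displacement and the uniform $(L_2 \to L_2)$ bound for the flux.

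No genuine obstacle appears: the argument is a formal identification with the general framework of Chapter 3. The one point worth emphasising is that the \emph{improved} versions of Theorems \ref{th16.13} and \ref{th16.16} (namely Theorems \ref{th16.14} and \ref{th16.17}) are not available for the elasticity system in general, since, as noted in the paragraph preceding Proposition \ref{prop17.4}, $\widehat{N}_Q(\boldsymbol{\theta})$ may be nonzero (and even $\widehat{N}_{0,Q}(\boldsymbol{\theta})$ can be nonzero in the isotropic case). Hence Condition \ref{cond_BB} fails in general, and the unimproved bounds stated in Proposition \ref{prop17.4} are precisely what the abstract machinery delivers under no further structural hypotheses on~$g$.
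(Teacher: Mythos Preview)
Your proposal is correct and follows essentially the same approach as the paper: the text immediately preceding Proposition~\ref{prop17.4} states that Theorem~\ref{th16.13} applies for statement~$1^\circ$ and Theorem~\ref{th16.16} for statement~$2^\circ$, which is precisely what you do. One minor remark: the convergence assertions are already contained in Theorems~\ref{th16.13}$(3^\circ)$ and~\ref{th16.16}$(3^\circ)$, so your separate Banach--Steinhaus arguments are redundant (though not incorrect); also, for the sandwiched setting the relevant remark is~\ref{rem15.36a} rather than~\ref{rem15.11a}.
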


\subsection{The Hill body}
In mechanics  (see, e.~g., \cite{ZhKO}), an elastic isotropic medium with $\mu(\x)= \mu_0 = \operatorname{const}$ is called the Hill body. In this case, a simpler factorization for the operator $\mathcal W$ is possible; see \cite[Chapter 5, Subsection 2.3]{BSu1}. The form~\eqref{forma_w} can be represented as 
$$
w[\u,\u] = \int\limits_{\R^d} \langle g_{\wedge}(\x) b_\wedge(\D) \u,  b_\wedge(\D) \u \rangle\, d\x,
\quad \u \in H^1(\R^d; \AC^d).
$$
We have $m_\wedge = 1+ d(d-1)/2$. The symbol of the operator $b_\wedge(\D)$ is the matrix 
$b_\wedge(\boldsymbol{\xi})$ of size \hbox{$m_\wedge \times d$} defined as follows.
The first row is  $(\xi_1, \xi_2, \dots, \xi_d)$. The other rows correspond to pairs of indices $(j,l)$,
$1\le j<l\le d$. The entry in the  $(j,l)$th row and the $j$th column is $\xi_l$; the entry in the $(j,l)$th row and the $l$th column is $-\xi_j$;  all other entries of the $(j,l)$th row are equal to zero.
The matrix $g_\wedge(\x)$ is the diagonal matrix given by 
$$
g_\wedge(\x) = \operatorname{diag} \{ {\beta}(\x), \mu_0/2, \dots, \mu_0/2\}.
$$
The effective operator is given by 
${\mathcal W}^0 = b_\wedge(\D)^* g^0_\wedge b_\wedge(\D),$
where the effective matrix $g^0_\wedge$ coincides with  $\underline{g_\wedge}$:
$$
g^0_\wedge = \underline{g_\wedge} =
\operatorname{diag} \{ \underline{\beta}, \mu_0/2, \dots, \mu_0/2\}.
$$

By Proposition \ref{prop11.22}$(3^\circ)$, Condition $\Lambda \!\in\! L_\infty$ is satisfied.
For problem~\eqref{17.20}, Theorem  \ref{th16.13} is applicable;  in the case where $\boldsymbol{\phi}\!=\!0$ and
$\boldsymbol{\psi}_2\!=\!0$, we can apply Theorem~\ref{th16.19}.

Let us discuss the case where $Q(\x)=\mathbf{1}$. By Proposition \ref{N=0_proposit}$(3^\circ)$,
we have  $\wh{N}(\boldsymbol{\theta})=0$ for any  $\boldsymbol{\theta}\in {\mathbb S}^{d-1}$.
Therefore, Theorem \ref{th16.2} can be applied; in the case where $\boldsymbol{\phi}=0$, 
$\boldsymbol{\psi}_2=0$, and $\boldsymbol{\rho}=0$, we can apply Theorem \ref{th16.10}.

\end{document}